\numberwithin{equation}{subsection}
\numberwithin{figure}{section}
\renewcommand{\section}[1]{
\vspace{4em}
\begin{center}\refstepcounter{section}{\bf \Large \thesection. #1}\end{center}\vspace{1em}\addcontentsline{toc}{section}{\thesection. #1\vspace{-1em}}
}
\renewcommand{\subsection}[1]{\vspace{2em}\begin{center}\refstepcounter{subsection}{\bf \thesubsection. #1}\end{center}\vspace{.5em}\addcontentsline{toc}{subsection}{\vspace{-1em}\hspace{1em}\thesubsection. #1}}
\renewcommand{\subsubsection}[1]{\hspace{-\parindent}\refstepcounter{subsubsection}{\bf (\thesubsubsection) #1.}}
\renewcommand{\thesubsubsection}{\arabic{section}.\arabic{subsection}\alph{subsubsection}}
\theoremstyle{plain}
\newtheorem{thm}{Theorem}[subsection]
\newtheorem{theorem}[thm]{Theorem}
\newtheorem{assumption}[thm]{Assumption}
\newtheorem{corollary}[thm]{Corollary}
\newtheorem{prop}[thm]{Proposition}
\newtheorem{definition}[thm]{Definition}
\newtheorem{remark}[thm]{Remark}
\newtheorem{proposition}[thm]{Proposition}
\newtheorem{example}[thm]{Example}
\newtheorem{non-example}[thm]{Non-example}
\newtheorem{lemma}[thm]{Lemma}
\newtheorem{conjecture}[thm]{Conjecture}
\newtheorem*{claim*}{Claim} 
\newtheorem*{lemma*}{Lemma}
\newtheorem*{theorem*}{Theorem}
\newtheorem*{conjecture*}{Conjecture}
\newtheorem{notation}[thm]{Notation}
\newcommand{\bC}{{\mathbb C}}
\newcommand{\bD}{{\mathbb D}}
\newcommand{\bF}{{\mathbb F}}
\newcommand{\bH}{{\mathbb H}}
\newcommand{\bK}{{\mathbb K}}
\newcommand{\bN}{{\mathbb N}}
\newcommand{\bR}{{\mathbb R}}
\newcommand{\bZ}{{\mathbb Z}}
\newcommand{\bfx}{\mathbf x}
\newcommand{\bfy}{\mathbf y}
\newcommand{\scrA}{\EuScript A}
\newcommand{\scrB}{\EuScript B}
\newcommand{\scrC}{\EuScript C}
\newcommand{\scrE}{\EuScript E}
\newcommand{\scrF}{\EuScript F}
\newcommand{\scrG}{\EuScript G}
\newcommand{\scrH}{\EuScript H}
\newcommand{\scrJ}{\EuScript J}
\newcommand{\scrK}{\EuScript K}
\newcommand{\scrL}{\EuScript L}
\newcommand{\scrM}{\EuScript M}
\newcommand{\scrO}{\EuScript O}
\newcommand{\fraka}{\mathfrak{a}}
\newcommand{\frako}{\mathfrak{o}}
\newcommand{\frakA}{\mathfrak{A}}
\newcommand{\frakC}{\mathfrak{C}}
\newcommand{\frakF}{\mathfrak{F}}
\newcommand{\frakH}{\mathfrak{H}}
\newcommand{\frakL}{\mathfrak{L}}
\newcommand{\frakM}{\mathfrak{M}}
\newcommand{\frakP}{\mathfrak{P}}
\newcommand{\frakR}{\mathfrak{R}}
\newcommand{\bigK}{\mathbb{K}}
\newcommand{\oSd}{\mathring{\mathfrak{R}}_d}
\newcommand{\Sd}{\mathfrak{R}_d}
\newcommand{\oRkm}{\mathring{\mathfrak{R}}_{d,m}}
\newcommand{\Rkm}{\mathfrak{R}_{d,m}}
\newcommand{\disc}{\mathbb{D}}
\newcommand{\half}{{\textstyle\frac{1}{2}}}
\newcommand{\quarter}{{\textstyle\frac{1}{4}}}
\newcommand{\smallfrac}[2]{{\textstyle\frac{#1}{#2}}}
\newcommand{\iso}{\cong}
\newcommand{\htp}{\simeq}
\newcommand{\smooth}{C^\infty}
\renewcommand{\hom}{\mathit{hom}}
\newcommand{\mymod}{\,\mathrm{mod}\,}
\newcommand{\heart}{\heartsuit}
\newcommand{\AC}{\mathfrak{AC}}
\newcommand{\AH}{\mathfrak{AH}}
\newcommand{\iotaCF}{\iota_{q}}
\newcommand{\floer}{}
\begin{document}
\title[Quantum connection]{\Large The quantum connection, Fourier-Laplace \\ transform, and families of $A_\infty$-categories}
\author{D. Pomerleano, P. Seidel\vspace{-2em}}
\maketitle

\begin{abstract} \vspace{-3em} Take a closed monotone symplectic manifold containing a smooth anticanonical divisor. The quantum connection on its cohomology has singularities at zero and infinity (in the quantum parameter). At zero it has a regular singular point, by definition. We show that the singularity at infinity is of unramified exponential type. The argument involves: realizing cohomology as a deformation of the symplectic cohomology of the divisor complement; the corresponding deformation of the wrapped Fukaya category; a new categorical interpretation of the Fourier-Laplace transform of $D$-modules; and the regularity theorem of Petrov-Vaintrob-Vologodsky in noncommutative geometry.
\end{abstract}

{\smaller\vspace{-1em}
\tableofcontents
}

\section{Introduction}

\subsection{The quantum connection\label{subsec:the-quantum-connection}}
Let $M$ be a closed $2n$-dimensional symplectic manifold which is monotone,
\begin{equation}
[\omega_M] = c_1(M) \in H^2(M;\bR).
\end{equation}
Take a formal variable $q$. The quantum connection on $H^*(M;\bC)[q^{\pm 1}]$ is the following $\bZ/2$-graded endomorphism, which differentiates with respect to $q$:
\begin{equation} \label{eq:original-quantum}
\nabla_{\partial_q} x = \partial_q x + q^{-1} ([\omega_M] \ast_q x),
\end{equation}
where $\ast_q$ is the small quantum product,
\begin{equation} \label{eq:small-quantum-product}
x \ast_q y = x \ast^{(0)} y + q \, x \ast^{(1)} y + q^2 \, x \ast^{(2)} y + \cdots
\end{equation}
The term $\ast^{(k)}$, which counts rational curves with first Chern number $k$, has degree $-2k$; for $k = 0$, it is the classical cup product. (This is the smallest, which means single-parameter, version of the quantum connection; readers interested in the wider picture may want to look at e.g.\ \cite{galkin-golyshev-iritani16, galkin-iritani19, iritani-mann-mignon16}.) Visibly, the quantum connection has a simple pole at $q = 0$, with nilpotent residue $x \mapsto [\omega_M] x$. The monodromy around that pole is conjugate to $x \mapsto \exp(-2\pi i [\omega_M] x)$. 

One can simplify the $q$-dependence in \eqref{eq:original-quantum} as follows. Take a grading operator 
\begin{equation} \label{eq:grading-operator}
\mathrm{Gr}: H^*(M;\bC) \longrightarrow H^*(M;\bC), 
\end{equation}
which multiplies the degree $d$ cohomology by some even integer $\delta_d$ such that $\delta_{d+2} = \delta_d + 2$. (If there is no odd degree cohomology, one can take $\delta_d = d$; or in general, $\delta_d = 2\lfloor d/2\rfloor$.) Then \eqref{eq:original-quantum} is gauge equivalent to
\begin{equation} \label{eq:big-q-pole-0}
\nabla_{\partial_q}^{\mathrm{Gr}}x \stackrel{\mathrm{def}}{=} (q^{\mathrm{Gr}/2\,} \nabla_{\partial_q} \,q^{-\mathrm{Gr}/2})(x) = \partial_q x - q^{-1}\frac{\mathrm{Gr}(x)}{2} + [\omega_M] \ast x,
\end{equation}
where $\ast$ without subscript means that we set $q = 1$ in \eqref{eq:small-quantum-product}. (In the literature, e.g.\ \cite{galkin-golyshev-iritani16}, odd values of $\delta_d$ are often used, leading to a gauge transformation involving $q^{1/2}$.)
We are interested in the behaviour near $q = \infty$, and therefore change variables to $Q = 1/q$. The quantum connection (multiplied by $-q^2$, to account for the difference between the vector fields $\partial_q$ and $\partial_Q$) is correspondingly written as
\begin{align} \label{eq:switch-q}
& \nabla_{\partial_Q} x = \partial_Q x - Q^{-1}([\omega_M] \ast_{Q^{-1}} x), \\
\label{eq:big-q-pole}
& \nabla_{\partial_Q}^{\mathrm{Gr}} x = \partial_Q x -
Q^{-2} ([\omega_M] \ast x) +
Q^{-1} \frac{\mathrm{Gr}(x)}{2}.
\end{align}
In \eqref{eq:big-q-pole} the pole order has been reduced to $2$, which in general is the best one can do by gauge transformations. Such quadratic singularities have a rich structure, of which we consider only the part remembered in a formal power series expansion in $Q$ (this means ignoring the Stokes phenomenon).

\begin{conjecture} \label{th:main-conjecture}
(i) $\nabla_{\partial_q}$ has a singularity of unramified exponential type at $q = \infty$.

(ii) The regularized formal monodromies at $q = \infty$ are quasi-unipotent (their eigenvalues are roots of unity). 
%
\end{conjecture}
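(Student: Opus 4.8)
The plan is to prove the conjecture in the form stated as a theorem, i.e.\ part (i): $\nabla_{\partial_q}$ has unramified exponential type at $q=\infty$, together with part (ii) on quasi-unipotence of regularized monodromies. The starting point is the reinterpretation of $H^*(M)$ as a deformation of the symplectic cohomology $SH^*(M\setminus D)$ of the complement of the anticanonical divisor $D$, with the quantum parameter $q$ playing the role of the deformation parameter that "turns on" the curves hitting $D$. On the categorical side, this realizes the monotone Fukaya category of $M$ as a deformation of the wrapped Fukaya category $\mathcal{W}(M\setminus D)$ over $\bC[q]$. The first step is therefore to make this family of $A_\infty$-categories precise and to identify its Hochschild invariants: the cyclic homology of the family, equipped with its connection, should recover the quantum connection on $H^*(M;\bC)[q^{\pm 1}]$, at least after passing to an appropriate (smooth, proper in the noncommutative sense) summand.

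The second step is the categorical Fourier-Laplace transform. The connection \eqref{eq:big-q-pole} near $Q=1/q=0$ has a pole of order $2$, which is the signature of a $D$-module obtained by Fourier-Laplace transform from one with regular singularities. The plan is to give a categorical model of this transform: the family over $\bC[q]$ should be viewed, via Fourier-Laplace duality, as a family over the dual line, whose fibers are (twists of) $\mathcal{W}(M\setminus D)$ and whose behavior at the origin of the dual line is governed by a regular singular connection — concretely, the Gauss-Manin / cyclic connection of the undeformed wrapped category together with its grading (equivariant) structure. Matching the two sides identifies the formal type at $q=\infty$ of the quantum connection with the formal type at $0$ of this regular-singular object pushed through Fourier-Laplace, and the general theory of the Fourier-Laplace transform of regular holonomic $D$-modules on $\bA^1$ then forces the singularity at infinity to be of exponential type; unramifiedness comes from the fact that the deformation is by a single parameter of integral (even) degree, so no fractional powers of $q$ are introduced.

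The third step supplies the regularity input: the Petrov-Vaintrob-Vologodsky theorem guarantees that the noncommutative Hodge-theoretic (cyclic homology) connection attached to a smooth and proper $A_\infty$-category, or more precisely to the relevant smooth proper summand of the deformed Fukaya category, has regular singularities and quasi-unipotent monodromy. Feeding this through the Fourier-Laplace correspondence yields part (i) directly, and the quasi-unipotence of the regularized formal monodromies at $q=\infty$ in part (ii) follows because Fourier-Laplace transform relates the formal monodromy at infinity to the local monodromies (at $0$ and at finite singular points) of the regular-singular dual, all of which are quasi-unipotent by the same noncommutative Hodge theory input.

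The main obstacle I expect is the second step: constructing the categorical Fourier-Laplace transform and proving that, under it, the cyclic-homology connection of the $\bC[q]$-family of Fukaya categories corresponds to the cyclic-homology connection of a genuinely regular-singular family on the dual line. This requires (a) identifying the correct smooth and proper model — the deformed wrapped category is typically not proper, so one must pass to a summand or a category of perfect/compact modules where Petrov-Vaintrob-Vologodsky applies and whose cyclic homology still computes $H^*(M)[q^{\pm 1}]$ with its connection; and (b) checking compatibility of the algebraic Fourier-Laplace transform on $D$-modules with the categorical construction at the level of connections, which is a nontrivial bookkeeping of the $q\partial_q$-action versus the dual coordinate. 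Compared to these, the extraction of exponential type and quasi-unipotence from the classical Fourier-Laplace dictionary is formal.
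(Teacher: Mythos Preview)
Your overall architecture matches the paper's: deform $SH^*(M\setminus D)$ and the wrapped category $\scrA$ by the Borman--Sheridan Maurer--Cartan element, pass through a categorical Fourier--Laplace transform to a family over a dual $t$-line, apply Petrov--Vaintrob--Vologodsky there, and translate back via the classical Fourier--Laplace dictionary. One small correction: unramifiedness is not about ``integral degree'' of the parameter; it is a direct output of the classical theorem that the Fourier--Laplace transform of a regular holonomic $D$-module on $\bA^1$ (regular including at $\infty$) has unramified exponential type at $\infty$. Likewise, the regularity and quasi-unipotence inputs from Petrov--Vaintrob--Vologodsky are needed at \emph{all} singular points of the $t$-connection and at $t=\infty$, not just at the origin; each finite $t$-singularity contributes one of the $\lambda$-summands in the decomposition at $q=\infty$.

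The genuine gap is your obstacle (a). The smooth proper family on the dual line is not obtained by ``passing to a summand or perfect modules'' of $\scrA_q$. Instead one constructs from $\scrA_q$ a Koszul-dual $A_\infty$-category $\scrA_t$ over $\bC[t]$ (essentially $\scrA[\epsilon]$ with $d\epsilon = t - \mu^{0,(1)}_q$, generalized appropriately); as a $\bC$-linear category this is quasi-isomorphic to $\scrA$, hence smooth over $\bC$. Properness over $\bC[t]$ is a separate geometric input: finite generation of the wrapped Floer groups as modules over the Borman--Sheridan class. Smoothness over $\bC[t]$ fails, but one shows $\scrA_{t,1/p}$ is smooth over $\bC[t,1/p]$ for a suitable polynomial $p$, the noncommutative analogue of removing critical values of a superpotential; the criterion is the vanishing of $q^r p([\kappa_{\scrA_q}])$ in $\mathit{HH}^*(\scrA_q)$. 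This localisation must occur \emph{before} applying Petrov--Vaintrob--Vologodsky, and threading it through the $u$-adic completions is where much of the technical weight lies. A further substantial piece you have not named is the cyclic open-closed map from $\mathit{CC}_*^+(\scrA_q)$ to deformed $S^1$-equivariant symplectic cohomology, and the proof that it intertwines the Getzler--Gauss--Manin connection with the geometric closed-string connection; this is what carries the categorical conclusion back to the quantum connection.
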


The terminology (unramified exponential type, regularized monodromy) will be explained in Section \ref{subsec:classical}. Statement (i) occurs in several places in the literature, motivated by mirror symmetry: e.g.\ as a small piece of \cite[Conjecture 3.4]{katzarkov-kontsevich-pantev08}, or in \cite[Section 2.5]{galkin-golyshev-iritani16} (qualified by warnings: \cite[Remark 3.5(ii)]{katzarkov-kontsevich-pantev08} suggests restricting it to the algebro-geometric case of Fano varieties, while \cite{galkin-golyshev-iritani16} talks of ``a wide class of Fanos''); part (ii) is less familiar, but closely related. In both references, the unramified exponential type property plays an important role as an initial step towards a deeper understanding of the quantum connection: there are farther-reaching conjectures (the noncommutative Hodge conjecture in \cite{katzarkov-kontsevich-pantev08}, and the Gamma conjecture in \cite{galkin-golyshev-iritani16}) whose statements only make sense if one assumes that property. Those conjectures make extraordinarily precise predictions, which have been checked for certain classes of examples; but as general statements, they pose a fundamental challenge to our understanding of the enumerative geometry of Fano varieties (not to mention non-algebraic monotone symplectic manifolds, which is a very poorly understood subject).

Roughly speaking, there are three previous approaches to Conjecture \ref{th:main-conjecture}.
\begin{itemize} \itemsep.5em
\item Certain cases can be solved directly. If the endomorphism $[\omega_M] \ast$ is semisimple, (i) holds by an elementary power series argument; see Lemma \ref{th:quadratic-pole}(ii). In the more restrictive situation where the quantum cohomology ring is semisimple (a direct sum of copies of the ring $\bC$), a much stronger version of (ii) holds, by a Gromov-Witten theory argument due to Dubrovin; see Lemma \ref{th:semisimple}. From a computational perspective, for any specific manifold whose quantum product is known, it can be decided algorithmically if the properties from Conjecture \ref{th:main-conjecture} hold (see e.g.\ \cite{babbitt-varadarajan83, barkatou97}). 

\item Suppose that quantum cohomology has a mirror description in terms of a Landau-Ginzburg model (a variety with a function, called superpotential). One can apply algebro-geometric methods to the Gauss-Manin connection of that superpotential, and derive conclusions about the quantum connection by a Fourier-Laplace transform; this is the motivation mentioned above. We refer to Section \ref{subsec:classical-gauss-manin} for further explanation of that strategy, which has seen extensive use in the literature. For instance, (i) is proved for certain complete intersections in projective space in \cite[Proposition 7.4]{sanda-shamoto19}, based on results in \cite{reichelt-sevenheck17}. This method addresses cases which are not covered by the previously mentioned elementary arguments (even for Fano toric manifolds, the quantum cohomology can fail to be semisimple \cite[Remark 5.1]{ostrover-tyomkin08}). As for future outlook, one could hope to apply such arguments in the context of the ``intrinsic mirror symmetry'' of Gross-Siebert, where $M$ admits a (singular) anticanonical divisor $D$ which makes it into a maximal log Calabi-Yau pair \cite[Definition 2.7]{gross-siebert18}. 

\item For any $\lambda \in \bC$ we have a Fukaya category $\mathit{Fuk}_\lambda(M)$, which is a $\bZ/2$-graded $A_\infty$-category over $\bC$. This category is zero unless $\lambda$ is an eigenvalue of $[\omega_M] \ast$. The cyclic open-closed map \cite{ganatra19} is a map from negative cyclic homology to ordinary (co)homology, more precisely a $u$-linear map
\begin{equation} \label{eq:cyclic-oc-lambda}
\bigoplus_{\lambda} \mathit{HC}_*(\mathit{Fuk}_\lambda(M)) \longrightarrow H^{*+n}(M;\bC)[[u]],
\end{equation}
where $u$ is another formal variable. Let's adjoin $u^{-1}$. Then, on the left hand side (periodic cyclic homology) we have a connection in the $u$-variable, with a quadratic pole at $u = 0$ \cite{katzarkov-kontsevich-pantev08, shklyarov14}, which has nilpotent leading order term \cite{amorim-tu22}. Adjust that connection by adding $\lambda u^{-2}$ times the identity on each summand (this reflects the fact that $\mathit{Fuk}_{\lambda}(M)$ should really be thought of as a curved $A_\infty$-category, with a curvature term which is $\lambda$ times the identity). On the right hand side, the corresponding connection is a version of \eqref{eq:big-q-pole}, with $u$ instead of $Q^{-1}$, and our definition of grading operator replaced by $\delta_d = d-n$. Suppose that we are in the situation of a generation result such as \cite[Theorem 3]{ganatra17}, where \eqref{eq:cyclic-oc-lambda} is an isomorphism. Then one can infer properties of our connection from ones of the Fukaya category. The first use of this relation was made in \cite{hugtenburg22} (under simplifying technical assumptions). As an example, \cite[Section 6.2]{hugtenburg22} analyzes the quantum connection for the intersection of two quadrics in $\bC P^5$; this in particular shows Conjecture \ref{th:main-conjecture} for that manifold.
\end{itemize}

%


\subsection{Results and general approach}
Our approach relies on the existence of an anticanonical divisor which is smooth (unlike the situation in the Gross-Siebert program), more precisely:

\begin{assumption} \label{th:anticanonical-divisor}
$M$ contains a smooth symplectic hypersurface $D$ (integrally) Poincar{\'e} dual to $c_1(M)$, such that the completed complement of $D$, which a priori is a Liouville manifold, is actually (finite type) Weinstein.
\end{assumption}

In the algebro-geometric context, if $M$ is Fano and $D$ is an anticanonical divisor, $M \setminus D$ is affine and hence automatically Weinstein. 

\begin{remark}
Still in algebraic geometry, it is known that every Fano manifold of complex dimension $\leq 3$ has a smooth anticanonical divisor \cite{shokurov79}. In complex dimensions $\leq 5$, it is known that anticanonical divisors exist (\cite[Theorem 5.2]{kawamata00} and \cite{hoering-smiech20}), and conjecturally the same is true in all dimensions \cite[Conjecture 2.1]{kawamata00}; however, there may not be any smooth ones (\cite[Example 2.12]{hoering-voisin11} or \cite[Example 2.9]{sano14}). Whether that failure of smoothness is relevant for our purposes remains unclear (there can be smoothings in the symplectic world which are precluded algebro-geometrically; for instance, deforming the Fano does not change its symplectic structure).
\end{remark}

Here's our main result towards Conjecture \ref{th:main-conjecture}, as well as an addendum which concerns a sharpened version of part (ii) of that conjecture.

\begin{theorem} \label{th:main}
Conjecture \ref{th:main-conjecture} is true for all manifolds $M$ satisfying Assumption \ref{th:anticanonical-divisor}.
\end{theorem}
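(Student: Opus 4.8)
The strategy is to convert the statement about the behaviour of $\nabla_{\partial_q}$ near $q=\infty$ into one in noncommutative Hodge theory, and then, via a Fourier--Laplace transform, to deduce it from the Petrov--Vaintrob--Vologodsky regularity theorem applied to a genuinely smooth and proper family of $A_\infty$-categories. The first step uses Assumption \ref{th:anticanonical-divisor} to realize $(H^*(M;\bC),\ast_q)$ as a deformation of the symplectic cohomology $SH^*(M\setminus D)$: there is a Borman--Sheridan-type class $\mathfrak b\in SH^*(M\setminus D)$, counting configurations meeting $D$ once, for which quantum cohomology is recovered from the $\mathfrak b$-deformed symplectic cohomology, with $q$ as deformation parameter. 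On the categorical level this should say that a $q$-linear curved deformation $\mathcal W_q$ of the wrapped Fukaya category $\mathcal W(M\setminus D)$ --- smooth because $M\setminus D$ is Weinstein, and satisfying $SH^*(M\setminus D)\cong HH^*(\mathcal W(M\setminus D))$ and, up to shift, $\cong HH_*(\mathcal W(M\setminus D))$ under generation results in the spirit of \cite{ganatra17} --- has Hochschild invariants specializing to those of $M$, the curvature $q\mathfrak b$ accounting for the difference. The cyclic open--closed map \cite{ganatra19}, together with its compatibility with connections (an extension to this relative setting of comparisons as in \cite{hugtenburg22} and the circle of ideas around \cite{amorim-tu22}), then identifies the quantum connection, written near $q=\infty$ as in \eqref{eq:big-q-pole}, with the $u$-connection on the periodic cyclic homology of the family $\mathcal W_q$.

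The heart of the argument is a purely algebraic statement: the periodic cyclic homology of the $q$-linear deformation $\mathcal W_q$, with its canonical connection, is the Fourier--Laplace transform of a $D$-module $N$ on the dual affine line $\mathbb A^1_\tau$, where $N$ is the periodic cyclic homology, with its Getzler--Gauss--Manin connection, of the family $\mathcal B_\tau$ obtained from $\mathcal W(M\setminus D)$ by curving with $\mathfrak b-\tau\cdot\mathrm{id}$. I would formulate and prove this as a general correspondence between "$q$-linear deformations'' and "curvature-by-a-constant'' families, matching the Getzler connection with $\partial_\tau$; the classical Fourier--Laplace dictionary --- regular singularities at finite points $\tau_i$ become an unramified exponential singularity at $q=\infty$ with exponential factors indexed by the $\tau_i$, and the regularized formal monodromy there is read off from the local monodromies --- is then exactly what produces both parts of Conjecture \ref{th:main-conjecture}. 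The essential point, and the second use of Assumption \ref{th:anticanonical-divisor}, is that although $\mathcal W(M\setminus D)$ is only smooth, each fibre $\mathcal B_\tau$ is smooth \emph{and} proper: curving by the constant $-\tau$ annihilates everything except the generalized $\tau$-eigenspace of the action of $\mathfrak b$, which models the closed manifold $M$ (equivalently, on the mirror side, the category of an isolated hypersurface singularity), hence is a proper category, nonzero only for $\tau$ in the finite set of eigenvalues of $[\omega_M]\ast$.

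Granting this, the family $\{\mathcal B_\tau\}$ is a family of smooth proper $A_\infty$-categories over the complement of that finite set, and extends suitably across each puncture, so the Petrov--Vaintrob--Vologodsky regularity theorem applies: $N$ is a regular holonomic $D$-module on $\mathbb A^1_\tau$ whose monodromy around each singular point is quasi-unipotent. The remaining singularity of $N$, at $\tau=\infty$, corresponds under Fourier--Laplace to $q=0$, where the quantum connection has its known regular singular point with nilpotent residue, so nothing new is needed there and no obstruction arises. Transporting the conclusions back through the correspondence of the preceding paragraph then shows at once that $\nabla_{\partial_q}$ has an unramified exponential singularity at $q=\infty$ with exponential factors indexed by the eigenvalues of $[\omega_M]\ast$, and that the regularized formal monodromies there are quasi-unipotent; this is Conjecture \ref{th:main-conjecture} for $M$, and with it the refinement of part (ii) promised in the addendum, since the exponents and local monodromies are now pinned down explicitly.

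I expect the principal difficulty to lie in the second step, for two reasons. First, constructing the categorical Fourier--Laplace transform with full control of the connections --- so that the transform of the Getzler connection is literally the quantum connection near $q=\infty$, including the grading-operator term in \eqref{eq:big-q-pole} --- requires care with the $u$-linear structures and with convergence/finiteness of the deformation. Second, and more seriously, proving properness of the fibres $\mathcal B_\tau$ amounts to showing that the curved deformation of the \emph{non-proper} category $\mathcal W(M\setminus D)$ by $\mathfrak b-\tau\cdot\mathrm{id}$ is Morita-equivalent to a proper category; this is precisely the phenomenon that makes a naive application of noncommutative Hodge theory to $M\setminus D$ fail and forces the Fourier--Laplace detour. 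A secondary, more technical obstacle is establishing the connection-compatibility of the cyclic open--closed map in the required relative generality in the first step, which the existing literature handles only under simplifying hypotheses.
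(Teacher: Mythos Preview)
Your overall architecture---deformed symplectic cohomology and wrapped Fukaya category, categorical Fourier--Laplace, Petrov--Vaintrob--Vologodsky, then classical Fourier--Laplace back---is the same as the paper's, and you have correctly located where the work lies. The genuine gap is in your smoothness and properness argument for the family $\{\mathcal B_\tau\}$.

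You claim each $\mathcal B_\tau$ is proper because ``curving by $-\tau$ annihilates everything except the generalized $\tau$-eigenspace of $\mathfrak b$''. This is not correct as stated: adding a constant curvature term to an $A_\infty$-category does not shrink its morphism complexes, and there is no evident Morita equivalence between $\mathcal B_\tau$ and $\mathit{Fuk}_\tau(M)$ (the paper explicitly remarks that the latter categories play no role in its argument). What the paper actually proves is that, after inverting a suitable polynomial $p(t)$, the \emph{family} $\scrA_{t,1/p}$ is proper over $\bC[t,1/p]$; this is a weaker statement than fibrewise properness and is obtained differently. Properness over the base comes from showing that $H^*(\scrA)$---wrapped Floer cohomology of the cocores---is a finitely generated $\bC[t]$-module under multiplication by the Borman--Sheridan class (Lemma~\ref{th:a-proper}, relying on \cite{ganatra-pomerleano20,pomerleano21}). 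Smoothness over the base (Proposition~\ref{th:smooth-2}) is a separate nontrivial fact: one shows that $q^r p([\kappa_{\scrA_q}])=0$ in $\mathit{HH}^*(\scrA_q)$ for some $r$, which requires knowing that $\bC[q^{\pm 1}]\otimes_{\bC[q]}\mathit{HH}^*(\scrA_q)$ is degreewise finite-dimensional---itself a consequence of \cite{pomerleano-seidel24} via the deformed closed-open map---and then a retract argument using a bar-type resolution $\bar Q\scrA_t$. Neither ingredient appears in your sketch, and your heuristic does not supply them.

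A secondary gap is the precise ordering of completions and localizations. The categorical Fourier--Laplace map (Theorem~\ref{th:noncommutative-fourier-transform}) has source the cyclic complex with $q^{-1}\bC[q^{-1}]$-coefficients, not $\bC[[q]]$, and because periodic cyclic homology does not commute with inverting $p(t)$, that localization must occur before inverting $u$. The paper tracks these manipulations through a derived category of $u$-complete $W_{q,u}$-modules (Section~\ref{subsec:u-theory}); getting this right is what allows one to show, via an acyclicity argument (Lemma~\ref{th:generically-acyclic}) and a holonomicity argument (Lemma~\ref{th:holonomic}), that the classical Fourier--Laplace results of Proposition~\ref{th:fourier} actually apply to the object one ends up with.
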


\begin{theorem} \label{th:main-2}
In the situation of Theorem \ref{th:main}, the regularized formal monodromies have the following property: any Jordan block for an eigenvalue $\neq 1$ is of size $\leq n = \mathrm{dim}_{\bC}(M)$ (meaning, the $\mathrm{dim}_{\bC}(M)$-th power of the nilpotent part is zero); and any Jordan block for the eigenvalue $1$ is of size $\leq n+1$.
\end{theorem}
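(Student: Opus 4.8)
The strategy is to refine the proof of Theorem \ref{th:main} rather than to re-run it. That proof, having realized $QH^*(M)$ as a deformation of $SH^*(M\setminus D)$ and passed to the associated family of $A_\infty$-categories over $\bA^1_t$, identifies via the categorical Fourier--Laplace transform the formal structure of the quantum connection at $q=\infty$: there is a Hukuhara--Levelt--Turrittin decomposition $\bigoplus_\lambda e^{\lambda q}\otimes R_\lambda$, the sum over eigenvalues $\lambda$ of $[\omega_M]\ast$, in which each regular-singular summand $R_\lambda$ carries a connection whose monodromy is the regularized formal monodromy $T_\lambda$ of the statement, and under the identification $R_\lambda$, with this monodromy, becomes the periodic cyclic homology (with its Getzler--Gauss--Manin $u$-connection) of a $\bZ/2$-graded $A_\infty$-category $\scrA_\lambda$ playing the role of the ``vanishing cycles'' of the family at $t=\lambda$ — equivalently, $T_\lambda$ is the local monodromy at $t=\lambda$ of the Fourier--Laplace dual holonomic $\scrD$-module $\mathit{HP}_*$ of the family. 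Theorem \ref{th:main} obtains part (ii) of Conjecture \ref{th:main-conjecture} by feeding this into the Petrov--Vaintrob--Vologodsky regularity theorem, which makes $T_\lambda$ quasi-unipotent; to prove Theorem \ref{th:main-2} I would feed the same data into a quantitative version of that theorem.

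Write $n=\mathrm{dim}_\bC(M)$. Both bounds come from the fact that the relevant categories are ``$n$-dimensional''. The vanishing-cycles category $\scrA_\lambda$ is homologically smooth and proper, and it is generated by objects supported on $n$-dimensional Lagrangians in $M$ (or in $M\setminus D$), whose Floer complexes have cohomological amplitude $\leq n$; consequently $\mathit{HH}_*(\scrA_\lambda)$, together with the weight filtration that the Petrov--Vaintrob--Vologodsky theorem attaches to it, is supported in a window of length $\leq 2n$. This finiteness is the quantitative content of Assumption \ref{th:anticanonical-divisor}: the Weinstein hypothesis is what lets the deformation/vanishing-cycles construction be carried out while keeping the amplitude controlled by the dimension (for the undeformed wrapped category of $M\setminus D$ the amplitude is generally infinite). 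The nilpotent part $N$ of the residue of the $u$-connection on $\mathit{HP}_*(\scrA_\lambda)$ lowers the weight by two, so $N^{\,n+1}=0$, and hence every Jordan block of $T_\lambda$ has size $\leq n+1$; this reproves Conjecture \ref{th:main-conjecture}(ii) with an effective bound, and is the bound claimed at the eigenvalue $1$.

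The improvement to $\leq n$ away from the eigenvalue $1$ is the familiar one-step asymmetry, in a limiting mixed Hodge structure (or its noncommutative counterpart), between the unipotent part of the monodromy and the rest: the monodromy weight filtration on the $T_\lambda$-eigenvalue-$1$ part of $\mathit{HH}_*(\scrA_\lambda)$ is centred one step higher than on the $T_\lambda$-eigenvalue-$\zeta$ parts with $\zeta\neq 1$ — exactly as, classically, the weight filtration on the vanishing cohomology $\tilde H^{n}(F)$ of an $n$-dimensional isolated hypersurface singularity is centred at $n+1$ on the $\zeta=1$ eigenspace of the semisimple monodromy and at $n$ on the others (Landman, Steenbrink, Varchenko). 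So on the $\zeta\neq 1$ summands the weight filtration is supported in a window of length only $n$, giving $N^{\,n}=0$ there and Jordan blocks of size $\leq n$. One must also confirm that the normalizing twist and degree shift implicit in the categorical Fourier--Laplace transform (and in passing between the $u$-connection of $\scrA_\lambda$ and the Milnor-type monodromy) do not move the distinguished eigenvalue away from $1$; this is a bookkeeping matter, settled by the conventions for the open--closed and PSS maps fixed in the proof of Theorem \ref{th:main}.

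The main obstacle is obtaining the Petrov--Vaintrob--Vologodsky input in the precise, quantitative, and functorial form just used: not merely that the Getzler--Gauss--Manin connection on $\mathit{HP}_*$ is regular with quasi-unipotent monodromy, but that the nilpotency index is bounded by the amplitude of the categories in the family, together with the one-step refinement distinguishing the $\zeta=1$ and $\zeta\neq 1$ eigenspaces — a noncommutative analogue of the Landman--Steenbrink--Varchenko estimates. A secondary, purely $A_\infty$-categorical difficulty is the amplitude bookkeeping for $\scrA_\lambda$ itself: one must check that the bulk class of $D$ and the vanishing-cycles/Fourier--Laplace construction interact with the length filtration on the wrapped Fukaya category so as not to inflate the amplitude beyond the dimension, and this genuinely uses the Weinstein hypothesis rather than merely the existence of a smooth anticanonical divisor.
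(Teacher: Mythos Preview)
Your overall strategy is right---PVV's quantitative Jordan block bound on the $t$-side, then translated via Fourier--Laplace---but you have misidentified where the one-step asymmetry between eigenvalue $1$ and eigenvalue $\neq 1$ comes from, and your amplitude bookkeeping is off by one.

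First, the amplitude. The paper does not use an estimate on the Floer amplitude of individual Lagrangians, nor any vanishing-cycles category $\scrA_\lambda$. It uses the open-closed isomorphism $\mathit{HH}_*(\scrA)\cong \mathit{SH}^{*+n}(\hat{N})$ for the wrapped category of $M\setminus D$, together with the fact (Lemma~\ref{th:boundedness}) that for a suitable Hamiltonian $\mathit{SH}^*(\hat{N})$ is supported in $[0,2n-1]$. Hence $\mathit{HH}_*(\scrA)$ is supported in degrees $\leq n-1$. Feeding $d=n-1$ into Corollaries~\ref{th:degree-bound} and~\ref{th:unipotence-bound} (which is \cite[Theorem~8]{petrov-vaintrob-vologodsky18}) gives Jordan blocks of size $\leq n$ \emph{uniformly}, for all eigenvalues of the monodromy of the Getzler--Gauss--Manin connection on the $t$-side. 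Your ``window of length $\leq 2n$'' would only give $\leq n+1$ here, which is one step too weak.

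Second, and more importantly, the eigenvalue-$1$ asymmetry is not a Hodge-theoretic or weight-filtration phenomenon on the categorical side; no noncommutative Landman--Steenbrink--Varchenko estimate is needed (or known). It is entirely classical. The regularized formal monodromy at $q=\infty$ is not literally the local monodromy of the dual $D$-module at $t=\sigma$: by Proposition~\ref{th:fourier}(ii) the two are related as $\exp(-2\pi i\, U_\sigma V_\sigma)$ versus $\exp(-2\pi i\, V_\sigma U_\sigma)$. Flanders' elementary linear algebra \cite{flanders51} (packaged as Corollary~\ref{th:quasiunipotent}) then says these are conjugate on generalized $\alpha$-eigenspaces for every $\alpha\neq 1$, while on the $\alpha=1$ eigenspace Jordan block sizes can change by at most $1$. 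Combined with the uniform $\leq n$ bound on the $t$-side, this gives exactly the statement: $\leq n$ for $\alpha\neq 1$, $\leq n+1$ for $\alpha=1$. Your proposed route would require the very noncommutative LSV input you yourself flag as the main obstacle; the paper's route avoids it entirely.
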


We now describe the main ideas in informal terms. One point worth making is that, even though we have focused on Conjecture \ref{th:main-conjecture} as a concrete goal, our approach is based on a more comprehensive understanding of the quantum connection.
\begin{itemize} \itemsep.5em \parskip.5em \parindent0em
\item On the side of algebraic differential equations, or slightly more generally holonomic $D$-modules, the key concept is Fourier-Laplace transform, which turns $q$-differentiation into multiplication by the dual variable $t$. Classical work of Malgrange \cite{malgrange} describes how this affects singularities. 
In particular, if a $D$-module has only regular singularities in $t$, then its Fourier-Laplace transform has an irregular singularity of unramified exponential type at $q = \infty$. Moreover, the regularized formal monodromy of that singularity is related to the monodromy of the original $D$-module around the singular values of $t$; see Section \ref{subsec:classical}.

Note that the insights in \cite{malgrange} go further, relating the Stokes structure at $q = \infty$ to the global monodromy representation of the $t$-connection (there is an alternative, more topological, picture of this in \cite{dagnolo}). At present, it is not clear what implications this has for the quantum connection, because noncommutative geometry gives only limited information about the relevant monodromy representation (see the last point below).

\item Before we talk about the Fourier-Laplace transform of the quantum connection, it is appropriate to extend that connection over $q = 0$ as a holonomic $D$-module. The reason why this is sensible is that invertibility of the variable $q$ corresponds, under Fourier-Laplace transform, to invertibility of the differentiation operator $\nabla_{\partial_t}$, which is not a geometrically natural notion. We accomplish the $q = 0$ extension by replacing quantum cohomology with a relative version, defined using a $q$-deformation of ($S^1$-equivariant) symplectic cohomology of the divisor complement. The idea to recover quantum cohomology in this way was proposed by Borman and Sheridan, and has been realized in various version \cite{borman-sheridan-varolgunes21, el-alami-sheridan24b, pomerleano-seidel24}; the last is what we use here. 

From a more general perspective, deformed $S^1$-equivariant symplectic cohomology is built out of three pieces of structure. One is the $L_\infty$-algebra structure on the chain complex underlying symplectic cohomology; the second is the structure of the equivariant theory as an $L_\infty$-module; and the third is a Maurer-Cartan element in the $L_\infty$-algebra, which describes the deformation itself. The relevant Hamiltonian (closed string) Floer theory is explained in Section \ref{sec:closed-string}, building on technical preliminaries in Section \ref{sec:floer} and a summary of the algebra in Section \ref{subsec:linfty}. (There is a large amount of overlapping literature in this direction: the technical setup is similar to \cite{abouzaid10, ganatra13}; our treatment of $S^1$-equivariant Floer cohomology follows \cite{ganatra19}; and the $L_\infty$-algebra structure is considered in \cite{abouzaid-groman-varolgunes, borman-el-alami-sheridan24}.)

\item At this point, we pass to the categorical (open string) side, replacing $S^1$-equivariant deformed symplectic cohomology by the cyclic homology of a $q$-deformation of the wrapped Fukaya category of the divisor complement, with its Getzler-Gauss-Manin connection. The transition is explained in Section \ref{sec:open-string}, and the required algebraic definitions are summarized in Section \ref{subsec:ainfty}. (This also builds on previous ideas, notably \cite{ganatra13} for the $S^1$-equivariant open-closed map.)

There are two reasons for using the $q$-deformed wrapped Fukaya category of the complement, rather than (as previously suggested) the ordinary Fukaya category of the closed symplectic manifold. The first and more fundamental reason is that the latter object only ``sees'' the singularities of the Fourier-Laplace transformed connection, and not the entire connection. This is best understood in terms of mirror symmetry: the ordinary Fukaya category corresponds to the category of Landau-Ginzburg $D$-branes, which is only sensitive to the singular fibres of the superpotential (and after idempotent closure, only to the formal neighbourhood of the singular points \cite{orlov}). That limited view could still be sufficient to extract the information required for Conjecture \ref{th:main-conjecture}; but it stands in the way of applying the Fourier-Laplace transform as we want to do it, and also limits possible further developments. The second, more technical, reason is that we can treat the deformed wrapped Fukaya category as a perturbation of its $q = 0$ reduction, which is known to have good properties. In particular, thanks to \cite{gps2} we know that the open-closed map for the wrapped Fukaya category is an isomorphism, and this carries over to the deformed version.

\item We started with quantum cohomology, passed to deformed $S^1$-equivariant symplectic cohomology, and then interpreted that as cyclic homology of a deformed category. There is one more step, which is to replace those deformed theories by versions which involve only negative powers of $q$. The effect is best formulated from a Fourier-Laplace dual perspective: the two versions agree away from finitely many values of $t$ (see Section \ref{subsec:finiteness-results}).

The reason for this change is that the $q$-negative version fits in with a new algebraic development, the categorical Fourier-Laplace transform. Roughly speaking, the idea is that a formal (curved) $q$-deformation of a dg or $A_\infty$-algebra, with a parameter $q$ of degree $2$, can be dually understood as providing that algebra with a $\bC[t]$-structure, which means viewing it as the total space in a one-parameter family of ``fibre algebras'' depending on $t$. The $q$-negative version of Hochschild, or cyclic, homology of the deformation is then isomorphic to the corresponding fibrewise (taken over $\bC[t]$) homology of the family. This theory is developed in Section \ref{subsec:dga}, in a classical context (curved deformation of a dga by a central element), and then carried over to a more general situation in Section \ref{subsec:fiber}.

\item At this point, we have a family of $A_\infty$-algebras parametrized by $t$, produced from the deformed wrapped Fukaya category by the categorical Fourier-Laplace transform. This family is the noncommutative analogue of the mirror Landau-Ginzburg model. It inherits good cohomological properties: it is fibrewise proper and, away from finitely many values of $t$, fibrewise smooth (the last-mentioned property is analogous to saying, in a mirror symmetry context, that the superpotential has only finitely many critical values). The final and crucial ingredient is Petrov-Vaintrob-Vologodsky's regularity theorem for the Getzler-Gauss-Manin connection of a smooth family \cite{petrov-vaintrob-vologodsky18}, stated in Section \ref{subsubsec:monodromy}: it shows that the singularities are regular, and each has quasi-unipotent monodromy. This replaces what, in a mirror symmetry approach, would be the use of the monodromy theorem for Gauss-Manin connections. The result of \cite{petrov-vaintrob-vologodsky18} also includes a Jordan block bound (again generalizing the classical geometric theory), which leads to Theorem \ref{th:main-2}. 

The proof in \cite{petrov-vaintrob-vologodsky18} involves reduction to characteristic $p$, and the construction of a mod $p$ Fontaine-Laffaille structure. One might expect that this is only the first truncation of a richer $p$-adic structure. The analogy here is with $p$-adic Hodge theory in algebraic geometry, where that structure can be used to recover the fibrewise cohomology as a local system with $\bZ_p$-coefficients \cite{faltings}. It remains to be seen to what extent similar ideas can work in noncommutative geometry; in principle, the theory constructed here could be used to carry any resulting insights over to the quantum connection.
\end{itemize}

\subsection{A more technical description}
We need to introduce a different way of writing the quantum connection, which is closer to what happens in both noncommutative and symplectic geometry. The formal variables $q$ and $u$ have already been mentioned, but the latter only as part of the separate discussion of \eqref{eq:cyclic-oc-lambda}. Both will now be used together, and they will be given degree $2$. On $H^*(M;\bC)[q^{\pm 1},u^{\pm 1}]$ consider the $\bZ$-graded endomorphism 
\begin{equation} \label{eq:quantum-connection-with-u}
\nabla_{u\partial_q} x = u \partial_q x + q^{-1}([\omega_M] \ast_q x).
\end{equation}
The part of $H^*(M;\bC)[q^{\pm 1},u^{\pm 1}]$ in any given degree $d$ is isomorphic to $H^{d \mymod 2}(M;\bC)[q^{\pm 1}]$ (by setting $u = 1$). Under that isomorphism, \eqref{eq:quantum-connection-with-u} corresponds to \eqref{eq:original-quantum}.

\begin{remark} \label{th:explain-lattice}
It is instructive to look at \eqref{eq:quantum-connection-with-u} without inverting $u$, since that provides a more organic explanation for the gauge transformation \eqref{eq:big-q-pole-0}. Take the degree zero parameter $\bar{q} = q/u$, so that $H^*(M;\bC)[q^{\pm 1},u] \iso H^*(M;\bC)[q^{\pm 1},\bar{q}^{-1}]$. As before, setting $q = 1$ yields an identification between the degree $d$ part of $H^*(M;\bC)[q^{\pm 1},\bar{q}^{-1}]$ and $H^{d \mymod 2}(M;\bC)[\bar{q}^{-1}]$, which turns \eqref{eq:quantum-connection-with-u} into
\begin{equation} \label{eq:weird-scaling}
\nabla_{\partial_{\bar{q}}} x = \partial_{\bar{q}}x + \bar{q}^{-1} \textstyle\frac{d-j}{2}x +
[\omega_M] \ast x
\quad \text{for } x \in H^j(M;\bC)[\bar{q}^{-1}].
\end{equation}
Different $d$ correspond to different choices of the grading operator in \eqref{eq:big-q-pole-0}. Passing to $\bar{Q} = \bar{q}^{-1} = u/q$ yields the counterpart of \eqref{eq:big-q-pole}, which is 
\begin{equation} \label{eq:qq-pole}
\nabla_{\partial_{\bar{Q}}} x = \partial_{\bar{Q}}x - \bar{Q}^{-2} ([\omega_M] \ast x) + \bar{Q}^{-1} \textstyle\frac{j-d}{2} x
\quad \text{for } x \in H^j(M;\bC)[\bar{Q}].
\end{equation}
\end{remark}

The proof of Theorems \ref{th:main} and \ref{th:main-2} centers on two objects. On the closed string side, we have the deformed $S^1$-equivariant symplectic cohomology of $M \setminus D$, denoted by $H_{q,u}$. This is a graded $\bC[[q,u]]$-module. Informally speaking, $q$ counts intersections with the divisor $D$; the actual (equivalent) implementation is that holomorphic curves going through $D$ are encoded in a Maurer-Cartan element which has $q$ as a variable, and the deformation then goes via inserting that element at additional interior marked points. Actually, for degree reasons (Lemma \ref{th:boundedness}) only the first order term of the Maurer-Cartan element, which we call the Borman-Sheridan cocycle, is nontrivial. By definition, setting $q = 0$ recovers the $S^1$-equivariant symplectic cohomology of $M \setminus D$ (setting $u = 0$ recovers the non-equivariant theory). The main result of \cite{pomerleano-seidel24} says that inverting $q$ yields a space isomorphic to $H^*(M)[q^{\pm 1},u]$. Moreover, that isomorphism relates \eqref{eq:quantum-connection-with-u} to a Floer-theoretically constructed connection (originally defined in \cite{seidel18}; we give an independent account here, in Section \ref{section:connection}).

On the open string side, we have the deformed wrapped Fukaya category of $M \setminus D$, denoted by $\scrA_q$. The same observation applies here: the $q = 0$ part is the standard wrapped category, and the deformation is defined by inserting the Maurer-Cartan element (really the Borman-Sheridan cocycle) at extra marked points; in this context, that is simpler than thinking directly about holomorphic curves going through $D$. The categorical Fourier-Laplace transform (Theorem \ref{th:noncommutative-fourier-transform} for dga's, later extended using a suitable quasi-isomorphism argument) relates $\scrA_q$ to a different object $\scrA_t$, which is an $A_\infty$-category over $\bC[t]$, with the variable $t$ having degree $0$. When considered just as $\bC$-linear, this $\scrA_t$ is quasi-isomorphic to the ordinary wrapped Fukaya category (no deformation), hence smooth by general properties of Weinstein manifolds. In our specific geometric situation, $\scrA_t$ is also proper over $\bC[t]$ (Lemma \ref{th:a-proper}). We consider $\scrA_{t,1/p} = \bC[t,1/p] \otimes_{\bC[t]} \scrA_t$ for some polynomial $p = p(t)$. Unlike ordinary algebraic geometry, it's not a priori clear that in noncommutative geometry, smoothness over $\bC$ implies smoothness over $\bC[t,1/p]$ for suitable $p$; we apply a specific smoothness criterion (Proposition \ref{th:smooth-2}), which in our case can be shown to hold via a computation in deformed symplectic cohomology (Lemma \ref{th:exploit-a}). This is needed in order to apply \cite{petrov-vaintrob-vologodsky18} (the precise result we use is Corollary \ref{th:end-of-algebra}, whose statement includes the smoothness criterion we just mentioned).

Throughout the argument, completeness with respect to $u$, which is a necessary feature of cyclic homology, is the main technical problem that forces constructions to be carried out in a particular order. Notably, taking cyclic homology does not commute with inverting $p(t)$. To avoid that issue, tensoring with $\bC[t,1/p]$ has to take place much earlier in the argument, while we are still on the closed string side. We also choose that point to pass to negative powers of $q$, which is a requirement of the categorical Fourier-Laplace transform. This means that the cyclic homology of $\scrA_q$ appears only in heavily modified form. 
\begin{figure}[t!]
\begin{centering}
\xymatrix{
H^*(M;\bC)[q^{\pm 1},u]  \ar@{<->}[d]_-{\iso}
\\
\bC[q^{\pm 1}] \otimes_{\bC[q]} H_{q,u}  \ar[r]^-{\text{invert $u$}}
&
\bC[q^{\pm 1},u^{\pm 1}] \otimes_{\bC[q,u]} H_{q,u}
&
\ar@{-->}[l] \fbox{\parbox{7.5em}{\scriptsize over $\bC[(q/u)^{\pm 1}]$, this is a vector bundle with connection $\nabla_{u\partial_q}$}}
\\
H_{q,u} \ar[u]_{\text{invert $q$}} \ar[d]^-{\text{invert $p(t)$}}
\ar[r]^-{\text{invert $u$}} 
&
\bC[u^{\pm 1}] \otimes_{\bC[u]} H_{q,u} 
\ar[u]_-{\text{invert $q$}}
\ar[d]^-{\text{invert $p(t)$}}
& 
\ar@{-->}[l] \fbox{\parbox{7.5em}{\scriptsize this is a holonomic $D$-module in each degree}}
\\
\bC[t,1/p] \otimes_{\bC[t]} H_{q,u}  
\ar[r]^-{\text{invert $u$}}
&
\bC[t,1/p,u^{\pm 1}] \otimes_{\bC[t,u]} H_{q,u}
\ar@{<->}[ddd]^-{\iso}
& 
\fbox{\parbox{7.5em}{\scriptsize over $\bC[t,1/p(t)]$, $t = \nabla_{u\partial_q}$, this is a vector bundle with connection $\nabla_{\partial_t} = -q/u$}}
\ar@{-->}[l]
\\ 
q^{-1}H_{q^{-1},1/p,u}
\ar[u]_-{\parbox{7.5em}{\scriptsize acyclicity of the\\ mapping cone}}^-{\iso}
\\ 
H\big(\bC[t,1/p] \hat\otimes_{\bC[t]} q^{-1}\bC[q^{-1}] \hat\otimes_{\bC[[q]]} \mathit{CC}_*(\scrA_q)\big) 
\hspace{-6em}
\ar[u]_-{\parbox{7.5em}{\scriptsize open-closed map}}^-{\iso}
\ar[d]^-{\parbox{7.5em}{\scriptsize categorical\\ Fou\-rier-La\-place}}_-{\iso}
\\
 \mathit{HC}_*(\scrA_{t,1/p}) \ar[r]^-{\text{invert $u$}} & \mathit{HP}_*(\scrA_{t,1/p}) & \ar@{-->}[l] \fbox{\parbox{7.5em}{\scriptsize this has regular singularities with quasi-uni\-po\-tent monodromy}}
}
\caption{\label{fig:diagram}Summary of the cohomology groups appearing in our argument. 
}
\end{centering}
\end{figure}%

Figure \ref{fig:diagram} shows the various groups and isomorphisms involved. A quick walkthrough:
\begin{itemize} \itemsep.5em
\item start with the ordinary cohomology of $M$ at the top, and then write that as the $q$-inverted version of $H_{q,u}$.

\item $H_{q,u}$ becomes particularly simple to understand after inverting $u$, where we can prove that it is a holonomic $D$-module under the action of $q/u$ and $\nabla_{u\partial_q}$ (Lemma \ref{th:holonomic}).

\item The Fourier-Laplace transform for $\bC[u^{\pm 1}] \otimes_{\bC[u]} H_{q,u}$ consists of renaming variables as $t = \nabla_{u\partial_q}$ and $\nabla_{\partial_t} = -q/u$. As a general property of holonomic $D$-modules, inverting some polynomial $p(t)$ then yields a vector bundle in $t$, on which $\nabla_{\partial_t}$ is a connection. We already apply $p(t)$-inversion to $H_{q,u}$ itself. Throughout the subsequent argument, there are several point where we will make this initially chosen polynomial larger (by multiplying it with other polynomials).

\item Next, we pass to negative powers of $q$, which means tensoring with the $\bC[[q]]$-module $q^{-1}\bC[q^{-1}] = \bC((q))/\bC[[q]]$; this is required in order to apply the categorical Fourier-Mukai transform. It gives rise to the group denoted by $q^{-1}H_{q^{-1},1/p.u}$ in Figure \ref{fig:diagram}. Assuming a suitable choice of $p(t)$, the natural map will be an isomorphism.

\item We use the cyclic open-closed map for the deformed wrapped Fukaya category $\scrA_q$, which leads us to negative cyclic homology; again, in a version with negative powers of $q$ and with $p(t)$ inverted.

\item At this point, the categorical Fourier-Laplace transform kicks in and allows us to re-interpret cyclic homology as that of the $\bC[t]$-linear $A_\infty$-category $\scrA_t$. Again, we apply this in a version with $p(t)$ inverted, which means that the relevant structure is $\scrA_{t,1/p}$. 

\item In the $u$-inverted version (right column), we now have periodic cyclic homology, which is where the theory of \cite{petrov-vaintrob-vologodsky18} works.
\end{itemize}
At this point, some of the notation in Figure \ref{fig:diagram} is as yet incompletely explained. We will give another summary of the argument towards the end of the paper, in Remark \ref{th:summary}, when all the theory has been set up.

\subsection{Conventions and notation\label{subsec:conventions}}

\begin{itemize} \itemsep1em
\item[(a)] For formal variables, our convention is that they are supercommuting. In particular, if $\bK$ is a field and $\epsilon$ is an odd degree formal variable, then $\epsilon^2 = 0$, so that $\bK[\epsilon] = \bK \oplus \bK\epsilon$. Any formal variable has a corresponding derivation. In the odd case, this is the endomorphism of $\bK[\epsilon]$ defined by $\partial_\epsilon(\epsilon) = 1$, $\partial_\epsilon(1) = 0$. 

\item[(b)] Let $q$ be a formal variable of even degree. Given a graded $\bK$-vector space $V$, we use the shorthand notation
\begin{equation}
V[q^{-1}] \stackrel{\mathrm{def}}{=} V((q))/qV[[q]] = V[[q]] \otimes_{\bK[[q]]} (\bK((q))/q\bK[[q]]).
\end{equation}
One can think of the elements of $V[q^{-1}]$ as polynomials in $q^{-1}$ with coefficients in $V$, with the proviso that multiplication by $q$ acts as zero on the constant term. We will also encounter a slight modification, namely $q^{-1}V[q^{-1}] = V((q))/V[[q]]$.

\item[(c)] Throughout the discussion of algebraic structures, $|a|$ is the degree of an element, and $\|a\| = |a|-1$. The sign convention for $A_\infty$-algebras is that
\begin{equation} \label{eq:associativity}
\sum_{ij} (-1)^{\|a_1\|+\cdots+\|a_i\|} \mu_{\scrA}^{d-j+1}(a_1,\dots,a_i,\mu_{\scrA}^j(a_{i+1},\dots,a_{i+j}),\dots,a_d) = 0.
\end{equation}
For a strictly unital $A_\infty$-algebra, the unit $e_{\scrA}$ satisfies 
\begin{equation} \label{eq:strict-unit}
\begin{aligned}
& \mu^2_{\scrA}(e_{\scrA},a) = a, \quad \mu^2_{\scrA}(a,e_{\scrA}) = (-1)^{|a|} a, \\
& \mu^d_{\scrA}(\dots,e_\scrA,\dots) = 0 \text{ for all $d \neq 2$.}
\end{aligned}
\end{equation}
A differential graded algebra becomes an $A_\infty$-algebra by setting
\begin{equation} \label{eq:diff-a-infinity}
\mu_{\scrA}^1(a) = da, \;\; \mu^2_{\scrA}(a_1,a_2) = (-1)^{|a_1|} a_1a_2.
\end{equation}
For $A_\infty$-categories, we write the morphism spaces as $\scrA(X_0,X_1)$, and also use the shorthand notation 
\begin{equation} \label{eq:multi-a}
\scrA(X_0,\dots,X_d) = \begin{cases} 
\scrA(X_0,X_1) \otimes \cdots \otimes \scrA(X_{d-1},X_d) & d > 0, \\
\bK & d = 0.
\end{cases}
\end{equation}
As already indicated by this, the composition of morphisms is written in reverse order from the classical one; so the $A_\infty$-operations are
\begin{equation}
\begin{aligned}
&
\mu_{\scrA}^d: \scrA(X_0,\dots,X_d) \longrightarrow \scrA(X_0,X_d)[2-d], 
\\
&
\mu_{\scrA}^d(a_1,\dots,a_d) \in \scrA(X_0,X_d)\;\; \text{ for } a_k \in \scrA(X_{k-1},X_k).
\end{aligned}
\end{equation}
In the geometric application to Fukaya categories, the marked points and Lagrangians are ordered as in Figure \ref{fig:ordered-discs}.
\begin{figure}
\begin{centering}
\includegraphics{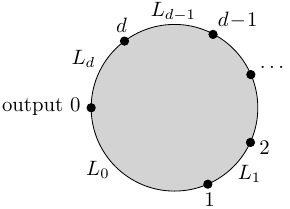}
\caption{\label{fig:ordered-discs}A disc with boundary punctures, showing the ordering convention in the definition of the Fukaya $A_\infty$-operation $\mu^d(a_1,\dots,a_d)$.}
\end{centering}
\end{figure}

\item[(d)] We write $\mathit{HH}_*(\scrA)$, $\mathit{HH}^*(\scrA)$, $\mathit{HC}_*(\scrA)$, $\mathit{HP}_*(\scrA)$ for, respectively, Hochschild homology, Hochschild cohomology, negative, and periodic cyclic homology (note that in spite of the subscript, the grading on Hochschild and cyclic homology is still cohomological). The notation for the underlying standard chain complexes is $C_*(\scrA)$, $C^*(\scrA)$, $\mathit{CC}_*(\scrA)$, and lastly $\mathit{CC}_*(\scrA) \otimes_{\bK[u]} \bK[u^{\pm 1}]$; we'll actually use several variants of those complexes, and the notation will be modified accordingly. 

\item[(e)] The formal variable $u$ which appears in $S^1$-equivariant Floer cohomology and in cyclic homology agrees with the convention in \cite{getzler95}, meaning that the sign is opposite of that in \cite{ganatra-perutz-sheridan15, sheridan20}; see \cite[Remark 3.19]{sheridan20}. This choice of sign is already visible in the definition of the quantum connection, compare \eqref{eq:quantum-connection-with-u} and \cite[Definition 3.1]{ganatra-perutz-sheridan15}.

\item[(f)] On a symplectic manifold $(M,\omega_M)$, the Hamiltonian vector field $X$ of a function $H$ satisfies $\omega_M(\cdot,X) = dH$. The Poisson bracket is $\{H_1,H_2\} = -\omega_M(X_1,X_2)$. 

\item[(g)] In the context of Floer cohomology, or more generally Cauchy-Riemann equations for maps $u:S \rightarrow M$, we use the following notation. $(S,j_S)$ is a Riemann surface; $J_S$ is a family of almost complex structures on $M$ parametrized by $S$; and $K_S$ is a one-form on $S$ with values in $\smooth(M,\bR)$, which is used to define the inhomogeneous term $Y_S$ in the Cauchy-Riemann equation, see \eqref{eq:k-y}. If $S$ has boundary, the boundary conditions $L_S$ are a family of Lagrangian submanifolds of $M$ parametrized by points of $\partial S$. For Hamiltonian Floer cohomology, which lives on a cylinder $S = \bR \times S^1$, we follow the usual convention that $S^1 = \bR/\bZ$.

\item[(h)] Operations in Floer cohomology are defined using a variety of parameter spaces (moduli spaces) of Riemann surfaces, the main ones of which are listed in Figure \ref{fig:parameter}.
\end{itemize}
\begin{figure}
\begin{centering}
{\renewcommand{\arraystretch}{2}%
\begin{tabular}{p{4em}|p{16em}|p{10em}}
notation & objects parametrized & algebraic operation
\\ \hline
$\frakF_m$ & 
\parbox{16em}{
points in the plane (Fulton-Mac\-Pherson); Section \ref{section:FMd}} 
& $\ell^m$ and $\delta_q^{\mathrm{diag}}$; Sections \ref{section:Linf}, \ref{sec:q-deformed}
\\ \hline
$\frakC_m$ & points on the cylinder; Section \ref{section:FM-cylinder} & $\ell^{m,1}$ and $\delta_q$; Sections \ref{section:linftymod}, \ref{sec:q-deformed} 
\\ \hline
$\frakA_r$ & cylinders with angles; Section \ref{section:angles} & $\delta_{S^1}$; Section \ref{subsubsec:s1-equivariant}
\\ \hline
$\AC_{m,r}$ & cylinders with angles and points; Section \ref{section:anglesFM} & $\ell^{m,1}_{S^1}$ and $\delta_{S^1,q}$;  Sections \ref{sec:s1module}, \ref{sec:q-deformed} 
\\ \hline
$\AC_{m,r,w}^{(A)}$, $\AC_{m,r,w}^{(B)}$
& same as before, but with constraints on one interior marked point; Sections \ref{section:cartan-A}--\ref{section:cartan-B}
& $KH_{(A)}$, $KH_{(B)}$; Section \ref{section:connection}
\\ \hline
$\frakR_d$ & 
points on the boundary of the disc (Fukaya-Stasheff); Section \ref{sec:pointeddiscs} & $\mu^d$; Section \ref{subsubsec:Fukaya}
\\ \hline
$\frakR_{d,m}$ & discs with boundary and interior marked points; Section \ref{subsubsec:with-interior}
& $\mu_q^d$; Section \ref{section:deform-fuk}
\\ \hline
$\frakR_{d,m}^{\pm}$, $\frakR_{d,m}^0$ &
same as before, but with constraints on one of the interior points; Section \ref{subsubsec:half-strip} & $GM_{\pm}$, $GM_0$; Section \ref{subsubsec:half-strip}
\\ \hline
$\AH_{d,m,r}$ & half-cylinders with angles and points; Section \ref{subsubsection:angledecoratedhalf} & plays an auxiliary role, to define the spaces below
\\ \hline
$\AH_{d,m,r}^{(1)}$, $\AH_{d,m,r}^{(2)}$ & subsets of $\AH_{d,m,r}$; Sections \ref{subsubsection:ocs1}--\ref{subsubsection:ocs2} &
$\mathit{OC}_{S^1,q,(1)}$, $\mathit{OC}_{S^1,q,(2)}$; Section \ref{section:deformed-oc}
\\ \hline
$\AH_{d,m,r,w}^{(A1)}$ etc.\ &
same as before, but with constraints on one interior marked point; Sections \ref{sec:A1}--\ref{sec:B2}
&
$\mathit{IT}_{(A1)}$ etc.\
Sections \ref{sec:A1}--\ref{sec:B2}
\end{tabular}
}
\caption{Notation for the most relevant moduli spaces of Riemann surfaces, and the Floer-theoretic operations they give rise to.\label{fig:parameter}}
\end{centering}
\end{figure}%

{\em Acknowledgements.} The idea of Fourier-Laplace transform, as applied to $S^1$-equivariant symplectic cohomology, was first mentioned to the second author by Nicholas Sheridan. We thank Kai Hugtenburg and Claude Sabbah for very useful explanations of their work. Both authors were partially funded by the Simons Collaboration in Homological Mirror Symmetry (Simons Foundation award 652299). The first author additionally received partial funding from NSF grant DMS-2306204. The second author additionally received partial funding as a Simons Investigator (Simons Foundation award 256290); from NSF grant DMS-1904997; and during a visit to the Simons-Laufer Mathematical Sciences Institute, from NSF grant DMS-1928930.

\section{Algebraic differential equations}

The first aim of this section is to situate our statements in the context of the classical algebraic theory of linear differential equations, of which a rather selective account is given in Section \ref{subsec:classical} (see \cite{malgrange}; the authors have found \cite[Ch.~II and V]{sabbah-isomonodromic} to be a helpful introduction to the subject). This is followed by a short digression on Gauss-Manin connections in algebraic geometry, Section \ref{subsec:classical-gauss-manin}, which is not necessary for our purpose, but may provide some helpful intuition. Finally, in Section \ref{subsec:u-theory}, we do some preliminary work that's required for the noncommutative geometry version of the Gauss-Manin connection.

\subsection{Classical theory\label{subsec:classical}}

\subsubsection{Formal classification of singularities\label{subsec:connections}}
The simplest aspect of the theory of algebraic connections is the formal (in the sense of Laurent series) one. One considers 
\begin{equation} \label{eq:formal-connection}
\nabla_{\partial_q} = \partial_q + A_q, \;\; \text{ where }\;\; 
A_q = \sum_{k=m}^{\infty} A_k q^k \in \mathit{Mat}_r(\bC((q))).
\end{equation}
Such connections are acted on by formal gauge transformations $G_q \in \mathit{GL}_r(\bC((q)))$:
\begin{equation}
\tilde\nabla_{\partial_q} = G_q^{-1} \nabla_{\partial_q} G_q = \partial_q + \tilde{A}_q, \;\;\text{ where }
\tilde{A}_q = G_q^{-1}A_qG_q + G_q^{-1}(\partial_q G_q).
\end{equation}
Occasionally, we will also use the subgroup
\begin{equation} \label{eq:small-g}
I + q\mathit{Mat}_r(\bC[[q]]) \subset \mathit{GL}_r(\bC((q)))
\end{equation}
of gauge transformations which have no poles and constant term equal to the identity. The formal classification of connections is completely understood (for expositions see e.g.\ \cite[Sections II.5 and III.1]{malgrange}, \cite[Sections II.2 and II.5]{sabbah-isomonodromic}, or \cite{babbitt-varadarajan83}). We will only use a small part of that theory, covering the simplest three classes: nonsingular connections; ones with a regular singular point; and singularities of unramified exponential type.

\begin{definition}
One says that $\nabla_{\partial_q}$ is nonsingular if, by a formal gauge transformation, it can be brought into a form where $\tilde{A}_q \in \mathit{Mat}_r(\bC[[q]])$. 
\end{definition}

This means that any apparent pole can be transformed away. After that, one can formally integrate to trivialize the connection:
\begin{lemma}
Every connection without a pole, meaning with $m = 0$ in \eqref{eq:formal-connection}, is equivalent by a gauge transformation in \eqref{eq:small-g} to the trivial one, $\tilde{A}_q = 0$.
\end{lemma}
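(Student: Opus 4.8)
The plan is to construct the gauge transformation directly by solving an ODE. Since we want $\tilde{A}_q = 0$, the transformation law $\tilde{A}_q = G_q^{-1}A_qG_q + G_q^{-1}(\partial_q G_q)$ reduces to
\[
\partial_q G_q + A_q G_q = 0, \qquad G_q \in I + q\,\mathit{Mat}_r(\bC[[q]]),
\]
so it is enough to produce a formal power series solution $G_q$ with constant term $G_0 = I$ and to observe that such a $G_q$ is automatically invertible (a matrix power series with invertible constant term lies in $\mathit{GL}_r(\bC[[q]])$), hence belongs to the subgroup \eqref{eq:small-g}.

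First I would set up the recursion. Write $G_q = \sum_{k\geq 0} G_k q^k$ with $G_0 = I$ and $A_q = \sum_{k\geq 0} A_k q^k$ (using $m = 0$). Comparing coefficients of $q^k$ in $\partial_q G_q = -A_q G_q$ yields
\[
(k+1)\,G_{k+1} = -\sum_{i+j=k} A_i G_j,
\]
which determines $G_{k+1}$ uniquely in terms of $G_0,\dots,G_k$. Starting from $G_0 = I$, this gives the $G_k$ inductively, and hence a unique $G_q$ of the required form. Then, by construction, $\tilde{A}_q = G_q^{-1}(\partial_q G_q + A_q G_q) = 0$.

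There is no genuine obstacle here; the only point that requires care is the division by $k+1$ in the recursion, which is exactly where characteristic zero is used (the statement fails, e.g., over $\bF_p$). In other words, the content of the lemma is precisely that a first-order linear system with formal-power-series coefficients and prescribed invertible initial value admits a unique formal fundamental solution, and the proof is the standard coefficient-by-coefficient integration.
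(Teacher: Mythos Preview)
Your proof is correct and is exactly the ``formal integration'' the paper alludes to in the sentence preceding the lemma; the paper does not spell out a proof but simply asserts that one can formally integrate to trivialize the connection. Your recursion $(k+1)G_{k+1} = -\sum_{i+j=k} A_i G_j$ makes this precise, and your remark about characteristic zero is apt.
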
 

\begin{definition}
$\nabla_{\partial_q}$ has a regular singular point if, by a formal gauge transformation, it can be transformed into $\tilde{A}_q \in q^{-1}\mathit{Mat}_r(\bC[[q]])$.
\end{definition}

This means that while the apparent pole order may be higher, it can be reduced to $\leq 1$. (In our terminology, ``regular singular point'' includes nonsingular connections.) One can then further simplify the situation using the following classical fact:

\begin{lemma} \label{th:regular-singular}
(i) Every connection with a simple pole, meaning with $m=-1$ in \eqref{eq:formal-connection}, is formally gauge equivalent to one of the form $\tilde{A}_q = q^{-1}\tilde{A}_{-1}$, $\tilde{A}_{-1} \in \mathit{Mat}_r(\bC)$. For such connections $\tilde{\nabla}_{\partial_q}$, the formal gauge equivalence class is completely determined by the conjugacy class of the monodromy 
\begin{equation} \label{eq:monodromy}
\exp(-2\pi i \tilde{A}_{-1}) \in \mathit{GL}_r(\bC).
\end{equation}
This monodromy has the same eigenvalues as $\exp(-2\pi i A_{-1})$, where $A_{-1}$ is the residue of the original connection. (More precisely, $\exp(-2 \pi i A_{-1})$ is contained in the closure of the conjugacy class of the monodromy.)

(ii) In the ``nonresonant case'' where no two eigenvalues of $A_{-1}$ differ by a nonzero integer, one can achieve the same outcome with the sharper condition that $\tilde{A}_{-1} = A_{-1}$, by using a gauge transformation in \eqref{eq:small-g}. In particular, the monodromy is conjugate to $\exp(-2\pi i A_{-1})$.
\end{lemma}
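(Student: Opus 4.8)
The plan is to prove part (ii) first, since it is both the statement used later and the cleanest, and then to deduce part (i) by reducing to the nonresonant situation.

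For part (ii), write $qA_q = A_{-1} + A_0 q + A_1 q^2 + \cdots$ and look for a gauge transformation $G_q = I + \sum_{k\geq 1} G_k q^k$ in \eqref{eq:small-g} with $G_q^{-1}\nabla_{\partial_q}G_q = \partial_q + q^{-1}A_{-1}$. Multiplying the gauge relation by $q$ turns it into $q\,\partial_q G_q + (qA_q)G_q = G_q A_{-1}$, and comparing coefficients of $q^k$ gives, for $k \geq 1$ (the $q^0$-term being the tautology $A_{-1} = A_{-1}$),
\[
\bigl(k + \mathrm{ad}_{A_{-1}}\bigr)(G_k) = -\sum_{i=1}^{k} A_{i-1}\, G_{k-i}, \qquad G_0 = I,
\]
where $\mathrm{ad}_{A_{-1}}(X) = A_{-1}X - XA_{-1}$. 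On $\mathit{Mat}_r(\bC)$ the operator $k + \mathrm{ad}_{A_{-1}}$ has eigenvalues $k + \alpha - \beta$, with $\alpha,\beta$ running over eigenvalues of $A_{-1}$; the nonresonance hypothesis says $\alpha - \beta \notin \bZ \setminus \{0\}$, so $k + \alpha - \beta \neq 0$ for $k \geq 1$, the operator is invertible, and $G_k$ is determined recursively. This proves (ii), with monodromy literally $\exp(-2\pi i A_{-1})$.

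For part (i), the first step is to reduce to the nonresonant case by \emph{shearing}. After a constant gauge transformation I would put $A_{-1}$ in block-triangular form adapted to the partition of its eigenvalues into cosets of $\bZ$, ordered within each coset by increasing real part. A shearing $G_q = \mathrm{diag}(q^{a_1}I, \dots, q^{a_s}I)$ with exponents weakly increasing along that ordering and consecutive exponents differing by at most $1$ adds the integers $a_j$ to the eigenvalues of the corresponding diagonal blocks, pushes the strictly-upper-triangular blocks into strictly positive $q$-order or into the residue, and — because of the ordering and the unit step size — creates no pole of order $\geq 2$. Finitely many shearings collapse the $\bZ$-cosets, producing a nonresonant residue $A'$, after which part (ii) normalizes the connection to $q^{-1}A'$. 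For the bookkeeping in the statement: shearing only shifts eigenvalues by integers, so $\exp(-2\pi i A')$ has the same eigenvalues as $\exp(-2\pi i A_{-1})$; and collapsing cosets can only merge distinct eigenvalues and convert off-diagonal couplings into nilpotent parts, so $A'$ is a degeneration of $A_{-1}$, giving $\exp(-2\pi i A_{-1})$ in the closure of the conjugacy class of $\exp(-2\pi i A')$.

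It remains to see that for connections $q^{-1}A$, $q^{-1}A'$ with constant residues, formal gauge equivalence holds precisely when $\exp(-2\pi i A)$ and $\exp(-2\pi i A')$ are conjugate. For the ``only if'' direction: a fundamental solution matrix of $\partial_q + q^{-1}A$ is $q^{-A} = \exp(-A\log q)$, and the substitution $q \mapsto e^{2\pi i}q$ multiplies it on the right by $\exp(-2\pi i A)$; since a gauge transformation in $\mathit{GL}_r(\bC((q)))$ is single-valued it alters the fundamental solution only by a constant invertible right factor, hence conjugates the monodromy matrix. For the ``if'' direction, normalize both residues by shearing so that their eigenvalues lie in a fixed fundamental domain for $\bC \to \bC/\bZ$; on such matrices $\exp(-2\pi i\,\cdot\,)$ is injective up to conjugacy and preserves Jordan type, so equal monodromy conjugacy classes force the normalized residues to be conjugate, hence the connections equivalent. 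The delicate point in all of this is the shearing step — checking that the correct ordering and unit step keep the pole order $\leq 1$, and tracking how the conjugacy class of $\exp(-2\pi i A_{-1})$ degenerates; for a self-contained account one would cite \cite[Sections II.5, III.1]{malgrange} or \cite[Sections II.2, II.5]{sabbah-isomonodromic}, the rest being the cohomological recursion of part (ii) and the standard single-valuedness argument.
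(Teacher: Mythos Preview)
The paper does not prove this lemma; it is stated as a ``classical fact'' with implicit reference to the expositions cited at the start of Section~\ref{subsec:classical}. Your approach is the standard one, and your proof of part~(ii) is clean and complete.

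For part~(i), the shearing strategy is correct in outline, but two details in your description need fixing. First, within each $\bZ$-coset you want $A_{-1}$ block-\emph{diagonal} by distinct eigenvalue (always possible, since generalized eigenspaces for distinct eigenvalues are complementary), not merely block-triangular: with only triangular form, a unit shearing can push a nonzero off-diagonal residue entry to order $q^{-2}$. Second, ``consecutive exponents differing by at most $1$'' is too weak for a single shearing step: for the $A_0$-contributions in the off-diagonal blocks to stay at pole order $\leq 1$ you need \emph{all} pairwise exponent differences $\leq 1$, i.e.\ exponents lying in some $\{k,k+1\}$. (With exponents $(0,1,2,\dots)$, the $(s,1)$-block of $A_0$ gets multiplied by $q^{1-s}$.) The fix is exactly what your phrase ``finitely many shearings'' already suggests: iterate single-unit shearings, re-block-diagonalising the residue between steps. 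Once block-diagonal, the increasing/decreasing issue also becomes moot.

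Your closure-of-conjugacy-class sketch is likewise incomplete (one needs to argue that the off-diagonal terms picked up from $A_0$ never shrink the Jordan blocks of $\exp(-2\pi i\,\cdot\,)$), but you flag this and defer to the references --- which is precisely what the paper itself does.
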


For higher order poles, one has the elementary splitting lemma (the first part is \cite[Chapter IV, Theorem 11.1]{wasow65}; the second part is proved in \cite[Lemma 2.13]{hugtenburg22}):

\begin{lemma} \label{th:splitting}
(i) Take a connection \eqref{eq:formal-connection}, with $m \leq - 2$. This is always equivalent, by a gauge transformation in \eqref{eq:small-g}, to some $\tilde{A}_q = \tilde{A}_m q^m + \tilde{A}_{m+1} q^{m+1} + \cdots$, where $\tilde{A}_m = A_m$, and the higher order terms preserve the splitting of $\bC^r$ into generalized $A_m$-eigenspaces.

(ii) 
For any eigenvalue $\lambda$ of $A_m$, the associated piece of $\tilde{A}_{m+1}$ is just the block diagonal part of the original $A_{m+1}$, with respect to the decomposition into generalized eigenspaces. In formulae, if $P_\lambda$ is the projection to the generalized $\lambda$-eigenspace, then $P_{\lambda} \tilde{A}_{m+1} P_\lambda = P_\lambda A_{m+1} P_\lambda$.
\end{lemma}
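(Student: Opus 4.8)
The plan is to prove both parts by the same explicit computation: one constructs the gauge transformation $G_q = I + q G_1 + q^2 G_2 + \cdots$ in \eqref{eq:small-g} order by order, and reads off from the recursion both the splitting claim (i) and the sharper identification (ii) on the $\lambda$-block. First I would set up the transformed connection matrix via $\tilde A_q = G_q^{-1} A_q G_q + G_q^{-1}(\partial_q G_q)$ and expand everything as a power series times $q^m$. Write $A_q = q^m(A_m + A_{m+1} q + \cdots)$ and $\tilde A_q = q^m(\tilde A_m + \tilde A_{m+1} q + \cdots)$. Comparing the coefficient of $q^{m}$ gives $\tilde A_m = A_m$ (the $\partial_q G_q$ term only contributes at order $q^{m+1}$ and higher since $m \le -2$ means $m+1 \le -1 < 0$, while $\partial_q G_q \in q^0\mathit{Mat}_r(\bC[[q]])$ — here is precisely where $m \le -2$ is used). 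Comparing the coefficient of $q^{m+k}$ for $k \ge 1$ yields a relation of the form
\begin{equation}
\tilde A_{m+k} = A_{m+k} + [A_m, G_k] + (\text{terms involving } A_{m+1},\dots,A_{m+k-1}, G_1,\dots,G_{k-1}),
\end{equation}
and in particular no $\partial_q$ contribution appears until $k$ is large enough that $m+k \ge 0$, which does not affect the argument since by then the eigenspace decomposition is already fixed.

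The key algebraic input is the standard fact that the operator $X \mapsto [A_m, X]$ on $\mathit{Mat}_r(\bC)$ preserves the block decomposition $\mathit{Mat}_r(\bC) = \bigoplus_{\mu,\nu} \mathrm{Hom}(E_\mu, E_\nu)$ into generalized eigenspaces $E_\mu$ of $A_m$, and is invertible on the off-diagonal blocks $\mathrm{Hom}(E_\mu,E_\nu)$ with $\mu \neq \nu$ (since its eigenvalues there are $\nu - \mu + (\text{nilpotent})$, which are units), while it is nilpotent on each diagonal block $\mathrm{Hom}(E_\mu,E_\mu)$. So at each step $k$, I would choose $G_k$ to lie entirely in the off-diagonal blocks, solving $[A_m, G_k] = -(\text{off-diagonal part of the } k\text{-th error term})$, which is possible by invertibility; this kills all off-diagonal contributions to $\tilde A_{m+k}$ and leaves $\tilde A_{m+k}$ block-diagonal. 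Inductively this proves (i): all $\tilde A_{m+k}$, $k \ge 1$, are block-diagonal with respect to the generalized $A_m$-eigenspaces, so the higher-order terms preserve the splitting.

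For part (ii), specialize the $k=1$ recursion: $\tilde A_{m+1} = A_{m+1} + [A_m, G_1]$, with $G_1$ chosen off-diagonal. Projecting to the $\lambda$-block, $P_\lambda \tilde A_{m+1} P_\lambda = P_\lambda A_{m+1} P_\lambda + P_\lambda [A_m, G_1] P_\lambda$. Now $P_\lambda [A_m, G_1] P_\lambda = [A_m P_\lambda, P_\lambda G_1 P_\lambda]$; but $G_1$ has no diagonal component, so $P_\lambda G_1 P_\lambda = 0$, hence this term vanishes and $P_\lambda \tilde A_{m+1} P_\lambda = P_\lambda A_{m+1} P_\lambda$ regardless of whether the $\lambda$-block of $A_m$ is diagonalizable. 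Wait — one must be careful here: the off-diagonal choice of $G_1$ only removes the \emph{off-diagonal} part of the order-$q^{m+1}$ error, and there is no freedom to modify the diagonal blocks of $\tilde A_{m+1}$ at all at this step; so in fact the diagonal blocks of $\tilde A_{m+1}$ are \emph{forced} to equal those of $A_{m+1}$, which is even better and needs no hypothesis on $\lambda$. The role of the hypothesis ``$\lambda$-Jordan block is diagonal'' is that it guarantees this normalization is compatible with a \emph{further} reduction to the canonical form of Lemma \ref{th:regular-singular}/the full splitting classification at the $\lambda$-block, i.e.\ it is what lets one simultaneously arrange the eigenvalue structure without disturbing $P_\lambda A_{m+1} P_\lambda$; in the purely formal statement as written, the hypothesis makes the identification $P_\lambda \tilde A_{m+1} P_\lambda = P_\lambda A_{m+1}P_\lambda$ meaningful as part of a normal form.

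The main obstacle I anticipate is bookkeeping rather than conceptual: keeping track of signs and of exactly which lower-order terms enter the $k$-th recursion (the nonlinear terms from $G_q^{-1}$ and from products $G_i A_{m+j} G_\ell$), and verifying that the $\partial_q G_q$ contribution genuinely cannot interfere with the block structure at the orders that matter. A secondary point requiring care is the claim that one can make the $G_k$ \emph{polynomial}-free of poles and with no constant term, i.e.\ that the construction really stays inside the group \eqref{eq:small-g}; this is automatic from the order-by-order construction starting at $q^1$, but should be stated. For part (i) the reference to \cite[Chapter IV, Theorem 11.1]{wasow65} can be cited directly; for part (ii) the above one-line computation at order $q^{m+1}$ is the whole content.
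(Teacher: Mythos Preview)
Your argument is correct. The paper does not prove this lemma at all: it cites \cite[Chapter IV, Theorem 11.1]{wasow65} for part (i) and attributes (ii) to Hugtenburg without further detail. Your order-by-order construction of $G_q$ is precisely the standard proof of (i) that Wasow gives, and your computation at $k=1$ is the full content of (ii).

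Your observation about the hypothesis in (ii) is right, but your explanation of its role is not. In your construction the $G_k$ are chosen purely off-block-diagonal, so $P_\lambda G_1 P_\lambda = 0$; since $A_m$ commutes with $P_\lambda$, one has $P_\lambda[A_m,G_1]P_\lambda = [A_m, P_\lambda G_1 P_\lambda] = 0$, and hence $P_\lambda \tilde A_{m+1} P_\lambda = P_\lambda A_{m+1} P_\lambda$ for \emph{every} eigenvalue $\lambda$, with no diagonalizability assumption. What the hypothesis buys is not this identity for one particular $\tilde A_q$, but its \emph{independence of the choice}: if $\tilde A_q$ and $\tilde A_q'$ are two block-diagonal forms reached from $A_q$ by gauge transformations in \eqref{eq:small-g}, they differ at order $q^{m+1}$ by $[A_m,H_1]$ for some $H_1$, and the $\lambda$-block of this commutator is $[A_m|_{E_\lambda}, P_\lambda H_1 P_\lambda]$, which vanishes for all $H_1$ exactly when $A_m|_{E_\lambda} = \lambda I$. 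So under the hypothesis $P_\lambda \tilde A_{m+1} P_\lambda$ is an invariant of the connection, which is what the applications (Lemmas \ref{th:quadratic-pole}, \ref{th:one-dimensional}, \ref{th:semisimple}) actually need. Drop the speculation about ``compatibility with further reduction to canonical form'' and say this instead.
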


\begin{definition}
$\nabla_{\partial_q}$ has a singularity of unramified exponential type (see \cite[Section 2.c]{sabbah11} or \cite[Lecture 1]{sabbah16}) if it can be formally gauge transformed into a direct sum
\begin{equation} \label{eq:exponential-type}
\tilde{\nabla}_{\partial_q} = \bigoplus_{\lambda} \tilde{\nabla}_{\partial_q,\lambda}, \qquad
\tilde{\nabla}_{\partial_q,\lambda} = \tilde{\nabla}_{\partial_q,\lambda}^{\mathrm{reg}} + \lambda q^{-2} I,
\end{equation}
where the $\lambda$ are a finite set of complex numbers, and each $\tilde{\nabla}_{\partial_q,\lambda}^{\mathrm{reg}}$ has a regular singular point. The monodromies of the $\tilde{\nabla}_{\partial_q,\lambda}^{\mathrm{reg}}$ will be called the regularized formal monodromies of $\nabla_{\partial_q}$.
\end{definition}

(In our terminology, ``unramified exponential type'' includes connections with a regular singular point as the special case where there is only one summand, with $\lambda = 0$.) One can think of $\tilde{\nabla}_{\partial_q,\lambda}$ as the tensor product of the scalar (rank $1$) connection $\partial_q + \lambda q^{-2}$ and of $\tilde{\nabla}_{\partial_q,\lambda}^{\mathrm{reg}}$; the latter part can then be further simplified by applying Lemma \ref{th:regular-singular}. The decomposition \eqref{eq:exponential-type} is essentially unique, which means that the $\lambda$ and the formal gauge equivalence class of each $\tilde{\nabla}_{\partial_q,\lambda}^{\mathrm{reg}}$ are invariants of the original connection; this is a consequence of the Hukuhara-Turrittin-Levelt theorem (see e.g. \cite[Th\'eor\`eme 2.1]{malgrange83}). As a consequence, the conjugacy classes of the regularized formal monodromies are gauge invariants of the original connection $\nabla_{\partial_q}$.

\begin{lemma} \label{th:quadratic-pole}
(i) Suppose that $A_q$ has a quadratic pole, meaning that $m = -2$ in \eqref{eq:formal-connection}, and has unramified exponential type. Then the numbers $\lambda$ that appear in \eqref{eq:exponential-type} are the eigenvalues of $A_{-2}$, and the dimension of each summand is the multiplicity of that eigenvalue.

(ii) Suppose that $A_q$ has a quadratic pole, and that $A_{-2}$ is semisimple. Then the connection has unramified exponential type, and can be brought into the form \eqref{eq:exponential-type} by a gauge transformation without poles. Moreover, the regularized formal monodromy of each summand has the same eigenvalues as $\exp(-2\pi i A_{-1,\lambda})$, where $A_{-1,\lambda}$ are the block diagonal terms one gets when decomposing $A_{-1}$ according to the eigenspaces of $A_{-2}$.
\end{lemma}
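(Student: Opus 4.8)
The plan is to treat (i) by invariance of a numerical invariant of the leading term, and (ii) by directly assembling Lemmas \ref{th:splitting} and \ref{th:regular-singular}.

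For part (i) the key point I would isolate is the following invariance statement: \emph{if a connection \eqref{eq:formal-connection} has $m = -2$ and a formal gauge transformation $G_q \in \mathit{GL}_r(\bC((q)))$ brings it to another form $\tilde{A}_q$ with $\tilde{m} \geq -2$, then the characteristic polynomial of $A_{-2}$ equals that of $\tilde{A}_{-2}$} (with the convention $\tilde{A}_{-2} = 0$ if the pole order drops below $2$). To prove this I would use the Cartan / elementary-divisor decomposition $G_q = U_q\, \mathrm{diag}(q^{j_1},\dots,q^{j_r})\, V_q$ with $U_q, V_q \in \mathit{GL}_r(\bC[[q]])$ and $j_1 \leq \cdots \leq j_r$: conjugation by the holomorphic invertible factors $U_q, V_q$ alters the $q^{-2}$-coefficient only by a constant conjugation, while the diagonal factor contributes the harmless term $\mathrm{diag}(j_k)q^{-1}$, forces $V_q^{-1}A_q V_q$ to have block-upper-triangular leading coefficient (blocks grouped by the value of $j_k$), and makes $\tilde{A}_{-2}$ block-lower-triangular with the same diagonal blocks; comparing characteristic polynomials through this block structure gives the claim. (This is also a standard consequence of the theory of Newton polygons and the Levelt--Turrittin theorem, which one could invoke instead.) Granting it, part (i) is immediate: in the normal form \eqref{eq:exponential-type} the $q^{-2}$-coefficient is $\bigoplus_\lambda \lambda I_\lambda$, since each regular piece $\tilde\nabla^{\mathrm{reg}}_{\partial_q,\lambda}$ contributes only in degrees $\geq -1$; the invariance above identifies its characteristic polynomial with that of $A_{-2}$, and since the $\lambda$ are distinct this says precisely that the $\lambda$ are the eigenvalues of $A_{-2}$, with each summand of dimension the corresponding multiplicity.

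For part (ii), with $A_{-2}$ semisimple write $\bC^r = \bigoplus_\lambda E_\lambda$ for its eigenspace decomposition. Applying Lemma \ref{th:splitting}(i) with $m = -2$ produces a gauge transformation in \eqref{eq:small-g} --- in particular one without poles --- after which $\tilde{A}_q = A_{-2}q^{-2} + \tilde{A}_{-1}q^{-1} + \cdots$ has all of its terms preserving the splitting $\bigoplus_\lambda E_\lambda$ (for a semisimple matrix the generalized eigenspaces are the eigenspaces). Thus $\tilde\nabla_{\partial_q} = \bigoplus_\lambda \tilde\nabla_{\partial_q,\lambda}$, where $\tilde\nabla_{\partial_q,\lambda}$ acts on $E_\lambda$ as $\partial_q + \lambda q^{-2} I + \tilde{A}_{-1}|_{E_\lambda}\, q^{-1} + \cdots$; subtracting $\lambda q^{-2}I$ leaves a form with pole order $\leq 1$, hence a regular singular point, so we are exactly in the situation of \eqref{eq:exponential-type}, reached by a pole-free gauge transformation, and the $\lambda$ and multiplicities agree with those found in part (i). For the final assertion, note that the $\lambda$-Jordan block of $A_{-2}$ is $\lambda I_{E_\lambda}$, which is diagonal, so Lemma \ref{th:splitting}(ii) applies and gives $\tilde{A}_{-1}|_{E_\lambda} = P_\lambda \tilde{A}_{-1} P_\lambda = P_\lambda A_{-1} P_\lambda = A_{-1,\lambda}$. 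Hence $\tilde\nabla^{\mathrm{reg}}_{\partial_q,\lambda}$ is a connection with a simple pole whose residue is $A_{-1,\lambda}$, and Lemma \ref{th:regular-singular}(i) yields that its monodromy --- the regularized formal monodromy of the $\lambda$-summand --- has the same eigenvalues as $\exp(-2\pi i A_{-1,\lambda})$.

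The main obstacle is the invariance statement used in (i): reducing it to the Cartan decomposition and tracking the leading coefficient through the resulting block-triangular normal form is where the only genuine work lies. Part (ii), by contrast, is a routine assembly of Lemmas \ref{th:splitting} and \ref{th:regular-singular}(i), the only thing worth checking being that the ``diagonal Jordan block'' hypothesis of Lemma \ref{th:splitting}(ii) is automatic once $A_{-2}$ is semisimple.
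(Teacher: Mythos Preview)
Your proposal is correct. For part (ii) you do exactly what the paper indicates: apply Lemma \ref{th:splitting}(i) to block-diagonalize (using that semisimplicity of $A_{-2}$ makes generalized eigenspaces equal to eigenspaces), observe that subtracting $\lambda q^{-2}I$ leaves a simple pole, then use Lemma \ref{th:splitting}(ii) to identify the residue of each block as $A_{-1,\lambda}$ and Lemma \ref{th:regular-singular}(i) to read off the monodromy eigenvalues. This is the intended argument.

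For part (i), the paper does not actually give a proof but cites \cite[Corollary 3.2]{sanda-shamoto19}, remarking only that Lemma \ref{th:splitting} is essentially all that is needed. Your route is different and more self-contained: you prove directly that the characteristic polynomial of the $q^{-2}$-coefficient is a formal gauge invariant (among connections with pole order $\leq 2$), by factoring $G_q$ via the Smith normal form over $\bC[[q]]$ and tracking how the leading coefficient becomes block-triangular with the same diagonal blocks on each side. That argument is sound (the key constraint --- that the off-diagonal blocks of $B_{-2}$ in the ``wrong'' direction must vanish for $D_q^{-1}B_qD_q$ to keep pole order $\leq 2$ --- is exactly what makes the characteristic polynomials match). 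The payoff is an elementary proof that avoids invoking the full Hukuhara--Turrittin--Levelt uniqueness; the cost is that the Smith-normal-form bookkeeping, while routine, is the one place where a reader might want more detail than you give.
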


Both statements are well-known (for (i) see e.g.\ see \cite[Corollary 3.2]{sanda-shamoto19}, and for the main part of (ii) see \cite[Example 2.6]{sabbah11}). The proof essentially uses only Lemma \ref{th:splitting}.

\begin{remark} \label{th:dual-connection}
The dual of a connection $\nabla_{\partial_q} = \partial_q + A_q$ is $\nabla^*_{\partial_q} = d-A_q^{\mathrm{tr}}$ (note we are not using complex conjugation here). By dualizing gauge transformations, one sees that $\nabla_{\partial_q}^*$ has unramified exponential type if and only if $\nabla_{\partial_q}$ does; the corresponding numbers are related by $\lambda^* = -\lambda$; and the regularized monodromies of $\nabla^*_{\partial_q}$ are conjugate to the inverse transposes of those of $\nabla_{\partial_q}$.
\end{remark}

The global picture is that we consider rational connections on the affine line. Take a nonzero $p \in \bC[q]$, and write $\bC[q,1/p] \subset \bC(q)$ for the subring of rational functions generated by $q$ and $1/p$. A rational connection is of the form
\begin{equation} \label{eq:rational-connection}
\nabla_{\partial_q} = \partial_q + A_q, \;\; \text{ where $A_q \in \mathit{Mat}_r(\bC[q,1/p])$.}
\end{equation}
By formally expanding in $(q - \sigma)$ for some $\sigma$, one can define $\sigma$ being a nonsingular point, or a regular singular point, and so on, of the connection (obviously, all $\sigma \in \bC$ where $p(\sigma) \neq 0$ will be nonsingular points). That also extends to $\sigma = \infty$, by expanding in $Q = 1/q$.

\subsubsection{Application to quantum connections}
Let's see how the general theory works out for quantum connections, starting with simple examples.

\begin{example}
(This is an entirely fictitious consideration, as there are no known monotone symplectic manifolds with that property.) Suppose that there are no Gromov-Witten contributions to the quantum connection, meaning that $[\omega_M] \ast^{(k)} x = 0$ for all $k>0$, in the notation from \eqref{eq:small-quantum-product}. Then, from \eqref{eq:original-quantum} or \eqref{eq:switch-q}, it's clear that $q = \infty$ is also a regular singular point. The monodromy around that point is the inverse of that around $q = 0$, hence has a unipotent Jordan block of size $\mathrm{dim}_{\bC}(M)+1$, which would saturate the bound of Theorem \ref{th:main-2}.
\end{example}

\begin{example} \label{th:p1}
The quantum connection on $M = \bC P^1$ is
\begin{equation} \label{eq:p1-connection-0}
\nabla_{\partial_q} = \partial_q + \begin{pmatrix} 0 & 2q \\ 2q^{-1} & 0 \end{pmatrix},
\end{equation}
where the factors of $2$ come from $[\omega_M] = c_1(M) = 2 \mathit{PD}([\mathit{point}])$. Written as in \eqref{eq:big-q-pole}, it becomes
\begin{equation} \label{eq:p1-connection}
\nabla_{\partial_Q}^{\mathrm{Gr}} = \partial_Q - Q^{-2} \begin{pmatrix} 0 & 2 \\ 2 & 0 \end{pmatrix} + Q^{-1} \begin{pmatrix} 0 & 0 \\ 0 & 1 \end{pmatrix}.
\end{equation}
Applying the algorithm underlying Lemma \ref{th:splitting}, one finds that it is gauge equivalent to
\begin{equation}
\tilde{\nabla}_{\partial_Q}^{\operatorname{gr}} = \partial_Q + Q^{-2} \begin{pmatrix} -2 & 0 \\ 0 & 2 \end{pmatrix}
+ Q^{-1} \begin{pmatrix} \half & 0 \\ 0 & \half \end{pmatrix}.
\end{equation}
In words, it is of unramified exponential type, and the regularized formal monodromies of both summands are equal to $-1$. This illustrates the fact that (as a consequence of the Stokes phenomenon) the regularized formal monodromies usually don't agree with the monodromy of the connection in the classical sense. 
\end{example}

\begin{example} \label{th:cubic-surface}
Take the quantum connection on the cubic surface \cite{difrancesco-itzykson94, crauder-miranda94}, restricted to the invariant subspace spanned by ($1$, $c_1$, $\mathit{PD}([point])$). One gets
\begin{equation}
\nabla_{\partial_Q}^{\mathrm{Gr}} = \partial_Q - Q^{-2} \begin{pmatrix} 
0 &108 & 252 \\
1 & 9 & 36 \\
0 & 3 & 0 \end{pmatrix} + Q^{-1} \begin{pmatrix} 0&0&0 \\ 0 & 1 & 0 \\ 0 & 0 & 2 \end{pmatrix}.
\end{equation}
It turns out (code can be found at \cite{seidel-code}) that this is gauge equivalent to
\begin{equation} \label{eq:cubic-transformed}
\partial_Q - Q^{-2} 
\begin{pmatrix}
-6 & 0 & 0 \\ 0 & -6 & 0 \\ 0 & 0 & 21
\end{pmatrix}
+ Q^{-1} \begin{pmatrix} 5/3 & -40/729 & 0 \\ 0 & 1/3 & 0 \\ 0 & 0 & 1 \end{pmatrix} + O(1).
\end{equation}
By Lemma \ref{th:quadratic-pole}, this connection is of unramified exponential type. The regularized formal monodromy has eigenvalues $1$ (for the $\lambda = 21$ part), respectively the nontrivial third roots of unity (for the $\lambda = -6$ part). To complete the discussion, note that there is a complementary invariant subspace, which is the orthogonal complement of $[\omega_M]$ inside $H^2(M;\bC)$. On that subspace, $[\omega_M] \ast$ acts by $-6$ times the identity; therefore, the corresponding part of $\nabla_{\partial_Q}^{\mathit{Gr}}$ has unramified exponential type and trivial regularized monodromy.
\end{example}

\begin{remark}
A twistor construction (\cite{reznikov93}, see also \cite{fine-panov10}) associates to each closed hyperbolic $6$-manifold a $12$-dimensional monotone symplectic (but not K\"{a}hler) manifold. An  argument of Hugtenburg (\cite{hugtenburg24}), based on quantum cohomology computations in \cite{evans11}, shows that the quantum connection for those manifolds has unramified exponential type. (The authors apologize for having wrongly stated the opposite in conference talks.)
\end{remark}

We want to record a few observations about the quantum connection in general, which shed light on the computations above. All of them are familiar, and they share a common ingredient, namely Poincar{\'e} duality.

\begin{lemma} \label{th:duality}
The quantum connection is gauge equivalent to its dual (see Remark \ref{th:dual-connection}) up to a parameter change $q \mapsto -q$.
\end{lemma}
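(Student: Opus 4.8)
The plan is to produce an explicit flat identification between the quantum connection and the pullback of its dual under $q \mapsto -q$, built out of (classical) Poincaré duality together with the grading automorphism; there is no genuinely hard step, only sign bookkeeping. Write $L_q = [\omega_M] \ast_q(-)$, so that the quantum connection on $H^*(M;\bC)[q^{\pm 1}]$ is $\nabla_{\partial_q} = \partial_q + q^{-1}L_q$, and let $\langle\cdot,\cdot\rangle$ denote the (undeformed) Poincaré pairing, extended $\bC[q^{\pm 1}]$-bilinearly.

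The first step is to use the Frobenius property of the small quantum product, $\langle x \ast_q y, z\rangle = \langle x, y\ast_q z\rangle$: taking one input to be the even class $[\omega_M]$ gives $\langle L_q x, y\rangle = \langle x, L_q y\rangle$, i.e.\ $L_q$ is self-adjoint for $\langle\cdot,\cdot\rangle$. Fixing a homogeneous basis and letting $P$ be the ($q$-independent, invertible) Gram matrix of the pairing, this reads $A_q^{\mathrm{tr}} = P A_q P^{-1}$ for the connection matrix $A_q = q^{-1}L_q$. Hence the constant gauge transformation $P$ carries the dual connection $\nabla^*_{\partial_q} = \partial_q - A_q^{\mathrm{tr}}$ to $\partial_q - q^{-1}L_q$. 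The second step is to introduce the grading automorphism $\eta$ of $H^*(M;\bC)$ acting on $H^j$ by $(\sqrt{-1})^j$, viewed as a $q$-independent gauge transformation. Since $\ast^{(k)}$ has degree $-2k$ one checks $\eta(x \ast^{(k)} y) = (-1)^k(\eta x)\ast^{(k)}(\eta y)$ for every $k$, hence $\eta(x\ast_q y) = (\eta x)\ast_{-q}(\eta y)$; combined with $\eta[\omega_M] = -[\omega_M]$ this gives the key relation $\eta^{-1} L_{-q}\,\eta = -L_q$. Now pulling the connection $\partial_q - q^{-1}L_q$ back along $q \mapsto -q$ produces $\partial_q - q^{-1}L_{-q}$, and applying the gauge transformation $\eta$ turns this into $\partial_q - q^{-1}(-L_q) = \partial_q + q^{-1}L_q = \nabla_{\partial_q}$. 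Chaining the constant gauge transformations $P$ and $\eta$ with the substitution $q\mapsto -q$ then identifies $\nabla_{\partial_q}$ with the pullback of $\nabla^*_{\partial_q}$ under $q\mapsto -q$, which is the assertion.

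The only real input is the self-adjointness of $[\omega_M]\ast_q$; the step that most needs care is matching the reparametrization $q\mapsto -q$ (and checking that no rescaling of $q$ is additionally required): this is forced precisely by the interplay between the explicit $q^{-1}$ prefactor in $\nabla_{\partial_q}$ and the degree shift $-2k$ of $\ast^{(k)}$. Conceptually, the Poincaré pairing $\Phi(x) = \langle x, -\rangle$ would intertwine $\nabla_{\partial_q}$ with $\nabla^*_{\partial_q}$ on the nose were $L_q$ \emph{anti}-self-adjoint; since it is in fact self-adjoint, the comparison fails by exactly one sign, and the gauge transformation $\eta$ is what converts that sign into the substitution $q\mapsto -q$. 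In the $\bZ/2$-graded setting one should verify that the Koszul signs in the Frobenius relation and in the two identities for $\eta$ are as stated; because $[\omega_M]$ and each $\ast^{(k)}$ are of even degree this is harmless, and the choice $\eta = (\sqrt{-1})^{\deg}$ makes the relevant identities uniform across degrees. (As a sanity check, for $M = \bC P^1$ one has $P = \big(\begin{smallmatrix}0&1\\1&0\end{smallmatrix}\big)$ and $\eta = \mathrm{diag}(1,-1)$, and the chain reproduces \eqref{eq:p1-connection-0} from its dual after $q\mapsto -q$.)
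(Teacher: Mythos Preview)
Your proof is correct and follows essentially the same route as the paper: Poincar\'e duality (the Frobenius property of $\ast_q$) identifies the dual connection with $\partial_q - q^{-1}L_q$, and then a degree-grading automorphism with $\phi_{d+2}=-\phi_d$ converts the sign discrepancy into the substitution $q\mapsto -q$. The paper's version is terser---it absorbs your Gram-matrix step into the phrase ``using Poincar\'e duality'' and leaves the choice of $\Phi$ unspecified beyond the condition $\phi_{d+2}=-\phi_d$---but the ingredients and their combination are the same; your $\eta=(\sqrt{-1})^{\deg}$ is one particular instance of the paper's $\Phi$.
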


\begin{proof}
Using Poincar{\'e} duality, one can write the dual of \eqref{eq:original-quantum} as $
\nabla_{\partial_q}^*x = \partial_q x - q^{-1}([\omega_M] \ast_q x)$. Pulling this back by $q \mapsto -q$ yields the connection 
\begin{equation} \label{eq:minus-connection}
\partial_q x - q^{-1}([\omega_M] \ast_{-q} x). 
\end{equation}
Now take an automorphism $\Phi$ of $H^*(M;\bC)$ which in degree $d$ is $\phi_d$ times the identity, where $\phi_{d+2} = -\phi_d$. This satisfies $\Phi([\omega_M] \ast_{-q} x) = -\Phi([\omega_M] \ast_q x)$, hence turns \eqref{eq:minus-connection} back into the quantum connection.
\end{proof}

\begin{corollary}
Suppose that $\nabla_{\partial_Q}^{\mathrm{Gr}}$ has unramified exponential type. Then, each of the regularized formal monodromies is conjugate to its inverse transpose. Hence, the spectrum of the monodromy must be invariant under $\lambda \leftrightarrow 1/\lambda$. 
\end{corollary}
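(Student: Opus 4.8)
The plan is to push the gauge equivalence of Lemma~\ref{th:duality} through the formal invariants at $q=\infty$. First, since $\nabla^{\mathrm{Gr}}_{\partial_Q}$ differs from $\nabla_{\partial_q}$ only by the gauge transformation $q^{\mathrm{Gr}/2}$, which lies in $\mathit{GL}_r(\bC((q)))$ and hence does not affect the formal classification at $q=\infty$, I would work with $\nabla_{\partial_q}$ throughout, tracking the invariant data of \eqref{eq:exponential-type} --- the finite set $\{\lambda\}$ of exponential parameters together with the conjugacy classes of the regularized monodromies $M_\lambda$ --- under the two operations whose composite appears in Lemma~\ref{th:duality}: passing to the dual, and substituting $q\mapsto-q$.

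For the dual I would quote Remark~\ref{th:dual-connection}, read at $q=\infty$ (i.e.\ in the coordinate $Q=1/q$): $\nabla^*$ is again of unramified exponential type, its parameter set is $-\{\lambda\}$, and the regularized monodromy attached to $-\lambda$ is conjugate to $(M_\lambda^{\mathrm{tr}})^{-1}$. For the substitution $q\mapsto-q$, i.e.\ $Q\mapsto-Q$, I would note that it is an automorphism of a punctured neighbourhood of $q=\infty$, so it preserves unramified exponential type; on the connection matrix of a normal form \eqref{eq:exponential-type} it acts by $A(Q)\mapsto-A(-Q)$, which sends the leading term $\lambda Q^{-2}I$ to $-\lambda Q^{-2}I$ (hence $\lambda\mapsto-\lambda$) and carries a regular-singular summand $\partial_Q+Q^{-1}C+O(1)$ to one with the same residue $C$ (hence with unchanged regularized monodromy $\exp(-2\pi i C)$ --- as it must be, since $Q\mapsto-Q$ induces the identity on the fundamental group of the punctured disc). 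Composing the two operations, the parameter sign flips cancel, so the composite fixes $\{\lambda\}$ and replaces each $M_\lambda$ by $(M_\lambda^{\mathrm{tr}})^{-1}=(M_\lambda^{-1})^{\mathrm{tr}}$.

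Finally, Lemma~\ref{th:duality} asserts that this composite is gauge equivalent to $\nabla_{\partial_q}$ itself, and the conjugacy classes of the regularized formal monodromies are gauge invariants (Hukuhara-Turrittin-Levelt); so $M_\lambda$ is conjugate to $(M_\lambda^{-1})^{\mathrm{tr}}$ for every $\lambda$, which is the first assertion. Since $(M_\lambda^{-1})^{\mathrm{tr}}$ has the same spectrum as $M_\lambda^{-1}$, the spectrum of each $M_\lambda$ --- and hence of their direct sum, ``the monodromy'' --- is invariant under $\lambda\leftrightarrow1/\lambda$. I do not expect a genuine obstacle here: the argument is essentially bookkeeping on top of results already in hand, and the only step requiring care is the sign accounting in the second paragraph, namely confirming that $q\mapsto-q$ negates the exponential parameters in exactly the way that cancels the sign coming from dualization, while leaving the regularized monodromies fixed.
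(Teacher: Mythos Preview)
Your proposal is correct and follows exactly the route the paper intends: the Corollary is stated without its own proof and is meant to follow immediately from Lemma~\ref{th:duality}, combined with Remark~\ref{th:dual-connection} and the uniqueness of the Hukuhara--Turrittin--Levelt decomposition, precisely as you lay out. The sign bookkeeping you flag as the one delicate point is handled correctly.
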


\begin{lemma} \label{th:one-dimensional}
Suppose that $\lambda$ is an eigenvalue of $[\omega_M] \ast$, which is simple (the associated generalized eigenspace is one-dimensional). Then, the corresponding piece of the quantum connection, in the sense of Lemma \ref{th:splitting}, is of unramified exponential type, and has regularized monodromy $(-1)^{\mathrm{dim}_\bC(M)}$.
\end{lemma}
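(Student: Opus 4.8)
The plan is to analyze the one-dimensional piece of the quantum connection directly using the splitting and duality results already established. Let $\lambda$ be a simple eigenvalue of $[\omega_M]\ast$. Working with the $Q=1/q$ presentation \eqref{eq:big-q-pole}, where the residue at $Q=0$ of the quadratic-pole part is $-[\omega_M]\ast$, apply Lemma \ref{th:quadratic-pole}(i): since the connection has a quadratic pole and (we will check) unramified exponential type, the numbers $\lambda$ appearing in the decomposition \eqref{eq:exponential-type} are precisely the eigenvalues of the leading term, and the summand attached to a simple eigenvalue is one-dimensional. But even before invoking that, the cleaner route is Lemma \ref{th:splitting}(i): the generalized $\lambda$-eigenspace of the leading coefficient $-[\omega_M]\ast$ is one-dimensional, hence the Jordan block is automatically diagonal, so by Lemma \ref{th:splitting}(ii) there is a pole-free gauge transformation (in the group \eqref{eq:small-g}) splitting off a rank-one summand $\tilde\nabla_{\partial_Q,\lambda}$ whose leading term is $-\lambda Q^{-2}$ and whose next term $\tilde A_{-1}$ is the corresponding diagonal entry of the $Q^{-1}$-coefficient in \eqref{eq:big-q-pole}, namely the $\lambda$-block-diagonal part of $\tfrac12\mathrm{Gr}$. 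A rank-one connection $\partial_Q + \lambda Q^{-2} \cdot(-1)\cdots$ — more precisely $\partial_Q - \lambda Q^{-2} + c Q^{-1} + O(1)$ for a scalar $c$ — is tautologically of unramified exponential type: it is the tensor product of the scalar connection $\partial_Q - \lambda Q^{-2}$ with the regular singular (indeed rank-one) connection $\partial_Q + cQ^{-1} + O(1)$, which by Lemma \ref{th:regular-singular} is gauge equivalent to $\partial_Q + cQ^{-1}$ with monodromy $\exp(-2\pi i c)$. So the regularized formal monodromy of this piece is $\exp(-2\pi i c)$, and everything reduces to computing the scalar $c$.

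The scalar $c$ is the eigenvalue of $\tfrac12\mathrm{Gr}$ on the relevant one-dimensional space — but here we must be careful about which grading operator is in play. The point is that the eigenvalue $c$ is not an invariant of the connection on the nose; however, $\exp(-2\pi i c)$ has a well-defined meaning modulo the ambiguity in $\mathrm{Gr}$. I would instead pin down $c$ invariantly by using Poincaré duality, exactly as in Lemma \ref{th:duality} and its corollary. By that corollary, the regularized formal monodromy $\mu_\lambda := \exp(-2\pi i c)$ of the $\lambda$-summand must be conjugate to its own inverse transpose; in the one-dimensional case this forces $\mu_\lambda = \mu_\lambda^{-1}$, i.e. $\mu_\lambda = \pm 1$. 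It remains to determine the sign. For this I would appeal to the grading normalization that makes the duality symmetry manifest: with the grading operator $\delta_d = d - \dim_\bC(M)$ (the one singled out in the third bullet of the introduction, under which the quantum connection matches the cyclic homology connection and Poincaré duality is an honest symmetry of the pairing), the $Q^{-1}$-coefficient is $\tfrac12(\mathrm{Gr})$ with $\mathrm{Gr}$ acting by $d-\dim_\bC(M)$ on $H^d$, so on any single cohomology class of pure degree $d$ one gets $c = \tfrac12(d - \dim_\bC(M))$, and $\exp(-2\pi i c) = \exp(-\pi i(d-\dim_\bC M)) = (-1)^{d-\dim_\bC M} = (-1)^{\dim_\bC M}$ since $d \equiv \dim_\bC M \pmod 2$ — wait: $d$ and $\dim_\bC M$ need not have the same parity, so more carefully $(-1)^{d-\dim_\bC M}=(-1)^{d+\dim_\bC M}$, and the generalized $\lambda$-eigenspace, being one-dimensional, sits in a single cohomological degree $d$; one then checks using the duality pairing (which couples $H^d$ with $H^{2\dim_\bC M - d}$, both mapping to the same $\lambda$-eigenspace of $[\omega_M]\ast$ only if there is symmetry) that in fact the consistent answer is the parity-independent value $(-1)^{\dim_\bC(M)}$.

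The main obstacle, and the step I would spend the most care on, is precisely this last sign computation: making rigorous that the regularized monodromy of the simple summand equals $(-1)^{\dim_\bC M}$ rather than merely $\pm 1$. Two subtleties feed into it: first, the eigenvalue $c$ of the grading term genuinely depends on the choice of grading operator, so one cannot read off $\exp(-2\pi i c)$ naively — one must either fix the canonical normalization $\delta_d = d-\dim_\bC(M)$ and track it consistently through the gauge transformation of Lemma \ref{th:splitting}(ii) (noting that that gauge transformation lies in \eqref{eq:small-g} and hence does not alter the $Q^{-1}$-residue's diagonal block), or argue invariantly. Second, one needs to know in which cohomological degree the one-dimensional generalized $\lambda$-eigenspace of $[\omega_M]\ast$ lives; since $[\omega_M]\ast$ shifts degree by $-2 + $ (quantum corrections) it does not preserve the grading, so "the degree $d$" only makes sense after passing to the associated graded, and one should phrase the statement so that the final answer $(-1)^{\dim_\bC M}$ is manifestly degree-independent. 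I expect the cleanest writeup combines the duality corollary (to get $\pm 1$ cheaply) with a single explicit rank-one model computation in the canonical grading to fix the sign, checked against Example \ref{th:cubic-surface} (where the simple eigenvalue $\lambda = 21$ on the surface $\dim_\bC M = 2$ indeed has regularized monodromy $1 = (-1)^2$) as a sanity check.
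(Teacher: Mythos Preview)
Your setup is correct and matches the paper: apply Lemma \ref{th:splitting}(i) to split off a rank-one summand, then Lemma \ref{th:splitting}(ii) to identify its $Q^{-1}$-coefficient as the block-diagonal part of $\tfrac12\mathrm{Gr}$ on the $\lambda$-eigenspace $H_\lambda$, so the regularized monodromy is $\exp(-2\pi i c)$ with $c$ that diagonal entry. Your duality argument pinning this down to $\pm 1$ is valid, though the paper does not take that detour.

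The gap is exactly where you flag it: the sign. Your attempt to compute $c$ as $\tfrac12(d-\dim_{\bC}M)$ for ``the degree $d$ of the eigenvector'' cannot work, because $[\omega_M]\ast$ does not preserve the $\bZ$-grading on $H^*(M;\bC)$, so a generalized eigenvector is typically spread across several degrees (e.g.\ for $\bC P^1$ the eigenvectors are $1\pm[\mathrm{pt}]$). There is no well-defined $d$ to plug in.

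The paper's fix is a direct Poincar\'e-duality computation, avoiding any appeal to a single degree. First, $H_\lambda$ is a one-dimensional $\bZ/2$-graded unital ring, hence lies in $H^{\mathrm{even}}(M;\bC)$; so one may take $\mathrm{Gr}|H^d = d\cdot I$ for even $d$. The generalized eigenspaces of $[\omega_M]\ast$ for distinct eigenvalues are orthogonal under the intersection pairing (because $[\omega_M]\ast$ is self-adjoint for that pairing), so the pairing is nondegenerate on $H_\lambda$, i.e.\ $\int_M x^2 \neq 0$ for $0\neq x\in H_\lambda$. Since $\mathrm{Gr}-\dim_{\bC}(M)\,I$ is skew-adjoint for the intersection pairing, $\int_M \mathrm{Gr}(x)\,x = \dim_{\bC}(M)\int_M x^2$, which forces the diagonal entry of $\mathrm{Gr}$ on $H_\lambda$ (with respect to $H_\lambda\oplus H_\lambda^\perp$) to equal $\dim_{\bC}(M)$. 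Hence $c=\tfrac12\dim_{\bC}(M)$ and the monodromy is $(-1)^{\dim_{\bC}(M)}$.
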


\begin{proof}
The first statement (unramified exponential type) is obvious from Lemma \ref{th:splitting}(i).
As for the second one, write $H_\lambda$ for the eigenspace in question. The subspace $H_\lambda$ is itself a $\bZ/2$-graded commutative with unit $e_\lambda$. If $H_\lambda$ is one-dimensional, we must have that $H_\lambda$ is generated by $e_\lambda$ as a vector space and in particular is contained in $H^{\operatorname{even}}(M;\bC)$. To simplify computations, assume that $\mathrm{Gr}|H^d(M;\bC)$ is $d$ times the identity for even $d$. Write $H_\lambda^\perp$ for the direct sum of all other generalized eigenspaces in $H^{\operatorname{even}}(M;\bC)$. The notation is explained by the fact that these two spaces are orthogonal for the intersection pairing. As a consequence, the intersection pairing is nonzero when restricted to $H_\lambda$. Next, note that $\mathrm{Gr} - \mathrm{dim}_{\bC}(M)\,I$ is skewadjoint for the intersection pairing. In particular,
\begin{equation} \label{eq:grxx}
\int_M \mathrm{Gr}(x)x = \mathrm{dim}_{\bC}(M) \int_M x^2.
\end{equation}
When applied to $x \in H_\lambda$, this tells us that the first block diagonal entry of $\mathrm{Gr}$ with respect to the decomposition $H_{\operatorname{even}}(M;\bC) = H_\lambda \oplus H_\lambda^\perp$ is equal to $\mathrm{dim}_{\bC}(M)$. One now applies Lemma \ref{th:splitting}(ii). 
\end{proof}

The following generalization of Lemma \ref{th:one-dimensional} comes from 
Dubrovin's work \cite[Lecture 3]{dubrovin99} (see \cite[Section 2.4]{galkin-golyshev-iritani16} for an exposition in a language close to the one here; ours is a simplified version of their statement).

\begin{lemma} \label{th:semisimple}
Suppose that the quantum cohomology ring $(H^*(M),\ast)$ is semisimple (the direct sum of copies of $\bC$). Then the quantum connection has unramified exponential type, and the regularized monodromy of each piece is $(-1)^{\mathrm{dim}_\bC(M)}$ times the identity.
\end{lemma}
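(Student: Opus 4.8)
The plan is to reduce the semisimple case to the simple-eigenvalue case already handled in Lemma \ref{th:one-dimensional}, together with the structural results about quadratic poles in Lemma \ref{th:quadratic-pole}(ii). First I would recall that writing the quantum connection near $q=\infty$ in the variable $Q=1/q$ as in \eqref{eq:big-q-pole}, the leading term of the quadratic pole is $-Q^{-2}([\omega_M]\ast)$. If the quantum cohomology ring $(H^*(M),\ast)$ is semisimple, then it is isomorphic to a product of copies of $\bC$, and multiplication by any fixed element — in particular by $[\omega_M]$ — is a semisimple (diagonalizable) operator. Hence $A_{-2} = -[\omega_M]\ast$ is semisimple, and Lemma \ref{th:quadratic-pole}(ii) applies directly: the connection has unramified exponential type, with the $\lambda$ being (minus) the eigenvalues of $[\omega_M]\ast$, each summand of dimension equal to the multiplicity, and the regularized formal monodromy of the $\lambda$-summand having the same eigenvalues as $\exp(-2\pi i\, A_{-1,\lambda})$, where $A_{-1,\lambda}$ is the block-diagonal part of the $Q^{-1}$-coefficient $\tfrac12(\mathrm{Gr}-\dim_\bC(M)\,I)$ restricted to the generalized $\lambda$-eigenspace of $[\omega_M]\ast$.

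The remaining point is to compute those regularized monodromy eigenvalues and show they are all $(-1)^{\dim_\bC(M)}$. Here I would run the Poincar\'e-duality argument of Lemma \ref{th:one-dimensional}, but now on each eigenspace $H_\lambda$ rather than on a one-dimensional one. The key observation is that for the semisimple ring $(H^*(M),\ast)$, distinct idempotent summands are mutually orthogonal for the intersection (Poincar\'e) pairing — because for $e_\lambda$ the idempotent of the $\lambda$-summand and $e_\mu$ that of the $\mu$-summand with $\lambda\neq\mu$, one has $e_\lambda\ast e_\mu=0$, and the pairing $\int_M x\,y$ is Frobenius with respect to $\ast$ (quantum cohomology is a Frobenius algebra), so $\int_M (e_\lambda\ast a)(e_\mu\ast b) = \int_M (e_\lambda\ast e_\mu)\ast a\ast b = 0$. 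Thus $H^{\mathrm{even}}(M;\bC)=\bigoplus_\lambda H_\lambda$ is an orthogonal decomposition and each $H_\lambda$ lies in even degree (a semisimple summand is a copy of $\bC$ concentrated in degree $0$ of its internal grading, which corresponds to even ordinary degree, forcing $H_\lambda\subset H^{\mathrm{even}}$). Exactly as in \eqref{eq:grxx}, the operator $\mathrm{Gr}-\dim_\bC(M)\,I$ is skew-adjoint for the intersection pairing (taking $\mathrm{Gr}$ in degree $d$ to be $d\cdot\mathrm{Id}$ for even $d$), hence so is its compression to each $H_\lambda$; but a skew-adjoint operator on a space with a nondegenerate symmetric pairing has trace zero, and more is true after the splitting-lemma normalization — I would argue that the block-diagonal part $A_{-1,\lambda}$ of $\tfrac12(\mathrm{Gr}-\dim_\bC(M)I)$ on $H_\lambda$ is conjugate to a nilpotent operator, equivalently all its eigenvalues are $0$. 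Then $\exp(-2\pi i\,A_{-1,\lambda})$ is unipotent, and combining with the shift by $\dim_\bC(M)$ that was subtracted — i.e. tracking that the actual $Q^{-1}$-coefficient is $\tfrac12\mathrm{Gr}$ and not $\tfrac12(\mathrm{Gr}-\dim_\bC(M)I)$ — yields eigenvalue $\exp(-\pi i\dim_\bC(M)) = (-1)^{\dim_\bC(M)}$ for every summand. (This is the same bookkeeping that produced the exponent $\tfrac12$ and monodromy $-1$ in Example \ref{th:p1} when $\dim_\bC(M)=1$.)

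The main obstacle is the last step: showing that the compression of $\mathrm{Gr}-\dim_\bC(M)I$ to each eigenspace $H_\lambda$ is not merely trace-zero but genuinely has all eigenvalues zero — a skew-adjoint operator for a symmetric bilinear form can certainly have nonzero (purely imaginary, in pairs) eigenvalues in general. The resolution is that the form here is the intersection pairing restricted to $H_\lambda\subset H^{\mathrm{even}}$, which pairs $H^{2k}$ with $H^{2(\dim_\bC M - k)}$, while $\mathrm{Gr}-\dim_\bC(M)I$ acts on $H^{2k}\cap H_\lambda$ as the scalar $2k-\dim_\bC(M)$; so with respect to the grading decomposition of $H_\lambda$ it is already \emph{diagonal} with real eigenvalues symmetric about $0$, and skew-adjointness forces the pairing to interchange the $\pm(2k-\dim_\bC M)$ eigenspaces — which is exactly Poincar\'e duality. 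What Lemma \ref{th:splitting}(ii) buys is that the normalized $A_{-1,\lambda}$ is the honest block-diagonal compression; and then I would invoke the fact (again Poincar\'e duality within the single-$\lambda$ block, as in the proof of Lemma \ref{th:one-dimensional}, now with $H_\lambda$ possibly higher-dimensional) that this compression, conjugated appropriately, is strictly upper triangular with zero diagonal relative to a flag refining the degree filtration — hence nilpotent. I would present this carefully since it is the only genuinely new content beyond assembling Lemmas \ref{th:one-dimensional}, \ref{th:quadratic-pole}, and \ref{th:duality}.
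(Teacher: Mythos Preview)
Your setup is correct through the application of Lemma \ref{th:quadratic-pole}(ii): semisimplicity of $(H^*(M),\ast)$ makes $[\omega_M]\ast$ diagonalizable, hence the connection has unramified exponential type, and the regularized monodromy eigenvalues on the $\lambda$-block are those of $\exp(-2\pi i\,A_{-1,\lambda})$, where $A_{-1,\lambda} = P_\lambda(\tfrac12\mathrm{Gr})P_\lambda|_{H_\lambda}$. You also correctly observe that the eigenspaces $H_\lambda$ are mutually orthogonal for the intersection pairing, and that $\mathrm{Gr}-\dim_\bC(M)I$ is skew for that pairing.

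The gap is in the final step. The eigenspaces $H_\lambda$ are \emph{not} graded subspaces of $H^*(M;\bC)$: the idempotents $e_i$ are spread across several degrees (already in $\bC P^1$, $e_\pm = \tfrac12(1\pm[\mathrm{pt}])$). So ``$H^{2k}\cap H_\lambda$'' is typically zero for all $k$, and the sentence ``with respect to the grading decomposition of $H_\lambda$ it is already diagonal'' has no content. Consequently your nilpotency argument via a degree filtration collapses. Skew-adjointness alone only forces the diagonal entries (in the idempotent basis) of $P_\lambda(\mathrm{Gr}-\dim_\bC(M)I)P_\lambda$ to vanish and the off-diagonal entries to be ``skew''; over $\bC$ with an indefinite symmetric form this does not force the eigenvalues to be zero (think of $\bigl(\begin{smallmatrix}a&0\\0&-a\end{smallmatrix}\bigr)$, skew for the hyperbolic form). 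So you have not shown that the block is $\dim_\bC(M)I$ up to nilpotent.

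The paper closes exactly this gap, and not by linear algebra: it uses Gromov--Witten theory. Via the divisor axiom \eqref{eq:divisor-axiom} one finds $\int_M \mathrm{Gr}(e_i)e_j = 2\langle c_1(M),e_i,e_j,e_j\rangle$ for $i\neq j$, and then the five-point WDVV relation \eqref{eq:5-wdvv} gives $\langle c_1(M),e_i,e_j,e_j\rangle = (\lambda_j-\lambda_i)\langle e_i,e_j,e_j,e_j\rangle$, which vanishes when $\lambda_i=\lambda_j$. Hence \emph{all} off-diagonal entries of the block vanish, so $P_\lambda\,\mathrm{Gr}\,P_\lambda|_{H_\lambda} = \dim_\bC(M)\cdot I$ exactly (stronger than nilpotent), and $\exp(-\pi i\dim_\bC(M)) = (-1)^{\dim_\bC(M)}$ is the only eigenvalue. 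This enumerative input (Dubrovin's argument) is the missing idea in your proposal; Poincar\'e duality alone does not suffice when $\dim H_\lambda > 1$.
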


\begin{proof}
Unramified exponential type is obvious from Lemma \ref{th:quadratic-pole}. In this case there can be no odd degree cohomology, so we can assume that $\mathrm{Gr}$ is as in the proof of Lemma \ref{th:one-dimensional}. Write $\langle x_1,\dots,x_n \rangle$ for $n$-pointed genus zero Gromov-Witten invariants, so that
\begin{equation}
\int_M (x_1 \ast x_2) x_3 = \langle x_1,x_2,x_3 \rangle.
\end{equation}
As a special case of the divisor axiom,
\begin{equation} \label{eq:divisor-axiom}
 2 \langle c_1(M), x_1,x_2,x_3 \rangle +
\int_M \mathrm{Gr}(x_1 \ast x_2) x_3 = \int_M (\mathrm{Gr}(x_1) \ast x_2 + x_1 \ast \mathrm{Gr}(x_2)) x_3.
\end{equation}
The five-point WDVV relation is
\begin{equation} \label{eq:5-wdvv}
\langle x_1 \ast x_2, x_3,x_4, x_5 \rangle + \langle x_1,x_2, x_3 \ast x_4, x_5 \rangle =
\langle x_1 \ast x_3, x_2,x_4,x_5 \rangle + \langle x_1, x_3, x_2 \ast x_4, x_5 \rangle.
\end{equation}
In our situation, there is a basis $\{e_i\}$ of idempotents for the quantum product. These satisfy $e_i \ast e_j = 0$ and hence also $\int_M e_ie_j = 0$ for $i \neq j$. Moreover, $c_1(M) \ast e_i = \lambda_i e_i$ for some $\lambda_i \in \bC$. Applying \eqref{eq:divisor-axiom} yields
\begin{equation} \label{eq:special-divisor-axiom}
\begin{aligned}
& \text{for $e_i \neq e_j$, } \;\;
2\langle c_1(M),e_i,e_j,e_j\rangle = 2 \langle c_1(M),e_i,e_j,e_j \rangle + \int_M \mathrm{Gr}(e_i \ast e_j) e_j = 
\\ & \quad =
\int_M (\mathrm{Gr}(e_i) \ast e_j + e_i \ast \mathrm{Gr}(e_j)) e_j = \int_M (\mathrm{Gr}(e_i) \ast e_j + e_i \ast \mathrm{Gr}(e_j) ) \ast e_j \\
& \quad = \int_M \mathrm{Gr}(e_i) \ast e_j = \int_M \mathrm{Gr}(e_i) e_j.
\end{aligned}
\end{equation}
From \eqref{eq:5-wdvv} for $(x_1,\dots,x_5) = (c_1(M), e_i,e_j,e_j,e_j)$ one gets
\begin{equation} \label{eq:special-wdvv}
\text{for $e_i \neq e_j$, } \;\;
\langle c_1(M), e_i, e_j, e_j \rangle = (\lambda_j - \lambda_i) \langle e_i,e_j,e_j,e_j \rangle.
\end{equation}
As a consequence of \eqref{eq:grxx}, \eqref{eq:special-divisor-axiom} and \eqref{eq:special-wdvv} one sees that
\begin{equation}
\text{if $\lambda_i = \lambda_j$, then} \;\; 
\int_M \mathrm{Gr}(e_i) e_j = \begin{cases} \mathrm{dim}_{\bC}(M) \int_M e_i^2 & i = j, \\
0 & i \neq j.
\end{cases}
\end{equation}
In other words, each block diagonal part of $\mathrm{Gr}$ with respect to the decomposition into eigenspaces is $\mathrm{dim}_{\bC}(M)$ times the identity. One applies Lemma \ref{th:quadratic-pole}(ii) to obtain the desired conclusion.
\end{proof}

\subsubsection{Differential operators} 
An alternative viewpoint on the formal classification of connections is provided by the cyclic vector lemma, which says that there is some $v \in \bC((q))^r$ such that $v,\nabla_{\partial_q} v,\dots,\nabla_{\partial_q}^{r-1} v$ form a basis. The connection gauge transformed to that basis has the form
\begin{equation}
\partial_q + \begin{pmatrix} &&&& -a_0 \\ 1 &&&& -a_1 \\ & 1 &&& -a_2 \\ && \cdots && \cdots \\ &&& 1 & -a_{r-1} \end{pmatrix}
\end{equation}
for $a_0,\dots,a_{r-1} \in \bC((q))$. In other words, setting $a_r = 1$, we have a relation $\sum_{i=0}^r a_i \nabla_{\partial_q}^i v = 0$. We correspondingly define an order $r$ scalar differential operator
\begin{equation} \label{eq:higher-order-operator}
P_q = \sum_{i=0}^r a_i \partial_q^i.
\end{equation}
%
Suppose that $w \in \bC((q))^r$ is a covariantly constant section of the dual connection, $(\partial_q - A_q^{\mathrm{tr}})w = 0$. Then, the function $f = v \cdot w \in \bC((q))$ solves $P_q f = 0$.

\begin{remark} 
For the quantum connection, each class $y \in H^*(M;\bC)$ gives rise to a covariantly constant section $\Psi_y$ of the (Poincar\'e) dual connection \cite[Section 28.1]{hori-katz-klemm-pandharipande-thomas-vafa-vakil-zaslow}. These fundamental solutions are generally multivalued, meaning that they have coefficients in $\bC[\log(q)]((q))$ (which makes sense since there's an obvious action of $\partial_q$ on that ring). Explicitly,
\begin{equation}
\int_M x \Psi_y = \int_M x e^{\log(q)[\omega_M]} y +
\sum_{\substack{d>0 \\ m \geq 0}} q^d \big\langle x, \psi^m(e^{\log(q) [\omega_M]} y) \big\rangle_d.
\end{equation}
Here, we use standard notation $\langle x_1, \psi^m(x_2) \rangle_d$ for two-pointed Gromov-Witten invariants with gravitational descendants, counting rational curves with first Chern number $d$. The covariant constancy property can be expressed as 
\begin{equation}
\partial_q \int_M x \Psi_y = \int_M (\nabla_{\partial_q} x) \Psi_y.
\end{equation}
As discussed above, if $v$ is a cyclic vector and $P_q$ the resulting operator, then
\begin{equation} \label{eq:p-constraint}
P_q\big(\textstyle\int_M v \Psi_y\big) = 0 \;\; \text{for all $y$.}
\end{equation}
\end{remark}

\begin{remark} \label{th:givental}
Instead of considering the entire quantum connection, let's just look at the $\bC((q))$-linear subspace spanned by $1 \in H^0(M)$ and its $\nabla_{\partial_q}$-derivatives, so that $v = 1$ is tautologically a cyclic vector. Let $P_q$ be the associated differential operator. The analogue of \eqref{eq:p-constraint} is 
\begin{equation} \label{eq:p-j}
P_q(J_y) = 0,
\end{equation}
where we write (using the string equation)
\begin{equation} \label{eq:j-function}
J_y = \int_M \Psi_y = \int_M e^{\log(q)[\omega_M]}y +
\sum_{\substack{d>0 \\ m > 0}} q^d \langle \psi^{m-1}(e^{\log(q) [\omega_M]} y)\rangle.
\end{equation}
In Givental's terminology (see e.g.\ \cite[Section 10.3]{cox-katz} for an exposition in the proper context, which is more general than the one here), a differential operator $\tilde{P}_q = \sum_i \tilde{a}_i \partial_q^i$, $\tilde{a}_i \in \bC((q))$, would be called a quantum differential operator if it satisfies the analogue of \eqref{eq:p-j}, meaning that
\begin{equation} \label{eq:tilde-p}
\tilde{P}_q(J_y) = 0 \text{ for all $y$} \;\; \Leftrightarrow \int_M (\sum_i \tilde{a}_i \nabla_{\partial_q}^i(1)) \cdot \Psi_y = 0 \text{ for all $y$} \;\; \Leftrightarrow \;\; \sum_i \tilde{a}_i \nabla_{\partial_q}^i(1) = 0.
\end{equation}
Because the definition of $P_q$ involves the lowest degree relation between the $\nabla_{\partial_q}^i(1)$, 
we then have $\tilde{P}_q = (\text{\it some differential operator})P_q$; in terms of the Weyl algebra to be introduced in a little while, the quantum differential operators are the left ideal generated by $P_q$. By an easy degree argument, the rightmost equation in \eqref{eq:tilde-p} implies the following: if the coefficients of $\tilde{a}_i$ only contain nonpositive powers of $q$, the relation
\begin{equation} \label{eq:quantum-relation}
\sum_{i=0}^r \text{($q^0$-term of $\tilde{a}_i$)} [\omega_M]^{\ast i} = 0
\end{equation}
holds in quantum cohomology, where as usual $\ast$ stands for the $q = 1$ specialization of the quantum product (compare \cite[Theorem 10.3.1]{cox-katz}).
\end{remark}

The Newton polygon of $P_q$ is constructed as follows. For each $0 \leq i \leq r$ such that $a_i \neq 0$, take the point $(x_i,y_i) = (i,\mathrm{val}_q(a_i)-i)$, where $\mathrm{val}_q(a_i)$ is the lowest power of $q$ which occurs. Consider the subsets 
\begin{equation}
\{0 \leq x \leq x_i,\; y \geq y_i\}. 
\end{equation}
The Newton polygon is the convex hull of the union of those subsets. The slopes of $P_q$ are the finite (meaning, excluding vertical sides) slopes of the sides of the Newton polygon. These slopes are nonnegative rational numbers, and are invariants of the original connection (which means, they are independent of the choice of cyclic vector $v$; see e.g.\ \cite[Theorem III.1.5]{malgrange}). The classical Fuchs regularity criterion is:

\begin{lemma}
$\nabla_{\partial_q}$ has a regular singular point if and only if $0$ is the only slope of $P_q$.
\end{lemma}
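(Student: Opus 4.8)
This is the classical Fuchs criterion, and the plan is to reformulate it entirely in terms of the scalar operator $P_q$. The starting observations: in the basis $v,\nabla_{\partial_q}v,\dots,\nabla_{\partial_q}^{r-1}v$ the connection $\nabla_{\partial_q}$ takes companion form (this is precisely the companion connection of $P_q$), which is a formal gauge transform of the original; ``having a regular singular point'' is gauge invariant by its very definition; and, as recalled above, the Newton polygon slopes depend only on the formal gauge equivalence class, in particular not on the choice of cyclic vector. So it is enough to prove the equivalence for one convenient pair (gauge representative, cyclic vector). I would also record the elementary unwinding of the slope condition: reading off the Newton polygon, ``$0$ is the only slope of $P_q$'' is equivalent to $\mathrm{val}_q(a_i)\ge i-r$ for all $i$ with $a_i\ne 0$, since the polygon is then exactly the quadrant $\{0\le x\le r,\ y\ge -r\}$, whose only finite edge is horizontal; conversely any coefficient violating this produces a corner strictly below $y=-r$ and hence a strictly positive slope.

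Next I would pass to the ``Euler'' basis. Writing $\delta=q\nabla_{\partial_q}$, the purely formal identity $q^i\nabla_{\partial_q}^i=\delta(\delta-1)\cdots(\delta-i+1)$ shows that $v,\delta v,\dots,\delta^{r-1}v$ is again a basis (an invertible integer-coefficient, Stirling, change of basis away from $v,q\nabla_{\partial_q}v,\dots,q^{r-1}\nabla_{\partial_q}^{r-1}v$), and that in this basis $\nabla_{\partial_q}$ takes companion form $\partial_q+q^{-1}\mathcal{C}$, where the last column of $\mathcal{C}$ records the coefficients $b_0,\dots,b_{r-1}$ of the relation $\delta^r v=-\sum_i b_i\delta^i v$. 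The $b_i$ and the $q^{r-i}a_i$ differ by the integer change of basis between the falling-factorial and monomial bases in $\delta$, so ``$0$ is the only slope'' $\iff$ ``all $b_i\in\bC[[q]]$'' $\iff$ ``$\mathcal{C}$ has power-series entries''. The implication ``$0$ is the only slope $\Rightarrow$ regular singular'' is then immediate: such a $\mathcal{C}$ gives a connection with a pole of order $\le 1$. (Equivalently, one can avoid the $\delta$-basis here and conjugate the companion connection of $P_q$ directly by $\mathrm{diag}(q,q^2,\dots,q^r)$, checking with $\mathrm{val}_q(a_i)\ge i-r$ that the result lies in $q^{-1}\mathit{Mat}_r(\bC[[q]])$.)

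For the converse, a regular singular $\nabla_{\partial_q}$ admits a $\delta$-stable $\bC[[q]]$-lattice $L$ (take the representative with $A_q\in q^{-1}\mathit{Mat}_r(\bC[[q]])$ and $L=\bC[[q]]^r$). For any cyclic vector $v$, each $\delta^i v$ lies in $L$, since every $\delta-k$ with $k\in\bZ$ preserves $L$. If $v$ can moreover be chosen with $\overline{v}\in L/qL$ a cyclic vector for the induced residue operator $\overline{\delta}$, then $\overline{\delta^i v}=\overline{\delta}^{\,i}\overline{v}$ shows $\{\delta^i v\}_{i<r}$ spans $L/qL$, so by Nakayama it is a $\bC[[q]]$-basis of $L$; since $\delta^r v\in L$, the coefficients $b_i$ lie in $\bC[[q]]$, and we are done.

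\emph{Main obstacle.} The only genuinely delicate point is that $\overline{\delta}$ (the residue, up to integer shifts) may be derogatory, in which case no cyclic vector for $\overline{\delta}$ exists and the naive $v$ spans only a proper sublattice $L'\subsetneq L$, yielding merely $\mathrm{val}_q(b_i)\ge -\mathrm{length}(L/L')$. I would resolve this by the standard lattice-modification argument: replacing $L$ by a chain of $\delta$-stable sublattices obtained as preimages of $\overline{\delta}$-stable subspaces shifts the residue eigenvalues by integers and, suitably arranged, spreads them apart and destroys the Jordan degeneracies, producing a $\delta$-stable lattice on which the residue is non-derogatory. Alternatively: reduce via Lemma \ref{th:regular-singular} to $\nabla_{\partial_q}=\partial_q+q^{-1}A_{-1}$; in the non-derogatory case use the explicit formula $\nabla_{\partial_q}^i v_0=q^{-i}A_{-1}(A_{-1}-1)\cdots(A_{-1}-i+1)v_0$ for a constant cyclic vector $v_0$ together with Cramer's rule---the Wronskian has valuation $-\binom{r}{2}$ and the numerators valuation $\ge i-r-\binom{r}{2}$, giving $\mathrm{val}_q(a_i)\ge i-r$---and deduce the derogatory case from additivity of the Newton polygon slopes under the Hukuhara--Turrittin--Levelt decomposition cited above.
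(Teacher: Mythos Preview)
The paper does not prove this lemma at all: it is introduced as ``the classical Fuchs regularity criterion'' and simply stated, with the surrounding discussion citing \cite{malgrange} and \cite{vanderput-singer} for the general theory of slopes. So there is no paper-proof to compare against; you have supplied an argument where the paper offers none.

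Your argument is essentially correct. The easy direction (only slope $0$ $\Rightarrow$ regular singular) via the diagonal conjugation $\mathrm{diag}(q,\dots,q^r)$, or equivalently via the $\delta$-basis, is clean. For the converse, the lattice argument is the right one, and you correctly isolate the derogatory-residue obstacle. One small correction: your proposed alternative of invoking ``the Hukuhara--Turrittin--Levelt decomposition cited above'' is empty here, since for a regular singular connection HTL yields a single piece with $\lambda=0$. What actually does the job is Lemma~\ref{th:regular-singular}(i): gauge to $\partial_q+q^{-1}\tilde A_{-1}$ with constant $\tilde A_{-1}$, put $\tilde A_{-1}$ in Jordan form to get a direct sum of non-derogatory blocks, and then use the cited gauge-invariance of slopes (which forces the slopes of a direct sum to be the multiset union of the slopes of the summands). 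Your primary route via lattice modification is cleaner and self-contained; once the residue on the modified lattice $L'$ is non-derogatory, choosing $v_0\in L'$ with cyclic reduction simultaneously produces a cyclic vector for $\nabla_{\partial_q}$ and forces the $b_i$ into $\bC[[q]]$, as you say.
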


More generally, the slopes describe the pole orders of the pieces of the Hukuhara-Turrittin-Levelt decomposition of a connection \cite[Remark 3.55]{vanderput-singer}. As a special case, one has:

\begin{lemma} \label{th:unramified-slopes}
If $\nabla_{\partial_q}$ has a singularity of unramified exponential type, the slopes of $P_q$ are a subset of $\{0,1\}$.
\end{lemma}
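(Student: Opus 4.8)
The plan is to deduce this from the slope/Hukuhara–Turrittin–Levelt dictionary recalled just above, combined with the very explicit normal form \eqref{eq:exponential-type} of an unramified exponential type connection. The starting point is that the multiset of slopes of $P_q$ is a formal gauge invariant of $\nabla_{\partial_q}$ (independent of the cyclic vector), and — being read off from the Hukuhara–Turrittin–Levelt decomposition — it is the union of the slope multisets of the summands in any formal direct sum decomposition of $\nabla_{\partial_q}$; this is the content of \cite[Remark 3.55]{vanderput-singer}. Hence it is harmless to replace $\nabla_{\partial_q}$ by the direct sum \eqref{eq:exponential-type} and to treat each summand $\tilde\nabla_{\partial_q,\lambda} = \tilde\nabla^{\mathrm{reg}}_{\partial_q,\lambda} + \lambda q^{-2} I$ separately.

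For $\lambda = 0$ the summand has a regular singular point, so by the Fuchs regularity criterion stated just above, its only slope is $0$. For $\lambda \neq 0$, I would first use Lemma \ref{th:regular-singular}(i) to reduce the regular part to a simple pole, which presents $\tilde\nabla_{\partial_q,\lambda}$, up to formal gauge equivalence, as $\partial_q + \lambda q^{-2} I + q^{-1} B$ with $B$ a constant matrix; equivalently, it exhibits this summand as the tensor product of the rank one connection $\partial_q + \lambda q^{-2}$, whose formal flat section is $\exp(\lambda q^{-1})$, with a regular singular connection. Either description makes transparent that the only exponential factor occurring in the formal solutions of this summand is $\exp(\lambda q^{-1})$, whose exponent is a polynomial of degree exactly $1$ in $q^{-1}$, and that its connection matrix has a pole of order exactly two. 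By the slope/Hukuhara–Turrittin–Levelt correspondence, every slope of this summand therefore equals $1$. (If one prefers not to invoke that correspondence directly, one can instead argue through the Newton polygon: a connection whose matrix has a pole of order at most two has all slopes $\leq 1$ by a standard valuation estimate for the companion form built from a cyclic vector, and since unramified exponential type rules out ramification these slopes are integers, hence lie in $\{0,1\}$; applied to the full direct sum \eqref{eq:exponential-type} with its regular parts reduced to simple poles, this handles both cases at once.)

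Combining the two cases, all slopes of $P_q$ lie in $\{0,1\}$. The only point requiring any care — and what I would regard as the (mild) crux — is the bookkeeping in the slope/Hukuhara–Turrittin–Levelt dictionary: one must check that, under the normalization of "unramified exponential type" adopted in the definition preceding \eqref{eq:exponential-type}, the exponential parts are precisely the $\lambda q^{-1}$, so that their $q^{-1}$-degrees, and hence the nonzero slopes, are all equal to $1$ rather than some larger integer. Everything else is a direct appeal to Lemma \ref{th:regular-singular}, the Fuchs criterion, and the stability of slopes under direct sums.
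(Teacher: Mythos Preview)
Your proposal is correct and is precisely the unpacking of what the paper does: the paper does not give a self-contained proof but simply states the lemma as ``a special case'' of the general dictionary \cite[Remark 3.55]{vanderput-singer} between slopes and the pole orders of the exponential factors in the Hukuhara--Turrittin--Levelt decomposition. Your argument is exactly the spelling-out of that special case (the exponential factors in \eqref{eq:exponential-type} are $\lambda q^{-1}$, hence have $q^{-1}$-degree $0$ or $1$), so there is nothing to add.
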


\begin{example} 
Consider \cite[Remark 2.13]{katzarkov-kontsevich-pantev08}
\begin{equation}
A_q = \begin{pmatrix} 0 & -q^{-2} \\ -q^{-1} & -\half q^{-1} \end{pmatrix}.
\end{equation}
The cyclic vector $v = (1,0)$ yields a differential operator with slope $1/2$,
\begin{equation}
P_q = \partial_q^2 + {\textstyle\frac32} q^{-1}\partial_q - q^{-3}.
\end{equation}
Hence, this is not of unramified exponential type. One can also see this in a different way (following the argument in \cite{katzarkov-kontsevich-pantev08}). Namely, introducing a square root $q^{1/2}$ allows the gauge transformation 
\begin{equation} \label{eq:half-power}
G_q = \begin{pmatrix} 1 & 1 \\ -q^{1/2} & q^{1/2} \end{pmatrix}\;\; \Longrightarrow \;\;
\tilde{A}_q = q^{-3/2} \begin{pmatrix} 1 & 0 \\ 0 & -1 \end{pmatrix}.
\end{equation}
By the uniqueness part of the Hukuhara-Turrittin-Levelt theorem, this rules out unramified exponential type. Note that the theorem classifies connections up to multivalued gauge transformations, which means over $\bC((q,q^{1/2},q^{1/3},\dots))$; so the occurrence of $q^{1/2}$ is not a problem.
\end{example}

\begin{example}
Take the previous example, and add $q^{-2}$ times the identity matrix to $A_q$. For the same cyclic vector, one now gets
\begin{equation}
\textstyle P_q = \partial_q^2 + (\frac32 q^{-1} -2q^{-2} )\partial_q + (q^{-4} - \half q^{-3}),
\end{equation}
whose only slope is $1$. By \eqref{eq:half-power} this is gauge equivalent to $\tilde{A}_q = (q^{-2} + q^{-3/2})I$. Again by the uniqueness part of Hukuhara-Turrittin-Levelt, this means that the original connection cannot be of unramified exponential type.
%
%
This example illustrates that the converse to Lemma \ref{th:unramified-slopes} is false.
\end{example}

\subsubsection{Fourier-Laplace transform\label{subsubsec:fl}}
Let $W_q$ be the Weyl algebra of differential operators in one variable $q$, over $\bC$. This is generated by $q$ and $\partial_q$, with the relation
\begin{equation}
[\partial_q, q] = 1.
\end{equation}
Left $W_q$-modules are called $D$-modules. If $N_q$ is a $D$-module, and $p \in \bC[q]$ is nonzero,
\begin{equation} \label{eq:localised-h}
N_{q,1/p} \stackrel{\mathrm{def}}{=} \bC[q,1/p] \otimes_{\bC[q]} N_q
\end{equation}
inherits the structure of a $D$-module (by the obvious differentiation rule). A $D$-module $N_q$ is called holonomic if it is finitely generated and, for every $x \in N_q$, there is a nonzero $w \in W_q$ such that $wx = 0$. If $N_q$ is holonomic, then so are its localisations \eqref{eq:localised-h}.
%
Given a rational connection $\nabla_{\partial_q}$ as in \eqref{eq:rational-connection}, the space $ \bC[q,1/p]^r$, with $\partial_q$ acting by $\nabla_{\partial_q}$, becomes a holonomic $D$-module. The $W_q$-modules obtained in this way are precisely those on which $p$ acts invertibly, and which are finitely generated over $\bC[q,1/p]$ (freeness over $\bC[q,1/p]$ is then an automatic consequence). In the converse direction, one has (see e.g.\ \cite[p.~171]{sabbah-isomonodromic}):

\begin{lemma} \label{th:localize-d-module}
Let $N_q$ be a holonomic $D$-module. Then there is a nonzero $p \in \bC[q]$, such that $N_{q,1/p}$ is isomorphic to the $D$-module coming from a connection \eqref{eq:rational-connection}.
\end{lemma}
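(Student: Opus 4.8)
The plan is to go through the characteristic variety of $N_q$. Fix a good filtration $F_\bullet N_q$ compatible with the order filtration on $W_q$; then $\mathrm{gr}^F N_q$ is a finitely generated module over $\mathrm{gr}\,W_q = \bC[q,\xi]$ (with $\xi$ the principal symbol of $\partial_q$), and I take the characteristic variety $\mathrm{Ch}(N_q) \subseteq \bA^2_{q,\xi}$ to be its support. This variety is invariant under rescaling in the $\xi$-direction, and for a finitely generated $W_q$-module the holonomicity hypothesis used here — every element killed by a nonzero operator — is exactly the condition $\dim \mathrm{Ch}(N_q) \le 1$ (with equality when $N_q \neq 0$, by Bernstein's inequality). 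Any one-dimensional irreducible closed subset of $\bA^2$ invariant under $\xi$-rescaling is either the zero section $\{\xi = 0\}$ or a fibre $\{q = \sigma\}$ of the projection to $\bA^1$, so $\mathrm{Ch}(N_q) \subseteq \{\xi = 0\} \cup \{q=\sigma_1\} \cup \cdots \cup \{q=\sigma_\ell\}$ for a finite set $\sigma_1,\dots,\sigma_\ell \in \bC$. I then set $p_0 = \prod_i (q-\sigma_i)$.

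Since localization is compatible with characteristic varieties, $\mathrm{Ch}(N_{q,1/p_0})$ is contained in the zero section $\{\xi = 0\}$. Hence a good filtration on $N_{q,1/p_0}$ has associated graded which is finitely generated over $\bC[q,1/p_0][\xi]$ and set-theoretically supported on $\{\xi = 0\}$, so it is killed by some power of $\xi$; consequently $\mathrm{gr}^F N_{q,1/p_0}$, and with it $N_{q,1/p_0}$, is finitely generated over $\bC[q,1/p_0]$. In other words, after inverting $p_0$ the $D$-module has become $\mathcal O$-coherent. I expect this passage — from holonomicity to $\mathcal O$-coherence after a single localization — to carry the real content of the lemma; everything downstream is commutative algebra over a principal ideal domain.

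To finish: $R_0 := \bC[q,1/p_0]$ is a principal ideal domain, so its torsion submodule $T \subseteq N_{q,1/p_0}$ is finitely generated and supported at finitely many closed points $\tau_1,\dots,\tau_t$ of $\mathrm{Spec}\,R_0$. Put $p = p_0 \prod_j (q - \tau_j)$. Then $N_{q,1/p}$ is a finitely generated torsion-free module over the principal ideal domain $R := \bC[q,1/p]$, hence free of some finite rank $r$; choosing an $R$-basis, $\partial_q$ acts as the Leibniz-rule differentiation plus an $R$-linear correction, i.e.\ exactly as a connection $\partial_q + A_q$ with $A_q \in \mathit{Mat}_r(\bC[q,1/p])$ as in \eqref{eq:rational-connection}, and conversely every such connection gives a $D$-module, so $N_{q,1/p}$ is of the stated form. (If $N_q = 0$, take $p = 1$, $r = 0$.) As an alternative to the first two paragraphs one can avoid characteristic varieties: $N_q \otimes_{\bC[q]} \bC(q)$ is holonomic over $\bC(q)\langle\partial_q\rangle$ hence finite-dimensional over $\bC(q)$; lifting a $\bC(q)$-basis to finitely many elements of $N_q$, clearing denominators in the action of $\partial_q$, and then inverting the (finitely supported, since holonomic) torsion submodule and the cokernel of the resulting inclusion yields the same free localization — using here that sub- and quotient modules of holonomic modules are holonomic and that a $\bC[q]$-torsion holonomic module has finite support.
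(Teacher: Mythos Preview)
Your argument is correct; both the characteristic-variety route and the alternative via generic finite-dimensionality over $\bC(q)$ are standard and complete. The paper does not actually prove this lemma: it is stated with a reference to \cite[p.~171]{sabbah-isomonodromic}, so there is no in-paper argument to compare against.
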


Take a minimal such $p$. One calls $\Sigma_q = p^{-1}(0)$ the set of singularities of the $D$-module $N_q$, and speaks of the associated rational connection $\nabla_{\partial_q}$ (which, in our formulation, is unique up to gauge equivalence over $\bC \setminus \Sigma_q$).
%

\begin{definition}
Let $t$ be another formal variable. We identify $W_t \iso W_q$ by setting
\begin{equation} \label{eq:fourier}
t = \partial_q, \;\; \partial_t = -q.
\end{equation}
Given a $W_t$-module $N_t$, the Fourier-Laplace transform $N_q$ is simply the same space considered as a module over $W_q$ via \eqref{eq:fourier}. This clearly preserves holonomicity.
\end{definition}

\begin{example} \label{th:uv} 
Take linear maps $U: \bC^r \rightarrow \bC^s$, $V: \bC^s \rightarrow \bC^r$, as well as some $\sigma \in \bC$. Define a holonomic $D$-module 
\begin{equation} \label{eq:explicit-n}
N_t = \bC[t]^r \oplus \bC[\partial_t]^s, 
\end{equation}
with the $W_t$-action
\begin{equation} \label{eq:model-action}
\begin{aligned} &
\partial_t( t^m x, \partial_t^n y) = (m t^{m-1} x, \partial_t^{n+1} y) + \begin{cases} 
(t^{m-1} VU x, 0) & m > 0, \\
(0, Ux) & m = 0;
\end{cases}
\\ &
t (t^m x, \partial_t^n y) = (t^{m+1}x, \sigma \partial_t^n y - n\partial_t^{n-1}y) + \begin{cases} (0,\partial_t^{n-1} UV y) & n>0, \\
(Vy,0) & n = 0.
\end{cases}
\end{aligned}
\end{equation}
Any element of the second summand of \eqref{eq:explicit-n} is mapped to the first summand by a sufficiently high power of $(t-\sigma)$; and $(t-\sigma)\partial_t (x,0) = (VUx,0)$. From that, one sees that $N_{t,1/(t-\sigma)}$ is the module associated to the connection
\begin{equation} \label{eq:model-action-4}
\nabla_{\partial_t}= \partial_t + (t-\sigma)^{-1}VU. 
\end{equation}
Apply the Fourier-Laplace transform, and write the summands in the opposite order, which means as $N_q = \bC[q]^s \oplus \bC[\partial_q]^r$. Then, $N_{q,1/q}$ is the module associated to $\nabla_{\partial_q} = \partial_q - q^{-1}UV + \sigma$. Changing coordinates to $Q = 1/q$ yields 
\begin{equation}
\nabla_{\partial_Q} = \partial_Q - Q^{-2}\sigma + Q^{-1}UV.
\end{equation}
\end{example}

\begin{remark}
Take $P_q \in W_q$ and a formal power series solution, meaning some $\Pi = \sum_{m=0}^\infty a_m q^m \in \bC[[q]]$ such that $P_q \Pi = 0$. Then
\begin{equation}
\hat{\Pi} \stackrel{\mathrm{def}}{=} \sum_{m=0}^\infty m!a_m t^{-m-1} \in t^{-1}\bC[[t^{-1}]]
\end{equation}
is a formal solution of the dual equation $P_t \hat{\Pi} = 0$, where $P_t$ corresponds to $P_q$ under \eqref{eq:fourier}. In the case of the quantum connection and the setting from Remark \ref{th:givental}, the quantum period is defined to be \eqref{eq:j-function} specialized to the class $y = p$ Poincar{\'e} dual to a point,
\begin{equation}
\Pi = J_p = 1 + \sum_{d \geq 2} q^d \langle \psi^{d-2}(p) \rangle_d.
\end{equation}
Then $\hat{\Pi}$ is essentially the regularized quantum period (see e.g.\ \cite{coates-corti-galkin-golyshev-kasprzyk13} for the terminology).
\end{remark}

The general relation between a holonomic $D$-module and its Fourier-Laplace transform was studied extensively in \cite{malgrange}. We will need only a special case of the results from \cite[Ch.~IX-XI]{malgrange}:

\begin{proposition} \label{th:fourier}
Let $N_t$ be a holonomic $W_t$-module, with singularities at $\Sigma_t \subset \bC$. Suppose that the associated rational connection $\nabla_{\partial_t}$, in the sense of Lemma \ref{th:localize-d-module}, has only regular singular points, including at $t = \infty$. Then, 

(i) The Fourier-Laplace tranform $N_q$ is nonsingular on $\bC^*$. If we look at the associated connection $\nabla_{\partial_q}$, then that has a regular singular point at $q = 0$, and a singularity of unramified exponential type at $q = \infty$.

(ii) To describe the latter singularity more precisely, let's change coordinates to $Q = 1/q$, and look at the normal form $\tilde{\nabla}_{\partial_Q}$  from \eqref{eq:exponential-type}. The numbers that appear there are precisely $\lambda = -\sigma$ for $\sigma \in \Sigma_t$. Moreover, for each $\sigma$ there are there are matrices $U_\sigma$, $V_\sigma$ such that the monodromy of $\nabla_{\partial_t}$ around $\sigma$ is $\exp(-2\pi i V_\sigma U_\sigma)$, and the corresponding regularized formal monodromy around $q = \infty$ (anticlockwise in $Q$, which means clockwise in $q$) is $\exp(-2\pi i U_\sigma V_\sigma)$.
\end{proposition}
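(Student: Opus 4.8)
The strategy is to reduce the general statement to the explicit model computation already carried out in Example \ref{th:uv}, using the structure theory of holonomic $D$-modules with regular singularities. First I would recall the standard fact (Malgrange, \cite[Ch.~IX-XI]{malgrange}) that a holonomic $W_t$-module $N_t$ whose associated rational connection $\nabla_t$ has only regular singular points, \emph{including at infinity}, can be presented — after replacing $N_t$ by an isomorphic module — in a normal form built out of its "vanishing cycle'' data at each $\sigma \in \Sigma_t$. Concretely, the minimal (intermediate) extension of the local system off $\Sigma_t$ differs from $N_t$ by finite-dimensional pieces supported at the points of $\Sigma_t$, and near each $\sigma$ the module is determined by the canonical/variation maps $\mathrm{can}_\sigma: \Psi_\sigma \to \Phi_\sigma$ and $\mathrm{var}_\sigma: \Phi_\sigma \to \Psi_\sigma$ between nearby and vanishing cycles. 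This is exactly the shape of the data $(U,V,\sigma)$ in Example \ref{th:uv}: taking $\bC^r = \Psi_\sigma$, $\bC^s = \Phi_\sigma$, $U = \mathrm{can}_\sigma$, $V = \mathrm{var}_\sigma$, the module \eqref{eq:explicit-n} is the local model at $\sigma$, and the local monodromy of $\nabla_t$ around $\sigma$ is $\exp(-2\pi i\, V_\sigma U_\sigma)$ by \eqref{eq:model-action-4}.

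The key steps, in order, are: (1) decompose $N_t$ up to isomorphism into a direct sum of a piece that is a connection on all of $\bA^1$ with regular singularity at infinity (a "constant'' part) plus, for each $\sigma \in \Sigma_t$, a piece glued from the Example \ref{th:uv} model at $\sigma$ — more precisely, use that the category of regular holonomic $D$-modules on $\bA^1$ is equivalent, by Deligne-Malgrange, to a quiver-type description in terms of the generic local system plus the $(\Psi_\sigma, \Phi_\sigma, \mathrm{can}_\sigma, \mathrm{var}_\sigma)$ at each singularity, and observe that Fourier-Laplace is additive so it suffices to treat each building block; (2) for the "constant'' part (a regular connection with no singularities in $\bC$, only at $\infty$): by the cyclic vector lemma and the slope interpretation (\cite[Theorem III.1.5]{malgrange}), regularity at every finite point and at infinity forces, after Fourier-Laplace, a connection on $\bA^1$ which is again regular everywhere including infinity and has no finite singularity — one checks directly that such a module is a successive extension of copies of $\bC[q]$ with trivial or logarithmic connection, whose Fourier transform is supported at $q=0$ hence contributes only to the $\lambda = 0$ summand with regular singular behaviour at $q=\infty$; (3) for each $\sigma$-block, invoke Example \ref{th:uv} verbatim: its Fourier-Laplace transform, in the coordinate $Q = 1/q$, is the connection $\nabla_{\partial_Q} = \partial_Q - Q^{-2}\sigma + Q^{-1} U_\sigma V_\sigma$, which is of unramified exponential type with $\lambda = -\sigma$ and, by Lemma \ref{th:quadratic-pole}(ii) together with the evident semisimplicity of $\sigma \cdot I$ on that summand, has regularized formal monodromy with the same eigenvalues as $\exp(-2\pi i\, U_\sigma V_\sigma)$; (4) assemble: part (i) follows because Fourier-Laplace sends the direct sum to a direct sum of one nonsingular-on-$\bC^*$ piece with regular singularity at $q=0$ (from step 2, noting the residue at $q=0$ is controlled by the finite-rank data) and the exponential-type pieces from step 3, and a direct sum of unramified-exponential-type connections is again of that type; part (ii) is read off from step 3, the list of $\lambda$'s being exactly $\{-\sigma : \sigma \in \Sigma_t\}$.

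A few points need care. One must justify that the decomposition in step (1) can be arranged so that localizing at a suitable polynomial $p$ turns $N_q$ into an honest connection — but this is automatic from Lemma \ref{th:localize-d-module} applied to $N_q$, and holonomicity is preserved by Fourier-Laplace by definition. One must also verify that the two sources of regular singular behaviour at $q=0$ (the constant part and the $Q^{-1}$-terms of the $\sigma$-blocks) genuinely combine into a single regular singular point and not something worse; this is clear since each contributes a simple pole in $q$ (equivalently, each block is regular at $q=0$), and a finite direct sum of regular connections is regular. The statement that the monodromy of $\nabla_t$ at $\sigma$ equals $\exp(-2\pi i V_\sigma U_\sigma)$ while the regularized monodromy at $\infty$ equals $\exp(-2\pi i U_\sigma V_\sigma)$ is the "$UV$ versus $VU$'' phenomenon, and the only subtlety is that $U_\sigma V_\sigma$ and $V_\sigma U_\sigma$ have the same nonzero spectrum with multiplicities — so the eigenvalue statement holds on the nose, and the finer Jordan-type comparison (needed later for Theorem \ref{th:main-2}) follows from the standard fact that the nilpotent parts of $UV$ and $VU$ have Jordan blocks differing by at most one in size at the eigenvalue $0$.

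\textbf{Main obstacle.} The genuinely delicate point is step (1)–(2): producing, from the bare hypothesis "regular singularities including at infinity,'' a sufficiently explicit global presentation of $N_t$ so that Fourier-Laplace can be computed blockwise and matched to Example \ref{th:uv}. The cleanest route is to cite the relevant structure results of \cite[Ch.~IX-XI]{malgrange} directly (which is what the statement of the Proposition already anticipates by saying "we will need only a special case'') rather than to reprove the Deligne-Malgrange description; the remaining work — steps (3)–(4) — is then essentially bookkeeping on top of Example \ref{th:uv} and Lemma \ref{th:quadratic-pole}.
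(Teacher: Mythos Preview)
Your plan has a genuine gap in step (1): a regular holonomic $W_t$-module does \emph{not} in general decompose globally as a direct sum of a ``constant'' piece and local models at each $\sigma \in \Sigma_t$. The quiver description you invoke (generic fibre plus $(\Psi_\sigma, \Phi_\sigma, \mathrm{can}_\sigma, \mathrm{var}_\sigma)$ at each point) is a \emph{presentation} of the category, not a direct-sum splitting of its objects. For a concrete counterexample, take a rank-one connection on $\bA^1 \setminus \{0,1\}$ with nontrivial monodromy at both punctures: the corresponding minimal extension is a simple $D$-module, so it admits no nontrivial direct summands at all. Since Fourier--Laplace is exact but not in any obvious way compatible with the gluing data of a quiver presentation, ``additivity'' does not let you reduce to the building blocks.

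The paper sidesteps this entirely by working \emph{formally} rather than globally. Part (i) is simply cited from Sabbah. For part (ii), the key input is the local-to-formal principle (\cite[Section V.3]{sabbah-isomonodromic}): the formal isomorphism class of $N_q$ at $q = \infty$ depends only on the formal (local) structure of $N_t$ near each singularity $\sigma$, and each such singularity contributes a direct summand to the formal decomposition \eqref{eq:exponential-type}. This is a nontrivial theorem, not a consequence of any global splitting. Once one has it, the \emph{local} structure of a regular holonomic module near $\sigma$ is genuinely isomorphic to the model of Example \ref{th:uv} (this is \cite[p.~28]{malgrange}), and then the explicit computation in that example finishes the argument. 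Your steps (3)--(4) are fine and match the paper; what you are missing is that the reduction to Example \ref{th:uv} happens at the level of formal germs, not via a global direct sum. Your final paragraph correctly senses that citing Malgrange is the way out, but the specific mechanism (local-to-formal, then local normal form) is different from the global decomposition you sketched.
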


Part (i) is stated for instance in \cite[p.~91]{sabbah-stokes} or \cite[Lemma 1.5]{sabbah10}. Part (ii) follows from the fact that the formal structure of $N_q$ at $q = \infty$ depends only on the local structure of $N_t$ near its singularities \cite[Section V.3]{sabbah-isomonodromic}. More precisely, every singular point of $N_t$ contributes a direct summand to the formal connection $\nabla_{\partial_q}$. The local structure of a holonomic $D$-module with a regular singularity at $t = \sigma$ is isomorphic to one of those from Example \ref{th:uv}, see \cite[p.~28]{malgrange}; and therefore, the computation carried out in that example actually proves the general result. 

Given Proposition \ref{th:fourier}(ii), some linear algebra \cite{flanders51} yields the following:

\begin{corollary} \label{th:quasiunipotent}
Let $M_{\sigma,\alpha}$ be the monodromy of $\nabla_{\partial_t}$ around some singular point $\sigma$, restricted to the generalized $\alpha$-eigenspace; and $N_{\sigma,\alpha}$ the regularized formal monodromy of the $\lambda = -\sigma$ summand of $\tilde{\nabla}_{\partial_Q}$, restricted correspondingly.

(i) If $\alpha \neq 1$, $M_{\sigma,\alpha}$ and $N_{\sigma,\alpha}$ are conjugate.

(ii) There is a bijective correspondence between Jordan blocks of $M_{\sigma,1}$ and of $N_{\sigma,1}$, under which sizes change by at most $1$. Here, we think of having an infinite reservoir of size $0$ Jordan blocks on each side, so that size $1$ Jordan blocks can appear and disappear under the correspondence.
\end{corollary}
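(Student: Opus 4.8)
The plan is to reduce everything to three inputs: the explicit description of the two monodromies furnished by Proposition \ref{th:fourier}(ii), namely $\exp(-2\pi i\, V_\sigma U_\sigma)$ for $\nabla_t$ around $\sigma$ and $\exp(-2\pi i\, U_\sigma V_\sigma)$ for the regularized formal monodromy of the $\lambda=-\sigma$ summand; the classical comparison of the Jordan structures of $V_\sigma U_\sigma$ and $U_\sigma V_\sigma$ (products $BA$ versus $AB$) due to Flanders \cite{flanders51}; and a lemma on how the matrix exponential acts on Jordan normal form.

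First I would record the exponential lemma. For any complex square matrix $X$ with generalized eigenspace decomposition $V=\bigoplus_\mu V_{(\mu)}$, the operator $\exp(-2\pi i X)$ preserves each $V_{(\mu)}$, acts there with the single eigenvalue $e^{-2\pi i\mu}$, and carries each size-$k$ Jordan block of $X$ on $V_{(\mu)}$ to a single size-$k$ Jordan block of eigenvalue $e^{-2\pi i\mu}$; indeed, for a single nilpotent Jordan block $N$ of size $k$ one has $\exp(-2\pi i N)-I = N\cdot G$ with $G$ unipotent upper-triangular, so $(\exp(-2\pi i N)-I)^{j}$ has the same rank as $N^{j}$ for all $j$. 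Consequently the generalized $\alpha$-eigenspace of $\exp(-2\pi i X)$ is $\bigoplus_{\mu:\,e^{-2\pi i\mu}=\alpha} V_{(\mu)}$, and the multiset of its Jordan block sizes is the union, over those $\mu$, of the Jordan block sizes of $X$ on $V_{(\mu)}$. Next I would invoke Flanders: for $A\colon\bC^{r}\to\bC^{s}$ and $B\colon\bC^{s}\to\bC^{r}$, the matrices $BA$ and $AB$ have identical Jordan structure at every nonzero eigenvalue, while at the eigenvalue $0$ there is a bijection between their Jordan blocks changing sizes by at most $1$, once one adjoins an infinite reservoir of size-$0$ blocks to each side; apply this with $A=U_\sigma$, $B=V_\sigma$.

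To assemble (i) and (ii), note $e^{-2\pi i\mu}=1$ iff $\mu\in\bZ$, so for $\alpha\neq 1$ every $\mu$ with $e^{-2\pi i\mu}=\alpha$ is nonzero. For (i): by the exponential lemma the Jordan block sizes of $M_{\sigma,\alpha}$, resp.\ $N_{\sigma,\alpha}$, are those of $V_\sigma U_\sigma$, resp.\ $U_\sigma V_\sigma$, on the span of the generalized $\mu$-eigenspaces with $e^{-2\pi i\mu}=\alpha$; since all such $\mu$ are nonzero, Flanders makes these agree block by block, and as both operators have the single eigenvalue $\alpha$ they are conjugate. For (ii): the generalized $1$-eigenspaces correspond to $\mu\in\bZ$; for nonzero integers Flanders gives an exact block-for-block match, and for $\mu=0$ it gives a correspondence with sizes changing by at most $1$; pushing these through the exponential lemma, which preserves block sizes and sends all of them to eigenvalue $1$, yields the asserted correspondence between Jordan blocks of $M_{\sigma,1}$ and $N_{\sigma,1}$.

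The one place that needs care is the bookkeeping at $\mu=0$ (equivalently $\alpha=1$): one must use the sharp form of Flanders' theorem — equality of the number of Jordan blocks of each size $\geq 2$ for $AB$ and $BA$, with no constraint on size-$1$ blocks or on the block count — and phrase the final statement with the size-$0$ reservoir exactly as in the Corollary. I expect this to be the only subtle point; the remainder is routine linear algebra.
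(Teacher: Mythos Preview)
Your proposal is correct and matches the paper's approach exactly: the paper states only that the corollary follows from Proposition~\ref{th:fourier}(ii) together with ``some linear algebra \cite{flanders51}'', and you have filled in precisely that linear algebra (the Flanders comparison of $AB$ versus $BA$, combined with the behaviour of Jordan structure under $\exp(-2\pi i\,\cdot\,)$).
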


\subsection{The Gauss-Manin system\label{subsec:classical-gauss-manin}}

\subsubsection{Definition\label{subsubsec:define-gauss-manin}}
Let $X$ be a smooth complex algebraic variety, and $W: X \rightarrow S = \mathbb{A}^1$ a proper nonconstant morphism. The Gauss-Manin system is defined \cite[Ch.~2, Section 15]{pham79} as the derived pushforward of the $D$-module sheaf $\scrO_X$ under $W$. To compute it, one factors $W$ as the composition of the embedding $(\mathit{id},W): X \rightarrow X \times S$ and the projection $X \times S \rightarrow S$. Concretely, the outcome is as follows \cite[p.~159]{pham79}. Take the complex of sheaves
\begin{equation} \label{eq:e-complex}
\begin{aligned}
& \scrE_q = \Omega^*_X[q], \\
& d_{\scrE_q}\theta = d\theta - q\, dW \wedge \theta,
\end{aligned}
\end{equation}
with the operation
\begin{equation} \label{eq:wtheta}
\nabla_{\partial_q}(\theta q^k) = k \theta\, q^{k-1} - W\theta\, q^k.
\end{equation}
The hypercohomology $E_q$ of $\scrE_q$ becomes a $W_q$-module in each degree. The Gauss-Manin system in degree $k$ is defined as the Fourier-Laplace transform of $E_q^{k+1}$. From the general theory (see e.g.\ \cite[Theorem 3.2.3]{hotta}), one gets:

\begin{lemma} \label{th:classical-holonomic}
In each degree, $E_q$ is a holonomic $W_q$-module.
\end{lemma}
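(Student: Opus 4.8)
The plan is to identify $E_q$, up to Fourier--Laplace transform, with the $D$-module pushforward of the structure sheaf along $W$, and then invoke two standard ``preservation'' statements: holonomicity is preserved by proper pushforward of $D$-modules, and (as already noted in the definition of the transform) by the Fourier--Laplace transform itself. The only geometric input needed is that, since $X$ is smooth, $\scrO_X$ regarded as a left $D_X$-module is holonomic: its characteristic variety is the zero section $T^*_X X \subset T^*X$, which has dimension $\dim X$. Everything after that is formal $D$-module theory on the affine line.

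Next I would set up the identification. Factoring $W$ through the graph embedding $i = (\mathrm{id},W): X \hookrightarrow X \times S$ and the projection $\pi: X \times S \to S$, one has $W_+\scrO_X = \pi_+ i_+\scrO_X$. The exponentially twisted de Rham complex $(\scrE_q, d_{\scrE_q})$ together with the operator $\nabla_{\partial_q} = \partial_q - W$ from \eqref{eq:e-complex}--\eqref{eq:wtheta} is exactly the standard model (as in \cite[p.~159]{pham79}) computing the Fourier--Laplace transform of $\pi_+ i_+\scrO_X$: the variable $q$ plays the role of $-\partial_t$ with $t$ the coordinate on $S$, the term $-q\,dW\wedge$ in the differential is the $X$-directional part of the connection on the exponentially twisted module, and the de Rham complex along the $X$-directions realizes $\pi_+$. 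Concretely, this gives in each degree a natural isomorphism between $E_q^{k+1}$ and the Fourier--Laplace transform of the cohomology $D$-module $\mathcal{H}^k(W_+\scrO_X)$, the shift by one accounting for the codimension-one graph embedding; in degrees where $\scrE_q$ makes no contribution, $E_q$ vanishes and there is nothing to prove.

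It then remains to quote the general theory. Because $W$ is proper, the derived pushforward $W_+$ carries $D^b_{\mathrm{hol}}(D_X)$ into $D^b_{\mathrm{hol}}(D_S)$ --- this is the theorem on preservation of holonomicity under proper pushforward, stated for instance in \cite[Theorem 3.2.3]{hotta} --- so every cohomology $D$-module $\mathcal{H}^k(W_+\scrO_X)$ is holonomic over $W_t$. Since the Fourier--Laplace transform preserves holonomicity (immediate from the identification $W_t \iso W_q$), it follows that $E_q^{k+1}$, and hence $E_q$ in every degree, is a holonomic $W_q$-module.

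The one step carrying genuine content, which I would treat with care, is the identification in the second paragraph: one must check that the explicit twisted differential $d\theta - q\,dW\wedge\theta$ and the action \eqref{eq:wtheta} coincide with the de Rham differential and connection produced by $\pi_+ i_+\scrO_X$ after the substitution $t = \partial_q$, $\partial_t = -q$, including the homological shift and the sign conventions. This is routine but bookkeeping-heavy (it amounts to spelling out the partial Fourier transform of the graph $D$-module and its pushforward along the projection); once it is in place, holonomicity is entirely formal and follows exactly as above.
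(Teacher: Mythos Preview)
Your proposal is correct and follows exactly the approach the paper takes: the paper itself does not give a proof beyond the sentence ``From the general theory (see e.g.\ \cite[Theorem 3.2.3]{hotta}), one gets,'' relying on the identification of $E_q$ (via Fourier--Laplace) with the cohomology of $W_+\scrO_X$ already set up in the paragraph preceding the lemma. Your write-up simply unpacks that citation --- holonomicity of $\scrO_X$, preservation under proper pushforward, preservation under Fourier--Laplace --- which is precisely what the reference supplies.
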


%

The picture simplifies away from the singular fibers. Namely, let $S^{\mathrm{reg}} \subset S$ be the set of regular values, and $X^{\mathrm{reg}} = W^{-1}(S^{\mathrm{reg}})$ its preimage. The restriction of the Gauss-Manin system to $S^{\mathrm{reg}}$ can be computed directly using the pushforward by the submersion $W|X^{\mathrm{reg}}$, which yields the classical definition of the Gauss-Manin connection on the hypercohomology of the relative de Rham complex $(\Omega^*_{X^{\mathrm{reg}}/S^{\mathrm{reg}}}, d)$. 

\subsubsection{Application}
In the algebro-geometric context, the counterpart of Conjecture \ref{th:main-conjecture} is:

\begin{proposition} \label{th:classical}
(i) In each degree, restricting $E_q$ to $\bC^* = \{q \neq 0\}$ yields a connection which has a regular singularity at $q = 0$, and a singularity of unramified exponential type at $q = \infty$.

(ii) The regularized formal monodromies at $q = \infty$ are quasi-unipotent. 

(iii) For each regularized formal monodromy, the spectrum on $E_q$ (combining degrees) is invariant under $\lambda \leftrightarrow 1/\lambda$.
\end{proposition}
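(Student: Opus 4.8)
The plan is to derive Proposition \ref{th:classical} from the classical Hukuhara-Turrittin-Levelt theory together with the Fourier-Laplace dictionary of Proposition \ref{th:fourier}, applied to the Gauss-Manin system of the proper map $W\colon X\to S$. The starting point is that $E_q^{k+1}$ is, up to Fourier-Laplace transform, the Gauss-Manin system in degree $k$; writing $t=\partial_q$, $\partial_t=-q$ as in \eqref{eq:fourier}, the $W_t$-module side is genuinely ``geometric'': on the locus $S^{\mathrm{reg}}$ of regular values the Gauss-Manin system restricts to the flat bundle whose fibres are the de Rham cohomology $H^k$ of the smooth fibres $W^{-1}(t)$. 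The first step is therefore to invoke the classical regularity (monodromy) theorem of Griffiths-Deligne-Katz: the Gauss-Manin connection attached to a morphism of smooth complex varieties has regular singular points everywhere, including at $t=\infty$, and its local monodromies around all singular points (finite and infinite) are quasi-unipotent. This is exactly the hypothesis needed to feed into Proposition \ref{th:fourier}(i), which then immediately yields part (i) of the Proposition: after the Fourier-Laplace transform $E_q$ is nonsingular on $\bC^*$, has a regular singular point at $q=0$, and has a singularity of unramified exponential type at $q=\infty$.

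For part (ii), I would combine Proposition \ref{th:fourier}(ii) (or more precisely Corollary \ref{th:quasiunipotent}) with the quasi-unipotence of the local monodromies of the Gauss-Manin connection. Corollary \ref{th:quasiunipotent}(i) says that for eigenvalue $\alpha\neq 1$ the regularized formal monodromy $N_{\sigma,\alpha}$ around $q=\infty$ is conjugate to the monodromy $M_{\sigma,\alpha}$ of $\nabla_t$ around the corresponding finite singular point $\sigma=-\lambda$; and part (ii) of that corollary gives a size-$\leq 1$ correspondence of Jordan blocks in the $\alpha=1$ case. Since the eigenvalues of every $M_{\sigma,\alpha}$ are roots of unity (the monodromy theorem), all eigenvalues of all $N_{\sigma,\alpha}$ are roots of unity, which is precisely quasi-unipotence of the regularized formal monodromies. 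One has to be slightly careful to also treat the contribution of the singularity of the Gauss-Manin system at $t=\infty$: under Fourier-Laplace this governs the regular singularity of $E_q$ at $q=0$ (and possibly a $\lambda=0$ summand at $q=\infty$), and its monodromy is again quasi-unipotent, so nothing new is needed.

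For part (iii), the $\lambda\leftrightarrow 1/\lambda$ symmetry of the spectrum, the natural input is Poincar\'e-Verdier duality for the Gauss-Manin system (equivalently, the polarization on the variation of Hodge structure on the smooth fibres). On the $W_t$-side, duality identifies the Gauss-Manin system in degree $k$ with (a Tate twist of) the one in degree $2\dim_\bC(W^{-1}(t))-k$, and on each summand it sends the monodromy $M_\sigma$ around a finite singular point to its inverse transpose. Translating through Remark \ref{th:dual-connection} and Corollary \ref{th:quasiunipotent}, the regularized formal monodromy of $\nabla^*_{\partial_q}$ is the inverse transpose of that of $\nabla_{\partial_q}$, up to the reflection $\lambda^*=-\lambda$ of the exponential summands; combining this with the self-duality of $E_q$ (combining all degrees, which removes the degree shift) yields that each regularized monodromy is conjugate to its inverse transpose, hence its spectrum is $\lambda\leftrightarrow 1/\lambda$ invariant. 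The main obstacle I anticipate is not conceptual but bookkeeping: pinning down exactly how the degree shift, the Tate twist, and the sign $\lambda\mapsto-\lambda$ in Fourier-Laplace interact, so that one is genuinely comparing the \emph{same} exponential summand on the two sides of the duality. If a clean duality statement for the noncompact Gauss-Manin system is awkward to quote, an alternative is to invoke the Fourier-Laplace dictionary to transport part (iii) to a statement about the monodromies of the de Rham cohomology of the smooth fibres, where Poincar\'e duality is entirely standard.
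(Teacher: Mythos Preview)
Your proposal is correct and follows essentially the same route as the paper: invoke the Griffiths--Landman--Grothendieck monodromy theorem for regularity and quasi-unipotence on the $t$-side, then apply Proposition~\ref{th:fourier} and Corollary~\ref{th:quasiunipotent}. Two minor points. First, the paper explicitly cites Lemma~\ref{th:classical-holonomic} (holonomicity of $E_q$), which is a hypothesis of Proposition~\ref{th:fourier} that you left implicit. Second, for (iii) the paper takes your alternative route rather than your first one: it argues directly that each local monodromy of the Gauss--Manin connection preserves the Poincar\'e duality pairing on the cohomology of the smooth fibres, hence has the same eigenvalues as its inverse, and this spectral symmetry transfers via Corollary~\ref{th:quasiunipotent}. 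You can therefore drop the $D$-module duality bookkeeping you were worried about.
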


\begin{proof}
By the (Griffiths-Landman-Grothendieck) monodromy theorem, the Gauss-Manin connection for $W: X^{\mathrm{reg}} \rightarrow S^{\mathrm{reg}}$ has regular singularities, and quasi-unipotent monodromy around each of those singularities. Moreover, each monodromy endomorphism is compatible with the Poincar{\'e} duality pairing on the cohomology of the fibers, and therefore has the same eigenvalues as its inverse. This, together with Lemma \ref{th:classical-holonomic}, allows us to apply Proposition \ref{th:fourier}, which gives the desired result.
\end{proof}

Part of enumerative mirror symmetry, as formulated e.g.\ in \cite{givental95}, is that the Gauss-Manin system of the mirror superpotential should give the quantum connection. In situations where this is proved, one can use Proposition \ref{th:classical} to derive the corresponding case of Conjecture \ref{th:main-conjecture}. Strictly speaking, to use Proposition \ref{th:classical} as stated, one has to have a proper mirror superpotential, which can be achieved if the mirror is constructed relative to a smooth anticanonical divisor; however, the algebro-geometric considerations can be generalized beyond the proper case, under suitable assumptions on $W$. We had already mentioned one of the results that have been obtained in this way \cite{sanda-shamoto19}.

\begin{example}
The mirror of $\bC P^1$ is the superpotential $X = \bC^*$, $W(z) = z+z^{-1}$. 
%
The complex \eqref{eq:e-complex} reduces to
\begin{equation}
\xymatrix{
\bC[z^{\pm 1}]\mathit{dz} && q\bC[z^{\pm 1}]\mathit{dz} && q^2\bC[z^{\pm 1}]\mathit{dz} && \cdots
\\
&&
\ar@/^1pc/[ull]^-{d}
\ar[u]^-{q(z^{-2}-1)\mathit{dz}}
\bC[z^{\pm 1}] && q\bC[z^{\pm 1}] \ar@/^1pc/[ull]^-{d} \ar[u]^-{q(z^{-2}-1)\mathit{dz}} && \cdots \ar@/^1pc/[ull]
}
\end{equation}
We have drawn that so as to exhibit an increasing filtration. Using that filtration, one sees that the only nontrivial cohomology group $E_q^1 = H^1(\scrE_q)$, as a $W_q$-module, has generators and relations
%
\begin{equation} \label{eq:a0a1-generators}
\begin{aligned}
& a_0 = [z^{-1} dz], \;\; a_1 = [dz], \\
& q \partial_q a_0 = -2 qa_1, \\
& \partial_q q a_1 = -2qa_0.
\end{aligned}
\end{equation}
Our module contains $q$-torsion elements: $q[(1-z^{-2})\mathit{dz}] = q(\partial_q a_0 + 2a_1) = 0$. If we tensor with $\bC[q^{\pm 1}]$ and replace the generators with $(a_0,-qa_1)$, the relations \eqref{eq:a0a1-generators} yield the quantum connection \eqref{eq:p1-connection-0}. Take the Fourier-Laplace transform, and write the relations \eqref{eq:a0a1-generators} as
\begin{equation}
\begin{aligned}
& a_0 = \half (t^2-4) \partial_t a_1, \\
& 2\partial_t a_0 = -t \partial_t a_1.
\end{aligned}
\end{equation}
After tensoring with $\bC[t,(t^2-4)^{-1}]$, this becomes the connection with
\begin{equation}
A_t = \begin{pmatrix} -\frac{t}{t^2-4} & \frac{2}{t^2-4} \\ 0 & 0 \end{pmatrix}.
\end{equation}
As one would expect from its geometric origin, the monodromy around $t = \pm 2$ has eigenvalues $\{-1,1\}$ (it swaps the two sheets of $W$). Via Proposition \ref{th:fourier}(ii), that explains the occurrence of the eigenvalues $-1$ in the regularized formal monodromy computation from Example \ref{th:p1}. 
%
\end{example}

\begin{example} \label{th:cubic-surface-2}
The mirror to the cubic surface (relative to a smooth anticanonical divisor) is obtained from the extremal rational elliptic surface $X_{431}$, in the notation of \cite[Theorem 4.1]{miranda-persson86}, by removing the $I_3$ fiber. More specifically, the base should be parametrized so that the resulting $W: X \rightarrow \bC$ is a partial compactification of the superpotential given e.g.\ in \cite[Example 10]{coates-kasprzyk-prince19} with a constant $-6$ added (which brings the critical values into their expected position $\{-6,21\}$; for the origin of that constant, see \cite[Section 10]{givental95} and \cite[Appendix B]{sheridan16}). This $W$ has one nondegenerate singular point, and a more complicated singular fiber which consists of an $\tilde{E}_6$ configuration of rational spheres. The monodromy around the last-mentioned fiber has the eigenvalues $\exp(\pm 2\pi i/3)$ seen in Example \ref{th:cubic-surface}.
\end{example}

\subsection{The noncommutative theory\label{subsec:u-theory}}


\subsubsection{The $(u,q)$-Weyl algebra\label{subsubsec:uq-weyl}}
(Heisenberg-)Weyl algebras depending on an additional formal variable are of course what occurs in the original quantum mechanics context. In noncommutative geometry the setup is a little different, since the additional formal variable $u$ has degree $2$. As we will not actually be solving any differential equations in this part of our argument, we can work over an arbitrary field $\bK$.

Let $W_{q,u}$ be the graded $\bK[u]$-algebra generated by $q$ (of degree $2$) and $u\partial_q$ (of degree $0$), with
\begin{equation} \label{eq:q-relation}
[u\partial_q,q] = u.
\end{equation}
(In spite of the notation, there is no element $\partial_q$ in this algebra.) Let's look at graded (left) modules over $W_{q,u}$, understood to be graded $\bK[u]$-modules with  $u$-linear actions of $q$ and $\partial_q$, which satisfy \eqref{eq:q-relation}.
\begin{itemize} \itemsep.5em
\item
We say that a module is $u$-torsionfree if multiplication by $u$ is injective. 
\item
The $u$-adic completion of a module $A_{q,u}$ (in the graded sense, meaning that we complete in each degree separately) is
\begin{equation} \label{eq:completion}
\hat{A}_{q,u} \stackrel{\mathrm{def}}{=} \underleftarrow{\lim}_m \, A_{q,u}/u^mA_{q,u} =
\underleftarrow{\lim}_m \, \big((\bK[u]/u^m) \otimes_{\bK[u]} A_{q,u}\big).
\end{equation}
We call $A_{q,u}$ complete if the canonical map $A_{q,u} \rightarrow \hat{A}_{q,u}$ is an isomorphism. (This implies that no nonzero element of $A_{q,u}$ can be divisible by arbitrarily high powers of $u$.) Completions are always complete \cite[Lemma 00MC]{stacks}. 

\item
Setting $u = 0$ in $A_{q,u}$ yields a module over a graded two-variable polynomial ring, since the actions of $q$ and $u\partial_q$ then commute; we denote that by
\begin{equation} \label{eq:set-u-to-zero}
A_q \stackrel{\mathrm{def}}{=} A_{q,u}/uA_{q,u} = (\bK[u]/u) \otimes_{\bK[u]} A_{q,u}.
\end{equation}

\end{itemize}
Just like the notions above, the next Lemmas really concern only the $u$-module structure:

\begin{lemma} \label{th:completion}
Suppose that $A_{q,u}$ is $u$-torsionfree. Then so is its completion $\hat{A}_{q,u}$. Moreover, the $u = 0$ reduction of the completion agrees with that of the original module.
\end{lemma}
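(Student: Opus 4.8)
The plan is to discard all structure except the underlying graded $\bK[u]$-module: the $W_{q,u}$-action is irrelevant here, and because the completion \eqref{eq:completion} is formed degree by degree, it is enough to argue in a single internal degree. Since $u$ has degree $2$, that degree involves a sequence of abelian groups $M_0,M_1,\dots$ with injective maps $u\colon M_{k+1}\hookrightarrow M_k$, and the relevant piece of $\hat A_{q,u}$ is $\varprojlim_m M_0/u^mM_m$; this indexing bookkeeping is harmless, so I will write the argument as if $u$ preserved degree, i.e.\ for a $\bK[u]$-module $M$ on which $u$ acts injectively and $\hat M=\varprojlim_m M/u^mM$. What has to be shown is: (a) $u$ acts injectively on $\hat M$; and (b) the projection $\hat M\to M/uM$ onto the $m=1$ term induces an isomorphism $\hat M/u\hat M\xrightarrow{\;\sim\;}M/uM$.

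For (a): given $(x_m)_m\in\hat M$ with $u\cdot(x_m)_m=0$, pick lifts $\tilde x_m\in M$; then $u\tilde x_m\in u^mM$, say $u\tilde x_m=u^my_m$, and injectivity of $u$ on $M$ gives $\tilde x_m=u^{m-1}y_m$, so $x_m$ lies in the image of $u^{m-1}M$ in $M/u^mM$. Applying this one index higher, $x_{m+1}$ lies in the image of $u^mM$ in $M/u^{m+1}M$, which is annihilated by the transition map to $M/u^mM$; by compatibility $x_m=0$ for every $m$. This step is entirely routine.

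For (b), the inclusion $u\hat M\subseteq\ker(\hat M\to M/uM)$ is trivial, so the whole content is the reverse inclusion: if $(w_m)_m$ is a compatible system with $w_1=0$ --- equivalently each $w_m$ lies in the image of $uM$ in $M/u^mM$ --- then $(w_m)_m\in u\hat M$. To produce the preimage I would divide each $w_m$ by $u$. The key point, and the one place where $u$-torsionfreeness of $M$ is used a \emph{second} time, is the identification $uM/u^mM\cong M/u^{m-1}M$ obtained by dividing by $u$: it makes the quotient $v_m\in M/u^{m-1}M$ of $w_m$ by $u$ \emph{unique}, and that uniqueness immediately forces the $v_m$ to be compatible under the transition maps, so after the shift $m\mapsto m-1$ they assemble into a $v\in\hat M$ with $uv=(w_m)_m$. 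Checking that the resulting map $M/uM\to\hat M/u\hat M$ is a two-sided inverse of the projection is then formal. I do not anticipate a genuine obstacle; the only non-formal ingredient is the division-by-$u$ identification, and it is elementary. (Alternatively one could invoke standard properties of derived $u$-adic completion for $u$-torsionfree modules, but the direct argument is shorter.)
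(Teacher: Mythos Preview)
Your proof is correct. The paper's argument is a bit slicker in packaging: instead of treating injectivity of $u$ on $\hat A_{q,u}$ and the computation of $\hat A_{q,u}/u\hat A_{q,u}$ separately, it writes down the short exact sequences
\[
0 \longrightarrow A_{q,u}/u^m A_{q,u} \xrightarrow{\ u\ } A_{q,u}/u^{m+1} A_{q,u} \longrightarrow A_q \longrightarrow 0
\]
(which exist precisely because $A_{q,u}$ is $u$-torsionfree) and passes to the inverse limit over $m$, invoking the Mittag--Leffler condition (surjective transition maps) to conclude that the limit sequence $0\to\hat A_{q,u}\xrightarrow{u}\hat A_{q,u}\to A_q\to 0$ is again exact. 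Both claims then drop out at once.

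Your division-by-$u$ identification $uM/u^mM\cong M/u^{m-1}M$ is exactly the content of the left half of that short exact sequence, and your compatibility check for the $v_m$ is the Mittag--Leffler step unraveled by hand. So the two arguments are the same in substance; the paper's version is just more compressed. One small point: you gesture at surjectivity of $\hat M\to M/uM$ as ``formal,'' and it is---the canonical map $M\to\hat M$ gives a section at the level of $M/uM$---but it would not hurt to say this explicitly.
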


\begin{proof}
We have short exact sequences
\begin{equation}
0 \rightarrow A_{q,u}/u^m A_{q,u} \stackrel{u}{\longrightarrow} A_{q,u}/u^{m+1} A_{q,u}
\longrightarrow A_q \rightarrow 0.
\end{equation}
Passing to the limit (which is exact because the maps that decrease $m$ are surjective) yields
\begin{equation}
0 \rightarrow \hat{A}_{q,u} \stackrel{u}{\longrightarrow} \hat{A}_{q,u} \longrightarrow A_q \rightarrow 0.
\end{equation}
\end{proof}

\begin{lemma} \label{th:completion-2}
Let $A_{q,u}$ and $B_{q,u}$ be complete $u$-torsionfree modules, and $f_{q,u}: A_{q,u} \rightarrow B_{q,u}$ a $W_{q,u}$-linear map whose $u = 0$ reduction $f_q: A_q \rightarrow B_q$ is injective. Then $f_{q,u}$ is itself injective, and $C_{q,u} = \mathrm{coker}(f_{q,u})$ is a complete $u$-torsionfree module. Moreover, the $u= 0$ reduction $C_q$ is the cokernel of $f_q$.
\end{lemma}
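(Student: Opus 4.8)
The plan is to argue one graded degree at a time, so that everything reduces to statements about modules over the graded ring $\bK[u]$, and to chain together a few short diagram chases of the same flavour as the proof of Lemma \ref{th:completion}. I will repeatedly use the observation (noted after \eqref{eq:completion}) that a complete $u$-torsionfree module $A_{q,u}$ has $\bigcap_m u^m A_{q,u} = 0$: the kernel of the canonical map $A_{q,u} \to \hat{A}_{q,u} = \underleftarrow{\lim}_m A_{q,u}/u^m A_{q,u}$ is exactly $\bigcap_m u^m A_{q,u}$, and completeness makes that map injective. For injectivity of $f_{q,u}$, take $x \in \ker f_{q,u}$; its reduction $\bar{x} \in A_q$ lies in $\ker f_q = 0$, so $x = u x_1$; then $u f_{q,u}(x_1) = 0$ and $u$-torsionfreeness of $B_{q,u}$ give $f_{q,u}(x_1) = 0$, so $x_1 \in \ker f_{q,u}$ too; iterating, $x \in \bigcap_m u^m A_{q,u} = 0$.

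Having the short exact sequence $0 \to A_{q,u} \xrightarrow{f_{q,u}} B_{q,u} \to C_{q,u} \to 0$, I next check that $C_{q,u}$ is $u$-torsionfree and that $C_q \cong \mathrm{coker}(f_q)$. The second claim is just right exactness of $(\bK[u]/u) \otimes_{\bK[u]} -$, which yields an exact sequence $A_q \xrightarrow{f_q} B_q \to C_q \to 0$ (no injectivity needed). For torsionfreeness, suppose $y \in B_{q,u}$ with $u y = f_{q,u}(x)$; reducing mod $u$ gives $f_q(\bar{x}) = 0$, hence $\bar{x} = 0$ and $x = u x'$; then $u y = u f_{q,u}(x')$, and torsionfreeness of $B_{q,u}$ forces $y = f_{q,u}(x')$, so the class of $y$ in $C_{q,u}$ is already zero.

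The remaining point, completeness of $C_{q,u}$, is the one that needs a little care. Because $C_{q,u}$ is now known to be $u$-torsionfree, the group $\mathrm{Tor}_1^{\bK[u]}(\bK[u]/u^m, C_{q,u})$ — which is the $u^m$-torsion submodule of $C_{q,u}$ — vanishes for every $m$, so tensoring the short exact sequence with $\bK[u]/u^m$ keeps it short exact: $0 \to A_{q,u}/u^m A_{q,u} \to B_{q,u}/u^m B_{q,u} \to C_{q,u}/u^m C_{q,u} \to 0$ for all $m$. These fit into an inverse system over $m$ whose first term has surjective transition maps, hence satisfies the Mittag--Leffler condition, so $\underleftarrow{\lim}{}^{1}$ of it vanishes; passing to the inverse limit therefore preserves exactness and gives $0 \to \hat{A}_{q,u} \to \hat{B}_{q,u} \to \hat{C}_{q,u} \to 0$, i.e.\ $0 \to A_{q,u} \to B_{q,u} \to \hat{C}_{q,u} \to 0$ using completeness of $A_{q,u}$ and $B_{q,u}$. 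Comparing this with the original short exact sequence via the canonical map $C_{q,u} \to \hat{C}_{q,u}$ (the comparison squares commute by functoriality of completion), the five lemma shows $C_{q,u} \to \hat{C}_{q,u}$ is an isomorphism.

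I expect only this last step to require any thought: the substantive content is recognizing that $u$-torsionfreeness of $C_{q,u}$, established just before, is precisely what makes reduction mod $u^m$ left exact, after which the Mittag--Leffler vanishing of $\underleftarrow{\lim}{}^{1}$ for $(A_{q,u}/u^m A_{q,u})$ finishes it; the first two steps are routine chases.
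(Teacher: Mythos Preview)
Your proposal is correct and follows essentially the same approach as the paper's proof: both establish injectivity via an infinite $u$-divisibility argument, prove $u$-torsionfreeness of the cokernel by the same lifting chase, and deduce completeness by tensoring the short exact sequence with $\bK[u]/u^m$ and passing to the inverse limit. You are simply more explicit about the underlying homological algebra (Tor vanishing, Mittag--Leffler, the five lemma) where the paper asserts the conclusions directly.
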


\begin{proof}
Injectivity of $f_{q,u}$ is elementary: suppose that $f_{q,u}(a) = 0$ for some nonzero $a$. We can write $a = u^m a'$ with a maximal $m$, and then $f_{q,u}(a') = 0$, which after reduction to $u = 0$ shows that $a'$ must again be divisible by $u$, a contradiction. The fact that $C_{q,u}$ is $u$-torsionfree is also elementary: if $uc = 0$ in the cokernel, then a lift of $c$ to $b \in B_{q,u}$ would satisfy $ub = f_{q,u}(a)$ for some $a$. This means that the $u=0$ reduction of $a$ lies in the kernel of $f_q$, hence must be zero, and we can write $a = ua'$. But then $u(b-f_{q,u}(a')) = 0$, hence $b = f_{q,u}(a')$ and $c = 0$. At this point, we can tensor with $\bK[u]/u^m$ to get short exact sequences
\begin{equation}
0 \rightarrow A_{q,u}/u^m A_{q,u} \stackrel{f_{q,u}}{\longrightarrow} B_{q,u}/u^m B_{q,u} \longrightarrow C_{q,u}/u^m C_{q,u} \rightarrow 0,
\end{equation}
which for $m = 1$ shows the desired fact about $C_q$. Passing to the limit $m \rightarrow \infty$ yields
\begin{equation}
0 \rightarrow A_{q,u} \stackrel{f_{q,u}}{\longrightarrow} B_{q,u} \longrightarrow \hat{C}_{q,u} \rightarrow 0,
\end{equation}
which proves that $C_{q,u} = \hat{C}_{q,u}$. 
\end{proof}

At this point, we add the action of $q$ to the discussion.
\begin{itemize} \itemsep.5em
\item Given a complete module $A_{q,u}$, one can invert $q$ and then take the $u$-completion of that. The outcome of this process will be denoted (slightly clumsily, since the tensor product uses $q$ and the completion uses $u$) by
\begin{equation} \label{eq:invert-q}
A_{q^{\pm 1},u} \stackrel{\mathrm{def}}{=} \bK[q^{\pm 1}] \hat\otimes_{\bK[q]} A_{q,u}.
\end{equation}
Here, just like in the framework of classical $D$-modules, the action of $u\partial_q$ on the $q$-inverted module is given by $u\partial_q(q^k \otimes a) = uk q^{k-1} \otimes a + q^k \otimes u\partial_q a$. 

\item
We write (see Section \ref{subsec:conventions}(b) for the notation; completion is in the same sense as before)
\begin{equation} \label{eq:quotient-q}
q^{-1} A_{q^{-1},u} \stackrel{\mathrm{def}}{=} 
q^{-1}\bK[q^{-1}] \hat\otimes_{\bK[q]} A_{q,u}.
\end{equation}
\end{itemize}

\begin{lemma} \label{th:invert-q}
Suppose that $A_{q,u}$ is complete and $u$-torsionfree. Then \eqref{eq:invert-q} is $u$-torsionfree, and its $q=0$ reduction is related to that $A_{q,u}$ in the obvious way:
\begin{equation}
A_{q^{\pm 1}} \stackrel{\mathrm{def}}{=} A_{q^{\pm 1},u}/uA_{q^{\pm 1},u} \iso \bK[q^{\pm 1}] \otimes_{\bK[q]} A_q.
\end{equation}
\end{lemma}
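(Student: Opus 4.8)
The goal is to prove Lemma \ref{th:invert-q}: if $A_{q,u}$ is complete and $u$-torsionfree, then $A_{q^{\pm 1},u} = \bK[q^{\pm 1}] \hat\otimes_{\bK[q]} A_{q,u}$ is again $u$-torsionfree, and its $u=0$ reduction is $\bK[q^{\pm 1}] \otimes_{\bK[q]} A_q$.

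\textbf{The plan.} The strategy is to first understand the \emph{uncompleted} localization $\bK[q^{\pm 1}] \otimes_{\bK[q]} A_{q,u}$, then apply the $u$-completion machinery already developed in Lemmas \ref{th:completion} and \ref{th:completion-2}. First I would observe that localization at the multiplicative set $\{1,q,q^2,\dots\}$ is exact, so $\bK[q^{\pm 1}] \otimes_{\bK[q]} A_{q,u}$ is $u$-torsionfree (tensoring the injection $u: A_{q,u} \to A_{q,u}$ with the flat $\bK[q]$-module $\bK[q^{\pm 1}]$ stays injective), and its $u=0$ reduction is $\bK[q^{\pm 1}] \otimes_{\bK[q]} A_q$ by right-exactness of tensor (or again flatness). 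The point of \eqref{eq:invert-q} is then to $u$-complete this. By Lemma \ref{th:completion}, the completion of a $u$-torsionfree module is $u$-torsionfree and has the same $u=0$ reduction. Applying this to $B_{q,u} := \bK[q^{\pm 1}] \otimes_{\bK[q]} A_{q,u}$ gives that $A_{q^{\pm 1},u} = \hat B_{q,u}$ is $u$-torsionfree with $u=0$ reduction equal to that of $B_{q,u}$, namely $\bK[q^{\pm 1}] \otimes_{\bK[q]} A_q$. This is exactly the claimed statement.

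\textbf{Details to check.} The one genuinely substantive point is that the $W_{q,u}$-module structure (in particular the action of $u\partial_q$) is well-defined and $\bK[u]$-linear on the localization and survives completion; but this is forced by the Leibniz-type formula $u\partial_q(q^k \otimes a) = uk\, q^{k-1}\otimes a + q^k \otimes u\partial_q a$ already written down right after \eqref{eq:invert-q}, and since we only need the $u$-module statements, the algebra structure plays no role in the proof — as the remark ``the next Lemmas really concern only the $u$-module structure'' emphasizes. One should also note that $A_q = A_{q,u}/uA_{q,u}$ is a module over $\bK[q]$ (with $u\partial_q$ descending to an action that commutes with $q$), so $\bK[q^{\pm 1}] \otimes_{\bK[q]} A_q$ makes sense as written.

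\textbf{Main obstacle.} There is no serious obstacle: the lemma is essentially a bookkeeping consequence of flatness of $\bK[q] \to \bK[q^{\pm 1}]$ combined with Lemma \ref{th:completion}. The only mild subtlety is not to conflate ``localize then complete'' with ``complete then localize'' — the notation \eqref{eq:invert-q} fixes the order (invert $q$ first, then $u$-complete), and the proof should respect that, using that the intermediate object $\bK[q^{\pm 1}] \otimes_{\bK[q]} A_{q,u}$ is $u$-torsionfree (though generally not $u$-complete) before invoking Lemma \ref{th:completion} to pass to its completion.

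\begin{proof}
Write $B_{q,u} = \bK[q^{\pm 1}] \otimes_{\bK[q]} A_{q,u}$ for the (uncompleted) localization, so that $A_{q^{\pm 1},u} = \hat{B}_{q,u}$. Since $\bK[q^{\pm 1}]$ is a flat (indeed, localization of) $\bK[q]$-module, tensoring the injective map $u: A_{q,u} \to A_{q,u}$ with $\bK[q^{\pm 1}]$ over $\bK[q]$ yields an injective map, so $B_{q,u}$ is $u$-torsionfree. By the same flatness, applying $\bK[q^{\pm 1}] \otimes_{\bK[q]} -$ to the short exact sequence $0 \to A_{q,u} \xrightarrow{u} A_{q,u} \to A_q \to 0$ shows that the $u = 0$ reduction of $B_{q,u}$ is $\bK[q^{\pm 1}] \otimes_{\bK[q]} A_q$. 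Now apply Lemma \ref{th:completion} to $B_{q,u}$: its $u$-adic completion $\hat{B}_{q,u} = A_{q^{\pm 1},u}$ is $u$-torsionfree, and its $u = 0$ reduction agrees with that of $B_{q,u}$, which is $\bK[q^{\pm 1}] \otimes_{\bK[q]} A_q$. The $W_{q,u}$-action on $B_{q,u}$ given by the Leibniz rule stated after \eqref{eq:invert-q} is $u$-linear, hence passes to the completion, so these identifications are compatible with all the relevant structure.
\end{proof}
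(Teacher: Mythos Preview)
Your proof is correct and follows essentially the same approach as the paper: use flatness of $\bK[q^{\pm 1}]$ over $\bK[q]$ to see that the uncompleted localization $\bK[q^{\pm 1}] \otimes_{\bK[q]} A_{q,u}$ is $u$-torsionfree with $u=0$ reduction $\bK[q^{\pm 1}] \otimes_{\bK[q]} A_q$, then invoke Lemma \ref{th:completion} to pass to the completion. The paper's version is terser but logically identical.
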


\begin{proof}
Tensoring with $\bK[q^{\pm 1}]$ yields a short exact sequence
\begin{equation}
0 \rightarrow \bK[q^{\pm 1}] \otimes_{\bK[q]} A_{q,u} \stackrel{u}{\longrightarrow}
\bK[q^{\pm 1}] \otimes_{\bK[q]} A_{q,u} \longrightarrow
\bK[q^{\pm 1}] \otimes_{\bK[q]} A_q \rightarrow 0.
\end{equation}
In words, $\bK[q^{\pm 1}] \otimes_{\bK[q]} A_{q,u}$ is $u$-torsionfree, and its $u = 0$ reduction is $\bK[q^{\pm 1}] \otimes_{\bK[q]} A_q$. Lemma \ref{th:completion} does the rest.
\end{proof}

\begin{lemma} \label{th:invert-q-2}
Suppose that $A_{q,u}$ is complete and $u$-torsionfree; and that its $u=0$ reduction $A_q$ is $q$-torsionfree ($q$ acts injectively on it). Then \eqref{eq:quotient-q} is $u$-torsionfree; its $u = 0$ reduction is related to that of $A_{q,u}$ in the obvious way,
\begin{equation}
q^{-1}A_{q^{-1}} \stackrel{\mathrm{def}}{=} q^{-1}A_{q^{-1},u}/uq^{-1}A_{q^{-1},u}
\iso q^{-1}\bK[q^{-1}] \otimes_{\bK[q]} A_q;
\end{equation}
and it fits into a short exact sequence
\begin{equation} \label{eq:ses-q}
0 \rightarrow A_{q,u} \longrightarrow A_{q^{\pm 1},u} \longrightarrow q^{-1}A_{q^{-1},u} \rightarrow 0.
\end{equation}
\end{lemma}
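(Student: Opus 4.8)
\emph{Overview.} The plan is to obtain \eqref{eq:ses-q} by tensoring the tautological short exact sequence of $\bK[q]$-modules $0\to\bK[q]\to\bK[q^{\pm1}]\to q^{-1}\bK[q^{-1}]\to 0$ with $A_{q,u}$ and then $u$-completing; everything will reduce to flatness of $\bK[q^{\pm1}]$ over $\bK[q]$ together with the Lemmas already at hand, once I know the single extra fact that $q$ acts injectively on $A_{q,u}$ itself. So the first step is to prove that. Suppose $q^na=0$ with $a\neq 0$ and $n\geq 1$ minimal among all such relations; set $b=q^{n-1}a$, which is nonzero by minimality and satisfies $qb=0$. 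Since $A_{q,u}$ is complete, $b$ is not divisible by arbitrarily high powers of $u$, so write $b=u^mb'$ with $b'\notin uA_{q,u}$; then $u^mqb'=0$, hence $qb'=0$ because $A_{q,u}$ is $u$-torsionfree. The image $\bar b'$ of $b'$ in $A_q$ is nonzero and satisfies $q\bar b'=0$, contradicting the hypothesis that $A_q$ is $q$-torsionfree. Thus $A_{q,u}$ (and likewise, trivially, $\bK[q^{\pm1}]\otimes_{\bK[q]}A_{q,u}$ and $q^{-1}\bK[q^{-1}]\otimes_{\bK[q]}A_{q,u}$) has no $q$-power torsion.

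\emph{The uncompleted sequence.} Because $\bK[q^{\pm1}]$ is flat over $\bK[q]$, the sequence $0\to\bK[q]\to\bK[q^{\pm1}]\to q^{-1}\bK[q^{-1}]\to 0$ is a length-one flat resolution of $q^{-1}\bK[q^{-1}]$, so for any $\bK[q]$-module $N$ the only possibly nonzero higher Tor is $\mathrm{Tor}_1^{\bK[q]}(q^{-1}\bK[q^{-1}],N)=\ker(N\to N[q^{-1}])$, the $q$-power-torsion submodule. By the previous paragraph this vanishes for $N=A_{q,u}$, and by hypothesis it vanishes for $N=A_q$. Tensoring with $A_{q,u}$ therefore yields a short exact sequence
\[
0\longrightarrow A_{q,u}\longrightarrow \bK[q^{\pm1}]\otimes_{\bK[q]}A_{q,u}\longrightarrow q^{-1}\bK[q^{-1}]\otimes_{\bK[q]}A_{q,u}\longrightarrow 0 .
\]
Its left term is $u$-torsionfree by assumption; its middle term is $u$-torsionfree with $u=0$ reduction $\bK[q^{\pm1}]\otimes_{\bK[q]}A_q$ (this is exactly the computation in the proof of Lemma~\ref{th:invert-q}, tensoring the $u$-multiplication sequence of $A_{q,u}$ with the flat module $\bK[q^{\pm1}]$); and tensoring that same $u$-multiplication sequence with $q^{-1}\bK[q^{-1}]$ and using $\mathrm{Tor}_1^{\bK[q]}(q^{-1}\bK[q^{-1}],A_q)=0$ shows the right term is $u$-torsionfree with $u=0$ reduction $q^{-1}\bK[q^{-1}]\otimes_{\bK[q]}A_q$.

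\emph{Completion.} For a short exact sequence $0\to M'\to M\to M''\to 0$ whose three terms are all $u$-torsionfree one has $M'\cap u^nM=u^nM'$ for every $n$ (if $x=u^ny\in M'$, the image of $y$ in $M''$ is $u$-torsion, hence zero, so $y\in M'$), whence the reductions $0\to M'/u^nM'\to M/u^nM\to M''/u^nM''\to 0$ form an exact sequence of inverse systems with surjective transition maps; the inverse limit is then exact, so $u$-adic completion preserves the sequence. Applying this to the sequence of the previous paragraph, and noting that the completions of its three terms are by definition $A_{q,u}$ (already complete), $A_{q^{\pm1},u}$ of \eqref{eq:invert-q}, and $q^{-1}A_{q^{-1},u}$ of \eqref{eq:quotient-q}, gives precisely \eqref{eq:ses-q}; all maps are $W_{q,u}$-linear since every operation used is. Finally $q^{-1}A_{q^{-1},u}$ is the $u$-completion of the $u$-torsionfree module $q^{-1}\bK[q^{-1}]\otimes_{\bK[q]}A_{q,u}$, so by Lemma~\ref{th:completion} it is $u$-torsionfree with $u=0$ reduction $q^{-1}\bK[q^{-1}]\otimes_{\bK[q]}A_q$, as asserted.

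\emph{Anticipated difficulty.} The only step with genuine content is the no-$q$-torsion argument; the rest is bookkeeping. If one would rather not prove exactness of $u$-completion on sequences of $u$-torsionfree modules directly, the alternative is to feed $f_{q,u}\colon A_{q,u}\to A_{q^{\pm1},u}$ into Lemma~\ref{th:completion-2}: its $u=0$ reduction $A_q\to\bK[q^{\pm1}]\otimes_{\bK[q]}A_q$ is injective by the second paragraph, so Lemma~\ref{th:completion-2} gives that $f_{q,u}$ is injective with complete, $u$-torsionfree cokernel whose $u=0$ reduction is $q^{-1}\bK[q^{-1}]\otimes_{\bK[q]}A_q$; one then identifies that cokernel with $q^{-1}A_{q^{-1},u}$ by the evident natural map, which is an isomorphism modulo $u$ between complete $u$-torsionfree modules and hence an isomorphism. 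The only mild annoyance in that route is this last identification, which is why I would prefer the completion-of-a-sequence formulation above.
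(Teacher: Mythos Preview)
Your proof is correct and follows essentially the same route as the paper: both begin by lifting $q$-torsionfreeness from $A_q$ to $A_{q,u}$ via completeness and $u$-torsionfreeness, establish the uncompleted short exact sequence and $u$-torsionfreeness of its terms, and then complete. The only divergence is in the completion step: you prove directly that $u$-adic completion preserves a short exact sequence of $u$-torsionfree modules (via $M'\cap u^nM=u^nM'$ and Mittag-Leffler), whereas the paper instead invokes Lemma~\ref{th:completion-2} on the map $A_{q,u}\to A_{q^{\pm1},u}$ and then identifies the cokernel with $q^{-1}A_{q^{-1},u}$ by checking the natural map is an isomorphism mod $u$ between complete $u$-torsionfree modules. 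You in fact describe the paper's route exactly in your ``anticipated difficulty'' paragraph; your direct argument is marginally cleaner in that it avoids the final identification step, while the paper's route has the advantage of reusing Lemma~\ref{th:completion-2} rather than proving a separate exactness statement.
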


\begin{proof}
Because $A_q$ is $q$-torsionfree, so is $A_{q,u}$ (if $x \in A_{q,u}$ satisfies $qx = 0$, then it must be a multiple of $u$; that argument can be iterated to prove that $x$ is arbitrarily often $u$-divisible, hence zero by completeness).
Therefore, the following diagram has exact columns:
\begin{equation} \label{eq:3x3}
\xymatrix{
& 0 \ar[d] & 0 \ar[d] & \ar[d] 0 & \\
0 \ar[r] & A_{q,u} \ar[d] \ar[r]^-{u} & \ar[d] A_{q,u} \ar[r] & \ar[d] \ar[r] A_q & 0 \\
0 \ar[r] & \bK[q^{\pm 1}] \otimes_{\bK[q]} A_{q,u} \ar[d] \ar[r]^-{u} & \bK[q^{\pm 1}] \otimes_{\bK[q]} A_{q,u} \ar[d] \ar[r] & \bK[q^{\pm 1}] \otimes_{\bK[q]} A_q \ar[d] \ar[r] & 0 \\
0 \ar[r] & q^{-1}\bK[q^{-1}] \otimes_{\bK[q]} A_{q,u} \ar[d] \ar[r]^-{u} & q^{-1}\bK[q^{-1}] \otimes_{\bK[q]} A_{q,u} \ar[r] \ar[d] & q^{-1}\bK[q^{-1}] \otimes_{\bK[q]} A_q \ar[d] \ar[r] & 0 \\
& 0 & 0 & 0 &
}
\end{equation}
The top two rows are exact, hence so is the bottom one. In words, $q^{-1}\bK[q^{-1}] \otimes_{\bK[q]} A_{q,u}$ is $u$-torsionfree, and its $u = 0$ reduction is $q^{-1}\bK[q^{-1}] \otimes_{\bK[q]} A_q$. We can then apply Lemma \ref{th:completion} to carry over those results to $q^{-1}A_{q^{-1}}$. Define \eqref{eq:ses-q} to be the $u$-completion of the left or middle column in \eqref{eq:3x3}. Lemma \ref{th:completion-2} tells us that the first map in \eqref{eq:ses-q} is injective. Moreover, concerning the map from its cokernel to $q^{-1}A_{q^{-1},u}$, we then know that its $u = 0$ reduction is an isomorphism, which implies that the map itself must be an isomorphism.
\end{proof}

\begin{example} \label{th:example-modules}
Let $V$ be a graded $\bK$-vector space. Consider the graded $\bK[q,u]$-module $A_{q,u} = V[[q,u]]$. Then
\begin{equation} \label{eq:v-q-u-0}
\bK[q^{\pm 1}] \otimes_{\bK[q]}\, A_{q,u} = \underrightarrow{\lim}_k\, q^{-k}A_{q,u} =
V[[u]]((q))
\end{equation}
is the space of Laurent series in $q$ with coefficients in $V[[u]]$ (each such Laurent series has a lower bound on the powers of $q$ that can appear). The completion is
\begin{equation} \label{eq:v-q-u}
A_{q^{\pm 1},u} = V((q))[[u]] = V((q))[[u/q]]
\end{equation}
which means power series in $u$, or equivalently $u/q$, whose coefficients are Laurent series in $q$. Concretely, an element in \eqref{eq:v-q-u} of degree $d$ is a series 
\begin{equation} \label{eq:qu-series}
\sum_{i \geq 0} \sum_{j \geq m_i} (u/q)^i q^j v_{ij}
= \sum_{i \geq 0} \sum_{k \geq m_i-i} u^i q^k v_{i,i+k}
\end{equation}
for some $m_i \in \bZ$, $v_{ij} \in V^{d-2j}$. In the special case where $V$ is bounded, it follows that $v_{ij} = 0$ once $|j|$ exceeds some $d$-dependent bound, and therefore: 
\begin{equation}
A_{q^{\pm 1},u} = (V[[u/q]])[q^{\pm 1}] \quad \text{if $V$ is bounded.}
\end{equation}

Similarly, we have
\begin{equation}
q^{-1}\bK[q^{-1}] \otimes_{\bK[q]} A_{q,u} = \underrightarrow{\lim}_k \,(q^{-k}A_{q,u}/A_{q,u}) = q^{-1} V[[u]][q^{-1}],
\end{equation}
which is the space of polynomials in $q^{-1}$ with zero constant term and coefficients in $V[[u]]$. Completion yields
\begin{equation}
q^{-1}A_{q^{-1},u} = (q^{-1}V[q^{-1}])[[u]],
\end{equation}
the space of power series in $u$ with coefficients in $q^{-1}V[q^{-1}]$. One can think of this as in \eqref{eq:qu-series} but where the entries are restricted to $j<i$ (respectively $k<0$).
\end{example}

Take the graded $\bK[u]$-algebra $W_{t,u}$, with generators $t$ of degree $0$ and $u\partial_t$ of degree $2$, such that
\begin{equation}
[u\partial_t,t] = u.
\end{equation}
There is an isomorphism $W_{t,u} \iso W_{q,u}$, 
\begin{equation} \label{eq:Fourier-Laplacenc}
t = u\partial_q, \;\; u\partial_t = -q.
\end{equation}
That gives rise to the notion of Fourier-Laplace transform appropriate to our context (completeness and $u$-torsionfreeness are independent of whether one thinks of a module as lying over $W_{t,u}$ or $W_{q,u}$). There is also a localisation process with respect to $t$, which is more flexible because that variable has degree zero. Namely, take a nonzero $p(t) \in \bK[t]$ and a complete $W_{t,u}$-module $A_{t,u}$, and form the $u$-adically completed tensor product
\begin{equation} \label{eq:quotient-t}
A_{t,1/p,u} = \bK[t,1/p] \hat\otimes_{\bK[t]} A_{t,u}.
\end{equation}
As before, this becomes a $W_{t,u}$-module by $u\partial_t( p(t)^k \otimes a) = u kp(t)^{k-1} p'(t) \otimes a + p(t)^k \otimes u\partial_t a$. In parallel with Lemma \ref{th:invert-q}, we have:

\begin{lemma} \label{th:invert-t}
Suppose that $A_{t,u}$ is complete and $u$-torsionfree. Then \eqref{eq:quotient-t} is $u$-torsionfree, and its $u = 0$ reduction is 
\begin{equation}
A_{t,1/p} \stackrel{\mathrm{def}} = A_{t,1/p,u}/uA_{t,1/p,u}
\iso \bK[t,1/p] \otimes_{\bK[t]} A_t.
\end{equation}
\end{lemma}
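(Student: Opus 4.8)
The plan is to mimic the proof of Lemma \ref{th:invert-q} line by line, with localisation at the powers of $p(t)$ playing the role that inverting $q$ played there. The single piece of input that makes this work is that $\bK[t,1/p]$ is the localisation of $\bK[t]$ at the multiplicative set $\{1,p,p^2,\dots\}$, hence flat as a $\bK[t]$-module.

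First I would write down the short exact sequence of $W_{t,u}$-modules expressing that $A_{t,u}$ is $u$-torsionfree,
\[
0 \longrightarrow A_{t,u} \stackrel{u}{\longrightarrow} A_{t,u} \longrightarrow A_t \longrightarrow 0,
\]
and tensor it over $\bK[t]$ with $\bK[t,1/p]$. Flatness keeps it exact, so
\[
0 \longrightarrow \bK[t,1/p]\otimes_{\bK[t]} A_{t,u} \stackrel{u}{\longrightarrow} \bK[t,1/p]\otimes_{\bK[t]} A_{t,u} \longrightarrow \bK[t,1/p]\otimes_{\bK[t]} A_t \longrightarrow 0.
\]
In other words, the (not yet $u$-completed) localisation $\bK[t,1/p]\otimes_{\bK[t]} A_{t,u}$ is $u$-torsionfree, and its reduction modulo $u$ is $\bK[t,1/p]\otimes_{\bK[t]} A_t$. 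By definition \eqref{eq:quotient-t}, $A_{t,1/p,u}$ is the $u$-adic completion of this module, so Lemma \ref{th:completion} applies and yields at once that $A_{t,1/p,u}$ is $u$-torsionfree and that its $u=0$ reduction agrees with that of the uncompleted module, i.e.\ with $\bK[t,1/p]\otimes_{\bK[t]} A_t$; this is the claimed identification of $A_{t,1/p}$. (Note that the completion step is genuinely needed: localising a complete module typically destroys completeness, as already the model $A_{t,u} = \bK[t][[u]]$ shows.)

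I do not anticipate a real obstacle: the argument is purely formal, and as in Lemma \ref{th:invert-q} the completeness hypothesis on $A_{t,u}$ is not actually used, only its $u$-torsionfreeness together with Lemma \ref{th:completion}. The one point deserving a word of care is that completion with respect to $u$ is taken degree by degree, as in \eqref{eq:completion}, while the localisation is at $p(t)$ with $t$ of degree $0$; since localising at a degree-$0$ element is an operation internal to each graded piece, there is no interaction to worry about, and the compatibility of the $W_{t,u}$-module structure with localisation is guaranteed by the differentiation rule $u\partial_t(p(t)^k \otimes a) = ukp(t)^{k-1}p'(t)\otimes a + p(t)^k \otimes u\partial_t a$ recorded just before the lemma.
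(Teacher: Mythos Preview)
Your proof is correct and follows exactly the approach the paper intends: the paper states the lemma with the preamble ``In parallel with Lemma \ref{th:invert-q}, we have,'' and gives no separate proof, so the expected argument is precisely the one you wrote---tensor the $u$-torsionfree short exact sequence with the flat $\bK[t]$-module $\bK[t,1/p]$, then apply Lemma \ref{th:completion} to the $u$-completion.
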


\subsubsection{The derived category\label{subsubsec:derived}}
At this point, we consider differential graded modules over $W_{q,u}$, which means $A_{q,u}$ that additionally come with a $W_{q,u}$-linear differential $d_{A_{q,u}}$.

\begin{definition}
Take the category whose objects are $u$-torsionfree and complete dg modules over $W_{q,u}$, and whose morphisms are chain maps. By passing to chain homotopy classes, we obtain the homotopy category $K(W_{q,u})$. Call a morphism $A_{q,u} \rightarrow B_{q,u}$ a filtered quasi-isomorphism if it induces a quasi-isomorphism $A_q \rightarrow B_q$. The category obtained from $K(W_{q,u})$ by inverting such quasi-isomorphisms is called the derived category $D(W_{q,u})$. 
\end{definition}

Both $K(W_{q,u})$ and $D(W_{q,u})$ are triangulated categories. This uses nothing more than the standard mapping cone construction. 

\begin{lemma} \label{th:filtered-les}
Take a sequence of two chain maps which compose to zero,
\begin{equation} \label{eq:abc}
A_{q,u} \longrightarrow B_{q,u} \longrightarrow C_{q,u}.
\end{equation}
Suppose that after setting $u = 0$, this becomes a short exact sequence. Then, in $D(W_{q,u})$ there is a canonical morphism that completes it to an exact triangle.
\end{lemma}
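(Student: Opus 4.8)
The plan is to reduce the statement to the standard mapping-cone construction, working entirely $u$-adically and using the three preceding lemmas (\ref{th:completion}, \ref{th:completion-2}, \ref{th:invert-q}-style arguments) to control completeness and $u$-torsionfreeness at each stage. Concretely, write $f_{q,u}\colon A_{q,u}\to B_{q,u}$ and $g_{q,u}\colon B_{q,u}\to C_{q,u}$ for the two maps in \eqref{eq:abc}. First I would form the mapping cone $\mathrm{Cone}(f_{q,u})$, which is a complete $u$-torsionfree dg module over $W_{q,u}$ (both properties are inherited termwise: a finite direct sum of complete $u$-torsionfree modules is again such, and the cone differential is $W_{q,u}$-linear). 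The map $g_{q,u}$ together with the chosen nullhomotopy of $g_{q,u}\circ f_{q,u}$ assembles, in the usual way, into a chain map $h_{q,u}\colon \mathrm{Cone}(f_{q,u})\to C_{q,u}$; this is the candidate morphism completing \eqref{eq:abc} to a triangle.

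Next I would verify that $h_{q,u}$ is a filtered quasi-isomorphism, i.e.\ that $h_q$ is a quasi-isomorphism after setting $u=0$. Here is where the hypothesis is used: setting $u=0$ in $\mathrm{Cone}(f_{q,u})$ gives $\mathrm{Cone}(f_q)$ (the cone construction commutes with the exact functor $-\otimes_{\bK[u]}\bK[u]/u$, since the modules are $u$-torsionfree so the sequence $0\to A_{q,u}\xrightarrow{u}A_{q,u}\to A_q\to 0$ is exact and similarly for $B$), and $h_q$ is the comparison map $\mathrm{Cone}(f_q)\to C_q$ induced by $g_q$. Since $0\to A_q\xrightarrow{f_q}B_q\xrightarrow{g_q}C_q\to 0$ is exact by assumption, the comparison map from the cone of an injection to the cokernel is a quasi-isomorphism (this is the elementary homological fact that $\mathrm{Cone}(f_q)\simeq \mathrm{coker}(f_q)=C_q$ when $f_q$ is a monomorphism of complexes; one checks it directly on homology via the long exact sequence of the cone). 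Therefore $h_{q,u}$ becomes an isomorphism in $D(W_{q,u})$, and the exact triangle $A_{q,u}\to B_{q,u}\to \mathrm{Cone}(f_{q,u})\xrightarrow{[1]}$, transported along $h_{q,u}^{-1}$, is the desired triangle completing \eqref{eq:abc}.

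For canonicity, I would note that the construction depends a priori on the choice of the nullhomotopy exhibiting $g_{q,u}\circ f_{q,u}\simeq 0$; but in \eqref{eq:abc} the composite is literally zero, so one takes the zero homotopy, and the resulting $h_{q,u}$ is then completely determined. (If one only assumed the composite is chain-homotopic to zero, different homotopies would still give morphisms agreeing in $D(W_{q,u})$ up to the usual ambiguity, but that refinement is not needed here.) The triangulated structure on $D(W_{q,u})$ itself is the one already granted in the text as coming from the mapping cone, so no further axiom-checking is required.

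The only genuine subtlety — the ``main obstacle,'' though a mild one — is bookkeeping around completion: $\mathrm{Cone}(f_{q,u})$ must be checked to lie in the category of \emph{complete} $u$-torsionfree dg modules, not merely $u$-torsionfree ones, and one must make sure that ``setting $u=0$'' really does commute with forming the cone. Both follow because the cone is a \emph{finite} direct sum (so no completion is lost: a finite product of complete modules is complete, \cite[Lemma 00MC]{stacks} being needed only for the ambient constructions, not here) and because $u$-torsionfreeness makes the reduction functor exact in the relevant range, exactly as in the proofs of Lemmas \ref{th:completion} and \ref{th:completion-2}. Once this is in place the argument is purely formal.
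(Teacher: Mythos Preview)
Your proof is correct and follows essentially the same approach as the paper: form $\mathrm{Cone}(f_{q,u})$, observe that the induced map $\mathrm{Cone}(f_{q,u})\to C_{q,u}$ (using the zero nullhomotopy, since $g\circ f=0$ strictly) is a filtered quasi-isomorphism because its $u=0$ reduction compares the cone of an injection to its cokernel, then invert it and compose with the projection $\mathrm{Cone}(f_{q,u})\to A_{q,u}[1]$. The paper's proof is the same two-sentence argument; your additional checks that the cone is complete and $u$-torsionfree, and that reduction mod $u$ commutes with cone formation, are correct (and indeed follow just from the cone being a finite direct sum of objects in the category), though the paper leaves them implicit.
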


\begin{proof}
By assumption, the map $\mathit{Cone}(A_{q,u} \rightarrow B_{q,u}) \rightarrow C_{q,u}$ is a filtered quasi-isomorphism. One defines the desired morphism by combining the inverse of that map with the projection from the cone to $A_{q,u}[1]$.
\end{proof}

The localisation process \eqref{eq:invert-q} also applies to dg modules. Because of Lemma \ref{th:invert-q}, it preserves filtered quasi-isomorphisms, hence gives rise to an exact endofunctor of $D(W_{q,u})$. Under the extra assumption that $A_q$ is $q$-torsionfree, Lemmas \ref{th:invert-q-2} and \ref{th:filtered-les} say that we have an exact triangle
\begin{equation} \label{eq:aq-triangle}
\xymatrix{A_{q,u} \ar[r] & A_{q^{\pm 1},u} \ar[r] & q^{-1} A_{q^{-1},u}
\ar@/^1pc/[ll]^-{[1]}
}
\end{equation}
One can of course also think of $D(W_{q,u}) = D(W_{t,u})$ as a derived category of dg modules over $W_{t,u}$. Localisation in the sense of \eqref{eq:quotient-t} preserves filtered quasi-isomorphisms, hence gives rise to an exact endofunctor of the derived category.

In applications, geometrically defined chain maps are often strictly $u$-linear and $q$-linear, but commute with differentiation only up to homotopy. That can be remedied in the derived category:

\begin{lemma} \label{th:homotopy-dq}
Take two $u$-torsionfree complete dg modules $A_{q,u}$ and $B_{q,u}$. Suppose that we have $(q,u)$-linear maps
\begin{equation} \label{eq:fh}
\begin{aligned}
& f_{q,u}: A_{q,u} \longrightarrow B_{q,u} \text{ of degree $0$}, && d_{B_{q,u}} f_{q,u}(x) = f_{q,u}(d_{A_{q,u}} x), \\
& h_{q,u}: A_{q,u} \longrightarrow B_{q,u} \text{ of degree $-1$}, &&
d_{B_{q,u}} h_{q,u}(x) + h_{q,u}(d_{A_{q,u}} x) = u\partial_q f_{q,u}(x) - f_{q,u}(u\partial_q x).
\end{aligned}
\end{equation}
This gives rise to a canonical morphism $A_{q,u} \rightarrow B_{q,u}$ in $D(W_{q,u})$. Moreover, if $f_{q,u}$ is a filtered quasi-isomorphism, then the associated morphism is an isomorphism.
\end{lemma}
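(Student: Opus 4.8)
The plan is to exhibit the required morphism as one edge of an exact triangle, and to let Lemma \ref{th:filtered-les} perform the construction in $D(W_{q,u})$ once a suitable ``twisted mapping cone'' of $f_{q,u}$ has been built.

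First I would twist the ordinary mapping cone of $f_{q,u}$ by the homotopy $h_{q,u}$. Concretely, take $C_{q,u} = A_{q,u}[1] \oplus B_{q,u}$ as a graded $\bK[q,u]$-module, with differential
\[
d_{C_{q,u}} = \begin{pmatrix} -d_{A_{q,u}} & 0 \\ f_{q,u} & d_{B_{q,u}} \end{pmatrix},
\]
and let $u\partial_q$ act by the matrix whose diagonal entries are the given actions on $A_{q,u}$ and $B_{q,u}$ and whose lower-left entry is $h_{q,u}$. The one thing to check is that this is a dg $W_{q,u}$-module: the identity $d_{C_{q,u}}^2 = 0$ is exactly the statement that $f_{q,u}$ is a chain map; the relation $[u\partial_q, q] = u$ on $C_{q,u}$ follows from the corresponding relations on $A_{q,u}$ and $B_{q,u}$ together with the $q$-linearity of $h_{q,u}$; and the commutation of $u\partial_q$ with $d_{C_{q,u}}$ is precisely the second identity in \eqref{eq:fh}. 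As a finite direct sum of $u$-torsionfree complete modules, $C_{q,u}$ is $u$-torsionfree and complete, hence an object of the category underlying $D(W_{q,u})$.

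Next I would apply Lemma \ref{th:filtered-les} to the two strict $W_{q,u}$-linear chain maps
\[
B_{q,u} \hookrightarrow C_{q,u} \twoheadrightarrow A_{q,u}[1]
\]
given by the inclusion of the second summand and the projection onto the first. These compose to zero, and after setting $u = 0$ they become the split short exact sequence $0 \to B_q \to A_q[1]\oplus B_q \to A_q[1] \to 0$. The Lemma then produces a canonical morphism $A_{q,u}[1] \to B_{q,u}[1]$ in $D(W_{q,u})$ completing the sequence to an exact triangle; shifting by $-1$ yields the asserted canonical morphism $A_{q,u} \to B_{q,u}$ together with an exact triangle $A_{q,u} \to B_{q,u} \to C_{q,u} \to A_{q,u}[1]$. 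That this morphism represents $f_{q,u}$ can be seen by forgetting the $u\partial_q$-action: over the dg algebra $\bK[q,u]$, $C_{q,u}$ becomes the ordinary mapping cone of the chain map $f_{q,u}$, and the standard identification of the connecting morphism of a mapping-cone triangle identifies our morphism with the chain homotopy class of $f_{q,u}$.

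Finally, when $f_{q,u}$ is a filtered quasi-isomorphism, $C_q = \mathit{Cone}(f_q)$ is acyclic, so $C_{q,u} \to 0$ is a filtered quasi-isomorphism and $C_{q,u} \iso 0$ in $D(W_{q,u})$; the exact triangle just constructed then forces $A_{q,u} \to B_{q,u}$ to be an isomorphism. I do not anticipate a genuine obstacle; the only delicate point is the sign bookkeeping in the twisted differential and twisted $u\partial_q$-action on $C_{q,u}$, which must be arranged so that the coherence identity \eqref{eq:fh} matches $[u\partial_q, d_{C_{q,u}}] = 0$ on the nose — and, relatedly, making sure it is the twisted cone $C_{q,u}$ (not the naive mapping cone) that feeds into Lemma \ref{th:filtered-les}, so that all three terms of the triangle are honest dg $W_{q,u}$-modules.
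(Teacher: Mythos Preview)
Your argument is correct and is essentially the same as the paper's: both build the twisted mapping cone $C_{q,u}=A_{q,u}[1]\oplus B_{q,u}$ with the standard cone differential and the $h_{q,u}$-deformed $u\partial_q$-action, then feed the inclusion/projection sequence $B_{q,u}\hookrightarrow C_{q,u}\twoheadrightarrow A_{q,u}[1]$ into Lemma~\ref{th:filtered-les} to extract the connecting morphism. The paper additionally spells out this morphism via the mapping cylinder $Z_{q,u}$, but that is elaboration rather than a different idea.
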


\begin{proof}
Equip the mapping cone $C_{q,u} = A_{q,u}[1] \oplus B_{q,u}$ of $f_{q,u}$ with the standard differential and $(q,u)$-action, and with the differentiation operation
\begin{equation}
u\partial_q(a,b) = (u\partial_q a, u\partial_q b + h_{q,u}(a)).
\end{equation}
This is an object of our category, and Lemma \ref{th:filtered-les} says that the inclusion and projection maps are part of a canonical exact triangle
\begin{equation} \label{eq:abc-2}
\xymatrix{
B_{q,u} \ar[r] & C_{q,u} \ar[r] & A_{q,u}[1] \ar@/^1pc/[ll]
}
\end{equation}
The boundary homomorphism of that triangle, meaning the left-pointing arrow in \eqref{eq:abc-2}, is the morphism we wanted to define. One can make this construction entirely explicit: namely, take the mapping cone of the shifted map $C_{q,u}[-1] \rightarrow A_{q,u}$ from \eqref{eq:abc-2}; this should more appropriately be called the mapping cylinder of $(f_{q,u},h_{q,u})$, and we denote it by $Z_{q,u}$. It comes with natural maps
\begin{equation} \label{eq:leftrightarrow}
A_{q,u} \longrightarrow Z_{q,u} \longleftarrow B_{q,u},
\end{equation}
of which the $\leftarrow$ is a filtered quasi-isomorphism. Inverting that gives rise to the desired morphism in the derived category. Finally, if $f_{q,u}$ is a filtered quasi-isomorphism, then so is the $\rightarrow$ in \eqref{eq:leftrightarrow}.
\end{proof}

Visibly, Lemma \ref{th:homotopy-dq} is asymmetric with respect to $(t,q)$. There is an analogue with the two variables switched, proved in the same way. (One might hope for more general and symmetric statements, possibly involving some $A_\infty$-version of $W_{q,u}$-module homomorphisms, but what we have will be sufficient for our purpose.)

\section{Noncommutative geometry\label{sec:algebra}}

The Getzler-Gauss-Manin connection \cite{getzler95} on periodic cyclic homology, and the theorem of Petrov-Vaintrob-Vologodsky  \cite{petrov-vaintrob-vologodsky18} concerning its behaviour for smooth and proper families, play a key role in our argument. The relevance of these results for the quantum connection depends on another piece of noncommutative geometry, which appears to be new; namely, a Fourier-Laplace duality for Getzler-Gauss-Manin connections (Theorem \ref{th:noncommutative-fourier-transform}), which resembles the construction of Gauss-Manin systems (see Section \ref{subsec:classical-gauss-manin}). In Section \ref{subsec:dga}, we explain that duality and its consequences, for differential graded algebras deformed by a superpotential (a central cocycle). After that, the purely expository Section \ref{subsec:ainfty} sets up the corresponding more general context for curved deformations of $A_\infty$-algebras. Section \ref{subsec:fiber} contains a technical argument used to reduce the $A_\infty$-situation to that of dga's. The outcome of these purely algebraic considerations is summarized in Corollary \ref{th:end-of-algebra}. The final Section \ref{subsec:linfty} has a separate purpose: it recalls some definitions from the world of $L_\infty$-algebras, which will be useful when discussing symplectic cohomology and its deformations.

\subsection{Differential graded algebras\label{subsec:dga}}

\subsubsection{The setup\label{subsubsec:dga-setup}}
In this section, $\scrA$ is a (nonzero) differential graded algebra over a field $\bK$. Denote the unit by $e_{\scrA} \in \scrA^0$, and write $\bar\scrA = \scrA/\bK e_{\scrA}$. Fix a central element
\begin{equation} \label{eq:w}
W \in \scrA^0,\;\; d_{\scrA}W = 0, \;\; Wa = aW \text{ for all $a \in \scrA$.} 
\end{equation}
There are two ways in which one can consider $W$ as part of the structure of $\scrA$.

\begin{itemize} \itemsep1em
\item Multiplication by $W$ makes $\scrA$ into a dga over a one-variable polynomial ring. It is not necessarily free as a module over that ring, but we can replace it by a better-behaved model, the $\bK[t]$-linear dga
\begin{equation} \label{eq:cofibrant-t}
\begin{aligned}
& \scrA_t = \scrA[t,\epsilon], \\
& d_{\scrA_t} = d_{\scrA} + (t-W)\partial_\epsilon.
\end{aligned}
\end{equation}
Here $(t,\epsilon)$ are formal variables of degree $0$ and $-1$, respectively; see Section \ref{subsec:conventions}(a). The inclusion $\scrA \hookrightarrow \scrA_t$ is a quasi-isomorphism, and the induced map on cohomology takes $[W]$ to $t[e_\scrA]$.

\item Let $q$ be a formal variable of degree $2$. We can regard $\scrA[[q]]$ as a differential graded algebra over $\bK[[q]]$ with a curvature term, namely $qW$ (this is a special case of the notion of curved $A_\infty$-deformation). Let's denote that curved dga by $\scrA_q$. All constructions involving $\scrA_q$ need to be carried out in $q$-adically completed versions.
\end{itemize}

\subsubsection{Bar resolutions\label{subsubsec:bar}}
Because we are dealing with differential graded algebras, all modules and bimodules are understood in the dg sense. A bimodule over $\scrA$ is the same as a module over $\scrA \otimes \scrA^{\operatorname{opp}}$. Recall the (normalized) bar resolution of the diagonal bimodule, 
\begin{equation} \label{eq:bar-diagonal}
\begin{aligned}
\bar{B}\scrA & = \scrA \otimes T(\bar\scrA[1]) \otimes \scrA \\
& = (\scrA \otimes \scrA) \oplus (\scrA \otimes \bar\scrA[1] \otimes \scrA) \oplus (\scrA \otimes \bar\scrA[1] \otimes \bar\scrA[1] \otimes \scrA) \oplus \cdots,
\end{aligned}
\end{equation}
with the obvious left and right $\scrA$-module structure, and differential
\begin{equation} \label{eq:bar-differential}
\begin{aligned}
& d_{\bar{B}\scrA}(a_0 (a_1|\dots|a_l) a_{l+1}) = 
\\[.5em] & \quad 
d_{\scrA}a_0 (a_1|\dots|a_l) a_{l+1}
- \sum_j (-1)^{|a_0|+\|a_1\|+\cdots+\|a_j\|} a_0 (a_l|\dots|d_{\scrA}a_{j+1}|\dots|a_l) a_{l+1} 
\\ & \quad
+ (-1)^{|a_0|+\|a_1\|+\cdots+\|a_l\|} a_0 (a_1|\dots|a_l) d_{\scrA} a_{l+1}
+ (-1)^{|a_0|} a_0a_1 (a_2|\dots|a_l) a_{l+1} \\[.5em] & \quad +
\sum_j (-1)^{|a_0|+\|a_1\|\cdots+ \|a_{j+1}\|} a_0(a_1|\dots|a_{j+1}a_{j+2}|\dots|a_l) a_{l+1} 
\\ & \quad - (-1)^{|a_0|+\|a_1\|+\cdots+\|a_{l-1}\|} a_0(a_1|\dots|a_{l-1}) a_la_{l+1}.
\end{aligned}
\end{equation}
The quasi-isomorphism $\bar{B}\scrA \rightarrow \scrA$ is given by $a_0()a_1 \mapsto a_0a_1$. (Our reason for working with normalized complexes will become clear later, see Example \ref{th:k}.) We will need two variants:
\begin{itemize}
\itemsep1em 
\item When talking about $\scrA_t$-bimodules, those are always assumed to be $t$-linear, which means that they are $\bK[t]$-linear modules over $\scrA_t \otimes_{\bK[t]} \scrA_t^{\operatorname{opp}}$. An example of this is the bar resolution $\bar{B}\scrA_t$, defined as before but with all tensor products taken over $\bK[t]$.

\item The bar resolution of $\scrA_q$ is similarly defined by working over $\bK[[q]]$, but with $q$-completion built in, and including an additional term in the differential which uses the curvature $qW$. Explicitly,
\begin{equation} \label{eq:daq-resolution}
\begin{aligned}
& \bar{B}\scrA_q = \bar{B}\scrA [[q]], \\
& d_{\bar{B}\scrA_q}(a_0(a_1|\dots|a_l)a_{l+1}) = \; d_{\bar{B}\scrA}(a_0(a_1|\dots|a_l)a_{l+1}) \\ &
 \qquad \qquad - q\sum_j (-1)^{|a_0|+\|a_1\|+\cdots+\|a_j\|} a_0(a_1|\dots|a_j|W|a_{j+1}|\dots|a_l)a_{l+1}.
\end{aligned}
\end{equation}
\end{itemize}

Pushing forward the bar resolution of $\scrA$ via $\scrA \hookrightarrow \scrA_t$ yields an $\scrA_t$-bimodule
\begin{equation} \label{eq:q-complex-0}
\begin{aligned}
& (\scrA_t \otimes_{\bK[t]} \scrA_t^{\operatorname{opp}}) \otimes_{(\scrA \otimes \scrA^{\operatorname{opp}})} \bar{B}\scrA 
=
\scrA_t \otimes_{\bK[t]} T_{\bK[t]}(\bar{\scrA}[1] \otimes \bK[t]) \otimes_{\bK[t]} \scrA_t \\ & \qquad
= (\scrA[\epsilon] \otimes T(\bar\scrA[1]) \otimes \scrA[\epsilon])[t].
\end{aligned}
\end{equation}
The differential is, by definition, derived from that on $B\scrA$ and $\scrA_t$. More precisely, given 
$a_0 \in \scrA[\epsilon]$, $a_1,\dots,a_l \in \bar\scrA$, and $a_{l+1} \in \scrA[\epsilon]$, one defines 
$d_{(\scrA_t \otimes_{\bK[t]} \scrA_t^{\operatorname{opp}}) \otimes_{(\scrA \otimes \scrA^{\operatorname{opp}})} \bar{B}\scrA}(a_0(a_1|\dots|a_l)a_{l+1})$ as in \eqref{eq:bar-differential}, but replacing $d_{\scrA}a_0$, $d_{\scrA}a_{l+1}$ by their $\scrA_t$-counterparts; this is then extended $t$-linearly. We next define an $\scrA_t$-bimodule with an additional action of $q$, which means a module over $(\scrA_t \otimes_{\bK[t]} \scrA_t^{\operatorname{opp}})[[q]]$, by combining \eqref{eq:q-complex-0} with a term resembling that from \eqref{eq:daq-resolution}:
\begin{equation} \label{eq:q-complex}
\begin{aligned}
& \bar{Q}\scrA_t = (\scrA[\epsilon] \otimes T(\bar\scrA[1]) \otimes \scrA[\epsilon])[t,q^{-1}], \\[.5em]
& d_{\bar{Q}\scrA_t}(a_0(a_1|\dots|a_l)a_{l+1} q^{-k}) = 
d_{(\scrA_t \otimes_{\bK[t]} \scrA_t^{\operatorname{opp}}) \otimes_{(\scrA \otimes \scrA^{\operatorname{opp}})} \bar{B}\scrA}(a_0(a_1|\dots|a_l)a_{l+1}) q^{-k}
\\ & \quad
+ \Big( -(-1)^{|a_0|} a_0\epsilon(a_1|\dots|a_l)a_{l+1} 
\\
& \quad - \sum_j (-1)^{|a_0|+\|a_1\|+\cdots+\|a_j\|} a_0(a_1|\dots|a_j|W|a_{j+1}|\dots|a_l)a_{l+1}
\\[-.5em]
& \quad
 + (-1)^{|a_0|+\|a_1\|+\cdots+\|a_l\|} a_0(a_1|\dots|a_l) \epsilon a_{l+1} \Big) q^{-k+1}
\\ &
\qquad \qquad \qquad \qquad \qquad \text{for } a_0 \in \scrA[\epsilon], \, a_1,\dots,a_l \in \scrA, \, a_{l+1} \in \scrA[\epsilon].
\end{aligned}
\end{equation}
For $k = 0$, the $q^{-k+1}$ term becomes zero; also, $\epsilon a = (-1)^{|a|} a\epsilon \in \scrA_t$; both are parts of our general conventions, see Section \ref{subsec:conventions}(a), (b). The shuffle map is the following map of $\scrA_t$-bimodules:
\begin{equation} \label{eq:shuffle}
\begin{aligned}
& \mathit{sh}: \bar{Q}\scrA_t \longrightarrow \bar{B}\scrA_t, \\
& \mathit{sh}(a_0(a_1|\cdots|a_l)a_{l+1}\, q^{-k}) = (-1)^k \!\!\! \sum_{0 \leq i_1 \leq \cdots \leq i_k \leq l}  \!\!\!
a_0(a_1|\dots|a_{i_1}|e_{\scrA}\epsilon|\dots|a_{i_2}|e_{\scrA}\epsilon|\dots|a_l)a_{l+1} 
\\
& \qquad \qquad \qquad \qquad \qquad \text{for } a_0 \in \scrA[\epsilon], \, a_1,\dots,a_l \in \scrA,\, a_{l+1} \in \scrA[\epsilon], \text{ and } k \geq 0.
\end{aligned}
\end{equation}

\begin{example} \label{th:k}
Write $\scrK = \bK$ for the coefficient field thought of as a dga, with $W = 0$ as the central element, so that $(\scrK_t,d_{\scrK_t}) = (\bK[t,\epsilon], d\epsilon = t)$. Then \eqref{eq:q-complex} simplifies to
\begin{equation}
\begin{aligned}
& \bar{Q}\scrK_t = (\bK[\epsilon] \otimes \bK[\epsilon])[t,q^{-1}], \\
& d_{\bar{Q}\scrK_t}(a_0()a_1 \, q^{-k}) = \partial_\epsilon a_0()a_1 \, q^{-k}t + 
(-1)^{|a_0|} a_0()\partial_\epsilon a_1 \, q^{-k} t \\
& \qquad \qquad
-(-1)^{|a_0|} a_0\epsilon()a_1 q^{-k+1} + (-1)^{|a_0|}a_0()\epsilon a_1 q^{-k+1}
\end{aligned}
\end{equation}
for $a_0,a_1 \in \bK[\epsilon]$; on the other hand,
\begin{equation}
\begin{aligned}
& \bar{B}\scrK_t = (\bK[\epsilon] \otimes T(\bK \epsilon[1]) \otimes \bK[\epsilon])[t], \\
& d_{\bar{B}\scrK_t}( a_0 (\underbrace{\epsilon|\dots|\epsilon}_k) a_1) = 
\partial_\epsilon a_0 (\underbrace{\epsilon|\dots|\epsilon}_k) a_1\, t 
+ (-1)^{|a_0|} a_0(\underbrace{\epsilon|\dots|\epsilon}_k) \partial_\epsilon a_1\, t
\\ & \qquad \qquad +
(-1)^{|a_0|} a_0\epsilon  (\underbrace{\epsilon|\dots|\epsilon}_{k-1}) a_1
- (-1)^{|a_0|} a_0  (\underbrace{\epsilon|\dots|\epsilon}_{k-1}) \epsilon a_1.
\end{aligned}
\end{equation}
The map \eqref{eq:shuffle} takes $a_0()a_1 q^{-k}$ to $(-1)^k a_0 (\underbrace{\epsilon|\cdots|\epsilon}_k) a_1$, and is an isomorphism of complexes.
\end{example}

\begin{example} \label{th:w-is-zero}
Take an arbitrary $\scrA$, but still assuming the central element to be $W = 0$. Then, $\scrA_t$ is the tensor product (over $\bK$) of $\scrA$ and the previously considered $\scrK_t$. Along similar lines,
\begin{equation}
\bar{Q}\scrA_t \iso \bar{B}\scrA \otimes \bar{Q}\scrK_t.
\end{equation}
If we then use the identification $\bar{Q}\scrK_t \iso \bar{B}\scrK_t$ from the previous example, the map \eqref{eq:shuffle} turns into a form of the classical shuffle product (see e.g.\ \cite[Section 9.4]{weibel}), and fits into a commutative diagram
\begin{equation}
\xymatrix{
\bar{B}\scrA \otimes \bar{B}\scrK_t \ar[rr]^-{\mathit{sh}} \ar[dr] && \bar{B}(\scrA \otimes \scrK_t) \ar[dl]
\\ &
\scrA \otimes \scrK_t.
}
\end{equation}
Here, the diagonal maps express the fact that both $\bar{B}\scrA \otimes \bar{B}\scrK_t$ and $\bar{B}(\scrA \otimes \scrK_t)$ are resolutions of $\scrA \otimes \scrK_t$. Since those maps are quasi-isomorphisms, so is the shuffle map (a well-known fact, of course).
\end{example}

\begin{prop} \label{th:q-resolution}
For any $(\scrA,W)$, the map \eqref{eq:shuffle} is a quasi-isomorphism.
\end{prop}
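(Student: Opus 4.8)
The plan is to reduce to the case $W = 0$, which was disposed of in Examples \ref{th:k} and \ref{th:w-is-zero}, by running a filtration argument on both sides of \eqref{eq:shuffle}.

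First I would equip $\bar{Q}\scrA_t$ and $\bar{B}\scrA_t$ with an increasing filtration $F_\bullet$ by an ``$\epsilon$-weight''. After decomposing the two outer factors $a_0,a_{l+1}\in\scrA[\epsilon]$ into $\epsilon$-homogeneous pieces, assign to a generator $a_0(a_1|\cdots|a_l)a_{l+1}\,q^{-k}$ of $\bar{Q}\scrA_t$ the weight $k+\deg_\epsilon(a_0)+\deg_\epsilon(a_{l+1})$; and, after decomposing \emph{all} the factors $a_0,\dots,a_{l+1}$ of a generator of $\bar{B}\scrA_t$ into their $\epsilon$-degree $0$ and $1$ parts (recall $\bar\scrA_t\cong\bar\scrA[t]\oplus\scrA\epsilon[t]$), assign it the weight $\sum_{i=0}^{l+1}\deg_\epsilon(a_i)$. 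Three things then need checking: (a) both differentials are non-increasing for $F_\bullet$ --- in $d_{\bar{Q}\scrA_t}$ the pieces coming from $d_\scrA$ and from the bar/composition maps preserve the weight, the two ``$q^{-k+1}$-terms'' $a_0\mapsto a_0\epsilon$ and $a_{l+1}\mapsto\epsilon a_{l+1}$ preserve it, while the explicit $W$-insertion term and the whole $(t-W)\partial_\epsilon$-part of $d_{\scrA_t}$ on the outer factors strictly lower it; in $d_{\bar{B}\scrA_t}$ the slotwise $d_{\scrA_t}$ behaves identically and the composition maps are non-increasing since $\epsilon^2=0$; (b) $\mathit{sh}$ is weight-preserving --- each of the $k$ inserted entries $e_\scrA\epsilon$ has $\epsilon$-degree $1$, which exactly absorbs the $k$ carried by $q^{-k}$; (c) $F_{-1}=0$ and $F_\bullet$ is exhaustive on both sides. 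Given (a)--(c) and the fact that $\mathit{sh}$ is a chain map, the standard comparison theorem (a filtered chain map inducing a quasi-isomorphism on the associated graded, between complexes with exhaustive, bounded-below filtrations, is a quasi-isomorphism) reduces the claim to showing that $\mathrm{gr}^F\!\mathit{sh}$ is a quasi-isomorphism.

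This is where Examples \ref{th:k} and \ref{th:w-is-zero} come back in. On the associated graded all weight-lowering terms are killed; in particular both $W\partial_\epsilon$ and $t\partial_\epsilon$ disappear, so $\mathrm{gr}^F\bar{Q}\scrA_t$ and $\mathrm{gr}^F\bar{B}\scrA_t$ are precisely the complexes built by the recipe of Examples \ref{th:k}--\ref{th:w-is-zero} from $(\scrA,0)$, except that the auxiliary dga $\scrK_t=(\bK[t,\epsilon],\,d\epsilon=t)$ used there is now replaced by $(\bK[t,\epsilon],\,d\epsilon=0)$. Thus $\mathrm{gr}^F\!\mathit{sh}$ factors, exactly as the shuffle map does in Example \ref{th:w-is-zero}, as $\bar{B}\scrA\otimes\bar{Q}(\bK[t,\epsilon],0) \to \bar{B}\scrA\otimes\bar{B}(\bK[t,\epsilon],0) \to \bar{B}\bigl(\scrA\otimes(\bK[t,\epsilon],0)\bigr)$, where the first arrow is an isomorphism of complexes by (the $t$-independent part of) the computation in Example \ref{th:k}, and the second is the classical shuffle quasi-isomorphism, valid because both its sides resolve $\scrA\otimes(\bK[t,\epsilon],0)$. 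Hence $\mathrm{gr}^F\!\mathit{sh}$ is a quasi-isomorphism and we are done.

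I expect the only real work to be the term-by-term verification in step (a), together with pinning down the identification of the associated graded complexes in the last paragraph; conceptually everything is a perturbation of the $W=0$ computation. (One could instead introduce a formal parameter $s$, replace $W$ by $sW$, filter by powers of $s$ so that the associated graded becomes \emph{literally} the $W=0$ situation of Example \ref{th:w-is-zero}, and then set $s=1$; I avoid that route only because the specialization $\bK[s]\to\bK$ at $s=1$ is not flat and would require a separate boundedness argument, whereas the $\epsilon$-weight filtration above is uniformly bounded below.)
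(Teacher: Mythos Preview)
Your proof is correct and follows essentially the same strategy as the paper's: introduce a weight filtration on both sides, check that $\mathit{sh}$ is filtered, and reduce to a known case on the associated graded. The only difference is in the choice of weights. The paper assigns weight $1$ to each of $\epsilon$, $t$, and $q^{-1}$; with this choice, the $t\partial_\epsilon$ piece of $d_{\scrA_t}$ is weight-preserving and survives on the associated graded, so that $\mathrm{gr}^F$ is literally the $W=0$ situation of Example~\ref{th:w-is-zero} (with $\scrK_t=(\bK[t,\epsilon],\,d\epsilon=t)$). Your filtration omits $t$, so both $t\partial_\epsilon$ and $W\partial_\epsilon$ die on $\mathrm{gr}^F$, and you land instead on the variant with $(\bK[t,\epsilon],\,d\epsilon=0)$. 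That variant is not strictly of the form ``$\scrK_t$ for some $(\scrK,W)$'', so you cannot quote Examples~\ref{th:k}--\ref{th:w-is-zero} verbatim; but as you note, the computation there carries over with the $t\partial_\epsilon$ terms simply deleted, and the classical shuffle quasi-isomorphism of bar resolutions holds for any tensor product of dga's. The paper's choice is marginally cleaner in that respect, while yours is marginally simpler to set up; either way the argument is the same filtration-plus-comparison.
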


\begin{proof}
Let's say that the formal variables $\epsilon$, $t$, $q^{-1}$ all have weight $1$. Consider the increasing filtration of $\bar{Q}\scrA_t$ obtained by putting an upper bound of the weights. This filtration is bounded below and exhaustive, and on the associated graded space, the differential is precisely what one would obtain if $W = 0$. One can use the same weights for $t$ and $\epsilon$ to obtain a filtration on $\bar{B}\scrA_t$, and again, passing to the associated quotient has the same effect as setting $W = 0$. The shuffle map is homogeneous with respect to weights. Hence, the induced map on graded spaces is exactly what we looked at in Example \ref{th:w-is-zero}. That being a quasi-isomorphism, an obvious spectral sequence argument yields the desired result.
\end{proof}

The bar resolution $\bar{B}\scrA$ is homotopically flat ($K$-flat in the terminology of \cite{spaltenstein88, bernstein-lunts94}), meaning that if $P$ is any acyclic $\scrA$-bimodule, then $P \otimes_{\scrA \otimes \scrA^{\operatorname{opp}}} \bar{B}\scrA \iso P \otimes_{\bK} T(\bar{\scrA}[1])$ is an acyclic chain complex. The pushforward \eqref{eq:q-complex-0} inherits the corresponding property as an $\scrA_t$-bimodule, because by definition
\begin{equation} \label{eq:push-tensor}
\begin{aligned}
& P_t \otimes_{(\scrA_t \otimes_{\bK[t]} \scrA_t^{\operatorname{opp}})} 
(\scrA_t \otimes_{\bK[t]} \scrA_t^{\operatorname{opp}}) \otimes_{(\scrA \otimes \scrA^{\operatorname{opp}})} \bar{B}\scrA 
\iso P_t \otimes_{(\scrA \otimes \scrA^{\operatorname{opp}})} \bar{B}\scrA
\\ & \qquad\qquad\qquad
\text{for any $(\scrA_t \otimes_{\bK[t]} \scrA_t^{\operatorname{opp}})$-module $P_t$.}
\end{aligned}
\end{equation}
From that and a $q$-filtration argument, it follows that $\bar{Q}\scrA_t$ is homotopically flat. So is the bar resolution $\bar{B}\scrA_t$ (for the same reason as $\bar{B}\scrA$).

\begin{corollary} \label{th:k-flat}
Let $P_t$ be an $\scrA_t$-bimodule. Then, the map \eqref{eq:shuffle} induces a quasi-isomorphism 
\begin{equation} \label{eq:tensor-with-sh}
P_t \otimes_{(\scrA_t \otimes_{\bK[t]} \scrA_t^{\operatorname{opp}})} \bar{Q}\scrA_t \stackrel{\htp}{\longrightarrow} P_t \otimes_{(\scrA_t \otimes_{\bK[t]} \scrA_t^{\operatorname{opp}})} \bar{B}\scrA_t.
\end{equation}
\end{corollary}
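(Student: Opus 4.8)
The plan is to derive the statement formally from Proposition~\ref{th:q-resolution} together with the homotopical flatness of $\bar{Q}\scrA_t$ and $\bar{B}\scrA_t$ recorded just above, using the principle that tensoring an arbitrary bimodule with a homotopically flat acyclic complex of bimodules produces an acyclic complex.

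First I would pass to the mapping cone $\mathit{Cone}(\mathit{sh})$ of the shuffle map, which is a complex of $\scrA_t$-bimodules. Since $\mathit{sh}$ is a quasi-isomorphism by Proposition~\ref{th:q-resolution}, this cone is acyclic. Next I would verify that it is also homotopically flat: for any acyclic $\scrA_t$-bimodule $N$, the complex $N \otimes_{(\scrA_t \otimes_{\bK[t]} \scrA_t^{\operatorname{opp}})} \mathit{Cone}(\mathit{sh})$ is the mapping cone of $N \otimes_{(\scrA_t \otimes_{\bK[t]} \scrA_t^{\operatorname{opp}})} \mathit{sh}$, whose source and target are acyclic by homotopical flatness of $\bar{Q}\scrA_t$ and $\bar{B}\scrA_t$ respectively; hence it is acyclic, and $\mathit{Cone}(\mathit{sh})$ is homotopically flat.

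It then remains to see that $P_t \otimes_{(\scrA_t \otimes_{\bK[t]} \scrA_t^{\operatorname{opp}})} \mathit{Cone}(\mathit{sh})$ is acyclic for our arbitrary $P_t$. One way is to cite the relevant homological algebra (in the spirit of \cite{spaltenstein88, bernstein-lunts94}, already invoked for the definition of homotopical flatness); alternatively one gives the two-line argument: choose a homotopically flat resolution $\tilde{P}_t \to P_t$, note that $\mathit{Cone}(\tilde{P}_t \to P_t)$ is acyclic so that homotopical flatness of $\mathit{Cone}(\mathit{sh})$ makes $\tilde{P}_t \otimes \mathit{Cone}(\mathit{sh}) \to P_t \otimes \mathit{Cone}(\mathit{sh})$ a quasi-isomorphism, and observe that $\tilde{P}_t \otimes \mathit{Cone}(\mathit{sh})$ is itself acyclic because $\tilde{P}_t$ is homotopically flat and $\mathit{Cone}(\mathit{sh})$ is acyclic. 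Since tensor product commutes with the formation of mapping cones, $P_t \otimes_{(\scrA_t \otimes_{\bK[t]} \scrA_t^{\operatorname{opp}})} \mathit{Cone}(\mathit{sh})$ is the cone of the map in \eqref{eq:tensor-with-sh}, so its acyclicity is precisely the assertion that \eqref{eq:tensor-with-sh} is a quasi-isomorphism.

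The argument is entirely formal, and the one point that calls for a little care is the last: an acyclic homotopically flat complex of bimodules stays acyclic after tensoring with a completely arbitrary (not necessarily acyclic) $P_t$. Everything else is bookkeeping with mapping cones, and nothing interacts with the $q$-adic completions or with the variable $t$ beyond what has already been arranged, so I do not expect any further difficulty.
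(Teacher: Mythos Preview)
Your proposal is correct and follows essentially the same approach as the paper: both arguments choose a homotopically flat resolution $\tilde{P}_t \to P_t$ and combine Proposition~\ref{th:q-resolution} with the homotopical flatness of $\bar{Q}\scrA_t$ and $\bar{B}\scrA_t$. The paper phrases it as a commutative square (three of whose sides are quasi-isomorphisms, hence so is the fourth), while you package the same logic via the cone of $\mathit{sh}$; these are equivalent rearrangements of the same formal argument.
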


\begin{proof}
By the general results of \cite[14.8]{drinfeld04} or \cite[\S 3.1.]{keller94}, we may choose a homotopically flat resolution 
\begin{equation} \label{eq:flat-resolution}
\tilde{P}_t \longrightarrow P_t. 
\end{equation}
 If we replace $P_t$ by $\tilde{P}_t$, then the map in \eqref{eq:tensor-with-sh} is a quasi-isomorphism, just because the shuffle map is a quasi-isomorphism and $\tilde{P}_t$ is homotopically flat. On the other hand, tensoring the map \eqref{eq:flat-resolution} with $\bar{Q}\scrA_t$ or with $\bar{B}\scrA_t$ yields a quasi-isomorphism, because $\bar{Q}\scrA_t$ and $\bar{B}\scrA_t$ are homotopically flat. The combination of those facts yields the desired result.
\end{proof}

Similar observations work for $\mathit{hom}$ instead of the tensor product. $\bar{B}\scrA$ is homotopically projective ($K$-projective), meaning that if $P$ is acyclic, then so is $\mathit{hom}_{\scrA \otimes \scrA^{\operatorname{opp}}}(\bar{B}\scrA, P) \iso \mathit{hom}_{\bK}(T(\bar{\scrA}[1]),P)$. One carries over this property to \eqref{eq:q-complex-0} by the adjunction
\begin{equation} \label{eq:hom-adjunction}
\mathit{hom}_{\scrA_t \otimes_{\bK[t]} \scrA_t^{\operatorname{opp}}}\big(
(\scrA_t \otimes_{\bK[t]} \scrA_t^{\operatorname{opp}}) \otimes_{(\scrA \otimes \scrA^{\operatorname{opp}})} \bar{B}\scrA, P_t\big) \iso
\mathit{hom}_{\scrA \otimes \scrA^{\operatorname{opp}}}(\bar{B}\scrA, P_t).
\end{equation}
As before, a further filtration argument then shows that $\bar{Q}\scrA_t$ is homotopically projective; so is $\bar{B}\scrA_t$, leading to the following analogue of Corollary \ref{th:k-flat}:

\begin{corollary} \label{th:k-projective}
Let $P_t$ be an $\scrA_t$-bimodule. Then, the map \eqref{eq:shuffle} induces a quasi-isomorphism 
\begin{equation} \label{eq:tensor-with-sh-2}
\mathit{hom}_{\scrA_t \otimes_{\bK[t]} \scrA_t^{\operatorname{opp}}}(\bar{B}\scrA_t,P_t)
\stackrel{\htp}{\longrightarrow}
\mathit{hom}_{\scrA_t \otimes_{\bK[t]} \scrA_t^{\operatorname{opp}}}(\bar{Q}\scrA_t,P_t).
\end{equation}
\end{corollary}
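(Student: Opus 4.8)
The plan is to mirror the proof of Corollary \ref{th:k-flat}, replacing the tensor product by $\mathit{hom}$, homotopical flatness by homotopical projectivity, and a homotopically flat resolution of $P_t$ by a homotopically injective ($K$-injective) one. Both inputs are already available: Proposition \ref{th:q-resolution} says that the shuffle map $\mathit{sh}\colon \bar{Q}\scrA_t \to \bar{B}\scrA_t$ is a quasi-isomorphism, and the discussion preceding the corollary establishes — via the adjunction \eqref{eq:hom-adjunction} together with a $q$-weight filtration argument parallel to the tensor-product case — that both $\bar{Q}\scrA_t$ and $\bar{B}\scrA_t$ are homotopically projective as $\scrA_t$-bimodules.

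Concretely, I would choose a homotopically injective resolution $P_t \to \tilde{P}_t$. On the one hand, if we replace $P_t$ by $\tilde{P}_t$ then the map \eqref{eq:tensor-with-sh-2} becomes a quasi-isomorphism, simply because $\mathit{sh}$ is a quasi-isomorphism and $\tilde{P}_t$ is homotopically injective. On the other hand, applying $\mathit{hom}_{\scrA_t \otimes_{\bK[t]} \scrA_t^{\operatorname{opp}}}(\bar{B}\scrA_t,-)$ and $\mathit{hom}_{\scrA_t \otimes_{\bK[t]} \scrA_t^{\operatorname{opp}}}(\bar{Q}\scrA_t,-)$ to the resolution $P_t \to \tilde{P}_t$ yields quasi-isomorphisms, because $\bar{B}\scrA_t$ and $\bar{Q}\scrA_t$ are homotopically projective. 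Chasing the resulting commutative square gives the claim.

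Equivalently, and avoiding resolutions of $P_t$ altogether: the mapping cone of $\mathit{sh}$ is a homotopically projective acyclic complex of $\scrA_t$-bimodules, hence contractible, and $\mathit{hom}_{\scrA_t \otimes_{\bK[t]} \scrA_t^{\operatorname{opp}}}(-,P_t)$ sends a contractible complex to a contractible — in particular acyclic — complex; since this $\mathit{hom}$ turns the cone of $\mathit{sh}$ into (up to shift and sign) the cone of the map \eqref{eq:tensor-with-sh-2}, the latter is a quasi-isomorphism. Either route is routine.

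I do not expect a genuine obstacle here: the substantive content was already extracted in Proposition \ref{th:q-resolution} and in the verification of homotopical projectivity of $\bar{Q}\scrA_t$. The only point requiring a little care is that homotopical projectivity propagates from $\bar{B}\scrA$ to the pushforward \eqref{eq:q-complex-0} through the adjunction \eqref{eq:hom-adjunction}, and then to $\bar{Q}\scrA_t$ through the bounded-below exhaustive $q$-weight filtration whose associated graded realizes the $W = 0$ case — but this is exactly analogous to the homotopical flatness argument used for Corollary \ref{th:k-flat}, so nothing new arises.
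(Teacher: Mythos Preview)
Your proposal is correct and matches the paper's approach: the paper states the corollary as the analogue of Corollary \ref{th:k-flat} immediately after establishing that both $\bar{B}\scrA_t$ and $\bar{Q}\scrA_t$ are homotopically projective, leaving the (parallel) proof implicit. Your second argument, via the cone of $\mathit{sh}$ being $K$-projective and acyclic hence contractible, is in fact slightly cleaner since it avoids invoking the existence of $K$-injective resolutions of $P_t$.
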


There is some duplication in the discussion above, because homotopically projective implies flat \cite[Corollary 10.12.4.4]{bernstein-lunts94}.

\subsubsection{Hochschild (co)homology}
The sources for the following exposition, as well as its generalization in Section \ref{subsubsec:hh-again} later on, are \cite{shklyarov13, getzler95, sheridan20}. Take the (normalized) standard chain complex underlying the Hochschild homology $\mathit{HH}_*(\scrA)$, namely
\begin{equation}
\bar{C}_*(\scrA) = \scrA \otimes_{(\scrA \otimes \scrA^{\operatorname{opp}})} \bar{B}\scrA.
\end{equation}
It is worth while spelling this out:
\begin{equation} \label{eq:hochschild-homology-complex}
\begin{aligned}
& \bar{C}_*(\scrA) = \scrA \otimes T(\bar{\scrA}[1]), \\
& d_{\bar{C}_*(\scrA)}(a_0(a_1|\dots|a_l)) \\
& \quad = d_{\scrA}a_0 (a_1|\dots|a_l) + \sum_j 
(-1)^{\|a_0\|+\|a_1\|+\cdots+\|a_j\|}
a_0(a_1|\dots|d_{\scrA} a_{j+1}|\dots|a_l) \\[-.5em] & \quad + (-1)^{|a_0|} a_0a_1 (a_2|\dots|a_l) 
 + \sum_j (-1)^{|a_0|+\|a_1\|+\cdots+ \|a_{j+1}\|} a_0(a_1|\dots|a_{j+1}a_{j+2}|\dots|a_l) \\ & \quad -
(-1)^{\|a_l\|(|a_0|+\|a_1\|+\cdots+\|a_{l-1}\|)} a_la_0 (a_1|\dots|a_{l-1}).
\end{aligned}
\end{equation}
Hochschild cohomology $\mathit{HH}^*(\scrA)$ is similarly computed by 
\begin{equation} \label{eq:c-upper}
\begin{aligned}
& 
\bar{C}^*(\scrA) = \mathit{hom}_{\scrA \otimes \scrA^{\operatorname{opp}}}(\bar{B}\scrA,\scrA)
= \mathit{hom}_{\bK}(T(\bar\scrA[1]),\scrA) = \prod_{l \geq 0} \mathit{hom}_{\bK}((\bar{\scrA}[1])^{\otimes l},\scrA), \\[-.5em]
&
(d_{\bar{C}^*(\scrA)}\phi)^l(a_1,\dots,a_l) \\ 
& \qquad = d_{\scrA} \phi^l(a_1,\dots,a_l) 
 + \sum_j (-1)^{\|a_1\|+\cdots+\|a_j\|+|\phi|} \phi^l(a_1,\dots,d_{\scrA}a_{j+1},\dots,a_l) 
\\[-.5em] & \qquad
- (-1)^{|\phi|\,\|a_1\|} a_1\phi^{l-1}(a_2,\dots,a_l) \\ & \qquad  - \sum_j 
(-1)^{\|a_1\|+\cdots+\|a_{j+1}\|+|\phi|}
 \phi^{l-1}(a_1,\dots,a_{j+1}a_{j+2},\dots,a_l)
\\ & \qquad
+ (-1)^{\|a_1\|+\cdots+\|a_{l-1}\| + |\phi|}\phi^{l-1}(a_1,\dots,a_{l-1})a_l.
\end{aligned}
\end{equation}
It is well-known that $\mathit{HH}^*(\scrA)$ is a graded commutative algebra. The product is induced by 
\begin{equation} \label{eq:smile}
(\phi \smile \psi)^l(a_1,\dots,a_l) = \sum_j 
(-1)^{|\psi|(\|a_1\|+\cdots+\|a_j\|)}
\phi^j(a_1,\dots,a_j)\psi^{l-j}(a_{j+1},\dots,a_l).
\end{equation}
Moreover, $\mathit{HH}_*(\scrA)$ is a module over $\mathit{HH}^*(\scrA)$, with the underlying chain level structure being
\begin{equation} \label{eq:iota-action}
\begin{aligned}
& \iota_\phi: \bar{C}_*(\scrA) \longrightarrow \bar{C}_{*+|\phi|}(\scrA), \\
& \iota_\phi( a_0(a_1|\dots|a_l)) \\ & \quad = \sum_j (-1)^{|\phi|+(|a_0|+\cdots+\|a_j\|)(\|a_{j+1}\|+\cdots+\|a_l\|)} 
\phi^{l-j}(a_{j+1},\dots,a_l)
a_0 (a_1|\dots|a_j).
\end{aligned}
\end{equation}
$\mathit{HH}^*(\scrA)$ also carries a Lie bracket of degree $-1$, induced by
\begin{equation} \label{eq:bracket}
\begin{aligned}
& [\phi,\psi]^l(a_1,\dots,a_l)  = \sum_{jk} 
(-1)^{\|\psi\|(\|a_1\|+\cdots+\|a_j\|)}
\phi^{l-k+1}(a_1,\dots,\psi^k(a_{j+1},\dots,a_{j+k}),\dots,a_l) \\[-.5em] & \qquad -
(-1)^{\|\phi\|(\|a_1\|+\cdots+\|a_j\|+\|\psi\|)}
\psi^{l-k+1}(a_1,\dots,\phi^{k}(a_{j+1},\dots,a_{j+k}),\dots,a_l).
\end{aligned}
\end{equation}
Finally, $\mathit{HH}_*(\scrA)$ is a Lie module with respect to that bracket, by
\begin{equation} \label{eq:lie-module}
\begin{aligned}
& L_\phi: \bar{C}_*(\scrA) \longrightarrow \bar{C}_{*+|\phi|-1}(\scrA), \\
& L_\phi( a_0(a_1|\dots|a_l)) 
\\ & \quad
= \sum_{jk} (-1)^{(\|a_0\|+\|a_1\|+\cdots+\|a_k\|)(\|a_{k+1}\|+\cdots+\|a_l\|)} \phi^{l-k+j+1}(a_{k+1},\dots,a_0,\dots,a_j)
\\[-1em] & \qquad \qquad \qquad \qquad \qquad \qquad \qquad \qquad \qquad \qquad \qquad \qquad
 (a_{j+1}|\dots|a_k) \\
& \quad + \sum_{jk} (-1)^{\|\phi\|(\|a_0\|+\|a_1\|+\cdots+\|a_j\|)}a_0 (a_1|\dots|\phi^k(a_{j+1},\dots,a_{j+k})|\dots|a_l).
\end{aligned}
\end{equation}
In our context, the central element $W$ is a Hochschild cocycle. The formulae above simplify to
\begin{equation} 
\begin{aligned}
& (W \smile \psi)^l(a_1,\dots,a_l) = W \psi^l(a_1,\dots,a_l), \\
& \iota_W( a_0(a_1|\dots|a_l)) = W a_0 (a_1|\dots|a_l), \\
& [W,\psi]^l(a_1,\dots,a_l) = -(-1)^{\|\psi\|(\|a_1\|+\cdots+\|a_j\|+1)}
\psi^{l+1}(a_1,\dots,a_j,W,\dots,a_l), \\
& L_W (a_0(a_1|\dots|a_l)) = \sum_j (-1)^{\|a_0\|+\cdots+\|a_j\|} a_0(a_1|\cdots|a_j|W|a_{j+1}|\dots|a_l).
\end{aligned}
\end{equation}

The same constructions apply to $\scrA_q$ (working over $\bK[[q]]$, and $q$-adically completing) and $\scrA_t$ (over $\bK[t]$). As a special case of Corollary \ref{th:k-flat}, tensoring the shuffle map \eqref{eq:shuffle} with the diagonal bimodule $\scrA_t$ yields a quasi-isomorphism, for which we use the same notation,
\begin{equation} \label{eq:shuffle-2}
\mathit{sh}: \overline{CQ}_*(\scrA_t) \stackrel{\mathrm{def}}{=} \scrA_t \otimes_{(\scrA_t \otimes_{\bK[t]} \scrA_t^{\operatorname{opp}})} \bar{Q}\scrA_t \longrightarrow \bar{C}_*(\scrA_t).
\end{equation}
Explicitly, the domain of \eqref{eq:shuffle-2} is the $(t,q)$-linear complex
\begin{equation} \label{eq:p-hochschild}
\begin{aligned}
& 
\overline{CQ}_*(\scrA_t) = (\scrA[\epsilon] \otimes T(\bar\scrA[1]))[t,q^{-1}], \\
& d_{\overline{CQ}_*(\scrA_t)} ( a_0 (a_1|\dots|a_l) q^{-k}) = d_{\bar{C}_*(\scrA)}(a_0(a_1|\dots|a_l)) q^{-k}
\\ & \qquad
- q^{-k+1} \sum_j (-1)^{|a_0| + \|a_1\| + \cdots + \|a_j\|} a_0(a_1|\dots|a_j|W|a_{j+1}|\dots|a_l)
\\[-.5em] & \qquad + (t e_\scrA-W) \partial_\epsilon a_0  (a_1|\cdots|a_l) q^{-k}
 \qquad \qquad \qquad
\text{for } a_0 \in \scrA[\epsilon], \, a_1,\dots,a_l \in \scrA.
\end{aligned}
\end{equation}
The formula for \eqref{eq:shuffle-2}, derived directly from \eqref{eq:shuffle}, is
\begin{equation} \label{eq:shuffle-2b}
\begin{aligned}
&
\mathit{sh}(a_0 (a_1|\dots|a_l)q^{-k}) = (-1)^k \sum_{0 \leq i_1 \leq \cdots \leq i_k \leq l} a_0(a_1|\dots|a_{i_1}|e_{\scrA}\epsilon|\dots|a_{i_k}|e_{\scrA}\epsilon|\dots|a_l)
\\[-.5em] & 
\qquad \qquad \qquad \qquad \qquad \qquad \qquad \qquad \qquad \qquad
\text{for } a_0 \in \scrA[\epsilon], \, a_1,\dots,a_l \in \scrA.
\end{aligned}
\end{equation}
By definition, $\partial_\epsilon: \scrA_t \rightarrow \scrA_t$ satisfies
\begin{equation}
\begin{aligned}
& \partial_\epsilon d_{\scrA_t} + d_{\scrA_t} \partial_\epsilon = 0, \\
& \partial_\epsilon (a_1a_2) = (\partial_\epsilon a_1)a_2 + (-1)^{|a_1|} a_1 (\partial_\epsilon a_2),
\end{aligned}
\end{equation} 
hence is a cocycle of degree $2$ in $\bar{C}^*(\scrA_t)$, with
\begin{equation} \label{eq:action-of-depsilon}
\begin{aligned}
& \iota_{\partial_\epsilon}(a_0(a_1|\cdots|a_l)) = (-1)^{(|a_0|+\|a_1\|+\cdots+\|a_{l-1}\|)\|a_l\|}(\partial_\epsilon a_l) a_0 (a_1|\dots|a_j), \\
& L_{\partial_\epsilon}(a_0(a_1|\cdots|a_l)) = (\partial_\epsilon a_0)(a_1|\dots|a_l) + 
\sum_j (-1)^{\|a_0\|+\cdots+\|a_j\|} a_0(a_1|\dots|\partial_\epsilon a_j|\dots|a_l).
\end{aligned}
\end{equation}
From the definitions, one immediately sees that \eqref{eq:shuffle-2} fits into a commutative diagram
\begin{equation} \label{eq:hochschild-mult-q}
\xymatrix{
\overline{CQ}_*(\scrA_t)
\ar[d]_-{q} \ar[r]^-{\mathit{sh}} & \bar{C}_*(\scrA_t) \ar[d]^-{-\iota_{\partial_\epsilon}} \\
\overline{CQ}_{*+2}(\scrA_t)
\ar[r]^-{\mathit{sh}} & \bar{C}_{*+2}(\scrA_t).
}
\end{equation}

The first two terms in the formula for the differential \eqref{eq:p-hochschild} are as in the Hochschild complex for $\scrA_q$. One can therefore separate out the parts with and without $\epsilon$, and write
\begin{equation} \label{eq:cq-cone}
\overline{CQ}_*(\scrA_t) \iso
\mathit{Cone}\Big( (\bK[q^{-1}] \otimes_{\bK[q]} \bar{C}_*(\scrA_q))[t] \xrightarrow{t-\iota_W} (\bK[q^{-1}] \otimes_{\bK[q]} \bar{C}_*(\scrA_q))[t] \Big).
\end{equation}
The map $t-\iota_W$  is injective, and its image is a subcomplex which is complementary to the $(\epsilon,t)$-constant subcomplex
\begin{equation} \label{eq:hook-0}
\bK[q^{-1}] \otimes_{\bK[q]} \bar{C}_*(\scrA_q)  \subset \overline{CQ}_*(\scrA_t).
\end{equation}
Hence, the inclusion of that subcomplex is a quasi-isomorphism. Moreover, because of the interpretation as mapping cone, it follows that this diagram is homotopy commutative:
\begin{equation}
\xymatrix{
\bK[q^{-1}] \otimes_{\bK[q]} \bar{C}_*(\scrA_q) \; \ar@{^{(}->}[r]
\ar[d]_{\iota_W} 
&
\overline{CQ}_*(\scrA_t) \ar[d]^-{t}
\\
\bK[q^{-1}] \otimes_{\bK[q]} \bar{C}_*(\scrA_q) \; \ar@{^{(}->}[r] &
\overline{CQ}_*(\scrA_t).
}
\end{equation}
We summarize the conclusions of our discussion:

\begin{prop} \label{th:hochschild-homology}
The map \eqref{eq:shuffle-2b}, restricted to \eqref{eq:hook-0}, defines a quasi-isomorphism
\begin{equation} \label{eq:shuffle-2c}
\mathit{sh}: \bK[q^{-1}] \otimes_{\bK[q]} \bar{C}_*(\scrA_q) \longrightarrow \bar{C}_*(\scrA_t).
\end{equation}
This quasi-isomorphism fits into a strictly commutative diagram
\begin{equation}
\xymatrix{
\bK[q^{-1}] \otimes_{\bK[q]} \bar{C}_*(\scrA_q) 
\ar[d]_-{q} \ar[r]^-{\mathit{sh}} & \bar{C}_*(\scrA_t) \ar[d]^-{-\iota_{\partial_\epsilon}} \\
\bK[q^{-1}] \otimes_{\bK[q]} \bar{C}_{*+2}(\scrA_q) 
\ar[r]^-{\mathit{sh}} & \bar{C}_{*+2}(\scrA_t),
}
\end{equation}
as well as into a homotopy commutative diagram
\begin{equation} \label{eq:minus-diagram}
\xymatrix{
\bK[q^{-1}] \otimes_{\bK[q]} \bar{C}_*(\scrA_q) 
\ar[d]_-{\iota_W} \ar[r]^-{\mathit{sh}} & \bar{C}_*(\scrA_t) \ar[d]^-{t} \\
\bK[q^{-1}] \otimes_{\bK[q]} \bar{C}_*(\scrA_q) 
\ar[r]^-{\mathit{sh}} & \bar{C}_*(\scrA_t).
}
\end{equation}
\end{prop}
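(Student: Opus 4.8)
The statement collects the conclusions of the preceding discussion, so the plan is purely to assemble pieces already in place: Corollary~\ref{th:k-flat}, the mapping-cone identification \eqref{eq:cq-cone}, the strictly commutative square \eqref{eq:hochschild-mult-q}, and the homotopy-commutative square displayed immediately before the proposition.

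First I would settle the quasi-isomorphism assertion. Applying Corollary~\ref{th:k-flat} to the diagonal bimodule $P_t = \scrA_t$ shows that the shuffle map \eqref{eq:shuffle-2}, $\mathit{sh}\colon\overline{CQ}_*(\scrA_t)\to\bar{C}_*(\scrA_t)$, is a quasi-isomorphism. Independently, \eqref{eq:cq-cone} exhibits $\overline{CQ}_*(\scrA_t)$ as the mapping cone of the chain map $t-\iota_W$ on $(\bK[q^{-1}]\otimes_{\bK[q]}\bar{C}_*(\scrA_q))[t]$; since that map is injective with image a subcomplex complementary to the $(\epsilon,t)$-constant subcomplex \eqref{eq:hook-0}, the cone is quasi-isomorphic to the corresponding cokernel, so the inclusion of \eqref{eq:hook-0} into $\overline{CQ}_*(\scrA_t)$ is a quasi-isomorphism. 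By inspection, the map \eqref{eq:shuffle-2c} is exactly the composite of this inclusion with $\mathit{sh}$ — the formula \eqref{eq:shuffle-2b} restricted to $\epsilon$-free $a_0$ is precisely that composite — hence it is a quasi-isomorphism, being a composite of two.

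Next the two squares. For the strict one: multiplication by $q$ preserves the subcomplex \eqref{eq:hook-0} and restricts there to the $\bK[q]$-action it already carries as $\bK[q^{-1}]\otimes_{\bK[q]}\bar{C}_*(\scrA_q)$ (the $q^0$-term being annihilated), so the inclusion is $q$-linear, and restricting \eqref{eq:hochschild-mult-q} along it yields the first square of the statement. For the homotopy-commutative one: the shuffle map \eqref{eq:shuffle}, hence \eqref{eq:shuffle-2}, is a map of $\scrA_t$-bimodules, in particular $\bK[t]$-linear, so $\mathit{sh}\circ t = t\circ\mathit{sh}$ strictly; composing the homotopy-commutative square displayed before the proposition with $\mathit{sh}$ down its right-hand side, and using this $t$-linearity to turn the vertical map $t$ on $\overline{CQ}_*(\scrA_t)$ into the vertical map $t$ on $\bar{C}_*(\scrA_t)$, produces \eqref{eq:minus-diagram}, the required chain homotopy being the image under $\mathit{sh}$ of the one already constructed there.

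I do not anticipate a genuine obstacle: all the homological substance — that the shuffle map is a quasi-isomorphism — is already contained in Proposition~\ref{th:q-resolution} and Corollary~\ref{th:k-flat}. The only thing demanding care is bookkeeping, namely verifying that the differential of $\overline{CQ}_*(\scrA_t)$ restricted to the $(\epsilon,t)$-constant part coincides with the $q$-adically truncated curved Hochschild differential of $\scrA_q$, and that the signs in \eqref{eq:daq-resolution}--\eqref{eq:p-hochschild} and in \eqref{eq:shuffle-2b} are consistent enough that the two squares commute, respectively commute up to the stated homotopy, exactly as written.
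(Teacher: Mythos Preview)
Your proposal is correct and follows essentially the same route as the paper: the paper itself presents Proposition~\ref{th:hochschild-homology} as a summary of the preceding discussion, and your assembly of Corollary~\ref{th:k-flat}, the cone identification~\eqref{eq:cq-cone}, the complementarity of $\mathrm{im}(t-\iota_W)$ and the $(\epsilon,t)$-constant subcomplex, the strict square~\eqref{eq:hochschild-mult-q}, and the $t$-linearity of $\mathit{sh}$ matches that discussion point for point.
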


\begin{remark}
One can specialize $\scrA_t$ to a single value $t = t_0 \in \bK$, which means considering the dga $\scrA_{t_0} = \bK[t]/(t-t_0) \otimes_{\bK[t]} \scrA_t$ over $\bK$. The corresponding specialization of \eqref{eq:shuffle-2}, \eqref{eq:cq-cone} then computes the Hochschild homology of $\scrA_{t_0}$:
\begin{equation} \label{eq:iota-cone}
\mathit{Cone}\Big(\bK[q^{-1}] \otimes_{\bK[q]} \bar{C}_*(\scrA_q)
\xrightarrow{t_0 - \iota_W} \bK[q^{-1}] \otimes_{\bK[q]} \bar{C}_*(\scrA_q) \Big)
\htp \bar{C}_*(\scrA_{t_0}).
\end{equation}
This imitates a familiar expression in algebraic geometry (compare Section \ref{subsec:classical-gauss-manin}). Namely, let $W: X \rightarrow \bC$ be a function on a smooth algebraic variety, and $t_0$ a regular value. Then, the sheaves $\Omega^*_{X_{t_0}}$ of differential forms on that smooth fiber have a resolution on $X$,
\begin{equation}
\mathit{Cone}\Big(   (\Omega^*_X[q^{-1}], -q\, dW \wedge)
\xrightarrow{t_0 - W} (\Omega^*_X[q^{-1}], - q\, dW \wedge) \Big) \htp \Omega^*_{X_{t_0}}.
\end{equation}
\end{remark}

There is a parallel story for Hochschild cohomology. From \eqref{eq:shuffle} and Corollary \ref{th:k-projective}, we get a quasi-isomorphism
\begin{equation} \label{eq:shuffle-3}
\mathit{sh}: \bar{C}^*(\scrA_t) \longrightarrow \overline{CQ}^*(\scrA_t) \stackrel{\mathrm{def}}{=} \mathit{hom}_{\scrA_t \otimes_{\bK[t]} \scrA_t^{\operatorname{opp}}}\big(
\bar{Q}\scrA_t, \scrA_t\big).
\end{equation}
Explicitly,
\begin{equation} \label{eq:cq-star}
\begin{aligned}
&
\overline{CQ}^*(\scrA_t) = \mathit{hom}_{\bK}(T(\bar{\scrA}[1]), \scrA[\epsilon])[t][[q]], 
\\
&
(d_{\overline{CQ}^*(\scrA_t)}\phi)^l(a_1,\dots,a_l) = (d_{\bar{C}^*(\scrA)}\phi)^l(a_1,\dots,a_l) \\
& \qquad
+ q\sum_j (-1)^{\|a_1\|+\cdots+\|a_j\|+|\phi|}\phi^{l+1}(a_1,\dots,a_j,W,a_{j+1},\dots,a_l) \\[-.5em]
& \qquad + (t-W) \partial_\epsilon \phi^l(a_1,\dots,a_l), \\
&
\mathit{sh}(\phi)^l(a_1,\dots,a_l) = \sum_{k, \, i_1 \leq \cdots \leq i_k} (-1)^k \phi^{l+k}(a_1,\dots,
a_{i_1},e_{\scrA}\epsilon,\dots,a_{i_k},e_{\scrA}\epsilon,\dots,a_l) q^k. 
\end{aligned}
\end{equation}
By definition,
\begin{equation}
(\partial_\epsilon \smile \phi)^l(a_1,\dots,a_l) = (-1)^{|\phi|\, \|a_1\|} (\partial_\epsilon a_1) \phi^{l-1}(a_2,\dots,a_l).
\end{equation}
The analogue of \eqref{eq:hochschild-mult-q} is the commutative diagram
\begin{equation} \label{eq:hochschild-mult-q-2}
\xymatrix{
\ar[d]_-{\partial_\epsilon \smile}
\bar{C}^*(\scrA_t) \ar[r]^-{\mathit{sh}} &
\overline{CQ}^*(\scrA) \ar[d]^-{-q}
\\
\bar{C}^{*+2}(\scrA_t) \ar[r]^-{\mathit{sh}} &
\overline{CQ}^{*+2}(\scrA).
}
\end{equation}
A look at the differential shows that we can write \eqref{eq:cq-star} as
\begin{equation} \label{eq:cq-upper}
\overline{CQ}^*(\scrA) = \mathit{Cone}\Big( (\bar{C}^*(\scrA_q)[t])^\wedge \xrightarrow{t - W \smile} (\bar{C}^*(\scrA_q)[t])^\wedge \Big),
\end{equation}
where $(\cdots)^\wedge$ is $q$-adic completion. The map that appears here is injective, and its image is complementary to the $t$-constant subcomplex (this remains true after $q$-adic completion; one can check it separately for each power of $q$, and then take the product of all of them). As a consequence, 
\begin{equation} \label{eq:hook}
\bar{C}^*(\scrA_q) \hookrightarrow \overline{CQ}^*(\scrA_t)
\end{equation}
is a quasi-isomorphism (in fact, it is a filtered quasi-isomorphism with respect to the $q$-filtration, and therefore a homotopy equivalence). We now state the counterpart of Proposition \ref{th:hochschild-homology}, which follows from this discussion. 

\begin{prop} \label{th:hochschild-cohomology}
The quasi-isomorphisms 
\begin{equation}
\bar{C}^*(\scrA_t) \stackrel{\mathit{sh}}{\longrightarrow} \overline{CQ}^*(\scrA_q)
\hookleftarrow \bar{C}^*(\scrA_q)
\end{equation}
fit into a commutative diagram
\begin{equation}
\xymatrix{
\bar{C}^*(\scrA_t) \ar[r]^-{\mathit{sh}} \ar[d]_-{\partial_\epsilon \smile}
& \overline{CQ}^*(\scrA_t) & \ar@{_{(}->}[l] \, \bar{C}^*(\scrA_q) \ar[d]^-{-q}
\\
\bar{C}^{*+2}(\scrA_t) \ar[r]^-{\mathit{sh}}
& \overline{CQ}^{*+2}(\scrA_t) & \ar@{_{(}->}[l] \, \bar{C}^{*+2}(\scrA_q),
}
\end{equation}
as well as into a homotopy commutative diagram
\begin{equation}
\xymatrix{
\bar{C}^*(\scrA_t) \ar[r]^-{\mathit{sh}} \ar[d]_-{t}
& \overline{CQ}^*(\scrA_t) & \ar@{_{(}->}[l] \, \bar{C}^*(\scrA_q) \ar[d]^-{W \smile}
\\
\bar{C}^*(\scrA_t) \ar[r]^-{\mathit{sh}}
& \overline{CQ}^*(\scrA_t) & \ar@{_{(}->}[l] \, \bar{C}^*(\scrA_q).
}
\end{equation}
\end{prop}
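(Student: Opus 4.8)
The plan is to obtain the proposition by packaging facts already in place, in close parallel with the chain-level discussion that produced Proposition \ref{th:hochschild-homology}. The first point to record is that the two horizontal arrows appearing in both diagrams are quasi-isomorphisms: $\mathit{sh}$ is the map \eqref{eq:shuffle-3}, a quasi-isomorphism by Corollary \ref{th:k-projective} with $P_t = \scrA_t$ the diagonal bimodule; and the inclusion \eqref{eq:hook} was already shown to be a quasi-isomorphism in the preceding paragraph, via the mapping cone description \eqref{eq:cq-upper} (there $t - W\smile$ is injective with image complementary to the $t$-constant subcomplex $\bar{C}^*(\scrA_q)$, so the quotient complex is the cone of an isomorphism and hence acyclic, which one checks $q$-degree by $q$-degree). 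It then remains only to verify the commutativity statements, reading the unlabelled middle column of the first diagram as multiplication by $-q$ on $\overline{CQ}^*(\scrA_t)$ and that of the second as multiplication by $t$.

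For the first diagram, the left-hand square is precisely \eqref{eq:hochschild-mult-q-2}, i.e.\ the identity $\mathit{sh}\circ(\partial_\epsilon\smile) = (-q)\circ\mathit{sh}$, and the right-hand square commutes strictly because \eqref{eq:hook} is the inclusion of a $\bK[[q]]$-submodule and so intertwines multiplication by $-q$ with itself. For the second diagram, the left-hand square again commutes strictly: the explicit formula for $\mathit{sh}$ in \eqref{eq:cq-star} contains no occurrence of $t$, so $\mathit{sh}$ is $\bK[t]$-linear and $\mathit{sh}\circ t = t\circ\mathit{sh}$.

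The only step calling for a genuine computation is the homotopy-commutativity of the right-hand square of the second diagram: I need a chain homotopy between $t\cdot\iota$ and $\iota\circ(W\smile)$, where $\iota$ is the inclusion \eqref{eq:hook} and $W\smile$ is the honest Hochschild cup product with the cocycle $W$, which preserves the subcomplex $\bar{C}^*(\scrA_q)$ in view of the simplified formula $(W\smile\psi)^l = W\psi^l$. The homotopy I would use is $h(\psi) = \epsilon\psi$, multiplication by the degree $-1$ variable $\epsilon$ regarded as landing in $\overline{CQ}^*(\scrA_t)$ — this is nothing but the standard nullhomotopy of a mapping cone's defining map, read off the incarnation \eqref{eq:cq-upper}. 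Unwinding the differential \eqref{eq:cq-star} applied to $\epsilon\psi$, the $(t-W)\partial_\epsilon$ term contributes $(t - W\smile)\psi$ while the remaining terms contribute $-\epsilon\, d_{\bar{C}^*(\scrA_q)}\psi$, so that $d_{\overline{CQ}^*(\scrA_t)}(h\psi) + h(d_{\bar{C}^*(\scrA_q)}\psi) = (t - W\smile)\psi = t\cdot\iota(\psi) - \iota(W\smile\psi)$, as required. The main obstacle, to the extent there is one, is confined to this last verification: keeping the signs straight under the conventions of \eqref{eq:cq-star}, of the cup product \eqref{eq:smile}, and of the odd variable in Section \ref{subsec:conventions}(a), and checking that $h = \epsilon\,\cdot$ (rather than $-\epsilon\,\cdot$) is the right normalization. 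Everything else is formal and mirrors the argument already given for Proposition \ref{th:hochschild-homology}.
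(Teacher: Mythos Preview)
Your proposal is correct and follows essentially the same route as the paper: the paper derives the proposition from the preceding discussion, namely the shuffle quasi-isomorphism (Corollary \ref{th:k-projective}), the strict commutativity \eqref{eq:hochschild-mult-q-2}, and the mapping-cone description \eqref{eq:cq-upper} together with the inclusion \eqref{eq:hook}, exactly as you do. Your explicit identification of the homotopy as $h(\psi)=\epsilon\psi$ is just the standard nullhomotopy in the cone \eqref{eq:cq-upper}, which is what the paper's phrase ``because of the interpretation as mapping cone'' (used in the parallel Proposition \ref{th:hochschild-homology}) is pointing to.
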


From the previous discussion, it follows that endomorphisms of $\bar{Q}\scrA_t$ compute the Hochschild cohomology of $\scrA_q$:
\begin{equation} \label{eq:endo-q-2}
H^*(\mathit{hom}_{(\scrA_t \otimes_{\bK[t]} \scrA_t^{\operatorname{opp}})}(\bar{Q}\scrA_t,\bar{Q}\scrA_t)) \iso \mathit{HH}^*(\scrA_q).
\end{equation}
Explicitly, this isomorphism is induced by a chain of quasi-isomorphisms
\begin{equation} \label{eq:endo-q}
\begin{aligned} &
\mathit{hom}_{(\scrA_t \otimes_{\bK[t]} \scrA_t^{\operatorname{opp}})}(\bar{Q}\scrA_t,\bar{Q}\scrA_t) \stackrel{\htp}{\longrightarrow}
\mathit{hom}_{(\scrA_t \otimes_{\bK[t]} \scrA_t^{\operatorname{opp}})}(\bar{Q}\scrA_t,\bar{B}\scrA_t)
\\ & \qquad
\stackrel{\htp}{\longrightarrow}
\mathit{hom}_{(\scrA_t \otimes_{\bK[t]} \scrA_t^{\operatorname{opp}})}(\bar{Q}\scrA_t,\scrA_t) =
\overline{CQ}^*(\scrA_t)
\stackrel{\htp}{\longleftarrow}
\bar{C}^*(\scrA_q);
\end{aligned}
\end{equation}
the first one comes from \eqref{eq:shuffle}, the second from the standard map $\bar{B}\scrA_t \rightarrow \scrA_t$, and the third one is \eqref{eq:hook}.

\begin{corollary} \label{th:endo-rings}
The isomorphism \eqref{eq:endo-q-2}sends $q^k$ times the identity (as an endomorphism of $\bar{Q}\scrA_t)$ to the element of the same name in $\mathit{HH}^*(\scrA_q)$. Moreover, it is an isomorphism of $\bK[t]$-modules, where $t$ acts on $\mathit{HH}^*(\scrA_q)$ by $[W] \smile$.
\end{corollary}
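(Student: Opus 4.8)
\emph{Plan.} The idea is to trace through the explicit chain of quasi-isomorphisms \eqref{eq:endo-q} that induces \eqref{eq:endo-q-2}, reducing the corollary to two elementary observations: what happens to the identity endomorphism, and how multiplication by $t$ is carried along. First I would note that the first two maps in \eqref{eq:endo-q} are post-composition with $\mathit{sh}$ and with the augmentation $\bar{B}\scrA_t\to\scrA_t$, hence are strictly $\bK[[q]]$-linear (for the $q$-action coming from the common source $\bar{Q}\scrA_t$) and strictly $\bK[t]$-linear (since $t$ is a central element acting on all the bimodules in play); the third map \eqref{eq:hook} is visibly $\bK[[q]]$-linear. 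Therefore \eqref{eq:endo-q-2} is $\bK[[q]]$-linear, and it is $\bK[t]$-linear once $\mathit{HH}^*(\scrA_q)$ is given the $\bK[t]$-structure obtained by transporting multiplication by $t$ on $\overline{CQ}^*(\scrA_t)$ backwards along \eqref{eq:hook}.

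\emph{The identity and the $q^k$ claim.} Under \eqref{eq:endo-q} the endomorphism $\mathrm{id}_{\bar{Q}\scrA_t}$ goes first to $\mathit{sh}\colon\bar{Q}\scrA_t\to\bar{B}\scrA_t$, and then composing with the augmentation gives, by \eqref{eq:shuffle}, the map sending $a_0(a_1|\cdots|a_l)a_{l+1}\,q^{-k}$ to $a_0a_{l+1}$ when $l=k=0$ and to $0$ otherwise. As an element of $\overline{CQ}^*(\scrA_t)$ this is the cochain supported in $q$-degree and bar-length zero with value $e_{\scrA}$; it lies in the subcomplex $\bar{C}^*(\scrA_q)$ of \eqref{eq:hook} and is there the unit cocycle. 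Hence \eqref{eq:endo-q-2} sends $\mathrm{id}$ to $1\in\mathit{HH}^0(\scrA_q)$, and by the $\bK[[q]]$-linearity just noted, $q^k\cdot\mathrm{id}\mapsto q^k\cdot 1$, which is the first assertion of the corollary.

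\emph{The $t$-action.} It remains to identify the $\bK[t]$-structure transported along \eqref{eq:hook}. Inspecting \eqref{eq:cq-star}, the map $\mathit{sh}\colon\bar{C}^*(\scrA_t)\to\overline{CQ}^*(\scrA_t)$ of \eqref{eq:shuffle-3} is strictly $\bK[t]$-linear (the shuffle insertions of $e_{\scrA}\epsilon$ do not interact with $t$), so it intertwines multiplication by $t$ on $\bar{C}^*(\scrA_t)$ with multiplication by $t$ on $\overline{CQ}^*(\scrA_t)$. The homotopy-commutative diagram of Proposition \ref{th:hochschild-cohomology}, whose two rows are $\bar{C}^*(\scrA_t)\xrightarrow{\mathit{sh}}\overline{CQ}^*(\scrA_t)\hookleftarrow\bar{C}^*(\scrA_q)$ with left vertical $t$ and right vertical $W\smile$, therefore says exactly that, on cohomology, multiplication by $t$ on $H^*(\overline{CQ}^*(\scrA_t))$ corresponds under \eqref{eq:hook} to $[W]\smile(-)$ on $\mathit{HH}^*(\scrA_q)$. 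Plugging this into the factorization of \eqref{eq:endo-q-2} through \eqref{eq:hook} (and using the strict $\bK[t]$-linearity of the first two maps in \eqref{eq:endo-q}) gives that \eqref{eq:endo-q-2} is an isomorphism of $\bK[t]$-modules with $t$ acting by $[W]\smile$, as claimed.

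\emph{Expected difficulty.} I expect the only real obstacle to be bookkeeping: the maps \eqref{eq:hook} and $\mathit{sh}$ are genuine quasi-isomorphisms and not isomorphisms---$\bar{C}^*(\scrA_q)$ is only homotopy-complementary to the image of $t-W\smile$ in the mapping-cone presentation \eqref{eq:cq-upper}---so one must not treat \eqref{eq:endo-q-2} as a strict identification, and one must keep careful track of which $q$- and $t$-actions (source versus target, $q$-completed or not) sit on each Hom-complex in \eqref{eq:endo-q}. A cleaner but heavier route would be to first verify, in the style of \cite{shklyarov13}, that \eqref{eq:endo-q} is compatible with the Yoneda/cup-product structures, making \eqref{eq:endo-q-2} a ring isomorphism; the corollary would then follow from the already-computed images of $\mathrm{id}$ and of $t\cdot\mathrm{id}$ alone.
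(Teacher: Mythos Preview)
Your proposal is correct and follows essentially the same approach as the paper's proof: trace $q^k\cdot\mathrm{id}$ explicitly through the chain \eqref{eq:endo-q} to land on $q^k e_{\scrA}\in\bar{C}^*(\scrA_q)$, then observe that the first two maps are strictly $\bK[t]$-linear while the third intertwines $t$ with $W\smile$ up to homotopy by the argument of Proposition~\ref{th:hochschild-cohomology}. The paper's version is more terse (it tracks $q^k\cdot\mathrm{id}$ directly rather than first handling $\mathrm{id}$ and then invoking $q$-linearity), but the content is the same.
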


\begin{proof}
Under the first map in \eqref{eq:endo-q}, $q^k$ times the identity endomorphism of $\bar{Q}\scrA_t$ is mapped to $\mathit{sh}(q^k \cdot)$, seen as an element of $\mathit{hom}(\bar{Q}\scrA_t,\bar{B}\scrA_t)$. From there it is mapped to $q^k e_{\scrA} \in \overline{CQ}^*(\scrA_t)$, which is of course the image of $q^k e_{\scrA} \in \bar{C}^*(\scrA_q)$ under \eqref{eq:hook}. The first two maps in \eqref{eq:endo-q} are obviously $t$-linear, and under the last one, $t$ corresponds to $(W \smile)$ up to chain homotopy, for the same reason as in Proposition \ref{th:hochschild-cohomology}.
\end{proof}


\subsubsection{Smoothness\label{subsubsec:smoothness}}
Let $D(\scrA \otimes \scrA^{\operatorname{opp}})$ be the derived category of $\scrA$-bimodules. Recall that a bimodule $P$ is perfect if and only if it is a compact object of the derived category, meaning that $\mathit{Hom}_{D(\scrA \otimes \scrA^{\operatorname{opp}})}(P,\cdot)$ commutes with colimits. The dga $\scrA$ is called homologically smooth if the diagonal bimodule is perfect.
Similar concepts apply to $\scrA_t$, using the $\bK[t]$-linear category $D(\scrA_t \otimes_{\bK[t]} \scrA_t^{\operatorname{opp}})$. We will use our previous results to relate the smoothness of $\scrA$ and of $\scrA_t$ (this is inspired by arguments in \cite{preygel11}).

\begin{lemma} \label{th:0-perfect}
Suppose that $\scrA$ is smooth over $\bK$. Then \eqref{eq:q-complex-0} is a perfect $\scrA_t$-bimodule.
\end{lemma}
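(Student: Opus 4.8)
The plan is to recognize the bimodule \eqref{eq:q-complex-0} as the derived base change of the diagonal bimodule along the algebra map induced by $\scrA \hookrightarrow \scrA_t$, and then invoke the general fact that derived base change preserves perfectness.

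Write $\scrA^e = \scrA \otimes \scrA^{\operatorname{opp}}$ and $\scrA_t^e = \scrA_t \otimes_{\bK[t]} \scrA_t^{\operatorname{opp}}$. The inclusion $\scrA \hookrightarrow \scrA_t$ induces a map of dg algebras $\scrA^e \to \scrA_t^e$, and by definition \eqref{eq:q-complex-0} is the (underived) extension of scalars $\scrA_t^e \otimes_{\scrA^e} \bar{B}\scrA$. First I would record that, since $\bar{B}\scrA$ is homotopically flat over $\scrA^e$ (as recalled just after Corollary \ref{th:k-flat}), this underived tensor product represents the derived functor $\scrA_t^e \otimes^{\mathbf{L}}_{\scrA^e}(-)$ applied to the diagonal bimodule $\scrA \simeq \bar{B}\scrA$; this is precisely where it matters that we use the bar resolution as our model rather than $\scrA$ itself.

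Next, smoothness of $\scrA$ over $\bK$ means exactly that $\scrA$, as an object of $D(\scrA^e)$, is perfect, i.e.\ compact. The derived extension of scalars $F = \scrA_t^e \otimes^{\mathbf{L}}_{\scrA^e}(-)\colon D(\scrA^e) \to D(\scrA_t^e)$ is an exact functor which is left adjoint to restriction of scalars along $\scrA^e \to \scrA_t^e$; the latter preserves arbitrary coproducts (these are computed on underlying complexes), so $F$ carries compact objects to compact objects. Hence $F(\scrA)$ — which by the previous paragraph is quasi-isomorphic to \eqref{eq:q-complex-0} — is a perfect $\scrA_t$-bimodule, which is what we wanted.

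If one prefers a hands-on version (in the spirit of \cite{preygel11}), one can instead start from the fact that perfectness exhibits the diagonal bimodule $\scrA$ as a homotopy retract, in $D(\scrA^e)$, of a bimodule finitely built from shifted copies of the free bimodule $\scrA^e$; applying $\scrA_t^e \otimes_{\scrA^e}(-)$ and using homotopical flatness of $\bar{B}\scrA$ to keep the tensor product underived, each $\scrA^e[k]$ is sent to the free bimodule $\scrA_t^e[k]$, so \eqref{eq:q-complex-0} becomes a homotopy retract of a finite complex built from $\scrA_t^e$, hence perfect. In either route the only step that genuinely requires attention is the identification of the explicit complex \eqref{eq:q-complex-0} with the derived base change, for which homotopical flatness of $\bar{B}\scrA$ is the essential input; everything else is formal nonsense about adjunctions and compact objects.
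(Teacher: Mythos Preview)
Your proof is correct and follows essentially the same approach as the paper: both use the extension-of-scalars/restriction adjunction together with compactness of the diagonal $\scrA$-bimodule. The only cosmetic difference is that the paper invokes homotopical \emph{projectivity} of $\bar{B}\scrA$ and of \eqref{eq:q-complex-0} to descend the chain-level $\mathit{hom}$-adjunction \eqref{eq:hom-adjunction} directly to the derived category, whereas you use homotopical \emph{flatness} of $\bar{B}\scrA$ to identify \eqref{eq:q-complex-0} with the derived base change and then appeal to the abstract ``left adjoint whose right adjoint preserves coproducts sends compacts to compacts'' principle; these are two standard packagings of the same argument.
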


\begin{proof}
Both $\bar{B}\scrA$ and \eqref{eq:q-complex-0} are homotopically projective (see the discussion preceding Corollary \ref{th:k-projective}). Hence, the adjunction \eqref{eq:hom-adjunction} descends to the derived category,
\begin{equation}
\begin{aligned}
& \mathit{Hom}_{D(\scrA_t \otimes_{\bK[t]} \scrA_t^{\operatorname{opp}})}( (\scrA_t \otimes_{\bK[t]} \scrA_t^{\operatorname{opp}}) \otimes_{(\scrA \otimes \scrA^{\operatorname{opp}})} \bar{B}\scrA,P_t) \iso \mathit{Hom}_{D(\scrA \otimes \scrA^{\operatorname{opp}})}(\bar{B}\scrA,P_t)
\\ & \qquad \qquad \qquad \qquad \iso \mathit{Hom}_{D(\scrA \otimes \scrA^{\operatorname{opp}})}(\scrA,P_t).
\end{aligned}
\end{equation}
By assumption, the $\mathit{Hom}$ space on the right hand side commutes with colimits, hence the same is true on the left hand side.
\end{proof}

\begin{proposition} \label{th:smooth-1}
Suppose that: 
\begin{itemize} 
\item[(i)] $\scrA$ is smooth (over $\bK$);
\item[(ii)] $q^r [e_\scrA] \in \mathit{HH}^{2r}(\scrA_q)$ vanishes for some $r>0$.
\end{itemize}
Then $\scrA_t$ is smooth over $\bK[t]$.
\end{proposition}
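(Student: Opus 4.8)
The plan is to deduce this from a statement about bimodules: I will show that $\bar{Q}\scrA_t$, the $\scrA_t$-bimodule of \eqref{eq:q-complex}, is perfect. Since $\bar{Q}\scrA_t$ is quasi-isomorphic to the diagonal bimodule $\scrA_t$ (apply Proposition~\ref{th:q-resolution} and then the bar augmentation $\bar{B}\scrA_t \to \scrA_t$), and perfectness depends only on the isomorphism class in $D(\scrA_t \otimes_{\bK[t]} \scrA_t^{\operatorname{opp}})$, this is the same as saying that $\scrA_t$ is smooth over $\bK[t]$.

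First I would filter $\bar{Q}\scrA_t$ by the power of $q^{-1}$: let $F_k$ be spanned by the terms $a_0(a_1|\cdots|a_l)a_{l+1}\,q^{-j}$ with $j\le k$. The differential in \eqref{eq:q-complex} has one part that preserves the $q^{-1}$-power and one part (the $q^{-k+1}$ term) that lowers it by one, so each $F_k$ is a subcomplex of $\scrA_t$-bimodules, the $F_k$ increase, and they exhaust $\bar{Q}\scrA_t$. By construction $F_0$ is exactly the pushforward bimodule \eqref{eq:q-complex-0}, and for $k\ge 1$ the quotient $F_k/F_{k-1}$ is isomorphic to \eqref{eq:q-complex-0} up to an internal degree shift (the lowering term of the differential dies in the quotient). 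By Lemma~\ref{th:0-perfect}, which is where hypothesis (i) is used, \eqref{eq:q-complex-0} is a perfect $\scrA_t$-bimodule; running this through the exact triangles $F_{k-1}\to F_k\to F_k/F_{k-1}$ and inducting on $k$, every $F_k$ is perfect.

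Next I would use hypothesis (ii). Multiplication by $q^r$ is a chain endomorphism of $\bar{Q}\scrA_t$ of degree $2r$; it is surjective, and its kernel (computed on underlying graded spaces, hence as a subcomplex) is exactly $F_{r-1}$. So there is a short exact sequence of $\scrA_t$-bimodules $0\to F_{r-1}\to\bar{Q}\scrA_t\xrightarrow{q^r}\bar{Q}\scrA_t\to 0$ (the target carrying the appropriate degree shift), and hence an exact triangle $F_{r-1}\xrightarrow{\iota}\bar{Q}\scrA_t\xrightarrow{q^r}\bar{Q}\scrA_t\to F_{r-1}[1]$ in the derived category. On the other hand, \eqref{eq:endo-q-2} identifies the endomorphism $\mathit{hom}$-complex of $\bar{Q}\scrA_t$ with a complex computing $\mathit{HH}^*(\scrA_q)$, and by Corollary~\ref{th:endo-rings} this identification sends $q^r$ times the identity to $q^r[e_\scrA]\in\mathit{HH}^{2r}(\scrA_q)$. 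Thus hypothesis (ii) says precisely that the morphism $q^r\colon\bar{Q}\scrA_t\to\bar{Q}\scrA_t$ in $D(\scrA_t \otimes_{\bK[t]} \scrA_t^{\operatorname{opp}})$ is zero.

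Finally, because the middle morphism of the exact triangle $F_{r-1}\xrightarrow{\iota}\bar{Q}\scrA_t\xrightarrow{q^r}\bar{Q}\scrA_t\to F_{r-1}[1]$ vanishes, applying $\mathit{Hom}(\bar{Q}\scrA_t,-)$ to it shows that $\iota_*\colon\mathit{Hom}(\bar{Q}\scrA_t,F_{r-1})\to\mathit{Hom}(\bar{Q}\scrA_t,\bar{Q}\scrA_t)$ is surjective; in particular $\mathrm{id}_{\bar{Q}\scrA_t}$ factors as $\iota\circ g$ for some $g$. Hence $\bar{Q}\scrA_t$ is a retract of the perfect bimodule $F_{r-1}$, so it is perfect, and $\scrA_t$ is smooth over $\bK[t]$. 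I expect the main things needing care to be purely bookkeeping: that the differential of \eqref{eq:q-complex} respects the $q^{-1}$-filtration and that $q^r$ has the stated kernel and cokernel, and that the endomorphism ``$q^r$'' appearing in Corollary~\ref{th:endo-rings} is literally the one occurring in the short exact sequence, so that the vanishing of $q^r[e_\scrA]$ can be transported into the triangle. None of this is deep, but it is exactly where the two hypotheses enter.
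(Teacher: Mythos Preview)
Your proof is correct and follows essentially the same route as the paper's: filter $\bar{Q}\scrA_t$ by powers of $q^{-1}$, use Lemma~\ref{th:0-perfect} and induction to see each $F_k$ is perfect, then use the short exact sequence with $q^r$ together with Corollary~\ref{th:endo-rings} and hypothesis (ii) to exhibit $\bar{Q}\scrA_t$ as a retract of a finite filtration piece. The only cosmetic differences are that the paper runs the induction via $0\to F_0\to F_r\xrightarrow{q} F_{r-1}[2]\to 0$ rather than via the successive quotients, and writes the kernel of $q^r$ as $F_r$ rather than your $F_{r-1}$ (your indexing is the correct one, but this is immaterial to the argument).
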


\begin{proof}
Consider the increasing filtration $F_r \bar{Q}\scrA_t \subset \bar{Q}\scrA_t$, $r \geq 0$, given by restricting the powers of $q$ to be $\geq -r$. Assumption (i) and Lemma \ref{th:0-perfect} say that $F_0\bar{Q}\scrA_t$ is perfect. By definition, there are short exact sequences
\begin{equation}
0 \rightarrow F_0 \bar{Q}\scrA_t \longrightarrow F_r \bar{Q}\scrA_t \stackrel{q}{\longrightarrow} F_{r-1}\bar{Q}\scrA_t[2] \rightarrow 0.
\end{equation}
From the resulting exact triangles in the derived category, it follows (inductively) that each $F_r \bar{Q}\scrA_t$ is perfect. Along the same lines, we have short exact sequences
\begin{equation} \label{eq:qm}
0 \rightarrow F_r\bar{Q}\scrA_t \longrightarrow \bar{Q}\scrA_t \stackrel{q^r}{\longrightarrow} \bar{Q}\scrA_t[2r] \rightarrow 0.
\end{equation}
Assumption (ii), together with Corollary \ref{th:endo-rings}, tells us that there is some $r$ such that the $q^r$ map in \eqref{eq:qm} is nullhomotopic. In the derived category, this means that $\bar{Q}\scrA_t$ is isomorphic to a retract (direct summand) of $F_r\bar{Q}\scrA_t$. Since we already know that $F_r\bar{Q}\scrA_t$ is perfect, the result follows.
\end{proof}

The assumption (ii) in Proposition \ref{th:smooth-1} is rarely satisfied. What we actually need is a generalization, where one removes finitely many values of $t$. This amounts to taking a nonzero polynomial $p(t)$, and looking at the dga over $\bK[t,1/p]$ obtained by extending constants,
\begin{equation} \label{eq:remove-fibers}
\scrA_{t,1/p} = \bK[t,1/p] \otimes_{\bK[t]}\scrA_t.
\end{equation}

\begin{proposition} \label{th:smooth-2}
Suppose that we have $(\scrA,W)$ and a nonzero polynomial $p(t)$, such that:
\begin{itemize} 
\item[(i)] $\scrA$ is smooth (over $\bK$);
\item[(ii)] $q^r p([W]) \in \mathit{HH}^{2r}(\scrA_q)$ vanishes for some $r>0$. 
\end{itemize}
Then $\scrA_{t,1/p}$ is smooth over $\bK[t,1/p]$.
\end{proposition}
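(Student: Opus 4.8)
The plan is to run the argument of Proposition \ref{th:smooth-1} after base change along the flat ring map $\bK[t]\to\bK[t,1/p]$. Write $\bar{Q}\scrA_{t,1/p}=\bK[t,1/p]\otimes_{\bK[t]}\bar{Q}\scrA_t$, and likewise $F_r\bar{Q}\scrA_{t,1/p}=\bK[t,1/p]\otimes_{\bK[t]}F_r\bar{Q}\scrA_t$, where $F_r$ is the filtration from the proof of Proposition \ref{th:smooth-1} (powers of $q$ bounded below by $-r$). The first step is to record that, since $\bK[t]\to\bK[t,1/p]$ is flat, the functor $\bK[t,1/p]\otimes_{\bK[t]}(-)$ is exact, preserves quasi-isomorphisms, sends homotopically projective $\scrA_t$-bimodules to homotopically projective $\scrA_{t,1/p}$-bimodules, and sends perfect $\scrA_t$-bimodules to perfect $\scrA_{t,1/p}$-bimodules; indeed it is the extension-of-scalars functor on bimodules, because $\scrA_{t,1/p}\otimes_{\bK[t,1/p]}\scrA_{t,1/p}^{\operatorname{opp}}=\bK[t,1/p]\otimes_{\bK[t]}(\scrA_t\otimes_{\bK[t]}\scrA_t^{\operatorname{opp}})$. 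Localizing the chain of quasi-isomorphisms $\bar{Q}\scrA_t\xrightarrow{\mathit{sh}}\bar{B}\scrA_t\to\scrA_t$ then exhibits $\bar{Q}\scrA_{t,1/p}$ as a homotopically projective resolution of the diagonal $\scrA_{t,1/p}$-bimodule, so it is enough to prove that $\bar{Q}\scrA_{t,1/p}$ is a perfect $\scrA_{t,1/p}$-bimodule.

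Next I would import the perfectness input. By hypothesis (i) and Lemma \ref{th:0-perfect}, $F_0\bar{Q}\scrA_t$ (which is \eqref{eq:q-complex-0}) is a perfect $\scrA_t$-bimodule, hence $F_0\bar{Q}\scrA_{t,1/p}$ is perfect over $\scrA_{t,1/p}$. Applying $\bK[t,1/p]\otimes_{\bK[t]}(-)$ to the short exact sequences $0\to F_0\bar{Q}\scrA_t\to F_r\bar{Q}\scrA_t\xrightarrow{q}F_{r-1}\bar{Q}\scrA_t[2]\to 0$ and inducting on $r$ shows that every $F_r\bar{Q}\scrA_{t,1/p}$ is perfect. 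Localizing \eqref{eq:qm} gives short exact sequences, hence exact triangles in $D(\scrA_{t,1/p}\otimes_{\bK[t,1/p]}\scrA_{t,1/p}^{\operatorname{opp}})$,
\begin{equation}
0\to F_r\bar{Q}\scrA_{t,1/p}\longrightarrow\bar{Q}\scrA_{t,1/p}\xrightarrow{\,q^r\,}\bar{Q}\scrA_{t,1/p}[2r]\to 0 .
\end{equation}
So it remains to show that for some $r$ the map $q^r$ here is nullhomotopic; then $\bar{Q}\scrA_{t,1/p}$ is a retract (direct summand) of the perfect bimodule $F_r\bar{Q}\scrA_{t,1/p}$, and the proof is complete.

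The crux is to produce that nullhomotopy, and this is where hypothesis (ii) together with the invertibility of $p$ enters. By Corollary \ref{th:endo-rings}, under the isomorphism \eqref{eq:endo-q-2} the endomorphism of $\bar{Q}\scrA_t$ given by multiplication by $q^rp(t)$ corresponds to $q^rp([W])\in\mathit{HH}^*(\scrA_q)$, which vanishes by (ii); hence multiplication by $q^rp(t)$ is nullhomotopic on $\bar{Q}\scrA_t$, and a fortiori (applying the exact localization functor to a nullhomotopy) on $\bar{Q}\scrA_{t,1/p}$. Since $p(t)$ is a unit in $\bK[t,1/p]$ and acts centrally, multiplication by $q^r$ on $\bar{Q}\scrA_{t,1/p}$ equals $p(t)^{-1}$ times that nullhomotopic endomorphism, and $p(t)^{-1}$ times a nullhomotopy of it is a nullhomotopy of multiplication by $q^r$. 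This is exactly the map appearing in the displayed short exact sequence, so $\bar{Q}\scrA_{t,1/p}$ is perfect and $\scrA_{t,1/p}$ is smooth over $\bK[t,1/p]$.

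I expect the only genuinely delicate point to be the bookkeeping of the first step: one must check that all of exactness, preservation of homotopical projectivity, preservation of "being a homotopically projective resolution of the diagonal", preservation of perfectness, and transport of nullhomotopies are stable under $\bK[t,1/p]\otimes_{\bK[t]}(-)$. Each is a standard consequence of flatness of $\bK[t]\to\bK[t,1/p]$, but the identification of $\bar{Q}\scrA_{t,1/p}$ with the diagonal bimodule of $\scrA_{t,1/p}$ in the derived category — and hence the reduction of smoothness of $\scrA_{t,1/p}$ to perfectness of $\bar{Q}\scrA_{t,1/p}$ — is the one most worth spelling out; everything after that is a verbatim localized copy of the proof of Proposition \ref{th:smooth-1}.
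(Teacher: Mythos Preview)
Your proof is correct and follows essentially the same route as the paper: localize the filtration and the exact sequences from Proposition \ref{th:smooth-1}, use Corollary \ref{th:endo-rings} together with hypothesis (ii) to see that $q^r p(t)$ is nullhomotopic on $\bar{Q}\scrA_{t,1/p}$, and then exploit the invertibility of $p(t)$ to conclude that $\bar{Q}\scrA_{t,1/p}$ is a retract of the perfect $F_r\bar{Q}\scrA_{t,1/p}$. The only cosmetic difference is that the paper replaces the surjection $q^r$ in \eqref{eq:qm} by $q^r p(t)$ (still a short exact sequence since $p(t)$ is invertible) and shows that map is nullhomotopic, whereas you keep the map $q^r$ and multiply the nullhomotopy by $p(t)^{-1}$; these are equivalent.
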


\begin{proof}
The argument from Lemma \ref{th:0-perfect} implies that $\bK[t,1/p] \otimes_{\bK[t]} F_0 \bar{Q}\scrA_t$ is perfect as a $\scrA_{t,1/p}$- bimodule. The proof of Proposition \ref{th:smooth-1} then carries over to show that $\bK[t,1/p] \otimes_{\bK[t]} F_r\bar{Q}\scrA_t$ is also perfect. Consider the sequence of bimodules over $\scrA_{t,1/p}$ obtained by tensoring \eqref{eq:qm} with $\bK[t,1/p]$, and then multiplying the second map with the invertible element $p(t) \in \bK[t,1/p]$:
\begin{equation} \label{eq:qm-2}
\begin{aligned} &
0 \rightarrow \bK[t,1/p] \otimes_{\bK[t]} F_r\bar{Q}\scrA_t
\longrightarrow \bK[t,1/p] \otimes_{\bK[t]} \bar{Q}\scrA_t \xrightarrow{q^r p(t)}\\
& \qquad \qquad \qquad \qquad \qquad \qquad \qquad \rightarrow \bK[t,1/p] \otimes_{\bK[t]} \bar{Q}\scrA_t[2r] \rightarrow 0.
\end{aligned}
\end{equation}
Assumption (ii), together with Corollary \ref{th:endo-rings}, implies that $q^r p(t)$ is nullhomotopic (in fact, it was already nullhomotopic before passing to $\bK[t,1/p]$-coefficients). The last step is as before.
\end{proof}

\begin{remark} It is a general fact that if $P_t$ is a perfect bimodule over $\scrA_t$, then $\bK[t,1/p] \otimes_{\bK[t]} P_t$ is a perfect bimodule over $\scrA_{t,1/p}$. We have given a direct argument for the specific case we need in Proposition \ref{th:smooth-2}.  \end{remark}

\begin{corollary} \label{th:degree-bound}
In the situation of Proposition \ref{th:smooth-2}, suppose additionally that $\mathit{HH}_*(\scrA)$ is concentrated in degrees $\leq d$, and that $\bK(t) \otimes_{\bK[t]} H^*(\scrA)$ is finite-dimensional over $\bK(t)$. Then $\mathit{HH}_*(\bK(t) \otimes_{\bK[t]} \scrA_t)$ is concentrated in degrees $[-d,d]$.
\end{corollary}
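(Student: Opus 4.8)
The plan is to show that $\scrB := \bK(t) \otimes_{\bK[t]} \scrA_t$ is a \emph{smooth and proper} dg algebra over the field $K := \bK(t)$, to derive the bound $\mathit{HH}_i(\scrB) = 0$ for $i>d$ from Proposition \ref{th:hochschild-homology} together with the hypothesis on $\mathit{HH}_*(\scrA)$, and then to obtain the symmetric bound $\mathit{HH}_i(\scrB)=0$ for $i<-d$ for free from smooth-and-proper duality (nondegeneracy of the Mukai pairing on Hochschild homology).

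\textbf{Step 1 (geometry of $\scrB$).} First I would note that $K$ is a localisation of $\bK[t,1/p]$ and $\scrB = K \otimes_{\bK[t,1/p]} \scrA_{t,1/p}$, so Proposition \ref{th:smooth-2} gives smoothness of $\scrB$ over $K$ by base change: a perfect $\scrA_{t,1/p}$-bimodule stays perfect after applying $K\otimes_{\bK[t,1/p]}(-)$, using $K\otimes_{\bK[t,1/p]}K\iso K$. For properness, $\scrA_t$ is free over $\bK[t]$, so $H^*(\scrB)\iso K\otimes_{\bK[t]}H^*(\scrA_t)$; and the quasi-isomorphism $\scrA\hookrightarrow\scrA_t$ identifies $H^*(\scrA_t)$ with $H^*(\scrA)$ as a $\bK[t]$-module, $t$ acting by multiplication with $[W]$. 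Hence $H^*(\scrB)\iso K\otimes_{\bK[t]}H^*(\scrA)$, which is finite-dimensional over $K$ by hypothesis.

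\textbf{Step 2 (upper bound).} By Proposition \ref{th:hochschild-homology}, $\mathit{HH}_*(\scrA_t)\iso H^*\big(\bK[q^{-1}]\otimes_{\bK[q]}\bar{C}_*(\scrA_q)\big)$, and the complex on the right is $\bar{C}_*(\scrA)[q^{-1}]$, i.e.\ polynomials in $q^{-1}$ with coefficients in $\bar{C}_*(\scrA)$, with the differential of \eqref{eq:p-hochschild}: the ordinary Hochschild differential preserves the power of $q^{-1}$, while the $W$-insertion term decreases that power by one. So the subcomplexes $F_p$ spanned by $q^{-j}$ with $j\leq p$ form an increasing exhaustive filtration with $F_{-1}=0$ and $\mathrm{gr}_p F\iso q^{-p}\bar{C}_*(\scrA)$ carrying the ordinary differential, whence $H^*(\mathrm{gr}_p F)\iso q^{-p}\mathit{HH}_*(\scrA)$ is concentrated in degrees $\leq d-2p\leq d$ for $p\geq 0$. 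Running the long exact sequences of $0\to F_{p-1}\to F_p\to\mathrm{gr}_p F\to 0$ inductively shows $H^i(F_p)=0$ for $i>d$ and all $p$, and then the same vanishing holds for the filtered colimit since homology commutes with it. Finally $\bK[t]\to K$ is a flat epimorphism, so $\mathit{HH}_*(\scrB)\iso K\otimes_{\bK[t]}\mathit{HH}_*(\scrA_t)$, and $\mathit{HH}_i(\scrB)=0$ for $i>d$.

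\textbf{Step 3 (lower bound).} Since $\scrB$ is smooth and proper over the field $K$ (Step 1), $\mathit{HH}_*(\scrB)$ is finite-dimensional and the Mukai pairing $\mathit{HH}_i(\scrB)\otimes_K\mathit{HH}_{-i}(\scrB)\to K$ is nondegenerate (cf.\ \cite{shklyarov13}); in particular $\dim_K\mathit{HH}_i(\scrB)=\dim_K\mathit{HH}_{-i}(\scrB)$, so the vanishing for $i>d$ obtained in Step 2 forces $\mathit{HH}_i(\scrB)=0$ for $i<-d$. Combining the two bounds proves the claim. I expect Step 1 to be the delicate point — making sure that smoothness and properness genuinely descend along $\bK[t,1/p]\to K$ and that $\mathit{HH}_*$ commutes with this localisation — together with the appeal to smooth-and-proper duality in Step 3; the degree bookkeeping in Step 2 is routine once Proposition \ref{th:hochschild-homology} is in hand.
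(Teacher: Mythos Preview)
Your proof is correct and follows essentially the same strategy as the paper. The paper also gets the upper bound via the $q$-filtration on $\bK[q^{-1}]\otimes_{\bK[q]}\bar{C}_*(\scrA_q)$ (it phrases this through the cone description \eqref{eq:cq-cone} tensored with $\bK(t)$, whereas you invoke Proposition~\ref{th:hochschild-homology} directly and tensor with $\bK(t)$ afterwards using flatness---both are valid), and obtains the lower bound from the nondegenerate Shklyarov pairing for the smooth and proper $\bK(t)$-algebra $\scrB$, exactly as in your Step~3.
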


\begin{proof}
Take \eqref{eq:shuffle-2}, \eqref{eq:cq-cone} and tensor all groups involved with $\bK(t)$. The outcome is a quasi-isomorphism
\begin{equation} \label{eq:1-over-p-quasi}
\mathit{Cone}\Big( 
\bK[q^{-1}](t) \otimes_{\bK[q]} \bar{C}_*(\scrA_q) \xrightarrow{t-\iota_W} 
\bK[q^{-1}](t) \otimes_{\bK[q]} \bar{C}_*(\scrA_q) \Big) 
\htp \bar{C}_*(\bK(t) \otimes_{\bK[t]} \scrA_t).
\end{equation}
Using the (bounded above exhausting) filtration by powers of $q$, and the associated spectral sequence, one sees that under our assumption, the cohomology of $\bK[q^{-1}] \otimes_{\bK[q]} \bar{C}_*(\scrA_q)$ is concentrated in degrees $\leq d$. The same therefore holds for the left hand side of \eqref{eq:1-over-p-quasi}. This shows the upper bound in our statement; since $\bK(t) \otimes_{\bK[t]} \scrA_t$ is proper and smooth over $\bK(t)$, the lower bound follows from the existence of the nondegenerate Shklyarov pairing \cite{shklyarov07}.
\end{proof}

\subsubsection{Cyclic homology\label{subsubsec:cyclic-homology}}
By cyclic homology we mean what's usually referred to as negative cyclic homology. Let $u$ be a formal variable of degree $2$; see Section \ref{subsec:conventions}(e) for sign conventions. The cyclic complex is obtained by adding an extra $u$ term (the Connes operator) to the Hochschild differential:
\begin{equation} \label{eq:connes}
\begin{aligned}
& \overline{CC}_*(\scrA) = \bar{C}_*(\scrA)[[u]], \\
& d_{\overline{CC}_*(\scrA)}(a_0(a_1|\dots|a_l)) = d_{\bar{C}_*(\scrA)}(a_0(a_1|\dots|a_l)) \\
& \qquad
- u  \sum_j (-1)^{(\|a_0\|+\cdots+\|a_{j}\|)(\|a_{j+1}\|+\cdots+\|a_l\|)} e_{\scrA}(a_{j+1}|\dots|a_l|a_0|\dots|a_j).
\end{aligned}
\end{equation}
As before, Hochschild cochains $\phi$ act on $\overline{CC}_*(\scrA)$, $u$-linearly, in two different ways. The Lie action is given by the same formula \eqref{eq:lie-module}, and we continue to write it as $L_\phi$. The module action acquires an extra term, and we correspondingly change notation:
\begin{equation} \label{eq:big-iota-action}
\begin{aligned} &
I_\phi( a_0(a_1|\dots|a_l) ) = \iota_\phi(a_0(a_1|\dots|a_l)) 
\\ & \quad
-u
\sum_{ijm} (-1)^{(\|a_0\|+\cdots+\|a_{j}\|)(\|a_{j+1}\|+\cdots+\|a_l\|) + \|\phi\|( \|a_{j+1}\|+\cdots+\|a_i\|)} \\[-1em] & \qquad \qquad \qquad \qquad
e_{\scrA} (a_{j+1}|\dots|\phi^m(a_{i+1},\dots,a_{i+m})|\dots|a_0|\dots|a_j).
\end{aligned}
\end{equation}
(In the new term, $a_0$ must lie to the right of $\phi$.) These operations satisfy the Cartan formula
\begin{equation} \label{eq:cartan}
d_{\overline{CC}_*(\scrA)} I_\phi - (-1)^{|\phi|} I_\phi d_{\overline{CC}_*(\scrA)} - I_{d_{\bar{C}^*(\scrA)}\phi} = uL_\phi.
\end{equation}
This formula underlies the general definition of the Getzler-Gauss-Manin connection. For now, we only need two rather special cases:
\begin{itemize} \itemsep1em
\item 
Take $\overline{CC}_*(\scrA_t)$, defined as before but working over $\bK[t]$. The naive operation of $t$-differentiation, applied to cyclic cochains, satisfies
\begin{equation} \label{eq:dt-d}
\partial_t d_{\overline{CC}_*(\scrA_t)} - d_{\overline{CC}_*(\scrA_t)} \partial_t = L_{\partial_\epsilon}.
\end{equation}
The expression $\partial_\epsilon$ appears here because it is the $t$-derivative of $d_{\scrA_t}$, the rest of the dga structure being $t$-independent. The $t$-connection is the $u$-linear map
\begin{equation}
\begin{aligned}
& \nabla_{u\partial_t}: \overline{CC}_*(\scrA_t) \longrightarrow \overline{CC}_{*+2}(\scrA_t), \\
& \nabla_{u\partial_t} = u\partial_t + I_{\partial_\epsilon}.
\end{aligned}
\end{equation}
This is a chain map, because of \eqref{eq:dt-d} and  \eqref{eq:cartan}. Spelling out the definition, see \eqref{eq:action-of-depsilon}, yields
\begin{equation}
\begin{aligned}
& \nabla_{u\partial_t}( a_0 (a_1|\dots|a_l)t^k) \\ & \quad = uk \, a_0(a_1|\dots|a_l) t^{k-1}
 + (-1)^{(|a_0|+\|a_1\|+\cdots+\|a_{l-1}\|)\|a_l\|} (\partial_\epsilon a_l) a_0 (a_1|\dots|a_{l-1}) t^k
\\ & \quad
 - u\sum_{ij}
(-1)^{(\|a_0\|+\|a_1\|+\cdots+\|a_j\|)(\|a_{j+1}\|+\cdots+\|a_l\|) + (\|a_{j+1}\| + \cdots + \|a_i\|)}
\\[-1em] & \qquad \qquad \quad \qquad
 e_{\scrA} (a_{j+1}|\dots|\partial_\epsilon a_{i+1}| \dots|a_0|\dots|a_j) t^k  
\qquad \qquad \qquad \text{for } a_0, \dots, a_l \in \scrA[\epsilon].
\end{aligned}
\end{equation}

\item
On the other hand, consider the cyclic complex of the curved dga $\scrA_q$. This is understood to be complete with respect to both $q$ and $u$, hence is $\overline{CC}_*(\scrA_q) = \bar{C}_*(\scrA)[[q,u]]$. The differential includes $W$ in the same way as in \eqref{eq:daq-resolution}, and has the same Connes operator term as in \eqref{eq:connes}. The analogue of \eqref{eq:dt-d} is
\begin{equation} \label{eq:dq-d}
\partial_q d_{\overline{CC}_*(\scrA_q)} - d_{\overline{CC}_*(\scrA_q)} \partial_q = L_W,
\end{equation}
and the associated $q$-connection is correspondingly
\begin{equation} \label{eq:q-ggm}
\begin{aligned}
& \nabla_{u\partial_q}: \overline{CC}_*(\scrA_q) \longrightarrow \overline{CC}_*(\scrA_q), \\
& \nabla_{u\partial_q} = u\partial_q + I_W;
\end{aligned}
\end{equation}
or explicitly
\begin{equation}
\begin{aligned}
& \nabla_{u\partial_q}( a_0(a_1|\dots|a_l) q^k) = uk \,a_0(a_1|\dots|a_l)q^{k-1}  + 
Wa_0(a_1|\dots|a_l)q^k \\ & \quad - u\sum_{ij} 
(-1)^{(\|a_0\|+\cdots+\|a_j\|)(\|a_{j+1}\|+\cdots+\|a_l\|) + (\|a_{j+1}\| + \cdots + \|a_i\|)}
\\[-1em] & \qquad \qquad \quad \qquad
e_{\scrA}(a_{j+1}|\dots|a_i|W|\dots|a_0|\dots|a_j)q^k
 \qquad \qquad \qquad
\text{for }a_0,\dots,a_l \in \scrA.
\end{aligned}
\end{equation}
There is also a version of cyclic homology with negative powers of $q$, more precisely $q^{-1}\bK[q^{-1}] \hat\otimes_{\bK[q]} \overline{CC}_*(\scrA_q)$ (see Section \ref{subsec:conventions}(b) for notation). Because $\partial_q$ acts on $q^{-1}\bK[q^{-1}]$, this version inherits a connection which we also denote by $\nabla_{u\partial_q}$.
\end{itemize}

Take the map \eqref{eq:shuffle-2c}, shift the powers of $q$ involved by $1$, and extend it $u$-linearly. We denote the outcome, which is easily seen to be a chain map, by
\begin{equation} \label{eq:cyclic-shuffle}
\begin{aligned}
& \mathit{SH}: q^{-1}\bK[q^{-1}] \hat\otimes_{\bK[q]} \overline{CC}_*(\scrA_q)  \longrightarrow \overline{CC}_{*+2}(\scrA_t), \\
& \mathit{SH}(a_0(a_1|\dots|a_l) q^{-k-1}) = \mathit{sh}(a_0(a_1|\dots|a_l)q^{-k}).
\end{aligned}
\end{equation}

\begin{theorem} \label{th:noncommutative-fourier-transform}
The map \eqref{eq:cyclic-shuffle} is a filtered quasi-isomorphism (with respect to the $u$-filtration). It fits into a strictly commutative diagram
\begin{equation} \label{eq:gm-compatible-1}
\xymatrix{
 q^{-1}\bK[q^{-1}] \hat\otimes_{\bK[q]}\overline{CC}_*(\scrA_q)  \ar[r]^-{\mathit{SH}} \ar[d]_-{q} &
\overline{CC}_{*+2}(\scrA_t) \ar[d]^-{-\nabla_{u\partial_t}}
\\
 q^{-1}\bK[q^{-1}] \hat\otimes_{\bK[q]}\overline{CC}_{*+2}(\scrA_q)  \ar[r]^-{\mathit{SH}} &
\overline{CC}_{*+4}(\scrA_t), 
}
\end{equation}
as well as into a homotopy commutative diagram 
\begin{equation} \label{eq:gm-compatible-2}
\xymatrix{
 q^{-1}\bK[q^{-1}] \hat\otimes_{\bK[q]}\overline{CC}_*(\scrA_q)  \ar[r]^-{\mathit{SH}} \ar[d]_-{\nabla_{u\partial_q}} &
\overline{CC}_{*+2}(\scrA_t) \ar[d]^-{t}
\\
 q^{-1}\bK[q^{-1}] \hat\otimes_{\bK[q]} \overline{CC}_*(\scrA_q) \ar[r]^-{\mathit{SH}} &
\overline{CC}_{*+2}(\scrA_t). 
}
\end{equation}
\end{theorem}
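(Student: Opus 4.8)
\emph{Plan.} Write $M_q=\bK[q^{-1}]\otimes_{\bK[q]}\bar C_*(\scrA_q)$ for the Hochschild-level complex appearing in Proposition \ref{th:hochschild-homology}. Everything is deduced from the Hochschild-level Propositions \ref{th:hochschild-homology} and \ref{th:hochschild-cohomology}, together with the mapping-cone presentation \eqref{eq:cq-cone} of $\overline{CQ}_*(\scrA_t)$, by keeping track of the Connes operator and of the powers of $u$. The filtered-quasi-isomorphism claim needs only that $\mathit{SH}$ be a chain map --- which, as remarked just before \eqref{eq:cyclic-shuffle}, adds to \eqref{eq:shuffle-2c} only the classical compatibility of the shuffle map with the cyclic/Connes structure, cf.\ Example \ref{th:w-is-zero} --- since by construction the reduction of $\mathit{SH}$ modulo $u$ is the composite of the isomorphism of $\bK[q]$-modules ``multiplication by $q$'', $q^{-1}\bK[q^{-1}]\otimes_{\bK[q]}\bar C_*(\scrA_q)\to M_q$, with the quasi-isomorphism $\mathit{sh}$ of Proposition \ref{th:hochschild-homology}. (As all complexes involved are $u$-torsionfree and $u$-complete, $\mathit{SH}$ is then in fact a quasi-isomorphism, by the long exact sequence of multiplication by $u$; only the filtered statement is used later.)

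\emph{The strictly commutative square \eqref{eq:gm-compatible-1}.} Two observations reduce this to Proposition \ref{th:hochschild-homology}. First, every chain in the image of $\mathit{SH}$ has all its tensor factors in $\scrA\cup\{e_\scrA\epsilon\}$ and its outer factor in $\scrA$; in particular it does not involve $t$, so $\partial_t$ kills it and $\nabla_{u\partial_t}\circ\mathit{SH}=I_{\partial_\epsilon}\circ\mathit{SH}$. Second, on chains of this form the term of \eqref{eq:big-iota-action} by which $I_{\partial_\epsilon}$ differs from $\iota_{\partial_\epsilon}$ vanishes: it records $\partial_\epsilon$ applied to one tensor factor, but $\partial_\epsilon$ annihilates the factors in $\scrA$ and sends $e_\scrA\epsilon$ to $e_\scrA$, which then occupies a bar slot and so is zero in the normalized complex --- this is the use of normalization anticipated in Example \ref{th:k}. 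Hence $\nabla_{u\partial_t}\circ\mathit{SH}=\iota_{\partial_\epsilon}\circ\mathit{SH}$ exactly, and the commutativity of \eqref{eq:gm-compatible-1} is the $u$-linear extension, shifted by one in the $q$-exponents, of the strictly commutative $q$-versus-$(-\iota_{\partial_\epsilon})$ square of Proposition \ref{th:hochschild-homology} (i.e.\ \eqref{eq:hochschild-mult-q} restricted to \eqref{eq:shuffle-2c}).

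\emph{The homotopy-commutative square \eqref{eq:gm-compatible-2}.} This is the substantive step. Modulo $u$, the assertion is the homotopy commutativity of the $\iota_W$-versus-$t$ square in Proposition \ref{th:hochschild-homology}; there the homotopy is the tautological null-homotopy, inside the mapping cone \eqref{eq:cq-cone}, of ``$(\text{inclusion})\circ(t-\iota_W)$'', namely insertion of an $\epsilon$ into the outer slot followed by the shuffle map. I would lift this construction to the cyclic level: build a Connes-deformed analogue $\overline{CCQ}_*(\scrA_t)$ of $\overline{CQ}_*(\scrA_t)$, presented as the mapping cone of the $u$-linear operator $t-I_W$ on $\bigl(\bK[q^{-1}]\hat\otimes_{\bK[q]}\overline{CC}_*(\scrA_q)\bigr)[t]$, whose $(\epsilon,t)$-constant subcomplex is $\bK[q^{-1}]\hat\otimes_{\bK[q]}\overline{CC}_*(\scrA_q)$ (the extra $(te_{\scrA}-W)\partial_\epsilon$ term of the differential \eqref{eq:p-hochschild} vanishes on $\epsilon$-free chains) with inclusion a filtered quasi-isomorphism, and through which $\mathit{SH}$ factors: multiplication by $q$, then this inclusion, then the cyclic shuffle map $\overline{CCQ}_*(\scrA_t)\to\overline{CC}_*(\scrA_t)$. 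The tautological cone homotopy, composed with the ($t$-linear) cyclic shuffle, gives a homotopy between $\mathit{SH}\circ I_W$ and $t\circ\mathit{SH}$; one then passes from $I_W$ to $\nabla_{u\partial_q}=u\partial_q+I_W$ of \eqref{eq:q-ggm} by absorbing the remaining $\mathit{SH}\circ u\partial_q$ into a further explicit correction of the homotopy, forced by the relation $[\nabla_{u\partial_q},q]=u$ interacting with the one-step shift in $q$-exponents in the cyclic shuffle, and verified with the Cartan formula \eqref{eq:cartan} and \eqref{eq:dq-d}. (Equivalently one can simply take $K$ to be the $u$-linear extension of the Hochschild homotopy, corrected by lower $u$-order terms, and check \eqref{eq:gm-compatible-2} by hand.)

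\emph{Expected main obstacle.} The delicate part is exactly this last bookkeeping for \eqref{eq:gm-compatible-2}: getting the signs consistent for the Connes operator, the curvature insertions of $W$, the $\epsilon$-insertions of the shuffle map, and the shift in powers of $q$, all at once. Conceptually nothing new is needed --- the cyclic mapping-cone picture behaves well with respect to all of these, so the Hochschild homotopy of Proposition \ref{th:hochschild-homology} genuinely upgrades --- but reconciling the $u$-adic corrections with the $q$-degree shift is where the care is required.
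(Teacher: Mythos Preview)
Your treatment of the filtered quasi-isomorphism and of the strictly commutative square \eqref{eq:gm-compatible-1} is essentially the paper's, only spelled out in more detail (the paper simply says the commutativity of \eqref{eq:gm-compatible-1} is elementary because the image of $\mathit{SH}$ is constant in $t$).

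For the homotopy-commutative square \eqref{eq:gm-compatible-2}, however, the paper takes a much shorter route than your proposed cyclic-cone construction. It verifies two pointwise identities:
\[
\mathit{SH}\circ \partial_q = -L_{\epsilon e_\scrA}\circ \mathit{SH}, \qquad \mathit{SH}\circ I_W = I_W\circ \mathit{SH},
\]
the first coming from the combinatorics of inserting one extra $e_\scrA\epsilon$ into a shuffle, the second from the fact that the curvature term of $\scrA_q$ and the operator $I_W$ on $\overline{CC}_*(\scrA_t)$ are induced by the same central element $W$. Combining these with $\nabla_{u\partial_q}=u\partial_q+I_W$ gives $\mathit{SH}\circ\nabla_{u\partial_q}=(-uL_{\epsilon e_\scrA}+I_W)\circ\mathit{SH}$. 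A single application of the Cartan formula \eqref{eq:cartan} with $\phi=\epsilon e_\scrA$ (noting that $d_{\scrA_t}(\epsilon e_\scrA)=(t-W)e_\scrA$, so $I_{d_{\scrA_t}(\epsilon e_\scrA)}+I_W=I_{te_\scrA}=t$ by normalization) then produces the homotopy directly: it is $-I_{\epsilon e_\scrA}$. This bypasses entirely the construction of a cyclic analogue $\overline{CCQ}_*(\scrA_t)$ of the cone \eqref{eq:cq-cone}, and with it the ``delicate bookkeeping'' you anticipate as the main obstacle. Your approach is not wrong in principle, but it reconstructs from scratch a homotopy that the Cartan formula hands you explicitly.
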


\begin{proof}
The quasi-isomorphism statement follows from that in Proposition \ref{th:hochschild-homology}. The commutativity of \eqref{eq:gm-compatible-1} is elementary, especially so because elements in the image of \eqref{eq:cyclic-shuffle} are constant in $t$. As for \eqref{eq:gm-compatible-2}, we have
\begin{equation} 
\begin{aligned}
& \mathit{SH} \,\partial_q = -L_{\epsilon e_\scrA} \mathit{SH}, \\
& \mathit{SH} I_W = I_W \mathit{SH},
\end{aligned}
\end{equation}
and therefore, using \eqref{eq:cartan},
\begin{equation}
\begin{aligned}
& \mathit{SH}\, (\nabla_{u\partial_q}) = (-uL_{\epsilon e_{\scrA}} + I_W) \mathit{SH} \\
& \quad = (-d_{\overline{CC}_*(\scrA_t)} I_{\epsilon e_{\scrA}} - I_{\epsilon e_{\scrA}} d_{\overline{CC}_*(\scrA_t)} + I_{d_{\scrA_t}(\epsilon e_{\scrA})} + I_W) \mathit{SH} \\
& \quad = (-d_{\overline{CC}_*(\scrA_t)} I_{\epsilon e_{\scrA}} - I_{\epsilon e_{\scrA}} d_{\overline{CC}_*(\scrA_t)} + t) \mathit{SH}.
\end{aligned}
\end{equation}
In other words, if we replaced $t$ by the homotopic endomorphism in brackets in the last line, then the diagram \eqref{eq:gm-compatible-2} would commute strictly.
\end{proof}

\subsubsection{The monodromy theorem\label{subsubsec:monodromy}}
In the terminology of Section \ref{subsec:u-theory}, 
\begin{equation} \label{eq:final-cyclic}
A_{q,u} = \overline{CC}_*(\scrA_q),
\end{equation}
with its $q$-connection, is a complete $u$-torsionfree dg module over $W_{q,u} \iso W_{t,u}$; and its $u = 0$ reduction is $q$-torsionfree,
\begin{equation}
A_q = \bar{C}_*(\scrA_q).
\end{equation}
What appears in Theorem \ref{th:noncommutative-fourier-transform} is the modified version $q^{-1}A_{q^{-1},u}$ from \eqref{eq:quotient-q}. This is again complete (by definition) and $u$-torsionfree (by Lemma \ref{th:invert-q-2}; one can also easily check it directly). On that version, we can carry out the completed localisation process with respect to some nonzero polynomial $p(t)$, as in \eqref{eq:quotient-t}. Denote the outcome by
\begin{equation} \label{eq:cc-localise-t}
q^{-1}A_{q^{-1},1/p,u} \stackrel{\mathrm{def}}{=} \bK[t,1/p] \hat\otimes_{\bK[t]} q^{-1}A_{q^{-1},u};
\end{equation}
it is again complete (by definition) and $u$-torsionfree (by Lemma \ref{th:invert-t}; this time, direct verification is not straightforward, since $t = \nabla_{u\partial_q}$ by definition of the Fourier-Laplace transform). Finally, we can invert $u$ in a purely algebraic sense, again using shorthand notation:
\begin{equation} \label{eq:cc-periodic-final}
q^{-1} A_{q^{-1},1/p,u^{\pm 1}} \stackrel{\mathrm{def}}{=} \bK[u^{\pm 1}] \otimes_{\bK[u]} q^{-1}A_{q^{-1},1/p,u}.
\end{equation}
This is now $2$-periodically graded; and the action of $\nabla_{\partial_t} = -q/u$ and $t = \nabla_{u\partial_q}$ make it into a chain complex of modules over the classical Weyl algebra.

\begin{corollary} \label{th:monodromy}
Take $\bK = \bC$. Suppose that we have $(\scrA,W)$ and a nonzero polynomial $p(t)$, such that:
\begin{itemize} \itemsep.5em
\item[(i)] $\scrA$ is smooth;
\item[(ii)] $q^k p([W]) \in \mathit{HH}^{2k}(\scrA_q)$ vanishes for some $k>0$. 
\item[(iii)] $\bK[t,1/p] \otimes_{\bK[t]} H^*(\scrA)$, where $t$ acts by $[W]$, is a finitely generated $\bK[t,1/p]$-module.
\end{itemize}
Then, in each degree, the cohomology of $q^{-1} A_{q^{-1},1/p,u^{\pm 1}}$ is finitely generated over $\bC[t,1/p]$, where $t = \nabla_{u\partial_q}$. Moreover, the action of $\nabla_{\partial_t} = -q/u$ on that cohomology is a connection with regular singularities, and quasi-unipotent monodromy around each singularity (for both statements, this includes $t = \infty$; quasi-unipotency means that the eigenvalues are roots of unity).
\end{corollary}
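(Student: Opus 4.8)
The plan is to reduce the statement to the theorem of Petrov--Vaintrob--Vologodsky applied to the family $\scrA_{t,1/p}$ over $R = \bC[t,1/p]$, after using Theorem \ref{th:noncommutative-fourier-transform} to identify $q^{-1}A_{q^{-1},1/p,u^{\pm 1}}$ with the periodic cyclic complex of that family.

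\emph{Smoothness and properness.} First I would observe that hypotheses (i) and (ii) are precisely those of Proposition \ref{th:smooth-2}, so $\scrA_{t,1/p}$ is smooth over $R$. For properness, recall from \eqref{eq:cofibrant-t} that $\scrA \hookrightarrow \scrA_t$ is a quasi-isomorphism sending $[W]$ to $t[e_\scrA]$; reading $\scrA_t$ as (a shift of) the mapping cone of $t-W$ acting on $\scrA[t]$ gives $H^*(\scrA_t) \cong H^*(\scrA)$ with $t$ acting by $[W]$, hence $H^*(\scrA_{t,1/p}) \cong \bK[t,1/p]\otimes_{\bK[t]}H^*(\scrA)$ with $t$ acting by $[W]$. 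By (iii) this is finitely generated over $R$, and since $R$ is regular of dimension one it is then a perfect $R$-module; so $\scrA_{t,1/p}$ is proper over $R$.

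\emph{The Fourier--Laplace identification.} Next I would feed the quasi-isomorphism $\mathit{SH}$ of Theorem \ref{th:noncommutative-fourier-transform}, together with the homotopy commutativity of \eqref{eq:gm-compatible-2}, into the ``two variables switched'' version of Lemma \ref{th:homotopy-dq} (identifying the ambient Weyl algebra with $W_{t,u}$ via $q \mapsto u\partial_t$, $u\partial_q \mapsto t$): since $\mathit{SH}$ is strictly $(u,q)$-linear and commutes with $\nabla_{u\partial_q}$ only up to homotopy, this produces an isomorphism in $D(W_{t,u})$ between $q^{-1}A_{q^{-1},u}$ and $\overline{CC}_*(\scrA_t)$ (up to a shift), intertwining $t = \nabla_{u\partial_q}$ with multiplication by $t$ and $-q$ with $\nabla_{u\partial_t}$. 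Because $\bK[t,1/p]$ is flat over $\bK[t]$ one has $\bK[t,1/p]\otimes_{\bK[t]}\bar{C}_*(\scrA_t) = \bar{C}_*(\scrA_{t,1/p})$, so after performing the outer $u$-completion and the localization $1/p$ on the left as in \eqref{eq:cc-localise-t}, the same map descends to an isomorphism between $q^{-1}A_{q^{-1},1/p,u}$ and $\overline{CC}_*(\scrA_{t,1/p})$; inverting $u$ turns the target into the periodic cyclic complex of $\scrA_{t,1/p}$. On cohomology this gives $H^*(q^{-1}A_{q^{-1},1/p,u^{\pm 1}}) \cong \mathit{HP}_*(\scrA_{t,1/p})$ (up to $2$-periodicity) as $R$-modules, with $t = \nabla_{u\partial_q}$ acting by multiplication by $t$ and $\nabla_{\partial_t} = -q/u$ acting as the Getzler--Gauss--Manin connection (note $-q/u$ corresponds to $u^{-1}\nabla_{u\partial_t} = \nabla_{\partial_t}$).

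\emph{Applying the monodromy theorem.} Finally, since $\operatorname{Spec} R$ is a smooth affine curve and $\scrA_{t,1/p}$ is smooth and proper over it, the theorem of Petrov--Vaintrob--Vologodsky \cite{petrov-vaintrob-vologodsky18} gives that $\mathit{HP}_*(\scrA_{t,1/p})$ is finitely generated over $R$ and that the Getzler--Gauss--Manin connection has, on the compactification $\bP^1 \supset \operatorname{Spec} R$, regular singularities and quasi-unipotent monodromy at every puncture --- that is, at each zero of $p$ and at $t = \infty$. Transporting these conclusions back along the isomorphism above proves the corollary. The only non-formal input is the theorem of \cite{petrov-vaintrob-vologodsky18}; the part that requires care (and dictates the order above) is carrying out the localization $1/p$ and the inversion of $u$ compatibly with the $u$-completions intrinsic to cyclic homology and with the renaming $t = \nabla_{u\partial_q}$ of the Fourier--Laplace transform.
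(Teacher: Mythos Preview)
Your proof is correct and follows essentially the same route as the paper's. One small slip: since \eqref{eq:gm-compatible-1} gives strict $q$-linearity of $\mathit{SH}$ (with $q$ acting on the target as $-\nabla_{u\partial_t}$, not $+\nabla_{u\partial_t}$) and \eqref{eq:gm-compatible-2} gives homotopy-compatibility with $u\partial_q$, it is the original version of Lemma \ref{th:homotopy-dq} that applies directly, not the variables-switched one.
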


\begin{proof}
First consider $q^{-1}A_{q^{-1},u}$. Theorem \ref{th:noncommutative-fourier-transform}, together with Lemma \ref{th:homotopy-dq}, says that in $D(W_{q,u}) \iso D(W_{t,u})$, this is isomorphic to $\overline{CC}_*(\scrA_t)$, with its $t$-action and Getzler-Gauss-Manin connection. As a consequence, we get an isomorphism in that category,
\begin{equation} \label{eq:cc-is-cc}
q^{-1} A_{q^{-1},1/p,u} \iso \big(\bK[t,1/p] \hat\otimes_{\bK[t]} \overline{CC}_*(\scrA_t)\big).
\end{equation}
The rightmost expression is the complex underlying the negative cyclic homology, $\mathit{HC}_*(\scrA_{t,1/p})$ taken over $\bK[t,1/p]$; for this to be true it is essential to $u$-adically complete the tensor product in \eqref{eq:cc-is-cc} as one does in the definition of cyclic homology \eqref{eq:connes}. We then invert $u$ to obtain periodic cyclic homology (again relative to $\bK[t,1/p]$) 
\begin{equation}
\mathit{HP}_*(\scrA_{t,1/p}) = \mathit{HC}_*(\scrA_{t,1/p}) \otimes_{\bK[u]} \bK[u^{\pm 1}]. 
\end{equation}
  On that cohomology, $\nabla_{\partial_t}$ acts as the Getzler-Gauss-Manin connection. Now, $\scrA_{t,1/p}$ is smooth over $\bK[t,1/p]$ by Proposition \ref{th:smooth-2}, and proper by (iii). Both regularity and quasi-unipotency hold by \cite[Theorem 3]{petrov-vaintrob-vologodsky18}.
\end{proof}

\begin{corollary} \label{th:unipotence-bound}
In the situation of Corollary \ref{th:monodromy}, suppose additionally that $\mathit{HH}_*(\scrA)$ is concentrated in degrees $\leq d$. Then, the monodromy of $\nabla_{\partial_t}$ around each singular point (including $\infty$) has Jordan blocks of size at most $d+1$.
\end{corollary}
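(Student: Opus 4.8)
The plan is to deduce this from the Hochschild‑homology amplitude bound of Corollary~\ref{th:degree-bound}, applied to the family $\scrA_{t,1/p}$, together with the unipotency‑index refinement in the theorem of Petrov--Vaintrob--Vologodsky. So the argument has two ingredients: first, that the Hochschild homology of the relevant smooth proper family is concentrated in the symmetric window $[-d,d]$; and second, that such a window forces the monodromy of the Getzler--Gauss--Manin connection to have unipotency index at most $d+1$.

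For the first ingredient I would check that the hypotheses of Corollary~\ref{th:degree-bound} hold: the assumption that $\mathit{HH}_*(\scrA)$ is concentrated in degrees $\leq d$ is exactly the extra hypothesis made here, and base‑changing hypothesis (iii) of Corollary~\ref{th:monodromy} to the fraction field shows that $\bK(t)\otimes_{\bK[t]}H^*(\scrA)$ is finite‑dimensional over $\bK(t)$. Thus Corollary~\ref{th:degree-bound} gives that $\mathit{HH}_*(\bK(t)\otimes_{\bK[t]}\scrA_t)$, equivalently $\mathit{HH}_*$ of the generic fibre of $\scrA_{t,1/p}$, is concentrated in $[-d,d]$. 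Next, for each singular point $\sigma$ of $\nabla_{\partial_t}$ (the points lying over $\{p=0\}$, together with $t=\infty$) I would pass to the local field $\bK((t-\sigma))$ --- which contains $\bK[t,1/p]$ and is flat over it, since $p$ becomes a unit there --- and set $\scrB_\sigma=\bK((t-\sigma))\otimes_{\bK[t,1/p]}\scrA_{t,1/p}$. This $\scrB_\sigma$ is smooth over $\bK((t-\sigma))$ by Proposition~\ref{th:smooth-2} and proper over it by hypothesis (iii), and its Hochschild homology, being the flat base change of $\mathit{HH}_*(\bK(t)\otimes_{\bK[t]}\scrA_t)$ along the field extension $\bK(t)\hookrightarrow\bK((t-\sigma))$, is still concentrated in $[-d,d]$.

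Finally, recall from the proof of Corollary~\ref{th:monodromy} that $q^{-1}A_{q^{-1},1/p,u^{\pm 1}}$ computes $\mathit{HP}_*(\scrA_{t,1/p})$ over $\bK[t,1/p]$, with $\nabla_{\partial_t}$ realised by the Getzler--Gauss--Manin connection; base‑changing to $\bK((t-\sigma))$ turns this into $\mathit{HP}_*(\scrB_\sigma)$ with its Getzler--Gauss--Manin connection, whose monodromy is by definition the monodromy of $\nabla_{\partial_t}$ around $\sigma$. Applying \cite[Theorem 3]{petrov-vaintrob-vologodsky18} to $\scrB_\sigma$ --- a smooth proper $\bK((t-\sigma))$‑linear dg algebra whose Hochschild homology lies in $[-d,d]$ --- then bounds the unipotency index of that monodromy by $d+1$, which is the assertion (and the case $t=\infty$ is handled identically after the coordinate change $\bar Q=1/t$). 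The one point I would take care over is the precise bookkeeping in the monodromy bound of \cite{petrov-vaintrob-vologodsky18}: one must use the \emph{symmetric} window $[-d,d]$ to obtain the sharp exponent $d+1$ rather than the cruder full‑amplitude bound $2d+1$, exactly as, in the commutative Landman theorem, $N^{\min(m,\,2n-m)+1}=0$ is maximised at the middle degree $m=n$. Beyond that I do not expect a genuine obstacle: the statement is a direct corollary of the algebraic machinery already assembled, the only mild care needed being that $\mathit{HP}_*$ and the Hochschild window behave correctly under the flat base change to $\bK((t-\sigma))$.
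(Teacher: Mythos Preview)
Your approach is essentially the paper's: verify the hypotheses of Corollary~\ref{th:degree-bound} (which you do correctly) and then invoke Petrov--Vaintrob--Vologodsky for the Jordan block bound. Two remarks.

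First, the relevant result in \cite{petrov-vaintrob-vologodsky18} is their Theorem~8, not Theorem~3. Theorem~3 is what Corollary~\ref{th:monodromy} used for regularity and quasi-unipotence; Theorem~8 is the statement that bounds the \emph{exponent} (maximal Jordan block size, in the sense of \cite[(0.2.2)]{katz}) in terms of the Hochschild amplitude. Once you cite the correct theorem, your worry about obtaining $d+1$ rather than $2d+1$ disappears: that sharper bound is exactly the content of Theorem~8.

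Second, the paper does not pass to the complete local fields $\bK((t-\sigma))$; it records that $\bK(t)\otimes_{\bK[t]}\scrA_t$ is smooth (by Proposition~\ref{th:smooth-2}) and proper over $\bK(t)$ with Hochschild homology in $[-d,d]$, and then applies \cite[Theorem~8]{petrov-vaintrob-vologodsky18} directly. Your localisation step is a plausible unpacking of how such a theorem might be applied pointwise, but it is not needed at the level of exposition the paper adopts, and it introduces some extra bookkeeping (compatibility of $\mathit{HP}_*$ with the base change to $\bK((t-\sigma))$) that the paper's formulation avoids.
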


\begin{proof}
Assumption (iii) from Corollary \ref{th:monodromy} implies that $\bK(t) \otimes_{\bK(t)} H^*(\scrA)$ is finite-dimensional over $\bK(t)$, and Proposition \ref{th:smooth-2} guarantees the smoothness of $\bK(t) \otimes_{\bK[t]} \scrA_t$ over $\bK(t)$. By Corollary \ref{th:degree-bound}, the Hochschild homology of $\bK(t) \otimes_{\bK[t]} \scrA_t$ is concentrated in degrees $[-d,\dots,d]$. One then applies \cite[Theorem 8]{petrov-vaintrob-vologodsky18} (the ``exponent'' in \cite{petrov-vaintrob-vologodsky18} is the maximal Jordan block size; see \cite[(0.2.2)]{katz}).
\end{proof}

\subsection{$A_\infty$-algebras\label{subsec:ainfty}}

\subsubsection{Basic notions}
We again work over a field $\bK$. An $A_\infty$-algebra consists of a graded vector space $\scrA$, together with operations 
\begin{equation} \label{eq:mu}
\mu^d_{\scrA}: \scrA^{\otimes d} \longrightarrow \scrA, \;\; d \geq 1,
\end{equation}
of degree $2-d$, satisfying the $A_\infty$-associativity equations \eqref{eq:associativity}. For now, we assume that our $A_\infty$-algebras are strictly unital: there is an $e_{\scrA} \in \scrA^0$ such that \eqref{eq:strict-unit} holds. Similarly, for an $A_\infty$-homomorphism $\Phi: \scrA \rightarrow \scrB$, with components
\begin{equation}
\Phi^d: \scrA^{\otimes d} \longrightarrow \scrB
\end{equation}
of degree $1-d$, the strict unitality condition is that
\begin{equation} \label{eq:strict-unit-2}
\Phi^1(e_{\scrA}) = e_{\scrB}, \;\; \Phi^d(\dots,e_{\scrA},\dots) = 0 \text{ for all $d \neq 1$.}
\end{equation}

A curved $A_\infty$-algebra is an $\scrA$ as before, with operations 
\begin{equation} \label{eq:curved-mu}
\mu^d_{\scrA_q}: \scrA^{\otimes d} \longrightarrow \scrA[[q]], \;\; d \geq 0,
\end{equation}
again of degree $2-d$, and where $q$ is a formal variable of degree $2$. The curvature term $\mu_{\scrA_q}^0$ must have zero $q$-constant part, and the entire sequence of operations (extended $q$-linearly to multilinear maps on $\scrA_q = \scrA[[q]]$) must satisfy the extended $A_\infty$-associativity equations. We also require the analogue of \eqref{eq:strict-unit}, again with $e_{\scrA} \in \scrA$ (not $\scrA_q$). Of course, setting $q = 0$ then gives us an ordinary $A_\infty$-algebra $\scrA$; we will also refer to $\scrA_q$ as a curved deformation of $\scrA$. Expanding in orders of $q$, we write
\begin{equation} \label{eq:q-expansion}
\mu_{\scrA_q}^d = \mu_{\scrA}^d + q\mu_{\scrA_q}^{d,(1)} + O(q^2).
\end{equation}
The $q$-derivative of the deformation is denoted by
\begin{equation} \label{eq:kappa}
\kappa_{\scrA_q}^d = \partial_q \mu^d_{\scrA_q}.
\end{equation}
There is a curved version $\Phi_q: \scrA_q \rightarrow \scrB_q$ of $A_\infty$-homomorphisms, where one makes the operations $q$-dependent, includes a $\Phi^0_q \in \scrB_q^1$ with zero $q$-constant part, and still maintains the condition \eqref{eq:strict-unit-2}. We say that $\Phi_q$ is a filtered quasi-isomorphism if its $q = 0$ reduction $\Phi: \scrA \rightarrow \scrB$ is a quasi-isomorphism.

\subsubsection{Hochschild and cyclic homology\label{subsubsec:hh-again}}
The constructions of Hochschild and cyclic homology generalize to the $A_\infty$-world (see e.g.\ \cite{sheridan20}). For Hochschild homology, the differential becomes
\begin{equation} \label{eq:unnormalized-c}
\begin{aligned}
& d_{\bar{C}_*(\scrA)}(a_0(a_1|\dots|a_l)) 
= \sum_{ij} (-1)^{\|a_0\|+\cdots+\|a_j\|} a_0 (a_1|\dots|a_j|\mu_{\scrA}^i(a_{j+1},\dots,a_{j+i})|\dots)
\\ &
 +\sum_{ij} (-1)^{(\|a_{j+1}\|+\cdots+\|a_l\|)(\|a_0\|+\cdots+\|a_j\|)}
\mu_{\scrA}^{l-j+i+1}(a_{j+1}, \dots,a_l,a_0,\dots,a_i) (a_{i+1}|\dots|a_j). 
\end{aligned}
\end{equation}
For cyclic homology, one adds the same Connes operator term \eqref{eq:connes} as before. For Hochschild cohomology, one similarly has 
\begin{equation} \label{eq:diff-c-a}
\begin{aligned} &
(d_{\bar{C}^*(\scrA)}\phi)^l(a_1,\dots,a_l) 
\\ & \quad
= \sum_{ij} (-1)^{\|\phi\|(\|a_1\|+\cdots+\|a_j\|)}\mu_{\scrA}^{l-j+1}(a_1,\dots,a_j,\phi^i(a_{j+1},\dots,a_{j+i}),\dots,a_l)
\\[-1em] & \qquad \qquad \quad
+ (-1)^{\|a_1\|+\cdots+\|a_j\|+|\phi|} \phi^{l-i+1}(a_1,\dots,a_j,\mu_{\scrA}^i(a_{j+1},\dots,a_{j+i}),\dots,a_l).
\end{aligned}
\end{equation}
For instance, the first order term $\mu_{\scrA_q}^{(1)}$ of a curved deformation is a cocycle in $\bar{C}^0(\scrA)$. As before, Hochschild cohomology is a graded commutative algebra. 
The analogue of \eqref{eq:iota-action} for the action of $\phi \in \bar{C}^*(\scrA)$ on the Hochschild complex is
\begin{equation} \label{eq:generalized-iota-action}
\begin{aligned}
&
\iota_\phi(a_0(a_1|\dots|a_l)) =
 \sum_{ijmn} (-1)^{(\|a_0\|+\cdots+\|a_j\|)(\|a_{j+1}\|+\cdots+\|a_l\|)
+ \|\phi\|(\|a_{j+1}\| + \cdots + \|a_i\|)}
\\[-.5em] & \qquad \mu_{\scrA}^{n+l+2-j}(a_{j+1},\dots,\phi_{\scrA}^m(a_{i+1},\dots,a_{i+m}),\dots,a_0,\dots,a_n) (a_{n+1}|\dots|a_j).
\end{aligned}
\end{equation}
For the action $I_\phi$ on cyclic homology, one adds the same $u$ term as in \eqref{eq:big-iota-action}. 

The same constructions apply to curved $A_\infty$-algebras $\scrA_q$, obviously including the $\mu^0_{\scrA_q}$-term and making sure that everything is $q$-linear and complete. For instance, the $q$-derivative \eqref{eq:kappa} is a cocycle in $\bar{C}^0(\scrA_q)$. One defines the $q$-connection on $\overline{CC}_*(\scrA_q)$ by
\begin{equation} \label{eq:kappa-connection}
\nabla_{u\partial_q} = u\partial_q + I_{\kappa_{\scrA_q}},
\end{equation}
or explicitly:
\begin{equation} \label{eq:explicit-gauss-manin}
\begin{aligned} &
\nabla_{u\partial_q}(a_0(a_1|\dots|a_l)q^k) = uk\, a_0(a_1|\dots|a_l)q^{k-1} 
\\ & 
\quad  + \sum_{ijmn} (-1)^{(\|a_0\|+\cdots+\|a_j\|)(\|a_{j+1}\|+\cdots+\|a_l\|)
+ (\|a_{j+1}\| + \cdots + \|a_i\|)}
\\[-1em] & \qquad \qquad \quad \mu_{\scrA_q}^{n+l+2-j}(a_{j+1},\dots,\partial_q \mu_{\scrA_q}^m(a_{i+1},\dots,a_{i+m}),\dots,a_0,\dots,a_n) (a_{n+1}|\dots|a_j)
\\[.5em] &
\quad - 
u
\sum_{ijm} (-1)^{(\|a_0\|+\cdots+\|a_{j}\|)(\|a_{j+1}\|+\cdots+\|a_l\|) + ( \|a_{j+1}\|+\cdots+\|a_i\|)} \\[-1em] & \qquad \qquad \qquad \qquad
e_{\scrA} (a_{j+1}|\dots|\partial_q \mu_{\scrA_q}^m(a_{i+1},\dots,a_{i+m})|\dots|a_0|\dots|a_j).
\end{aligned}
\end{equation}
The compatibility of this connection with the (covariant) functoriality of cyclic homology was addressed in \cite[Theorem 3.32 and Appendix B]{sheridan20} (for $A_\infty$-algebras without curvature term, but the inclusion of that term is straightforward). The statement is:

\begin{lemma} \label{th:induced-map}
Let $\Phi_q: \scrA_q \rightarrow \scrB_q$ be a curved $A_\infty$-homomorphism. Then, there is a canonical $(q,u)$-linear induced map $\overline{CC}_*(\scrA_q) \longrightarrow \overline{CC}_*(\scrB_q)$, which fits into a homotopy commutative diagram
\begin{equation}
\xymatrix{
\ar[d]_-{\nabla_{u\partial_q}} \overline{CC}_*(\scrA_q) \ar[rr] && \overline{CC}_*(\scrB_q) \ar[d]^-{\nabla_{u\partial_q}}
\\
\overline{CC}_*(\scrA_q) \ar[rr] && \overline{CC}_*(\scrB_q).
}
\end{equation}
Finally, if $\Phi_q$ is a filtered quasi-isomorphism, then so is the induced map on cyclic chains.
\end{lemma}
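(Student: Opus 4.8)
The plan is to adapt the construction and verification of \cite[Theorem~3.32, Appendix~B]{sheridan20} to the present curved, $u$-deformed setting, carrying the extra terms $\mu^0_{\scrA_q}$, $\mu^0_{\scrB_q}$ and $\Phi^0_q$ through every formula; $q$-completeness guarantees convergence of all the infinite sums that arise. First I would write down the induced map $\Phi_{q,*}$: on a Hochschild chain $a_0(a_1|\dots|a_l)$ one sums over all ways of partitioning the cyclically ordered inputs $a_1,\dots,a_l,a_0$ into consecutive blocks, applies a component of $\Phi_q$ (possibly the curvature term $\Phi^0_q$) to every block except one, and keeps that one block as the new output slot. Checking that this $(q,u)$-linear map intertwines the Hochschild differentials is a direct expansion using the $A_\infty$-homomorphism equations for $\Phi_q$; compatibility with the Connes operator term uses strict unitality \eqref{eq:strict-unit-2}. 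This part is routine bookkeeping.

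The substantive step is homotopy commutativity with $\nabla_{u\partial_q} = u\partial_q + I_\kappa$. Write $\tau_q = \partial_q\Phi_q$ for the $q$-derivative of $\Phi_q$ (components $\partial_q\Phi^d_q$, including $\partial_q\Phi^0_q$). Differentiating the $A_\infty$-homomorphism equations in $q$ yields the key identity relating $\kappa_{\scrA_q}$, $\kappa_{\scrB_q}$ and the Hochschild differential of $\tau_q$, all mediated by $\Phi_q$. Using it, I would define an explicit homotopy $h\colon \overline{CC}_*(\scrA_q)\to\overline{CC}_{*-1}(\scrB_q)$ by the same block-partition formula as $\Phi_{q,*}$, but with exactly one block hit by a component of $\tau_q$ rather than $\Phi_q$, together with a Connes-type correction modelled on the extra term in \eqref{eq:big-iota-action}. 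Verifying
\[ d_{\overline{CC}_*(\scrB_q)}\,h + h\,d_{\overline{CC}_*(\scrA_q)} = \nabla_{u\partial_q}\,\Phi_{q,*} - \Phi_{q,*}\,\nabla_{u\partial_q} \]
then reduces, after a long but mechanical cancellation, to the $A_\infty$-homomorphism equations, their $q$-derivatives, and the Cartan formula \eqref{eq:cartan}. I expect this verification to be the main obstacle: it is precisely where curvature terms meet the Connes differential, and tracking signs together with the $\Phi^0_q$- and $\mu^0_{\scrA_q}$-contributions that are absent in \cite{sheridan20} is the delicate point.

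Finally, suppose $\Phi_q$ is a filtered quasi-isomorphism, so that $\Phi = \Phi_q|_{q=0}\colon \scrA\to\scrB$ is an $A_\infty$-quasi-isomorphism. The length (tensor-power) filtration of the Hochschild complexes then shows that the induced map $\bar{C}_*(\scrA)\to\bar{C}_*(\scrB)$ is a quasi-isomorphism; filtering $\bar{C}_*(\scrA_q) = \bar{C}_*(\scrA)[[q]]$ and its $\scrB$-counterpart by powers of $q$ — a complete, exhaustive filtration whose associated graded is the Hochschild complex of $\scrA$, respectively $\scrB$ — promotes this to the statement that $\bar{C}_*(\scrA_q)\to\bar{C}_*(\scrB_q)$ is a quasi-isomorphism, i.e.\ that the induced map on cyclic chains is a filtered quasi-isomorphism in the sense of Section~\ref{subsubsec:derived}.
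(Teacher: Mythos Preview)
Your proposal is correct and takes essentially the same approach as the paper: the paper does not give an independent proof but simply attributes the result to \cite[Theorem~3.32 and Appendix~B]{sheridan20}, noting that the inclusion of the curvature term is straightforward. Your sketch is more detailed than what the paper provides, and the ingredients you identify (the induced map via block partitions, the homotopy built from $\partial_q\Phi_q$, and the $q$-filtration argument for the filtered quasi-isomorphism statement) are the right ones; one small imprecision is the phrase ``keeps that one block as the new output slot''---every block, including the one containing $a_0$, must be hit by a component of $\Phi_q$ to land in $\scrB_q$---but this does not affect the overall correctness of your outline.
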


\subsubsection{Deformation theory\label{subsubsec:deformation-theory}}
Classically, Hochschild cohomology appears in the $A_\infty$-context as the obstruction theory governing curved $A_\infty$-deformations. We will need a slight variation of the theory, which concerns curved $A_\infty$-homomorphisms. Suppose that we have an $A_\infty$-homomorphism $\Phi: \scrA \rightarrow \scrB$. Associated to that is a Hochschild cohomology theory $\mathit{HH}^*(\scrA,\scrB)$, with underlying complex
\begin{equation} \label{eq:ab-complex}
\begin{aligned} &
\bar{C}^*(\scrA,\scrB) = \mathit{hom}_{\bK}(T(\bar\scrA[1]),\scrB), \\[.5em]
& (d_{\bar{C}^*(\scrA,\scrB)}\phi)^l(a_1,\dots,a_l) 
\\ & \quad
= \sum_{\substack{r, j \\ i_1+\cdots+i_r = l}}
(-1)^{\|\phi\|(\|a_1\|+\cdots+\|a_{i_1+\cdots+i_j}\|)}
\mu_{\scrB}^r(\Phi^{i_1}(a_1,\dots,a_{i_1}),\dots,\Phi^{i_j}(\dots,a_{i_1+\cdots+i_j}),
\\[-1.5em] & \qquad \qquad \qquad \qquad \qquad
\phi^{i_{j+1}}(a_{i_1+\cdots+i_j+1},\dots,a_{i_1+\cdots+i_{j+1}}),\Phi^{i_{j+2}}(\dots),\dots)
\\[.5em] & \qquad \quad
+ \sum_{ij} (-1)^{\|a_1\|+\cdots+\|a_i\|+|\phi|} \phi^{l-j+1}(a_1,\dots,a_i,\mu_{\scrA}^j(a_{i+1},\dots,a_{i+j}),\dots,a_l).
\end{aligned}
\end{equation}
This comes with maps (compare e.g. \cite[Section 4.2]{sheridan20}, which discusses the more general situation of Hochschild cohomology with bimodule coefficients)
\begin{equation} \label{eq:hochschild-mixed-functoriality}
\begin{aligned}
& \bar{C}^*(\scrA) \stackrel{\Phi_*}{\longrightarrow} \bar{C}^*(\scrA,\scrB) \stackrel{\Phi^*}{\longleftarrow} \bar{C}^*(\scrB), \\[.5em]
& (\Phi_*\phi)^l(a_1,\dots,a_l) = \sum_{ij} (-1)^{\|\phi\|(\|a_1\|+\cdots+\|a_j\|)} \Phi^{l-i+1}(a_1,\dots,a_j,
\\[-1em] & \qquad \qquad \qquad \qquad \qquad \qquad \qquad
\phi^i(a_{j+1},\dots,a_{j+i}),\dots), \\[.5em]
& (\Phi^*\phi)^l(a_1,\dots,a_l) = \sum_{\substack{r \\ i_1+\cdots+i_r = l}}
\phi^r(\Phi^{i_1}(a_1,\dots,a_{i_1}),\Phi^{i_2}(a_{i_1+1},\dots,a_{i_1+i_2}),\dots).
\end{aligned}
\end{equation}
Moreover, $\mathit{HH}^*(\scrA,\scrB)$ is an algebra (see e.g.\ \cite[p.~11, Equation (1.9)]{seidel04} for the product), and the maps induced by $\Phi_*$, $\Phi^*$ are maps of algebras. Finally, if $\Phi$ is a quasi-isomorphism, then so are the maps $\Phi_*$, $\Phi^*$.

\begin{remark} \label{th:functorial}
One can get a higher-level picture by considering $A_\infty$-algebras as categories with a single object. $\bar{C}^*(\scrA)$ is the chain complex of natural transformations from the identity functor to itself; correspondingly, $\bar{C}^*(\scrA,\scrB)$ are the natural transformations from $\Phi$ to itself, in the category of functors $\scrA \rightarrow \scrB$; and the maps \eqref{eq:hochschild-mixed-functoriality} are left and right composition with $\Phi$ (compare e.g.\ \cite[Section 1e]{seidel04}). This makes it clear why those are maps of algebras.
\end{remark}

\begin{lemma} \label{th:def-theory}
Suppose that we have an $A_\infty$-homomorphism $\Phi: \scrA \rightarrow \scrB$, and 
curved deformations $\scrA_q$, $\scrB_q$, which are a priori independent of each other. If
\begin{equation} \label{eq:matching-first-order-classes}
[\Phi_*(\mu_{\scrA_q}^{(1)})] =[\Phi^*(\mu_{\scrB_q}^{(1)})] \in \mathit{HH}^0(\scrA,\scrB)
\end{equation}
and
\begin{equation} \label{eq:no-negative-hh}
\mathit{HH}^{2k}(\scrA,\scrB) = 0 \;\; \text{for all $k<0$},
\end{equation}
then $\Phi$ can be extended to a curved $A_\infty$-homomorphism $\Phi_q: \scrA_q \rightarrow \scrB_q$.
\end{lemma}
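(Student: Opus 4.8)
The strategy is the standard obstruction-theoretic induction, but set up relative to the homomorphism $\Phi$ rather than to a single algebra. I would build the curved homomorphism $\Phi_q$ order by order in $q$, writing $\Phi_q = \Phi + q\Phi^{(1)} + q^2\Phi^{(2)} + \cdots$ (each $\Phi^{(k)}$ a sequence of multilinear maps of the appropriate degrees, including a component $\Phi^{(k),0} \in \scrB^1$ so that $\Phi^0_q$ has zero $q$-constant term, and all $\Phi^{(k)}$ required to be strictly unital in the sense of \eqref{eq:strict-unit-2}). The $A_\infty$-homomorphism equations for $\Phi_q$, expanded in powers of $q$, form a tower: the order-$q^0$ equation is satisfied because $\Phi$ is an $A_\infty$-homomorphism; the order-$q^N$ equation, after collecting all terms involving only $\Phi^{(<N)}$ and the given operations $\mu^{(k)}_{\scrA_q}$, $\mu^{(k)}_{\scrB_q}$, reads $d_{\bar{C}^*(\scrA,\scrB)}\Phi^{(N)} = \mathrm{Obs}_N$ for an explicit cochain $\mathrm{Obs}_N \in \bar{C}^1(\scrA,\scrB)$ built from lower-order data. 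Here the differential is the one in \eqref{eq:ab-complex}; the point of introducing $\bar C^*(\scrA,\scrB)$ is precisely that deforming a homomorphism (with both source and target deformations fixed) is governed by this complex. Since $\Phi^{(N)}$ has total degree $1-N$ as a Hochschild cochain — wait, more carefully: $\Phi^{(N)}$ contributes to the part of the homomorphism equation living in $\bar C^{?}$; I would track degrees so that the relevant obstruction $\mathrm{Obs}_N$ is a cocycle in $\bar C^{2(1-N)}(\scrA,\scrB)$, hence an even-degree class, and for $N \geq 2$ this lies in a group $\mathit{HH}^{2k}(\scrA,\scrB)$ with $k = 1-N < 0$, which vanishes by hypothesis \eqref{eq:no-negative-hh}. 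Thus all higher obstructions vanish for degree reasons and the induction proceeds.

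The first step ($N=1$) is the only one that is not automatic, and this is where hypothesis \eqref{eq:matching-first-order-classes} enters. The order-$q^1$ equation asks for $\Phi^{(1)}$ with $d_{\bar C^*(\scrA,\scrB)}\Phi^{(1)} = \Phi^*(\mu^{(1)}_{\scrB_q}) - \Phi_*(\mu^{(1)}_{\scrA_q})$ (up to signs I would pin down from \eqref{eq:ab-complex} and \eqref{eq:hochschild-mixed-functoriality}): the left-composition of $\Phi$ with the first-order deformation of $\scrA$ must be cohomologous, as a $1$-cochain, to the right-composition with the first-order deformation of $\scrB$. The right-hand side is a cocycle in $\bar C^0(\scrA,\scrB)$ (both $\Phi_*(\mu^{(1)}_{\scrA_q})$ and $\Phi^*(\mu^{(1)}_{\scrB_q})$ are cocycles, being images of the first-order cocycles $\mu^{(1)}_{\scrA_q}\in \bar C^0(\scrA)$, $\mu^{(1)}_{\scrB_q}\in\bar C^0(\scrB)$ under chain maps), and \eqref{eq:matching-first-order-classes} says exactly that its class in $\mathit{HH}^0(\scrA,\scrB)$ is zero, so a bounding $\Phi^{(1)}$ exists. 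I would then feed this $\Phi^{(1)}$ into the induction above.

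The main obstacle — or rather the only place requiring genuine care — is bookkeeping: getting the signs in the expanded homomorphism equation to match the conventions in \eqref{eq:associativity}, \eqref{eq:ab-complex}, \eqref{eq:hochschild-mixed-functoriality}, and verifying that $\mathrm{Obs}_N$ is genuinely a $d_{\bar C^*(\scrA,\scrB)}$-cocycle at each stage (this is the usual "the obstruction is closed" lemma, proved by applying the $A_\infty$-homomorphism relations one order lower). I would either cite the standard reference for this — the relative deformation theory is the one-object-category case of the functoriality discussed around \eqref{eq:hochschild-mixed-functoriality}, cf.\ Remark \ref{th:functorial} — or remark that the argument is formally identical to the classical one for deforming an $A_\infty$-structure (e.g.\ as in \cite{seidel04}), with $\bar C^*(\scrA)$ replaced throughout by $\bar C^*(\scrA,\scrB)$ and the deformation data being the pair $(\mu_{\scrA_q},\mu_{\scrB_q})$ together with $\Phi$. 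The strict unitality of $\Phi_q$ can be arranged at each step by the usual homotopy-unit cleanup (choosing the bounding cochains to vanish when an entry is $e_{\scrA}$), using that $\bar C^*$ is built from the normalized bar complex; I would note this but not belabor it.
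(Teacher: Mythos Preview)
Your proposal is correct and matches the paper's approach essentially verbatim: the paper's own proof is a three-sentence sketch of exactly this order-by-order obstruction argument, identifying the first-order obstruction as $\Phi_*(\mu_{\scrA_q}^{(1)}) - \Phi^*(\mu_{\scrB_q}^{(1)}) \in \bar{C}^0(\scrA,\scrB)$ (handled by \eqref{eq:matching-first-order-classes}) and noting that the order-$N$ equation lives in $\bar{C}^{2(1-N)}(\scrA,\scrB)$, whose cohomology vanishes for $N \geq 2$ by \eqref{eq:no-negative-hh}. Your degree computation $2(1-N)$ agrees with the paper's, and your additional remarks (closedness of the obstruction, strict-unitality cleanup via the normalized complex) are details the paper simply omits from its sketch.
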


\begin{proof}[Sketch of proof]
This is a straightforward obstruction theory argument, order by order in $q$. Spelling out the equation for $\Phi_q$ at first order in $q$ shows that we are looking for $\phi \in \bar{C}^{-1}(\scrA,\scrB)$ which satisfies
\begin{equation}
\Phi_*(\mu_{\scrA_q}^{(1)}) - \Phi^*(\mu_{\scrB_q}^{(1)}) = d_{\bar{C}^*(\scrA,\scrB)}\phi \in \bar{C}^0(\scrA,\scrB),
\end{equation}
and that of course can be done iff \eqref{eq:matching-first-order-classes} holds. The next order equation will take place in $\bar{C}^{-2}(\scrA,\scrB)$, and so on, with the vanishing of the obstruction groups ensured by \eqref{eq:no-negative-hh}.
\end{proof}

We will also need a result concerning homomorphisms of curved $A_\infty$-algebras, whose proof uses the same techniques.

\begin{lemma} \label{th:phi-q}
A filtered quasi-isomorphism $\Phi_q: \scrA_q \rightarrow \scrB_q$ induces an isomorphism of Hochschild cohomologies, $\mathit{HH}^*(\scrA_q) \iso \mathit{HH}^*(\scrB_q)$. This is an isomorphism of algebras over $\bK[q]$. Moreover, it sends $[\kappa_{\scrA_q}]$ to $[\kappa_{\scrB_q}]$.
\end{lemma}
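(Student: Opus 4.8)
The plan is to adapt the mixed Hochschild complex of Section \ref{subsubsec:deformation-theory} to the curved setting. Given the curved $A_\infty$-homomorphism $\Phi_q\colon\scrA_q\rightarrow\scrB_q$, form the $q$-linear, $q$-adically complete cochain complex $\bar{C}^*(\scrA_q,\scrB_q)$ defined exactly as in \eqref{eq:ab-complex}, but with $\scrA_q$, $\scrB_q$, $\Phi_q$ in place of $\scrA$, $\scrB$, $\Phi$, so that all structure maps carry their curvature ($\mu^0$) terms --- this is the evident curved generalization, parallel to how \eqref{eq:diff-c-a} extends to curved algebras (see Section \ref{subsubsec:hh-again}). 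As in \eqref{eq:hochschild-mixed-functoriality}, it comes equipped with $q$-linear chain maps
\begin{equation*}
\bar{C}^*(\scrA_q)\xrightarrow{\;(\Phi_q)_*\;}\bar{C}^*(\scrA_q,\scrB_q)\xleftarrow{\;\Phi_q^*\;}\bar{C}^*(\scrB_q),
\end{equation*}
which induce algebra homomorphisms on Hochschild cohomology (again the curved form of the discussion in Section \ref{subsubsec:deformation-theory}, transparent from the one-object-category viewpoint of Remark \ref{th:functorial}). All three complexes are $q$-torsionfree and $q$-complete.

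The first step is to show that $(\Phi_q)_*$ and $\Phi_q^*$ are quasi-isomorphisms. Their $q=0$ reductions are $\Phi_*$ and $\Phi^*$, which are quasi-isomorphisms because $\Phi$ is (recall that $\Phi_q$ being a filtered quasi-isomorphism means exactly that its $q=0$ reduction is a quasi-isomorphism). Hence the mapping cone of each is a $q$-torsionfree, $q$-complete complex whose $q=0$ reduction is acyclic, and such a complex is acyclic: one checks inductively that $\mathit{Cone}/q^m\mathit{Cone}$ is acyclic using the extensions $0\to q^{m-1}\mathit{Cone}/q^m\mathit{Cone}\to\mathit{Cone}/q^m\mathit{Cone}\to\mathit{Cone}/q^{m-1}\mathit{Cone}\to 0$, and then passes to the inverse limit (exact because the transition maps are surjective) --- the same kind of $q$-filtration argument already used, e.g., in the proof of Proposition \ref{th:q-resolution}. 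Therefore $\mathit{HH}^*(\scrA_q)\xrightarrow{(\Phi_q)_*}\mathit{HH}^*(\scrA_q,\scrB_q)\xleftarrow{\Phi_q^*}\mathit{HH}^*(\scrB_q)$ are isomorphisms of $\bK[q]$-algebras, and the isomorphism of the statement is $(\Phi_q^*)^{-1}\circ(\Phi_q)_*$; it is a $\bK[q]$-algebra map since $(\Phi_q)_*$, $\Phi_q^*$, and hence $(\Phi_q^*)^{-1}$, all are.

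The second step is compatibility with $[\kappa]$. Differentiating the defining equations of the curved $A_\infty$-homomorphism $\Phi_q$ with respect to $q$, and sorting the resulting terms according to whether $\partial_q$ falls on an operation of $\scrB_q$, on a component of $\Phi_q$, or on an operation of $\scrA_q$, one obtains in $\bar{C}^0(\scrA_q,\scrB_q)$ the identity
\begin{equation*}
(\Phi_q)_*(\kappa_{\scrA_q})-\Phi_q^*(\kappa_{\scrB_q})=\pm\, d_{\bar{C}^*(\scrA_q,\scrB_q)}\!\big(\partial_q\Phi_q\big),
\end{equation*}
where $\partial_q\Phi_q$, whose components are maps $\scrA^{\otimes d}\to\scrB_q$ of degree $-1-d$, is regarded as an element of $\bar{C}^{-1}(\scrA_q,\scrB_q)$: the terms with $\partial_q$ on $\scrB_q$-operations assemble into $\Phi_q^*(\kappa_{\scrB_q})$ via the pullback map in \eqref{eq:hochschild-mixed-functoriality}, those with $\partial_q$ on $\scrA_q$-operations into $(\Phi_q)_*(\kappa_{\scrA_q})$ via the pushforward map, and those with $\partial_q$ on a $\Phi_q$-component reproduce exactly the two families of terms of $d_{\bar{C}^*(\scrA_q,\scrB_q)}$ applied to $\partial_q\Phi_q$, compare \eqref{eq:ab-complex}. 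Hence $(\Phi_q)_*[\kappa_{\scrA_q}]=\Phi_q^*[\kappa_{\scrB_q}]$ in $\mathit{HH}^0(\scrA_q,\scrB_q)$, and applying $(\Phi_q^*)^{-1}$ shows that the isomorphism sends $[\kappa_{\scrA_q}]$ to $[\kappa_{\scrB_q}]$.

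The only genuine work is the sign bookkeeping in the differentiated functor equation of the second step; this is the cochain-level refinement of the familiar fact that Kodaira--Spencer (characteristic) classes of families are compatible with morphisms of families, and it runs entirely parallel to --- indeed is a simpler, $u$-free version of --- the compatibility already recorded in Lemma \ref{th:induced-map}. Everything else is the by-now-routine combination of the curved-$A_\infty$ Hochschild machinery with the $q$-adic filtration argument.
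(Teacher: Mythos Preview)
Your proof is correct and follows essentially the same route as the paper's sketch: introduce the mixed complex $\bar{C}^*(\scrA_q,\scrB_q)$ with the maps $(\Phi_q)_*$, $\Phi_q^*$, reduce the quasi-isomorphism claim to $q=0$ via the $q$-adic filtration, and obtain the $\kappa$-compatibility by differentiating the $A_\infty$-homomorphism equations to produce the bounding cochain $\partial_q\Phi_q\in\bar{C}^{-1}(\scrA_q,\scrB_q)$. The only quibble is the pointer to Proposition~\ref{th:q-resolution}, which uses a weight filtration rather than the $q$-adic one; the argument you actually give is the standard completeness/torsionfreeness one and is fine.
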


\begin{proof}[Sketch of proof]
One introduces a mixed group $\mathit{HH}^*(\scrA_q,\scrB_q)$, which comes with maps $\Phi_{q,*}$ and $\Phi_q^*$ as in \eqref{eq:hochschild-mixed-functoriality}. This has an interpretation in terms of categories of curved $A_\infty$-functors and their natural transformations, as in Remark \ref{th:functorial}, and from that, one sees that the maps are compatible with the algebra structures. Given that $\Phi_q$ is a filtered quasi-isomorphism, the maps $\Phi_{q,*}$ and $\Phi_q^*$ are quasi-isomorphisms; this argument goes by $q$-filtration, which reduces it to the uncurved case. Differentiating $\Phi_q$ itself yields a cochain $\lambda_{\Phi_q} \in \bar{C}^{-1}(\scrA_q,\scrB_q)$, which satisfies
\begin{equation}
d_{C^*(\scrA_q,\scrB_q)}(\lambda_{\Phi_q}) = \Phi_{q,*}\kappa_{\scrA_q} - \Phi_q^*\kappa_{\scrB_q}.
\end{equation}
\end{proof}

\subsubsection{Cohomological unitality\label{subsubsec:c-unital}}
We will now drop the condition of strict unitality, and only require that $H^*(\scrA)$ be a unital algebra. For curved $A_\infty$-algebras $\scrA_q$, we require that the $q = 0$ reduction should be cohomologically unital. Similar adaptations can be made to the notion of $A_\infty$-homomorphism. 

Hochschild homology and cohomology can be defined in the cohomologically unital context simply by dropping the normalization condition, which means replacing $\bar{\scrA}$ by $\scrA$ in the definition of the relevant complexes.  We denote the outcome by $C_*(\scrA)$ and $C^*(\scrA)$. For Hochschild homology, there is also another approach, which then extends to cyclic homology as well. Namely, take
\begin{equation} \label{eq:adjoin}
\scrA^+ = \scrA \oplus \bK e_{\scrA}^+, 
\end{equation}
where the $A_\infty$-operations are extended by making $e_{\scrA}^+$ a strict unit. The normalized Hochschild complex of $\scrA^+$ can be written (as a graded vector space) as
\begin{equation} 
\bar{C}_*(\scrA^+) = C_*(\scrA) \oplus (e_{\scrA}^+ \otimes T(\scrA[1])).
\end{equation}
Write 
\begin{equation} \label{eq:bar-complex}
D_*(\scrA) \stackrel{\mathrm{def}}{=} T^{>0}(\scrA[1]) \subset T(\scrA[1])
\end{equation}
for the space of tensor expressions of positive length. This comes with a standard bar differential
\begin{equation} \label{eq:dbar-differential}
d_{D_*(\scrA)}(a_1|\cdots|a_l) = \sum_{ij} (-1)^{\|a_1\|+\cdots+\|a_j\|} (a_1|\dots|a_j|\mu_{\scrA}^i(a_{j+1},\dots,a_{j+i})|\dots|a_l).
\end{equation}
The non-unital version of the Hochschild complex 
\begin{equation} \label{eq:plus-complex}
C_*^+(\scrA) \stackrel{\mathrm{def}}{=} C_*(\scrA) \oplus (e_{\scrA}^+ \otimes D_*(\scrA)) \subset \bar{C}_*(\scrA^+)
\end{equation}
Explicitly, the differential is
\begin{equation} \label{eq:hochschild-homology-complex-2}
\begin{aligned}
& d_{C_*^+(\scrA)}(a_0(a_1|\dots|a_l))
\text{ as in \eqref{eq:unnormalized-c},} \\
& d_{C_*^+(\scrA)}(e^+_{\scrA}(a_1|\dots|a_l)) = 
 - e^+_{\scrA}\, d_{D_*(\scrA)}(a_1|\dots|a_l)
\\ & \qquad
+ a_1(a_2|\dots|a_l) - (-1)^{\|a_l\|(\|a_1\|+\cdots+\|a_{l-1}\|)} a_l(a_1|\dots|a_{l-1}).
\end{aligned}
\end{equation}
For cyclic homology, one correspondingly uses the subcomplex
\begin{equation} \label{eq:plus-complex-cyclic}
\mathit{CC}_*^+(\scrA) \stackrel{\mathrm{def}}{=} C_*^+(\scrA)[[u]] 
\stackrel{\mathrm{def}}{=} C_*(\scrA)[[u]] \oplus (e_{\scrA}^+ \otimes D_*(\scrA)[[u]])
\subset \overline{CC}_*(\scrA^+),
\end{equation}
and we again spell out the differential:
\begin{equation} \label{eq:c-unital-cyclic}
\begin{aligned}
& d_{\mathit{CC}_*^+(\scrA)}(a_0(a_1|\dots|a_l)) = d_{C_*^+(\scrA)}(a_0(a_1|\dots|a_l)) 
\\ & \qquad
- u \sum_j (-1)^{(\|a_0\|+\cdots+\|a_j\|)(\|a_{j+1}\|+\cdots+\|a_l|)} e_{\scrA}^+(a_{j+1}|\dots|a_l|a_0|\dots|a_j), \\
& d_{\mathit{CC}_*^+(\scrA)}(e_{\scrA}^+ (a_1|\dots|a_l)) = 
d_{C_*^+(\scrA)}(e_{\scrA}^+(a_1|\dots|a_l)).
\end{aligned}
\end{equation}

All this also works in the curved ($q$-linear) setup. The connection on $\overline{CC}_*(\scrA_q^+)$ defined in \eqref{eq:kappa-connection} preserves the subspace $\mathit{CC}_*^+(\scrA_q)$, and we will use that as the definition of Getzler-Gauss-Manin connection in the cohomologically unital case. The explicit formula is
\begin{equation} \label{eq:recap-gauss-manin}
\begin{aligned} 
&
\nabla_{u\partial_q}(a_0(a_1|\dots|a_l)) 
= u(\partial_q a_0)(a_1|\dots|a_l) + u\sum_i a_0(a_1|\dots|\partial_qa_i|\dots|a_l)
\\ &
\qquad + \sum_{ijrs} 
(-1)^{(\|a_0\|+\cdots+\|a_j\|)(\|a_{j+1}\|+\cdots+\|a_l\|)+ (\|a_{j+1}\| + \cdots + \|a_i\|)}
\\[-.5em] 
& \qquad \qquad \qquad 
\mu_{\scrA_q}^{s+l+2-j}(a_{j+1},\dots,\partial_q \mu_{\scrA_q}^r(a_{i+1},\dots,a_{i+r}),\dots,a_0,\dots,a_s) (a_{s+1}|\dots|a_j)
\\[.5em] &
\qquad - u\sum_{ijk}
(-1)^{(\|a_0\|+\cdots+\|a_{j}\|)(\|a_{j+1}\|+\cdots+\|a_l\|) + ( \|a_{j+1}\|+\cdots+\|a_i\|)} \\[-1em] & \qquad \qquad \qquad 
e^+_{\scrA} (a_{i+1}|\dots|\partial_q \mu_{\scrA_q}^r(a_{j+1},\dots,a_{j+k})|\dots|a_0|\dots|a_i), 
\\[.5em]
& \nabla_{u\partial_q}(e_{\scrA}^+(a_1|\cdots|a_l)) = u \sum_i e_{\scrA}^+(a_1|\dots|\partial_q a_i|\dots|a_l)
\\ & \qquad + 
\sum_{j \geq 0} (-1)^{(\|a_1\|+\cdots+\|a_{j}\|)(\|a_{j+1}\|+\cdots+\|a_l\|)} 
\partial_q\mu_{\scrA_q}^j(a_{l-j+1},\dots,a_l) (a_1|\dots|a_{l-j}).
\end{aligned}
\end{equation}

We now digress to discuss a source of equivalent formulae for the Getzler-Gauss-Manin connection (this could also be applied to strictly unital $A_\infty$-algebras, but the present framework is where we will need it for geometric applications). Recall from Sections \ref{subsubsec:cyclic-homology} and \ref{subsubsec:hh-again} that the Getzler-Gauss-Manin connection can be understood in terms of the operation $\iota_{\kappa_{\scrA_q}}$, where $\kappa_{\scrA_q} = \partial_q \mu_{\scrA_q}$, and its cyclic extension $I_{\kappa_{\scrA_q}}$. Here, we want to factor $\iota_{\kappa_{\scrA_q}}$ into two steps. There is a map from the (unreduced) chain complex underlying Hochschild cohomology to a suitable morphism space, in the category of strictly unital $A_\infty$-bimodules over $\scrA_q^+$:
\begin{equation} \label{eq:hochschild-to-diagonal}
\begin{aligned}
& \Delta: C^*(\scrA_q) \longrightarrow \overline{\mathit{hom}}_{(\scrA_q^+, \scrA_q^+)}(\scrA_q^+,\scrA_q), \\
& (\Delta\phi)^{j,1,k}(a_1,\dots,a_j,\underline{a_{j+1}},a_{j+2},\dots,a_{j+k+1}) = \sum_{i l} (-1)^{\|\phi\|(\|a_i\|+\cdots+\|a_i\|)} 
\\[-.5em] & \qquad \qquad \qquad \mu_{\scrA_q}^{j+k-l+2}(a_1,\dots,a_i,\phi^l(a_{i+1},\dots,a_{i+l}),\dots,\underline{a_{j+1}},\dots,a_{j+k+1}), \\
& (\Delta\phi)^{j,1,0}(a_1,\dots,a_j,\underline{e_{\scrA}^+}) = (-1)^{|\phi|+\|a_1\|+\cdots+\|a_j\|} \phi^j(a_1,\dots,a_j), \\
& (\Delta\phi)^{j,1,k}(\dots,\underline{e_{\scrA}^+},\dots) = 0 \;\; \text{for $k>0$.}
\end{aligned}
\end{equation}
Here, the $\overline{\hom}$ reminds us that we are talking about strictly unital morphisms over $\scrA_q^+$; and we have underlined those entries which are the distinguished (central) ones of the $A_\infty$-bimodule map. Note that in the second line of \eqref{eq:hochschild-to-diagonal}, the distinguished entry must lie to the right of $\phi(\dots)$ (the opposite convention would lead to a different but chain homotopic map). Next, endomorphisms of the diagonal act on the Hochschild complex (and more generally, Hochschild homology is a functor on bimodules). Adapted to our situation, this yields the chain map
\begin{equation} \label{eq:endomorphisms-act-on-hochschild}
\begin{aligned}
& \Gamma: \overline{\mathit{hom}}_{(\scrA_q^+,\scrA_q^+)}(\scrA_q^+,\scrA_q) \longrightarrow \mathit{hom}_{\bK[[q]]}(C_*^+(\scrA_q),C_*(\scrA_q)), 
\\
& (\Gamma \xi)(a_0(a_1|\cdots|a_l)) = \sum_{jk} (-1)^{(\|a_0\|+\cdots+\|a_{j}\|)(\|a_{j+1}\|+\cdots+\|a_l\|)} 
\\[-.75em] & \qquad \qquad \qquad \qquad \qquad \qquad
\xi^{l-j,1,k}(a_{j+1},\dots,a_l,\underline{a_0},a_1,\dots,a_k) (a_{k+1}|\dots|a_{j}), \\
& (\Gamma \xi)(e_{\scrA}^+(a_1|\dots|a_l)) = \sum_{jk} 
(-1)^{(\|a_1\|+\cdots+\|a_j\|+1)(\|a_{j+1}\|+\cdots+\|a_l\|)}
\\[-.75em] & \qquad \qquad \qquad \qquad \qquad \qquad
\xi^{l-j,1,k}(a_{j+1},\dots,a_l,\underline{e_{\scrA}^+},a_1,\dots,a_k)(a_{k+1}|\dots|a_j).
\end{aligned}
\end{equation}
The composition of the two is $\iota_{\phi} = \Gamma \Delta \phi$. Now, take an arbitrary cochain 
\begin{equation}
\xi \in \hom_{(\scrA_q,\scrA_q)}^{-1}(\scrA_q,\scrA_q),
\end{equation}
and its coboundary $d_{(\scrA_q,\scrA_q)}\xi$. Let's extend $\xi$ trivially to an element of $\overline{\hom}_{(\scrA_q,\scrA_q)}(\scrA_q^+,\scrA_q)$, which means $\xi^{j,1,k}(\dots,\underline{e_{\scrA}^+},\dots) = 0$. Then define a modified Getzler-Gauss-Manin connection by
\begin{equation} \label{eq:modified-gauss-manin}
\tilde{\nabla}_{u\partial_q} \stackrel{\mathrm{def}}{=} \nabla_{u\partial_q} + d_{\mathit{CC}_*^+(\scrA_q)}\Gamma(\xi) + \Gamma(\xi)d_{\mathit{CC}_*^+(\scrA_q)}.
\end{equation}
By construction, this induces the same connection on cohomology as $\nabla_{u\partial_q}$. Explicitly, the added term is
\begin{align}
\label{eq:modified-gauss-manin-2}
&
\begin{aligned}
&
\big(d_{\mathit{CC}_*^+(\scrA_q)}\Gamma(\xi) + \Gamma(\xi)d_{\mathit{CC}_*^+(\scrA_q)}\big)
\big( a_0(a_1|\dots|a_l) \big) \\
& \qquad = 
\sum_{jk} (-1)^{(\|a_{l-j+1}\|+\cdots+\|a_l\|)(\|a_0\|+\cdots+\|a_{l-j}\|)} 
\\[-1em] & \qquad \qquad \qquad \quad
(d_{(\scrA_q,\scrA_q)}\xi)^{j,1,k}(a_{l-j+1},\dots,a_l,\underline{a_0},a_1,\dots,a_k) (a_{k+1}|\dots|a_{l-j})
\\ & \qquad -
u\sum_{ijk} (-1)^{(\|a_{i+1}\|+\cdots+\|a_{l-j}\|) +
(\|a_{i+1}\|+\cdots+\|a_l\|)(\|a_0\|+\cdots+\|a_i\|)}
\\[-1em] & \qquad \qquad \quad \quad
e^+_{\scrA} \otimes (a_{i+1}|\dots|\xi^{j,1,k}(a_{l-j+1},\dots,a_l,\underline{a_0},a_1,\dots,a_k)|a_{k+1}|\dots|a_i),
\end{aligned}
\\ 
\label{eq:modified-gauss-manin-3}
& 
\begin{aligned} &
\big(d_{\mathit{CC}_*^+(\scrA_q)}\Gamma(\xi) + \Gamma(\xi)d_{\mathit{CC}_*^+(\scrA_q)}\big)
\big( e_{\scrA^+} (a_1|\cdots|a_l) \big) 
\\ & \qquad =
\sum_{jk} (-1)^{(\|a_1\|+\cdots+\|a_{l-j}\|)(\|a_{l-j+1}\|+\cdots+\|a_l\|)} 
\\[-1em] & \qquad \qquad \qquad \quad
\xi^{j,1,k-1}(a_{l-j+1},\dots,a_l,\underline{a_1},a_2,\dots,a_k)(a_{k+1}|\dots|a_{l-j}) 
\\ & \qquad - \sum_{jk} (-1)^{(\|a_1\|+\cdots+\|a_{l-j}\|)(\|a_{l-j+1}\|+\cdots+\|a_l\|)} 
\\[-1em] & \qquad \qquad \qquad \quad
\xi^{j-1,1,k}(a_{l-j+1},\dots,\underline{a_l},a_1,\dots,a_k)(a_{k+1}|\dots|a_{l-j}).
\end{aligned}
\end{align} 
Note that \eqref{eq:modified-gauss-manin-3} is zero if $l = 1$. 

\subsubsection{Carrying results over to the cohomologically unital case}
Once the definitions have been set up, all the results we have obtained generalize to the cohomologically unital case.  For instance, Lemmas \ref{th:def-theory} and \ref{th:phi-q} apply to cohomologically unital $A_\infty$-homomorphisms, with the same proof. The same holds for Lemma \ref{th:induced-map} (indeed, it's the cohomologically unital situation which is primarily considered in \cite{sheridan20}).
In cases where the results require more effort, one can avoid re-doing the work by using a few tricks that reduce things to the strictly unital case. 

\begin{lemma} \label{th:make-strictly-unital}
Let $\scrA$ be a cohomologically unital $A_\infty$-algebra. Then, there is a strictly unital $\scrB$ and a quasi-isomorphism $\scrA \rightarrow \scrB$. Similarly, if $\scrA_q$ has the property that its $q = 0$ reduction is cohomologically unital, there is a strictly unital $\scrB_q$ and a filtered quasi-isomorphism $\scrA_q \rightarrow \scrB_q$.
\end{lemma}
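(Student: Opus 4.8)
The plan is to reduce the statement to the classical fact that a cohomologically unital $A_\infty$-algebra admits a strictly unital minimal model, and then to run the same construction $q$-linearly in the curved case.

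For the uncurved statement I would proceed as follows. Since $H^*(\scrA)$ is unital, its unit is represented by a cocycle $e \in \scrA^0$ with $\mu^1_\scrA(e) = 0$. Choose a deformation retract of the complex $(\scrA, \mu^1_\scrA)$ onto its cohomology $H = H^*(\scrA)$, with structure maps $i \colon H \to \scrA$, $p \colon \scrA \to H$ and homotopy $h$, satisfying the usual side conditions and normalized so that $i$ sends the class of $e$ to $e$; the homotopy transfer theorem then produces a minimal $A_\infty$-structure $\mu_H$ on $H$ together with an $A_\infty$-quasi-isomorphism $P \colon \scrA \to H$ whose linear term is $p$ (and which is therefore cohomologically unital). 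In the minimal model the class of $e$ is automatically a strict unit for the product $\mu^2_H$, since the cohomological relations hold on the nose once $\mu^1_H = 0$; the remaining higher operations $\mu^{\geq 3}_H(\dots,e,\dots)$ need not vanish, but one removes their dependence on the $e$-slot order by order in the arity by applying $A_\infty$-automorphisms of $(H, \mu_H)$ with linear term the identity. This is the standard strictification of homotopy units for minimal $A_\infty$-algebras; it yields a strictly unital $\scrB$ and, composing $P$ with the resulting $A_\infty$-isomorphism, a cohomologically unital $A_\infty$-quasi-isomorphism $\scrA \to \scrB$.

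For the curved case I would carry out exactly the same steps over $\bK[[q]]$, with all constructions $q$-adically completed. One keeps the contraction of $(\scrA, \mu^1_\scrA)$ — the differential used for the transfer is still the $q = 0$ one — and transfers the full curved structure $\{\mu^d_{\scrA_q}\}_{d \geq 0}$, treating the curvature term $\mu^0_{\scrA_q} \in q\,\scrA[[q]]$ and the positive-order part of $\mu^1_{\scrA_q}$ as part of what is transferred. Because every curvature leaf in the tree-sum formulas of homotopy transfer raises the $q$-order by at least one, only finitely many trees contribute to each power of $q$ and all sums converge $q$-adically; this produces a curved deformation $\scrB_q$ of $\scrB$ together with a curved $A_\infty$-homomorphism $\scrA_q \to \scrB_q$ restricting to $P$ at $q = 0$, i.e.\ a filtered quasi-isomorphism. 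The strictification of the unit is then carried out order by order in the arity with $\bK[[q]]$-coefficients, and (by the same convergence bookkeeping) defines a strictly unital curved deformation $\scrB_q$ together with a filtered quasi-isomorphism $\scrA_q \to \scrB_q$.

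I expect the main obstacle to be the last point: verifying that the classical strictification procedure — both the unit-compatible homotopy transfer and the inductive removal of the $e$-dependence of the higher operations — genuinely upgrades to the filtered curved setting. This requires simultaneously controlling the $q$-adic convergence of all the tree sums (exactly where the hypothesis $\mu^0_{\scrA_q} \in q\,\scrA[[q]]$ is used) and checking that strict unitality is inherited at every order in $q$. The uncurved input is standard, so the real content is this bookkeeping.
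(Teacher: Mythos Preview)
Your proposal is correct but follows a different route from the paper. The paper's primary argument is the Yoneda embedding: for a cohomologically unital $\scrA$, the Yoneda functor into its category of modules yields a quasi-isomorphism onto a differential graded algebra (in particular strictly unital), and this construction extends verbatim to the curved setting, producing a curved dga $\scrB_q$ and a filtered quasi-isomorphism $\scrA_q \to \scrB_q$. The paper also mentions, as an alternative, the deformation-theoretic argument based on the quasi-isomorphism between the normalized and unnormalized Hochschild complexes; your approach via transfer to a minimal model followed by explicit strictification of the unit is a third, closely related, route. The Yoneda argument has the advantage of bypassing entirely the inductive bookkeeping you correctly identify as the main obstacle: since the module category is already a dg category, no separate strictification is needed, and the curved extension requires no additional convergence analysis. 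Your approach, by contrast, produces a much smaller (indeed minimal) $\scrB$, at the price of the order-by-order checks you outline; those checks are genuine but standard, so the trade-off is size of the output versus length of the verification.
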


\begin{proof}[Sketch of proof]
A short way to prove this is via the Yoneda embedding \cite[Corollary 9.4]{fukaya02}, which for a given $\scrA$, produces a quasi-isomorphism to a (strictly unital) differential graded algebra $\scrB$. This also extends to the curved case, where it produces a curved dga $\scrB_q$. Alternatively, one can prove Lemma \ref{th:make-strictly-unital} using deformation theory (based on the fact that the classification of $A_\infty$-structures is governed by the Hochschild complex, and that of strictly unital ones by the quasi-isomorphic normalized complex).
\end{proof}

Even with Lemma \ref{th:make-strictly-unital} at hand, there's a small gap to bridge. Namely, for strictly unital $\scrA$, one has to show that constructions involving the enlarged ``$+$-version'' of the Hochschild complex agree with their classical counterparts. To do that, one considers the collapse map
\begin{equation} \label{eq:collapse}
\begin{aligned}
& \mathit{C}_*^+(\scrA) \longrightarrow \overline{C}_*(\scrA), \\
& a_0(a_1|\dots|a_l) \longmapsto a_0(a_1|\dots|a_l), \\
& e^+_{\scrA}(a_1|\dots|a_l) \longmapsto e_{\scrA}(a_1|\dots|a_l).
\end{aligned}
\end{equation}
Think of the decomposition $\scrA^+ = \scrA \oplus \bK(e^+_{\scrA} - e_{\scrA})$ (which is a direct sum of $A_\infty$-algebras). Then, \eqref{eq:collapse} is induced by projection to the first summand.

\begin{lemma} \label{th:collapse}
For any strictly unital $A_\infty$-algebra $\scrA$, the map \eqref{eq:collapse} is a quasi-isomorphism.
\end{lemma}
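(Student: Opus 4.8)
The plan is to show that the collapse map \eqref{eq:collapse} is a quasi-isomorphism by exhibiting both complexes as the total complexes of filtrations and comparing the associated graded pieces, or — more directly — by splitting off an acyclic complementary subcomplex of $C_*^+(\scrA)$ and identifying the quotient with $\overline{C}_*(\scrA)$. Concretely, I would use the decomposition of $A_\infty$-algebras $\scrA^+ = \scrA \oplus \bK(e_{\scrA}^+ - e_{\scrA})$ already pointed out in the paragraph preceding the statement. Writing $f = e_{\scrA}^+ - e_{\scrA}$, this is a strict idempotent-type decomposition: $f$ is a strict unit for the one-dimensional summand $\bK f$ (with all higher $\mu^d$ vanishing there), and the two summands do not interact. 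Hence $C_*^+(\scrA)$, which was defined as $C_*(\scrA) \oplus (e_{\scrA}^+ \otimes D_*(\scrA))$ inside $\bar{C}_*(\scrA^+)$, decomposes accordingly, and \eqref{eq:collapse} is literally the projection onto the first Hochschild factor.

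First I would make precise the following: under the change of basis $e_{\scrA}^+ \leadsto e_{\scrA}, f$ in the length-one slot, the subcomplex $e_{\scrA}^+ \otimes D_*(\scrA) \subset C_*^+(\scrA)$ maps isomorphically (as a graded vector space) to $\big(e_{\scrA} \otimes D_*(\scrA)\big) \oplus \big(f \otimes D_*(\scrA)\big)$. The part landing in $e_{\scrA}\otimes D_*(\scrA)$ combines with $C_*(\scrA)$ to give precisely $\overline{C}_*(\scrA)$ — here one uses that $\overline{C}_*(\scrA) = \scrA \otimes T(\bar\scrA[1])$ with $\bar\scrA = \scrA/\bK e_{\scrA}$, so the "extra'' terms $e_{\scrA}(a_1|\cdots|a_l)$ with all $a_i \in \scrA$ are exactly what $e_{\scrA}^+\otimes D_*(\scrA)$ contributes after collapsing, modulo the $f$-part. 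The remaining piece $f \otimes D_*(\scrA)$, equipped with the differential it inherits from \eqref{eq:hochschild-homology-complex-2} and \eqref{eq:c-unital-cyclic} (restricted to this summand, where $f$ is a strict unit and higher operations vanish), is the reduced bar complex of $\scrA$ shifted, with the extra $a_1(a_2|\cdots) - (-1)^{\cdots}a_l(a_1|\cdots)$ terms from \eqref{eq:hochschild-homology-complex-2} now feeding into the $C_*(\scrA)$ factor — so I would instead organize the argument as a short exact sequence of complexes
\begin{equation}
0 \longrightarrow K_* \longrightarrow C_*^+(\scrA) \stackrel{\eqref{eq:collapse}}{\longrightarrow} \overline{C}_*(\scrA) \longrightarrow 0,
\end{equation}
where $K_*$ is the $f$-summand with its induced differential, and then show $H_*(K_*) = 0$.

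The acyclicity of $K_*$ is the technical heart and, I expect, the main obstacle — though a mild one. The point is that $K_* \cong f\otimes T^{>0}(\scrA[1])$ with differential built from the strict unit $f$ and the bar differential $d_{D_*(\scrA)}$ of \eqref{eq:dbar-differential}: acting with $\mu^2$ on $f$ against the first entry, against the last entry, and internally. This is exactly (a shift of) the two-sided bar complex computing $\mathrm{Tor}^{\scrA}_*(\bK,\bK)$-type data with $f$ acting as the augmentation, and the classical contracting homotopy — insert $f$ at the front (or use the simplicial extra-degeneracy argument, as in \cite[Ch.~8--9]{weibel} or the normalized-bar-complex discussion implicit in Example \ref{th:k}) — gives nullhomotopy. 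I would write down the homotopy $h(f\otimes(a_1|\cdots|a_l)) = f\otimes(a_1|\cdots|a_l|f)$-style operator (with the right signs dictated by our conventions in Section \ref{subsec:conventions}(c)), check $dh + hd = \mathrm{id}$ on $K_*$ using strict unitality \eqref{eq:strict-unit}, and conclude. The only place to be careful is the interface terms in \eqref{eq:hochschild-homology-complex-2} — $a_1(a_2|\cdots) - (-1)^{\cdots} a_l(a_1|\cdots)$ — which leave the $f$-summand; but these are precisely the terms that survive in the quotient and are already accounted for in $\overline{C}_*(\scrA)$, so they do not obstruct the homotopy on $K_*$ itself. Since $\overline{C}_*(\scrA)$ is the standard normalized Hochschild complex, the long exact sequence of the short exact sequence above, together with $H_*(K_*)=0$, gives that \eqref{eq:collapse} is a quasi-isomorphism. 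The same argument applies verbatim to the cyclic versions $\mathit{CC}_*^+(\scrA) \to \overline{CC}_*(\scrA)$ if needed, since the Connes operator in \eqref{eq:c-unital-cyclic} on the $f$-summand also admits the analogous homotopy (this is the usual fact that the cyclic bicomplex of the augmentation is acyclic), but for the stated Lemma the Hochschild case suffices.
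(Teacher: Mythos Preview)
There is a genuine gap in your identification of the kernel $K_*$. The collapse map restricted to the summand $C_*(\scrA) \subset C_*^+(\scrA)$ is the standard surjection $C_*(\scrA) = \scrA \otimes T(\scrA[1]) \to \scrA \otimes T(\bar\scrA[1]) = \bar C_*(\scrA)$, which is \emph{not} injective: its kernel is the degenerate subcomplex (chains with some bar entry in $\bK e_\scrA$). So $K_*$ is strictly larger than the $f$-summand $f \otimes D_*(\scrA)$; it also contains all these degenerate simplices. Your proposed short exact sequence $0 \to (f\text{-summand}) \to C_*^+(\scrA) \to \bar C_*(\scrA) \to 0$ is therefore not exact, and the contracting homotopy you describe handles only part of the kernel.

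What does work---and is essentially the paper's argument---is to factor the collapse map through $C_*(\scrA)$. One checks (as you implicitly do, and the interface terms in fact cancel exactly) that $f \otimes D_*(\scrA)$ is an acyclic subcomplex of $C_*^+(\scrA)$ with quotient $C_*(\scrA)$; or dually, that $C_*(\scrA) \hookrightarrow C_*^+(\scrA)$ has acyclic cokernel $e_\scrA^+ \otimes D_*(\scrA)$ (with induced differential $-d_{D_*(\scrA)}$, contracted by inserting $e_\scrA$). Either way one reduces to $C_*(\scrA) \to \bar C_*(\scrA)$, whose quasi-isomorphism is the standard comparison of unnormalized and normalized Hochschild complexes for a strictly unital $A_\infty$-algebra. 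That last step is a separate classical fact you need to invoke; it does not come for free from your $f$-summand analysis.
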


\begin{proof}[Sketch of proof]
By \eqref{eq:plus-complex}, the unreduced complex $C_*(\scrA)$ is contained in $C_*^+(\scrA)$ as a subcomplex. The quotient $C_*^+(\scrA)/C_*(\scrA)$, which is the second summand in \eqref{eq:plus-complex} with the induced differential, is acyclic. Therefore, the inclusion $C_*(\scrA) \subset C_*^+(\scrA)$ is a quasi-isomorphism. If we compose that inclusion with \eqref{eq:collapse}, we get the standard projection $C_*(\scrA) \rightarrow \bar{C}_*(\scrA)$, which is known to be a quasi-isomorphism.
\end{proof}

The same formula \eqref{eq:collapse} applies to cyclic homology. In the curved case, the resulting map $\mathit{CC}_*^+(\scrA_q) \rightarrow \overline{CC}_*(\scrA_q)$ is compatible with the connections on those groups. It is also, as a consequence of Lemma \ref{th:collapse}, a filtered quasi-isomorphism.

\subsubsection{$A_\infty$-categorical terminology\label{subsubsec:categories}}
We record here the straightforward extension to the case of $A_\infty$-categories, assumed to be small. (In all our applications the categories have finitely many objects, so one could equivalently think of them as algebras over a semisimple ring $\bK \oplus \cdots \oplus \bK$). The standard complexes underlying Hochschild homology and cohomology are (see Section \ref{subsec:conventions}(c) for notation)
\begin{equation} \label{eq:categorical-hochschild}
\begin{aligned}
& C_*(\scrA) = \bigoplus_{\substack{l \geq 0 \\ X_0,\dots,X_l}} \scrA(X_l,X_0,X_1,\dots,X_l)[l], \\
& C^*(\scrA) = \prod_{\substack{l \geq 0 \\ X_0,\dots,X_l}}
\mathit{Hom}(\scrA(X_0,\dots,X_l), \scrA(X_0,X_l))[-l],
\end{aligned}
\end{equation}
where the $l = 0$ term is $\bigoplus_{X_0} \scrA(X_0,X_0)$ respectively $\prod_{X_0} \scrA(X_0,X_0)$. To construct the non-unital version, one uses an enlarged category $\scrA^+$ which has an added unit $e_X^+$ for each object $X$. The outcome, following \eqref{eq:plus-complex}, is
\begin{equation} 
\label{eq:categorical-nonunital-hochschild}
C_*^+(\scrA) = C_*(\scrA) \oplus 
\bigoplus_{\substack{l > 0 \\ X_0,\dots,X_{l-1}}}
e_{X_0}^+ \otimes \scrA(X_0,X_1,\dots,X_{l-1},X_0)[l];
\end{equation}
the cyclic complex is $\mathit{CC}_*^+(\scrA) = \mathit{C}_*^+(\scrA)[[u]]$. In practice, the $e^+$ in \eqref{eq:categorical-nonunital-hochschild} merely serves as as a reminder to distinguish the two summands in $C^*_+(\scrA)$; we will therefore write it as $e_{X_0}^+ = e_{\scrA}^+$, freeing us from always having to keep track of the objects involved.  

Finally, given a curved $A_\infty$-deformation $\scrA_q$, we define $C_*(\scrA_q) = C_*(\scrA)[[q]]$, and similarly for the other homological invariants, with correspondingly deformed differentials.

\subsection{Fiber categories for curved deformations\label{subsec:fiber}}

\subsubsection{The fiber at zero\label{subsubsec:zero-fiber}}
Throughout this section, $\scrA$ is a strictly unital $A_\infty$-algebra, and $\scrA_q$ a strictly unital curved $A_\infty$-deformation. To make the notation a little lighter, we will often omit the subscript $\scrA$, writing $e$ for the unit, $\mu$ for the $A_\infty$-structure on $\scrA$, and $\mu_q$ for its $q$-deformation.

Let $\scrE$ be the graded algebra given by the endomorphisms of the graded vector space $\bK \oplus \bK[-1]$. The tensor product 
\begin{equation} \label{eq:f-fiber}
\scrB_0 = \scrA_q \otimes \scrE
\end{equation}
inherits the structure of a strictly unital curved $A_\infty$-algebra. We find it convenient to write elements of this tensor product as matrices
\begin{equation}
b = \begin{pmatrix} b^{11} & b^{12} \\ b^{21} & b^{22} \end{pmatrix}, \;\;
\text{ where $b^{11}, b^{22} \in \scrA_q^{|b|}$, $b^{21} \in \scrA_q^{|b|-1}$, $b^{12} \in \scrA_q^{|b|+1}$,}
\end{equation}
since that makes the formula for the $A_\infty$-operations intuitive: they combine those of $\scrA_q$ and matrix multiplication, with suitable Koszul signs (involving the reduced degree in $\scrA_q$, and the actual degree in $\scrE$). For instance,
\begin{equation}
\begin{aligned}
& \mu^2_{\scrA_q \otimes \scrE}
\Big( \begin{pmatrix} b_1^{11} & b_1^{12} \\ b_1^{21} & b_1^{22} \end{pmatrix},
\begin{pmatrix} b_2^{11} & b_2^{12} \\ b_2^{21} & b_2^{22} \end{pmatrix} \Big)
\\ & = \begin{pmatrix} 
\mu^2_{q}(b^{11}_1,b^{11}_2) + (-1)^{\|b^{21}_2\|}\mu^2_{q}(b^{12}_1,b^{21}_2) & 
(-1)^{\|b^{22}_2\|}\mu^2_{q}(b^{12}_1,b^{22}_2) + \mu^2_{q}(b^{11}_1,b^{12}_2) \\
(-1)^{\|b^{11}_2\|} \mu^2_{q}(b^{21}_1,b^{11}_2) + \mu^2_{q}(b^{22}_1,b^{21}_2) & 
\mu^2_{q}(b^{22}_1,b^{22}_2) + (-1)^{\|b^{12}_2\|} \mu^2_{q}(b^{21}_1,b^{12}_2) \\
\end{pmatrix}.
\end{aligned}
\end{equation}
Rather than using the given $A_\infty$-structure on the tensor product, we will deform it based on the Maurer-Cartan element
\begin{equation} \label{eq:delta-element}
\delta = \begin{pmatrix} 0 & qe \\ q^{-1}\mu^0_q & 0 \end{pmatrix}.
\end{equation}
Some explanation is necessary. By saying that $\delta$ is a Maurer-Cartan element, we mean that 
\begin{equation} \label{eq:delta-is-mc}
\mu^0_{\scrA_q \otimes \scrE} + \mu^1_{\scrA_q \otimes \scrE}(\delta) + \mu^2_{\scrA_q \otimes \scrE}(\delta,\delta) + \cdots = 0.
\end{equation}
Note that even though we have written the equation as an infinite sum, all terms other than $i = 0,2$ vanish in the case of \eqref{eq:delta-element}. By saying the $A_\infty$-structure is deformed using $\delta$, we mean that we consider
\begin{equation} \label{eq:mc-deformed-ainfty}
\mu^d_{\scrB_0}(b_1,\dots,b_d) = \sum_{i_0,\dots,i_d \geq 0} \mu_{\scrA_q \otimes \scrE}^{d+i_0+\cdots + i_d}\big(\delta^{\otimes i_0},b_1,\delta^{\otimes i_1},
b_2,\dots,b_d, \delta^{\otimes i_d}\big).
\end{equation}
Here, the notation is that $\delta^{\otimes i}$ means $i$ subsequent entries of $\delta$; and the sum is again finite. In those terms, the Maurer-Cartan equation \eqref{eq:delta-is-mc} says that $\mu^0_{\scrB_0} = 0$. The $d = 1$ case of \eqref{eq:mc-deformed-ainfty} becomes
\begin{equation} \label{eq:f-differential-0}
\mu^1_{\scrB_0}(b) = \mu^1_{\scrA_q \otimes \scrE}(b) + \mu^2_{\scrA_q \otimes \scrE}(\delta,b) + \mu^2_{\scrA_q \otimes \scrE}(b,\delta) + \mu^3_{\scrA_q \otimes \scrE}(\delta,b,\delta),
\end{equation}
since all terms with two adjacent $\delta$ vanish. It is useful to spell this out:
\begin{equation} \label{eq:f-differential}
\begin{aligned}
& \mu^1_{\scrB_0}\begin{pmatrix} b^{11} & b^{12} \\ b^{21} & b^{22} \end{pmatrix} 
= 
\\ & 
\left(\begin{matrix}
\mu^1_{q}(b^{11}) + (-1)^{\|b^{21}\|}q b^{21} 
- q^{-1}\mu^2_{q}(b^{12},\mu^0_q) 
\\
\mu_{q}^1(b^{21}) + (-1)^{\|b^{11}\|} q^{-1}\mu^2_{q}(\mu^0_q,b^{11}) + q^{-1}\mu_q^2(b^{22},\mu_q^0) 
+ (-1)^{\|b^{12}\|} q^{-2}\mu^3_q(\mu_q^0,b^{12},\mu_q^0)
\end{matrix} \right.
\\ & \qquad \qquad \qquad \qquad \qquad \qquad \qquad
\left.\begin{matrix}
\mu^1_q(b^{12}) + (-1)^{\|b^{22}\|}qb^{22} - (-1)^{\|b^{11}\|} qb^{11}
\\
\mu^1_q(b^{22}) + (-1)^{\|b^{12}\|} q^{-1}\mu^2_q(\mu^0_q,b^{12}) + (-1)^{\|b^{21}\|} qb^{21}
\end{matrix}\right). 
\end{aligned}
\end{equation}
The formulae for $\mu^d_{\scrB_0}$, $d \geq 2$, are actually a bit simpler, because the identity term in \eqref{eq:delta-element} no longer contributes at all, which means that that one can replace $\delta$ by 
\begin{equation}
\tilde{\delta} = \begin{pmatrix} 0 & 0 \\ q^{-1}\mu^0_q & 0 \end{pmatrix}.
\end{equation}
The first of these formulae is
\begin{equation} \label{eq:f-composition-2}
\begin{aligned} &
\mu^2_{\scrB_0}(b_1,b_2) =
\mu^2_{\scrA_q \otimes \scrE}(b_1,b_2) \\ & \quad + 
\mu^3_{\scrA_q \otimes \scrE}(\tilde{\delta},b_1,b_2) + \mu^3_{\scrA_q \otimes \scrE}(b_1,\tilde{\delta},b_2) +
\mu^3_{\scrA_q \otimes \scrE}(b_1,b_2,\tilde{\delta}) + 
\\ & \quad \mu^4_{\scrA_q \otimes \scrE}(\tilde{\delta},b_1,\tilde{\delta},b_2) +
\mu^4_{\scrA_q \otimes \scrE}(\tilde{\delta},b_1,b_2,\tilde{\delta}) + \mu^4_{\scrA_q \otimes \scrE}(b_1,\tilde{\delta},b_2,\tilde{\delta}) + \mu^5_{\scrA_q \otimes \scrE}(\tilde{\delta},b_1,\tilde{\delta},b_2,\tilde{\delta}).
\end{aligned}
\end{equation}

The structure of $\scrB_0$ may look mysterious, but is actually related to the previously considered \eqref{eq:cofibrant-t}, or more precisely to its $t = 0$ specialization. To see that, take $\epsilon$ to be a formal variable of degree $-1$, and consider the chain complex 
\begin{equation} \label{eq:tilde-f}
\begin{aligned}
& \scrA_0 = \scrA[\epsilon], \\
& \mu^1_{\scrA_0}(a) = \mu^1(a), \\
& \mu^1_{\scrA_0}(a\epsilon) = \mu^1(a)\epsilon - \mu^2(a,\mu_{q}^{0,(1)}).
\end{aligned}
\end{equation}
Here, $\mu_{q}^{0,(1)}$ is the $q$-linear term of $\mu_{q}^0$, which lies in $\scrA^0$. That term is a $\mu^1$-cocycle, and therefore the differential \eqref{eq:tilde-f} squares to zero. We introduce maps
\begin{align} \label{eq:transfer-start}
&
\begin{aligned}
& i: \scrA_0 \longrightarrow \scrB_0, \quad 
i(a) = \begin{pmatrix} 
a & 0 \\ 
(-1)^{\|a\|} q^{-1}(\mu^1_q a - \mu^1 a) & a 
\end{pmatrix}, \\
& \quad i(a \epsilon) = \begin{pmatrix} 0 & a \\ 
(-1)^{\|a\|} q^{-1}\big(\mu^2_q(a,q^{-1}\mu^0_q) - \mu^2(a,\mu_{q}^{0,(1)}) \big)
& (-1)^{\|a\|} q^{-1}(\mu^1_q a - \mu^1 a)\end{pmatrix}
\end{aligned}
\\ &
p: \scrB_0 \longrightarrow \scrA_0, 
\quad
p\begin{pmatrix} b^{11} & b^{12} \\ b^{21} & b^{22} \end{pmatrix} = b^{11,(0)} + b^{12,(0)} \epsilon,
\\ & \label{eq:transfer-end}
\begin{aligned} &
h: \scrB_0 \longrightarrow \scrB_0[-1], \\ & \quad
h\begin{pmatrix} b^{11} & b^{12} \\ b^{21} & b^{22} \end{pmatrix} = 
\begin{pmatrix} 
0 & 
0 \\
-(-1)^{\|b^{11}\|} q^{-1}(b^{11}-b^{11,(0)}) &
-(-1)^{\|b^{12}\|} q^{-1}(b^{12}-b^{12,(0)})
\end{pmatrix} 
\end{aligned}
\end{align}
where $b^{ij,(0)}$ stands for the $q$-constant term of $b^{ij}$. These maps satisfy
\begin{align}
& \mu^1_{\scrB_0} i = i \mu^1_{\scrA_0}, \\
& \mu^1_{\scrA_0} p = p \mu^1_{\scrB_0}, \\
& pi = \mathit{id}_{\scrA_0}, \\
& ip = \mathit{id}_{\scrB_0} + \mu^1_{\scrB_0} h + h \mu^1_{\scrB_0}. 
\end{align}
Starting with that, one can apply the Homological Perturbation Lemma to equip $\scrA_0$ with higher $A_\infty$-operations which make it quasi-isomorphic to $\scrB_0$:
\begin{equation} \label{eq:tilde-f-structure}
\mu^d_{\scrA_0} = p \big(\mu^d_{\scrB_0} + \text{terms involving $h$ as well as $\mu_{\scrB_0}$}\big)i^{\otimes d}. 
\end{equation}
This can be conveniently expressed as a sum over planar trees (\cite[Section 6.4]{kontsevich-soibelman00}; for the signs, or rather absence thereof, see e.g.\ \cite[Remark 3.1]{seidel03b}), where the first summand in \eqref{eq:tilde-f-structure} comes from the tree with a single vertex. The following discussion requires the reader to have that formulation in mind.

\begin{lemma} \label{th:structure-0}
For $a_1,\dots,a_d \in \scrA \subset \scrA_0$, we have
\begin{equation} \label{eq:simple-mu}
\mu_{\scrA_0}^d(a_1,\dots,a_d) = \mu^d_{\scrA}(a_1,\dots,a_d).
\end{equation}
\end{lemma}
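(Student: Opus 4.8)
The plan is to track carefully which trees in the Homological Perturbation expansion \eqref{eq:tilde-f-structure} can contribute when all inputs $a_1,\dots,a_d$ lie in the undeformed summand $\scrA \subset \scrA_0 = \scrA[\epsilon]$, and to show that the only surviving term is the single-vertex tree, which by the first summand of \eqref{eq:tilde-f-structure} gives exactly $p\,\mu^d_{\scrB_0} i^{\otimes d}$. So the first step is to evaluate $i(a_k)$ for $a_k \in \scrA$: by \eqref{eq:transfer-start} this is the matrix with $a_k$ on the diagonal and $(-1)^{\|a_k\|}q^{-1}(\mu^1_q a_k - \mu^1 a_k)$ in the $(2,1)$-slot, and crucially its $(1,2)$-entry vanishes. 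I would then record the key structural feature of $h$ from \eqref{eq:transfer-end}: $h$ outputs a matrix supported entirely in the bottom row (the $(2,1)$- and $(2,2)$-entries), built from $b^{11}$ and $b^{12}$, with an explicit $q^{-1}$ and with the $q$-constant part subtracted off.

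The core of the argument is a bookkeeping lemma about the lower-triangular/grading structure. One filters $\scrA_q \otimes \scrE$ by the "$\scrE$-weight" (say, assign weight $0$ to the $(1,1)$- and $(2,2)$-slots, weight $+1$ to $(1,2)$, weight $-1$ to $(2,1)$, so that $\mu_{\scrB_0}$ and matrix multiplication shift weight additively) together with a count of powers of $q$. Every ingredient — the inputs $i(a_k)$, the Maurer–Cartan element $\delta$ from \eqref{eq:delta-element}, the internal operations $\mu_{\scrB_0}$, and the homotopy $h$ — is homogeneous or has a definite sign of weight-shift, and $p$ from \eqref{eq:f-transfer} only reads off the top row. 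Because the inputs have no $(1,2)$-component and $h$ only ever produces bottom-row output, any tree with at least one internal $h$-edge must route that $h$-output into a subsequent $\mu_{\scrB_0}$; one checks that this forces the appearance of either a $q^{-1}$ factor that cannot be cancelled (the only positive powers of $q$ available come from the $qe$ entry of $\delta$ or the $(-1)^{\|b^{22}\|}qb^{22} - (-1)^{\|b^{11}\|}qb^{11}$ term in \eqref{eq:f-differential}, both of which, when combined with a $q^{-1}$ from $h$, land in slots $p$ discards), or else produces something whose top-row $q$-constant part is zero. Either way the contribution dies under $p$. Hence only the single-vertex tree survives, and $p\,\mu^d_{\scrB_0}(i(a_1),\dots,i(a_d))i$ — expanded via \eqref{eq:mc-deformed-ainfty} — collapses: all terms with a $\delta$ insertion either introduce a $(1,2)$- or bottom-row component that is killed by $p$, or introduce a net positive/negative $q$-power, leaving precisely $\mu^d_{\scrA_q\otimes\scrE}(i(a_1),\dots,i(a_d))$, whose $(1,1)$-entry's $q$-constant part is $\mu^d_{\scrA}(a_1,\dots,a_d)$.

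The main obstacle I anticipate is making the "no cancellation of the $q^{-1}$ from $h$" claim fully rigorous rather than heuristic: one has to verify that in every planar tree the weight/$q$-degree accounting genuinely closes up, including the subtraction of $q$-constant terms built into both $h$ and $p$, and including the fact that $\delta$ itself carries mixed $q$-degrees ($+1$ and $-1$). A clean way to do this is to introduce an auxiliary $\bZ$-grading on $\scrA_q\otimes\scrE$ — "$\scrE$-weight plus twice the $q$-exponent" — under which $\delta$ is homogeneous, $i$ lands in a fixed degree, $\mu_{\scrB_0}$ is degree-preserving, and $h$ strictly decreases this degree; then every multi-vertex tree lowers the total degree, while $p$ is only nonzero on a single degree, forcing such trees to vanish. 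I would phrase the whole proof through that auxiliary grading to avoid case-by-case sign and power chasing; the remaining verification that the auxiliary grading behaves as claimed on each of \eqref{eq:delta-element}, \eqref{eq:transfer-start}–\eqref{eq:transfer-end}, and \eqref{eq:mc-deformed-ainfty} is routine given the explicit formulas.
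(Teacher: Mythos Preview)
Your instinct---track the lower-triangular structure---is exactly right, but the proposed auxiliary grading ``$\scrE$-weight plus twice the $q$-exponent'' does not do what you claim. The element $\delta$ is not homogeneous in it: its $(1,2)$-entry $qe$ sits in degree $1+2=3$, whereas $q^{-1}\mu^0_q = \mu^{0,(1)}_q + q\mu^{0,(2)}_q + \cdots$ in the $(2,1)$-slot has components in degrees $-1,1,3,\ldots$. Likewise $\mu^d_{\scrB_0}$ is not degree-preserving, since $\mu^d_{\scrA_q}$ itself contains arbitrary nonnegative powers of $q$. So the grading argument collapses, and the earlier heuristic ``a $q^{-1}$ from $h$ cannot be cancelled'' is left unjustified.

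The paper's argument avoids $q$-power bookkeeping entirely, by extracting one more structural fact you did not state: on lower-triangular inputs (those with $b^{12}=0$), the $(1,1)$-entry of $\mu^d_{\scrB_0}(b_1,\ldots,b_d)$ for $d\geq 2$ is exactly $\mu^d_q(b_1^{11},\ldots,b_d^{11})$. This holds because the tensor $A_\infty$-structure on $\scrA_q\otimes\scrE$ computes matrix entries as sums over index-paths, the only path reaching the $(1,1)$-slot from lower-triangular inputs is $1\to 1\to\cdots\to 1$, and the $\tilde\delta$-insertions (strictly lower-triangular) contribute nothing to that entry. Given this, the proof is immediate: $h$ applied to any lower-triangular matrix has zero $(1,1)$-entry, so by multilinearity of $\mu^d_q$ any tree containing an $h$ has zero $(1,1)$-entry at the root, and $p$ kills it. The single-vertex tree gives $\mu^d_q(a_1,\ldots,a_d)$ in the $(1,1)$-slot, with $q$-constant part $\mu^d_\scrA(a_1,\ldots,a_d)$. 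Equivalently: strictly lower-triangular matrices form a two-sided ideal inside the lower-triangular ones, and modulo that ideal $h$, $\tilde\delta$, and hence every Perturbation-Lemma correction term, vanish.
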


\begin{proof}
All steps in the computation involve morphisms in $\scrB_0$ given by lower triangular matrices. The ingredients are (for the sake of brevity, we have replaced terms that are irrelevant for our argument by ?):
\begin{align}
\label{eq:all-triangle}
& \mu^d_{\scrB_0}\big( \begin{pmatrix} b_1^{11} & 0 \\ ? & ? \end{pmatrix},
\dots,\begin{pmatrix} b_d^{11} & 0 \\ ? & ? \end{pmatrix}\big) =
\begin{pmatrix} \mu^d_q(b_1^{11},\dots,b_d^{11}) & 0 \\ ? & ? \end{pmatrix},\;\; d \geq 2, \\
& i(a) = \begin{pmatrix} a & 0 \\ ? & ? \end{pmatrix}, \;\;
h\begin{pmatrix} ? & 0 \\ ? & ? \end{pmatrix}
= \begin{pmatrix} 0 & 0 \\ ? & 0 \end{pmatrix}, \;\;
p\begin{pmatrix} b^{11} & 0 \\ ? & ? \end{pmatrix} = b^{11,(0)}.
\end{align}
From that, one sees that the single-vertex tree contributes the expression on the right-hand side of \eqref{eq:simple-mu}. For any other tree, there is a finite edge which corresponds to an occurrence of $h$, whose output has zero upper left entry. That leads to an overall output with the same property at the root of the tree, which is then mapped to zero by $p$.
\end{proof}

\begin{lemma} \label{th:structure-1}
For $a_1,\dots,a_d \in \scrA \subset \scrA_0$ and any $k$, we have
\begin{equation} \label{eq:epsilon-term}
\begin{aligned}& 
\mu_{\scrA_0}^d(a_1,\dots,a_{k-1},a_k \epsilon,a_{k+1},\dots,a_d) =
(-1)^{\|a_{k+1}\|+\cdots+\|a_d\|} \mu^d(a_1,\dots,a_d) \epsilon
\\ & 
- \sum_{j \geq k} (-1)^{\|a_{k+1}\|+\cdots+\|a_j\|} \mu^{d-l+1}\big(a_1,\dots,a_k,\dots,a_j,
\\[-1em] & \qquad \qquad \qquad \qquad \qquad \qquad
\mu^{l,(1)}_q(a_{j+1},\dots,a_{j+l}),a_{j+l+1},\dots,a_d\big);
\end{aligned}
\end{equation} 
note that in the second expression, $a_k$ must lie to the left of $\mu^{l,(1)}_q$.
\end{lemma}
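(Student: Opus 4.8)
The statement is a direct computation with the Homological Perturbation Lemma, entirely parallel to Lemma \ref{th:structure-0} but now keeping track of the single $\epsilon$-entry. The plan is to analyze the planar-tree sum \eqref{eq:tilde-f-structure} for the input $(a_1,\dots,a_{k-1},a_k\epsilon,a_{k+1},\dots,a_d)$, where all $a_i \in \scrA$ (so $i(a_i)$ is a lower-triangular matrix with $q$-constant upper-left entry $a_i$), while $i(a_k\epsilon)$, by \eqref{eq:transfer-start}, is the matrix with zero in the $(1,1)$-slot, $a_k$ in the $(1,2)$-slot, and lower-order terms elsewhere. The upshot I expect is: the only trees that survive the final projection $p$ (which reads off the $q$-constant parts of the $(1,1)$- and $(1,2)$-entries) are (a) the single-vertex tree, and (b) trees with exactly one internal edge carrying $h$, where that $h$-edge is ``fed'' by the curvature term $\mu^0_q$ through the $(2,1)$-entry of $\tilde\delta$; all other multi-vertex contributions vanish exactly as in Lemma \ref{th:structure-0}.

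Concretely, first I would record the analogue of \eqref{eq:all-triangle} for inputs that are lower-triangular except that exactly one of them has the ``$\epsilon$-shape'' (zero $(1,1)$-entry, nonzero $(1,2)$-entry): applying $\mu^d_{\scrB_0}$ to such a tuple, the $(1,1)$-entry of the output is zero and the $(1,2)$-entry is $\mu^d_q(b_1^{11},\dots,b^{12}_{\text{(the $\epsilon$-one)}},\dots,b^{11}_d)$ up to sign, because the $\scrE$-factor forces the $\epsilon$ to propagate to the single off-diagonal slot. Tracking this through a tree: the single-vertex tree contributes $p\,\mu^d_{\scrB_0}(i(a_1),\dots,i(a_k\epsilon),\dots,i(a_d))$, whose $(1,2)$-entry's $q$-constant part is $(-1)^{\|a_{k+1}\|+\cdots+\|a_d\|}\mu^d(a_1,\dots,a_d)$ — this is the first term on the right of \eqref{eq:epsilon-term}. (The sign is just the Koszul sign from moving the degree-$(-1)$ class $\epsilon$ past $a_{k+1},\dots,a_d$ in the $\scrE$-tensor factor.)

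Next, for multi-vertex trees: as in Lemma \ref{th:structure-0}, every internal edge is an occurrence of $h$, and by \eqref{eq:all-triangle} (and its $\epsilon$-variant) the output of such an edge is a strictly lower-triangular matrix, i.e. has zero $(1,1)$- and $(1,2)$-entries. If the $\epsilon$ is already ``used up'' before the $h$-edge, the output is identically lower-triangular with zero top row and $p$ kills the root; so the only surviving multi-vertex trees are those in which the $\epsilon$ enters the tree below the unique $h$-edge, and the $\epsilon$ must be supplied at that $h$-edge's child vertex by the curvature term — i.e. by $\tilde\delta = \begin{pmatrix} 0 & 0 \\ q^{-1}\mu^0_q & 0\end{pmatrix}$, whose $(2,1)$-entry feeds into a position that, after one $h$ (which picks out $-(-1)^{\bullet}q^{-1}(b^{21}-b^{21,(0)})$-type data), lands a $q^0$-coefficient multiple of $\mu^{l,(1)}_q(a_{j+1},\dots,a_{j+l})$ back into the $(1,1)$-row of an ambient $\mu$. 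The net effect is precisely one application of some $\mu^{l,(1)}_q$ to a consecutive block $(a_{j+1},\dots,a_{j+l})$ with $j\ge k$ (so that $a_k$, carrying the $\epsilon$ that survives as the output off-diagonal entry, lies to the left of this block), all the remaining inputs assembled by the ordinary $\mu$ of $\scrA$ — the precise sum being exactly the second term on the right of \eqref{eq:epsilon-term}, with the stated sign from the Koszul reordering of $a_{k+1},\dots,a_j$ past $\epsilon$.

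The main obstacle is bookkeeping: matching the signs coming from (i) the Koszul rule in the $\scrA_q\otimes\scrE$ tensor product, (ii) the signs built into $i,p,h$ in \eqref{eq:transfer-start}–\eqref{eq:transfer-end}, and (iii) the (sign-free, by \cite[Remark 3.1]{seidel03b}) tree formula for the perturbed operations. I would organize this by first proving the ``shape'' statement — which matrix slots are nonzero at each vertex and edge — purely qualitatively (this is just the argument of Lemma \ref{th:structure-0} with one extra off-diagonal bit to carry), and only then computing the surviving scalar coefficients, of which there are exactly two families (the single-vertex term and the one-$h$-edge-with-$\tilde\delta$ terms). A useful sanity check is to set all higher $\mu^{l,(1)}_q = 0$, where \eqref{eq:epsilon-term} must reduce to $\mu^d_{\scrA_0}(a_1,\dots,a_k\epsilon,\dots,a_d) = (-1)^{\|a_{k+1}\|+\cdots+\|a_d\|}\mu^d(a_1,\dots,a_d)\epsilon$, i.e. $\scrA_0$ is then just $\scrA\otimes\bK[\epsilon]$ with the obvious (Koszul-signed) module structure over $\bK[\epsilon]$; and to compare the $d=1$ case with the explicit differential in \eqref{eq:tilde-f}.
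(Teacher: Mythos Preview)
Your overall framework is right --- it is an HPL tree computation --- but the shape analysis is wrong in two places, and following your plan would not produce \eqref{eq:epsilon-term}.

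First, the $(1,1)$-entry of $\mu^d_{\scrB_0}(i(a_1),\dots,i(a_k\epsilon),\dots,i(a_d))$ does \emph{not} vanish. The input $i(a_k\epsilon)$ has $a_k$ in its $(1,2)$-slot; each $i(a_j)$ for $j>k$ has the nonzero $(2,1)$-entry $(-1)^{\|a_j\|}q^{-1}(\mu^1_q a_j - \mu^1 a_j)$; and the deformation \eqref{eq:mc-deformed-ainfty} inserts copies of $\tilde\delta$, whose $(2,1)$-entry is $q^{-1}\mu^0_q$. A matrix product using the $(1,2)$ of $i(a_k\epsilon)$ followed, somewhere to its right, by one of these $(2,1)$-entries lands back in row $1$. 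After applying $p$, the single-vertex tree therefore contributes the first term on the right of \eqref{eq:epsilon-term} \emph{and} the $l=0,1$ cases of the second sum. The curvature $\tilde\delta$ enters only here, through the definition of $\mu^d_{\scrB_0}$ at that vertex, not as a separate tree ingredient.

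Second, your multi-vertex analysis is inverted. If the $h$-subtree contains the $\epsilon$-input, then $h$ of its output has zero top row, and feeding that into the root alongside the lower-triangular $i(a_i)$ again yields zero top row, which $p$ kills. The surviving two-vertex trees are those with $h$-subtree $h\mu^l_{\scrB_0}(i(a_{j+1}),\dots,i(a_{j+l}))$ for $j \geq k$, $l \geq 2$: the subtree lies to the \emph{right} of $a_k\epsilon$ and does \emph{not} contain it. By \eqref{eq:all-triangle} the $(1,1)$-entry of that inner $\mu^l_{\scrB_0}$ is $\mu^l_q(a_{j+1},\dots,a_{j+l})$; applying $h$ places $q^{-1}$ times its nonconstant part into the $(2,1)$-slot; at the root this pairs with the $(1,2)$ of $i(a_k\epsilon)$, and $p$ then extracts the $q^1$-coefficient $\mu^{l,(1)}_q(a_{j+1},\dots,a_{j+l})$. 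These are the $l \geq 2$ cases of the sum. Subtrees to the left of $\epsilon$, and trees with more than two vertices, vanish by the mechanism of Lemma~\ref{th:structure-0}.
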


\begin{proof}
We begin by computing two of the matrix entries of $\mu^d_{\scrB_0}(i a_1,\dots,i(a_k \epsilon),\dots, ia_d)$, $d \geq 2$:
\begin{equation}
\mu^d_{\scrB_0}(i a_1,\dots,i(a_k \epsilon),\dots,i a_d)^{12} =
(-1)^{\|a_{k+1}\|+\cdots+\|a_d\|}\mu^d_q(a_1,\dots,a_d),
\end{equation}
and
\begin{equation}
\begin{aligned}
& \mu^d_{\scrB_0}(i a_1,\dots, i(a_k \epsilon),\dots,i a_d)^{11}
= \mu^d_{\scrA_q \otimes \scrE}(i a_1,\dots,i(a_k \epsilon),\dots, i a_d)^{11}
\\ & \quad
+ \sum_{j \geq k} \mu^{d+1}_{\scrA_q \otimes \scrE}(i a_1,\dots,i(a_k\epsilon),\dots,a_j,\tilde{\delta},a_{j+1},\dots,a_d)^{11}
\\ & 
= -\sum_{j \geq k} (-1)^{\|a_{k+1}\|+\cdots+\|a_j\|} \big(\mu_q^d(a_1,\dots, a_k,\dots,q^{-1}(\mu_q^1 a_{j+1} - \mu^1 a_{j+1}),\dots, a_d) 
\\[-.5em] & \quad \qquad \qquad
- \mu_q^{d+1}(a_1\dots,a_k,\dots,a_j,q^{-1}\mu_q^0,a_{j+1},\dots,a_d) \big).
\end{aligned}
\end{equation}
Hence, the contribution from the one-vertex tree is
\begin{equation} \label{eq:1-vertex}
\begin{aligned}
& p \mu^d_{\scrB_0}(i a_1,\dots,i(a_k \epsilon),\dots,i a_d) =
(-1)^{\|a_{k+1}\|+\cdots+\|a_d\|} \epsilon\mu^d(a_1,\dots,a_d) \\
& \quad
- \sum_{j \geq k} (-1)^{\|a_{k+1}\|+\cdots+\|a_j\|} \big(\mu^d(a_1,\dots,a_k,\dots,a_j,\mu^{1,(1)}_q(a_{j+1}),\dots,a_d)
\\[-1em] & \qquad \qquad \qquad \qquad
+ \mu^{d+1}(a_1,\dots,a_k,\dots,a_j,\mu^{0,(1)}_q,\dots,a_d) \big).
\end{aligned}
\end{equation}

Let's look at the contributions from other trees. As before, $h\mu^d_{\scrB_0}(i a_1,\dots,i(a_k\epsilon),\dots,i a_d)$ is lower-triangular and has vanishing upper left entry, and obviously the same is true for any sub-expression $h\mu^l_{\scrB_0}(i a_{j+1},\dots,i(a_k \epsilon),\dots, ia_{j+l})$ ($l \geq 2$). For the same reason as in Lemma \ref{th:structure-0}, this means that any term arising from the Perturbation Lemma which contains such an expression contributes zero to our computation. Similarly, any tree with more than two vertices cannot contribute. The remaining terms are (again with $l \geq 2$):
\begin{align}
\label{eq:term-1}
& p \mu^{d-l+1}_{\scrB_0}(ia_1,\dots,h\mu_{\scrB_0}^l(ia_{j+1},\dots,ia_{j+l}), \dots, i(a_k \epsilon),\dots, ia_d) = 0; \\
\label{eq:term-2}
& p \mu^{d-l+1}_{\scrB_0}(ia_1,\dots,i(a_k \epsilon), \dots, h\mu_{\scrB_0}^l(ia_{j+1},\dots,ia_{j+l}),\dots,ia_d) \\
\notag & \qquad = -(-1)^{\|a_{k+1}\| + \cdots + \|a_j\|}
\mu^{d-l+1}(a_1,\dots, a_k,\dots,\mu^{l,(1)}_q(a_{j+1},\dots,a_{j+l}),\dots,a_d).
\end{align}
The sum of \eqref{eq:1-vertex} and \eqref{eq:term-2} precisely gives the desired \eqref{eq:epsilon-term}.
\end{proof}

\begin{example} \label{th:aw-fiber}
Suppose that we have $(\scrA,W)$ as in Section \ref{subsubsec:dga-setup}, and the associated $\scrA_q$. In that case, \eqref{eq:tilde-f} says that
\begin{equation}
\mu^1_{\scrA_0}(a\epsilon) = (d_{\scrA}a)\epsilon - W\partial_\epsilon(a\epsilon),
\end{equation}
and we also have
\begin{equation}
\mu^2_{\scrB_0} = \mu^2_{\scrA \otimes \scrE}, \;\; \mu^d_{\scrB_0} = 0 \text{ for $d>2$.}
\end{equation}
Moreover, \eqref{eq:transfer-start} simplifies to
\begin{equation}
i(a) = \begin{pmatrix} a & 0 \\ 0 & a \end{pmatrix}, \;\; i(a\epsilon) = \begin{pmatrix} 0 & a \\ 0 & 0 \end{pmatrix}.
\end{equation}
As a consequence, $h \circ \mu^2_{\scrB_0} \circ (i \otimes i) = 0$. This implies that $\mu^2_{\scrA_0} = \mu^2_{\scrA[\epsilon]}$, and $\mu^d_{\scrA_0} = 0$ for $d>2$. In other words, we get precisely the dga from \eqref{eq:cofibrant-t} specialized to $t = 0$, which explains our notation.
\end{example}

\subsubsection{The general fiber\label{subsubsec:general-fiber}}
We define a curved $A_\infty$-algebra $\scrC_{t,q} = \scrA[t][[q]]$ over $\bK[t]$, by setting
\begin{equation}
\mu^0_{\scrC_{t,q}} = \mu^0_{\scrA_q} - qt \, e_\scrA,
\end{equation}
and with all other operations extended $t$-linearly from $\scrA_q$. Generalizing \eqref{eq:f-fiber}, we consider
\begin{equation}
\scrB_t = \scrC_{t,q} \otimes \scrE,
\end{equation}
with the $A_\infty$-structure deformed by the Maurer-Cartan element
\begin{equation}
\delta_t = 
\begin{pmatrix} 0 & qe_{\scrA} \\ q^{-1}\mu^0_q - t  e_{\scrA} & 0 \end{pmatrix}. 
\end{equation}
Concretely, the differential here has an additional $t$-dependent term
\begin{equation} \label{eq:f-differential-2}
(\mu^1_{\scrB_t} - \mu^1_{\scrB_0})\begin{pmatrix} b^{11} & b^{12} \\ b^{21} & b^{22} \end{pmatrix} 
= t\begin{pmatrix}
-(-1)^{\|b^{21}\|} b^{21} & 0 
\\ (-1)^{\|b^{22}\|} b^{22} - (-1)^{\|b^{11}\|} b^{11} & -(-1)^{\|b^{12}\|} b^{12}
\end{pmatrix};
\end{equation}
while $\mu^d_{\scrB_t}$, $d \geq 2$, are the $t$-linear extensions of the corresponding operations in $\scrB_0$. Following \eqref{eq:tilde-f}, we define
\begin{equation} \label{eq:tilde-mu1-s}
\begin{aligned}
& \scrA_t = \scrA[t,\epsilon], \\
& \mu^1_{\scrA_t}(a t^k) = \mu^1_{\scrA}(a) t^k, \\
& \mu^1_{\scrA_t}(a t^k \epsilon) = \mu^1_{\scrA}(a) t^k \epsilon
- \mu^2(a,\mu_q^{0,(1)})t^k + a t^{k+1}.
\end{aligned}
\end{equation}
Consider the increasing filtration of $\scrA_t$ given by those $a_1(t) + a_2(t) \epsilon$ where $a_1(t)$ is a polynomial in $t$ of degree $\leq k$, and $a_2(t)$ of degree $<k$. The initial term of that filtration is $\scrA$; and it follows from \eqref{eq:tilde-mu1-s} that all subsequent quotients of the filtration are acyclic. From that, we see:

\begin{lemma} \label{th:easy-acyclic}
The inclusion $\scrA \subset \scrA_t$ is a quasi-isomorphism.
\end{lemma}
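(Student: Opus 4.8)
The plan is to exploit exactly the filtration indicated just before the statement. Let $F_k\scrA_t \subset \scrA_t$ be the $\bK$-subspace of those $a_1(t) + a_2(t)\epsilon$ with $\deg_t a_1 \leq k$ and $\deg_t a_2 \leq k-1$; so $F_{-1}\scrA_t = 0$, $F_0\scrA_t = \scrA$, and $\bigcup_k F_k\scrA_t = \scrA_t$. The first step is to check that each $F_k\scrA_t$ is a subcomplex for the differential $\mu^1_{\scrA_t}$ of \eqref{eq:tilde-mu1-s}. This is immediate: $\mu^1_{\scrA_t}(a t^k) = \mu^1_{\scrA}(a)t^k$ stays in $t$-degree $k$ with no $\epsilon$, while $\mu^1_{\scrA_t}(b t^{k-1}\epsilon) = \mu^1_{\scrA}(b)t^{k-1}\epsilon - \mu^2(b,\mu_q^{0,(1)})t^{k-1} + bt^k$ has its $\epsilon$-part in $t$-degree $k-1$ and its $\epsilon$-free part in $t$-degree $\leq k$, hence lies in $F_k\scrA_t$.

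Next I would identify the associated graded pieces $\mathrm{gr}_k = F_k\scrA_t / F_{k-1}\scrA_t$. For $k = 0$ this is just $(\scrA,\mu^1_{\scrA})$. For $k \geq 1$, $\mathrm{gr}_k$ is spanned by the classes of $a t^k$ and $b t^{k-1}\epsilon$ ($a,b \in \scrA$), and the induced differential is $\overline{\mu^1_{\scrA_t}}(a t^k) = \mu^1_{\scrA}(a)t^k$ and $\overline{\mu^1_{\scrA_t}}(b t^{k-1}\epsilon) = \mu^1_{\scrA}(b)t^{k-1}\epsilon + bt^k$, the term $\mu^2(b,\mu_q^{0,(1)})t^{k-1}$ having dropped out since it lies in $F_{k-1}\scrA_t$. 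Identifying $\scrA t^k$ and $\scrA t^{k-1}\epsilon$ with (shifts of) $(\scrA,\mu^1_{\scrA})$, this is precisely the mapping cone $\mathit{Cone}\big(\mathrm{id}\colon (\scrA,\mu^1_{\scrA}) \to (\scrA,\mu^1_{\scrA})\big)$, the "$b \mapsto bt^k$" part being the identity map. The cone of the identity map (indeed of any quasi-isomorphism) is acyclic, so $\mathrm{gr}_k$ is acyclic for every $k \geq 1$.

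Finally I would assemble these facts. From the short exact sequences $0 \to F_{k-1}\scrA_t \to F_k\scrA_t \to \mathrm{gr}_k \to 0$ and the acyclicity of $\mathrm{gr}_k$ for $k \geq 1$, the inclusions $F_{k-1}\scrA_t \hookrightarrow F_k\scrA_t$ are quasi-isomorphisms for all $k \geq 1$; hence $H^*(\scrA) = H^*(F_0\scrA_t) \xrightarrow{\ \sim\ } H^*(F_k\scrA_t)$ for all $k$. Since the filtration is exhaustive and cohomology commutes with the filtered colimit $\scrA_t = \varinjlim_k F_k\scrA_t$, we conclude $H^*(\scrA) \xrightarrow{\ \sim\ } H^*(\scrA_t)$, which is the assertion. (Equivalently, one can phrase this via the spectral sequence of the filtration, which is exhaustive and bounded below, degenerating after the $E_1$-page onto the $\scrA$-summand.) There is no real obstacle here; the only point requiring a moment's care is the correct bookkeeping that makes $\mathrm{gr}_k$ into the cone of the identity, i.e. verifying that the $bt^{k-1}\epsilon \mapsto bt^k$ term of \eqref{eq:tilde-mu1-s} survives to the associated graded while the $\mu^2(b,\mu_q^{0,(1)})$ term does not.
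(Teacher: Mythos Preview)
Your proof is correct and follows exactly the same approach as the paper: the paper's proof consists of the single observation that the filtration you describe has $\scrA$ as its initial term and acyclic subsequent quotients, leaving the verification to the reader. You have simply spelled out those verifications in detail, including the identification of $\mathrm{gr}_k$ with the cone of the identity.
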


One can use the same formulae as in \eqref{eq:transfer-start}--\eqref{eq:transfer-end} to transfer the $A_\infty$-structure from $\scrB_t$ to $\scrA_t$. Because the operations $\mu_{\scrB_t}^d$, $d \geq 2$, are $t$-independent, the same will then hold for the corresponding operations in $\scrA_t$. In other words, the only part of the $A_\infty$-structure of $\scrA_t$ which is $t$-dependent is the differential \eqref{eq:tilde-mu1-s}. In particular, Lemmas \ref{th:structure-0} and \ref{th:structure-1}, for $d \geq 2$, carry over. Combining the analogue of Lemma \ref{th:structure-0} with Lemma \ref{th:easy-acyclic}, we see that $\scrA \subset \scrA_t$ is a quasi-isomorphic $A_\infty$-subalgebra.

\begin{example}
Continuing the discussion from Example \ref{th:aw-fiber}, in that case $\scrA_t$ is exactly \eqref{eq:cofibrant-t}.
\end{example}

\subsubsection{Reduction to the dga case}
Let's return briefly to the $A_\infty$-algebra $\scrA_0$ from Section \ref{subsubsec:zero-fiber}. Applying the definition from Section \ref{subsubsec:deformation-theory} to the particularly simple case of the inclusion $\scrA \subset \scrA_0$, we have a Hochschild complex $\bar{C}^*(\scrA,\scrA_0)$, with maps \eqref{eq:hochschild-mixed-functoriality}.

\begin{lemma}
The primary deformation class $[\mu_{\scrA_q}^{(1)}] \in \mathit{HH}^0(\scrA)$ maps to zero in $\mathit{HH}^0(\scrA,\scrA_0)$.
\end{lemma}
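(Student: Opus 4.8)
The plan is to exhibit an explicit Hochschild cochain $\phi \in \bar{C}^{-1}(\scrA,\scrA_0)$ whose coboundary in $\bar{C}^0(\scrA,\scrA_0)$ equals the difference $\iota_*(\mu_{\scrA_q}^{(1)}) - \iota^*(\mu_{\scrA_0,q}^{(1)})$, where $\iota: \scrA \hookrightarrow \scrA_0$ is the inclusion. The crucial point is that $\scrA_0$ is \emph{not} being equipped with a curved deformation here (the notation in \eqref{eq:matching-first-order-classes} uses $\scrB$, but we are in the uncurved situation: we simply want $[\iota_*(\mu_{\scrA_q}^{(1)})] = 0$). So the real claim is that the pushforward under $\iota$ of the primary deformation class dies, because $\scrA_0$ already ``knows about'' the deformation through its differential \eqref{eq:tilde-f}. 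The natural candidate for $\phi$ is the length-zero cochain $\phi^0 = (-1)^{?}\, e_{\scrA}^{+}\text{-type correction}$ — more precisely, the element that records the extra $\epsilon$-component built into $\mu^1_{\scrA_0}$. Concretely, since $\mu^1_{\scrA_0}(a\epsilon) = \mu^1(a)\epsilon - \mu^2(a,\mu_q^{0,(1)})$, the ``$\epsilon$ direction'' of $\scrA_0$ supplies a contracting homotopy for the class $[\mu_q^{(1)}]$ once pushed into $\scrA_0$.

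First I would recall, following Section \ref{subsubsec:deformation-theory}, that $\bar{C}^*(\scrA,\scrA_0)$ computes $\mathit{HH}^*(\scrA)$ with coefficients in $\scrA$ regarded as an $\scrA$-bimodule via the $A_\infty$-homomorphism $\iota$ followed by the $\scrA_0$-bimodule structure; and that $\iota_*: \bar{C}^*(\scrA) \to \bar{C}^*(\scrA,\scrA_0)$ is a quasi-isomorphism by Lemma \ref{th:easy-acyclic} (applied to $\scrA \subset \scrA_0$, which is the $t=0$ analogue) together with the general functoriality statement after \eqref{eq:hochschild-mixed-functoriality}. Since $\iota_*$ is a quasi-isomorphism, it is equivalent to show that the class $[\mu_{\scrA_q}^{(1)}] \in \mathit{HH}^0(\scrA)$ is annihilated by the composite with the map $\bar{C}^*(\scrA,\scrA_0) \to \bar{C}^*(\scrA_0)$ induced by — wait, that is the wrong direction. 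Instead I would argue directly: the class in $\mathit{HH}^0(\scrA,\scrA_0)$ that $[\mu_{\scrA_q}^{(1)}]$ maps to is represented by the $0$-cochain $a_1 \otimes \cdots \mapsto \mu_{\scrA}^{(1)}(\dots)$ composed with $\iota$, and I claim this is the coboundary of the $(-1)$-cochain $\psi$ defined by $\psi^l(a_1,\dots,a_l) = (-1)^{?}\,\mu^{l}_{\scrA}(a_1,\dots,a_l)\,\epsilon$ (length $l \geq 1$), i.e. ``multiply the output by $\epsilon$''. Computing $d_{\bar{C}^*(\scrA,\scrA_0)}\psi$ using \eqref{eq:ab-complex}: the terms where $\mu_{\scrA_0}$ hits the $\epsilon$-decorated output produce, via the second term $-\mu^2(a,\mu_q^{0,(1)})$ in \eqref{eq:tilde-f}, exactly (minus) a Hochschild representative of $\iota_*(\mu_{\scrA_q}^{(1)})$, while all the $\epsilon$-linear terms cancel against the inner differential of $\scrA$ and the bar differential, since $\mu^1(a)\epsilon$ is just $\epsilon$ times the $\scrA$-differential.

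The key steps, in order: (1) write down $\psi$ explicitly as the ``tensor-with-$\epsilon$'' cochain and verify it is a well-defined element of $\bar{C}^{-1}(\scrA,\scrA_0)$ (degree bookkeeping: $\epsilon$ has degree $-1$); (2) expand $d_{\bar{C}^*(\scrA,\scrA_0)}\psi$ using the two groups of terms in \eqref{eq:ab-complex}, keeping in mind that $\iota$ is the strict inclusion so all $\iota^i$ with $i\geq 2$ vanish and $\iota^1 = \mathrm{id}$, which collapses the first sum dramatically; (3) isolate the contribution of the non-standard term $-\mu^2(\cdot,\mu_q^{0,(1)})$ in $\mu^1_{\scrA_0}$ applied to the $\epsilon$-output of $\psi$, and recognize $\mu^2(-,\mu_q^{0,(1)}) = \mu^{2,(1)}_{\scrA_q}(-,e_{\scrA}) + \cdots$ — more carefully, one should track that $\mu_{\scrA_q}^{(1)}$ as a Hochschild $0$-cocycle is exactly the collection $(\mu^{d,(1)}_{\scrA_q})_{d}$ and that after pushing to $\scrA_0$ only its $\mu^{2,(1)}(-,\mu_q^{0,(1)})$-flavored contribution survives the computation (here is where I would want to double-check normalization conventions against Lemma \ref{th:structure-1}); (4) conclude that $d\psi = \pm\,\iota_*(\mu_{\scrA_q}^{(1)})$ up to an exact term, hence $[\iota_*(\mu_{\scrA_q}^{(1)})] = 0$ in $\mathit{HH}^0(\scrA,\scrA_0)$. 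I expect the main obstacle to be step (3): getting the signs and the bookkeeping of which $\mu^{l,(1)}_q$ terms appear exactly right, since the definition \eqref{eq:tilde-f} only builds in the length-$2$-with-$\mu^0$ piece of the deformation, and one must make sure that this single term, together with the bar differential, accounts for the \emph{entire} Hochschild class $[\mu_{\scrA_q}^{(1)}]$ and not just part of it — this is really a statement that the remaining components $\mu^{l,(1)}_q$ for $l\geq 1$ are captured ``for free'' once one works in $\scrA_0$, which is plausible because $\scrA_0$ is quasi-isomorphic to $\scrB_0$ and the latter manifestly encodes $\mu^0_q$, but it needs a clean argument (most efficiently: observe that it suffices to check the vanishing after replacing $\scrA_0$ by the quasi-isomorphic $\scrB_0$, where $\delta$ from \eqref{eq:delta-element} gives an explicit Maurer--Cartan trivialization of the pulled-back deformation, and the bounding cochain $\psi$ is then visibly $\partial_q\delta|_{q=0}$-related).
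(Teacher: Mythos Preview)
Your intuition that the $\epsilon$-direction of $\scrA_0$ supplies a contracting homotopy is right, and your first instinct---a length-zero cochain---is exactly what the paper uses. The paper takes $\eta \in \bar{C}^{-1}(\scrA,\scrA_0)$ with the single nonzero component $\eta^0 = e_\scrA\epsilon$, and shows via Lemma~\ref{th:structure-1} that
\[
(d_{\bar{C}^*(\scrA,\scrA_0)}\eta)^l(a_1,\dots,a_l) = \sum_i \mu^{l+1}_{\scrA_0}(a_1,\dots,a_i,e_\scrA\epsilon,a_{i+1},\dots,a_l) = -\mu_{\scrA_q}^{l,(1)}(a_1,\dots,a_l).
\]
That is the entire proof.

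The cochain you pivot to, $\psi^l(a_1,\dots,a_l) = \mu^l_\scrA(a_1,\dots,a_l)\,\epsilon$, has the wrong degree: $\mu^l_\scrA(a_1,\dots,a_l)\epsilon$ lies in degree $\sum|a_i|+1-l$, whereas a Hochschild cochain of degree $-1$ must have output in degree $\sum\|a_i\|-1 = \sum|a_i|-l-1$. Your $\psi$ is a degree $+1$ cochain, so steps (2)--(4) cannot proceed as written.

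The worry you raise in step (3)---that \eqref{eq:tilde-f} only builds in $\mu_q^{0,(1)}$, so how could one recover all of $\mu^{l,(1)}_{\scrA_q}$?---is exactly what Lemma~\ref{th:structure-1} resolves. That lemma computes the \emph{higher} operations $\mu^d_{\scrA_0}$ (constructed via homological perturbation from $\scrB_0$) and shows that inserting $a_k\epsilon$ produces terms $\mu^{d-l+1}(\dots,\mu^{l,(1)}_q(\dots),\dots)$ for \emph{all} $l$. When $a_k = e_\scrA$ and one sums over insertion positions, strict unitality kills the surviving $\epsilon$-terms and leaves precisely $-\mu^{l,(1)}_{\scrA_q}$. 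So the missing ingredient is not a new argument but the recognition that Lemma~\ref{th:structure-1} already encodes the full first-order deformation inside the $A_\infty$-structure of $\scrA_0$, not just its curvature piece. (Your closing remark about $\partial_q\delta|_{q=0}$ in $\scrB_0$ is actually on target: its upper-right entry is $e_\scrA$, which under the transfer corresponds to $e_\scrA\epsilon$---the paper's $\eta^0$.)
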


\begin{proof}
Let $\eta \in \bar{C}^{-1}(\scrA,\scrA_0)$ be the Hochschild cochain whose only nonzero term is the constant $\eta^0 = e_{\scrA} \epsilon$. By \eqref{eq:diff-c-a} and (a particularly simple special case of) Lemma \ref{th:structure-1},
\begin{equation}
(d_{\bar{C}^*(\scrA,\scrA_0)}\eta)^l(a_1,\dots,a_l) = \sum_i \mu_{\scrA_0}^{l+1}(a_1,\dots,a_i,e_{\scrA} \epsilon,a_{i+1},\dots,a_l) = -\mu_{\scrA_q}^{l,(1)}(a_1,\dots,a_l).
\end{equation}
\end{proof}

In the corresponding situation from Section \ref{subsubsec:general-fiber}, the analogue for the inclusion $\scrA \subset \scrA_t$ is:

\begin{lemma} \label{th:primary-class-s}
The primary deformation class $[\mu_{\scrA_q}^{(1)}]$ maps to $[t\,e_\scrA] \in \mathit{HH}^0(\scrA,\scrA_t)$.
\end{lemma}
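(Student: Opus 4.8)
The plan is to mimic the proof of the preceding lemma (the one identifying the primary class in $\mathit{HH}^0(\scrA,\scrA_0)$), but keeping track of the extra $t$-dependent term in the differential \eqref{eq:tilde-mu1-s}. Recall that the primary deformation class $[\mu_{\scrA_q}^{(1)}]$ is represented by the Hochschild cochain $\mu_{\scrA_q}^{(1)} \in \bar{C}^0(\scrA)$, and under the map $\Phi_*: \bar{C}^*(\scrA) \to \bar{C}^*(\scrA,\scrA_t)$ associated to the inclusion $\Phi: \scrA \hookrightarrow \scrA_t$ of Lemma \ref{th:easy-acyclic}, it is sent to the cochain whose $l$-th component is $(a_1,\dots,a_l) \mapsto \mu_{\scrA_q}^{l,(1)}(a_1,\dots,a_l)$, viewed as living in $\scrA_t$. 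So the task is to exhibit an explicit cochain $\eta \in \bar{C}^{-1}(\scrA,\scrA_t)$ with
\begin{equation}
d_{\bar{C}^*(\scrA,\scrA_t)}\eta = \Phi_*(\mu_{\scrA_q}^{(1)}) - t\, e_{\scrA},
\end{equation}
where $t\,e_{\scrA} \in \bar{C}^0(\scrA,\scrA_t)$ denotes the length-zero cochain with constant value $t e_{\scrA}$ (which is the image of the cocycle $t[e_{\scrA}]$ — recall from Section \ref{subsubsec:dga-setup} that on cohomology $\Phi$ takes $[W]$ to $t[e_{\scrA}]$ in the dga case, and that $\mathit{HH}^0(\scrA,\scrA_t) \iso \mathit{HH}^0(\scrA)[t]$ after inverting nothing).

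First I would take $\eta$ to be the same cochain as in the previous lemma: the one whose only nonzero component is the constant term $\eta^0 = e_{\scrA}\epsilon \in \scrA_t^{-1}$, all higher components being zero. Then I compute $d_{\bar{C}^*(\scrA,\scrA_t)}\eta$ using the formula \eqref{eq:diff-c-a} for the differential on $\bar{C}^*(\scrA,\scrA_t)$. Since $\eta$ has only a length-zero component, the terms involving $\mu_{\scrA}$ acting on the inputs $a_1,\dots,a_l$ vanish, and we are left with
\begin{equation}
(d_{\bar{C}^*(\scrA,\scrA_t)}\eta)^l(a_1,\dots,a_l) = \sum_{i} \mu_{\scrA_t}^{l+1}(a_1,\dots,a_i,e_{\scrA}\epsilon,a_{i+1},\dots,a_l),
\end{equation}
exactly as in the $\scrA_0$ case, except that the operations $\mu_{\scrA_t}^{l+1}$ are the transferred ones on $\scrA_t$. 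Now I invoke the analogue of Lemma \ref{th:structure-1} for $\scrA_t$: for $d \geq 2$ the operations $\mu_{\scrA_t}^d$ are $t$-independent and agree with those of $\scrA_0$, so for $l \geq 1$ the sum reproduces precisely $-\mu_{\scrA_q}^{l,(1)}(a_1,\dots,a_l)$, just as before. The one new phenomenon is the $l = 0$ term: there $d_{\bar{C}^*(\scrA,\scrA_t)}\eta$ has component $\mu_{\scrA_t}^1(e_{\scrA}\epsilon)$, and by the $t$-dependent part of \eqref{eq:tilde-mu1-s} this equals $\mu_{\scrA}^1(e_{\scrA})\epsilon - \mu^2(e_{\scrA},\mu_q^{0,(1)}) + t\,e_{\scrA}$. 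Using strict unitality, $\mu_{\scrA}^1(e_{\scrA}) = 0$ and $\mu^2(e_{\scrA},\mu_q^{0,(1)}) = \mu_q^{0,(1)} = \mu_{\scrA_q}^{0,(1)}$, so this length-zero component is $-\mu_{\scrA_q}^{0,(1)} + t\,e_{\scrA}$. Collecting all components, $d_{\bar{C}^*(\scrA,\scrA_t)}\eta = -\Phi_*(\mu_{\scrA_q}^{(1)}) + t\,e_{\scrA}$, i.e. $\Phi_*(\mu_{\scrA_q}^{(1)}) = t\,e_{\scrA} - d_{\bar{C}^*(\scrA,\scrA_t)}\eta$, which shows $[\Phi_*(\mu_{\scrA_q}^{(1)})] = [t\,e_{\scrA}]$ in $\mathit{HH}^0(\scrA,\scrA_t)$.

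The main obstacle, such as it is, lies in being careful with the bookkeeping of where the $t e_{\scrA}$ term enters: one must check that in the transferred $A_\infty$-structure on $\scrA_t$, the extra $t$-dependent term of $\mu^1_{\scrA_t}$ contributes to $d_{\bar C^*(\scrA,\scrA_t)}\eta$ only through the length-zero component (because $\eta$ itself is concentrated in length zero and the $t$-term of $\mu^1_{\scrA_t}$ raises no tensor length), and that the operations $\mu^d_{\scrA_t}$ with $d \geq 2$ applied to one $e_{\scrA}\epsilon$ input behave exactly as in Lemma \ref{th:structure-1} — which is guaranteed since those higher operations are the $t$-linear (in fact $t$-independent) extensions of the ones on $\scrB_0$, so the tree-sum computation underlying Lemma \ref{th:structure-1} goes through verbatim. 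No genuinely new estimate or construction is needed; it is a direct adaptation of the preceding lemma's proof with one additional elementary term tracked through.
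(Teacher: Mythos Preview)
Your proposal is correct and follows essentially the same approach as the paper: use the same bounding cochain $\eta$ with $\eta^0 = e_{\scrA}\epsilon$, and observe that the only new contribution compared to the $\scrA_0$ case comes from the $t$-dependent term in $\mu^1_{\scrA_t}(e_{\scrA}\epsilon)$, yielding exactly $t\,e_{\scrA} - \mu_{\scrA_q}^{0,(1)}$ in length zero. The paper's proof is a two-line version of precisely this computation.
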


\begin{proof}
We use the same $\eta$, but pick up an additional term from $\mu^1_{\scrA_t}(e_{\scrA}\epsilon)$:
\begin{equation}
(d_{\bar{C}^*(\scrA,\scrA_t)}\eta)^l(a_1,\dots,a_l) = \begin{cases} 
t\,e_{\scrA} -\mu_{\scrA_q}^{0,(1)} & l = 0, \\
-\mu_{\scrA_q}^{l,(1)}(a_1,\dots,a_l) & l>0.
\end{cases}
\end{equation}
\end{proof}

\begin{proposition} \label{th:first-order}
Take a strictly unital $\scrA_q$, such that $\mathit{HH}^{2k}(\scrA) = 0$ for all $k < 0$. Then this is filtered quasi-isomorphic to the curved $A_\infty$-algebra obtained from some dga and a central element, as in Section \ref{subsubsec:dga-setup}.
\end{proposition}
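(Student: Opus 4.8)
The plan is to combine a rectification step with the obstruction-theoretic transfer of curved deformations furnished by Lemma~\ref{th:def-theory}, where the hypothesis $\mathit{HH}^{2k}(\scrA) = 0$ for $k < 0$ enters exactly as the vanishing condition \eqref{eq:no-negative-hh}. First I would apply the Yoneda embedding, as in the proof of Lemma~\ref{th:make-strictly-unital}, to produce a differential graded algebra $\scrD$ together with an $A_\infty$-quasi-isomorphism $\Phi\colon \scrA \to \scrD$. Since $\Phi_*$ and $\Phi^*$ are then quasi-isomorphisms, one gets $\mathit{HH}^*(\scrA) \iso \mathit{HH}^*(\scrA,\scrD) \iso \mathit{HH}^*(\scrD)$, so in particular $\mathit{HH}^{2k}(\scrA,\scrD) = 0$ for $k < 0$. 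On the target side I would take $\scrD_q$ to be the curved dga built from $\scrD$ and a central cocycle $W \in \scrD^0$ as in Section~\ref{subsubsec:dga-setup}; since its only deformation term is the curvature $qW$, the cochain $\Phi^*\mu_{\scrD_q}^{(1)}$ is simply the length-zero Hochschild cochain $W \in \bar{C}^0(\scrA,\scrD)$.

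Granting that we can choose $W$ so that $[\Phi_*\mu_{\scrA_q}^{(1)}] = [\Phi^*W]$ in $\mathit{HH}^0(\scrA,\scrD)$, Lemma~\ref{th:def-theory} applies and extends $\Phi$ to a curved $A_\infty$-homomorphism $\Phi_q\colon \scrA_q \to \scrD_q$; this is automatically a filtered quasi-isomorphism because $\Phi$ is one, which yields the assertion of the proposition. So, apart from the Yoneda reduction and a bookkeeping check that the matching condition \eqref{eq:matching-first-order-classes} is literally the statement just displayed, everything reduces to realizing the primary deformation class, transported to $\mathit{HH}^0(\scrD)$ via $\Phi$, by a genuine central cocycle in $\scrD^0$ (as opposed to merely by a higher-length Hochschild cocycle).

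This last point is where I would invoke the fibre-category constructions of Section~\ref{subsec:fibre}: by Lemma~\ref{th:primary-class-s}, after passing to the $t$-linear model $\scrA \hookrightarrow \scrA_t$ the primary class becomes literally $[t\,e_\scrA]$, i.e.\ the unit times the central degree-zero variable, which is precisely the ``length-zero and central'' shape one needs. Transporting this identification back, while if necessary replacing $\scrD$ by a larger dga model (for instance the endomorphism dga of a sufficiently large $K$-projective generator of the derived category of $\scrA$, for which every class of $\mathit{HH}^0$ is represented on the nose by a central cocycle), should produce the required $W \in \scrD^0$ with $[W]$ equal to the transported primary class. I expect this step — arranging that the dga model is rich enough that the chosen $\mathit{HH}^0$-class is carried by an honest central cocycle, equivalently that no length-filtration obstruction gets in the way — to be the main difficulty, and the reason the auxiliary algebras $\scrA_0$, $\scrA_t$ and Lemma~\ref{th:primary-class-s} are genuinely needed rather than a bare appeal to Lemma~\ref{th:def-theory}.
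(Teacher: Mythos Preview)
Your overall strategy matches the paper's: use Lemma~\ref{th:def-theory} once you have a dga target in which the primary deformation class is represented by an honest central cocycle, and use the fibre construction $\scrA_t$ together with Lemma~\ref{th:primary-class-s} to arrange that. But you have the order of operations wrong, and that is exactly where the content lies.

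You apply Yoneda to $\scrA$ over $\bK$, obtaining $\scrD$, and then try to locate a central cocycle $W \in \scrD^0$ representing the transported primary class. As you correctly suspect, this is the nontrivial step: there is no general reason a degree-zero Hochschild class on a dga should be carried by a length-zero (hence central) cocycle, and your proposed fix of passing to ``the endomorphism dga of a sufficiently large $K$-projective generator'' does not change that. The vague instruction to ``transport back'' the identification from $\scrA_t$ to such a $\scrD$ is precisely the missing argument.

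The paper resolves this cleanly by reversing the two steps. One first forms $\scrA_t$, which is an $A_\infty$-algebra over $\bK[t]$ (and free as a graded $\bK[t]$-module), and then applies the Yoneda embedding \emph{over $\bK[t]$} to obtain a genuine dga $\scrA'_t$ over $\bK[t]$, together with a $\bK[t]$-linear quasi-isomorphism $\scrA_t \to \scrA'_t$. Now forget the $\bK[t]$-linearity and regard $\scrA'_t$ as a dga over $\bK$. The element $W' = t\,e_{\scrA'_t}$ is automatically a central cocycle, simply because it is a scalar (in $\bK[t]$) times the unit. Composing with the inclusion $\scrA \hookrightarrow \scrA_t$ of Lemma~\ref{th:easy-acyclic} gives a quasi-isomorphism $\scrA \to \scrA'_t$; under the induced isomorphism on Hochschild cohomology, $[\mu_{\scrA_q}^{(1)}]$ goes to $[t\,e_{\scrA_t}]$ by Lemma~\ref{th:primary-class-s}, and then to $[t\,e_{\scrA'_t}] = [W']$ because the second map was $\bK[t]$-linear. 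This is exactly the matching condition \eqref{eq:matching-first-order-classes}, and Lemma~\ref{th:def-theory} (with the hypothesis $\mathit{HH}^{2k}(\scrA)=0$ for $k<0$) finishes the proof. The point is that doing Yoneda over $\bK[t]$ manufactures the central element for free; there is no need to search for one inside a dga model of $\scrA$ alone.
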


\begin{proof}
Consider $\scrA_t$ as an $A_\infty$-algebra over $\bK[t]$. One can apply the Yoneda embedding over that ring, to obtain a differential graded algebra $\scrA'_t$ and a $\bK[t]$-linear $A_\infty$-quasi-isomorphism $\scrA_t \rightarrow \scrA'_t$. For this to work, it is important that $\scrA_t$ is a free graded $\bK[t]$-module, which is true by definition; the resulting $\scrA'_t$ may not have that property, since it's defined as an infinite product, but that is irrelevant for our purposes. Indeed, we will now forget the $\bK[t]$-linear structure, and just consider $\scrA_t'$ as a dga over $\bK$. Combining this with Lemma \ref{th:easy-acyclic}, we have quasi-isomorphisms
\begin{equation} \label{eq:combo-quasi-iso}
\scrA \stackrel{\htp}{\longrightarrow} \scrA_t \stackrel{\htp}{\longrightarrow} \scrA'_t.
\end{equation}
Under the induced isomorphism (of Hochschild cohomologies formed over $\bK$)
\begin{equation}
\mathit{HH}^*(\scrA) \iso \mathit{HH}^*(\scrA_t) \iso \mathit{HH}^*(\scrA'_t), 
\end{equation}
the element $[\mu_{\scrA_q}^{(1)}]$ goes to $[t\,e_{\scrA_t}]$ and then to $[t\, e_{\scrA'_t}]$; the first part is by Lemma \ref{th:primary-class-s}, and the second part follows from the fact that the underlying quasi-isomorphism was $\bK[t]$-linear. At this point, we can apply Lemma \ref{th:def-theory}, which says that the quasi-isomorphism $\scrA \rightarrow \scrA'_t$ can be extended to a curved $A_\infty$-homomorphism from $\scrA_q$ to the deformation of $\scrA'_t$ obtained by turning on the curvature term $qW'$, where $W' = t \, e_{\scrA'_t}$.
\end{proof}

\subsubsection{Conclusion\label{subsubsec:manipulate-a}}
With these techniques at hand, we can now extend the results of Section \ref{subsubsec:monodromy} to $A_\infty$-algebras. With a view to applications in Floer theory, we will formulate the outcome in the cohomologically unital context. Namely, given a cohomologically unital $\scrA$ and a curved deformation $\scrA_q$, consider
\begin{equation}
A_{q,u} = \mathit{CC}^+_*(\scrA_q),
\end{equation}
with its Getzler-Gauss-Manin connection. This is a complete $u$-torsionfree dg module over $W_{q,u} \iso W_{t,u}$, and we can apply the same manipulation to it as in \eqref{eq:cc-periodic-final}, leading to an appropriate version of $q^{-1} A_{q^{-1},1/p,u^{\pm 1}}$. 

\begin{corollary} \label{th:end-of-algebra}
Take $\bK = \bC$. Let $p(t)$ be a nonzero polynomial. Suppose that:
\begin{itemize} \itemsep.5em
\item[(i)] $\scrA$ is smooth.
\item[(ii)] $q^r p([\kappa_{\scrA_q}]) \in \mathit{HH}^{2r}(\scrA_q)$ vanishes for some $r>0$. 
\item[(iii)] $\bK[t,1/p] \otimes_{\bK[t]} H^*(\scrA)$, where $t$ acts by multiplication with $[\mu_{\scrA_q}^{0,(1)}]$, is a finitely generated $\bK[t,1/p]$-module.
\item[(iv)] $\mathit{HH}^{2k}(\scrA) = 0$ for $k<0$.
\end{itemize}
Then, in each degree, the cohomology of $q^{-1} A_{q^{-1},1/p,u^{\pm 1}}$ is finitely generated over $\bC[t,1/p]$, where $t = \nabla_{u\partial_q}$. On that cohomology, $\nabla_{\partial_t} = -q/u$ is a connection with regular singularities and quasi-unipotent monodromy around each singular point (including $\infty$). If in addition,
\begin{itemize}
\item[(v)] $\mathit{HH}_*(\scrA)$ is concentrated in degrees $\leq d$,
\end{itemize}
the monodromy of around each singularity (again including $\infty$) has Jordan blocks of size $\leq d+1$.
\end{corollary}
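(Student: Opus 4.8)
The plan is to reduce the assertion, in two steps, to its differential-graded incarnations — Corollary \ref{th:monodromy} and Corollary \ref{th:unipotence-bound} — and then transport the conclusions back along the reductions. Throughout, the object $q^{-1}A_{q^{-1},1/p,u^{\pm1}}$ attached to $\scrA_q$ is built from $A_{q,u} = \mathit{CC}^+_*(\scrA_q)$, which is complete, $u$-torsionfree, and has $q$-torsionfree $u=0$ reduction $C^+_*(\scrA_q) = C^+_*(\scrA)[[q]]$, so the machinery of Section \ref{subsec:u-theory} (including the triangle \eqref{eq:aq-triangle}) applies verbatim.

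First I would replace $\scrA_q$ by a strictly unital model. By Lemma \ref{th:make-strictly-unital} there is a strictly unital curved $A_\infty$-algebra $\scrB_q$ and a filtered quasi-isomorphism $\scrA_q \to \scrB_q$. Lemma \ref{th:induced-map} gives an induced filtered quasi-isomorphism $\mathit{CC}^+_*(\scrA_q) \to \mathit{CC}^+_*(\scrB_q)$ commuting with the Getzler-Gauss-Manin connections up to homotopy; composing with the collapse map $\mathit{CC}^+_*(\scrB_q) \to \overline{CC}_*(\scrB_q)$ (a filtered quasi-isomorphism compatible with the connections) and invoking Lemma \ref{th:homotopy-dq}, one obtains an isomorphism in $D(W_{q,u})$. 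Hypotheses (i)–(v) are all quasi-isomorphism invariants: smoothness and the (co)homological finiteness statements are clear; by Lemma \ref{th:phi-q} the isomorphism $\mathit{HH}^*(\scrA_q) \iso \mathit{HH}^*(\scrB_q)$ is one of $\bK[q]$-algebras carrying $[\kappa_{\scrA_q}]$ to $[\kappa_{\scrB_q}]$; and, projecting along the ring map $\mathit{HH}^* \to H^*$, it carries $[\mu^{0,(1)}_{\scrA_q}]$ to $[\mu^{0,(1)}_{\scrB_q}]$. So (i)–(v) hold for $\scrB_q$.

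Next I would apply Proposition \ref{th:first-order}, which is available since $\mathit{HH}^{2k}(\scrB) = 0$ for $k<0$: it produces a dga $\scrC$ with central cocycle $W$ and a filtered quasi-isomorphism from $\scrB_q$ to the curved deformation $\scrC_q$ with curvature $qW$. As before this yields an isomorphism $\mathit{CC}^+_*(\scrB_q) \iso \mathit{CC}^+_*(\scrC_q) \iso \overline{CC}_*(\scrC_q)$ in $D(W_{q,u})$. Tracking the primary deformation class through Lemma \ref{th:primary-class-s}, together with the $\bK[t]$-linearity of the Yoneda embedding used in the proof of Proposition \ref{th:first-order}, identifies $[\kappa_{\scrC_q}] = [W] \in \mathit{HH}^*(\scrC_q)$ with the image of $[\kappa_{\scrA_q}]$, and (again projecting along $\mathit{HH}^* \to H^*$) identifies $[W] \in H^*(\scrC)$ with $[\mu^{0,(1)}_{\scrA_q}] \in H^*(\scrA)$. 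Consequently, hypotheses (i)–(iii) for $\scrA_q$ become precisely hypotheses (i)–(iii) of Corollary \ref{th:monodromy} for the triple $(\scrC,W,p)$, while hypothesis (v) translates into the extra hypothesis of Corollary \ref{th:unipotence-bound} via $\mathit{HH}_*(\scrC) \iso \mathit{HH}_*(\scrA)$.

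Finally, the construction of $q^{-1}A_{q^{-1},1/p,u^{\pm1}}$ — inverting $q$, passing to $q^{-1}A_{q^{-1},u}$ via \eqref{eq:aq-triangle}, completed localisation at $p(t)$, and inverting $u$ — proceeds by exact functors on $D(W_{q,u}) = D(W_{t,u})$, so the isomorphism $\mathit{CC}^+_*(\scrA_q) \iso \overline{CC}_*(\scrC_q)$ descends to an isomorphism of the two versions of $q^{-1}A_{q^{-1},1/p,u^{\pm1}}$, compatibly with $t = \nabla_{u\partial_q}$ and $\nabla_{\partial_t} = -q/u$ (these being part of the ambient $W_{q,u}$-structure). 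Applying Corollaries \ref{th:monodromy} and \ref{th:unipotence-bound} to $(\scrC,W,p)$ and transporting along this isomorphism yields the stated conclusions for $\scrA_q$. I expect the only real subtlety to be the consistent propagation of the hypotheses through both reductions — in particular the identification of the $t$-action by $[\mu^{0,(1)}_{\scrA_q}]$ with multiplication by the central class $[W]$ in the dga model; once that is pinned down, the remainder of the argument is formal.
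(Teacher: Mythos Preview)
Your proposal is correct and follows essentially the same route as the paper: reduce (via Lemma \ref{th:make-strictly-unital} and Proposition \ref{th:first-order}, using hypothesis (iv)) to a dga with central element, transport hypotheses (i)--(iii) and (v) across using Lemma \ref{th:phi-q}, apply Corollaries \ref{th:monodromy} and \ref{th:unipotence-bound}, and pull the conclusion back along the chain of isomorphisms in $D(W_{q,u})$. The paper's proof is more compressed---it merges your two reduction steps and does not spell out the tracking of $[\mu^{0,(1)}_{\scrA_q}]$ explicitly---but the content is the same.
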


\begin{proof}
In view of (iv), Lemma \ref{th:make-strictly-unital}, and Proposition \ref{th:first-order}, the given $\scrA_q$ is filtered quasi-isomorphic to the deformation $\scrA_q'$ obtained from a dga $\scrA'$ (our previous $\scrA'_t$, where we forget the $\bK[t]$-linear structure and accordingly adjust the notation) and central element $W'$. Moreover, (i)-(iii) carry over to that dga; the only nontrivial issue is (ii), but that is taken care of by (the cohomologically unital version of) Lemma \ref{th:phi-q}. Corollary \ref{th:monodromy} gives results parallel to the ones stated here, but for the cohomology of an appropriately manipulated version of $\overline{CC}_*(\scrA_q')$. As in Lemma \ref{th:collapse} and the discussion following it, one can use $\mathit{CC}_*^+(\scrA_q')$ instead. We now need to bring the results back to $\scrA_q$. Lemma \ref{th:homotopy-dq} and \ref{th:induced-map} show that $\mathit{CC}_*^+(\scrA_q)$ and $\mathit{CC}_*^+(\scrA_q')$ are isomorphic objects of $D(W_{q,u})$. Passing to negative powers of $q$, and inverting $p(t)$, are both operations within that category. The final step leading to $q^{-1} A_{q^{-1},1/p,u^{\pm 1}}$ is the purely algebraic inversion of $u$, and that is of course compatible with quasi-isomorphisms. The proof of the last part, involving (v), uses the same argument and Corollary \ref{th:unipotence-bound}.
\end{proof}

As usual, we have formulated the discussion for $A_\infty$-algebras, but the translation to $A_\infty$-categories (see Section \ref{subsubsec:categories}) is straightforward.

\begin{remark}
It is unclear whether assumption (iv) is more than a technical one. We have used it in the proof of Proposition \ref{th:first-order}, to determine the deformation of $\scrA'_t$ which corresponds to $\scrA_q$, but it is possible that the Proposition holds more generally. Alternatively (requiring changes on a larger scale), one could try to carry out something like the Fourier-Laplace transform argument from Section \ref{subsec:dga} directly for curved $A_\infty$-deformations.
\end{remark}

\subsection{$L_\infty$-formalism\label{subsec:linfty}}

\subsubsection{$L_\infty$-algebras} 
We begin by recalling the basic definitions (see e.g.\ \cite[Section 4]{getzler95}). Fix a ground field $\bigK$ of characteristic zero. Let $\mathit{Sh}(k, m) \subset \mathit{Sym}(m)$ denote the set of $k$-shuffles, meaning permutations $\sigma$ of $\{1,\dots,m\}$ which satisfy 
\begin{equation} \label{eq:shuffle2}
\sigma(1)< \cdots < \sigma(k) \text{ and } \sigma(k+1) < \cdots < \sigma(m).
\end{equation}

An $L_\infty$-structure on a graded $\bK$-vector space $\scrG$ is a sequence of linear maps $\ell^m: \scrG^{\otimes m} \rightarrow \scrG$, $m \geq 1$, of degree $2-m$. They must be graded symmetric when viewed as defined on the shifted space $\scrG[-1]$, which means that
\begin{equation} \label{eq:gradedsymmetry} 
\ell^m(x_1,\dots,x_i,x_{i+1},\dots,x_m) = (-1)^{\|x_i\|\,\|x_{i+1}\|}
\ell^d(x_1,\dots,x_{i+1},x_{i},\dots, x_m);
\end{equation} 
and they should satisfy the $L_\infty$-relations,
\begin{equation} \label{eq:Linfty} 
\sum_{\sigma \in \mathit{Sh}(k, m)} (-1)^{\dagger}\ell^{m-k+1}(\ell^{k}(x_{\sigma(1)}, \dots, x_{\sigma(k)}), x_{\sigma(k+1)},\dots, x_{\sigma(m)}) = 0,
\end{equation}
where $(-1)^{\dagger}$ is the Koszul sign associated to $\sigma$ acting on $(\scrG[1])^{\otimes m}$. Note that because of the previous condition, one can equivalently write this as
\begin{equation}
\sum_{\sigma \in \mathit{Sym}(m)} (-1)^{\dagger} \textstyle \frac{1}{k!(m-k)!} \ell^{m-k+1}(\ell^{k}(x_{\sigma(1)}, \dots, x_{\sigma(k)}), x_{\sigma(k+1)},\dots, x_{\sigma(m)}) = 0. 
\end{equation}

\begin{remark}
The sign conventions here are convenient for Floer theory, but less so when it comes to comparing the situation with classical Lie theory. One could instead use
\begin{equation}
\tilde{\ell}^m(x_1,x_2, \dots, x_m) \stackrel{\mathrm{def}}{=} (-1)^{\sum_i (m-i)|x_i|} \ell^m(x_1,x_2,\dots,x_m),
\end{equation}
which satisfies the symmetry condition
\begin{equation}  
\tilde{\ell}^m(x_1,\dots,x_i,x_{i+1},\dots, x_m) = -(-1)^{|x_i|\,|x_{i+1}|} \tilde{\ell}^m(x_1,\dots,x_{i+1},x_{i},\dots,x_m).
\end{equation}
Writing $dx = \tilde{\ell}^1(x) = \ell^1(x)$ and $[x_1,x_2] = \tilde{\ell}^2(x_1,x_2) = (-1)^{|x_1|} \ell^2(x_1,x_2)$, the $m=2,3$ equations in \eqref{eq:Linfty} turn into 
\begin{align}
& d[x_1,x_2] = [dx_1,x_2]+(-1)^{|x_1|} [x_1,dx_2], \\
\label{eq:jacobi}
& d\tilde{\ell}^3(x_1,x_2,x_3) + \tilde{\ell}^3(dx_1,x_2,x_3]) + \tilde{\ell}^3(x_1,dx_2,x_3) + \tilde{\ell}^3(x_1,x_2,dx_3) \\
\notag & \qquad +
[[x_1,x_2],x_3] + (-1)^{|x_1|(|x_2|+|x_3|)} [[x_2,x_3],x_1] + (-1)^{|x_3|(|x_1|+|x_2|)} [[x_3,x_1],x_2] = 0,
\end{align}
where the latter is a homotopical version of the Jacobi identity. Hence, an $L_\infty$-algebra with $\ell^m = 0$ for $m \geq 3$ is the same as a dg Lie algebra; and for a general $L_\infty$-algebra, $H^*(\scrG)$ inherits a graded Lie algebra structure, induced by $[x_1,x_2]$.
\end{remark}

An $L_\infty$-module over $\scrG$ is a graded vector space $\scrM$ with operations $\ell^{m,1}: \scrG^{\otimes m} \otimes \scrM \rightarrow \scrM[1-m]$, $m \geq 0$. They must be symmetric in the first $m$ entries, in the same sense as in \eqref{eq:gradedsymmetry}, and satisfy 
\begin{equation} \label{eq:LinftyM} 
\begin{aligned}
&
0 = \sum_{\sigma \in \mathit{Sh}(k, m)} (-1)^{\dagger}\, \ell^{m-k+1,1}(\ell^{k}(x_{\sigma(1)}, \dots, x_{\sigma(k)}), x_{\sigma(k+1)},\cdots, x_{\sigma(m)},y)\\ 
& 
+ \sum_{\sigma \in \mathit{Sh}(k, m)} (-1)^{\dagger + \|x_{\sigma(1)}\|+\cdots+\|x_{\sigma(k)}\|}\, \ell^{k,1}(x_{\sigma(1)},\dots,x_{\sigma(k)}, \ell^{m-k,1}(x_{\sigma(k+1)},\dots,x_{\sigma(m)},y)).
\end{aligned}
\end{equation}
Note that in the second sum, $k = 0$ and $k = m$ are both allowed.
%

\begin{example} \label{ex:diagonal} (The diagonal module)
Take $\scrM = \scrG$ with $\ell^{m,1} = \ell^{m+1}$. Any $\sigma \in \mathit{Sh}(k,m+1)$ satisfies $\sigma(m+1) = m+1$ or $\sigma(k) = m+1$, and this yields an identification $\mathit{Sh}(k,m+1) \iso \mathit{Sh}(k,m) \sqcup \mathit{Sh}(k-1,m)$. Using that, \eqref{eq:Linfty} turns into \eqref{eq:LinftyM}.
\end{example} 

\subsubsection{Maurer-Cartan theory\label{sec:mcelements}}
We have encountered Maurer-Cartan elements for $A_\infty$-algebras, as a technical ingredient in Section \ref{subsubsec:general-fiber}. A version of that notion in the  $L_\infty$ framework plays a much more fundamental role in our geometric constructions later on. Let $\scrG$ be an $L_\infty$-algebra, and $q$ a formal variable of degree $2$. A Maurer-Cartan element \cite[Definition 4.3]{getzler95} is
\begin{equation} \label{eq:lie-mc}
\alpha \in (q\scrG[[q]])^1, \quad
\sum_{m \geq 1} {\textstyle \frac{1}{m!}} \ell^{m}(\alpha^{\otimes m}) = 0 \in (q\scrG[[q]])^2.
\end{equation}
Any such $\alpha$ gives rise to a formal deformation of the $L_\infty$-structure on $\scrG[[q]]$, namely \cite[Proposition 4.4]{getzler95}
\begin{equation} \label{eq:deform-the-linfty-algebra}
\ell^m_{\alpha}(x_1,\dots,x_m) \stackrel{\mathrm{def}}{=} \sum_{j \geq 0} {\textstyle \frac{1}{j!}} \ell^{m+j}(\alpha^{\otimes j},x_1,\dots,x_m)
\end{equation}
(because $\alpha$ is of odd degree, it doesn't really matter where we insert $\alpha^{\otimes j}$ in the entries here). By differentiating \eqref{eq:lie-mc} with respect to $q$, one obtains: 
\begin{equation} \label{eq:ell1alpha}
\ell^1_\alpha(\partial_q\alpha) = \sum_{m \geq 1} {\textstyle\frac{1}{(m-1)!}} 
\ell^m(\alpha^{\otimes m-1}, \partial_q\alpha)
 = \sum_{m \geq 1} \textstyle{\frac{1}{m!}} \partial_q \ell^m(\alpha^{\otimes m}) = 0.
\end{equation}  
More generally, a Maurer-Cartan element deforms the structure of an arbitrary $L_\infty$-module, in the sense that the operations on $\scrM[[q]]$ defined by
\begin{equation} \label{eq:deform-the-linfty-module}
\ell^{m,1}_\alpha(x_1,\cdots,x_m,y) \stackrel{\mathrm{def}}{=} \sum_{j\geq 0} \textstyle{\frac{1}{j!}} \ell^{m+j,1}(\alpha^{\otimes j},x_1,\cdots,x_m,y)
\end{equation}
satisfy the $L_\infty$-module relations. It is again useful to look at what one gets by differentiating this formula. In the simplest instance, 
\begin{equation} \label{eq:commutationalphaeq}
\partial_q \ell^{0,1}_{\alpha}(y) - 
\ell^{0,1}_{\alpha}(\partial_q y) =
\sum_{m \geq 1} \textstyle{\frac{1}{(m-1)!}} \ell^{m,1}(\alpha^{\otimes m-1},\partial_q\alpha,y) = \ell^{1,1}_{\alpha}(\partial_q\alpha,y).
\end{equation}

\section{Floer theory preliminaries\label{sec:floer}}
This section sets up a version of Floer cohomology on Liouville manifolds, using Hamiltonians with quadratic growth. This allows us to work with a single Hamiltonian (as opposed to a sequence of Hamiltonians with linear growth and increasing slopes). For Floer-theoretic operations with more than one input, the Hamiltonian at the output end cannot be equal to that at the inputs, but one can arrange that the Floer complexes are still the same, by applying a rescaling trick to the Liouville manifold as in \cite{abouzaid10}. On a more technical level, we follow \cite{ganatra13} but modify the construction by adding ``unperturbed shells'', in order to reduce analytic aspects to their most elementary part; the downside being that the choices of Hamiltonians, and of almost complex structures, must be monitored carefully (see in particular Section \ref{subsubsec:operations}). Because of those changes, it has seemed appropriate to give a reasonably self-contained presentation. We will be considering only Riemann surfaces of genus zero, with one negative end; this restriction is not strictly necessary for developing the theory, but it does cover all cases relevant to us.

\subsection{The Hamiltonian theory\label{sec:hamiltonian}}

\subsubsection{Basic notation}
We begin by recalling some elementary notions underlying the construction of Floer cohomology on an exact symplectic manifold $(\hat{N}, \omega_{\hat{N}} = d\theta_{\hat{N}})$ (the notation $\hat{N}$ is chosen in view of later applications; at the moment, any exact symplectic manifold will do). Write $\scrH(\hat{N}) = \smooth(\hat{N},\bR)$, and $\scrJ(\hat{N})$ for the space of compatible almost complex structures.

Let $S$ be an oriented surface. Suppose that we are given a one-form on $S$ with values in $\scrH(\hat{N})$, or equivalently, a one-form on $S \times \hat{N}$ which vanishes in $T\hat{N}$-direction; we call this a ``Hamiltonian term'' and denote it by $K_S$. We remind the reader of our sign conventions for Hamiltonian vector fields from Section \ref{subsec:conventions}(f).  By passing from functions to Hamiltonian vector fields, one associates to $K_S$ a one-form $Y_S$ with values in $\smooth(T\hat{N})$, or equivalently a map $TS \rightarrow T\hat{N}$ of bundles pulled back to $S \times \hat{N}$. To summarize, we have
\begin{equation} \label{eq:k-y}
K_S \in \Omega^1(S,\scrH(\hat{N})) \subset \Omega^1(S \times \hat{N}) \; \Rightarrow \; Y_S \in \Omega^1(S,\smooth(T\hat{N})) \iso \mathit{Hom}_{S \times \hat{N}}(TS,T\hat{N}).
\end{equation}
This describes a Hamiltonian connection on the trivial bundle $S \times \hat{N} \rightarrow S$. More precisely, the connection is $d-K_S$ (where the relevant Lie algebra is $\scrH$, with the Poisson bracket) or $d-Y_S$ (where the relevant Lie algebra is that of Hamiltonian vector fields). One can think of a connection in terms of parallel transport: given a path $c(t) \in S$, the associated parallel transport is the family $(\phi_t)$ of symplectomorphisms satisfying
\begin{equation} \label{eq:phi-parallel-transport}
d\phi_t/dt = \phi_t^*(Y_S(c'(t))).
\end{equation}
The curvature of $d-K_S$ (again see Section \ref{subsec:conventions}(f) for sign conventions) is
\begin{equation} \label{eq:curvature}
F_S =  -dK_S + \half \{K_S,K_S\} \in \Omega^2(S,\scrH(\hat{N})).
\end{equation}
In local coordinates $(s,t)$ on $S$ (and omitting the $\mathit{ds} \wedge \mathit{dt}$), this means that
\begin{equation} \label{eq:local-curvature}
F_S = -\partial_s K_S(\partial_t) + \partial_t K_S(\partial_s) + \{K_S(\partial_s), K_S(\partial_t)\}.
\end{equation}
Since $S$ is oriented, it makes sense to say that the curvature is nonnegative (meaning, $F_S$ evaluates nonnegatively on any oriented basis of tangent vectors, or the function \eqref{eq:local-curvature} is nonnegative in oriented local coordinates).

Take a map $u: S \rightarrow \hat{N}$. Given a loop $c: S^1 \rightarrow S$, the action of $u$ along $c$ is
\begin{equation} \label{eq:action}
A(u|c) = \int_c c^*(-u^*\theta_{\hat{N}} + u^*K_S),
\end{equation}
where $u^*K_S$ is really the pullback by the graph $(z,u(z)): S \rightarrow S \times \hat{N}$. One extends this additively to finite collections of loops ($1$-cycles). If $C \subset S$ is a compact subdomain (always assumed to have smooth boundary), the topological energy of $u$ on $C$ is
\begin{equation} \label{eq:action-energy}
E^{\mathrm{top}}(u|C) = \int_C u^*\omega_{\hat{N}} - d(u^*K_S) = -A(u|\partial C).
\end{equation}
Suppose that $S$ carries the structure $j_S$ of a Riemann surface, and the target space a family $J_S = (J_{S,z})_{z \in S}$, $J_{S,z} \in \scrJ(\hat{N})$. The associated Cauchy-Riemann equation is
\begin{equation} \label{eq:cauchy-riemann}
(du-Y_S)^{0,1} = \half \big ( (du - Y_S) + J_{S,z} \circ (du - Y_S) \circ j_S \big) = 0.
\end{equation}
The geometric energy of a solution, restricted to a compact subdomain, is
\begin{equation} \label{eq:two-energies}
E^{\mathrm{geom}}(u|C) = \int_C \half \| du - Y_S \|^2 = E^{\mathrm{top}}(u|C) - \int_C u^*F_S.
\end{equation}
In a local complex coordinate $z = s+it$ on $S$, the integrand for the geometric energy (again omitting the $\mathit{ds} \wedge \mathit{dt}$) is
\begin{equation} 
\half\|du - Y_S\|^2 = \|\partial_s u - Y_S(\partial_s)\|^2 = \|\partial_t u - Y_S(\partial_t)\|^2.
\end{equation}

\begin{example}
On $S = \bR \times S^1$, take $K_S = H_{s,t} \mathit{dt}$, $J_S = J_{s,t}$. The Cauchy-Riemann equation is the familiar continuation map equation $\partial_s u + J_{s,t}(\partial_t u - X_{s,t}) = 0$, where $X$ is the Hamiltonian vector field of $H$; and having nonnegative curvature reduces to the monotonicity condition $\partial_s H_{s,t} \leq 0$. Floer's equation is the translation-invariant special case, $H_{s,t} = H_t$ and $J_{s,t} = J_t$, for which the curvature is zero.
\end{example} 

\subsubsection{Liouville manifolds\label{subsubsec:liouville}}
We will now be more specific about the class of symplectic manifolds under consideration. A Liouville domain is a compact exact symplectic manifold with boundary $N$, such that the associated Liouville vector field $Z_N$, defined by $\omega_N(Z_N,\cdot) = \theta_N$, satisfies
\begin{equation} \label{eq:liouville-condition}
Z_N|\partial N \text{ points strictly outwards.}
\end{equation}
Then, $\alpha_{\partial N} = \theta_N|\partial N$ is a contact one-form on $\partial N$, whose Reeb field we denote by $R_{\partial N}$. The completion of $N$ is the Liouville manifold $(\hat{N}, \omega_{\hat{N}} = d\theta_{\hat{N}})$ obtained by attaching a cone to $\partial N$:
\begin{equation} \label{eq:liouville-completion}
\begin{aligned}
& \hat{N} = N\cup_{\partial N} ([1,\infty) \times \partial N), \\
& \theta_{\hat{N}}\, |\, ([1,\infty) \times \partial N) = \rho\, \alpha_{\partial N},
\end{aligned}
\end{equation}
where $\rho$ is the $[1,\infty)$ coordinate. 

\begin{itemize}
\itemsep.5em
\item
For any $r \geq 1$, we write $N_r = \hat{N} \setminus \{\rho > r\} = N \cup_{\partial N} ([1,r] \times \partial N)$ for the compact piece of $\hat{N}$ bounded by $\{r\} \times \partial N$ ($N_1 = N$ is the original Liouville domain).

\item
By an $r$-shell, for $r>1$, we mean a subset $[r-\epsilon,r+\epsilon] \times \partial N \subset \hat{N}$, for some $\epsilon \in (0,r-1]$.

\item 
We write $Z_{\hat{N}}$ for the Liouville vector field of $\theta_{\hat{N}}$, and $\lambda_{\hat{N}}$ for its flow, which is defined for all times. These satisfy
\begin{equation} \label{eq:liouville-flow}
\begin{aligned}
& Z_{\hat{N}}|([1,\infty) \times \partial N) = \rho\partial_\rho, \\
& \lambda_{\hat{N},t}(\rho,x) = (e^t\rho,x) \text{ for } (\rho,x) \in [1,\infty) \times \partial N \text{ with } e^t\rho \geq 1.
\end{aligned}
\end{equation}

\item
On the cone, the Hamiltonian vector field of the function $\rho: [1,\infty) \times \partial N \rightarrow \bR$ is the Reeb field extended by $0$ in $\rho$-direction, which we simply write as
\begin{equation}
X_\rho = R_{\partial N}.
\end{equation}
The standard quadratic Hamiltonian is the function $\half\rho^2$, which therefore satisfies
\begin{equation}
X_{\frac12\rho^2} = \rho R_{\partial N}.
\end{equation}

\item A compatible almost complex structure $J$ is called of contact type if, on the cone,
\begin{equation} \label{eq:contact-type}
\alpha_{\partial N} \circ J = d\log(\rho) \;\; \Leftrightarrow \;\;
\theta_{\hat{N}} \circ J = d\rho \;\; \Leftrightarrow \;\; J(Z_{\hat{N}}) = R_{\partial N}.
\end{equation}
For the metric associated to any such $J$, we have the equalities (again on the cone)
\begin{equation} \label{eq:unit-norms}
\|R_{\partial N}\| = \|Z_{\hat{N}}\| = \|d\rho\| = \|\theta_{\hat{N}}\| = \rho^{1/2}.
\end{equation}
\end{itemize}

\begin{example}
Let $N$ be a disc of radius 2 in $\bR^2$ with $\theta_N = \half (x \mathit{dy} - y \mathit{dx})$. The Reeb field on the boundary is given by $R_{\partial N} = \half(x\partial_y - y\partial_x)$. The completion $\hat{N}$ can be identified with all of $\bR^2$ with its standard symplectic structure. The Liouville vector field is given by $Z_{\hat{N}} = \half(x \partial_x + y \partial_y)$ and the Liouville coordinate is given by $\rho = \quarter(x^2+y^2)$, from which it follows that the standard complex structure is of contact type.
\end{example}

Our Riemann surfaces $S$ will come with a one-form
\begin{equation} \label{eq:d-beta}
\beta_S \in \Omega^1(S), \;\; d\beta_S \leq 0.
\end{equation}
This singles out a particularly simple class of Hamiltonian terms, namely those which on the cone satisfy $K_S = \half \rho^2 \beta_S$; the condition \eqref{eq:d-beta} is then equivalent to nonnegativity of the curvature on the cone. The actual construction of Hamiltonian Floer cohomology will be a bit more complicated, as the Hamiltonian term will have to be perturbed at least on part of the cone, so as to make it non-autonomous.

\subsubsection{A priori bounds}
The following is a simple special case of arguments from \cite{floer-hofer94, cieliebak94,oancea08,ganatra13} (in the last-mentioned reference, it corresponds to Lemma A.4). 

\begin{lemma} \label{th:action-bounds}
Take a (holomorphically embedded) cylinder $C = [0,1] \times S^1 \subset S$, over which:
\begin{itemize} 
\item the almost complex structures $J_S$ are of contact type on the cone;
\item the Hamiltonian term is $K_S = (\sigma/2) \rho^2 \mathit{dt}$ on the cone, for some constant $\sigma>0$. 
\end{itemize}
Choose constants $e>0$ and $a \in \bR$. Then there is a constant $r \geq 1$ (depending on $\sigma$ and $K_S$, as well as $e$ and $a$) such that the following holds. Given any solution $u$ of \eqref{eq:cauchy-riemann} which satisfies
\begin{align}
\label{eq:e-bound} & E^{\mathrm{geom}}(u|C) \leq e, \\
\label{eq:a-bound} & A(u|\{1\} \times S^1) \geq a,
\end{align}
there is an $s \in [0,1]$ for which the restriction $u(s,\cdot): S^1 \rightarrow \hat{N}$ takes values in $N_r$.
\end{lemma}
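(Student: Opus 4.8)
The plan is to estimate the action along the slice $\{1\}\times S^1$ in terms of the geometric energy together with the maximal value of $\rho$ on that slice, and then to combine this with the curvature/energy identity to conclude that a nearby slice must stay inside a fixed compact set. First I would set up notation: on the cone, write $u = (\rho\circ u, x\circ u)$ in the coordinates $(\rho,x)\in[1,\infty)\times\partial N$, and work in the complex coordinate $z = s+it$ on $C = [0,1]\times S^1$ where $K_S = (\sigma/2)\rho^2\,\mathit{dt}$ and $Y_S(\partial_t) = \sigma\rho R_{\partial N}$, $Y_S(\partial_s)=0$. With $J_S$ of contact type, the Cauchy-Riemann equation \eqref{eq:cauchy-riemann} gives $\partial_s u + J_{S,z}(\partial_t u - \sigma\rho R_{\partial N}) = 0$, and pairing with $\theta_{\hat N}$ using \eqref{eq:contact-type} (so $\theta_{\hat N}\circ J = d\rho$) produces the standard identity $\partial_s(\rho\circ u) = \alpha_{\partial N}(\partial_t u) - \sigma(\rho\circ u)^2$ wherever $u$ lands on the cone (I would phrase everything so as to be vacuous off the cone).

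\textbf{Key steps.} Step 1: show the slice-action $A(u|\{s\}\times S^1) = \int_{S^1} -u^*\theta_{\hat N} + u^*K_S$ at a point where $u(s,\cdot)$ meets the cone satisfies a pointwise lower bound of the form $A(u|\{s\}\times S^1) \le -c_\sigma\,(\min_{S^1}\rho(u(s,\cdot)))^2 + (\text{energy term})$, using that $\int_{S^1}(-u^*\theta_{\hat N}+u^*K_S) = \int_{S^1}\big(-\alpha_{\partial N}(\partial_t u) + \tfrac{\sigma}{2}\rho^2\big)\,dt$ on the cone, Cauchy-Schwarz in the fibre, and \eqref{eq:unit-norms} to bound $\|\partial_t u - Y_S(\partial_t)\|$ by the geometric energy density. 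The upshot: if $u(s_0,\cdot)$ is entirely in the region $\{\rho>r\}$ for large $r$, then $A(u|\{s_0\}\times S^1)$ is very negative, at worst like $-\,c_\sigma r^2 + C e$. Step 2: use \eqref{eq:action-energy}, \eqref{eq:two-energies} and nonnegativity of the curvature to control how the slice-action varies with $s$: for $0\le s\le s'\le 1$,
\begin{equation*}
A(u|\{s'\}\times S^1) - A(u|\{s\}\times S^1) = \int_{[s,s']\times S^1} u^*\omega_{\hat N} - d(u^*K_S) = E^{\mathrm{geom}}(u|[s,s']\times S^1) + \int u^* F_S \ge 0,
\end{equation*}
so $A(u|\{s\}\times S^1)$ is nondecreasing in $s$; in particular $A(u|\{s\}\times S^1)\le A(u|\{1\}\times S^1)$ is false in the wrong direction — rather $A(u|\{0\}\times S^1)\le A(u|\{s\}\times S^1)\le A(u|\{1\}\times S^1)$, and combined with the hypothesis $A(u|\{1\}\times S^1)\ge a$ we also get the two-sided control $a \le A(u|\{1\}\times S^1)$ is the top, while the bottom $A(u|\{0\}\times S^1)\ge A(u|\{1\}\times S^1) - E^{\mathrm{geom}}(u|C) \ge a - e$ by \eqref{eq:e-bound}. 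Hence \emph{every} slice has $A(u|\{s\}\times S^1)\in[a-e,\;A(u|\{1\}\times S^1)]$, and in particular $A(u|\{s\}\times S^1)\ge a-e$ for all $s\in[0,1]$.

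Step 3: combine. Suppose for contradiction that for \emph{every} $s\in[0,1]$ the slice $u(s,\cdot)$ leaves $N_r$, i.e. meets $\{\rho>r\}$, for a value of $r$ still to be chosen. A short argument (either the slices stay in the cone region $\{\rho\ge 1\}$ for a subinterval, or one uses that $\{\rho>r\}$ and its complement are separated) lets me reduce to a slice $\{s_1\}\times S^1$ on which $u$ lies in the cone and $\min_{S^1}\rho(u(s_1,\cdot))$ is comparable to $r$ — this is where I would invoke the a priori $C^0$-estimate on the cone that follows from the differential inequality for $\partial_s(\rho\circ u)$ derived above (it forces the oscillation of $\rho\circ u(s,\cdot)$ to be controlled by $e$ once $\rho$ is large, so $\min$ and $\max$ over $S^1$ are close). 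Then Step 1 gives $A(u|\{s_1\}\times S^1)\le -c_\sigma r^2 + Ce$, contradicting $A(u|\{s_1\}\times S^1)\ge a-e$ once $r$ is chosen large enough depending on $\sigma$, $e$, $a$ (and the fixed $K_S$). This yields the desired $r$ and an $s\in[0,1]$ with $u(s,\cdot)$ valued in $N_r$.

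\textbf{Main obstacle.} The delicate point is Step 3: passing from ``each slice leaves $N_r$'' to ``some slice sits deep in the cone with $\rho$ large and nearly constant on $S^1$''. A slice could in principle dip in and out of $N_r$ while the whole cylinder wanders, so one needs the a priori estimate that on the cone $\rho\circ u$ cannot oscillate much once it is large — precisely the maximum-principle/Gromov-monotonicity ingredient of \cite{cieliebak94, oancea08, ganatra13}. I expect the cleanest route is to first establish that differential inequality (for the cone region, using contact-type $J$ and the explicit form of $K_S$), deduce a uniform bound relating $\max_{S^1}\rho(u(s,\cdot))$ and $\min_{S^1}\rho(u(s,\cdot))$ via the geometric energy $e$, and only then run the action comparison. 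Everything else — \eqref{eq:action-energy}, \eqref{eq:two-energies}, curvature nonnegativity from $d\beta_S\le 0$, and \eqref{eq:unit-norms} — is bookkeeping.
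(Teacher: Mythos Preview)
Your overall architecture (action estimate on cone slices, energy/curvature comparison of slice-actions, then an oscillation bound to pin down a slice) matches the paper's, but two of the three steps have concrete gaps.

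\textbf{Step 2: curvature is not nonnegative.} The hypothesis only fixes $K_S$ on the cone; off the cone $K_S$ is arbitrary, so $F_S$ need not be $\ge 0$ (indeed this is why the constant $r$ is allowed to depend on $K_S$, not just on $\sigma$). Your inequality $A(u|\{s'\}\times S^1)-A(u|\{s\}\times S^1)\ge 0$ is unjustified. The fix is easy: since $F_S=0$ on the cone and the complement is compact, choose $b$ with $|F_S|\le b$; then for any $s$,
\[
A(u|\{s\}\times S^1)=A(u|\{1\}\times S^1)+E^{\mathrm{geom}}(u|[s,1]\times S^1)+\textstyle\int_{[s,1]\times S^1}u^*F_S\ \ge\ a-b,
\]
using $E^{\mathrm{geom}}\ge 0$ (the bound $e$ is not needed here).

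\textbf{Step 3: the oscillation bound is misdirected.} You propose to control the $t$-oscillation of $\rho\circ u(s,\cdot)$ via the identity for $\partial_s(\rho\circ u)$, but that identity governs variation in $s$, not $t$; it cannot by itself bound $\max_{S^1}\rho-\min_{S^1}\rho$ at fixed $s$. The paper's route avoids the contradiction argument entirely and is direct. First, by Fubini there is some $s\in[0,1]$ with $\int_{\{s\}\times S^1}\|\partial_t u - Y_S(\partial_t)\|^2\le e$. Second, since $Y_S(\partial_t)=\sigma\rho R_{\partial N}$ is tangent to the $\rho$-level sets and $\|d(\rho^{1/2})\|=\tfrac12$ by \eqref{eq:unit-norms}, one has $|\partial_t(\rho^{1/2}\circ u)|\le \tfrac12\|\partial_t u - Y_S(\partial_t)\|$ wherever $u$ is on the cone; integrating gives oscillation of $\rho^{1/2}$ on any cone-subinterval of $\{s\}\times S^1$ at most $\tfrac12 e^{1/2}$. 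Third, split into cases: if the slice touches $N_1$, the oscillation bound gives $\rho^{1/2}\le 1+\tfrac12 e^{1/2}$; if the slice lies entirely in the cone, your Step~1 inequality (which actually yields $A(u|\{s\}\times S^1)\le \sigma^{-1}e-\tfrac{\sigma}{4}\int_{S^1}\rho^2(u)$) combined with $A\ge a-b$ bounds $\int_{S^1}\rho^2$, hence some point has $\rho^2$ below that bound, and the oscillation bound finishes. This is the missing ingredient you flagged as the ``main obstacle''; the key is to pick the low-slice-energy $s$ \emph{first} and argue only on that slice.
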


\begin{proof}
From \eqref{eq:unit-norms}, $\|d(\rho^{1/2})\| = \half$. Hence, if $u(s,t)$ lies in the cone, then at that point,
\begin{equation}
\begin{aligned} \label{eq:estimatetderivativerootrho}
& |\partial_t(\rho^{1/2}(u))| = |d(\rho^{1/2})(\partial_tu)|  = |d(\rho^{1/2})(\partial_tu - \sigma \rho R_{\partial N})|
\\ & \qquad \leq \half \|\partial_t u - \sigma \rho R_{\partial N}\| = \half \|\partial_t u - Y_S(\partial_t)\|.
\end{aligned}
\end{equation}
For the second equality in \eqref{eq:estimatetderivativerootrho}, we have used the fact that $d(\rho^{1/2})(R_{\partial N})=0$. It follows that, if $\{s\} \times [t_0,t_1]$ is an interval whose entire image under $u$ lies in the cone, then 
\begin{equation} \label{eq:change-of-rho}
\begin{aligned}
& |\rho^{1/2}(u(s,t_1))-\rho^{1/2}(u(s,t_0))| \leq \half \int_{\{s\} \times [t_0,t_1]} \|\partial_t u-Y_S(\partial_t)\|
\\ & \; \leq \half (t_1-t_0)^{1/2} \Big( \int_{\{s\} \times [t_0,t_1]} \|\partial_t u - Y_S(\partial_t)\|^2 \Big)^{1/2} \leq
\half \Big( \int_{\{s\} \times S^1} \|\partial_t u - Y_S(\partial_t)\|^2 \Big)^{1/2}.
\end{aligned}
\end{equation}
Similarly, using $\|\theta_{\hat{N}}\| = \rho^{1/2}$, we see that for any $s$ such that $u(\{s\} \times S^1)$ lies inside the cone,
\begin{equation} \label{eq:lower-action}
\begin{aligned}
&
A(u|\{s\} \times S^1) = \int_{\{s\} \times S^1} -\theta_{\hat{N}}(\partial_t u) + \smallfrac{\sigma}{2}\rho^2(u)
\\ & \qquad
= \int_{\{s\} \times S^1} -\theta_{\hat{N}}(\partial_t u - \sigma \rho R_{\partial N}) - \smallfrac{\sigma}{2}\rho^2(u) 
\\ & \qquad
\leq \int_{\{s\} \times S^1} \rho^{1/2}(u) \|\partial_t u-Y_S(\partial_t)\| - \smallfrac{\sigma}{2} \rho^2(u)
\\ & \qquad
\leq \Big(\int_{\{s\} \times S^1} \rho(u)\Big)^{\half} \Big(
 \int_{\{s\} \times S^1} \|\partial_t u-Y_S(\partial_t)\|^2\Big)^{\half} - \smallfrac{\sigma}{2} \int_{\{s\} \times S^1} \rho^2(u) 
\\ & \qquad
\leq \Big(\int_{\{s\} \times S^1} \rho^2(u)\Big)^{\half} \Big(
 \int_{\{s\} \times S^1} \|\partial_t u-Y_S(\partial_t)\|^2\Big)^{\half} - \smallfrac{\sigma}{2} \int_{\{s\} \times S^1} \rho^2(u) 
\\ & \qquad \qquad \qquad + 
 \left( \frac{1}{\sigma^{1/2}} \Big(\int_{\{s\} \times S^1} \|\partial_t u - Y_S(\partial_t)\|^2 \Big)^{\half} -
\frac{\sigma^{1/2}}{2} \Big(\int_{\{s\} \times S^1} \rho^2(u)\Big)^{\half}\right)^2
\\ & \qquad
= \frac{1}{\sigma} \int_{\{s\} \times S^1} \|\partial_t u-Y_S(\partial_t)\|^2 - \frac{\sigma}{4} \int_{\{s\} \times S^1} \rho^2(u).
\end{aligned}
\end{equation}
The second equality used that $\theta_{\hat{N}}(R_{\partial N})= \rho.$ The last step involved the silly inequality $\rho \leq \rho^2$ on the cone, and an equally elementary Peter-Paul trick. Next, we use the available action and energy bounds. The curvature \eqref{eq:curvature} is bounded on our finite cylinder, because it vanishes when restricted to the cone. Fix a bound,
\begin{equation} \label{eq:curvature-bound}
|F_S| \leq b.
\end{equation}
Then for any $s \in [0,1]$, we get from \eqref{eq:a-bound} that
\begin{equation} \label{eq:a-bound-2}
\begin{aligned}
& A(u|\{s\} \times S^1) = A(u|\{1\} \times S^1) + E^{\mathrm{top}}(u|[s,1] \times S^1)
\\ & \qquad = A(u|\{1\} \times S^1) + E^{\mathrm{geom}}(u|[s,1] \times S^1) + \int_{[s,1] \times S^1} u^*F_S
\geq a - b.
\end{aligned}
\end{equation}
From \eqref{eq:e-bound}, we see that there must be some $s \in [0,1]$ such that 
\begin{equation} \label{eq:low-diameter} 
\int_{\{s\} \times S^1} \|\partial_t u - Y_S(\partial_t)\|^2 = \int_{\{s\} \times S^1} \half \|du - Y_S\|^2 
\leq e.
\end{equation}
For that value of $s$, the inequalities \eqref{eq:change-of-rho} and \eqref{eq:lower-action}, \eqref{eq:a-bound-2} imply
\begin{align} \label{eq:diameter-bound}
& |\rho^{1/2}(u(s,t_1))-\rho^{1/2}(u(s,t_0))| \leq \half e^{1/2} && \text{if $u(\{s\} \times [t_0,t_1])$ lies in the cone,}
\\
\label{eq:lambda-bound}
& \int_{\{s\} \times S^1} \rho^2(u) 
\leq \smallfrac{4}{\sigma}(b-a) + \smallfrac{4}{\sigma^2}e && \text{if $u(\{s\} \times S^1)$ lies in the cone.}
\end{align}

The argument now concludes as follows. Take a value of $s$ for which \eqref{eq:low-diameter} holds. Suppose that there is some $t \in S^1$ such that $u(s,t)$ does not lie in the cone. Then, by applying \eqref{eq:diameter-bound} to maximal intervals $\{s\} \times [t_0,t_1]$ which are mapped to the cone, one sees that
\begin{equation} \label{eq:12-one}
\rho^{1/2}(u(s,t)) \leq 1 + \half e^{1/2} \;\; \text{ for all $t$ such that $u(s,t)$ is in the cone.}
\end{equation}
On the other hand, suppose that $u(\{s\} \times S^1)$ is entirely contained in the cone. Then \eqref{eq:lambda-bound} applies, which shows that that there must be some $t$ such that $\rho^2(u(s,t)) \leq \smallfrac{4}{\sigma}(b-a) + \smallfrac{4}{\sigma^2} e$. In conjunction with \eqref{eq:diameter-bound}, one gets
\begin{equation} \label{eq:12-two}
\rho^{1/2}(u(s,t)) \leq \big( \smallfrac{4}{\sigma}(b-a) + \smallfrac{4}{\sigma^2} e\big)^{1/4} + \half e^{1/2}\;\; \text{ for all $t$.}
\end{equation}
These inequalities provide the required $r$.
\end{proof}

The following is the integrated maximum principle \cite{abouzaid-seidel07, abouzaid10, ganatra13} (in the last reference, Section A.2). 

\begin{lemma} \label{th:integrated-maximum-principle}
Let $C \subset S$ be a connected compact subdomain with nonempty boundary. For some $r$, suppose that:
\begin{itemize} 
\item there is an $r$-shell on which: $K_S = \half \rho^2 \beta_S$, and $J_S$ is of contact type, over $C$.
\item The curvature $F_S$ is nonnegative on $C \times ([r,\infty) \times \partial N)$. 
\end{itemize}
Let $u: S \rightarrow \hat{N}$ be a solution of \eqref{eq:cauchy-riemann} such that $u(\partial C) \subset N_r$. Then $u(C) \subset N_r$. 
\end{lemma}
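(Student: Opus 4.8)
The plan is to prove a pointwise maximum-principle statement by contradiction, using the fact that on the ``bad'' region the function $\rho \circ u$ would have to be subharmonic, contradicting that it attains an interior maximum. Concretely, suppose that $u(C) \not\subset N_r$. Since $u(\partial C) \subset N_r$ and $C$ is compact, the open set $C' = u^{-1}(\{\rho > r\}) \cap \mathrm{int}(C)$ is nonempty, and its closure is contained in $\mathrm{int}(C)$; moreover $u(\partial C') \subset \{\rho = r\}$. The function $\rho \circ u$ attains its maximum over $\overline{C'}$ at some interior point, and is $> r$ there. First I would observe that on the $r$-shell, where $K_S = \tfrac12\rho^2\beta_S$ and $J_S$ is of contact type, the restriction of $u$ near $C'$ has image in the cone (after possibly shrinking $r$ by the shell width — actually one should set up the shell so that $u$ only needs to enter the region $[r,\infty)\times\partial N$, which is the content of the hypotheses). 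Then I would run the standard integrated maximum principle: integrate $d(u^*(\rho\,\alpha_{\partial N}))$ over $C'$ and use Stokes.

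The key computation: over the cone, write the Cauchy-Riemann equation \eqref{eq:cauchy-riemann} and compute $\int_{C'} u^* d(\rho\,\alpha_{\partial N}) = \int_{\partial C'} u^*(\rho\,\alpha_{\partial N})$. On one hand, since $\rho \circ u$ is constant $=r$ on $\partial C'$ and $\alpha_{\partial N}$ restricted there integrates against the direction of the Reeb flow, one shows (as in \cite{abouzaid-seidel07, ganatra13}) the boundary term has a sign: $\int_{\partial C'} u^*(\rho\,\alpha_{\partial N}) \leq 0$ because $\rho \circ u$ has an interior max, so $\partial C'$ is ``outward pointing'' and the tangential derivative of $\rho \circ u$ vanishes while the normal derivative is $\geq 0$, forcing the contact-type condition $\alpha_{\partial N}\circ J = d\log\rho$ to control the sign. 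On the other hand, one expands $u^* d(\rho\,\alpha_{\partial N}) = u^*\omega_{\hat N}$ on the cone, and using the inhomogeneous term $Y_S$ (whose $\rho$-component on the shell is $\rho R_{\partial N}\otimes\beta_S$) plus the curvature hypothesis $F_S \geq 0$ on $C \times ([r,\infty)\times\partial N)$, one rewrites $\int_{C'} u^*\omega_{\hat N}$ as a sum of $\tfrac12\|du - Y_S\|^2 \geq 0$, a curvature term $\int_{C'} u^*F_S \geq 0$, and the boundary action term, obtaining $\int_{C'} u^*\omega_{\hat N} \geq \int_{\partial C'} u^* K_S$. Combining the two estimates forces all the nonnegative quantities to vanish, in particular $du = Y_S$ on $C'$, which makes $\rho \circ u$ constant on $C'$ — contradicting that its maximum is $> r$ while its boundary value is $r$.

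The main obstacle I expect is the careful bookkeeping at the boundary $\partial C'$: one needs to verify that the boundary term $\int_{\partial C'} u^*(\rho\,\alpha_{\partial N} - K_S)$ genuinely has the right sign, which relies on the precise interplay between the contact-type condition, the sign $d\beta_S \leq 0$ (via \eqref{eq:d-beta}), and the fact that $\partial C'$ is a level set of $\rho \circ u$ where the function decreases outward. This is exactly where the ``integrated'' (as opposed to pointwise) form of the maximum principle is needed, since $\partial C'$ need not be smooth and $u$ need not be transverse to $\{\rho = r\}$; one handles this either by Sard's theorem (choosing a slightly larger regular value $r' \in (r, r+\epsilon)$ and working with $C'' = u^{-1}(\{\rho > r'\})$, then letting $r' \to r$) or by approximation. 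I would carry out the argument at a regular value $r'$ and then conclude by taking the infimum over such $r'$. The energy and action a priori bounds of Lemma \ref{th:action-bounds} are not needed here; this is a purely local statement given the hypothesis $u(\partial C) \subset N_r$.
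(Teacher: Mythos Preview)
Your approach is essentially the same as the paper's: pick a regular value $\tilde r$ slightly above $r$ (by Sard), take $D = u^{-1}([\tilde r,\infty)\times\partial N)\cap C$, and combine two inequalities to force $E^{\mathrm{geom}}(u|D)=0$, hence $du = Y_S$ on $D$, contradicting transversality at $\partial D$.

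However, the way you state and combine the two estimates in the middle paragraph does not close. Your (a) is $\int_{\partial D} u^*(\rho\,\alpha_{\partial N}) \leq 0$, and your (b) is $\int_{\partial D} u^*(\rho\,\alpha_{\partial N}) = \int_D u^*\omega_{\hat N} \geq \int_{\partial D} u^*K_S$. Together these only give $\int_{\partial D} u^*K_S \leq \int_{\partial D} u^*\theta_{\hat N} \leq 0$, which does not force the nonnegative gap $E^{\mathrm{geom}}+\int_D u^*F_S$ to vanish. What you actually need is the sharper boundary estimate
\[
\int_{\partial D} u^*\theta_{\hat N} \;\leq\; \int_{\partial D} u^*K_S,
\]
i.e.\ $A(u|\partial D)\geq 0$; then (b) sandwiches it and you are done. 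This sharper inequality comes from the pointwise contact-type computation $u^*\theta_{\hat N}(\xi) < \tilde r^2\,\beta_S(\xi)$ for positively oriented $\xi\in T\partial D$ (since $j_S\xi$ points into $D$, so $d\rho(du(j_S\xi))>0$, and one applies $\theta_{\hat N}\circ J_S = d\rho$), integrated to $\int_{\partial D}u^*\theta_{\hat N}\leq \tilde r^2\int_{\partial D}\beta_S$; then Stokes gives $\int_{\partial D}\beta_S = \int_D d\beta_S \leq 0$, whence $\tilde r^2\int_{\partial D}\beta_S \leq \tfrac12\tilde r^2\int_{\partial D}\beta_S = \int_{\partial D}u^*K_S$. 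You do identify exactly this as ``the main obstacle'' in your final paragraph and name the right ingredients ($d\beta_S\leq 0$, the contact-type identity), so the plan is sound; just be aware that the intermediate claim $\int_{\partial D}u^*(\rho\alpha_{\partial N})\leq 0$ is too weak to use directly. Also, the opening reference to $\rho\circ u$ being ``subharmonic'' is a red herring here: the argument is integrated, not pointwise.
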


\begin{proof} 
Suppose that the contrary is true. Then there is an $\tilde{r}$ which is a little larger than $r$, but still within our shell, such that $u|C$ intersects $\{\tilde{r}\} \times \partial N$ transversely in a nonempty subset, which must be disjoint from $\partial C$. Take $D$ to be one of the connected components of $u^{-1}([\tilde{r},\infty) \times \partial N) \cap C$. Because of the curvature assumption, we have
\begin{equation} \label{eq:e-top-geq0}
E^{\mathrm{top}}(u|D) \geq E^{\mathrm{geom}}(u|D) \geq 0.
\end{equation}
Let $\xi$ be a tangent vector at $z \in \partial D$ pointing in positive direction along the boundary. Then $j_S\xi$ points inwards, hence $du(j_S\xi)$ points in positive $\rho$-direction, by the transverse intersection assumption. Hence,
\begin{equation}
\begin{aligned}
& 0 < d\rho(du(j_S\xi)) = d\rho( du - \rho R_{\partial N} \beta_S)(j_S\xi) = (d\rho \circ J_z) (du - \rho R_{\partial N} \beta_S)(\xi) 
\\ & \qquad = \theta_{\hat{N}}(-du + \rho R_{\partial N} \beta_S)(\xi) = -(u^*\theta_{\hat{N}})(\xi) + \tilde{r}^2 \beta_S (\xi).
\end{aligned}
\end{equation} 
Hence,
\begin{equation} \label{eq:action-is-geq0}
A(u|\partial D) = \int_{\partial D} -u^*\theta_{\hat{N}} + \half \rho^2(u) \beta_S \geq -\half \tilde{r}^2 \int_{\partial D} \beta_S = -\half \tilde{r}^2 \int_D d\beta_S \geq 0.
\end{equation}
Comparing \eqref{eq:e-top-geq0} and \eqref{eq:action-is-geq0}, one sees that $E^{\mathrm{geom}}(u|D) = 0$. But that means $du = \rho R_{\partial N} \beta_S$ on $D$, hence $\rho(u)|D = \tilde{r}$ is constant, which is a contradiction to the transverse intersection assumption.
\end{proof}

\subsubsection{Floer data\label{subsubsec:floer}}
To simplify the bookkeeping involved, we fix once and for all a number $P > 1$, such that:

\begin{assumption} \label{th:no-p-orbits}
The Reeb flow on $\partial N$ has no periodic orbits whose period is a positive integer multiple of $P$.
\end{assumption}

A Floer datum is given by a time-dependent Hamiltonian $H = (H_t)_{t \in S^1}$, $H_t \in \scrH(\hat{N})$, with vector field $X = (X_t)$, and $J = (J_t)_{t \in S^1}$, $J_t \in \scrJ(\hat{N})$. We require the following (along the lines of \cite[Section 5]{abouzaid10}).
\begin{itemize} \itemsep.5em
\item
On the cone, 
\begin{equation} \label{eq:perturbed-hamiltonian}
H_t = \half\rho^2 + \tilde{H}_t, 
\end{equation}
where the perturbation $\tilde{H}$ is bounded and has bounded derivative $\partial_\rho \tilde{H}$. 

\item For each integer $i \geq 1$, there is an $iP$-shell in which $\tilde{H} = 0$ and $J$ is of contact type. The one-periodic orbits of $H$ must be disjoint from the shells (this is made possible by the choice of $P$), and nondegenerate.
\end{itemize}
Fix such a Floer datum $(H,J)$. Write $A(x)$ for the action of a one-periodic orbit, defined as in \eqref{eq:action} to be $A(x) = \int_{S^1} -x^*\theta_{\hat{N}} + H_t(x(t)) \mathit{dt}$. Consider Floer solutions asymptotic to one-periodic orbits,
\begin{equation} \textstyle
x_0(t) = \lim_{s \rightarrow -\infty} u(s,t), \;\; x_1(t) = \lim_{s \rightarrow +\infty} u(s,t). 
\end{equation}
For such solutions, the version of \eqref{eq:action-energy} that applies to the whole cylinder is
\begin{equation}
E^{\mathrm{geom}}(u) = \int_{\bR \times S^1} \|\partial_su\|^2 = E^{\mathrm{top}}(u) = 
\int_{\bR \times S^1} u^*\omega_{\hat{N}} - d(u^*H_t\,\mathit{dt}) = 
A(x_0) - A(x_1).
\end{equation}
Following \cite{abouzaid10, ganatra13} (Lemma A.1 of the latter), one has:

\begin{lemma} \label{th:action-growth}
For any $a \in \bR$, there are only finitely many one-periodic orbits with $A(x) \geq a$.
\end{lemma}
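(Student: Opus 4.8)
\textbf{Proof proposal for Lemma \ref{th:action-growth}.}

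The plan is to show that the action of a one-periodic orbit $x$ is, up to a bounded error, controlled from below by $-\half\rho^2$ evaluated on $x$, and then to exploit the fact that the orbits at large radius are (by the structure of the Hamiltonian $\half\rho^2 + \tilde H_t$) confined to a discrete set of radial ``bands'' determined by Reeb periods, with the action decreasing as the radius grows.

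First I would compute the action explicitly on the cone. For a one-periodic orbit $x$ contained in $[1,\infty)\times\partial N$, using $\theta_{\hat N} = \rho\,\alpha_{\partial N}$ and the orbit equation $\dot x = X_{H_t}(x) = \rho R_{\partial N} + X_{\tilde H_t}(x)$, one gets
\begin{equation}
A(x) = \int_{S^1}\big(-\theta_{\hat N}(\dot x) + H_t(x)\big)\,\mathit{dt}
= \int_{S^1}\big(-\rho\,\alpha_{\partial N}(\dot x) + \tfrac12\rho^2 + \tilde H_t(x)\big)\,\mathit{dt}.
\end{equation}
Since $\alpha_{\partial N}(R_{\partial N}) = 1$ and $\alpha_{\partial N}(X_{\tilde H_t})$ is bounded (because $\tilde H$ has bounded derivative), and $\rho$ is itself (essentially) constant along the orbit on the cone up to a bounded oscillation coming from $\tilde H$, this gives $A(x) = -\tfrac12\rho^2 + O(\rho) + O(1)$, where the implied constants depend only on the Floer datum. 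In particular $A(x)\to-\infty$ as the radius of $x$ tends to infinity. Next I would use compactness at bounded radius: for any $r$, the orbits contained in $N_r$ form a compact set, and nondegeneracy (hypothesis of the Floer datum) forces that set to be finite; so there are only finitely many orbits with radius $\le r$. Combining the two observations: given $a\in\bR$, choose $r$ so large that every orbit meeting $\{\rho>r\}$ has $A(x)<a$ (possible by the estimate above, noting that an orbit either lies in $N_r$ or, being a closed Reeb-type orbit on the cone, lies in a region where $\rho$ is comparable to some fixed large value — one uses Assumption \ref{th:no-p-orbits} only to know orbits avoid the shells, not for the counting itself); then all orbits with $A(x)\ge a$ lie in $N_r$, of which there are finitely many.

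The main obstacle I expect is making the phrase ``$\rho$ is essentially constant along an orbit on the cone'' precise, i.e.\ controlling the radial excursion of a one-periodic orbit of $\half\rho^2+\tilde H_t$ in terms of the $C^1$-bound on $\tilde H$. This is where one must be a little careful: on the cone the radial component of the Hamiltonian vector field of $\half\rho^2$ vanishes, so the only radial motion comes from $X_{\tilde H_t}$, whose $\partial_\rho$-component is bounded; hence $\rho$ varies by at most a constant over one period, which is exactly what is needed to turn $-\tfrac12\int\rho^2$ into $-\tfrac12\rho(x(0))^2 + O(\rho)$. Once that estimate is in hand the rest is the routine compactness-plus-nondegeneracy argument. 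Alternatively, one can bypass the radial-excursion analysis entirely by invoking Lemma \ref{th:action-bounds} or the integrated maximum principle (Lemma \ref{th:integrated-maximum-principle}) applied to the constant Floer cylinder $u(s,t)=x(t)$: a lower action bound on $x$ forces $x$ into some $N_r$ with $r$ depending only on the action bound and the datum, which is precisely the contrapositive of the claim.
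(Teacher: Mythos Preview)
Your plan and action estimate are correct and coincide with the paper's. The paper writes the computation slightly more directly via the identity $\theta_{\hat N}(X_H) = dH(Z_{\hat N}) = \rho\,\partial_\rho H$ on the cone, arriving (with $|\tilde H|\le b$, $|\partial_\rho\tilde H|\le c$) at
\[
A(x) \;=\; \int_{S^1}\!\big(-\rho\,\partial_\rho\tilde H_t - \tfrac12\rho^2 + \tilde H_t\big)\,\mathit{dt}
\;\le\; c^2 + b - \tfrac14\!\int_{S^1}\!\rho(x)^2.
\]

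Where you go astray is inserting the radial-excursion step to turn $\int\rho^2$ into $\rho(x(0))^2$. This is both unnecessary and unjustified: the $\partial_\rho$-component of $X_{\tilde H}$ is $-R_{\partial N}(\tilde H)$ (the Reeb derivative), \emph{not} $\partial_\rho\tilde H$, and the Floer datum places no bound on it. Your fallback via Lemma~\ref{th:action-bounds} also does not apply as stated, since that lemma assumes $K_S = \tfrac{\sigma}{2}\rho^2\,\mathit{dt}$ on the cone, whereas the Floer cylinder carries the perturbed $H_t\,\mathit{dt}$.

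The fix is to follow your own opening plan rather than abandon it. The Floer datum requires one-periodic orbits to be disjoint from every $iP$-shell; since each shell separates the cone, every orbit lies either in $N_P\setminus\partial N_P$ or entirely inside one band $(iP,(i{+}1)P)\times\partial N$. For an orbit in the $i$-th band one has $\int\rho^2 > (iP)^2$, so the displayed inequality forces $A(x)<c^2+b-\tfrac14(iP)^2$; and each band, being relatively compact, carries only finitely many orbits by nondegeneracy. This is exactly the paper's proof---no pointwise control of $\rho$ along the orbit is needed.
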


\begin{proof}
The argument is similar to \eqref{eq:lower-action}. Fix constants $b$, $c$ such that
\begin{equation}
|\tilde{H}| \leq b, \quad
|\partial_\rho \tilde{H}| \leq c.
\end{equation}
Then,
\begin{equation} \label{eq:lower-action-2}
\begin{aligned}
& A(x) = \int_{S^1} -\theta_{\hat{N}}(X_t)_{x(t)} + \half \rho(x(t))^2 + \tilde{H}_t(x(t)) \\ & \quad = \int_{S^1} (-\rho \partial_\rho H_t)(x(t)) + \half \rho(x(t))^2 + \tilde{H}_t(x(t))
\\ & \quad = \int_{S^1} (-\rho \partial_\rho\tilde{H}_t)(x(t)) - \half \rho(x(t))^2 + \tilde{H}_t(x(t))
\\ & \quad \leq \int_{S^1} \rho(x(t)) c - \half \rho(x(t))^2 + b  \leq c^2+b - \int_{S^1} \quarter \rho(x(t))^2.
\end{aligned}
\end{equation}
The last inequality is an integrated Peter-Paul inequality. Each orbit is either contained in $N_P \setminus \partial N_P$, or in one of the subsets $(Pi,P(i+1)) \times \partial N$, $i \in \bN$. Because of nondegeneracy, there are only finitely many orbits in each of those classes. For those orbits lying in $(Pi,P(i+1)) \times \partial N$, \eqref{eq:lower-action-2} provides a bound $A(x) \leq c^2 + b - \quarter P^2i^2$.
\end{proof}

The next statement is the main a priori ($C^0$) bound on solutions.

\begin{proposition} \label{th:c0-bound}
Fix a one-periodic orbit $x_1$. There is an $r$ such that any Floer solution with positive limit $x_1$, and arbitrary negative limit $x_0$, is contained in $N_r$.
\end{proposition}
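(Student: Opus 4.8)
The strategy is to combine the two a priori bounds already established: Lemma \ref{th:action-bounds} (which controls a Floer solution on a finite cylinder once one has geometric energy and action bounds) and Lemma \ref{th:integrated-maximum-principle} (the integrated maximum principle, which propagates containment in $N_r$ across a region on which $K_S$ has the standard quadratic form and $F_S \geq 0$). The orbit $x_1$ is fixed, so it lies in some compact piece $N_{r_1}$; and by Lemma \ref{th:action-growth}, there are only finitely many one-periodic orbits with action $\geq A(x_1)$, so the set of \emph{potential} negative limits $x_0$ with $A(x_0) \geq A(x_1)$ (forced by $E^{\mathrm{geom}}(u) = A(x_0) - A(x_1) \geq 0$) is finite; in particular these orbits are all contained in some $N_{r_2}$. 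Moreover the topological energy of any such $u$ is uniformly bounded, $E^{\mathrm{geom}}(u) = A(x_0) - A(x_1) \leq \max_{x_0} A(x_0) - A(x_1) =: e$.

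First I would fix a large integer $i$, so large that both limits $x_0, x_1$ and hence (by the maximum principle applied on the far part of the cone, which is unperturbed) the ``core'' of every relevant solution sits well inside $\{\rho < iP\}$; the $iP$-shell is, by the definition of a Floer datum in Section \ref{subsubsec:floer}, a region on which $\tilde H = 0$ (so $K_S = \half \rho^2 \mathit{dt}$) and $J$ is of contact type. Next I would apply Lemma \ref{th:action-bounds} to a finite cylinder $C = [s_0, s_0+1] \times S^1 \subset \bR \times S^1$ placed inside that shell's preimage: with $e$ as above and $a = A(x_1)$ (using $A(u|\{s\}\times S^1) \geq A(x_1)$ for $s$ large, since action decreases as $s$ decreases and the curvature term is controlled), the lemma produces a constant $r \geq 1$, depending only on $\sigma = 1$, on $e$, and on $a$ — crucially \emph{not} on $x_0$ — and an $s_* \in C$ with $u(s_*, \cdot) \subset N_r$. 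Enlarging $r$ if necessary so that $\{r\} \times \partial N$ lies strictly inside the $iP$-shell (here is where the freedom to take $i$ large is used), the circle $u(s_* \times S^1)$ then serves as the boundary datum for the integrated maximum principle.

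The final step is to run Lemma \ref{th:integrated-maximum-principle} twice, on the two half-cylinders $(-\infty, s_*] \times S^1$ and $[s_*, +\infty) \times S^1$, each exhausted by compact subdomains $C_\pm = [s_* - T, s_*] \times S^1$ and $[s_*, s_*+T] \times S^1$: on each, over the relevant shell $K_S = \half \rho^2 \beta_S$ with $\beta_S = \mathit{dt}$, $J_S$ is of contact type, the curvature vanishes on the cone, and the boundary of $C_\pm$ is mapped into $N_r$ — at $s = s_*$ by the choice above, and at $s = s_* \pm T$ by taking $T$ large enough that the endpoints are $C^0$-close to $x_0$ or $x_1$, both of which lie inside $N_r$ (after possibly enlarging $r$ once more to absorb $r_1, r_2$). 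Letting $T \to \infty$ gives $u(\bR \times S^1) \subset N_r$, and $r$ depends only on $x_1$, as claimed. The main obstacle is bookkeeping rather than conceptual: one must check that the constant $r$ extracted from Lemma \ref{th:action-bounds} can be chosen uniformly over the (finite, but a priori $x_0$-dependent in location) set of admissible negative limits, and that the chosen shell genuinely contains $\{r\} \times \partial N$ — which is why the argument must first fix the energy/action bounds, then choose $i$ (hence the shell) large, and only then invoke the two lemmas in that order.
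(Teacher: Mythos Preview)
Your overall strategy—use Lemma \ref{th:action-growth} to reduce to finitely many negative limits $x_0$, then apply the integrated maximum principle—is correct, and this is essentially the paper's approach. But your middle step, invoking Lemma \ref{th:action-bounds}, is both unnecessary and technically problematic. That lemma assumes $J_S$ is of contact type on the \emph{entire} cone and $K_S = (\sigma/2)\rho^2\,\mathit{dt}$ on the cone; for the Floer datum of Section \ref{subsubsec:floer}, $J_t$ is only contact type on the $iP$-shells, and $H_t = \half\rho^2 + \tilde{H}_t$ with a nonzero perturbation off those shells. Neither hypothesis holds, so the lemma cannot be invoked as stated. (Your phrase ``placed inside that shell's preimage'' suggests you sensed this, but there is no way to choose the cylinder $[s_0,s_0+1]\times S^1$ so that the perturbation disappears: the Floer equation is translation-invariant in $s$, and the perturbation is a property of the target, not the domain.)

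The paper's proof simply omits this step. Once $x_0$ is fixed (finitely many choices), choose $i$ so that both $x_0$ and $x_1$ lie in the interior of $N_{iP}$ (possible because orbits are disjoint from the shells). On the $iP$-shell one has $\tilde{H}=0$ and $J$ of contact type, and the curvature of Floer's equation vanishes identically; so Lemma \ref{th:integrated-maximum-principle} applies directly to $C=[-T,T]\times S^1$ for $T$ large, using only that $u(\pm T,\cdot)$ is $C^0$-close to $x_0$ or $x_1$ and hence lands in $N_{iP}$. Letting $T\to\infty$ finishes the argument. Your own step 3 already contains this reasoning; you can simply delete step 2.
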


\begin{proof}
If a solution $u$ with asymptotics $(x_0,x_1)$ exists, we must have $A(x_0) \geq A(x_1)$, which by Lemma \ref{th:action-growth} leaves only finitely possibilities for $x_0$. Hence, we may as well assume that $x_0$ is fixed as well. Choose some $i$ so that $(x_0,x_1)$ are contained in $N_{iP}$. We can then apply Lemma \ref{th:integrated-maximum-principle} to show that $u$ is contained in the same subset.
\end{proof}

\begin{proposition} \label{th:floer-transversality}
For fixed $H$, a generic choice of $J$ makes all Floer trajectories regular.
\end{proposition}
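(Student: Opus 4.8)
This is the standard transversality argument for Floer trajectories, adapted to the setting with quadratic Hamiltonians and ``unperturbed shells''; the only subtlety is that the a priori bounds (Proposition \ref{th:c0-bound}) already confine all relevant solutions to a compact region, so the perturbation can be carried out there. First I would fix the Hamiltonian $H$ and consider the universal moduli space of Floer solutions for a varying almost complex structure $J$ lying in a suitable Banach manifold $\scrJ^{\ell}$ of $C^\ell$ (or $C^\infty_\epsilon$, in the Floer sense) compatible almost complex structures; the constraint is that $J$ is held fixed on the shells (where it must be of contact type) but allowed to vary freely on the compact pieces $N_r$ in between. Given any $x_0,x_1$, Proposition \ref{th:c0-bound} provides an $r = r(x_1)$ such that every Floer solution asymptotic to $x_1$ lies in $N_r$; I would choose $r$ large enough that it also exceeds $iP$ for the relevant shell indices, so that each such solution passes through (or is bounded by) an unperturbed shell, and in particular meets the region $N_r \setminus (\text{shells})$ where $J$ is unconstrained.

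The key steps are then: (1) show that the universal moduli space $\widetilde{\scrM}(x_0,x_1)$ is a Banach manifold, i.e.\ that the linearized operator $D_u \oplus (\text{variation in } J)$ is surjective at every solution $u$; (2) apply the Sard--Smale theorem to the projection $\widetilde{\scrM} \to \scrJ^{\ell}$ to get a comeager set of regular values; and (3) pass from $C^\ell$ to $C^\infty$ by a Taubes-type intersection argument over the countably many pairs $(x_0,x_1)$. Step (1) is the heart of the matter: surjectivity of $D_u$ alone fails, but one shows that the image of the full linearization is dense, hence closed-and-dense hence everything, by the usual duality argument --- if $\eta$ is an $L^2$ element orthogonal to the image, then orthogonality to the $J$-variations forces $\eta$ to vanish on an open subset of the cylinder (using that $u$ is somewhere non-constant and injective there, which holds because $u$ cannot be entirely contained in a shell --- its asymptotics $x_0,x_1$ are disjoint from the shells), and then unique continuation for the formal adjoint of the Cauchy--Riemann operator propagates the vanishing to all of $\bR \times S^1$.

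The main obstacle, and the point requiring care in the self-contained presentation the authors have adopted, is precisely verifying that every Floer solution has an injective point lying \emph{outside} the shells, so that a $J$-variation supported away from the shells can be used. This follows because: the one-periodic orbits $x_0, x_1$ lie in $N_P \setminus \partial N_P$ or in the open intervals $(Pi, P(i{+}1)) \times \partial N$ --- in any case disjoint from the shells --- so $u$ is non-constant and (by the standard argument, e.g.\ using that somewhere-injective points are dense for a non-constant finite-energy solution) has an injective point $z$ with $u(z)$ not in any shell; since the $C^\infty$-small variations of $J$ supported on a neighborhood of $u(z)$ in $N_r$ are admissible, the cokernel argument goes through. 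Once step (1) is in place, steps (2) and (3) are routine. I would also remark that the same argument, with the shell constraints respected, gives transversality for the moduli spaces of solutions on the more general Riemann surfaces $S$ with Hamiltonian perturbation data used later, by varying the domain-dependent almost complex structures $J_S$ away from the shells and using that a solution restricted to a region where the equation is genuinely inhomogeneous is generically somewhere injective.
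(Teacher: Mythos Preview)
Your proposal is correct and follows essentially the same strategy as the paper: both reduce to the standard Floer--Hofer--Salamon transversality argument, with the key observation that the asymptotic orbits are disjoint from the $iP$-shells, so one can vary $J_t$ freely near the ends of any trajectory. The paper's proof is more concise on precisely this point: rather than invoking density of somewhere-injective points to locate one outside the shells, it simply notes that since $x_0,x_1$ lie outside the shells, $u(s,t)$ lies outside the shells for all $|s|\gg 0$, and at those points $J_t$ is unconstrained; the rest is then a direct citation of \cite{floer-hofer-salamon94}. Your route via injective points works, but the asymptotic argument is cleaner and avoids the (minor) imprecision of invoking ``somewhere-injective points are dense,'' which in the Floer setting is really the FHS regular-point lemma rather than the closed-curve statement.
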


\begin{proof}[Sketch of proof]
The limits lie outside the $iP$-shells, by assumption on $H$. Therefore, any trajectory $u$ has the property that for $|s| \gg0$, $u(s,t)$ lies outside the $iP$-shells. For those $(s,t)$, we can vary $J_t$ at $u(s,t)$ freely. Given that, the argument is as in \cite[Theorem 5.1]{floer-hofer-salamon94}.
\end{proof}

Following a strategy from \cite{abouzaid10}, we will also use rescaled version of Floer data. Take $(H,J)$ as before, and some $\sigma \geq 1$. Then, the rescaled datum is obtained by pullback via the Liouville flow for time $\log(\sigma) \geq 0$:
\begin{equation} \label{eq:pullback-floer}
\begin{aligned}
& H^{(\sigma)}_t = \lambda_{\hat{N},\log(\sigma)}^*H_t/\sigma \;\;\Rightarrow\;\; X^{(\sigma)}_t = \lambda_{\hat{N},\log(\sigma)}^*X_t, \\
& J^{(\sigma)}_t = \lambda_{\hat{N},\log(\sigma)}^*J_t.
\end{aligned}
\end{equation}
This inherits the following properties:
\begin{itemize}
\item on the cone, $H^{(\sigma)}_t = \smallfrac{\sigma}{2} \rho^2 + (\lambda_{\hat{N},\log(\sigma)}^*\tilde{H}_t)/\sigma$;
\item
on $(iP/\sigma)$-shells, we have $H^{(\sigma)}_t = \smallfrac{\sigma}{2}\rho^2$ and $J_t^{(\sigma)}$ of contact type.
\end{itemize}
There is an obvious correspondence between Floer trajectories for $(H,J)$ and $(H^{(\sigma)}, J^{(\sigma)})$. It is also worth recording the effect of this rescaling on the action functional. If $x$ is a one-periodic orbit of $H$, and $x^{(\sigma)} = \lambda_{\hat{N},-\log(\sigma)}(x)$ is the corresponding one-periodic orbit of $H^{(\sigma)}$, then \begin{align} \label{eq:rescaledaction} A_{H^{(\sigma)}}(x^{(\sigma)})= A_H(x)/\sigma. \end{align}

\subsubsection{Gradings and orientations\label{subsubsec:gradings-and-orientations}}
At this point, let's impose a Calabi-Yau type assumption: 
\begin{equation} \label{eq:calabi-yau}
c_1(N) = 0, \text{ and we fix a trivialization of the underlying complex line bundle.}
\end{equation}
This leads to a Floer complex defined as a $\bZ$-graded $\bK$-vector space, for an arbitrary coefficient field $\bK$. Even though that is entirely a standard construction, we give a short account here, because the notation and conventions will be used later on. For a more detailed discussion we recommend \cite[\S 1.4]{abouzaid14}, though the constructions go back as far as \cite[Section 2e]{floer89}.

Choose a Floer datum satisfying the transversality property from Proposition \ref{th:floer-transversality}. For each pair of orbits $\bfx = (x_0,x_1)$, let $\mathring{\frakC}(\bfx)$ be the manifold of trajectories asymptotic to those orbits, up to $\bR$-translation; and $\frakC(\bfx)$ its Gromov compactification (which is indeed compact, thanks to Proposition \ref{th:c0-bound}).

For every one-periodic orbit $x$, the trivialization  \eqref{eq:calabi-yau} gives rise to a homotopy class of trivializations of the pull-back $x^*T\hat{N}$, viewed as a unitary vector bundle. Choosing a trivialization within the prescribed homotopy class, the differential of the Hamiltonian flow defines a path of symplectic matrices. One can associate to this path an orientation operator $D_x$, which is a Cauchy-Riemann operator on the thimble \cite[\S 1.4.1]{abouzaid14}. In an appropriate sense, the space of such operators is simply-connected. One uses it to define the degree (Conley-Zehnder index) and orientation line (determinant line)
\begin{align}
& \mathrm{deg}(x) = \mathrm{index}(D_x), \\
& \frako_x = \mathit{det}(D_x) = \lambda^{\mathrm{top}}(\mathit{coker}(D)^\vee) \otimes \lambda^{\mathrm{top}}(\mathit{ker}(D)). \label{eq:orientation-line}
\end{align}
Here, $\frako_x$ is well defined as a line bundle over a simply-connected space. Hence, one has a well-defined $\bK$-normalized orientation line
\begin{equation}
|\frako_x|_{\bK} = \left\{\text{\parbox{25em}{$\bK$-vector space generated by orientations of $\frako_x$, with the sum of opposite orientations set to zero}}\right\}.
\end{equation}
The Floer complex is defined as
\begin{equation} \label{eq:hamiltonian-floer}
\mathit{CF}^k(H) = \bigoplus_{\mathrm{deg}(x)=k} |\frako_x|_{\bK},
\end{equation}
Any Floer trajectory, with linearized operator $D_u$ and asymptotics $(x_0,x_1)$, satisfies
\begin{equation} \label{eq:index-formula}
\mathrm{index}(D_u) = i(x_0) - i(x_1),
\end{equation}
and gives rise (via gluing of determinant lines) to an isomorphism
\begin{equation} \label{eq:ou}
\mathit{det}(D_u) \otimes \frako_{x_1} \iso \frako_{x_0},
\end{equation}
again canonical up to multiplication with a positive number. One has a short exact sequence
\begin{equation} \label{eq:tangent-to-moduli-1}
0 \rightarrow \bR \longrightarrow \mathit{ker}(D_u) \rightarrow T_u\mathring{\frakC}(\bfx) \rightarrow 0,
\end{equation}
where the first map takes $1$ to $-\partial_su$. From that, one obtains an isomorphism 
\begin{equation} \label{eq:tangent-to-moduli-2}
\bR \otimes \lambda^{\mathrm{top}}(T_u\mathring{\frakC}(\bfx)) \iso
\mathit{det}(D_u),
\end{equation}
where we carry around the copy of $\bR$ only because it formally has dimension $1$, hence may influence Koszul signs. Combining that with \eqref{eq:ou} yields an isomorphism
\begin{equation} \label{eq:tangent-ou}
o(u): \bR \otimes \lambda^{\mathrm{top}}(T_u\mathring{\frakC}(\bfx)) \otimes \frako_{x_1} \iso \frako_{x_0}.
\end{equation}
In the special case of isolated trajectories, the $T\mathring{\frakC}$ factor in \eqref{eq:tangent-ou} is trivial, and \eqref{eq:tangent-ou} therefore reduces to an isomorphism $\frako_{x_1} \iso \frako_{x_0}$. The differential $\delta = \delta_{H,J}$ on the Floer complex is the sum of the $\bK$-normalized versions of those isomorphisms; finiteness of that sum is again ensured by Proposition \ref{th:c0-bound}. 

When considering the rescaled Floer datum $(H^{(\sigma)}, J^{(\sigma)})$ from \eqref{eq:pullback-floer}, the correspondence between Floer trajectories gives rise to a canonical isomorphism of complexes, $(CF^*(H),\delta_{H,J}) \cong (CF^*(H^{(\sigma)}),\delta_{H^{(\sigma)},J^{(\sigma)}})$.

\subsubsection{Thick-thin decompositions\label{subsubsec:operations}}
Take a punctured sphere
\begin{equation} \label{eq:punctured-sphere}
S = \bC P^1 \setminus \{z_0,\dots,z_m\}.
\end{equation}
This surface will carry a one-form \eqref{eq:d-beta}. We want to assume that it comes with a decomposition into thick and thin pieces. The thin pieces are semi-infinite (one surrounding each puncture) cylinders, or finite cylinders, and are themselves divided into regions that will be treated technically differently (called ``Floer'', ``transitional'', and ``standard'' regions). Concretely, the (pairwise disjoint) thin pieces are of the following kind:
\begin{itemize} \itemsep1em
\item A single negative semi-infinite cylindrical end 
\begin{equation} \label{eq:minus-end}
(-\infty,0] \times S^1 \hookrightarrow S, \text{ surrounding $z_0$.}
\end{equation}
This comes with a constant $\sigma_0 \geq 1$ and function $\psi_0: (-\infty,0] \rightarrow \bR$,
\begin{equation} \label{eq:psi-1} 
\begin{cases}
\psi_0(s) = \sigma_0 & s \leq -2, \\
\psi_0'(s) < 0 & s \in (-2,-1), \\
\psi_0'(s) = 0 & s \geq -1.
\end{cases} 
\end{equation}
The one-form is $\beta_S = \psi_0(s) \mathit{dt}$. The terminology is that $(-\infty,-2] \times S^1$ is the Floer region, $(-2,-1) \times S^1$ the transitional one, and $[-1,0] \times S^1$ the standard region.

\item $m$ positive semi-infinite cylindrical ends 
\begin{equation} \label{eq:plus-end}
[0,\infty) \times S^1 \hookrightarrow S, \text{ surrounding $z_1,\dots,z_m$.}
\end{equation}
On the $e$-th positive cylinder we have a constant $\sigma_e \geq 1$ and function $\psi_e: [0,\infty) \rightarrow [0,1]$,
\begin{equation} \label{eq:psi-2}
\begin{cases}
\psi_e'(s) = 0 & s \leq 1, \\
\psi_e'(s) < 0 & s \in (1,2), \\
\psi_e(s) = \sigma_e & s \geq 2.
\end{cases} 
\end{equation}
As before, we then set $\beta_S = \psi_e(s) \mathit{dt}$. Symmetrically to the previous case, the Floer regions are $[2,\infty) \times S^1$, the transitional ones are $(1,2) \times S^1$, and the standard regions are $[0,1] \times S^1$. 

\item Some number (which can be zero) of finite cylinders. For simplicity of notation, let's consider a single such cylinder, which is of the form $[-l,l] \times S^1 \hookrightarrow S$ with $l > 2$.
The parametrization is always such that $\{-l\} \times S^1$ borders the component of $S \setminus ((-l,l) \times S^1)$ which contains the negative semi-infinite cylinder. The finite cylinder comes with a constant $\sigma \geq 1$ and function $\psi: [-l,l] \rightarrow [0,1]$,
\begin{equation} \label{eq:psi-3}
\begin{cases}
\psi'(s) = 0 & s \leq -l+1 \text{ or } s \geq l-1, \\
\psi'(s) < 0 & s \in (-l+1,-l+2) \text{ or } s \in (l-2,l-1), \\
\psi(s) = \sigma & -l+2 \leq s \leq l-2.
\end{cases}
\end{equation}
We again set $\beta_S = \psi(s) \mathit{dt}$. The Floer region is $[-l+2,l-2] \times S^1$; the transitional ones are $(-l+1,-l+2) \times S^1$ and $(l-2,l-1) \times S^1$; and the standard ones, $[-l,-l+1] \times S^1$ and $[l-1,l] \times S^1$. (The notation is intentionally shorthand: each of the finite cylinders comes with its own $\sigma$, $\psi$ and $l$.)
\end{itemize}
\begin{figure}
\begin{centering}
\includegraphics{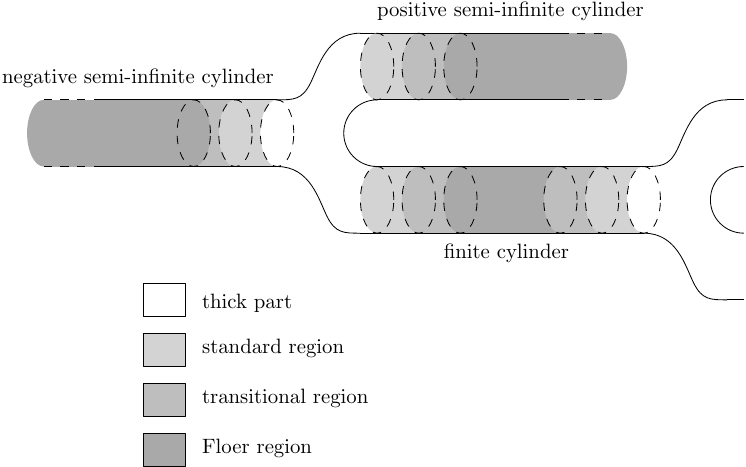}
\caption{\label{fig:surface}The decomposition of our surfaces. For simplicity, we have shown only part of the surface, omitting some of the positive semi-infinite cylinders.}
\end{centering}
\end{figure}
The thick part is the rest of the surface, and can carry any one-form \eqref{eq:d-beta} (see Figure \ref{fig:surface} for a summary). 

\begin{remark}
The conditions on the $\psi_e$ and on $\beta_S$ imply, via Stokes, that $\sigma_0 > \sigma_1 +\cdots + \sigma_m$.
\end{remark}

We fix a Floer datum as in Section \ref{subsubsec:floer}, denoting it by $(H^{\mathrm{Floer}},J^{\mathrm{Floer}})$ for clarity. Concerning $(K_S,J_S)$ we make the following assumptions:
\begin{itemize} \itemsep.5em
\item over the thick part of the surface, as well the standard regions of the thin parts, we have $K_S = \half\rho^2 \beta_S$ on the cone of $\hat{N}$, and each $J_{S,z}$ is of contact type. 

\item Over the Floer region of the semi-infinite cylinders, we use the Floer Hamiltonian rescaled as in \eqref{eq:pullback-floer}, $K_S = (H^{\mathrm{Floer}}_t)^{(\sigma_e)} \mathit{dt}$. For the almost complex structures, we allow any $J_{S,s,t}$ which are of contact type on $(iP/\sigma_e)$-shells (for all $i \in \bN$ such that $iP/\sigma_e > 1$), and which as $s \rightarrow \pm\infty$ converge exponentially fast to $(J^{\mathrm{Floer}}_t)^{(\sigma_e)}$. This condition ensures good asymptotic behaviour of finite energy solutions of our Cauchy-Riemann equation, and is at the same time flexible enough for transversality purposes.

\item There is more freedom on the Floer regions of the finite cylinders. We allow Hamiltonian terms of the form $K_S = H^{\mathrm{finite}}_t \mathit{dt}$, where on the cone $H^{\mathrm{finite}}_t = \frac{\sigma}{2} \rho^2 + \tilde{H}^{\mathrm{finite}}_{t}$, for a perturbation $\tilde{H}^{\mathrm{finite}}$ which vanishes on $(iP/\sigma)$-shells. Moreover, on those shells, the almost complex structures should be of contact type. Of course, the $H^{\mathrm{finite}}$ do not have to be the same for two different finite cylinders.

\item On the transition regions the Hamiltonian term should, on the cone of $\hat{N}$, satisfy
\begin{equation} \label{eq:ds-bound}
K_S = \half \rho^2 \psi(s) \mathit{dt} + \tilde{H}_{s,t} \mathit{dt}, \; \text{ with } \;
|\partial_s \tilde{H}_{s,t}| \leq -c\,\psi'(s)
\end{equation}
for some positive constant $c$, and where $\psi$ is the appropriate one of the functions \eqref{eq:psi-1}, \eqref{eq:psi-2}, \eqref{eq:psi-3}. Moreover, $\tilde{H}_{s,t}$ should vanish on a sequence of shells, which is the same as for the (unique) adjacent Floer region. On those shells, we also require that the almost complex structures $J_{s,t}$ should be of contact type. To avoid confusion, let's clarify that each transition region is treated separately here.
\end{itemize}

\begin{lemma} \label{th:curvature-piece-by-piece}
Over the Floer regions, the curvature $F_S$ is zero. Over the standard regions, the curvature is zero on the cone of $\hat{N}$. Over the transitional regions, the curvature is nonnegative (in fact positive) outside a compact subset of $\hat{N}$. Finally, over the thick part, the curvature is nonnegative on the cone.
\end{lemma}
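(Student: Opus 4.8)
The statement of Lemma~\ref{th:curvature-piece-by-piece} is a bookkeeping verification, region by region, using the explicit shape of the Hamiltonian terms imposed in Section~\ref{subsubsec:operations} together with the local curvature formula \eqref{eq:local-curvature}. The plan is therefore to treat each of the four types of region in turn, always splitting into ``on the cone $[1,\infty) \times \partial N$'' versus ``on the compact part $N_1$'' when the statement calls for it.

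\emph{Floer regions.} Here the Hamiltonian term is of the form $K_S = H_t\,\mathit{dt}$ with $H$ independent of the base coordinate $s$ (either $(H^{\mathrm{Floer}}_t)^{(\sigma_e)}$ on the semi-infinite cylinders, or $H^{\mathrm{finite}}_t$ on the finite ones). Plugging $K_S(\partial_s) = 0$ and $K_S(\partial_t) = H_t$ into \eqref{eq:local-curvature} gives $F_S = -\partial_s H_t + 0 + \{0,H_t\} = 0$, since $\partial_s H_t = 0$. So the curvature vanishes identically, not just on the cone.

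\emph{Standard regions and the thick part.} Over both of these we are told that $K_S = \half\rho^2\beta_S$ on the cone, with $\beta_S = \psi(s)\,\mathit{dt}$ on the standard regions and an arbitrary one-form satisfying $d\beta_S \leq 0$ on the thick part. A one-line computation (e.g.\ \cite[Lemma 7.4]{ganatra13}, or directly from \eqref{eq:curvature}) shows that for $K_S = \half\rho^2\beta_S$ the curvature on the cone is $F_S = -\half\rho^2\,d\beta_S$: the $\{K_S,K_S\}$ term vanishes because $\half\rho^2$ Poisson-commutes with itself, and $d(\half\rho^2\beta_S) = \half\rho^2\,d\beta_S$ since $\rho$ is a function of the cone coordinate only while $\beta_S$ is a pullback from $S$. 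For the standard regions $\beta_S = \psi(s)\,\mathit{dt}$ with $\psi' = 0$ there, so $d\beta_S = 0$ and $F_S = 0$ on the cone; for the thick part $d\beta_S \leq 0$ by \eqref{eq:d-beta}, so $-\half\rho^2\,d\beta_S \geq 0$, i.e.\ the curvature is nonnegative on the cone. (No claim is made about the compact part in either case.)

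\emph{Transitional regions.} On the cone, $K_S = \half\rho^2\psi(s)\,\mathit{dt} + \tilde H_{s,t}\,\mathit{dt}$ with the bound $|\partial_s\tilde H_{s,t}| \leq -\psi'(s)$ from \eqref{eq:ds-bound}. Feeding $K_S(\partial_s) = 0$, $K_S(\partial_t) = \half\rho^2\psi(s) + \tilde H_{s,t}$ into \eqref{eq:local-curvature} gives, on the cone,
\begin{equation}
F_S = -\partial_s\big(\half\rho^2\psi(s) + \tilde H_{s,t}\big) = -\half\rho^2\psi'(s) - \partial_s\tilde H_{s,t}.
\end{equation}
Since $\psi'(s) \leq 0$ (indeed $< 0$ in the interior of a transitional region) and $|\partial_s\tilde H_{s,t}| \leq -\psi'(s)$, we get $F_S \geq -\half\rho^2\psi'(s) + \psi'(s) = -\psi'(s)(\half\rho^2 - 1)$, which is positive once $\rho^2 > 2$, i.e.\ outside a compact subset of $\hat N$. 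This establishes the ``nonnegative, in fact positive outside a compact subset'' claim. I would then remark that the compact subset can be taken uniform (it only depends on the region and on $P$, since $\rho \leq P i/\sigma$ on the relevant shells), though that uniformity is not needed for the statement as phrased.

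The only point requiring any care — and hence the ``main obstacle'', though it is a mild one — is the transitional-region estimate: one must be careful to compare the sign of $-\half\rho^2\psi'$ against $|\partial_s\tilde H_{s,t}|$ correctly, and to observe that the elementary bound $\half\rho^2 - 1 > 0$ for $\rho^2 > 2$ is exactly what converts ``nonnegative'' into ``positive outside a compact set.'' Everything else is a direct substitution into \eqref{eq:local-curvature} using the piecewise definitions of $K_S$ from Section~\ref{subsubsec:operations} and the sign conventions of Section~\ref{subsec:conventions}(f).
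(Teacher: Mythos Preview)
Your proposal is correct and follows essentially the same approach as the paper. The paper's own proof is terser: it dismisses the Floer, standard, and thick regions as ``not immediately obvious'' only in the negative (i.e.\ obvious), and treats solely the transitional case, arriving at the bound $F_S \geq -\psi'(s)(\half\rho^2 - c)$ with the constant $c$ from \eqref{eq:ds-bound}, where you have effectively taken $c=1$ in line with the literal inequality written there.
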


\begin{proof}
Most of this is straightforward. Over the standard regions, we have $K_s = \half \rho^2 \beta_S$ on the cone, and $\beta_S = \psi(s)\mathit{dt}$ with $\psi'(s) = 0$, so that $d\beta_S = 0$. Less obviously, over a transitional region, \eqref{eq:ds-bound} implies that on the cone of $\hat{N}$,
\begin{equation} \label{eq:transitional-curvature}
F_S(\partial_s,\partial_t) = -\psi'(s) \half\rho^2 - \partial_s \tilde{H}_{s,t} \geq -\psi'(s) (\half \rho^2 - c).
\end{equation}
That makes the curvature positive where $\rho$ is large.
\end{proof}

Take one-periodic orbits $(x_0,\dots,x_m)$ of $(H_t^{\mathrm{Floer}})$. Consider solutions of the Cauchy-Riemann equation \eqref{eq:cauchy-riemann} whose asymptotics are appropriately rescaled version of these orbits:
\begin{equation} \label{eq:rescaled-asymptotics-operations}
\textstyle\lim_{s \rightarrow \pm\infty} \lambda_{\hat{N},\log(\sigma_e)} u(s,\cdot) = x_e \;\; \text{on the $e$-th semi-infinite cylinder.}
\end{equation}
For such a solution, we have (bearing in mind \eqref{eq:rescaledaction})
\begin{equation} \label{eq:curvature-on-s}
\frac{A(x_0)}{\sigma_0} - \sum_{e=1}^m \frac{A(x_e)}{\sigma_e} = E^{\mathrm{top}}(u) = E^{\mathrm{geom}}(u) + \int_S u^*F_S.
\end{equation}
Lemma \ref{th:curvature-piece-by-piece} shows that there is a uniform lower bound on the curvature integral, which holds for any $u$. Namely, the curvature $F_S$ is supported over a compact surface $S'$ which is the closure of the complement of the Floer regions. Over $S'$, the curvature is non-negative outside a compact set of $\hat{N}$. Combining these two observations gives the required lower bound. Hence, given $(x_0,\dots,x_m)$, there is an upper bound on the geometric energy, which is of course fundamental for compactness. The appropriate version of Proposition \ref{th:c0-bound} is:

\begin{proposition} \label{th:c0-bound-2}
Fix one-periodic orbits $(x_1,\dots,x_m)$. Then, there is an $r$ such that every solution $u$ of the Cauchy-Riemann equation on $S$, with those limits over the positive semi-infinite cylinders, and arbitrary limit $x_0$ on the negative semi-infinite cylinder, is contained in $N_r$.
\end{proposition}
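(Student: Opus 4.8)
The strategy mirrors that of Proposition \ref{th:c0-bound}, upgraded to the multi-ended surface $S$ of \eqref{eq:punctured-sphere} and its thick-thin decomposition. First I would reduce the set of possible negative asymptotics $x_0$ to a finite one: combining the energy identity \eqref{eq:curvature-on-s} with the uniform lower bound on $\int_S u^*F_S$ supplied by Lemma \ref{th:curvature-piece-by-piece} (the curvature being zero on Floer and standard regions, nonnegative on the thick part, and bounded below on the finitely many transitional regions, whose non-compact contributions are actually $\geq 0$), one gets an upper bound
\begin{equation}
\frac{A(x_0)}{\sigma_0} \leq \sum_{e=1}^m \frac{A(x_e)}{\sigma_e} + \text{(constant depending only on $S$)}.
\end{equation}
By Lemma \ref{th:action-growth} this leaves only finitely many candidates for $x_0$, so we may fix $x_0$ as well, and then the total geometric energy $E^{\mathrm{geom}}(u)$ is bounded above by a constant $e$ depending only on $S$ and the chosen orbits.

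Next I would localize the $C^0$-control region by region. On each semi-infinite cylindrical end the Hamiltonian term is a rescaled Floer datum, which on $(iP/\sigma_e)$-shells is of contact type with $K_S = \frac{\sigma_e}{2}\rho^2\,\mathit{dt}$; so Lemma \ref{th:action-bounds} (applied to a holomorphically embedded cylinder inside the Floer region, using the energy bound $e$ and a lower action bound at one end coming from the already-fixed asymptotic orbit and \eqref{eq:a-bound-2}-type estimates) produces a slice $\{s\}\times S^1$ whose image lies in some $N_r$, with $r$ depending only on $S$, the orbits, and the Floer datum. The same applies on each of the finitely many finite cylinders, where $H^{\mathrm{finite}}$ again equals $\frac{\sigma}{2}\rho^2$ on a sequence of shells. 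Having such ``good slices'' on every thin neck, choose an integer $i$ large enough that all the good slices, and all the asymptotic orbits, lie in $N_{iP}$. The complement of the open necks inside $S$, enlarged slightly to be bounded by good slices, is a compact subdomain $C$ whose boundary is mapped into $N_{iP}$; on an $(iP)$-shell the hypotheses of Lemma \ref{th:integrated-maximum-principle} hold (the Hamiltonian term is $\frac12\rho^2\beta_S$ there on the thick/standard regions, almost complex structures are of contact type, and the curvature on $C\times([iP,\infty)\times\partial N)$ is nonnegative by Lemma \ref{th:curvature-piece-by-piece}). Hence $u(C)\subset N_{iP}$, and combining with the control on the necks gives $u(S)\subset N_r$ for a uniform $r$.

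\textbf{Main obstacle.} The delicate point is not any single estimate but the bookkeeping needed to make the region $C$ legitimate for the integrated maximum principle while simultaneously matching the sequences of contact-type shells coming from the different necks. Each semi-infinite and finite cylinder carries its own constant $\sigma_e$ (or $\sigma$), hence its own sequence of shells at radii $iP/\sigma_e$; to apply Lemma \ref{th:integrated-maximum-principle} one needs a single shell radius $\tilde r$ (slightly above $iP$) which is simultaneously a contact-type shell over the entirety of $C$ — this is guaranteed by the standing requirement that on the transition and standard regions the perturbations vanish on the \emph{same} shells as the adjacent Floer region, and that on the thick part $K_S=\frac12\rho^2\beta_S$ with contact-type $J_S$ outright, so that $\{r\}\times\partial N$ is a valid hypersurface for all large $r$. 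I would spell out carefully that the Floer regions and the finitely many transitional regions do not obstruct this: on Floer regions $F_S=0$, and on transitional regions $F_S\geq 0$ on $[iP,\infty)\times\partial N$ by \eqref{eq:transitional-curvature} once $iP$ is large, so the curvature hypothesis of Lemma \ref{th:integrated-maximum-principle} holds globally on $C\times([iP,\infty)\times\partial N)$. Once this compatibility is organized, the proof is a routine assembly of Lemmas \ref{th:action-bounds}, \ref{th:integrated-maximum-principle}, and \ref{th:action-growth}.
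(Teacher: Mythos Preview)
Your overall strategy (bound $A(x_0)$ from below, reduce to finitely many $x_0$, then combine Lemmas~\ref{th:action-bounds} and~\ref{th:integrated-maximum-principle} region by region) is the right one, but the particular decomposition you propose does not satisfy the hypotheses of those lemmas. Two concrete problems. First, you invoke Lemma~\ref{th:action-bounds} on pieces of the \emph{Floer} regions. That lemma requires $K_S=(\sigma/2)\rho^2\,\mathit{dt}$ and $J_S$ of contact type on the \emph{entire} cone over the cylinder $C$; on a Floer region one only has $K_S=(H^{\mathrm{Floer}}_t)^{(\sigma_e)}\,\mathit{dt}$, which carries a nonzero perturbation off the $(iP/\sigma_e)$-shells, and $J_S$ is likewise only contact type on those shells. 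The estimates \eqref{eq:change-of-rho}--\eqref{eq:lower-action} use \eqref{eq:unit-norms} pointwise and genuinely fail without contact type everywhere. Second, your region $C$ for Lemma~\ref{th:integrated-maximum-principle} contains the transitional (and parts of the Floer) regions of \emph{all} ends simultaneously. Each end $e$ has its own shell sequence at radii $iP/\sigma_e$; since the $\sigma_e$ are distinct, there is in general no single radius $\tilde r$ that lies on a shell over every transitional region at once, so the hypothesis ``there is an $r$-shell on which $K_S=\half\rho^2\beta_S$ and $J_S$ is of contact type, over $C$'' cannot be met. (A minor slip: your displayed inequality for $A(x_0)/\sigma_0$ has the wrong direction; \eqref{eq:curvature-on-s} gives a \emph{lower} bound, which is what Lemma~\ref{th:action-growth} needs.)

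The fix, which is what the paper does, is to shift where each lemma is applied. The \emph{standard} regions are exactly the places where $K_S=\half\rho^2\beta_S$ on the full cone with $\beta_S$ a constant multiple of $\mathit{dt}$, and $J_S$ is of contact type everywhere; apply Lemma~\ref{th:action-bounds} there to get good slices. Then the enlarged thick part $C^{\mathrm{thick}}$ bounded by those slices lies entirely inside the thick part plus standard regions, so Lemma~\ref{th:integrated-maximum-principle} applies at \emph{any} radius, with no shell-matching needed. Finally each semi-infinite or finite cylinder is handled on its own, by a second application of Lemma~\ref{th:integrated-maximum-principle} at a radius $\tilde r$ chosen from that cylinder's individual shell sequence $iP/\sigma_e$ (taken large enough that the transitional curvature \eqref{eq:transitional-curvature} is nonnegative outside $N_{\tilde r}$ and that the relevant orbit lies inside $N_{\tilde r}$). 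The obstacle you flagged is real, but it is avoided rather than overcome.
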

\begin{figure}
\begin{centering}
\includegraphics{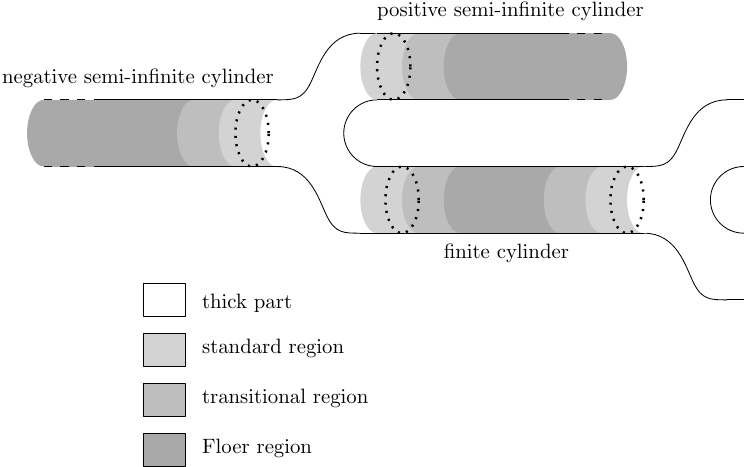}
\caption{\label{fig:surface3}The decomposition from Figure \ref{fig:surface}, with extra circles in each standard region, as they appear in the proof of Proposition \ref{th:c0-bound-2}. In that argument, the integrated maximum principle is applied separately to each part of the surface bounded by those circles.}
\end{centering}
\end{figure}

\begin{proof}
From \eqref{eq:curvature-on-s} we get a lower bound on $A(x_0)$, given $(x_1,\dots,x_m)$. Hence, we may just as well work with a fixed $x_0$. 

The first step is to bound the behaviour of $u$ on certain circles, one for each standard region. Take $C^{\mathrm{standard}}$ to be the closure of a standard region, identifying it with $[0,1] \times S^1$ by translation in $s$-direction. From \eqref{eq:curvature-on-s} we get an a priori bound on $E^{\mathrm{geom}}(u|C^{\mathrm{standard}}) \leq E^{\mathrm{geom}}(u)$. Let $T \subset S$ be the closure of the connected component of $S \setminus C^{\mathrm{standard}}$ which bounds $\{1\} \times S^1$; equivalently, this is the component not containing the negative semi-infinite cylinder. Let's say that $T$ contains the $e$-th positive semi-infinite cylinders for $e \in E \subset \{1,\dots,m\}$. Then, 
\begin{equation}
A(u|\{1\} \times S^1) = \sum_{e \in E} \frac{A(x_e)}{\sigma_e} + E^{\mathrm{top}}(u|T) \geq \sum_{e \in E} \frac{A(x_e)}{\sigma_e} + \int_T u^*F_S.
\end{equation}
As before, there is an a priori lower bound for the curvature integral, and therefore for $A(u|\{1\} \times S^1)$. Together, these bounds allow us to apply Lemma \ref{th:action-bounds}, and to obtain an $r \geq 1$ such that any solution $u$ must satisfy $u(s \times \{S^1\}) \subset N_r$ for some $s \in [0,1]$. Since there are finitely many standard regions, we may choose $r$ so that the bound applies to all of them.

Let's look at a connected component of the thick part. This is surrounded by standard regions, and in the closure of each standard region there is a circle that gets mapped to $N_r$. We can therefore enlarge our component by adding cylindrical pieces of those standard regions, and obtain a compact connected $C^{\mathrm{thick}} \subset S$ with nonempty boundary, such that $u(\partial C^{\mathrm{thick}}) \subset N_r$. Notice that over $C^{\mathrm{thick}}$, we have $K_S = \half\rho^2\beta_S$ on the cone, and all the almost complex structures are of contact type. We can therefore apply Lemma \ref{th:integrated-maximum-principle} to conclude that $u(C^{\mathrm{thick}}) \subset N_r$.

Next, consider the negative semi-infinite cylinder. Since its piece $[-1,0] \times S^1$ is a standard region, we know that there is an $s \in [-1,0]$ such that $u(\{s\} \times S^1) \subset N_r$. Moreover, $[s,0] \times S^1$ belonged to one of the previously considered regions $C^{\mathrm{thick}}$, so it remains to bound $u$ on $(-\infty,s] \times S^1$. From Lemma \ref{th:curvature-piece-by-piece} we know that over this cylinder, $F_S$ is nonnegative outside $N_{\tilde{r}}$ for some $\tilde{r}$, which we can assume to be $\geq r$. We also know the limit $x_0$, hence may enlarge $\tilde{r}$ so that $x_0$ is contained in the interior of $N_{\tilde{r}}$. Finally, again making it larger, we can assume that $\tilde{r}$ is $P/\sigma_0$ times an integer. At this point, for any sufficiently negative $\tilde{s}$, we know that $u| [\tilde{s},s] \times S^1$ has the property that the boundary maps to $N_{\tilde{r}}$. One can apply Lemma \ref{th:integrated-maximum-principle} to conclude that the whole of $[\tilde{s},s] \times S^1$ gets mapped to $N_{\tilde{r}}$. For this, it is crucial that on the $\tilde{r}$-shell, the Hamiltonian is of standard form $\half \rho^2 \beta$, and the almost complex structure of contact type; that is ensured by the choices we have made, including those for the transitional region $(-2,-1) \times S^1$. 

The argument for the positive semi-infinite cylinders is parallel. That for the finite cylinders is also similar, but still deserves a short discussion. Such a cylinder $[-l,l] \times S^1$ has standard regions at its ends, so there are $s_- \in [-l,-l+1]$ and $s_+ \in [l-1,l]$ such that $u(\{s_{\pm}\} \times S^1) \subset N_r$. Moreover, as part of our discussion of the thick parts, we have already obtained a priori bounds on $u$ on $[-l,s_-] \times S^1$ and $[s_+,l] \times S^1$. On the remaining part $[s_-,s_+] \times S^1$, we will again apply the integrated maximum principle, bearing in mind that the $(iP/\sigma)$-shells can be used over that entire part.
\end{proof}

\begin{proposition} \label{th:s-transversality}
For a fixed choice of Floer datum $(H^{\mathit{Floer}},J^{\mathit{Floer}})$ and of one-form $\beta_S$, a generic choice of $(K_S,J_S)$ within the class above ensures that every solution to the Cauchy-Riemann equation is regular.
\end{proposition}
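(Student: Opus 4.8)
The plan is the standard Sard--Smale argument applied to a universal moduli space. Fix the asymptotics $(x_0,\dots,x_m)$, and let $\scrP$ be the space of allowable data $(K_S,J_S)$ in the class described above, completed in a suitable Floer-type $C^\infty_\epsilon$ (or fixed $C^\ell$) topology; over $\scrP$ sits the universal moduli space $\scrM$ of pairs $(u,(K_S,J_S))$ with $u$ a finite-energy solution of \eqref{eq:cauchy-riemann} with the prescribed rescaled limits \eqref{eq:rescaled-asymptotics-operations}. By Proposition \ref{th:c0-bound-2}, every such $u$ takes values in a fixed compact $N_r$, so the usual exponential-convergence and elliptic-regularity package applies and $\scrM$ is cut out by a Fredholm section. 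The point to prove is that the universal linearized operator is surjective at every $(u,(K_S,J_S)) \in \scrM$; granting that, $\scrM$ is a Banach manifold, its projection to $\scrP$ is Fredholm, and a generic fibre consists of regular solutions. Running this simultaneously over the countably many choices of asymptotics (and, later, over the relevant boundary strata), and then applying Taubes' trick to pass from $C^\infty_\epsilon$-generic to $C^\infty$-generic data, gives a single generic $(K_S,J_S)$ for which all solutions are regular.

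For surjectivity of the universal operator, let $\eta$ be an element of the cokernel, i.e.\ a nonzero solution of the formal adjoint Cauchy--Riemann equation along $u$. By Aronszajn's unique continuation theorem, $\eta$ is nonvanishing on an open dense subset of $S$; likewise $u$ cannot be locally constant on an open set without $du = Y_S$ forcing $Y_S$ to vanish there, so (off the locus where $Y_S \equiv 0$) the set where $du - Y_S \neq 0$ is open and dense. Over the thick part and the standard regions the datum $J_{S,z}$ is genuinely domain-dependent, and in the interior $N \setminus \partial N$ it is entirely unconstrained. Hence, provided $u$ meets $N \setminus \partial N$ over one of these regions at a point where $du - Y_S \neq 0$, one can choose such a point $z_0$ with $\eta(z_0) \neq 0$; then exactly as in \cite[Theorem 5.1]{floer-hofer-salamon94} and in the proof of Proposition \ref{th:floer-transversality}, a variation of $J_S$ supported near $(z_0, u(z_0))$ in $S \times \hat N$ — localized in the domain because $J_S$ varies with $z$ — pairs nontrivially with $\eta$, contradicting $\eta$ being in the cokernel. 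Therefore $\eta = 0$.

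The main obstacle is precisely the step locating a point of full freedom: one must rule out a solution whose image over the thick part and standard regions lies entirely in the cone $[1,\infty)\times \partial N$, where $J$ is constrained to contact type and $K_S$ is pinned to $\half\rho^2\beta_S$. This is where the unperturbed shells and the integrated maximum principle \eqref{eq:transitional-curvature} re-enter: if the asymptotic orbits all lie in a fixed sublevel, so does $u$, and one then exploits either the freedom available on the Floer regions of the finite cylinders (the perturbations $\tilde H^{\mathrm{finite}}_{t}$ are $t$-dependent and unconstrained away from the shells) or, when no finite cylinder is present, the still infinite-dimensional contact-type-preserving freedom of $J_{S,z}$ on the cone over the thick part, restricted to the contact hyperplane $\ker\alpha_{\partial N}$, where generically $du(z_0)$ has a nonzero component. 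The degenerate case of a constant solution is excluded for the asymptotics under consideration by the index formula \eqref{eq:index-formula}. The remaining bookkeeping (gluing-compatible determinant lines and orientations, and transversality for the compactified strata) is routine and carried out in parallel with the above.
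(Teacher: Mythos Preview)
Your Sard--Smale framework is fine, but the mechanism you invoke for surjectivity of the universal linearized operator misses the paper's key point and leaves a genuine gap. The paper does \emph{not} use the thick part or the standard regions for transversality. Its main tool is the freedom of $J_{S,s,t}$ on the Floer regions of the \emph{semi-infinite} cylinders: there the almost complex structure is required to be of contact type only on the discrete family of $(iP/\sigma_e)$-shells, not on the entire cone. Since the asymptotic orbits are disjoint from those shells by construction, $u(s,t)$ lies off the shells for $|s|\gg 0$, and the argument of \cite{floer-hofer-salamon94} applies there. The only residual case is a solution with $\partial_s u \equiv 0$ for $|s|\gg 0$; unique continuation then forces $u$ to coincide with the rescaled orbit on the entire Floer region of an end, and the paper passes to the adjacent transitional region, where $u$ is still off the shells and where $K_S$ can be varied freely (subject only to the harmless bound \eqref{eq:ds-bound}).

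Your fallback arguments do not close the gap. Option (a), using the Floer regions of finite cylinders, fails because the thick-thin decomposition may contain no finite cylinders at all. Option (b), the contact-type-preserving freedom of $J_{S,z}$ on the cone over the thick part, fails whenever $du - Y_S$ takes values everywhere in the span of $Z_{\hat N}$ and $R_{\partial N}$: variations $\delta J$ within the contact-type class vanish on that $2$-plane, so $\delta J(du - Y_S) = 0$ identically and you get no pairing with $\eta$. This situation does occur --- for instance when all the $x_e$ are iterates of a single Reeb orbit and $u$ is a (perturbed) branched cover of the corresponding symplectization cylinder $(0,\infty) \times (\text{Reeb orbit})$. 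Your remark about constant solutions and the index formula does not address this non-constant case. Redirect the argument to the semi-infinite Floer regions, where $J$ is constrained only along shells, and handle the $\partial_s u \equiv 0$ exception via the transitional region as the paper does.
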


\begin{proof}[Sketch of proof]
Our main tool to achieve transversality is a generic choice of almost complex structures on the Floer regions of the semi-infinite cylinders. Of course, this freedom holds only away from the relevant shells, but because the limits are disjoint from those shells, so is $u(s,t)$ provided that $|s| \gg 0$. 

One small wrinkle: there is a very restricted class of solutions whose transversality can't be established in that way, namely ones which satisfy $\partial_s u = 0$ for $|s| \gg 0$. Let's look at such a $u$ on the negative semi-infinite cylinder (any other semi-infinite cylinder would also do). By unique continuation, we must have
\begin{equation} \label{eq:exceptional-solution}
u(s,t) = \lambda_{\hat{N},-\log(\sigma_0)}(x_0(t)) \;\; \text{for all $s \leq -2$.}
\end{equation}
In particular, on the Floer region, $u(s,t)$ is disjoint from the $(iP/\sigma_0)$-shells. Hence, the same is true for $u(s,t)$ when $s$ is slightly larger than $-2$. But by looking at such $s$, we have reached into the transitional region, and the Hamiltonian term can then be chosen freely near $u(s,t)$ (subject to the bound \eqref{eq:ds-bound}, which is irrelevant since transversality is about infinitesimal considerations). Hence, one can use the freedom to change that Hamiltonian term to ensure that this special class of solutions is also regular.
\end{proof}

Finally, note that there is a rescaling process for data $(\beta_S, J_S,K_S)$ as in \eqref{eq:pullback-floer}: for $\tau \geq 1$, take
\begin{equation} \label{eq:pullback-data}
\begin{aligned}
& \beta_S^{(\tau)} = \tau \beta_S, \\
& K_S^{(\tau)} = \lambda_{\hat{N},\log(\tau)}^* K_S/\tau, \\
& J_S^{(\tau)} = \lambda_{\hat{N},\log(\tau)}^* J_S.
\end{aligned}
\end{equation}
This still satisfies all the conditions imposed above, and the associated moduli spaces of maps can be identified with each other by taking $u^{(\tau)} = \lambda_{\hat{N},-\log(\tau)} u$.

\subsubsection{Parametrized moduli spaces\label{subsubsection:Floerfamilies}}
For many applications, one has to go beyond a single Riemann surface, and instead consider a manifold $\mathring{\frakP}$ which parametrizes a family of punctured spheres, usually denoted by
\begin{equation} \label{eq:universal-family}
U_{\mathring{\frakP}} \longrightarrow \mathring{\frakP}.
\end{equation}
This should carry a fiberwise one-form 
\begin{equation} \label{eq:family-beta}
\beta_{\mathring{\frakP}} \in \Omega^1(U_{\mathring{\frakP}}/\mathring{\frakP}),
\end{equation}
satisfying the conditions from Section \ref{subsubsec:operations} on each fiber. There are a few notable points:
\begin{itemize} \itemsep.5em
\item the numbers $\sigma_0,\dots,\sigma_m$ describing the behaviour of the one-form at infinity can vary throughout the family, which means that they are functions on $\mathring{\frakP}$.

\item The topological structure of the thick-thin decomposition is not necessarily constant throughout the family. Instead, the following happens: the semi-infinite cylinders are defined over the entire parameter space. Additionally, we have a finite partially ordered set $A$ and, for each $a \in A$, an open subset $U_a \subset \mathring{\frakP}$, such that
\begin{equation} \label{eq:a1a2}
a_1 \leq a_2 \;\; \Longrightarrow \;\; \bar{U}_{\fraka_1} \subset \bar{U}_{\fraka_2} 
\end{equation}
(in applications, this structure will come from collar neighbourhoods of the boundary strata of a compactification of $\mathring{\frakP}$ to a manifold with corners). Over each $\bar{U}_{\fraka}$ we have a smoothly varying choice of finite cylinders; and passing to a larger subset as in \eqref{eq:a1a2} amounts to forgetting some of the finite cylinders (the ``disappearance'' of finite cylinders is more intuitive in terms of neck-stretching, which we will discuss later; see e.g.\ \cite[Remark 9.1]{seidel04} for a previous occurrence of such artificially constructed thick-thin decompositions). 
\end{itemize}
Similarly, we want to have a family version of the perturbation data from Section \ref{subsubsec:operations},
\begin{equation} \label{eq:family-data}
K_{\mathring{\frakP}} \in \Omega^1(U_{\mathring{\frakP}}/\mathring{\frakP}, \mathcal{H}(\hat{N})) \text{ and } J_{\mathring{\frakP}} \in C^{\infty}(\mathcal{U}_{\mathring{\frakP}}, \scrJ(\hat{N})),
\end{equation}
Again, the devil lies in the details:
\begin{itemize}
\itemsep.5em
\item The Floer datum involved is always the same (constant over $\mathring{\frakP}$).

\item As before, it is sufficient to have thick-thin decompositions locally on $\mathring{\frakP}$. Again locally on $\mathring{\frakP}$, one should be able to choose the constant $c$ which appears in \eqref{eq:ds-bound} independently of the parameters.

\item For the behaviour of the almost complex structures on the Floer regions of the semi-infinite cylinders, the $C^\infty$ condition as formulated in \eqref{eq:family-data} is not enough. Instead, in a local trivialization of our family (around some point of $\mathring{\frakP}$) where $r$ are the parameters, we must have that $J_{\mathring{\frakP},r,s,t} \rightarrow (J_t^{\mathrm{Floer}})^{(\sigma_e)}$ exponentially as $s \rightarrow \pm\infty$, and uniformly in $r$. This means that in any $C^k$ norm (derivatives with respect to $r,s,t$), the difference between the two is bounded by $Ae^{\mp Bs}$, where $A,B>0$ are independent of $r$. (In addition, one of course has restrictions on the behaviour on shells.)
\end{itemize}

Given a collection of orbits $\bfx = (x_0,x_1,\cdots,x_m)$, let $\mathring{\frakP}(\mathbf{x})$ denote the moduli space of pairs 
\begin{equation} 
p \in \mathring{\frakP},\; u: S = U_p \longrightarrow \hat{N},
\end{equation}
where $U_p$ is the fiber of \eqref{eq:universal-family}, and $u$ is a solution of \eqref{eq:cauchy-riemann} with asymptotics \eqref{eq:rescaled-asymptotics-operations}. There is an straightforward parametrized version of Proposition \ref{th:s-transversality}, saying that this space is regular for generic choice of \eqref{eq:family-data}. 

The analogue of \eqref{eq:tangent-to-moduli-2} is a canonical isomorphism
\begin{equation} \label{eq:turn-off}
\lambda^{\mathrm{top}}T_{(p,u)}\mathring{\frakP}(\bfx) \iso 
T_p\mathring{\frakP} \otimes \mathit{det}(D_u).
\end{equation}
Here, $D_u$ is the ordinary linearized operation, which comes with versions of \eqref{eq:index-formula} and \eqref{eq:ou}:
\begin{align}
& \mathrm{index}(D_u) = \mathrm{deg}(x_0) - \mathrm{deg}(x_1) - \cdots - \mathrm{deg}(x_m), \\
\label{eq:det-gluing}
& \mathit{det}(D_u) \otimes \frako_{x_1} \otimes \cdots \otimes \frako_{x_m} \iso \frako_{x_0}.
\end{align}
The combination of this and \eqref{eq:turn-off} yields
\begin{align} 
& \mathrm{dim}\,\mathring{\frakP}(\bfx) = \mathrm{dim}\,\mathring{\frakP} + \mathrm{deg}(x_0) - \mathrm{deg}(x_1) - \cdots - \mathrm{deg}(x_m),
\\ \label{eq:oru} &
o(p,u): \lambda^{\mathrm{top}}(T_{(p,u)}\mathring{\frakP}(\bfx))
\otimes \frako_{x_1} \otimes \cdots \otimes \frako_{x_m} \iso 
\lambda^{\mathrm{top}}(T_p\mathring{\frakP}) \otimes \frako_{x_0}.
\end{align}
Let's assume that an orientation of $\mathring{\frakP}$ has been chosen, and that $(p,u)$ is an isolated point in the parametrized moduli space. In that case, \eqref{eq:oru} simplifies to an isomorphism $\frako_{x_1} \otimes \cdots \otimes \frako_{x_m} \iso \frako_{x_0}$. In principle, the $\bK$-normalization of this isomorphism describes the contribution of $(p,u)$ to a map
\begin{equation} \label{eq:pop}
\mathit{CF}^*(H^{\mathrm{Floer}})^{\otimes m} \longrightarrow \mathit{CF}^*(H^{\mathrm{Floer}})[-\mathrm{dim}(\mathring{\frakP})].
\end{equation}
As just described, the construction would only work for compact $\mathring{\frakP}$, which rarely applies. Instead, one usually encounters compactifications of the parameter space obtained by allowing the surfaces to split into pieces via neck-stretching, and that will be the next topic in our exposition.

\subsubsection{Neck-stretching\label{subsubsec:neck}}
Let's consider a simple situation, where we have surfaces $S_-$ and $S_+$, each of them coming with all the structure introduced above: one-forms $\beta_{S_\pm}$, and numbers $(\sigma_{\pm,e})$ which describe those one-forms on the Floer regions of the semi-infinite cylinders. Let $\varepsilon_-: (-\infty,0] \times S^1 \rightarrow S_-$ be the unique negative end, and $\varepsilon_+: [0,\infty) \times S^1 \rightarrow S_+$ one of the positive ends. The glued surface, for some length $l>2$, is 
\begin{equation} \label{eq:glued-surface}
S_l \stackrel{\mathrm{def}}{=} \frac{(S_+ \setminus \varepsilon_+((l,\infty) \times S^1)) \sqcup (S_- \setminus \varepsilon_-((-\infty,-l) \times S^1))}{\varepsilon_+(s,t) \sim \varepsilon_-(s-l,t)}.
\end{equation}
To make the one-forms compatible, we multiply them by some constants 
\begin{equation} \label{eq:taupm}
\tau_{\pm} \geq 1, \;\;\text{ such that } \tau_- \sigma_{-,e} = \tau_+ \sigma_{+,0}.
\end{equation}
Then, $S_l$ will come with one-forms $\beta_{S_l}$. Gluing creates a finite cylinder $[0,l] \times S^1 \subset S_l$. For large values $l \gg 0$, we consider $S_l$ as having the thick-thin decomposition inherited from $S_{\pm}$ including that cylinder. However, as $l$ becomes smaller, we will forget that cylinder, which means that it will belong to the thick part of $S_l$ (this is the previously mentioned phenomenon that the thick-thin decomposition can be parameter-dependent).

Instead of the gluing length, we also often use a gluing parameter $\gamma \in \mathring{\frakP} = (0,e^{-2})$, with the relation between the two given by
\begin{equation} \label{eq:log-gamma}
l = -\log(\gamma).
\end{equation}
One can think of the glued surfaces as fibers of a family of Riemann surfaces parametrized by $\gamma$. That family can then be extended over $\gamma = 0$ by setting the fiber there to be $S_- \sqcup S_+$. The outcome is a three-manifold $U_\frakP$ with boundary, with a local submersion
\begin{equation} \label{eq:extended-u}
U_\frakP \longrightarrow \frakP, 
\end{equation}
where $\frakP = [0,e^{-2})$. The one-forms we have constructed, and the original $\beta_{S_\pm}$, then yield a smooth fiberwise one-form
\begin{equation}
\beta_{\frakP} \in \Omega^1(U_{\frakP}/\frakP).
\end{equation}

Suppose that $S_{\pm}$ come with perturbation data. Concerning the data on the glued family, these should smoothly extend to $\gamma = 0$, which means we have
\begin{equation} \label{eq:extended-kj}
K_{\frakP} \in \Omega^1(U_{\frakP}/\frakP, \mathcal{H}(\hat{N})) \text{ and } J_{\frakP} \in C^{\infty}(U_{\frakP}, \scrJ(\hat{N})),
\end{equation}
which on the fiber $S_+ \sqcup S_-$ reduce to versions of $(K_{S_{\pm}},J_{S_{\pm}})$ rescaled by $\tau_{\pm}$ in the sense of \eqref{eq:pullback-data}. Fiberwise, this should satisfy our usual conditions, but some additional restrictions have to be imposed as well.
\begin{itemize}
\itemsep.5em
\item The constant $c$ appearing in \eqref{eq:ds-bound} can be chosen the same for all $S_l$, $l \gg 0$, as well as for $S_{\pm}$. 
\item On the semi-infinite ends of the glued surfaces, one needs to have exponential convergence to $(J_t^{\mathit{Floer}})^{(\sigma_e)}$ uniformly in the gluing parameter $\gamma$.
\item On the finite cylinder $[0,l] \times S^1$ obtained from gluing, the difference between the almost complex structure on the glued surface and the appropriate $(J_t^{\mathit{Floer}})^{(\sigma)}$ must be bounded by $A(e^{-Bs} + e^{B(s-l)})$, where $A,B>0$ are constants independent of the gluing length.
\end{itemize}
The aim of these requirements is to ensure suitable compactness properties in the parametrized moduli space. These are based on the following:

\begin{proposition}
Energy bounds on solutions $u: S_l \rightarrow \hat{N}$ of the Cauchy-Riemann equation, as well as the key a priori bound (Proposition \ref{th:c0-bound-2}), apply uniformly as $l \rightarrow \infty$.
\end{proposition}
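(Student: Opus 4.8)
The statement is really a uniform-in-$l$ version of two things already proved for a single surface: the energy bound coming from the curvature identity \eqref{eq:curvature-on-s}, and the $C^0$-bound of Proposition \ref{th:c0-bound-2}. The plan is to run both arguments again, keeping track of constants, on the glued surface $S_l$ and checking that nothing degenerates as $l \to \infty$. First I would record the energy estimate: on $S_l$ the identity \eqref{eq:curvature-on-s} reads $A(x_0)/\sigma_0 - \sum_e A(x_e)/\sigma_e = E^{\mathrm{geom}}(u|S_l) + \int_{S_l} u^* F_{S_l}$, where the $\sigma_e$ here are the (rescaled) asymptotic slopes of $\beta_{S_l}$; these are determined by the $\tau_\pm \sigma_{\pm,e}$ from \eqref{eq:taupm} and are independent of $l$. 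By Lemma \ref{th:curvature-piece-by-piece}, applied fibrewise, $F_{S_l}$ is nonnegative on the cone over the thick part and over the transitional and Floer regions (in the Floer regions it is zero), and the only negative contribution comes from a compact subset of $\hat N$ over the thick and transitional parts. The key point is that as $l$ grows, the extra region being incorporated into $S_l$ is a finite cylinder with $\beta = \sigma\,dt$ constant and Hamiltonian term of the form $\tfrac{\sigma}{2}\rho^2\,dt + \tilde H^{\mathrm{finite}}_t\,dt$ with $\tilde H^{\mathrm{finite}}$ vanishing on shells; its curvature is zero on the cone, so the lower bound on $\int_{S_l} u^*F_{S_l}$ is the same constant (depending only on $S_\pm$ and the finitely many perturbation terms) for all $l \gg 0$, and the energy bound on $E^{\mathrm{geom}}(u|S_l)$ given the positive asymptotics $(x_1,\dots,x_m)$ is uniform in $l$.

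Next I would redo the proof of Proposition \ref{th:c0-bound-2} on $S_l$, reusing its three ingredients verbatim but with uniform constants. (1) On each \emph{standard} region — these are the fixed standard regions of $S_\pm$, not affected by gluing — one applies Lemma \ref{th:action-bounds} with the uniform geometric-energy bound just obtained and the uniform lower bound on $A(u|\{1\}\times S^1)$ (which follows, as in the original proof, from \eqref{eq:curvature-on-s} applied to the subsurface cut off by the standard region, using the uniform curvature lower bound); this produces an $r$, independent of $l$, and a circle in each standard region mapped into $N_r$. (2) On the thick parts one applies the integrated maximum principle Lemma \ref{th:integrated-maximum-principle}, whose hypotheses (contact-type $J$ and $K = \tfrac12\rho^2\beta$ on the cone over the thick part, nonnegative curvature there) hold by construction; the enlarged subdomains $C^{\mathrm{thick}}$ have boundary in $N_r$. (3) On the semi-infinite ends one argues exactly as before, enlarging $r$ to $\tilde r$ so that the limit orbits are in the interior of $N_{\tilde r}$ and $\tilde r$ is a multiple of $P/\sigma_e$, then applying Lemma \ref{th:integrated-maximum-principle} again — crucially using that on $\tilde r$-shells the data are of standard form, which is part of the gluing hypotheses. (4) The one new feature is the long finite cylinder $[0,l]\times S^1 \subset S_l$ created by gluing. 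Here one needs a circle mapped into some $N_r$ to start the maximum-principle argument; this is supplied by the circles obtained from the two standard regions at the ends of $S_\pm$ adjacent to the glued neck, and then the integrated maximum principle applies on $[s_-,s_+]\times S^1$ because, by the gluing hypotheses, the $(iP/\sigma)$-shells with standard Hamiltonian and contact-type $J$ are available over the whole neck, with the bound $A(e^{-Bs}+e^{B(s-l)})$ on the deviation of $J$ being irrelevant for the maximum principle (it only constrains the shape of the curvature, which is still nonnegative on the cone). None of these steps introduces an $l$-dependent constant.

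The main obstacle I anticipate is not any single estimate but the bookkeeping needed to see that the \emph{transitional} regions behave uniformly: the constant $c$ in \eqref{eq:ds-bound} must be chosen the same for $S_\pm$ and all $S_l$, which is exactly imposed in the list of conditions on the glued data, and the curvature estimate \eqref{eq:transitional-curvature}, $F_{S_l}(\partial_s,\partial_t) \geq -\psi'(s)(\tfrac12\rho^2 - c)$, then holds with the same $c$ for all $l$, so the negative part of the curvature integral over transitional regions is bounded below by a fixed constant (there are only finitely many transitional regions, those of $S_\pm$ plus the two at the ends of the glued neck, whose $\psi$-profiles are fixed). Once this uniformity is in place, every constant entering Lemma \ref{th:action-bounds} and Lemma \ref{th:integrated-maximum-principle} is $l$-independent, and the two assertions of the proposition — uniform energy bounds and the uniform $C^0$-bound of Proposition \ref{th:c0-bound-2} — follow. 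I would therefore organize the write-up as: (i) the uniform energy identity and curvature lower bound; (ii) the uniform circle bounds on standard regions via Lemma \ref{th:action-bounds}; (iii) the maximum-principle sweep over thick parts, ends, and the glued neck; with the transitional-region uniformity flagged explicitly at step (i).
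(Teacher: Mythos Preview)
Your approach is correct and matches the paper's own (very brief) sketch: the essential points are that the variable-length Floer region of the glued neck contributes zero curvature, and that the constant $c$ in \eqref{eq:ds-bound} was explicitly required to be $l$-independent, so the curvature lower bound and hence the energy bound are uniform; the $C^0$ argument of Proposition \ref{th:c0-bound-2} then runs verbatim. One small imprecision: you write that on the glued Floer region ``its curvature is zero on the cone,'' but in fact $K_S = H_t^{\mathrm{finite}}\,dt$ is $s$-independent there, so $F_S \equiv 0$ everywhere (Lemma \ref{th:curvature-piece-by-piece}), not just on the cone --- this stronger statement is what you need, since otherwise a negative curvature contribution over $N$ on a region of length $2l-4$ would spoil uniformity.
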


\begin{proof}[Sketch of proof]
What's important here is that the curvature integral appearing in \eqref{eq:curvature-on-s} will have an a priori lower bound that holds for all $l$, because the variable-length part $[-l+2,l-2] \times S^1$ contributes zero. For the constant appearing in \eqref{eq:ds-bound}, we have explicitly asked that an $l$-independent bound should hold. The outcome is that the bound on the geometric energy, and Proposition \ref{th:c0-bound-2}, apply uniformly over all $S_l$.
\end{proof}

The discussion above is a model for what happens near codimension-one faces of general parameter spaces $\frakP$. Our spaces $\frakP$ will be manifolds with corners, where the strata of codimension $k$ correspond to splittings of the surfaces into $k$ pieces. They carry extended universal families which generalize that in \eqref{eq:extended-u}, and which come with thick-thin decompositions (and in particular ends) compatible with gluing. We then choose perturbation data over the compactified parameter space generalizing \eqref{eq:extended-kj} which near each codimension $k$ face satisfies the above conditions with respect to the gluing parameters $\gamma_1,\cdots,\gamma_k$ near that stratum. We summarize this by saying that $(K_{\frakP},J_{\frakP})$ is {\em conformally consistent}.

\subsection{The Lagrangian (and mixed) theory}

\subsubsection{Geometric setup}
We again begin with a review of generalities. Suppose $(N,\theta_N)$ is a Liouville domain and $(\hat{N},\theta_{\hat{N}})$ is its symplectic completion (see Section \ref{subsubsec:liouville}). Let $S$ be an oriented surface with boundary, which comes with a Hamiltonian term \eqref{eq:k-y}. In addition, we want our surface to be equipped with an exact Lagrangian boundary condition, meaning a submanifold and function
\begin{equation} \label{eq:lagrangian}
\begin{aligned}
& L_{\partial S} \subset \partial S \times \hat{N}, \\
& G_{\partial_S}: L_{\partial S} \longrightarrow \bR,
\end{aligned}
\end{equation}
such that $L_{\partial S} \rightarrow \partial S$ is a fiber bundle, and $dG_{\partial S}|L_{\partial S,z} = \theta_{\hat{N}}|L_{\partial S,z} \in \Omega^1(L_{\partial S,z})$ for each $z \in \partial S$. The boundary curvature $F_{\partial S}$ is a one-form on $L_{\partial S}$ which vanishes when restricted to a fiber, given by
\begin{equation} \label{eq:boundary-curvature}
F_{\partial S} = dG_{\partial S} - \theta_{\hat{N}}|L_{\partial S} + K_S|L_{\partial S}.
\end{equation}
Thinking of the connection $d-K_S$, one can explain the geometric meaning of \eqref{eq:boundary-curvature} as follows: $L_{\partial S}$ is compatible with parallel transport \eqref{eq:phi-parallel-transport} along $\partial S$ if and only if $(\omega_{\hat{N}} - dK_S)|L_{\partial S} = 0$, which is the case iff $F_{\partial S}$ is locally the pullback of a one-form on $\partial S$. (Actual vanishing of $F_{\partial S}$ lifts that compatibility statement from Hamiltonian vector fields to the level of functions.)

Take a map $u: S \rightarrow \hat{N}$, satisfying
\begin{equation} \label{eq:boundary-condition}
u(z) \in L_{\partial S,z} \;\; \text{ for all $z \in \partial S$.}
\end{equation}
Given a path $c: [0,1] \rightarrow S$ with $c(0), c(1) \in \partial S$, the action of $u$ along $c$ is defined as
\begin{equation} \label{eq:action-2}
A(u|c) = \int_{[0,1]} c^*(-u^*\theta_{\hat{N}} + u^*K_S) + G_{\partial S}(c(1),u(c(1))) - G_{\partial S}(c(0),u(c(0))).
\end{equation}
One can add up that term for several paths, and also allow loops as in \eqref{eq:action}, to get a definition of action for relative one-cycles in $(S,\partial S)$. Let $C \subset S$ be a compact subdomain, whose boundary consists of $\partial^{\mathrm{para}}C = C \cap \partial S$ and additional intervals and circles $\partial^{\mathrm{trans}}C$; the two parts are required to meet transversally, so $C$ is a surface with corners. The topological energy of $u$ on $C$ is
\begin{equation}
E^{\mathrm{top}}(u|C) = \int_C u^*\omega_{\hat{N}} - d(u^*K_S) + \int_{\partial^{\mathrm{para}}C} u^*F_{\partial S} = 
-A(u|\partial^{\mathrm{trans}} C).
\end{equation}
For solutions of the Cauchy-Riemann equation \eqref{eq:cauchy-riemann} satisfying \eqref{eq:boundary-condition}, we now have
\begin{equation}
E^{\mathrm{geom}}(u|C) = E^{\mathrm{top}}(u|C) - \int_C u^*F_S - \int_{\partial^{\mathrm{para}}C} u^*F_{\partial S}.
\end{equation}

In our application, we will only use exact Lagrangian submanifolds $L$ which are of Legendrian type on the cone, which means
\begin{equation} \label{eq:l-cone}
L \cap ([1,\infty) \times \partial N) = [1,\infty) \times \Lambda
\end{equation}
for some Legendrian $\Lambda \subset \partial N$. Each $L$ should come with a primitive $\theta_{\hat{N}}|L = dG_L$ which vanishes on \eqref{eq:l-cone}. When dealing with surfaces $S$, we want the one-forms \eqref{eq:d-beta} to also satisfy
\begin{equation} \label{eq:d-beta-2}
\beta_S|\partial S = 0 \in \Omega^1(\partial S).
\end{equation}
For the associated Lagrangian boundary conditions, we require that the part of $L_{\partial S}$ lying in the cone of $\hat{N}$ is locally constant in $z$, and that $G_{\partial S}$ vanishes on the cone. If we then suppose that $K_S = \half \rho^2 \beta_S$ on the cone, it follows that the boundary curvature \eqref{eq:boundary-curvature} is zero there.

\subsubsection{A priori bounds}
We consider solutions of \eqref{eq:cauchy-riemann} with boundary conditions \eqref{eq:boundary-condition}. The analogue of Lemma \ref{th:action-bounds} is:

\begin{lemma} \label{th:action-bounds-2}
Take a rectangular region $C = [0,1]^2 \subset S$ (with $\partial^{\mathrm{para}}C = [0,1] \times \{0,1\}$ and $\partial^{\mathrm{trans}}C = \{0,1\} \times [0,1]$). Suppose that on this region, the same conditions on $(J_S,K_S)$ as in Lemma \ref{th:action-bounds} apply, and so does \eqref{eq:e-bound}. Then, with \eqref{eq:a-bound} replaced by its obvious analogue
\begin{align}
& A(u|\{1\} \times [0,1]) \geq a,
\end{align}
the same conclusion holds, where the constant $r$ additionally depends on $G_{\partial S}$.
\end{lemma}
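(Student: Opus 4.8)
The statement is the Lagrangian analogue of Lemma \ref{th:action-bounds}, so the plan is to mirror that argument almost verbatim, replacing the circle $\{s\} \times S^1$ by the interval $\{s\} \times [0,1]$ and taking care of the extra boundary term coming from $G_{\partial S}$. First I would set up the same pointwise estimate: on the cone, using $\|d(\rho^{1/2})\| = \half$ and $Y_S(\partial_t) = \sigma \rho R_{\partial N}$, one gets $|\partial_t(\rho^{1/2}(u))| \leq \half \|\partial_t u - Y_S(\partial_t)\|$, hence for any interval $\{s\} \times [t_0,t_1]$ whose image lies in the cone,
\begin{equation}
|\rho^{1/2}(u(s,t_1)) - \rho^{1/2}(u(s,t_0))| \leq \half\Big(\int_{\{s\} \times [0,1]} \|\partial_t u - Y_S(\partial_t)\|^2\Big)^{1/2}.
\end{equation}
This is the verbatim analogue of \eqref{eq:change-of-rho}. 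Using \eqref{eq:e-bound} as in \eqref{eq:low-diameter}, there is some $s \in [0,1]$ for which the right-hand integral is $\leq e$, so along that slice $\rho^{1/2}(u)$ oscillates by at most $\half e^{1/2}$ over any sub-interval mapped into the cone.

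Next I would reproduce the action lower bound \eqref{eq:lower-action}. Here the only new feature is that $A(u|\{s\} \times [0,1])$ includes the endpoint terms $G_{\partial S}(\cdot, u(\cdot))$ from \eqref{eq:action-2}, but since the Lagrangian boundary condition is of Legendrian type on the cone and $G_{\partial S}$ vanishes there (see the conditions imposed after \eqref{eq:d-beta-2}), as soon as the relevant endpoints lie in the cone those terms vanish; in general they are bounded by a constant depending only on $G_{\partial S}$ (the sup of $|G_{\partial S}|$ over the compact part of $L_{\partial S}$ where it is nonzero). Away from that, the same chain of Cauchy-Schwarz and Peter-Paul manipulations as in \eqref{eq:lower-action}, with $\int_{\{s\} \times S^1}$ replaced by $\int_{\{s\} \times [0,1]}$, yields
\begin{equation}
A(u|\{s\} \times [0,1]) \leq \frac{1}{\sigma}\int_{\{s\} \times [0,1]} \|\partial_t u - Y_S(\partial_t)\|^2 - \frac{\sigma}{4}\int_{\{s\} \times [0,1]} \rho^2(u) + (\text{const depending on } G_{\partial S}),
\end{equation}
valid whenever the slice $u(\{s\} \times [0,1])$ lies in the cone. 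Then, exactly as in \eqref{eq:a-bound-2}, the curvature bound \eqref{eq:curvature-bound} (now including the boundary curvature $F_{\partial S}$, which is also bounded on the compact rectangle and vanishes on the cone) combined with the hypothesis $A(u|\{1\} \times [0,1]) \geq a$ gives $A(u|\{s\} \times [0,1]) \geq a - b'$ for a constant $b'$ absorbing the curvature and $G_{\partial S}$ contributions. Comparing the two inequalities produces, for the special slice chosen above, a bound $\int_{\{s\} \times [0,1]} \rho^2(u) \leq \smallfrac{4}{\sigma}(b'-a) + \smallfrac{4}{\sigma^2} e$ in the case the slice stays in the cone.

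Finally I would conclude exactly as in the last paragraph of the proof of Lemma \ref{th:action-bounds}: fix the slice $\{s\} \times [0,1]$ with low energy; if some point of it is not in the cone, apply the oscillation bound to the maximal sub-intervals mapped into the cone to get $\rho^{1/2}(u(s,t)) \leq 1 + \half e^{1/2}$ everywhere on that slice; if the whole slice lies in the cone, use the $\int \rho^2$ bound to find one point with $\rho^2$ controlled and then propagate via the oscillation bound to get $\rho^{1/2}(u(s,t)) \leq (\smallfrac{4}{\sigma}(b'-a) + \smallfrac{4}{\sigma^2}e)^{1/4} + \half e^{1/2}$ everywhere. Either way this gives the required $r$, now additionally depending on $G_{\partial S}$ through the constant $b'$. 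The only genuinely new bookkeeping — and hence the main (minor) obstacle — is checking that the $G_{\partial S}$ endpoint terms in the definition \eqref{eq:action-2} of action enter only as an additive constant and never spoil the sign structure of the Peter-Paul estimate; this is immediate once one notes they vanish on the cone, which is where all the quantitative work takes place.
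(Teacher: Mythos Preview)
Your proposal is correct and follows essentially the same route as the paper's proof: both mirror the argument of Lemma \ref{th:action-bounds} step by step, replacing circles by intervals and absorbing the new boundary contribution into an additive constant. The only cosmetic difference is that the paper makes the extra constant explicit as a bound $|F_{\partial S}| \leq c$ on the boundary curvature (giving $A(u|\{s\}\times[0,1]) \geq a-b-2c$), whereas you fold this and the $G_{\partial S}$ endpoint terms together into a single constant $b'$; since $G_{\partial S}$ vanishes on the cone, your extra additive term in the action upper bound is actually zero there, so your estimate reduces to the paper's.
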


\begin{proof}
One has \eqref{eq:change-of-rho} as before. Using the fact that $G_{\partial S}=0$ on the cone, the analogue of \eqref{eq:lower-action} says that if $u(\{s\} \times [0,1])$ lies in the cone,
\begin{equation}
A(u|\{s\} \times [0,1]) \leq \frac{1}{\sigma} \int_{\{s\} \times [0,1]} \|\partial_t u - Y_S(\partial_t)\|^2 - \frac{\sigma}{4} \int_{s \times [0,1]} \rho^2(u).
\end{equation}
In addition to the bound \eqref{eq:curvature-bound} on the curvature, we also have 
\begin{equation}
|F_{\partial S}| \leq c.
\end{equation}
The analogue of \eqref{eq:a-bound-2} is
\begin{equation}
A(u|\{s\} \times [0,1]) \geq a-b-2c.
\end{equation}
From the energy bound, there must be some $s \in [0,1]$ such that
\begin{equation}
\int_{\{s\} \times [0,1]} \|\partial_t u - Y_S(\partial_t)\|^2 \leq e.
\end{equation}
For that value of $s$, we have \eqref{eq:diameter-bound} as well as
\begin{equation}
\int_{\{s\} \times [0,1]} \rho^2(u) \leq \frac{4}{\sigma} (b+2c-a) + \frac{4}{\sigma^2}e
\quad \text{ if $u(\{s\} \times [0,1])$ lies in the cone.}
\end{equation}
One concludes the argument as before.
\end{proof}

The original version of the integrated maximum principle \cite[Lemma 7.2]{abouzaid-seidel07} already allows for Lagrangian boundary conditions. We reproduce it here for ready citeability. The proof is the same as in Lemma \ref{th:integrated-maximum-principle} (with $\partial D$ replaced by $\partial^{\mathrm{trans}}D$).

\begin{lemma} \label{th:integrated-maximum-principle-2}
Let $C \subset S$ be a connected compact subdomain, such that $\partial^{\mathit{trans}}C \neq \emptyset$. For some $r \geq 1$, suppose that the assumptions from Lemma \ref{th:integrated-maximum-principle} on $(K_S,J_S,F_S)$ hold. Let $u: S \rightarrow \hat{N}$ be a solution of \eqref{eq:cauchy-riemann}, \eqref{eq:boundary-condition}, such that $u(\partial^{\mathrm{trans}}C) \subset N_r$. Then $u(C) \subset N_r$.
\end{lemma}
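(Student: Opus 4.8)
\textbf{Proof plan for Lemma \ref{th:integrated-maximum-principle-2}.} The plan is to copy the argument of Lemma \ref{th:integrated-maximum-principle} essentially verbatim, with the one change that the transverse boundary $\partial D$ of the relevant subdomain is replaced by $\partial^{\mathrm{trans}}D$, because along the parametrized boundary $\partial^{\mathrm{para}}D = D \cap \partial S$ the map $u$ lands in the Lagrangian boundary condition, which is locally constant and contained in the cone $[1,\infty)\times\Lambda$ there; in particular the Lagrangian contribution to the action integral vanishes on that part, so no new term appears. Concretely, suppose for contradiction that $u(C)\not\subset N_r$. Choose $\tilde r$ slightly larger than $r$, still inside a shell on which $K_S = \tfrac12\rho^2\beta_S$ and $J_S$ is of contact type over $C$, and such that $u|C$ meets $\{\tilde r\}\times\partial N$ transversally (using also \eqref{eq:d-beta-2}, so that $\beta_S$ vanishes along $\partial S$ and hence the level set does not run off into $\partial^{\mathrm{para}}C$, whose image lies in the cone of $\hat N$ but at Liouville coordinate determined by the Legendrian boundary condition — one picks $\tilde r$ avoiding those finitely many values). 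Let $D$ be a connected component of $u^{-1}([\tilde r,\infty)\times\partial N)\cap C$, so that $\partial^{\mathrm{trans}}D$ is nonempty and consists of arcs mapping to $\{\tilde r\}\times\partial N$, while $\partial^{\mathrm{para}}D$ maps into the cone.

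The curvature hypothesis gives $E^{\mathrm{top}}(u|D) \geq E^{\mathrm{geom}}(u|D) + \int_{\partial^{\mathrm{para}}D} u^*F_{\partial S} = E^{\mathrm{geom}}(u|D) \geq 0$, since $F_{\partial S}$ vanishes on the cone by the normalization of $G_{\partial S}$. For the action of $u$ along $\partial^{\mathrm{trans}}D$, the same pointwise computation as in Lemma \ref{th:integrated-maximum-principle} applies: at a point $z\in\partial^{\mathrm{trans}}D$, a positively-oriented boundary tangent vector $\xi$ has $j_S\xi$ pointing inward, hence $d\rho(du(j_S\xi))>0$ by transversality, and unwinding $(d\rho\circ J_z)(du - \rho R_{\partial N}\beta_S)(\xi) = -(u^*\theta_{\hat N})(\xi) + \tilde r^2\beta_S(\xi)$ yields $-(u^*\theta_{\hat N})(\xi) + \tfrac12\tilde r^2\beta_S(\xi) \geq 0$ along $\partial^{\mathrm{trans}}D$ (the factor $\tfrac12$ from $K_S = \tfrac12\rho^2\beta_S$). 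Along $\partial^{\mathrm{para}}D$ the integrand of $A(u|\cdot)$ vanishes: $u$ maps there into the cone where $\theta_{\hat N}|L = 0$ and $\beta_S|\partial S = 0$ and $K_S|L = 0$, and $G_{\partial S}$ is constant there. Therefore
\begin{equation}
A(u|\partial^{\mathrm{trans}}D) = \int_{\partial^{\mathrm{trans}}D} -u^*\theta_{\hat N} + \tfrac12\rho^2(u)\beta_S \geq -\tfrac12\tilde r^2\int_{\partial^{\mathrm{trans}}D}\beta_S = -\tfrac12\tilde r^2\int_D d\beta_S \geq 0,
\end{equation}
where Stokes uses $\beta_S|\partial^{\mathrm{para}}D = 0$ and $d\beta_S\leq 0$. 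Since $E^{\mathrm{top}}(u|D) = -A(u|\partial^{\mathrm{trans}}D) \leq 0$ while also $E^{\mathrm{top}}(u|D)\geq 0$, we get $E^{\mathrm{geom}}(u|D) = 0$; hence $du = \rho R_{\partial N}\beta_S$ on $D$, so $\rho(u)|D \equiv \tilde r$, contradicting transverse intersection with $\{\tilde r\}\times\partial N$. Thus $u(C)\subset N_r$.

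I do not expect a genuine obstacle here, since this is precisely the classical integrated maximum principle with Lagrangian boundary \cite[Lemma 7.2]{abouzaid-seidel07}; the only point requiring a little care is confirming that the Stokes computation and the boundary-action estimate receive no contribution from $\partial^{\mathrm{para}}D$, which is guaranteed by the normalization conditions imposed in the geometric setup (namely $\beta_S|\partial S = 0$ from \eqref{eq:d-beta-2}, the Legendrian-type form \eqref{eq:l-cone} of the Lagrangians on the cone, the vanishing of $G_{\partial S}$ and $K_S|L$ there, and the resulting vanishing of $F_{\partial S}$). Given those, the proof is the same as before with $\partial D$ replaced by $\partial^{\mathrm{trans}}D$, so it suffices to remark on this replacement.
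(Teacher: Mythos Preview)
Your proposal is correct and matches the paper's approach exactly: the paper simply remarks that ``the proof is the same as in Lemma \ref{th:integrated-maximum-principle} (with $\partial D$ replaced by $\partial^{\mathrm{trans}}D$),'' citing \cite[Lemma 7.2]{abouzaid-seidel07}, and you have spelled out precisely that argument together with the verification that the $\partial^{\mathrm{para}}D$ contributions vanish. One minor slip in your write-up: the pointwise claim ``$-(u^*\theta_{\hat N})(\xi) + \tfrac12\tilde r^2\beta_S(\xi) \geq 0$'' does not follow directly from $-(u^*\theta_{\hat N})(\xi) + \tilde r^2\beta_S(\xi) > 0$ without knowing the sign of $\beta_S(\xi)$; however, your displayed inequality chain is correct as written, so this is purely a wording issue in the preceding sentence.
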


\subsubsection{Floer cohomology} \label{section:LagrangianFloercomplex}
Fix a pair of Lagrangian submanifolds $L_k$ ($k = 0,1$), as well as some $P>1$. The following assumption will allow us to use Hamiltonians which are (unperturbed) quadratic on the cone.

\begin{assumption} \label{th:no-p-chords}
For the associated Legendrian submanifolds $(\Lambda_0,\Lambda_1)$, all Reeb chords of length $\geq 1$ are nondegenerate. Moreover, there are no Reeb chords whose length is a positive integer multiple of $P$.
\end{assumption}

A Floer datum is given by $H = (H_t)_{t \in [0,1]}$ and $J = (J_t)_{t \in [0,1]}$, such that:
\begin{itemize}
\itemsep.5em
\item
on the cone, $H_t = \half \rho^2$. As a consequence, Floer chords which intersect the cone must lie entirely inside it, and can be identified with Reeb chords of length $\rho \geq 1$; these are nondegenerate by assumption. We then also require all other chords to be nondegenerate.
\item
For each $i \in \bN$, there is an $iP$-shell on which $J$ is contact type. These shells must be disjoint from the Floer chords.
\end{itemize}
The counterpart of Lemma \ref{th:action-growth} is trivially satisfied, since the Floer chords that correspond to length $\rho$ Reeb chords have action $-\half\rho^2$. For solutions $u: \bR \times [0,1] \rightarrow \hat{N}$ of Floer's equation with boundary conditions $(L_0,L_1)$, we have the analogue of Lemma \ref{th:c0-bound}, using Lemma \ref{th:integrated-maximum-principle-2} in the same way as before: 

\begin{lemma} \label{th:co-bound-3}
Fix a Floer chord $y_1$. There is an $r$ such that any Floer solution with positive limit $y_1$, and arbitrary negative limit $y_0$, is contained in $N_r$.
\end{lemma}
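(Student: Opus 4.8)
The statement to prove is Lemma \ref{th:co-bound-3}, a $C^0$-bound for Floer solutions in the Lagrangian setting, asserting that for a fixed positive asymptotic chord $y_1$, all Floer solutions stay in some compact piece $N_r$.

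The plan is to follow the exact template of the proof of Proposition \ref{th:c0-bound} (the Hamiltonian-orbit analogue), replacing the role of Lemma \ref{th:integrated-maximum-principle} by its Lagrangian counterpart Lemma \ref{th:integrated-maximum-principle-2}, and the role of Lemma \ref{th:action-growth} by the trivial action bound available in the Lagrangian setting. First I would record that, since $H_t = \half\rho^2$ on the cone, any Floer chord meeting the cone lies entirely in it and corresponds to a Reeb chord of length $\rho \geq 1$, with action exactly $-\half\rho^2$; in particular there are only finitely many chords of action $\geq a$ for any $a$, because the length-$\geq 1$ Reeb chords are nondegenerate (Assumption \ref{th:no-p-chords}) and those of length in a fixed bounded interval are finite in number, while the rest have action $\leq -\half$ times an unbounded quantity. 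This replaces Lemma \ref{th:action-growth}.

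Next, given the fixed positive limit $y_1$, any Floer solution $u$ with negative limit $y_0$ satisfies the energy identity $E^{\mathrm{geom}}(u) = A(y_0) - A(y_1) \geq 0$, so $A(y_0) \geq A(y_1)$, which by the finiteness just established leaves only finitely many possibilities for $y_0$. Hence it suffices to bound solutions with both limits $(y_0, y_1)$ fixed. Then I would choose an integer $i$ large enough that both $y_0$ and $y_1$ are contained in $N_{iP}$ (possible since the chords are disjoint from the cone outside a compact set, or more precisely the relevant Reeb chords have bounded length). Applying Lemma \ref{th:integrated-maximum-principle-2} to $C = [-T,T] \times [0,1] \subset \bR \times [0,1]$ for all large $T$ — noting that on the $iP$-shell the Hamiltonian term is the standard $\half\rho^2\,\mathit{dt}$ (with $\beta_S = \mathit{dt}$, so $K_S = \half\rho^2\beta_S$), the almost complex structure is of contact type there, and the curvature $F_S$ and boundary curvature $F_{\partial S}$ vanish, while the transverse boundary $\partial^{\mathrm{trans}}C = \{\pm T\} \times [0,1]$ maps into $N_{iP}$ once $T$ is large because $u$ converges to $y_0, y_1$ — gives $u([-T,T] \times [0,1]) \subset N_{iP}$, hence $u$ is contained in $N_r$ with $r = iP$.

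I do not expect a genuine obstacle here: this is a routine adaptation, and all the analytic content (the integrated maximum principle with Lagrangian boundary conditions, the exact action identity) is already in place in the excerpt. The only point requiring a line of care is verifying that the hypotheses of Lemma \ref{th:integrated-maximum-principle-2} literally hold on the relevant shell for the Floer equation at hand — that the Floer datum's restriction to the cone really is of the standard quadratic form $\half\rho^2$ with contact-type $J$ on the chosen $iP$-shell — but this is exactly what the definition of a Floer datum in Section \ref{section:LagrangianFloercomplex} guarantees, and the disjointness of the chords from the shells ensures $T$ can be taken large enough that the transverse boundary is mapped inside $N_{iP}$.
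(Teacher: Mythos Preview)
The proposal is correct and follows essentially the same approach as the paper, which simply states that the argument is the analogue of Proposition \ref{th:c0-bound} using Lemma \ref{th:integrated-maximum-principle-2} in place of Lemma \ref{th:integrated-maximum-principle}. Your replacement of Lemma \ref{th:action-growth} by the observation that cone chords have action $-\half\rho^2$ is exactly what the paper records just before the statement of the lemma.
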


The rescaling trick \eqref{eq:pullback-floer} now additionally involves moving the Lagrangian submanifolds (and the corresponding primitives)
\begin{equation} \label{eq:rescale-l-g}
\begin{aligned}
& L^{(\sigma)} = \lambda_{\hat{N},-\log(\sigma)}(L), \\
& G^{(\sigma)} = \lambda_{\hat{N},\log(\sigma)}^*G_L/\sigma.
\end{aligned}
\end{equation}
This pushes $L$ inwards, so we still have $L^{(\sigma)} = [1,\infty) \times \Lambda$ on the cone. One gets a canonical isomorphism between the Floer complex for $(L_0,L_1,H,J)$ and $(L_0^{(\sigma)},L_1^{(\sigma)},H^{(\sigma)},J^{(\sigma)})$.

Again, because the almost complex structure is unconstrained near the Floer chords, a generic choice of that structure achieves regularity of the moduli spaces (c.f. Proposition \ref{th:s-transversality}). To define the Floer complex with a $\bZ$-grading and $\bK$-coefficients, we assume the following standard ``brane conditions'':
\begin{equation} \label{eq:brane}
\parbox{35em}{our Lagrangian submanifolds carry gradings with respect to the trivialization of the line bundle from \eqref{eq:calabi-yau} (and hence come with orientations). Additionally they are equipped with {\em Spin} structures.}
\end{equation}
Then, as in the closed-string case, each chord $y$ has an associated degree $\mathrm{deg}(y)$ and determinant line $\mathfrak{o}_y = \operatorname{det}(D_y)$. In parallel with \eqref{eq:hamiltonian-floer} one sets
\begin{equation} 
\mathit{CF}^k(L_0,L_1,H) = \bigoplus_{\mathrm{deg}(y)=k} |\mathfrak{o}_y|_\bigK.  
\end{equation} 
Given two chords, $\mathring{\frakR}(y_0,y_1)$ denotes the moduli space of Floer strips up to $\bR$-translation. Assuming regularity, any $u$ in that space determines an isomorphism $o(u)$ as in \eqref{eq:tangent-ou}, which for isolated strips reduces to an isomorphism $\frako_{y_1} \iso \frako_{y_0}$. One again defines the Floer differential by adding up the $\bK$-normalizations of those isomorphisms.

\subsubsection{Open string operations\label{subsec:open-string-operations}}
Let $(S,j_S)$ be a boundary-punctured disc, obtained by removing $n+1 > 0$ boundary points from a closed disc; we label those missing boundary points by $\{0,\dots,n\}$, going counterclockwise around the boundary. We correspondingly label the connected components of $\partial S$, so that the $0$-th boundary component joins the $0$-th and $1$-st boundary puncture. Our surface should come with a one-form satisfying \eqref{eq:d-beta} and \eqref{eq:d-beta-2}. As before, we want a decomposition of $S$ into thick and thin pieces, and of the latter pieces into three types of regions (see Figure \ref{fig:surface-2}; note though that the ``transitional'' regions play a much smaller role than before, and exist only for a very  technical transversality reason). The thin pieces are:
\begin{figure}
\begin{centering}
\includegraphics{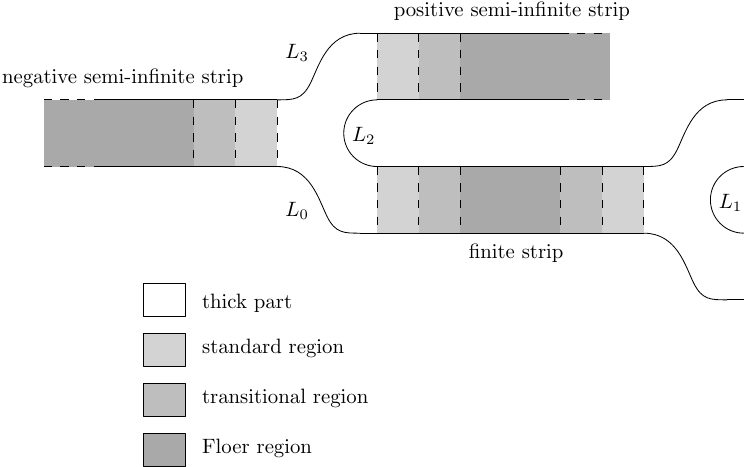}
\caption{\label{fig:surface-2}The decomposition of surfaces in the open string case.}
\end{centering}
\end{figure}
\begin{itemize} \itemsep.5em
\item A single negative semi-infinite strip $(-\infty,0] \times [0,1] \hookrightarrow S$ asymptotic to the $0$-th boundary puncture (here and in similar situations below, it is understood that $(-\infty,0] \times \{0,1\}$ is mapped to $\partial S$). This comes with a constant $\sigma_0$ and function $\psi_0$ as in \eqref{eq:psi-1}, with $\beta_S = \psi_0(s) \mathit{dt}$. The Floer region is $(-\infty,-2] \times [0,1]$; the transitional one, $(-2,-1) \times [0,1]$; and the standard region, $[-1,0] \times [0,1]$.

\item Positive semi-infinite strips $[0,\infty) \times [0,1] \rightarrow S$ for the boundary punctures labeled $e \in \{1,\dots,n\}$, with constants $\sigma_e \geq 1$, functions $\psi_e$ as in \eqref{eq:psi-2}, and $\beta_S = \psi_e(s) \mathit{dt}$. The Floer regions are $[2,\infty) \times [0,1]$; the transitional ones, $(1,2) \times [0,1]$; and the standard regions, $[0,1] \times [0,1]$.

\item Some number (which can be zero) of finite strips, $[-l,l] \times [0,1] \hookrightarrow 0$ for $l > 2$. 
The parametrization is always such that $\{-l\} \times [0,1]$ borders the component of $S \setminus ((-l,l) \times [0,1])$ which contains the negative semi-infinite strip. Each finite strip comes with its own constant $\sigma$ and function $\psi$ as in \eqref{eq:psi-3}, with $\beta_S = \psi(s) \mathit{dt}$. The Floer region of such a strip is $[-l+2,l-2] \times [0,1]$; the transitional regions are $(-l+1,-l+2) \times [0,1]$ and $(l-2,l-1) \times [0,1]$; and the standard regions, $[-l,-l+1] \times [0,1]$ and $[l-1,1] \times [0,1]$.
\end{itemize}
Fix Lagrangian submanifolds $(L_0,\dots,L_n)$. Choose Floer data $(H^{\mathrm{Floer}}_{e}, J^{\mathrm{Floer}}_e)$, $e \in \{0,\dots,n\}$, associated to the pairs $(L_0,L_n)$ (for $e = 0$) and $(L_{e-1},L_e)$ (for $e = 1,\dots,n$). Each such pair is required to satisfy Assumption \ref{th:no-p-chords} (with the same constant $P$). Our surface will come with a Lagrangian boundary condition $(L_{\partial S}, G_{\partial S})$, as well as the usual $(K_S,J_S)$. Concerning those, we make the following assumptions:
\begin{itemize} \itemsep.5em
\item Everywhere on the surface, we have that $K_S = \half \rho^2 \beta_S$ on the cone of $\hat{N}$. Similarly, everywhere on the $i$-th boundary component of $S$, we have $L_{\partial S} = L_i$ on the cone.

\item Over the thick part of $S$, as well as the standard regions, $J_S$ is of contact type.

\item Take the Floer region of the $e$-th semi-infinite strip (for any $e$). On that region, the Lagrangian boundary condition is
\begin{equation}
\left.
\begin{aligned}
& L_{\partial S,0,t} = L_0^{(\sigma_0)} \\
& L_{\partial S,1,t} = L_n^{(\sigma_0)}
\end{aligned}
\right\}
\text{ if $e = 0$, or } \quad
\left.
\begin{aligned}
& L_{\partial S,0,t} = L_{e-1}^{(\sigma_0)} \\
& L_{\partial S,1,t} = L_e^{(\sigma_0)}
\end{aligned}
\right\}
\text{ if $e>0$,}
\end{equation}
with corresponding functions as in \eqref{eq:rescale-l-g}. The Hamiltonian term is $K_S = (H^{\mathrm{Floer}}_{e,t})^{(\sigma_e)} \mathit{dt}$. The almost complex structure $J_{S,s,t}$ is of contact type on $(iP/\sigma_e)$-shells and as $s \rightarrow \pm\infty$, converges exponentially to $(J^{\mathrm{Floer}}_{e,t})^{(\sigma_e)}$.

\item On the Floer region of any finite strip, we want $K_S = H^{\mathrm{finite}}_t \mathit{dt}$, where $H^{\mathrm{finite}}_t =\frac{\sigma}{2} \rho^2$ on the cone. Similarly, the Lagrangian boundary condition and function $G_{\partial S}$ should be independent of $s$. Finally, the almost complex structures should be of contact type on $(iP/\sigma)$-shells.

\item On any transitional region, we want the almost complex structures to be of contact type for the same shells as the (unique) adjacent Floer region.
\end{itemize}
We consider solutions of \eqref{eq:cauchy-riemann}, \eqref{eq:boundary-condition} with asymptotics which are rescaled Lagrangian chords,
\begin{equation} \label{eq:asymptoticslagrangian}
\textstyle \lim_{s \rightarrow \pm \infty} \lambda_{\hat{N},\log(\sigma_e)} u(s,\cdot) = x_e.
\end{equation}
The analogue of \eqref{eq:curvature-on-s} is
\begin{equation} \label{eq:curvature-on-s-2}
\frac{A(x_0)}{\sigma_0} - \sum_{e=1}^n \frac{A(x_e)}{\sigma_e} = E^{\mathrm{top}}(u) = E^{\mathrm{geom}}(u) + \int_S u^*F_S + \int_{\partial S} u^*F_{\partial S}.
\end{equation}
As before, we have a uniform lower bound on the two curvature terms (in fact, the $\partial S$ term is bounded both below and above). The following is the counterpart of Proposition \ref{th:c0-bound-2}; the proof remains the same, with the basic analytic ingredients are replaced by their open string counterparts (Lemma \ref{th:action-bounds-2} and \ref{th:integrated-maximum-principle-2}).

\begin{proposition} \label{th:c0-bound-4}
Fix Floer chords $(y_1,\dots,y_m)$. Then, there is an $r$ such that every solution $u$ of the Cauchy-Riemann equation on $S$, with those limits over the positive semi-infinite strips, and arbitrary limit $y_0$ on the negative semi-infinite strip, is contained in $N_r$.
\end{proposition}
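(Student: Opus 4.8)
\emph{Strategy.} The plan is to transcribe the proof of Proposition \ref{th:c0-bound-2} verbatim, replacing its two analytic inputs by their open-string counterparts, namely Lemma \ref{th:action-bounds-2} (in place of Lemma \ref{th:action-bounds}) and Lemma \ref{th:integrated-maximum-principle-2} (in place of Lemma \ref{th:integrated-maximum-principle}). First I would reduce to a fixed negative asymptote $y_0$. Applying \eqref{eq:curvature-on-s-2} and using that both curvature terms $\int_S u^*F_S$ and $\int_{\partial S}u^*F_{\partial S}$ admit uniform lower bounds (for the boundary term, because $F_{\partial S}$ vanishes on the cone and $\partial S$ is compact away from it — in fact $F_{\partial S}$ is bounded on both sides), any solution with the prescribed positive asymptotes satisfies a lower bound on $A(y_0)$. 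Since Floer chords meeting the cone are genuine Reeb chords of length $\rho\ge 1$ with action $-\half\rho^2$, and these are nondegenerate by Assumption \ref{th:no-p-chords}, a lower action bound leaves only finitely many possibilities; together with the finitely many (nondegenerate) chords inside $N_P$, this pins $y_0$ down up to finitely many choices. Fixing $y_0$, \eqref{eq:curvature-on-s-2} then gives an a priori upper bound on $E^{\mathrm{geom}}(u)$, hence on $E^{\mathrm{geom}}(u|C)$ for every subdomain $C$.

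\emph{Standard regions and the thick part.} For each standard region, identify its closure with $[0,1]^2\subset S$ by translation in the $s$-coordinate. Let $T$ be the closure of the component of $S$ minus this rectangle bordering $\{1\}\times[0,1]$, i.e.\ the one not containing the negative semi-infinite strip; applying the energy--curvature identity to $T$ and the uniform lower bound on its curvature integrals produces a lower bound on $A(u|\{1\}\times[0,1])$. Feeding this and the energy bound above into Lemma \ref{th:action-bounds-2} yields a constant $r$ with $u(\{s\}\times[0,1])\subset N_r$ for some $s$ in that standard region; taking the maximum over the finitely many standard regions fixes $r$. Next, for each connected component of the thick part, enlarge it by adjoining strip pieces of the adjacent standard regions to obtain a compact connected $C^{\mathrm{thick}}\subset S$ whose transverse boundary $\partial^{\mathrm{trans}}C^{\mathrm{thick}}$ maps into $N_r$; since $K_S=\half\rho^2\beta_S$ and $J_S$ is of contact type there, and the Lagrangian boundary conditions are of Legendrian type on the cone (so $F_{\partial S}=0$ on the cone), Lemma \ref{th:integrated-maximum-principle-2} applies and gives $u(C^{\mathrm{thick}})\subset N_r$.

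\emph{Strips.} It remains to control $u$ on the semi-infinite and finite strips. On the negative semi-infinite strip, the standard region $[-1,0]\times[0,1]$ contains an arc mapped to $N_r$ and the adjacent piece was absorbed into some $C^{\mathrm{thick}}$, so only the half-strip $(-\infty,s]\times[0,1]$ remains; one enlarges $r$ to some $\tilde r\ge r$ lying outside the (compact) region where $F_S$ can be negative, containing $y_0$ in its interior, and equal to $P/\sigma_0$ times an integer, so that the $\tilde r$-shell carries the unperturbed quadratic Hamiltonian and a contact-type $J_S$; then Lemma \ref{th:integrated-maximum-principle-2} applied to $[\tilde s,s]\times[0,1]$ for $\tilde s\ll 0$ gives $u([\tilde s,s]\times[0,1])\subset N_{\tilde r}$. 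The positive semi-infinite strips are handled symmetrically, and the finite strips in the same way, using that the $(iP/\sigma)$-shells are available along their entire Floer regions. As the argument is a faithful open-string replica of the closed-string one, the only points demanding genuine (but routine) attention are the bookkeeping of the $F_{\partial S}$ contributions and the check that chords of bounded action are finite in number; I do not expect any serious obstacle.
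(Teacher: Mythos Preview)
Your proposal is correct and follows essentially the same approach as the paper: the paper itself simply states that the proof of Proposition~\ref{th:c0-bound-2} carries over verbatim once Lemma~\ref{th:action-bounds} and Lemma~\ref{th:integrated-maximum-principle} are replaced by their open-string counterparts, Lemma~\ref{th:action-bounds-2} and Lemma~\ref{th:integrated-maximum-principle-2}. Your write-up supplies exactly those details, including the reduction to fixed $y_0$ via the finiteness of chords with bounded-below action (trivially satisfied here since Reeb chords of length $\rho$ have action $-\half\rho^2$), the treatment of $F_{\partial S}$, and the strip-by-strip application of the integrated maximum principle.
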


We also have a transversality result, analogous to Proposition \ref{th:s-transversality}.
As in that situation, there is an unlikely exceptional case which has to be considered separately. Namely, suppose we have a solution which, on the negative semi-infinite strip, is of the form \eqref{eq:exceptional-solution}. There are two sub-cases to consider:
\begin{itemize}
\itemsep.5em
\item {\em The rescaled orbit $\lambda_{\hat{N},-\log(\sigma_0)}(y_0(t))$ is disjoint from the cone of $\hat{N}$.} In that case, at a point $u(s,t)$ with $s$ slightly larger than $-2$, one can choose $K_S$ freely, and transversality can be established by varying it there, just as in the closed string case.

\item {\em The rescaled orbit $\lambda_{\hat{N},-\log(\sigma_0)}(y_0(t))$ lies in the cone of $\hat{N}$.} Suppose temporarily that there is some $s_* \in (-2,-1)$ such that on the region $[-2,s_*] \times [0,1]$, we have $\partial_s u = 0$. This also means that $\partial_t u = \psi(s) \rho R$, where $\rho \geq 1$ is constant; but those two conditions contradict each other, because $\psi'(s) < 0$. As a consequence, we see that there points $(s,t) \in (-2,-1) \times [0,1]$ with $s$ arbitrarily close to $-2$, where $\partial_s u$ is nonzero. At such a point, $u(s,t)$ will be disjoint from $(iP/\sigma)$-shells, hence the almost complex structure can be varied freely, which ensures transversality.
\end{itemize}

The discussion of parametrized moduli spaces is parallel to that in Section \ref{subsubsec:gradings-and-orientations}, and there is a notion of conformally consistent perturbations parallel to that from Section \ref{subsubsec:neck}. We will not go through the details.

\subsubsection{Closed-open and open-closed situations} \label{subsubsec:closedopen}
Finally, we will encounter setups that mix the open and closed string theory, of two different kinds. In the first situation (closed-open) we consider Riemann surfaces obtained from the closed disc by removing $m$ interior points, labeled as $\{1,\dots,m\}$, and $(d+1)$ boundary points, labeled as $\{0,\dots,d\}$. The $0$-th boundary point corresponds to a negative semi-infinite strip, and the others have positive (cylindrical or strip) ends. In the second situation (open-closed), we have $(m+1)$ interior points, one of which is negative, and $d$ positive boundary punctures. 

In this context, the thick pieces can be more general than before, since they can both include parts of $\partial S$ and have boundary circles. However, that does not matter: we continue to impose the same conditions on $(K_S,J_S)$ on such pieces, which makes sense since our previous discussions of closed and open string operations were in agreement for thick pieces. All other pieces (finite and semi-infinite cylinders, finite and semi-infinite strips) belong to the one of the two setups encountered before, and we follow our previous treatment.

\section{Deformed symplectic cohomology\label{sec:closed-string}}

This section sets up certain operations in Hamiltonian Floer cohomology. For the analysis, we use the framework from Section \ref{sec:hamiltonian} without further elaboration. We apply that framework to a number of specific families of punctured Riemann surfaces, and the bulk of the discussion will be taken up with defining those. Here is how the various parts fit together.
\begin{itemize} \itemsep.5em
\item At the start, we review Deligne-Mumford spaces $\frakM_m$ (Section \ref{subsec:deligne-mumford}), as well as a version $\frakL_{m,r}$ of the lollipop spaces from \cite{abouzaid-seidel07} (Section \ref{sec:lollipop}). These won't be used as such, but they are convenient for introducing terminology (and also as a technical tool: $\frakM_m$ and $\frakL_{m,r}$ are smooth complex varieties, and this can be used to understand the manifold structures on related spaces).

\item Fulton-MacPherson spaces $\frakF_m$ (Section \ref{section:FMd}) are a form of the topological $L_\infty$-operad. Correspondingly, their application is to construct the $L_\infty$-structure on symplectic cohomology (Section \ref{section:Linf}; see \cite{abouzaid-groman-varolgunes,borman-el-alami-sheridan24} for other approaches). We use the $L_\infty$-structure only for one purpose, namely to define the deformed differential associated to a Maurer-Cartan element (part (i) of Definition \ref{th:deformed-structures}), with a formal parameter $q$.

\item One can rewrite Fulton-MacPherson spaces in terms of configurations on the cylinder rather than the plane (Section \ref{section:FM-cylinder}, see in particular Remark \ref{th:compare-c-f}). This viewpoint, which we emphasize by changing notation to $\frakC_m$, is natural for constructing the $L_\infty$-module on symplectic cohomology. Note that this is not the diagonal module of the $L_\infty$-algebra, because of the choice of asymptotic markers (Figure \ref{fig:markers-cylinder}). As before, a Maurer-Cartan elements leads to a $q$-deformation of the differential (part (ii) of Definition \ref{th:deformed-structures}).

\item Section \ref{section:angles} introduces parameter space $\frakA_r$ which then, in Section \ref{subsubsec:s1-equivariant}, will be used to define $S^1$-equivariant symplectic cohomology (compare \cite{ganatra19}). As in the non-equivariant situation, the underlying complex carries an $L_\infty$-module structure, hence admits $q$-deformations associated to Maurer-Cartan elements. The underlying parameter spaces are the $\AC_{m,r}$ from Section \ref{section:anglesFM}; they are used to set up the $L_\infty$-module structure in Section \ref{sec:s1module}; and the $q$-deformed differential is in Definition \ref{th:deformed-structures}(iii).

\item The $q$-deformed equivariant symplectic cohomology carries a connection $\nabla_{u\partial_q}$. This is the most complicated construction in this section: it is carried out in Section \ref{section:connection}, and combines parameter spaces $\AC^{(A)}_{m,r,w}$ and $\AC^{(B)}_{m,r,w}$ from Sections \ref{section:cartan-A} and \ref{section:cartan-B}.
\end{itemize}

\subsection{Deligne-Mumford spaces and their relatives\label{subsec:51}}

\subsubsection{Deligne-Mumford spaces\label{subsec:deligne-mumford}}
Consider spheres \eqref{eq:punctured-sphere} with $m+1 \geq 3$ punctures. We assume $z_0 = \infty$ and write
\begin{equation} \label{eq:punctured-sphere-2}
S = \bC \setminus \{z_1,\dots,z_m\}.
\end{equation}
The parameter space is
\begin{equation}
\mathring{\frakM}_m = \mathit{Conf}_m(\bC)/(\bC \rtimes \bC^*),
\end{equation}
where $\mathit{Conf}$ is ordered configuration space. Strata of the Deligne-Mumford compactification $\frakM_m$ are labeled by trees, for which we use the following terminology. 
\begin{itemize} \itemsep.5em
\item A tree $T$ is a contractible graph with directed edges, such that every vertex has exactly one outgoing edge. We write $|v|_{\operatorname{in}} = |v|-1$ for the number of incoming edges.

\item Our trees have $(m+1)$ semi-infinite edges, of which $1$ is directed towards infinity (outgoing), and $m$ are directed away from infinity (incoming). There is a unique vertex (the root vertex $v_{\operatorname{root}}$) adjacent to the outgoing edge. 

\item As part of the data of a tree, we fix a labeling of the incoming semi-infinite edges by $\{1,\dots,m\}$ (if two trees are isomorphic, the isomorphism between them is unique, since it has to preserve the labels). The outgoing edge can then be labeled as $0$.
\end{itemize}
Given that,
\begin{equation} \label{eq:dm} 
\frakM_m = \bigsqcup_T \mathring{\frakM}_T, \quad \mathring{\frakM}_T = \prod_v \mathring{\frakM}_{|v|_{\operatorname{in}}}, 
\end{equation}
where the union is over all stable trees $T$, meaning trees such that each vertex has $\geq 2$ incoming edges. (More precisely, we pick a representative in each isomorphism class of trees; the resulting $\mathring{\frakM}_T$ is independent of that choice up to canonical isomorphism.) The tree with only one vertex corresponds to the inclusion $\mathring{\frakM}_m \subset \frakM_m$. The space $\frakM_m$ has a natural structure of a compact complex manifold, and the stratification in \eqref{eq:dm} is given by a normal crossing divisor inside that manifold. Each point of $\frakM_m$ corresponds to a possibly disconnected Riemann surface $S = \bigsqcup_v S_v$, with components as in \eqref{eq:punctured-sphere-2}. 

\subsubsection{The framed version\label{sec:framings}}
A framing at $z_k \in \bar{S} = \bC P^1$ is a distinguished tangent direction
\begin{equation}
\tau_{z_k} \in (T_z\bar{S} \setminus \{0\})/\bR^{>0} \iso S^1.
\end{equation}
Since we work primarily with the punctured surface $S$ obtained by removing the $z_k$, the framings may be more appropriately called by their Floer-theoretic name, asymptotic markers. Punctured spheres with asymptotic markers (at all $z_k$, including $z_0 = \infty$) are parametrized by a space $\mathring{\frakM}_m^{\operatorname{fr}}$, which is an $(S^1)^{m+1}$-bundle over $\mathring{\frakM}_m$. It has a compactification to a smooth manifold with corners $\frakM_m^{\operatorname{fr}}$, whose strata are analogous to \eqref{eq:dm}:
\begin{equation} \label{eq:ksv-strata}
\frakM_m^{\operatorname{fr}} = \bigsqcup_T \mathring{\frakM}_T^{\operatorname{fr}}, \quad \mathring{\frakM}_T^{\operatorname{fr}} = \big(\prod_v \mathring{\frakM}^{\operatorname{fr}}_{|v|_{\operatorname{in}}}\big)/(S^1)^{E_{\mathit{fin}}(T)}.
\end{equation}
Here, $E_{\mathit{fin}}(T)$ is the set of finite edges of $T$. Each such edge singles out two punctures, lying on different $S_v$, and the $S^1$-action rotates their markers in opposite directions. More geometrically, if we consider the compact connected nodal surface $\bar{S} = \bigcup_v \bar{S}_v$ corresponding to a point of $\frakM_m^{\operatorname{fr}}$, this comes equipped with framings at the $z_k$, as well as relative framings at all nodes $z = \bar{S}_{v_1} \cap \bar{S}_{v_2}$, meaning distinguished elements
\begin{equation} \label{eq:relative-framing}
\tau_z^{\mathit{rel}} \in \big((T_z\bar{S}_{v_1} \otimes T_z\bar{S}_{v_2}) \setminus \{0\}\big)/\bR^{>0} \iso S^1.
\end{equation}

\begin{remark}
The space $\frakM_m^{\operatorname{fr}}$ was constructed in \cite{KSV}. As an intermediate object, they first introduced another space $\frakM_m^{\mathrm{bl}}$, obtained by taking a real oriented blowup of all the strata in Deligne-Mumford space. Since the stratification has normal crossings, blowing up yields a smooth (in fact real-analytic) manifold with corners. Geometrically, this corresponds to adding relative framings \eqref{eq:relative-framing} only. One then has a principal bundle
\begin{equation}
(S^1)^{m+1} \longrightarrow \frakM_m^{\operatorname{fr}} \longrightarrow \frakM_m^{\mathrm{bl}}.
\end{equation}
\end{remark}

\subsubsection{The symmetric group}
The action of $\mathit{Sym}(m)$ on $\mathring{\frakM}_m^{\operatorname{fr}}$, by permuting $(z_1,\dots,z_m)$, is free. This is easy to see: a group element which preserves a point of $\mathring{\frakM}_m^{\operatorname{fr}}$ must be a finite order automorphism of $\bC$ which fixes the asymptotic marker at $z_0 = \infty$; that leaves a remaining automorphism group $\bC \rtimes \bR^{>0}$, which has no nontrivial finite order elements. As a consequence of the definition of differentiable structure, the $\mathit{Sym}(m)$-action extends smoothly to $\frakM_m^{\operatorname{fr}}$, and that extension remains free. One could prove the last-mentioned claim by an explicit combinatorial argument, but we prefer to apply the following general idea to the inclusion $\mathring{\frakM}_m^{\operatorname{fr}} \hookrightarrow \frakM_m^{\operatorname{fr}}$ (which is a homotopy equivalence, because it is the inclusion of the interior of a manifold with corners into the whole manifold).

\begin{lemma} \label{lem:freeabs} 
Let $X, Y$ be topological manifolds with boundary, equipped with continuous actions of a finite group (or more generally compact Lie group) $G$. Suppose  $f: X \rightarrow Y$ is a $G$-equivariant map which (non-equivariantly) is a homotopy equivalence. If the $G$-action on $X$ is free, the same is true for $Y$.
\end{lemma}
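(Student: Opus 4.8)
The plan is to reduce this to a standard fact about group actions on manifolds, using the key observation that a free action of a finite group is detected purely by the fixed-point data of the subgroups: the action is free if and only if, for every element $g \neq e$ of $G$, the fixed-point set $Y^g$ is empty. So fix some $g \neq e$ and suppose for contradiction that $Y^g \neq \emptyset$. Since $g$ has finite order, $\langle g \rangle$ is a finite cyclic group acting on the topological manifold $Y$, and $Y^g$ is closed in $Y$; I would want to say that $Y^g$ is itself a topological manifold (possibly with boundary). For smooth actions this is the classical slice/fixed-point-manifold theorem; for merely topological actions of finite groups on topological manifolds one can invoke Smith theory, which gives that $Y^g$ has the mod-$p$ cohomology of a manifold when $g$ has prime order, and in general one reduces to the prime-order case by considering a prime-order subgroup of $\langle g\rangle$. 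In our application the actions are smooth (the $\mathit{Sym}(m)$-action on $\frakM_m^{\operatorname{fr}}$ is smooth by construction), so the smooth slice theorem suffices and I need not enter into the subtleties of the purely topological case.

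Next I would bring in the equivariant homotopy equivalence $f: X \to Y$. The idea is that $f$ should restrict to a map $X^g \to Y^g$, and that this restriction should again be a homotopy equivalence --- but this is exactly the point where one cannot argue too naively, since a non-equivariant homotopy equivalence need not restrict to a homotopy equivalence on fixed-point sets in general. The way around this: $X^g = \emptyset$ because the $G$-action on $X$ is free. If I can show that $f$ restricting to fixed sets together with equivariance forces $Y^g$ to be ``dominated'' in a suitable sense by $X^g = \emptyset$, I get $Y^g = \emptyset$, a contradiction. Concretely, the cleanest route is a Lefschetz/Euler-characteristic argument: by Smith theory (or the smooth fixed-point theorem plus a localization statement), if $g$ has prime order $p$ then the Euler characteristic satisfies $\chi(Y) \equiv \chi(Y^g) \pmod p$, and likewise $\chi(X) \equiv \chi(X^g) = \chi(\emptyset) = 0 \pmod p$; since $f$ is a homotopy equivalence, $\chi(X) = \chi(Y)$, so $\chi(Y^g) \equiv 0 \pmod p$. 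Unfortunately that alone does not kill $Y^g$ (it could have Euler characteristic $0$ while being nonempty). The honest fix is to work not just with Euler characteristic but with the full mod-$p$ cohomology: Smith theory gives $\dim_{\bF_p} H^*(Y^g; \bF_p) \leq \dim_{\bF_p} H^*(Y; \bF_p)$, and more usefully, for the \emph{equivariant} situation one has that $f$ induces an isomorphism on Borel equivariant cohomology $H^*_G(X;\bF_p) \to H^*_G(Y;\bF_p)$ (because $f$ is a $G$-equivariant homotopy equivalence, hence a weak equivalence of Borel constructions). Since the $G$-action on $X$ is free, $H^*_G(X;\bF_p) = H^*(X/G;\bF_p)$ is a finite-dimensional $\bF_p$-module killed by a power of any positive-degree element of $H^*(BG;\bF_p)$ coming from the restriction; localizing at such an element (the Borel localization theorem) gives that the localized equivariant cohomology of $Y$ vanishes, which by the localization theorem forces $Y^g = \emptyset$.

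So the key steps, in order, are: (1) recall that freeness of a finite group action is equivalent to emptiness of $Y^g$ for all $g \neq e$, and reduce to $g$ of prime order $p$ by passing to a subgroup; (2) form the Borel constructions $X_{hG} = EG \times_G X$ and $Y_{hG} = EG \times_G Y$, and note that the $G$-equivariant homotopy equivalence $f$ induces a homotopy equivalence $X_{hG} \to Y_{hG}$, hence an isomorphism on $H^*(-;\bF_p)$; (3) since $G$ acts freely on $X$, $X_{hG} \simeq X/G$, so $H^*_G(X;\bF_p)$ is finitely generated as an $\bF_p$-module and is therefore annihilated after inverting a suitable positive-degree class $\zeta \in H^*(BG;\bF_p)$; (4) transport this via the isomorphism to conclude $\zeta^{-1}H^*_G(Y;\bF_p) = 0$; (5) invoke the localization theorem (Quillen / Atiyah--Bott style, valid for finite-group actions on finite-dimensional spaces) to deduce $Y^g = \emptyset$. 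The main obstacle is step (5) and its hypotheses: the localization theorem requires some finiteness condition on $Y$ (finite cohomological dimension, or a finite-dimensional $CW$/manifold structure), which is why the statement is phrased for manifolds, and one has to be slightly careful that ``topological manifold with boundary'' gives enough finiteness --- for a manifold with boundary one can pass to the interior or to a compact exhaustion, and in the intended application $\frakM_m^{\operatorname{fr}}$ is a \emph{compact} smooth manifold with corners, so no difficulty actually arises. A lighter-weight alternative, avoiding Borel localization entirely, is to use that $f$ equivariant homotopy equivalence implies $f$ is an equivariant \emph{weak} equivalence (this is automatic for actions of finite groups on manifolds by a theorem of James--Segal / Elmendorf-type considerations once one checks fixed-point sets, but that is circular here); so the Borel-cohomology localization argument is the cleanest genuinely non-circular path, and that is the one I would write up.
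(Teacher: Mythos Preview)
Your proposal is correct, and the core idea---comparing Borel equivariant cohomology with $\bF_p$-coefficients for a prime-order subgroup---matches the paper's approach. However, the paper's execution is considerably more direct than your step (5). Rather than invoking the localization theorem to deduce $Y^{\bZ/p} = \emptyset$ from the vanishing of localized equivariant cohomology, the paper simply observes that a fixed point $y \in Y$ gives an equivariant map $\mathrm{pt} \to Y$ (and the constant map back), making $H_*^{\bZ/p}(\mathrm{pt};\bF_p) \cong \bF_p[*]$ a retract of $H_*^{\bZ/p}(Y;\bF_p)$; hence the latter is nonzero in every degree. Since $H_*^{\bZ/p}(X;\bF_p) \cong H_*(X/(\bZ/p);\bF_p)$ is bounded (as $X/(\bZ/p)$ is a manifold), and $f$ induces an isomorphism on equivariant homology, this is a contradiction. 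This retract-off-a-fixed-point trick sidesteps the localization theorem entirely, avoiding the finiteness hypotheses you worried about and making the argument a few lines long. Your detours through slice theorems, Smith-theoretic structure of fixed sets, and Euler characteristics are unnecessary---none of that is needed once you see the retract argument.
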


\begin{proof} 
Suppose on the contrary that $y \in Y$ has nontrivial stabilizer $G_y$ (in the case of a Lie group, $G_y \subset G$ is closed and hence again a Lie group). Take a cyclic subgroup $\bZ/p \subset G_y$, for $p$ prime. Inclusion of $y$ and the constant map induce homomorphisms
\begin{equation}
\xymatrix{
\ar@/_1pc/[rr]_-{\mathit{id}}
H_*^{\bZ/p}(\mathit{point};\bF_p) \ar[r] &
H_*^{\bZ/p}(Y;\bF_p) \ar[r] &
H_*^{\bZ/p}(\mathit{point};\bF_p)
}
\end{equation}
Since $H_*^{\bZ/p}(\mathit{point};\bF_p) = \bF_p$ for all $\ast \geq 0$, it follows that $H_*^{\bZ/p}(Y;\bF_p) \neq 0$ for all $\ast \geq 0$. On the other hand, $H_*^{\bZ/p}(X;\bF_p) \iso H_*(X/(\bZ/p);\bF_p)$, and $X/(\bZ/p)$ is again a manifold with boundary, so that homology group is zero in high degrees. That is a contradiction, since $f$ induces an isomorphism on equivariant homology.
\end{proof} 

\subsubsection{Cylindrical ends\label{sec:cylindrical}}
Take a punctured plane \eqref{eq:punctured-sphere-2}. Cylindrical ends are as in \eqref{eq:minus-end}, \eqref{eq:plus-end}. We adopt the standard gluing process from \eqref{eq:glued-surface}. If $S$ comes with asymptotic markers, one can ask for ends compatible with those markers. By this, we mean that the ends $\varepsilon_0,\dots,\varepsilon_m$ satisfy
\begin{equation}
\begin{aligned}
& \textstyle
\tau_{z_0} = \lim_{s\rightarrow -\infty} (\bR^{>0}\partial_s \varepsilon_0)_{(s,0)},
\\ & \textstyle
\tau_{z_k} = \lim_{s \rightarrow \infty} (-\bR^{>0}\partial_s \varepsilon_k)_{(s,0)}
\;\; \text{ for $k>0$.}
\end{aligned}
\end{equation}
Compatible ends form a contractible space. In particular, one can find a choice of such ends, for the universal family over $\mathring{\scrM}_m^{\operatorname{fr}}$, with $\smooth$ dependence on the modular parameters. One can simplify some technical aspects by restricting to the smaller class of rational ends, which form a finite-dimensional space of choices; here, rationality means that each end extends to a biholomorphic map $\{\pm\infty\} \cup (\bR \times S^1) \rightarrow \bC P^1$.

Suppose that a compatible choice of ends for the universal families has been made.
Given a surface $S = \bigsqcup_v S_v$ corresponding to a point in $\mathring{\frakM}_T^{\operatorname{fr}}$, and gluing parameters $\gamma = (\gamma_e)$, $\gamma_e \in [0,1)$ indexed by the set of finite edges, one can glue along the edges with $\gamma_e>0$ to obtain another surface, which belongs to the stratum of $\frakM^{\operatorname{fr}}_m$ labeled by the tree with those edges collapsed. This construction yields a map
\begin{equation}\label{eq:gluingKSV}
[0,1)^{\mathit{E}_{\operatorname{fin}}(T)} \times \mathring{\frakM}_T^{\operatorname{fr}} \longrightarrow 
\frakM_m^{\operatorname{fr}},
\end{equation}
which is $\smooth$ for the given differentiable structure. When restricted to a neighbourhood of $\{0\}^{\mathit{E}_{\operatorname{fin}}(T)} \times \mathring{\frakM}_T^{\operatorname{fr}}$,
it provides a collar neighbourhood of the $T$-stratum. We say that the choice of ends is consistent if the ends not used up in the gluing process, for small values of the gluing parameters, agree with those that are part of the universal choice for the glued surface. A consistent choice can be used to define a thick-thin decomposition on the universal family of surfaces, which is what was needed for the technical Floer-theoretic setup in Section \ref{subsubsec:operations}.

\subsubsection{Lollipops\label{sec:lollipop}}
Take an ordered configuration $(z_1,\dots,z_m)$ on the cylinder $\bR \times S^1$, and the associated surface
\begin{equation} \label{eq:cylinder-configuration}
S = (\bR \times S^1) \setminus \{z_1,\dots,z_m\} = 
(\overline{\bR \times S^1}) \setminus \{z_0 = -\infty,z_1,\dots,z_m,z_{m+1} = +\infty\}.
\end{equation}
In addition, we suppose that this comes with a choice of holomorphic isomorphisms 
\begin{equation} \label{eq:lollipop-maps}
\begin{aligned}
& \phi_1,\dots,\phi_r: \bar{S} \longrightarrow \bC P^1, \\ 
& \phi_i(-\infty) = \infty, \; \phi_i(\infty) = 0.
\end{aligned}
\end{equation}
One can think of each $\phi_i$ as determined by the point $\phi_i^{-1}(1) = (\sigma_i,\theta_i) \in \bR \times S^1$. Pairs \eqref{eq:cylinder-configuration}, \eqref{eq:lollipop-maps} are correspondingly parametrized by
\begin{equation}
\mathring{\frakL}_{m,r} \stackrel{\mathrm{def}}{=} (\mathit{Conf}_m(\bR \times S^1) \times (\bR \times S^1)^r)/(\bR \times S^1).
\end{equation}
A natural compactification $\frakL_{m,r}$ is the space of stable maps from genus $0$ curves with $(m+2)$ marked points to $(\bC P^1)^r$, of degree $(1,\dots,1)$, and with incidence conditions reflecting those above: the marked point $z_0$ goes to $(\infty,\dots,\infty)$, and $z_{m+1}$ to $(0,\dots,0)$. Examination of the first order deformation theory shows that this space is regular, hence a compact complex manifold, with a normal crossing stratification given by the number of nodes of the domain. Adopting the terminology from \cite[Section 6a]{abouzaid-seidel07} where a closely related concept was considered, we call this the space of lollipops.
\begin{figure}
\begin{centering}
\includegraphics{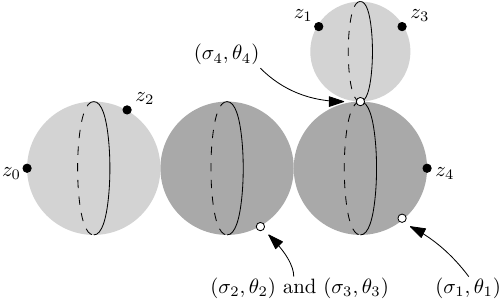}
\caption{\label{fig:lollipop}An example of the marked points from \eqref{eq:sigma-tau-point} (for $m = 3$, $r = 3$, $l = 4$). The lighter shaded components are where the map $\phi$ is constant.}
\end{centering}
\end{figure}

A point of $\frakL_{m,r}$ represents a connected compact nodal surface $\bar{S}$ and map $\phi = (\phi_1,\dots,\phi_r): \bar{S} \rightarrow \bC P^1$, but it is worth describing that information a little more explicitly. Let's write $\bar{S}_1,\dots,\bar{S}_l$ for those irreducible components of $\bar{S}$ which form a chain connecting the marked point $z_0 \in \bar{S}_1$ to $z_{m+1} \in \bar{S}_l$. For concreteness, let's choose identifications $\bar{S}_i = \overline{\bR \times S^1}$, so that $-\infty$ is either $z_0$ or the node closest to it on $\bar{S}_i$, and $+\infty$ is either $z_{m+1}$ or the node closest to it on $\bar{S}_i$. Let $p: \{1,\dots,r\} \rightarrow \{1,\dots,l\}$ be the map such that $\phi_j$ is non-constant precisely on $\bar{S}_{p(j)}$. We then mark the point
\begin{equation} \label{eq:sigma-tau-point}
(\sigma_j,\theta_j) = \phi_j^{-1}(1) \in \bR \times S^1 \subset \bar{S}_{p(j)}. 
\end{equation}
Note that these points can be nodes which connect $\bar{S}_i$ to one of the irreducible components of $\bar{S}$ which do not lie on our chain; or they can agree with one of the $(z_1,\dots,z_m)$; and moreover, several of those points can be the same (see Figure \ref{fig:lollipop}). The stability condition says that if some irreducible component does not contain any of the points \eqref{eq:sigma-tau-point}, then it must have at least three other special points. The data of the stable map is completely encoded in the surface $\bar{S}$ and additional points \eqref{eq:sigma-tau-point}.

As in the case of Deligne-Mumford space, one can modify the space by oriented real blowups, which geometrically adds relative framings at the nodes. This results in a compact manifold with corners $\frakL_{m,r}^{\mathrm{bl}}$. One can then further add framings at the marked points, which yields a principal bundle
\begin{equation} \label{eq:framed-lollipops}
(S^1)^{m+2} \longrightarrow \frakL_{m,r}^{\operatorname{fr}} \longrightarrow
\frakL_{m,r}^{\mathrm{bl}}.
\end{equation}

\subsection{Parameter spaces}

\subsubsection{Fulton-MacPherson spaces\label{section:FMd}}
We again consider configurations $(z_1,\dots,z_m)$ in $\bC$, for $m \geq 2$, but where now two configurations are isomorphic if they are related by a transformation $z \mapsto az+b$, with $a \in \bR^{>0}$ and $b \in \bC$. The interior of Fulton-MacPherson space is the parameter space
\begin{equation}
\mathring{\frakF}_m \stackrel{\mathrm{def}}{=} \mathit{Conf}_m(\bC)/(\bC \rtimes \bR^{>0}).
\end{equation}
The compactification $\frakF_m$ is a manifold with corners, stratified as in \eqref{eq:dm}:
\begin{equation} \label{eq:fm}
\frakF_m = \bigsqcup_T \mathring{\frakF}_T, \quad \mathring{\frakF}_T = \prod_v \mathring{\frakF}_{|v|_{\operatorname{in}}}.
\end{equation}
\begin{figure}
\begin{centering}
\includegraphics{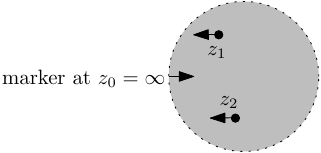}
\caption{\label{fig:punctured-plane}A punctured plane, with aligned asymptotic markers.}
\end{centering}
\end{figure}%
An application of Lemma \ref{lem:freeabs} shows that the $\mathit{Sym}(m)$-action on $\frakF_m$ is free. 

\begin{remark}
The name ``Fulton-MacPherson space'' as used here comes from the operad literature, e.g. \cite{salvatore21}. Briefly, the original paper of Fulton-MacPherson \cite{fulton-macpherson94} introduced a compactification of configuration space in $\bC^d$, for any $d$, which is a smooth algebraic variety. The strata in this compactification involve configurations divided by complex translation and rescaling. The spaces \eqref{eq:fm} are the version of that construction over $\bR$, for $d = 2$ (one can think of this as part of the real locus of the corresponding space over $\bC$, and use that to define the differentiable structure). The earliest occurrence of this version seems to be \cite[Section 3.2]{getzler-jones94} (see e.g.\ \cite[Section II.4.3]{markl-shnider-stasheff} for an exposition).
\end{remark}

Let's choose asymptotic markers which are {\em aligned}, in following sense. At the finite $z_i$, markers go in negative real direction; and at $z_0 = \infty$, the marker points in direction of the path $[0,1) \rightarrow \bC P^1$, $s \mapsto -1/s$ (Figure \ref{fig:punctured-plane}). Correspondingly, the convenient choice of ends is 
\begin{equation} \label{eq:punctured-plane-ends}
\begin{aligned}
& [0,\infty) \times S^1 \longrightarrow S, \; (s,t) \longmapsto z_j - \rho_j\exp(-2\pi(s+it)) \;\; \text{ $\rho_j>0$, near $z_j$ ($j = 1,\dots,m$);} \\
& (-\infty,0] \times S^1 \longrightarrow S, \; (s,t) \longmapsto \chi - \rho_0\exp(-2\pi(s+it)) \;\; \text{ $\rho_0>0$, $\chi \in \bC$, near $z_0 = \infty$.}
\end{aligned}
\end{equation}
These are a sub-class of rational ends, following the terminology of Section \ref{sec:framings}, and compatible with our choice of asymptotic markers. This particular class of ends is preserved under gluing, hence a consistent choice can be made within it.

\begin{remark}
A point in Fulton-MacPherson space describes a collection of configurations in $\bC$. One can compactify each component to $\bC P^1$, and glue those together to get a nodal surface, with $(m+1)$ smooth marked points. This surface inherits a marker at the output marked point $z_0$, and relative markers at the nodes. The aligned condition corresponds to specific choices of markers at $(z_1,\dots,z_m)$, but we can also rotate these by an arbitrary amount. The outcome of that yields a diffeomorphism
\begin{equation} \label{eq:m-rotate}
(S^1)^m \times \frakF_m \stackrel{\iso}{\longrightarrow} \frakM_m^{\operatorname{fr}},
\end{equation}
(this is well-known, compare e.g.\ the end of the proof of \cite[Proposition 2.1]{giansiracusa-salvatore}). The embedding $\frakF_m \hookrightarrow \frakM_m^{\operatorname{fr}}$ obtained by taking trivial rotations is particularly important, since it is compatible with the operad structure, or more concretely with gluing surfaces together.
\end{remark}

\subsubsection{Configurations on the cylinder\label{section:FM-cylinder}}
Consider configurations on the cylinder \eqref{eq:cylinder-configuration}, for some $m>0$, up to translation in $\bR$-direction. The parameter space is
\begin{equation} \label{eq:frakc}
\mathring{\frakC}_m = \mathit{Conf}_m(\bR \times S^1)/\bR.
\end{equation}
The construction of the compactification $\frakC_m$ can be divided into two steps. 
\begin{itemize} \itemsep.5em
\item
One first introduces a partial compactification $\frakC_m^\heart$ where, as points on the cylinder collide, they bubble into a configuration on $\bC$, determined up to $\bC \rtimes \bR^{>0}$. The stratification is correspondingly
\begin{equation} \label{eq:heart0}
\frakC_m^\heart = \bigsqcup_T \frakC_T^\heart, \quad
\frakC_T^\heart = 
\mathring{\frakC}_{|v_{\operatorname{root}}|_{\operatorname{in}}} \times
\prod_{v \neq v_{\operatorname{root}}} \mathring{\frakF}_{|v|_{\operatorname{in}}}.
\end{equation}
\item The full compactification builds in breaking of the cylinder, as points go to $\pm\infty$:
\begin{equation} \label{eq:cm-space0}
\frakC_m = \bigsqcup_{\substack{l \geq 1 \\ m_1+\cdots+m_l = m}}
\frakC_{m_1}^\heart \times \cdots \times \frakC_{m_l}^\heart.
\end{equation}
\end{itemize}
\begin{figure}
\begin{centering}
\includegraphics{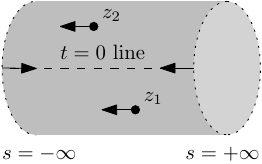}
\caption{\label{fig:markers-cylinder}Asymptotic markers on a cylinder.}
\end{centering}
\end{figure}%

We choose {\em $S^1$-invariant} asymptotic markers for the punctured cylinder (see Figure \ref{fig:markers-cylinder}). At $\pm\infty$, these point in the direction of the paths $s \mapsto (\pm 1/s,0)$, and at $z_1,\dots,z_m$ they go in negative $s$-direction. Correspondingly, one picks ends
\begin{align} \label{eq:endoncylinder1}
&
(-\infty,0] \times S^1 \longrightarrow S, \;\; (s,t) \longmapsto (s+\sigma_0,t)
\;\; \text{ $\sigma_0 \in \bR$, near $z_0 = -\infty$;}
\\ &
\label{eq:log-end}
[0,\infty) \times S^1 \longrightarrow S, \;\; (s,t) \longmapsto  z_j -\rho_j \exp(-2\pi(s+it)) \;\; \text{ $\rho_j>0$, near $z_j$, $j = 1,\dots,m$;}
\\ &
\label{eq:endoncylinder2} 
[0,\infty) \times S^1 \longrightarrow S, \;\; (s,t) \longmapsto (s+\sigma_{m+1},t)
\;\; \text{ $\sigma_{m+1} \in \bR$, near $z_{m+1} = +\infty$.}
\end{align}
These are not rational ends in the previously defined sense, but they play a similar role of reducing the amount of choice involved. If one uses these and the previously introduced ends on Fulton-MacPherson spaces, then the whole can be made consistent with respect to both gluing processes that occur here (merging two cylinders along $\pm\infty$, as well as inserting a punctured plane at some point in a cylinder).
\begin{figure}
\begin{centering}
\includegraphics{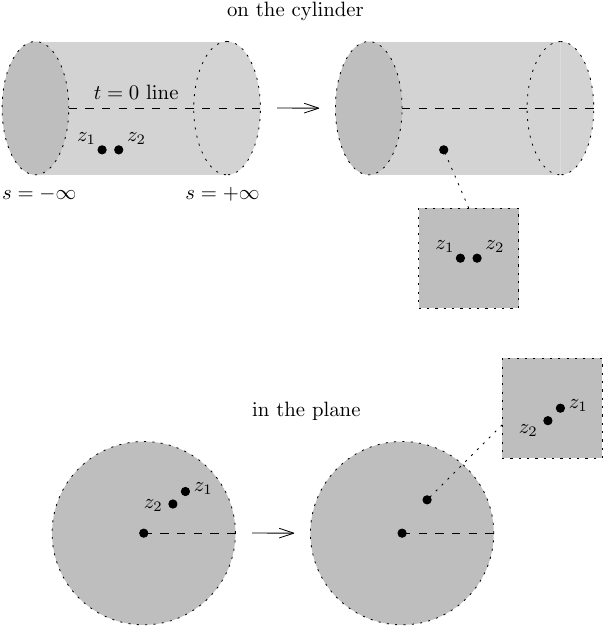}
\caption{\label{fig:compare-c-f}The degeneration from Remark \ref{th:compare-c-f}.}
\end{centering}
\end{figure}%

\begin{remark} \label{th:compare-c-f}
It turns out that 
\begin{equation} \label{eq:compare-c-f}
\frakC_m \iso \frakF_{m+1}. 
\end{equation}
On the interior, take
\begin{equation}
\bR \times S^1 \stackrel{\iso}{\longrightarrow} \bC^*, \quad s+it \longmapsto \exp(-2\pi (s+it))
\end{equation}
This induces an isomorphism $\mathring{\frakC}_m \iso \mathring{\frakF}_{m+1}$. In terms of combinatorics, a stratum of $\frakC_m$ is associated to a collection of trees $T_1,\dots,T_m$. For each $i>0$, let's turn the semi-infinite outgoing edge of each $T_{i+1}$ into a finite edge going towards the root of $T_i$; and add a semi-infinite edge going towards of the root of $T_m$. The outcome is a single tree $T$ with $(m+1)$ semi-infinite ends, which indexes the corresponding stratum of $\frakF_{m+1}$. The problem with \eqref{eq:compare-c-f} is that it is not compatible, at least not in the most straightforward way, with writing the stratifications as products in \eqref{eq:fm} and \eqref{eq:heart0}. To see that, take Figure \ref{fig:compare-c-f}, which shows the same degeneration (two points colliding) in $\mathring\frakC_2$ and $\mathring\frakF_3$. This yields limits in the strata $\mathring{\frakC}_1 \times \mathring{\frakF}_2 \subset \frakC_2$ respectively $\mathring{\frakF}_2 \times \mathring{\frakF}_2 \subset \frakF_3$, but those limits are not obtained from each other by just applying \eqref{eq:compare-c-f} in each factor: instead, the second factor should be rotated in a way which depends on the position of the point in the first factor. For that reason, we prefer to treat $\frakC_m$ and $\frakF_{m+1}$ separately. In accordance with that distinction, our choices of framings yield embeddings $\frakF_{m+1} \rightarrow \frakM_{m+1}^{\operatorname{fr}}$, $\frakC_m \rightarrow \frakM_{m+1}^{\operatorname{fr}}$ with different images.
\end{remark}

\subsubsection{Angle-decorated cylinders\label{section:angles}}
Fix some $r > 0$, and consider an $r$-tuple 
\begin{equation} \label{eq:angle-decorated}
(\sigma_1,\theta_1), \dots, (\sigma_r, \theta_r) \in \bR \times S^1,
\quad \sigma_1 \leq \cdots \leq \sigma_r.
\end{equation}
Geometrically, one thinks of marking the cylinder $S = \bR \times S^1$ with the circles $\{s = \sigma_i\}$, and where each circle comes equipped with an angle $\theta_i$. We call this an {\em angle-decorated cylinder}. Instead of the separate angles, it can often be convenient to work with sums 
\begin{equation} \label{eq:tildecoordinates} 
\theta_{\leq i} = \theta_1 + \cdots + \theta_i, \;\;
\theta_{\geq i} = \theta_i + \theta_{i+1} + \cdots + \theta_r, \;\;
\theta_{\mathrm{tot}} = \theta_{\leq r} = \theta_{\geq 1}.
\end{equation}
Suppose we have two angle-decorated cylinders $S_{\pm}$. When gluing the $+\infty$ end of $S_+$ to the $-\infty$ end of $S_-$ to form a new cylinder, our convention is to always use {\em angle-twisted gluing}, which means that $(s,t) \in S_+$ gets identified with $(s-l,t+\theta_{\mathrm{tot},-}) \in S_-$ ($l$ is the gluing length; what's important is the appearance of the total angle of $S_-$).

Write $\Sigma_r \subset \bR^r$ for the space of $(\sigma_1,\dots,\sigma_r)$ satisfying the condition from \eqref{eq:angle-decorated}. The parameter space of angle-decorated cylinders is
\begin{equation}
\mathring{\frakA}_r = (\Sigma_r \times (S^1)^r)/\bR.
\end{equation}
Unlike the previous situations, this already has codimension $1$ boundary faces before compactification, which occur when $\sigma_i = \sigma_{i+1}$ for some $i$. Those faces comes with natural maps
\begin{equation} \label{eq:forget-circle}
\partial_{\sigma_{i+1}=\sigma_i} \mathring{\frakA}_r \longrightarrow \mathring{\frakA}_{r-1},
\end{equation}
where we replace the adjacent pair $(s_i,\theta_i)$, $(s_{i+1} = s_i, \theta_{i+1})$ with $(s_i,\theta_i+\theta_{i+1})$, preserving $\theta_{\mathrm{tot}}$.
\begin{figure}
\begin{centering}
\includegraphics{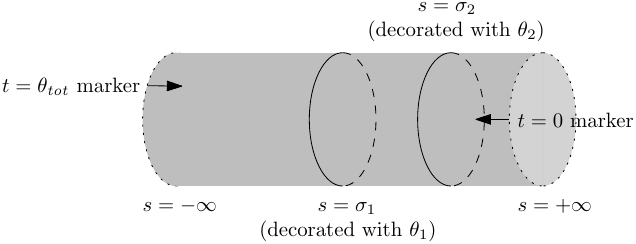}
\caption{\label{fig:angle-decorated}An angle-decorated cylinder.}
\end{centering}
\end{figure}%

The compactification $\frakA_r$ is defined by allowing some of the $\sigma_i$ to go to $\pm\infty$, leading to a finite collection of cylinders of the same kind, which means that
\begin{equation} \label{eq:a-space-compactified}
\frakA_r = \bigsqcup_{\substack{l \geq 1 \\ r_1+\cdots+r_l = r}} \mathring{\frakA}_{r_1} \times \cdots \times \mathring{\frakA}_{r_l}.
\end{equation}
The map $(\theta_1,\dots,\theta_r)$ extends smoothly to the compactification, and on each stratum \eqref{eq:a-space-compactified}, that extension is the product of the corresponding maps on the factors. The forgetful map \eqref{eq:forget-circle} also extends smoothly to compactifications.

One always chooses the asymptotic marker at $+\infty$ to point in direction of $s \mapsto (1/s,0)$, but that at $-\infty$ to point in the direction of $s \mapsto (-1/s,\theta_{\mathrm{tot}})$ (Figure \ref{fig:angle-decorated}). The ends are correspondingly taken to be
\begin{equation} \label{eq:rotated-end}
\begin{aligned}
& (-\infty,0] \times S^1 \longrightarrow S, \quad (s,t) \longmapsto (s+\sigma_0,\, t+\theta_{\mathrm{tot}}), \\
& [0,\infty) \times S^1 \longrightarrow S, \quad (s,t) \longmapsto (s+\sigma_1,t)
\end{aligned}
\end{equation}
As usual, one wants to make a universal choice of ends over $\mathring{\frakA}_r$, consistent with angle-twisted gluing. One additionally requires that the choice of ends should be compatible with \eqref{eq:forget-circle}. 

\subsubsection{Angle-decorated configurations\label{section:anglesFM}}
We next define parameter spaces $\AC_{m,r}$, for $m + r >0$, which are a mashup of those from Section \ref{section:FM-cylinder} and \ref{section:angles}. Consider configurations of $m$ marked points on the cylinder, and also data \eqref{eq:angle-decorated}, both up to common translation in $\bR$-direction:
\begin{equation}
\mathring{\AC}_{m,r} = \big(\mathit{Conf}_m(\bR \times S^1) \times \Sigma^r \times (S^1)^r\big)/\bR.
\end{equation}
As in \eqref{eq:forget-circle}, this space has boundary faces where $\sigma_i = \sigma_{i+1}$, and forgetful maps
\begin{equation} \label{eq:ac-forget}
\partial_{\sigma_{i+1}=\sigma_i} \mathring{\AC}_{m,r} \longrightarrow \mathring{\AC}_{m,r-1}.
\end{equation}
The compactification is constructed as in Section \ref{section:FM-cylinder}, by first taking
\begin{equation} \label{eq:ac-heart-space}
\AC_{m,r}^\heart = \bigsqcup_T \AC_T^\heart, \quad
\AC_T^\heart = 
\mathring{\AC}_{|v_{\operatorname{root}}|_{\operatorname{in}},r} \times
\prod_{v \neq v_{\operatorname{root}}} \mathring{\frakF}_{|v|_{\operatorname{in}}},
\end{equation}
amd then
\begin{equation} \label{eq:ac-space}
\AC_{m,r} = \bigsqcup_{\substack{l \geq 1 \\ m_1+\cdots+m_l = m \\ r_1+\cdots + r_l = r}}
\AC_{m_1,r_1}^\heart \times \cdots \times \AC_{m_l,r_l}^\heart.
\end{equation}
These identifications are chosen so that angle-twisted gluing is continuous (Figure \ref{fig:angle-twisted-stretching}). The outcome is a manifold with corners. One chooses asymptotic markers and ends at $\pm\infty$ as in \eqref{eq:rotated-end}; and around $(z_1,\dots,z_m)$ as in \eqref{eq:log-end}, consistently with gluing and compatibly with \eqref{eq:ac-forget}.
\begin{figure}
\begin{centering}
\includegraphics{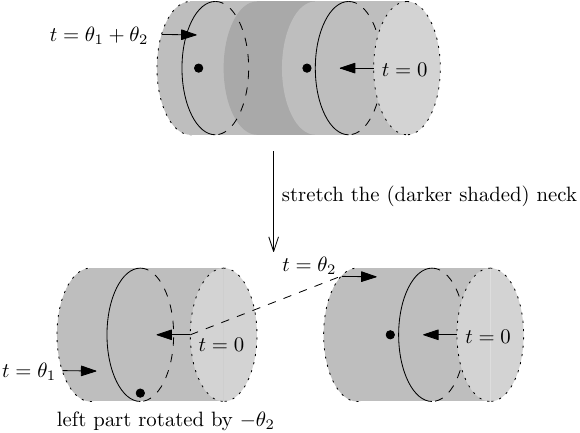}
\caption{\label{fig:angle-twisted-stretching}A degeneration in $\AC_{2,2}$, with limit in $\mathring{\AC}_{1,1} \times \mathring{\AC}_{1,1}$.
}
\end{centering}
\end{figure}

%

\begin{remark}
Starting with our choice of asymptotic markers, and then allowing those at $(z_0,z_1,\dots,z_m)$ to rotate, yields an analogue of \eqref{eq:m-rotate}, namely a map into the space of framed lollipops from \eqref{eq:framed-lollipops},
\begin{equation}
(S^1)^{m+1} \times \AC_{m,r} \longrightarrow \frakL_{m,r}^{\operatorname{fr}}.
\end{equation}
These spaces in fact only differ by the condition $\sigma_i \leq \sigma_{i+1}$. One can show that the equality $\sigma_i = \sigma_{i+1}$ is transverse to all the boundary strata of $\frakL_{m,r}^{\operatorname{fr}}$. This is one way to prove that $\AC_{m,r}$ is a manifold with corners, since that reduces the question to the more familiar study of the stable map spaces that underlie the notion of lollipop.
\end{remark}

\subsubsection{Cartan homotopy moduli spaces (A)\label{section:cartan-A}}
We now turn to the two spaces used to define the connection on equivariant Floer cohomology. Both are constructed inside $\AC_{m,r}$, and use the same kinds of asymptotic markers and ends.

Take a configuration \eqref{eq:cylinder-configuration}, $m>0$, with angle decorations \eqref{eq:angle-decorated}. Fix some $w \in \{0,\dots,r\}$ and impose the following constraint on the first point (Figure \ref{fig:cartan-A}):
\begin{equation} \label{eq:s-moves}
z_1 = (s_1,t_1 = \theta_{\geq w+1}), \quad
s_1 \in \begin{cases}
\bR & r = 0, \\
(-\infty,\sigma_1] & w = 0, \, r>0 \\
[\sigma_w,\sigma_{w+1}] & w = 1,\dots,r-1, \\
[\sigma_r,\infty) & w = r, \, r>0
\end{cases}
\end{equation}
The space of those, up to $\bR$-translation, is written as $\mathring{\AC}_{m,r,w}^{(A)}$. This has different kinds of codimension $1$ boundary faces:
\begin{itemize} \itemsep.5em
\item
We can have $\sigma_i = \sigma_{i+1}$ for some $i \neq w$ (the $i = w$ case is of codimension $2$, hence not listed here). Those faces comes with the usual map (note that the forgetting process is compatible with the condition that $t_1 = \theta_{\geq w+1}$)
\begin{equation}
\partial_{\sigma_i = \sigma_{i+1}} \mathring{\AC}_{m,r,w}^{(A)} \longrightarrow
\begin{cases}
\mathring{\AC}_{m,r,w-1}^{(A)} & i<w, \\
\mathring{\AC}_{m,r,w}^{(A)} & i>w.
\end{cases}
\end{equation}

\item
We can have $s_1 = \sigma_w$ (when $w \neq 0$) or $s_1 = \sigma_{w+1}$ (when $w \neq r$). Those boundary faces are written as $\partial_{s_1 = \sigma_w} \mathring{\AC}_{m,r,w}^{(A)}$, $\partial_{s_1 = \sigma_{w+1}} \mathring{\AC}_{m,r,w}^{(A)}$.
\end{itemize}
\begin{figure}
\begin{centering}
\includegraphics{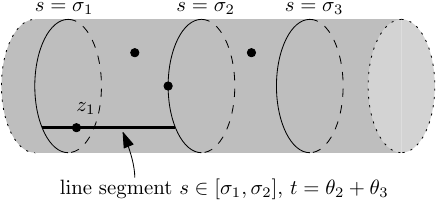}
\caption{\label{fig:cartan-A}The position constraint from \eqref{eq:s-moves}.}
\end{centering}
\end{figure}

Clearly, $\mathring{\AC}_{m,r,w}^{(A)}$ is a codimension $1$ submanifold with boundary in $\mathring{\AC}_{m,r}$. We define $\AC_{m,r,i}^{(A)}$ to be its closure in $\AC_{m,r}$. Concretely, this means that one has a partial compactification $\AC_{m,r,w}^{(A),\heart}$ as in \eqref{eq:ac-heart-space}, and then the analogue of \eqref{eq:ac-space} is
\begin{equation} \label{eq:FMdkb3} 
\AC_{m,r,w}^{(A)} = \bigsqcup\, \AC_{m_1,r_1}^{\heart} \times \cdots \times \AC_{m_k,r_k,w-r_1-\cdots-r_{k-1}}^{(A),\heart} \times \cdots \times \AC_{m_l,r_l}^{\heart}.
\end{equation}
The union is over all $l \geq 1$, $m_1 + \cdots + m_l = m$, $r_1 + \cdots + r_l = r$, and $1 \leq k \leq l$ such that the spaces in the formula make sense. This closure is a submanifold with corners inside $\AC_{m,r}$. The main point here is that $\theta_{\geq w+1}$ extends to a function on $\AC_{m,r}$ without any critical points, and the same for its restriction to any closed stratum; hence setting $t_1 = \theta_{\geq w+1}$ is automatically a transverse constraint, even though $t_1$ does have critical points. The same applies to the other part of \eqref{eq:s-moves} in a slightly more complicated way (one can use translation-invariance to fix $s_1 = 0$, and then the constraints become $\sigma_w \leq 0$, $\sigma_{w+1} \geq 0$).
The compactification has additional codimension $1$ boundary faces:
\begin{itemize} \itemsep.5em
\item The cylinder can split into two, with the part carrying $z_1$ lying either on the left or right. This is the case $l = 2$ of \eqref{eq:FMdkb3}, which means that the open stratum is
\begin{equation}
\mathring{\AC}_{m_1,r_1} \times \mathring{\AC}_{m_2,r_2,w-r_1}^{(A)} \;\; \text{ or } \;\;
\mathring{\AC}_{m_1,r_1,w}^{(A)} \times \mathring{\AC}_{m_2,r_2}, \;\;
m_1+m_2 = m, \; r_1+r_2=r.
\end{equation}
\item We can have Fulton-MacPherson bubbling, where a group of punctures collide. Strictly speaking, there are two sub-cases here, depending on whether $z_1$ ends up on the bubble of not. 
\end{itemize}
\begin{figure}
\begin{centering}
\includegraphics{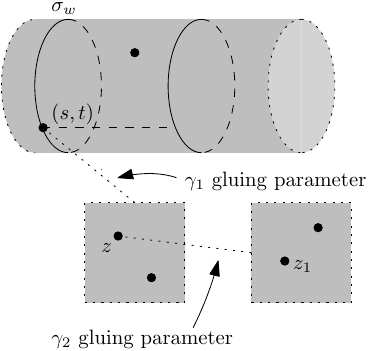}
\caption{\label{fig:fm-bubble}The gluing process from Remark \ref{th:glue-constraint}.}
\end{centering}
\end{figure}%

\begin{remark} \label{th:glue-constraint}
To further illustrate the submanifold-with-corners structure, it is useful to look at the coordinates given by the gluing process. Take a stratum in \eqref{eq:ac-heart-space} where two levels of Fulton-MacPherson bubbling have occurred at a point on the $\sigma_w$ circle, and where the point $z_1 \in \bC$ lies on one of the resulting bubbles, as in Figure \ref{fig:fm-bubble}. Write $z \in \bC$ and $(s,t) \in \bR \times S^1$ for the relevant attaching points (which can move as part of the parameter space, as can $z_1$). After gluing the surfaces together using parameters $\gamma_1, \gamma_2 > 0$  (suppressing some irrelevant constants), the point $z_1$ would end up at $(s,t) + \gamma_1(z + \gamma_2 z_1) \in \bR \times S^1$. Then, \eqref{eq:s-moves} is given by the following conditions on $(\gamma_1,\gamma_2,\sigma_w,s,t,z,z_1)$:
\begin{equation}
\begin{aligned}
& s + \gamma_1 \mathrm{re}(z) + \gamma_1 \gamma_2 \mathrm{re}(z_1) \geq \sigma_w, \\
& t + \gamma_1 \mathrm{im}(z) + \gamma_1 \gamma_2 \mathrm{im}(z_1) = \theta_{\geq w+1}.
\end{aligned}
\end{equation}
As one can see, if one turns the first into an equality, it is always transversally cut out at $\gamma_1 = \gamma_2 = 0$; and so is the second one.
\end{remark}
\begin{figure}
\begin{centering}
\includegraphics{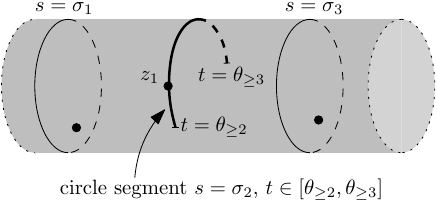}
\caption{\label{fig:cartan-B}The position constraint from \eqref{eq:t-moves}.}
\end{centering}
\end{figure}

\subsubsection{Cartan homotopy moduli spaces (B)\label{section:cartan-B}}
We start with a cylinder configuration as before, some $r \geq 1$ and $1 \leq w \leq r$. As part of the data, we include a lift of $\theta_w$ to $[0,1]$, and an extra variable $\xi$, which enter into the position constraint for $z_1$ (Figure \ref{fig:cartan-B} shows a slightly simplified picture of the outcome):
\begin{equation} \label{eq:t-moves} 
\theta_w^{\operatorname{lift}} \in [0,1],\;\; \xi \in [\theta_w^{\operatorname{lift}},1], \quad
z_1 = (s_1 = \sigma_w, \, t_1= \theta_{\geq w+1} + \xi).
\end{equation}
The resulting parameter space $\mathring{\AC}_{m,r,w}^{(B)}$ has the following codimension $1$ boundary faces:
\begin{itemize}
\itemsep.5em
\item
The by now standard ones where $\sigma_i = \sigma_{i+1}$ for some $i$. In the cases where $i = w-1$ and $i = w$, the forgetful maps should be thought of as ignoring the condition on $t_1$, which means they have the form
\begin{equation}
\begin{aligned}
&
\partial_{\sigma_{w-1} = \sigma_w} \mathring{\AC}_{m,r,w}^{(B)} \longrightarrow 
\{
\text{subset of $\mathring{\AC}_{m,r-1}$ where $z_1 = (s_1,t_1)$, $s_1 = \sigma_{w-1}$}
\}, \\
&
\partial_{\sigma_w = \sigma_{w+1}} \mathring{\AC}_{m,r,w}^{(B)} \longrightarrow
\{\text{subset of $\mathring{\AC}_{m,r-1}$ where $z_1 = (s_1,t_1)$, $s_1 = \sigma_w$}\}.
\end{aligned}
\end{equation}
It is still true that they decrease dimensions, which is what's important for us.

\item
We can have $t_1 = \theta_{\geq w}$. This comes with a map to one of the boundary faces of the (A) type spaces,
\begin{equation} \label{eq:cylinder-left-right}
\partial_{t_1 = \theta_{\geq w}} \mathring{\AC}_{m,r,w}^{(B)} \longrightarrow
\partial_{s_1 = \sigma_w} \mathring{\AC}_{m,r,w-1}^{(A)},
\end{equation}
which is an isomorphism away from a subset of positive codimension (the issue being the lifts to $[0,1]$ in \eqref{eq:t-moves}, which carry nontrivial information only at the endpoints $\{0,1\}$). At the other extreme, we can have $t_i = \theta_{\geq w+1}$, and a parallel map
\begin{equation} \label{eq:ABisomorphism2}
\partial_{t_1 = \theta_{\geq w+1}} \mathring{\AC}_{m,r,w}^{(B)} \longrightarrow \partial_{s_1 = \sigma_w} \mathring{\AC}_{m,r,w}^{(A)}.
\end{equation}

\item
Finally, if $\theta_w^{\operatorname{lift}} = 0$, then $t_1$ can lie anywhere on the circle $\{s = \sigma_w\}$. We can forget $\sigma_w$, and then the condition is that $z_1$ can lie anywhere in the annulus $[\sigma_{w-1},\sigma_{w+1}] \times S^1$. Renumbering of the angle decorations therefore yields a map
\begin{equation} \label{eq:0-forget}
\partial_{\theta_i^{\operatorname{lift}} = 0} \mathring{\AC}_{m,r,w}^{(B)} \longrightarrow \mathring{\AC}_{m,r-1}.
\end{equation}
The image of this is the subset of configurations decorated with $(r-1)$ angles, such that $z_1$ lies in the annulus $[\sigma_{w-1},\sigma_w] \times S^1$.
\end{itemize}
As before we have a compactification $\AC_{m,r,w}^{(B)}$, which contributes the same additional codimension $1$ boundary strata (cylinder-breaking, bubbling off of Fulton-MacPherson configuration) as in the (A) type situation.

\subsubsection{Orientation conventions\label{subsec:or-con}}
We end by listing the choices of orientations for the main parameter spaces. The general convention for orientations on manifolds with boundary $N$ is this: if $(\tau_1,\tau_2,\dots)$ is a positively oriented basis for $T(\partial N)$, and $\nu$ is an outwards pointing normal vector, then $(\nu,\tau_1,\tau_2,\dots)$ is a positively oriented basis for $TN$. 
\begin{itemize}
\itemsep.5em
\item \parindent0em \parskip1em
For every configuration $(z_1,\dots,z_m)$ there is a short exact sequence
\begin{equation} \label{eq:orient} 
0 \rightarrow \bC \oplus \bR \longrightarrow \bC^m \longrightarrow T_{[z_1,\dots,z_m]} \mathring{\frakF}_m \rightarrow 0,
\end{equation} 
where $1 \in \bC$ is mapped to $(1,\dots,1)$, and $1 \in \bR$ to $(-z_1,\dots,-z_m)$ (infinitesimal shrinking of a configuration). We use the orientation of $\frakF_m$ compatible with that sequence. Let's illustrate the example $m = 2$: there, the left hand side of \eqref{eq:orient} gives three linearly independent elements in $\bC^2$, which are naturally completed to an oriented basis
\begin{equation} \label{eq:2-basis}
(1,1), \, (i,i), \, (-z_1,-z_2), \, (-iz_1,-iz_2) \in \bC^2.
\end{equation}
The relative angle $\alpha(z_1,z_2) = \arg(z_2-z_1)$ satisfies $\alpha'_{(z_1,z_2)}(-iz_1,-iz_2) = -1$, hence induces an {\em orientation-reversing} diffeomorphism $\frakF_2 \iso S^1$. 

Note that by definition, $\mathit{Sym}_m$ acts orientation-preservingly. One can also check that the inclusions of boundary faces 
\begin{equation} \label{eq:b-to-r}
\mathring{\frakF}_{m_2} \times \mathring{\frakF}_{m_1} \hookrightarrow \partial\frakF_m,\;\; m = m_1+m_2-1
\end{equation}
are compatible with orientations. Here, in terms of \eqref{eq:fm}, $\frakF_{m_1}$ corresponds to the root vertex of the tree, so we are ordering the factors in \eqref{eq:b-to-r} branch-to-root (the use of this ordering, specifically for orientation purposes, is motivated by the bottom left entry in Figure \ref{fig:orientation-spaces}, where it appears naturally). 

\item
The orientation of $\frakF_{m+1}$ carries over to $\frakC_m$ via \eqref{eq:compare-c-f}. More directly, this orientation is given by the short exact sequence
\begin{equation} \label{eq:right-moving}
0 \rightarrow \bR \longrightarrow \bC^m \longrightarrow T_{[z_1,\dots,z_m]} \mathring{\frakC}_m \rightarrow 0,
\end{equation}
where $1 \in \bR$ is mapped to the infinitesimal translation $(1,\dots,1)$ (because the marked point $z_0$ is now at $-\infty$, and our previous sign convention was to move away from $z_0$). Again, it is instructive to look at the simplest case $m = 1$: there, 
$\mathrm{im}(z_1)$ gives an {\em orientation-preserving} diffeomorphism $\frakC_1 \iso S^1$.

As before, this will cause the orientations to be compatible with the inclusions of faces $\mathring{\frakC}_{m_2} \times \mathring{\frakC}_{m_1} \hookrightarrow \partial\frakC_{m_1+m_2}$, (note the ordering of components is from $s = +\infty$ to $s = -\infty$, which agrees with our convention for Fulton-MacPherson spaces). The same holds for faces $\mathring{\frakF}_{m_2} \times \mathring{\frakC}_{m_1} \hookrightarrow \partial\frakC_{m_1+m_2-1}$.

\item 
For angle-decorated cylinders, we group each $(\sigma_k,\theta_k)$ into a complex number, and use the sequence parallel to \eqref{eq:right-moving},
\begin{equation}
0 \rightarrow \bR \longrightarrow \bC^r \longrightarrow T_{[\sigma_1,\theta_1,\dots,\sigma_r,\theta_r]} \mathring{\frakA}_r \rightarrow 0.
\end{equation}
As in the previous cases, this is compatible with boundary faces. The same principle is used for angle-decorated configurations.

\item 
For type (A) Cartan homotopy moduli spaces (Section \ref{section:cartan-A}), we orient the line on which the first marked point lies towards the right (away from $z_0 = -\infty$). The other marked points and angle decorations are treated as before, and we orient these moduli spaces by the resulting exact sequence:  
\begin{equation}
0 \rightarrow \bR \longrightarrow \bR \oplus \bC^{m+r-1} \longrightarrow T_{[(\sigma_i,\theta_i),z_1,\dots,z_m]} \mathring{\AC}_{m,r,w}^{(A)} \rightarrow 0.
\end{equation}
In the simplest case $\mathring{\AC}_{1,0,0}^{(A)} = \{\mathit{point}\}$, this convention counts that point as $+1$. In general, this is the same as the orientation obtained by breaking the translational symmetry to set $z_1 = (0,\theta_{\geq w+1})$, and then treating the remaining $z_k$ and $(\sigma_k,\theta_k)$ as complex numbers. 

Yet another equivalent description would be to say that the map $\mathring{\AC}_{m,r,w}^{(A)} \times S^1 \rightarrow \mathring{\AC}_{m,r}$, where the $S^1$ factor rotates the second component of $z_1 = (s_1,t_1)$, is compatible with orientations. This point of view is particularly convenient for considering compatibility with boundary orientations, since it reduces that question to the previously discussed cases. It turns out that (again with factors ordered from $s = +\infty$ to $s = -\infty$)
\begin{align} \label{eq:type-a-orientation-1}
& \mathring{\AC}_{m_2,r_2,w-r_1}^{(A)} \times \mathring{\AC}_{m_1,r_1} \hookrightarrow
\partial \mathring{\AC}_{m_1+m_2,r_1+r_2,w}^{(A)} \text{ is orientation-reversing;} \\
\label{eq:type-a-orientation-2}
& \mathring{\AC}_{m_2,r_2} \times \mathring{\AC}_{m_1,r_1,w}^{(A)} \hookrightarrow
\partial \mathring{\AC}_{m_1+m_2,r_1+r_2,w}^{(A)} \text{ is orientation-preserving;} \\
& \mathring{\frakF}_{m_2} \times \mathring{\AC}_{m_1,r,w}^{(A)} \hookrightarrow \partial \mathring{\AC}_{m_1+m_2-1,r,w}^{(A)} \text{ is orientation-preserving.}
\label{eq:type-a-orientation-3}
\end{align}
(The orientation-reversal happens because one has to swap the extra $S^1$ factor with $\mathring{\AC}_{m_1,r_1}$ in the ordering, and both are odd-dimensional.)

\item For type (B) Cartan homotopy moduli spaces (Section \ref{section:cartan-B}) we orient the circle on which $z_1$ lies counterclockwise, meaning in direction of increasing $t_1$. This gives rise to an analogous exact sequence which we use to orient the moduli spaces:
\begin{equation} \label{eq:b-sequence}
0 \rightarrow \bR \longrightarrow \bR \oplus \bC^{m+r-1} \longrightarrow T_{[(\sigma_i,\theta_i),z_1,\dots,z_m]} \mathring{\AC}_{m,r,w}^{(B)} \rightarrow 0.
\end{equation}
Equivalently, one can say that the map $\mathring{\AC}_{m,r,w}^{(B)} \times \bR \rightarrow \mathring{\AC}_{m,r}$, where $\bR$ translates the first component of $z_1 = (s_1,t_1)$ to the left, is compatible with orientations. For the boundary faces parallel to those in \eqref{eq:type-a-orientation-1}--\eqref{eq:type-a-orientation-3}, we get the same orientation behaviour. 

There is an additional boundary face, which appears in the forgetful map \eqref{eq:0-forget}. Let's consider the simplest case $m = r = w = 1$. The space $\bR \oplus \bC$ in the middle of \eqref{eq:b-sequence} is oriented according to the coordinates $(t_1,\sigma_1,\theta_1^{\mathrm{lift}})$, and since $\theta_1^{\mathrm{lift}} = 0$ is the minimal value of that coordinate, the resulting boundary face is ordered opposite to $(t_1,\sigma_1)$, or equivalently agrees with the orientation from $(\sigma_1,t_1)$. The forgetful map corresponds to considering $(\sigma_1,t_1)$ as a complex number, and is accordingly orientation-preserving. The same is true in general.
\end{itemize}


\subsection{Closed string operations \label{section:closedstringoperations}} 

\subsubsection{$L_\infty$-structure\label{section:Linf}} We are now ready to construct the operations on symplectic cohomology, following the general framework laid out in Section \ref{subsubsection:Floerfamilies}. Take $\hat{N}$ as in \eqref{eq:liouville-completion}, \eqref{eq:calabi-yau}. We also fix $(H^{\floer}, J^{\floer})$ which sets up the Floer complex (Section \ref{subsubsec:gradings-and-orientations}).

We begin by introducing operations parameterized by the spaces $\frakF_m$ (Section \ref{section:FMd}) for each $m \geq 2$. We suppose that a universal family of cylindrical ends  \eqref{eq:punctured-plane-ends} on each $\mathring{\frakF}_m$ has been fixed, consistently with gluing. We also choose families of Hamiltonian perturbations and almost complex structures on the universal curves,
\begin{equation} \label{eq:f-pair}
K_{\frakF_m} \in \Omega^1_{U_{\frakF_{m}}/ \frakF_{m}}(U_{\frakF_{m}}, \scrH(\hat{N})) \text{ and } J_{\frakF_m} \in C^{\infty}(U_{\frakF_{m}}, \scrJ(\hat{N})),
\end{equation} 
which satisfy the conditions of Section \ref{subsubsec:operations} when restricted to each point in parameter space, are conformally consistent with respect to boundary strata in the sense of Section \ref{subsubsec:neck}, and are equivariant with respect to the $\mathit{Sym}(m)$-action.

\begin{remark}
The equivariance constraint is unproblematic for transversality, because the action on $\mathring{\frakF}_m$ is free. More concretely, any point in $\mathring{\frakF}_m$ has a neighbourhood $U$ which is disjoint from its image $\sigma(U)$ under any nontrivial $\sigma \in \mathit{Sym}(m)$. On such a $U$, equivariance does not restrict the choice of \eqref{eq:f-pair}, and since regularity of the parametrized moduli space is a local property over the parameter space, that suffices. Equivalently, one can think of regularity being achieved over the quotient $\mathring{\frakF}_m/\mathit{Sym}(m)$.
\end{remark}

Let $\mathring{\frakF}_m(\mathbf{x}) = \mathring{\frakF}_m(x_0,\dots,x_m)$ be the parametrized moduli space of maps satisfying the Cauchy-Riemann equations, and with fixed asymptotics (this follows the general notation from Section \ref{subsubsection:Floerfamilies}). For generic choice of \eqref{eq:f-pair}, this is a manifold of the expected dimension 
\begin{equation} \label{eq: vdimM} 
\operatorname{dim}\,\mathring{\frakF}_{m}(\mathbf{x}) = \deg(x_0)-\sum_{i=1}^{m}(\deg(x_i))+2m-3.
\end{equation} 
Following \eqref{eq:oru}, each point $(r,u)$ in this space gives rise to an isomorphism 
\begin{equation} \label{eq:oru-2}
o(r,u): \lambda^{\mathrm{top}}(T_{(r,u)}\mathring{\frakF}_m(\bfx)) \otimes \frako_{x_1} \otimes \cdots \otimes \frako_{x_m} \iso \lambda^{\mathrm{top}}(T_r\mathring{\frakF}_m) \otimes \frako_{x_0}.
\end{equation}
The spaces $\mathring{\frakF}_{m}(\mathbf{x})$ have Gromov compactifications 
\begin{equation} 
\frakF_{m}(\mathbf{x}) = \bigsqcup_T \frakF_T(\mathbf{x}) 
\end{equation} 
whose strata are indexed by trees $T$ similar to \eqref{eq:fm}, but allowing unstable vertices with $|v|_{\mathrm{in}} = 1$, which correspond to breaking off of positive energy Floer cylinders. When the expected dimension \eqref{eq: vdimM} is $0$, all strata except the interior are empty, hence $\mathring{\frakF}_m(\mathbf{x}) = \frakF_m(\mathbf{x})$ is a finite set. 
Via \eqref{eq:oru-2} and the orientation of $\frakF_m$ from Section \ref{subsec:or-con}, each point in that set gives an isomorphism $\frako_{x_1} \otimes \cdots \otimes \frako_{x_m} \iso \frako_{x_0}$. We add up the $\bK$-normalization of those isomorphisms to obtain the operation
\begin{equation} \label{eq:elld} 
\ell^m: \mathit{CF}^*(H^{\floer})^{\otimes m} \longrightarrow \mathit{CF}^{*+3-2m}(H^{\floer}).
\end{equation}
On $\scrG = \mathit{CF}^*(H^{\floer})[1]$, the degree of these operations becomes $2-m$.
We extend them to $m = 1$ by declaring $\ell^1$ to be the Floer differential. The fact that our perturbation data are equivariant implies that our operations satisfy the symmetry property \eqref{eq:gradedsymmetry}. (Because $\scrG$ is the shifted Floer complex, the signs from \eqref{eq:gradedsymmetry} are $(-1)^{\|x_k\|} = (-1)^{\mathrm{deg}(x_k)}$, which agrees with their geometric origin as Koszul signs associated to permuting the orientation operators $D_{x_k}$.)

\begin{prop} \label{lem:Linfinty} 
The operations \eqref{eq:elld} define an $L_\infty$-structure on $\mathit{CF}^*(H^{\floer})[1]$. 
\end{prop}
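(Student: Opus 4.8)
The plan is to verify the $L_\infty$-relations \eqref{eq:Linfty} by the standard degeneration-of-moduli-space argument, exactly in the spirit of the general framework of Section \ref{subsubsection:Floerfamilies}. First I would fix a collection of orbits $\bfx = (x_0,\dots,x_m)$ such that the expected dimension \eqref{eq: vdimM} of $\mathring{\frakF}_m(\bfx)$ equals $1$; by the transversality statement underlying \eqref{eq:elld} (Proposition \ref{th:s-transversality} applied in the parametrized setting), for generic choice of \eqref{eq:f-pair} the space $\frakF_m(\bfx)$ is then a compact $1$-manifold with boundary. The $L_\infty$-relation for that choice of $\bfx$ will be read off by counting the boundary points of $\frakF_m(\bfx)$, each counted with the sign coming from \eqref{eq:oru-2} and the chosen orientation of $\frakF_m$ (Section \ref{subsec:or-con}), and using that the signed count of boundary points of an oriented compact $1$-manifold is zero.

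\textbf{Key steps.} (i) Identify the boundary strata. By the a priori bound (Proposition \ref{th:c0-bound-2}) and standard Gromov compactness, no interior sphere or disc bubbling can occur in $\hat N$ (exactness forbids it), so the only degenerations are Floer-breaking of cylinders at the punctures and the strata \eqref{eq:fm} of $\frakF_m$ itself, possibly decorated by unstable vertices $|v|_{\mathrm{in}}=1$ (broken Floer cylinders). In the one-dimensional moduli space, the codimension-one boundary consists exactly of: (a) a single Floer-breaking at one of the $m+1$ punctures, contributing $\ell^1$ composed with an $\ell^m$ (for $x_0$) or an $\ell^m$ with $\ell^1$ inserted (for $x_1,\dots,x_m$); and (b) the codimension-one faces $\mathring{\frakF}_{m_2}\times\mathring{\frakF}_{m_1}$ of $\frakF_m$ with $m_1+m_2-1=m$, from \eqref{eq:b-to-r}, contributing compositions $\ell^{m_1}(\dots,\ell^{m_2}(\dots),\dots)$. (ii) Match the combinatorics: the faces in (b), summed over all trees, produce exactly the terms $\sum_{\sigma\in\mathit{Sh}(k,m)}(-1)^\dagger\,\ell^{m-k+1}(\ell^k(x_{\sigma(1)},\dots,x_{\sigma(k)}),\dots)$ of \eqref{eq:Linfty}, with the shuffle sum arising because the choice of which punctures land on the bubble component corresponds precisely to a shuffle. (iii) Sign bookkeeping: invoke \eqref{eq:det-gluing} for gluing of determinant lines at the broken cylinder, \eqref{eq:oru-2} for the parametrized orientations, and the compatibility of the orientation of $\frakF_m$ with the boundary inclusion \eqref{eq:b-to-r} (stated in Section \ref{subsec:or-con}); together with the remark after \eqref{eq:elld} that the Koszul signs $(-1)^{\|x_k\|}$ agree with permutation signs of the $D_{x_k}$, this yields precisely the sign $(-1)^\dagger$ appearing in \eqref{eq:Linfty}. (iv) Conclude that the total signed boundary count vanishes, which is the $L_\infty$-relation; the graded-symmetry \eqref{eq:gradedsymmetry} was already established from $\mathit{Sym}(m)$-equivariance of \eqref{eq:f-pair}, and $\ell^1$ being the Floer differential handles the $m=1$ case.

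\textbf{Main obstacle.} The analytic inputs (transversality, compactness, gluing near the codimension-one boundary) are all covered by the framework already set up in Sections \ref{sec:hamiltonian} and \ref{subsubsection:Floerfamilies}, so the real work is the sign verification: one must check that the orientation conventions of Section \ref{subsec:or-con} — in particular the branch-to-root ordering in \eqref{eq:b-to-r} and the identification of Koszul signs with orientation-operator permutations — conspire to reproduce exactly $(-1)^\dagger$, with no residual sign. I would carry this out by checking the lowest nontrivial cases ($m=2$, i.e.\ the relation $\ell^1\ell^2 \pm \ell^2(\ell^1,-)\pm\ell^2(-,\ell^1)=0$ expressing that $\ell^2$ is a chain map, and $m=3$, the homotopy-Jacobi relation) against the boundary of $\frakF_2(\bfx)$ and $\frakF_3(\bfx)$, where $\frakF_2\cong S^1$ (with the orientation-reversal noted after \eqref{eq:2-basis}) makes the signs fully explicit, and then observe that the general case is forced by the same local computation at each codimension-one face. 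I expect no genuine difficulty beyond this bookkeeping, since the moduli spaces $\frakF_m$ are the classical Fulton--MacPherson operad spaces and the argument is the by-now-standard one establishing that operadic chains of surfaces yield an $L_\infty$-structure on the Floer complex.
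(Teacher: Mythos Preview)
Your proposal is correct and follows essentially the same approach as the paper: the paper also examines one-dimensional spaces $\frakF_m(\bfx)$, identifies the codimension-one boundary as Floer breakings (unstable vertices) plus degenerations to codimension-one strata \eqref{eq:2-face} indexed by shuffles, and then verifies signs via the orientation conventions of Section \ref{subsec:or-con} together with gluing of determinant lines. The only cosmetic difference is that the paper packages the sign check into a single commutative diagram of orientation lines rather than checking low cases first.
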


\begin{proof} 
To check that these operations satisfy the $L_\infty$-relations \eqref{eq:Linfty}, we follow the standard pattern of considering one-dimensional spaces $\frakF_{m}(\mathbf{x})$. In this case,  for a generic choice of perturbation data, all nonempty boundary strata $\frakF_T(\mathbf{x})$ correspond to trees with exactly two vertices (and one internal edge). Concretely, these boundary terms are of two kinds: 
\begin{itemize} \itemsep.5em
\item ($T$ unstable, which means that one of the vertices has $|v|_{\mathrm{in}} = 1$) This corresponds to cylindrical breaking-off of a Floer trajectory for $H^{\floer}$, meaning the strata are 
\begin{equation} 
\mathring{\frakC}(x_0,y_0) \times \mathring{\frakF}_{m}(y_0,x_1,\cdots,x_m) \text{ or } \mathring{\frakF}_{m}(x_0,x_1,\cdots,y_i,\cdots ,x_m) \times \mathring{\frakC}(y_i,x_i).
\end{equation} 
Here, as in Section \ref{subsubsec:floer}, $\mathring{\frakC}(x_0,y_0)$ and $\mathring{\frakC}(y_i,x_i)$ denote spaces of Floer trajectories up to $\bR$-translation. 

\item ($T$ stable) The other kind of boundary strata correspond to a degeneration to a codimension $1$ stratum in $\partial\frakF_m$. After applying the action of the symmetric group, these strata can be identified with 
\begin{equation} \label{eq:2-face}
\mathring{\frakF}_{m_1}(x_0,y, x_{\sigma(m_2+1)},\dots,x_{\sigma(m)}) \times \mathring{\frakF}_{m_2}(y,x_{\sigma(1)},\dots,x_{\sigma(m_2)}), \;\; m_1+m_2 = m+1,
\end{equation}
for some $m_2$-shuffle $\sigma$, see \eqref{eq:shuffle2}, and one-periodic orbit $y$.
\end{itemize}  
 
There is an obvious bijection between these boundary strata and terms in \eqref{eq:Linfty}. (The stable trees correspond to compositions of operations of arity $\geq 2$; and the remaining ones to terms involving the differential $\ell^1.$) Standard gluing constructions show that such points correspond bijectively to ends of $\mathring{\frakF}_m(\mathbf{x})$, with the correspondence given by Gromov convergence. This means that $\frakF_m(\mathbf{x})$ is at least topologically a compact one-manifold with boundary, allowing for the usual ``signed count of boundary points is zero'' argument to go through.

The signs in the argument deserve a short discussion. To make that more transparent, we temporarily take a moduli space $\frakF_m(\bfx)$ of arbitrary dimension, while still considering only boundary faces of the form \eqref{eq:2-face}; for simplicity we will assume that $\sigma$ is trivial. Start with a point on that boundary face, given by a pair $(p_1,u_1), (p_2,u_2)$, and let $(p,u)$ be the result of applying gluing, for some small value of the gluing parameter. We have isomorphisms
\begin{equation} \label{eq:linearized-glue-1}
\bR \oplus T_{p_2}\mathring{\frakF}_{m_2} \oplus T_{p_1}\mathring{\frakF}_{m_1} \iso T_p\mathring{\frakF}_m
\end{equation}
and
\begin{equation} \label{eq:linearized-glue-2}
\bR \oplus T_{(p_1,u_1)}\mathring{\frakF}_{m_1}(x_0,y,x_{m_2+1},\dots,x_m) \oplus T_{(p_2,u_2)}\mathring{\frakF}_{m_2}(y,x_1,\dots,x_{m_2}) \iso
T_{(p,u)}\mathring{\frakF}_m(\bfx),
\end{equation}
which are not quite unique, but well-defined enough to allow us to carry over orientations. Both are obtained from linearized gluing: on the level of parameter spaces, for \eqref{eq:linearized-glue-1} (where the ordering follows the conventions from Section \ref{subsec:or-con}, so that we can quote the results from there); and for parametrized moduli spaces, in the case of \eqref{eq:linearized-glue-2} (where the standard ordering for gluing of determinant lines is used, as in Section \ref{subsubsection:Floerfamilies}). In both cases, $\bR$ corresponds to the gluing parameter. The resulting isomorphisms of top exterior powers are compatible with each other via \eqref{eq:turn-off} and \eqref{eq:det-gluing}. By that, we mean that the diagram of one-dimensional vector spaces and isomorphisms shown in Figure \ref{fig:orientation-spaces} is commutative (up to multiplication with positive numbers, as usual). 

To simplify the discussion, let's suppose temporarily that identifications $\frako_{x_k} \iso \bR$ have been chosen. Together with the orientations of Fulton-MacPherson spaces, this determines orientations of $\mathring{\frakF}_m(\bfx)$, as an instance of \eqref{eq:turn-off}. Recall from Section \ref{subsec:or-con} that the orientations of Fulton-MacPherson spaces are compatible with $\mathring{\frakF}_{m_2} \times \mathring{\frakF}_{m_1} \subset \partial \frakF_m$. The commutativity of the diagram in Figure \ref{fig:orientation-spaces} then shows that the resulting orientations of the parametrized moduli spaces are compatible with $\mathring{\frakF}_{m_2} \times \mathring{\frakF}_{m_1} \subset \partial \frakF_m$, up to the Koszul sign arising from the ``swap''. In the case relevant to our argument, that sign is trivial, since one of the factors is zero-dimensional. This geometric result translates algebraically into the absence of signs in the term
\begin{equation}
\ell^{m_1}(\ell^{m_2}(x_1,\dots,x_{m_2}),x_{m_2+1},\dots,x_m)
\end{equation}
of the $L_\infty$-relation \eqref{eq:Linfty}. To get the general statement from this, one only needs to add Koszul signs corresponding to permutations of the $x_k$.
\end{proof}
\begin{figure}
\[
\xymatrix{
\lambda^{\mathrm{top}}(T_{(p,u)}\mathring{\frakF}_m(\bfx)) \otimes \frako_{x_1} \otimes \cdots \otimes \frako_{x_m} 
\ar[r]^-{o(r,u)}
&
\lambda^{\mathrm{top}}(T_p\mathring{\frakF}_m) \otimes \frako_{x_0} 
\ar@{=}[dddd]
\\ 
\ar[u]^-{\eqref{eq:linearized-glue-2}}
\parbox{22em}{\centering
$\bR \otimes \lambda^{\mathrm{top}}(T_{(p_1,u_1)}\mathring{\frakF}_{m_1}(x_0,y,x_{m_2+1},\dots,x_m)) \otimes \lambda^{\mathrm{top}}(T_{(p_2,u_2)}\mathring{\frakF}_{m_2}(y,x_1,\dots,x_{m_2})) \otimes \frako_{x_{1}} \otimes \cdots \otimes \frako_{x_{m}}$
}
\ar[d]_-{o(r_2,u_2)}
\\
\parbox{20em}{\centering
$\bR \otimes \lambda^{\mathrm{top}}(T_{(p_1,u_1)}\mathring{\frakF}_{m_1}(x_0,y,x_{m_2+1},\dots,x_m)) \otimes \lambda^{\mathrm{top}}(T_{p_2}\mathring{\frakF}_{m_2}) \otimes \frako_y \otimes\frako_{x_{m_2+1}}\otimes \cdots \otimes \frako_{x_{m}}$
}
\ar[d]_-{\text{swap}}
\\
\parbox{18em}{\centering
$\bR \otimes \lambda^{\mathrm{top}}(T_{p_2}\mathring{\frakF}_{m_2}) \otimes \lambda^{\mathrm{top}}(T_{(p_1,u_1)}\mathring{\frakF}_{m_1}(x_0,y,x_{m_2+1},\dots,x_m))
\otimes \frako_y \otimes\frako_{x_{m_2+1}}\otimes \cdots \otimes \frako_{x_{m}}$
}
\ar[d]_-{o(r_1,u_1)} 
\\  
\bR \otimes \lambda^{\mathrm{top}}(T_{p_2}\mathring{\frakF}_{m_2}) \otimes \lambda^{\mathrm{top}}(T_{p_1}\mathring{\frakF}_{m_1}) \otimes \frako_{x_{0}} 
\ar[r]_-{\eqref{eq:linearized-glue-1}} 
& 
\lambda^{\mathrm{top}}(T_r\mathring{\frakF}_{m}) \otimes \frako_{x_0}
}
\]
\caption{\label{fig:orientation-spaces}The diagram for the sign considerations in Proposition \ref{lem:Linfinty}.}
\end{figure}

\subsubsection{$L_\infty$-module structure\label{section:linftymod}}
We next consider the parameter spaces $\mathring{\frakC}_{m}$ from Section \ref{section:FM-cylinder}. Recall that this moduli space has been equipped with a universal choices of positive cylindrical ends at $z_{m+1} = +\infty$ and intermediate punctures $z_1,\dots,z_m$, and a negative end at $z_0=-\infty$, as in \eqref{eq:endoncylinder1}-\eqref{eq:endoncylinder2}. Again, we choose a family of perturbation data which is chosen within our allowed class from Section \ref{subsubsec:operations}, conformally consistent with respect to boundary strata, and $\mathit{Sym}(m)$-equivariant (note that part of consistency involves our previous choices for the $\scrF_m$ parameter spaces).
%

Let $\mathbf{x} = (x_0,x_1,\cdots,x_m,x_{m+1})$ be a collection of orbits. Following our usual notational habits, we write $\mathring{\frakC}_{m}(\mathbf{x})$ for the parameterized moduli space of solutions. These spaces generically have dimension 
\begin{equation} 
\operatorname{dim}(\mathring{\frakC}_{m}(\mathbf{x}))=\operatorname{deg}(x_0)-\sum_{i=1}^{m+1} \operatorname{deg}(x_i) + 2m-1.
\end{equation} 
Counting points in zero-dimensional moduli spaces then gives rise to operations of degree $1-m$,
\begin{equation} \label{eq:opsliemodule}
\ell^{m,1}:(\mathit{CF}^*(H^{\floer}[1])^{\otimes m})\otimes \mathit{CF}^*(H^{\floer}) \longrightarrow \mathit{CF}^*(H^{\floer})
\end{equation}

\begin{prop} \label{prop:Lmodule} The operations $\ell^{m,1}$ give $\mathit{CF}^*(H^{\floer})$ the structure of an $L_\infty$-module over $\mathit{CF}^*(H^{\floer})[1].$ 
\end{prop}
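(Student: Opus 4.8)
The argument will follow the same template as the proof of Proposition \ref{lem:Linfinty}, transplanted to the parameter spaces $\frakC_m$. The $L_\infty$-module relations \eqref{eq:LinftyM} are to be read off from the boundary structure of one-dimensional components of the moduli spaces $\frakC_m(\mathbf{x})$. First I would recall the compactification $\frakC_m(\mathbf{x})$ and, via the gluing theorems already invoked for $\frakF_m$, identify its boundary strata. There are three types, each corresponding bijectively to a term in \eqref{eq:LinftyM}: (a) cylindrical breaking off of a Floer trajectory for $H^{\floer}$ at one of the $m+2$ punctures, which contributes the $\ell^1$ terms in \eqref{eq:LinftyM}; (b) degeneration to a boundary face of $\frakC_m$ of the form $\mathring{\frakF}_{m_2} \times \mathring{\frakC}_{m_1}$ (Fulton--MacPherson bubbling, as in \eqref{eq:cm-space0} combined with \eqref{eq:heart0}), which after acting by $\mathit{Sym}(m)$ gives the composite $\ell^{m_1,1}(\ell^{k}(\ldots),\ldots,y)$ where the bubbled-off sphere carries the $\ell^k$; and (c) cylinder breaking $\mathring{\frakC}_{m_2} \times \mathring{\frakC}_{m_1}$ from points escaping to $\pm\infty$, which gives the nested composite $\ell^{k,1}(\ldots,\ell^{m-k,1}(\ldots,y))$ with $k = 0$ and $k = m$ both allowed (these correspond to the $z_0$-breaking and $z_{m+1}$-breaking faces respectively). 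I would note explicitly that it is the ``diagonal module'' bookkeeping of Example \ref{ex:diagonal} which makes this match: the identity $\mathit{Sh}(k,m+1) \iso \mathit{Sh}(k,m) \sqcup \mathit{Sh}(k-1,m)$ used there is precisely the combinatorial content of how a shuffle on $\{z_1,\dots,z_m,y\}$ splits according to whether the marked point $z_{m+1}$ (carrying $y$) lies on the root cylinder or the bubble.

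The second step is transversality and compactness: a generic choice of perturbation data \eqref{eq:family-data} within the class of Section \ref{subsubsec:operations}, conformally consistent in the sense of Section \ref{subsubsec:neck} and $\mathit{Sym}(m)$-equivariant, makes all the $\mathring{\frakC}_m(\mathbf{x})$ regular (the parametrized version of Proposition \ref{th:s-transversality}); the equivariance is unproblematic since the $\mathit{Sym}(m)$-action on $\mathring{\frakC}_m$ is free, exactly as in the Remark following Proposition \ref{lem:Linfinty}. Compactness of $\frakC_m(\mathbf{x})$, hence the ``signed count of boundary points is zero'' mechanism in the one-dimensional case, follows from Proposition \ref{th:c0-bound-2} together with the uniform curvature lower bound coming from Lemma \ref{th:curvature-piece-by-piece} and the conformal consistency of the perturbation data along necks. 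Here one uses that the choices for the $\frakF_m$ families (already fixed in Section \ref{section:Linf}) are part of the consistency data, so that the bubbled-off spheres in type (b) faces carry exactly the operations $\ell^k$ defined there.

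The third and only genuinely delicate step is signs. I would orient the $\frakC_m$ using the exact sequence \eqref{eq:right-moving} and the conventions of Section \ref{subsec:or-con}, which were arranged precisely so that the inclusions $\mathring{\frakF}_{m_2} \times \mathring{\frakC}_{m_1} \hookrightarrow \partial\frakC_{m_1+m_2-1}$ and $\mathring{\frakC}_{m_2} \times \mathring{\frakC}_{m_1} \hookrightarrow \partial\frakC_{m_1+m_2}$ are orientation-compatible, with factors ordered from $s = +\infty$ to $s = -\infty$. One then runs the diagram-chase of Figure \ref{fig:orientation-spaces}, with $\mathring{\frakF}_m$ replaced by $\mathring{\frakC}_m$ on the module side, to translate the geometric orientation identities into the algebraic signs appearing in \eqref{eq:LinftyM}. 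The sign $(-1)^{\dagger + \|x_{\sigma(1)}\| + \cdots + \|x_{\sigma(k)}\|}$ in the second sum of \eqref{eq:LinftyM} is expected to arise from two sources: the Koszul sign $(-1)^\dagger$ from permuting orientation operators $D_{x_k}$ (built into the definition of the operations, as in the parenthetical remark after \eqref{eq:elld}), and the extra $\|x_{\sigma(1)}\| + \cdots + \|x_{\sigma(k)}\|$ from the ``swap'' that moves the inner $\ell^{m-k,1}(\ldots,y)$ output block past the preceding entries when one orders the glued moduli space branch-to-root; this is the analogue of the ``swap'' arrow in Figure \ref{fig:orientation-spaces} but now with a potentially odd-dimensional factor on each side, which is exactly why this term, unlike the one in the $L_\infty$-relation, carries a nontrivial Koszul correction. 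The main obstacle is therefore purely combinatorial-bookkeeping: verifying that the cylinder-breaking faces with $k=0$ and $k=m$ (where one of the two cylinders carries no intermediate punctures) produce the correct terms $\ell^{0,1}(\ell^{m,1}(x_1,\dots,x_m,y))$ and $\ell^{m,1}(x_1,\dots,x_m,\ell^{0,1}(y))$ with the right signs, and that the orientation conventions of Section \ref{subsec:or-con} were indeed chosen to make everything cancel. I would present this in the same abbreviated style as Proposition \ref{lem:Linfinty}, temporarily setting $\frako_{x_k} \iso \bR$, checking the trivial-shuffle case in detail, and then invoking the standard ``add Koszul signs for permutations'' reduction.
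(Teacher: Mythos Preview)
Your overall strategy matches the paper's: examine the boundary of one-dimensional $\frakC_m(\mathbf{x})$ and match each codimension-one stratum to a term in \eqref{eq:LinftyM}. The paper organises the strata slightly differently---Fulton--MacPherson bubbling together with Floer breaking at the intermediate punctures $z_1,\dots,z_m$ for the first sum, cylinder breaking together with Floer breaking at $z_0,z_{m+1}$ for the second---which avoids the overlap in your enumeration: as written, Floer breaking at $z_0$ and $z_{m+1}$ appears once in your (a) (``one of the $m+2$ punctures'') and again in your (c) (``$k=0$ and $k=m$ both allowed'').

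The more substantive issue is your invocation of Example~\ref{ex:diagonal}. The module being constructed here is \emph{not} the diagonal module: the paper's Remark immediately following this Proposition stresses exactly this, since $\frakC_m$ carries $S^1$-invariant markers while $\frakF_{m+1}$ carries aligned ones (see also Remark~\ref{th:compare-c-f}). The shuffle identity $\mathit{Sh}(k,m+1) \iso \mathit{Sh}(k,m) \sqcup \mathit{Sh}(k-1,m)$ is what makes the diagonal module satisfy \eqref{eq:LinftyM}; it is not the combinatorics of the boundary of $\frakC_m$. Your phrase ``whether $z_{m+1}$ lies on the root cylinder or the bubble'' misdescribes the geometry: $z_{m+1}=+\infty$ always remains at the end of a cylinder component and never enters a Fulton--MacPherson bubble. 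The proof needs no such appeal; the codimension-one faces $\mathring{\frakF}_{m_2}\times\mathring{\frakC}_{m_1}$ and $\mathring{\frakC}_{m_2}\times\mathring{\frakC}_{m_1}$ (plus Floer breakings) directly match the two sums in \eqref{eq:LinftyM}, and that is all that is required. Drop the diagonal-module paragraph entirely.
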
 

\begin{proof} The proof again proceeds by examining the boundaries of one-dimensional moduli spaces $\frakC_{m}(\mathbf{x})$. There are two possible boundary types to consider:
\begin{itemize} \itemsep.5em
\item The first is where we have a degeneration to a codimension $1$ boundary of \eqref{eq:heart0} (meaning we have a Fulton-MacPherson configuration bubbling off at one of the interior marked points) or a breaking of a Floer trajectory at one of the points $z_1,\dots,z_m$. These correspond to the first sum on the right-hand side of \eqref{eq:LinftyM}. 

\item The second possible degeneration is a cylindrical breaking at $\pm \infty$. This means either an ordinary Floer breaking 
\begin{align} 
\frakC(x_0,y_0) \times \frakC_{m}(y_0,x_1,\dots,x_m,x_\infty) \text{ or } \frakC_{m}(x_0,x_1,\dots,x_m,y_{\infty}) \times \frakC(y_{\infty},x_\infty); 
\end{align} 
or else, that the domain degenerates to the boundary of $\frakC_m$, so that we have a limit
\begin{align} 
(p_1,u_1,p_2,u_2) \in \mathring{\frakC}_{m_1} \times \mathring{\frakC}_{m_2}, \quad m_1+m_2=m.
\end{align} 
This boundary type accounts for the second sum in \eqref{eq:LinftyM}. 
\end{itemize} 
The verification that these boundary strata contribute with the correct signs proceeds as in Lemma \ref{lem:Linfinty} (details omitted).  \end{proof}

\begin{remark} 
As noted in Example \ref{ex:diagonal}, one can also consider the ``diagonal" module $M = \mathit{CF}^*(H^{\floer})$ over $\scrG = \mathit{CF}^*(H^{\floer})[1]$ with operations $\ell^{m,1} \stackrel{\mathrm{def}}{=} \ell^{m+1}$. However, the module structure defined by the operations \eqref{eq:opsliemodule} is in general different from the diagonal one, due to the fact that we have chosen $S^1$-invariant markers over $\frakC_m$ as opposed to aligned markers over $\frakF_{m+1}$ (compare Remark \ref{th:compare-c-f}). This will later on lead to two versions of $q$-deformed Floer groups.
\end{remark}
 

\subsubsection{$S^1$-equivariant Floer groups\label{subsubsec:s1-equivariant}}
The definition of the $S^1$-equivariant Floer complex involves parametrized operations over the spaces $\mathring{\frakA}_r$ (Section \ref{section:angles}; a version of this approach is used in \cite{ganatra19}). Recall that the ends are chosen to be of the form \eqref{eq:rotated-end} (in particular, the negative puncture at $s=-\infty$ is rotated by $\theta^{\operatorname{tot}}$) and compatible with the map \eqref{eq:forget-circle} over the boundary $\partial_{\sigma_{i+1}=\sigma_i} \frakA_r$. We choose perturbation data for each fiber of the universal curve (still within the class from Section \ref{subsubsec:operations}) which over $\partial_{\sigma_{i+1}=\sigma_i} \frakA_r$: \begin{align} \label{eq:fakeboundary} \text{coincide with the pull-back of the data chosen over } \mathring{\mathfrak{A}}_{r-1}. \end{align} Of course, we also require these perturbations to be chosen conformally consistently over boundary strata. For any pair of orbits $x_0,x_{1}$, let $\mathring{\mathfrak{A}}_r(x_0,x_{1})$ be the moduli space of parameterized solutions for these choices. It has virtual dimension 
\begin{align} \operatorname{dim}(\mathring{\mathfrak{A}}_r(x_0,x_1))= \operatorname{deg}(x_0)-\operatorname{deg}(x_1)+2r-1; 
\end{align} 
hence, after choosing perturbation data generically, rigid moduli spaces give rise to operations
\begin{equation} 
\delta_{S^1}^r: \mathit{CF}^*(H^{\floer}) \longrightarrow \mathit{CF}^{*+1-2r}(H^{\floer}) 
\end{equation} 
for any $r \geq 1$. In the degenerate case $r=0$, we define $\delta_{S^1}^0 = \delta$ to be the Floer differential.  

\begin{lemma} \label{lem:S1diff} For any $r \geq 0$, 
\begin{equation} 
\label{eq: S1diff} 
\sum_{j=0}^{r}\delta_{S^1}^j\delta_{S^1}^{r-j}=0.
\end{equation} 
\end{lemma}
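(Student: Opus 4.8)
The plan is to prove \eqref{eq: S1diff} by the standard codimension-one boundary analysis applied to the compactified parameter spaces $\frakA_r$. First I would consider, for each $r \geq 0$ and each pair of orbits $(x_0,x_1)$ with $\operatorname{deg}(x_0)-\operatorname{deg}(x_1)+2r-1 = 1$, the one-dimensional moduli space $\frakA_r(x_0,x_1)$. The plan is to argue that, for a generic conformally consistent choice of perturbation data, this is a topological $1$-manifold with boundary, whose boundary points are in bijective correspondence (via Gromov convergence and gluing) with the terms appearing on the left-hand side of \eqref{eq: S1diff}. Then the usual ``signed count of boundary points of a compact $1$-manifold is zero'' argument yields the identity.

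The key step is to enumerate the boundary strata. Recall from Section \ref{section:angles} that $\frakA_r$ already has codimension-one boundary faces $\partial_{\sigma_{i+1}=\sigma_i}\frakA_r$ \emph{before} compactification, with forgetful maps \eqref{eq:forget-circle} to $\frakA_{r-1}$. Here is where the choice \eqref{eq:fakeboundary} is crucial: the perturbation data over these faces is pulled back from $\mathring{\frakA}_{r-1}$, so on these ``fake'' boundary faces the moduli space is fibered over $\frakA_{r-1}(x_0,x_1)$ with contractible (indeed, trivial in the rigid case) fibers. In a one-dimensional moduli space these fake faces therefore appear in cancelling pairs: the face $\partial_{\sigma_{i+1}=\sigma_i}$ and the face $\partial_{\sigma_{i}=\sigma_{i-1}}$ after the forgetful map land on the same stratum of $\frakA_{r-1}(x_0,x_1)$ with opposite induced orientations, so they do not contribute to the count. (Equivalently, one can say that $\delta_{S^1}^r$ is really defined using a moduli space whose fake boundary has been ``absorbed'', and these faces are invisible to the algebraic count.) The genuine contributions come from the strata in the Gromov/neck-stretching compactification \eqref{eq:a-space-compactified}: either an ordinary Floer breaking at $\pm\infty$ off the single cylinder, giving terms $\mathring{\frakC}(x_0,y)\times\mathring{\frakA}_r(y,x_1)$ and $\mathring{\frakA}_r(x_0,y)\times\mathring{\frakC}(y,x_1)$ — these are absorbed into $\delta^0_{S^1}\delta^r_{S^1}$ and $\delta^r_{S^1}\delta^0_{S^1}$ — or a splitting of the angle-decorated cylinder into two, landing in $\mathring{\frakA}_{j}\times\mathring{\frakA}_{r-j}$ with $1 \leq j \leq r-1$, which gives the intermediate terms $\delta^j_{S^1}\delta^{r-j}_{S^1}$. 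Since the negative end is rotated by $\theta^{\operatorname{tot}}$ exactly as in \eqref{eq:rotated-end}, the angle-twisted gluing convention of Section \ref{section:angles} makes the identification of glued surfaces with boundary strata continuous, so the gluing theorem applies without modification.

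Next I would address signs and orientations. Using the orientation convention for angle-decorated cylinders from Section \ref{subsec:or-con} (grouping each $(\sigma_k,\theta_k)$ into a complex number and using the exact sequence $0 \to \bR \to \bC^r \to T\mathring{\frakA}_r \to 0$), the inclusions $\mathring{\frakA}_{r_2}\times\mathring{\frakA}_{r_1}\hookrightarrow\partial\frakA_{r_1+r_2}$ are orientation-compatible; combined with the gluing compatibility of determinant lines (exactly as in the diagram of Figure \ref{fig:orientation-spaces} for the $L_\infty$ case), this shows each splitting stratum contributes $\delta^j_{S^1}\delta^{r-j}_{S^1}$ with a $+$ sign. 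The Floer-breaking strata contribute $\delta^0_{S^1}\delta^r_{S^1}$ and $\delta^r_{S^1}\delta^0_{S^1}$, again with signs dictated by the standard gluing-of-determinant-lines ordering. Summing over all boundary points of $\frakA_r(x_0,x_1)$ and using that a compact oriented $1$-manifold has signed boundary count zero gives $\sum_{j=0}^r \delta^j_{S^1}\delta^{r-j}_{S^1} = 0$.

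The main obstacle I anticipate is the careful treatment of the fake boundary faces $\partial_{\sigma_{i+1}=\sigma_i}\frakA_r$: one has to verify that the pullback condition \eqref{eq:fakeboundary} genuinely forces these faces to cancel in pairs (or, more precisely, that the perturbation data can be chosen so that the rigid part of the moduli space never touches them, or touches them only in cancelling configurations), and that this is compatible with conformal consistency at the deeper corners where several $\sigma_i$ coincide simultaneously while also approaching $\pm\infty$. This is a bookkeeping issue about manifolds with corners rather than a serious analytic difficulty, but it is the place where the argument genuinely differs from the textbook $S^1$-equivariant construction and where the geometry of $\frakA_r$ (with its pre-existing boundary) must be used with care. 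Everything else — energy bounds, $C^0$-bounds via the integrated maximum principle, transversality, gluing — is covered by the framework of Section \ref{sec:hamiltonian} and follows the pattern already established in the proofs of Propositions \ref{lem:Linfinty} and \ref{prop:Lmodule}.
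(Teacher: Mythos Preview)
Your overall strategy matches the paper's: examine one-dimensional moduli spaces $\frakA_r(x_0,x_1)$, enumerate codimension-one boundary strata, and conclude via the signed-boundary argument. The identification of the cylinder-breaking strata with the terms $\delta_{S^1}^j\delta_{S^1}^{r-j}$ is correct.

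However, your treatment of the ``fake'' boundary faces $\partial_{\sigma_{i+1}=\sigma_i}\frakA_r$ is confused. The fibres of the forgetful map \eqref{eq:forget-circle} are circles (parametrising the splitting of $\theta_i+\theta_{i+1}$ into two angles), not contractible. More importantly, your ``cancelling pairs'' argument is wrong: the faces $\partial_{\sigma_{i+1}=\sigma_i}$ and $\partial_{\sigma_i=\sigma_{i-1}}$ do \emph{not} map to the same locus in $\frakA_{r-1}$ (one merges circles $i,i+1$, the other merges $i-1,i$), and in any case there are $r-1$ such faces, which need not pair off. The actual mechanism---which you do mention parenthetically but bury as an alternative---is the one the paper uses: since the perturbation data over $\partial_{\sigma_{i+1}=\sigma_i}\frakA_r$ is pulled back along a map with one-dimensional fibres, any curve lying over this face sits in a one-parameter family, so there are \emph{no rigid curves} over these strata at all. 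Equivalently, the pulled-back moduli space is $\mathring{\frakA}_{r-1}(x_0,x_1)\times S^1$, and $\mathring{\frakA}_{r-1}(x_0,x_1)$ has virtual dimension $-1$ when $\frakA_r(x_0,x_1)$ has dimension $1$, hence is generically empty. This is a dimension argument, not a cancellation argument; replace your paragraph accordingly and the proof is complete.
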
 

\begin{proof} 
Examining the boundary of compactified one-dimensional moduli spaces (keeping the orientation computations from Section \ref{subsec:or-con} in mind) immediately gives \begin{align} \sum_{j=0}^{r}\delta_{S^1}^j\delta_{S^1}^{r-j}+ \sum_i \delta^r_{i,i+1}=0.\end{align} Here $\delta^r_{i,i+1}$ denotes the contributions from curves lying over the boundary strata $\partial_{\sigma_{i+1}=\sigma_i} \frakA_r.$ However, the consistency condition \eqref{eq:fakeboundary} implies that the Floer datum chosen for any element $C_r$ for $r \in \partial_{\sigma_{i+1}=\sigma_i} \frakA_r$ only depends on its image under the forgetful map to $\mathring{\mathfrak{A}}_{r-1}$. As this forgetful map has one dimensional fibers, there can be no rigid curves lying over those strata. Thus all of the $\delta^r_{i,i+1}$ vanish, yielding  \eqref{eq: S1diff}.   \end{proof}

The equivariant Floer complex is the $\bZ$-graded complex of $\bK[[u]]$-modules
\begin{equation}
\mathit{CF}_{S^1}^*(H^{\floer}) \stackrel{\mathrm{def}}{=} \mathit{CF}^*(H^{\floer})[[u]], 
\quad
\delta_{S^1}=\sum_{r=0}^{\infty} u^r\delta_{S^1}^r.  
\end{equation}

\subsubsection{$S^1$-equivariant $L_\infty$-module structure\label{sec:s1module}}
Here we consider the operations governed by parameterized Floer theory over $\mathring{\AC}_{m,r}$ from Section \ref{section:anglesFM}. The universal ends at $s=\pm\infty$ are chosen as in \eqref{eq:rotated-end} (again, this means that the negative puncture at $s=-\infty$ is rotated by $\theta^{\operatorname{tot}}$), while at interior punctures they are chosen as in \eqref{eq:log-end}. Over the boundary where two heights coincide, $\partial_{\sigma_{i+1}=\sigma_i}\mathring{\AC}_{m,r}$, the ends are chosen compatibly with \eqref{eq:ac-forget}. As usual, the perturbation data are chosen to be consistent with respect to boundary strata, and $Sym(m)$-equivariant. We additionally require that these data satisfy the analogue of \eqref{eq:fakeboundary}, meaning that over $\partial_{\sigma_{i+1}=\sigma_i}\mathring{\AC}_{m,r}$, they \begin{align} \label{eq:fakeboundary2} \text{coincide with the pull-back of the data chosen over } \mathring{\AC}_{m,r-1}. \end{align} Let $\mathbf{x} = (x_0,x_1,x_2, \cdots, x_m,x_{m+1})$ be a collection of orbits. Using our standard procedure, we define moduli spaces $\mathring{\AC}_{m,r}(\mathbf{x})$ where our curve is asymptotic to $x_0$ at $s=-\infty$, $x_{m+1}$ at $s=+\infty$ and to $x_1,\cdots,x_m$ at the intermediate punctures. These moduli spaces generically have dimension
\begin{align} \operatorname{dim}(\mathring{\AC}_{m,r}(\mathbf{x}))=\operatorname{deg}(x_0)-\sum_{i=1}^{m+1}\operatorname{deg}(x_i)+2(m+r)-1. \end{align} 
Counting rigid moduli spaces then gives rise to operations on Floer complexes 
\begin{equation}
\ell_{S^1}^{m,1,r}:(\mathit{CF}^*(H^{\floer})^{\otimes m})\otimes \mathit{CF}^*(H^{\floer}) \longrightarrow \mathit{CF}^*(H^{\floer})
\end{equation}
of degree $1-2(m+r)$.  We extend this definition to $m=r=0$ by setting $\ell_{S^1}^{0,1,0} = \delta$, the Floer differential. The sum $\ell_{S^1}^{m,1}=\sum u^r \ell_{S^1}^{m,1,r}$ can then be viewed as an operation 
\begin{equation} 
\ell_{S^1}^{m,1}: (\mathit{CF}^*(H^{\floer})[1]^{\otimes m})\otimes \mathit{CF}_{S^1}^*(H^{\floer}) \longrightarrow \mathit{CF}_{S^1}^*(H^{\floer}) 
\end{equation}
of degree $1-m$.

\begin{prop} \label{prop:LmoduleS1} 
The operations $\ell_{S^1}^{m,1}$ give $\mathit{CF}_{S^1}^*(H^{\floer})$ the structure of an $L_\infty$-module over $\mathit{CF}^*(H^{\floer})[1].$ 
\end{prop}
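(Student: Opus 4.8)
The plan is to follow exactly the same pattern as in Propositions \ref{prop:Lmodule} and \ref{lem:S1diff}, combining the $L_\infty$-module boundary analysis from $\frakC_m$ with the ``fake boundary'' cancellation trick from the equivariant differential. Concretely, one examines the boundary of the compactified one-dimensional moduli spaces $\AC_{m,r}(\mathbf{x})$, sorts the codimension-one strata into those contributing to the $L_\infty$-module relations \eqref{eq:LinftyM} and those that cancel for dimension reasons, and reads off the identity.

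First I would enumerate the codimension-one boundary faces of $\AC_{m,r}$, using the stratification \eqref{eq:ac-space} together with \eqref{eq:ac-heart-space}. These are of four types: (a) Fulton-MacPherson bubbling at one of the interior marked points $z_1,\dots,z_m$ (a $\mathring\frakF$-factor splitting off), which together with Floer breaking at an intermediate puncture produces the first sum in \eqref{eq:LinftyM}; (b) breaking of the underlying angle-decorated cylinder into two, i.e.\ a limit in $\mathring{\AC}_{m_1,r_1}^\heart \times \mathring{\AC}_{m_2,r_2}^\heart$ with $m_1+m_2 = m$, $r_1+r_2 = r$ — this is the term where a product of two operations $\ell_{S^1}^{*,1,*}$ appears, i.e.\ the second sum in \eqref{eq:LinftyM} with both $k=0$ and $k=m$ allowed, matching the structure of an $L_\infty$-module; (c) ordinary Floer breaking at $s = \pm\infty$, which supplies the $\ell^1 = \delta$ terms (here one must keep careful track of the angle-twisting convention so that the rotated ends \eqref{eq:rotated-end} glue correctly, but this is the same bookkeeping already handled in Section \ref{subsubsec:s1-equivariant}); and (d) the ``height collision'' faces $\partial_{\sigma_{i+1}=\sigma_i}\AC_{m,r}$. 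For type (d), the consistency requirement \eqref{eq:fakeboundary2} forces the perturbation data over such a face to be pulled back along the forgetful map \eqref{eq:ac-forget} to $\mathring{\AC}_{m,r-1}$, whose fibres are one-dimensional; hence no rigid curves lie over these strata and their contribution vanishes, exactly as in the proof of Lemma \ref{lem:S1diff}.

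Having matched strata to terms, I would then verify the signs. This is where I expect the only real work to lie: one must check that the orientations of $\AC_{m,r}$ fixed in Section \ref{subsec:or-con} (via the exact sequence grouping each $(\sigma_k,\theta_k)$ into a complex number, paralleling \eqref{eq:right-moving}) are compatible with the inclusions of the cylinder-breaking and bubbling faces, and that gluing of determinant lines introduces precisely the Koszul signs $(-1)^\dagger$ and $(-1)^{\dagger+\|x_{\sigma(1)}\|+\cdots+\|x_{\sigma(k)}\|}$ appearing in \eqref{eq:LinftyM}. The argument is the one indicated in the proof of Proposition \ref{lem:Linfinty} and spelled out via the commuting diagram in Figure \ref{fig:orientation-spaces}: linearized gluing gives compatible isomorphisms of top exterior powers on parameter spaces and on parametrized moduli spaces, and the signs are bookkeeping of swaps of odd-dimensional factors. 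Since in the one-dimensional case all the relevant factors of the split moduli spaces are zero-dimensional, the swap signs are trivial and the identity \eqref{eq:LinftyM} comes out with the stated signs; the general-arity signs are then obtained by inserting Koszul signs for permutations of the $x_k$, exactly as before.

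The main obstacle, then, is not conceptual but organizational: one has to be careful that the $u$-expansion $\ell_{S^1}^{m,1} = \sum_r u^r \ell_{S^1}^{m,1,r}$ interacts correctly with the decomposition $r = r_1 + r_2$ at cylinder-breaking faces, so that the coefficient of each $u^r$ in \eqref{eq:LinftyM} is accounted for by boundary strata of the single moduli space $\AC_{m,r}(\mathbf{x})$. This is guaranteed by the product structure \eqref{eq:ac-space} of the compactification, which distributes the angle decorations across the two levels, together with the fact that $\ell^{0,1,0}_{S^1} = \delta$ occupies the $r=0$, $m=0$ slot; so the $u$-linearity of the module operations is automatic from the geometry. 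Details of the sign verification are omitted, as they are identical in structure to those in Proposition \ref{lem:Linfinty}.
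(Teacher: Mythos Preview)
Your proposal is correct and follows essentially the same approach as the paper's proof. The paper's argument is briefer, grouping your types (b) and (c) together as ``degeneration or cylindrical breaking at $s=\pm\infty$'' and omitting the sign and $u$-bookkeeping discussion entirely, but the enumeration of boundary strata and the use of \eqref{eq:fakeboundary2} to kill the height-collision faces are identical.
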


\begin{proof} 
This is an $S^1$-equivariant version of Proposition \ref{prop:Lmodule}. Let us consider the boundary of the one-dimensional moduli spaces. There are now three possible boundary-types to consider. The first two are exactly as in the non-equivariant case, meaning we have a degeneration at one of the interior marked points, or cylindrical breaking at $s=\pm \infty$. The third situation is where the parameter lies in one of the boundary strata $\partial_{\sigma_{i+1}=\sigma_i}\mathring{\AC}_{m,r}$. However, in view of the condition \eqref{eq:fakeboundary2} we have placed on our perturbations, we again have that Floer curves along this boundary necessarily come in one-dimensional families and can never be rigid. Hence these terms contribute zero. \end{proof}

\subsubsection{The $q$-deformed groups\label{sec:q-deformed}}
From this point onwards, we assume that a Maurer-Cartan element (in the sense of Section \ref{sec:mcelements}) for the Floer $L_\infty$-algebra has been fixed,
\begin{equation} \label{eq:floer-mc}
\alpha \in (q\mathit{CF}^*(H^{\floer})[[q]])^2.
\end{equation}

\begin{definition} \label{th:deformed-structures}
(i) Take the deformation of the Floer differential induced by $\alpha$, in the sense of \eqref{eq:deform-the-linfty-algebra}:
\begin{equation} \label{eq:diagonaldeformationCF}
\begin{aligned} 
& \mathit{CF}_q^{\operatorname{diag}}(H^{\floer}) \stackrel{\mathrm{def}}{=} 
\mathit{CF}^*(H^{\floer})[[q]], \\
& \delta_q^{\operatorname{diag}} = \ell^1_\alpha = \ell^1 + \ell^2(\alpha,\cdot) + \half \ell^3(\alpha,\alpha,\cdot) - \cdots 
\end{aligned}
\end{equation}
We have only written down the differential, but as a consequence of the general algebraic theory, there is an entire deformed $L_\infty$-algebra structure here. (The superscript is because one can think of this as deforming the diagonal bimodule.)

(ii) $\mathit{CF}^*(H^{\floer})$ is an $L_\infty$-module, following Section \ref{section:linftymod}. Again, we can use $\alpha$ to deform that structure as in \eqref{eq:deform-the-linfty-module}, which means that the outcome is an $L_\infty$-module over the deformed algebra from (i). The resulting $q$-deformed complex is 
\begin{equation} \label{eq:deformedfloermoduled}
\begin{aligned}
& \mathit{CF}_q^*(H^{\floer}) \stackrel{\mathrm{def}}{=} 
\mathit{CF}^*(H^{\floer})[[q]], \\
& \delta_q^{\operatorname{module}} = \ell^{0,1}_{\alpha} = l^{0,1} + \ell^{1,1}(\alpha,\cdot) + \half \ell^{2,1}(\alpha,\alpha,\cdot) + \cdots
\end{aligned}
\end{equation}

(iii) The argument from (ii) has an $S^1$-equivariant extension, following Section \ref{sec:s1module}. The $q$-deformed $S^1$-equivariant complex is
\begin{equation} \label{eq:qs1-complex}
\begin{aligned}
& \mathit{CF}_{S^1,q}^*(H^{\floer}) \stackrel{\mathrm{def}}{=} 
\mathit{CF}^*(H^{\floer})[[q,u]], \\
& \delta_{S^1,q} = \ell^{0,1}_{S^1,\alpha} = \ell^{0,1}_{S^1} + \ell^{1,1}_{S^1}(\alpha,\cdot) + \half \ell^{2,1}_{S^1}(\alpha,\alpha,\cdot) + \cdots
\end{aligned}
\end{equation}
For later use, we find it convenient to also spell out the next term of the $L_\infty$-module structure:
\begin{equation}
\begin{aligned} \label{eq:deformedS1liemodule} &
\ell^{1,1}_{S^1,q} : \mathit{CF}_q^{\operatorname{diag}}(H^{\floer})\otimes \mathit{CF}_{S^1,q}^{*}(H^{\floer}) \longrightarrow \mathit{CF}_{S^1,q}^{*}(H^{\floer}),
\\ 
&
\ell^{1,1}_{S^1,q}(\cdot,\cdot) \stackrel{\mathrm{def}}{=} \ell^{1,1}_{S^1}(\cdot,\cdot) + \ell^{2,1}_{S^1}(\alpha,\cdot,\cdot)+ \half \ell^{3,1}_{S^1}(\alpha^{\otimes 2},\cdot,\cdot)+\cdots.  \end{aligned}
\end{equation}
\end{definition}

\begin{remark} \label{rem:MCcancel} 
The fact that the deformed differentials in Definition \ref{th:deformed-structures} square to zero was deduced algebraically. Nevertheless, it is worth briefly re-interpreting the argument geometrically, because the same idea will occur again in the next section. Consider for instance \eqref{eq:deformedfloermoduled}. Deforming the differential introduces new terms given by counts of configurations on the cylinder where $\alpha$ is inserted (as a linear combination of periodic orbits) into the interior marked points $z_1,\cdots,z_{m}$. The count of codimension $1$ boundary strata gives the relation
\begin{equation}
\sum_{j_1+j_2-1=m} {\textstyle\frac{1}{j_1!j_2!}} \ell^{j_1,1}(\ell^{j_2}(\alpha^{\otimes j_{2}}),\alpha^{\otimes j_1-1}, x)  + \sum_{j_1+j_2=m} {\textstyle\frac{1}{j_1!j_2!}} \ell^{j_1,1}(\alpha^{\otimes j_{1}},\ell^{j_2,1}(\alpha^{\otimes j_2},x))=0.
\end{equation} 
As in the proof of Proposition \ref{prop:Lmodule}, the first sum corresponds to degenerating in $\frakC_{m}^{\heart}$ (or Floer differential when $j_2=1$) and the second one to breaking at infinity.  However, the fact that $\alpha$ satisfies the Maurer-Cartan equation means that after summing these relations over all $j_1,j_2$, the first sum contributes zero while the second sum is precisely the relation $(\delta_q^{\operatorname{module}})^2 = 0$. 
\end{remark}

\subsubsection{The closed string connection\label{section:connection}}
We finally turn to defining the connection $\nabla_{u\partial_{q}}$ on the cohomology of $\mathit{CF}_{S^1,q}^*(H^{\floer})$, the complex from Definition \ref{th:deformed-structures}. To do this, we consider the parameter spaces from Sections \ref{section:cartan-A}--\ref{section:cartan-B}. For a collection of Hamiltonian orbits $\mathbf{x} = (x_0,x_1 \dots,x_{m},x_{m+1})$, the corresponding moduli spaces $\mathring{\AC}_{m,r,w}^{(A)}(\mathbf{x})$, $\mathring{\AC}_{m,r,w}^{(B)}(\mathbf{x})$ have dimension 
\begin{equation} 
\operatorname{deg}(x_0)-\sum_{i=1}^{m+1} \operatorname{deg}(x_i) + 2(m+r-1). 
\end{equation} 
We will assume that all of the universal cylindrical ends and perturbation data used to construct these moduli spaces are pulled back from $\AC_{m,r}.$ This requires placing additional (generic) constraints on the data over $\AC_{m,r}$. Specifically, the following transversality conditions are required to hold for all $\mathbf{x}$ such that $\operatorname{dim}(\mathring{\AC}_{m,r,w}^{(A)}(\mathbf{x})) = \operatorname{dim}(\mathring{\AC}_{m,r,w}^{(B)}(\mathbf{x})) \leq 1$: 
\begin{itemize}
\itemsep.5em
\item The moduli spaces  $\mathring{\AC}_{m,r,w}^{(A)}(\mathbf{x})$, $\mathring{\AC}_{m,r,w}^{(B)}(\mathbf{x})$ are cut out transversally. 

\item Concerning the compactifications $\AC_{m,r,w}^{(A)}(\mathbf{x})$, $\AC_{m,r,w}^{(B)}(\mathbf{x})$, we require that the moduli spaces over all codimension-$k$ boundary strata of $\AC_{m,r,w}^{(A)}$ and $\AC_{m,r,w}^{(B)}$  are also cut out transversally. In particular, our spaces contain no curves lying over the codimension $\geq 2$ boundary strata of $\AC_{m,r,w}^{(A)}$, $\AC_{m,r,w}^{(B)}$.  
\end{itemize} 
These conditions imply that the zero-dimensional moduli spaces consist of a finite collection of points, and the one-dimensional moduli spaces are one-manifolds with boundary. For certain considerations involving type (B) moduli spaces, we will also need to impose that: 

\begin{itemize}
\item If any angle $\theta_i = 0$, the perturbation data are pulled back along the map $\mathring{\AC}_{m,r} \longrightarrow \mathring{\AC}_{m,r-1}$ which forgets the circle $s=\sigma_i$. (If several $\theta_i=0$ at once, this forces us to forget all of the corresponding circles simultaneously.)
\end{itemize} 
Let's start with $\mathring{\AC}_{m,r,w}^{(A)}(\mathbf{x})$. After summing over $w$, the zero-dimensional spaces give rise to maps
\begin{equation} \label{eq:iota1} 
\mathit{KH}_{(A)}^{m,r}: \mathit{CF}^*(H^{\floer})^{\otimes m} \otimes \mathit{CF}^*(H^{\floer}) \longrightarrow \mathit{CF}^{*-2(m+r-1)}(H^{\floer}).
\end{equation} 

\begin{example} \label{th:pair-of-pants}
In the special case where $m=1$ and $r=0$, $\AC_{1,0,0}^{(A)}$ is a point, and the operation $\mathit{KH}_{(A)}^{1,0}$ from \eqref{eq:iota1} is just the pair-of-pants product. 
\end{example}

Let $\mathit{KH}_{(A)}$ be the operation obtained by formally inserting the Maurer-Cartan element $\alpha$ into the marked points $z_1,\dots,z_m$, and $\partial_q\alpha$ into $z_1$:
\begin{equation}
\begin{aligned} \label{eq:iota3}   
& \mathit{KH}_{(A)}: \mathit{CF}^*_{S^1,q}(H^{\floer}) \longrightarrow \mathit{CF}_{S^1,q}^*(H^{\floer}),
\\
&
\mathit{KH}_{(A)}(\cdot) = \sum_{m,r} \textstyle{\frac{1}{(m-1)!}} u^r \,\mathit{KH}^{(A)}_{m,r}(\partial_q \alpha, \alpha^{\otimes m-1},\cdot). 
\end{aligned}
\end{equation}
Repeating the steps \eqref{eq:iota1}-\eqref{eq:iota3} for type (B) moduli spaces similarly yields
\begin{equation} \label{eq:iota4} 
\mathit{KH}_{(B)}: \mathit{CF}^*_{S^1,q}(H^{\floer}) \longrightarrow \mathit{CF}^*_{S^1,q}(H^{\floer}). 
\end{equation} 
Finally, we set 
\begin{equation}
\mathit{KH} = \mathit{KH}_{(A)} + \mathit{KH}_{(B)}.
\end{equation}

\begin{definition} 
The closed string connection is
\begin{equation}
\begin{aligned} 
& \nabla_{u\partial_{q}}: \mathit{CF}^*_{S^1,q}(H^{\floer}) \longrightarrow \mathit{CF}^*_{S^1,q}(H^{\floer}),
\\ &
\nabla_{u\partial_{q}} = u\partial_q + \mathit{KH}.
\end{aligned}
\end{equation}
\end{definition}

\begin{prop} 
We have $\delta_{S^1,q} \nabla_{u\partial_q} = \nabla_{u\partial_q} \delta_{S^1,q}$. As a consequence, $\nabla_{u\partial_{q}}$ induces a connection on $H^*(\mathit{CF}_{S^1,q}^*(H^{\floer}))$. 
\end{prop}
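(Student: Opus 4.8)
The plan is to prove the chain-map identity $\delta_{S^1,q}\nabla_{u\partial_q} = \nabla_{u\partial_q}\delta_{S^1,q}$ by the standard boundary-of-one-dimensional-moduli-space argument, applied simultaneously to the two families $\AC^{(A)}_{m,r,w}$ and $\AC^{(B)}_{m,r,w}$. Since $\nabla_{u\partial_q} = u\partial_q + \mathit{KH}_{(A)} + \mathit{KH}_{(B)}$, and $u\partial_q$ interacts with $\delta_{S^1,q}$ only through the $q$-dependence of the Maurer-Cartan element $\alpha$, the commutator $[\delta_{S^1,q}, u\partial_q]$ will produce exactly the terms where $\partial_q\alpha$ is inserted into some marked point of the surfaces governing $\delta_{S^1,q}$ (i.e.\ the $\AC_{m,r}$-moduli spaces defining $\ell^{m,1}_{S^1,\alpha}$). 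The point of the $\mathit{KH}$ operations is to cancel these against genuine codimension-one degenerations in the compactified Cartan homotopy spaces.

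First I would fix a collection of orbits $\mathbf{x}$ with $\operatorname{dim}\mathring{\AC}^{(A)}_{m,r,w}(\mathbf{x}) = \operatorname{dim}\mathring{\AC}^{(B)}_{m,r,w}(\mathbf{x}) = 1$ and enumerate the boundary strata of the compactifications, using the transversality assumptions imposed in Section \ref{section:connection} (no curves over codimension $\geq 2$ strata). For type (A), the relevant codimension-one faces are: (a) cylindrical breaking at $s = \pm\infty$, producing $\delta_{S^1}\circ\mathit{KH}_{(A)}$ and $\mathit{KH}_{(A)}\circ\delta_{S^1}$ type terms (after summing with the Maurer-Cartan insertions, $\delta_{S^1,q}\circ\mathit{KH}_{(A)}$ and $\mathit{KH}_{(A)}\circ\delta_{S^1,q}$); (b) Fulton-MacPherson bubbling at interior marked points and Floer breaking there, which after summing over $\alpha$-insertions contribute zero by the Maurer-Cartan equation, exactly as in Remark \ref{rem:MCcancel}; (c) the faces $\partial_{\sigma_i = \sigma_{i+1}}\AC^{(A)}_{m,r,w}$, which carry no rigid curves because the perturbation data are pulled back along a forgetful map with one-dimensional fibres; and (d) the "new" faces $\partial_{s_1 = \sigma_w}\AC^{(A)}_{m,r,w}$ and $\partial_{s_1 = \sigma_{w+1}}\AC^{(A)}_{m,r,w}$. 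The crux is that the face $\partial_{s_1=\sigma_w}\mathring{\AC}^{(A)}_{m,r,w-1}$ is identified via \eqref{eq:ABisomorphism2} and \eqref{eq:cylinder-left-right} with faces of the type (B) spaces, so the type (A) and type (B) boundary contributions must be summed together: the contributions from $\partial_{t_1 = \theta_{\geq w}}$ and $\partial_{t_1 = \theta_{\geq w+1}}$ of $\AC^{(B)}_{m,r,w}$ cancel against the $\partial_{s_1=\sigma_w}$ and $\partial_{s_1=\sigma_{w+1}}$ faces of the adjacent $\AC^{(A)}$ spaces. What remains from type (B) after this telescoping cancellation, together with the $\partial_{\theta_w^{\mathrm{lift}}=0}$ face \eqref{eq:0-forget} (again with one-dimensional fibres, hence no rigid curves, once one uses the pullback condition on perturbation data when some angle vanishes), and the Fulton-MacPherson/Floer-breaking faces (zero by Maurer-Cartan), is precisely the term $[\delta_{S^1,q}, u\partial_q] = \mathit{KH}_{\partial_q\alpha}$-type contribution coming from differentiating $\delta_{S^1,q}$ — this is where the $\partial_q\alpha$ in \eqref{eq:iota3} enters. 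Collecting all faces with the signs dictated by the orientation conventions of Section \ref{subsec:or-con} (the orientation-reversal/preservation statements \eqref{eq:type-a-orientation-1}--\eqref{eq:type-a-orientation-3} and their type (B) analogues) and the fact that the total signed count of boundary points of a compact oriented one-manifold is zero, yields the identity $\delta_{S^1,q}\mathit{KH} + \mathit{KH}\delta_{S^1,q} = \delta_{S^1,q}(u\partial_q) - (u\partial_q)\delta_{S^1,q}$, i.e.\ $\delta_{S^1,q}\nabla_{u\partial_q} = \nabla_{u\partial_q}\delta_{S^1,q}$.

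Once the chain-map identity is established, the second sentence is formal: a chain map commuting with the differential descends to an endomorphism of cohomology, and because $\nabla_{u\partial_q} = u\partial_q + (\text{$\bK[[q,u]]$-linear operator})$ satisfies the Leibniz rule with respect to multiplication by elements of $\bK[[q,u]]$ in the $q$-variable (this is visible from the $u\partial_q$ summand, $\mathit{KH}$ being $(q,u)$-linear), the induced map on $H^*(\mathit{CF}^*_{S^1,q}(H^{\floer}))$ is a connection in the sense of Section \ref{subsec:classical}.

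The main obstacle will be the bookkeeping of the "new" boundary faces of the type (A) and type (B) spaces and the verification that they cancel in pairs across the two families — that is, carefully matching \eqref{eq:cylinder-left-right}, \eqref{eq:ABisomorphism2}, and \eqref{eq:0-forget} with the correct orientation signs so that the telescoping is exact and the only surviving term reproduces $\partial_q$ applied to the Maurer-Cartan element inside $\delta_{S^1,q}$. The degeneration-at-interior-points faces are handled uniformly by the Maurer-Cartan equation as in Remark \ref{rem:MCcancel}, and the $\partial_{\sigma_i=\sigma_{i+1}}$ and $\partial_{\theta_w^{\mathrm{lift}}=0}$ faces are killed by the pullback (one-dimensional fibre) conditions on perturbation data; so the genuine content is entirely in the interplay between the two Cartan homotopy families, which is precisely why both $\mathit{KH}_{(A)}$ and $\mathit{KH}_{(B)}$ appear in the definition of $\nabla_{u\partial_q}$.
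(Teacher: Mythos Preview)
Your overall structure is right, but there is a concrete error in the treatment of the boundary face $\partial_{\theta_w^{\mathrm{lift}}=0}\mathring{\AC}_{m,r,w}^{(B)}$. You claim this face contributes nothing because the forgetful map \eqref{eq:0-forget} has one-dimensional fibres. It does not. On that face one has $\theta_w^{\mathrm{lift}}=0$ and $\xi\in[0,1]$ free, with $z_1 = (\sigma_w,\theta_{\geq w+1}+\xi)$; under the map to $\mathring{\AC}_{m,r-1}$ one forgets the decoration $(\sigma_w,\theta_w)$ but keeps $z_1$ as a marked point. Since $\sigma_w = s_1$ and $\xi = t_1-\theta_{\geq w+1}$ are recovered from $z_1$, the map is a diffeomorphism onto its image (the locus where $z_1$ lies in the annulus $[\sigma_{w-1},\sigma_w]\times S^1$ after renumbering), and a dimension count confirms this. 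So this face \emph{does} contribute, and in the paper's proof it is exactly this face that, after summing over $w$, produces the crucial term $u\,\ell^{1,1}_{S^1,q}(\partial_q\alpha,\cdot)$.

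This is not a minor bookkeeping slip: once you have declared all the other (B)-faces either zero or cancelling against (A)-faces, there is no boundary stratum left in your accounting to produce the commutator $[\delta_{S^1,q},u\partial_q]$. Your sentence ``what remains \dots\ is precisely the term $[\delta_{S^1,q},u\partial_q]$'' is therefore unsupported. The correct logic is: first observe algebraically (via \eqref{eq:commutationalphaeq}) that $\partial_q\delta_{S^1,q}-\delta_{S^1,q}\partial_q = \ell^{1,1}_{S^1,q}(\partial_q\alpha,\cdot)$, then show geometrically that the boundary analysis yields the Cartan homotopy relation $\mathit{KH}\circ\delta_{S^1,q}-\delta_{S^1,q}\circ\mathit{KH}+u\,\ell^{1,1}_{S^1,q}(\partial_q\alpha,\cdot)=0$, with the last term arising precisely from the $\theta_w^{\mathrm{lift}}=0$ face. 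Fixing this identification, the rest of your outline (cylindrical breaking giving the $\delta_{S^1,q}\circ\mathit{KH}$ terms, the (A)/(B) cancellation via \eqref{eq:cylinder-left-right}--\eqref{eq:ABisomorphism2}, Maurer-Cartan killing the interior bubbling, $\sigma_i=\sigma_{i+1}$ faces vanishing by pullback) matches the paper.
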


\begin{proof}
From \eqref{eq:commutationalphaeq} and the definition of the differential \eqref{eq:qs1-complex} we see that, in the notation from \eqref{eq:deformedS1liemodule},
\begin{equation} 
\delta_{S^1,q} \partial_q(\cdot) - \partial_q \delta_{S^1,q}(\cdot) = -\ell^{1,1}_{S^1,q}(\partial_q\alpha,\cdot).
\end{equation} 
The fact that $\nabla_{u\partial_{q}}$ is a cochain map is therefore an immediate consequence of the ``Cartan homotopy relation'' 
\begin{equation} 
\label{eq:cartan0} 
\mathit{KH}\circ\delta_{S^1,q}(\cdot)- \delta_{S^1,q}\circ KH (\cdot) +u \ell^{1,1}_{S^1,q}(\partial_q\alpha,\cdot)=0.  
\end{equation} 
 
To prove \eqref{eq:cartan0}, we look at how these moduli spaces can degenerate in codimension $1$, beginning with the type (B) moduli spaces: 
\begin{itemize}  \itemsep.5em
\item 
As in previous situations, the boundary strata where $\sigma_{i+1} = \sigma_i$ contribute nothing; 
the Floer data are then pulled back from a lower-dimensional parameter space.

\item We can have Floer breaking or marked points colliding at some of the $(z_1,\dots,z_m)$. There are two cases to consider, depending on whether these degenerations involve the distinguished marked point $z_{1}$ or not. The degenerations which do not involve the distinguished marked point $z_{1}$ contribute nothing, by the argument from Remark \ref{rem:MCcancel}. The degenerations which do involve $z_{1}$ a priori contribute a term with $\delta^{\operatorname{diag}}_q(\partial_q\alpha)$ inserted  into $z_1$,
\begin{equation} 
\sum_{m,r} {\textstyle \frac{1}{(m-1)!}} u^r \mathit{KH}_{(B)}^{m,r}(\delta_q^{\operatorname{diag}}(\partial_q\alpha), \alpha^{\otimes m-1},\cdot) \in \mathit{CF}^*_{S^1,q}(H^{\floer}). 
\end{equation}  
But \eqref{eq:ell1alpha} shows that $\delta_q^{\operatorname{diag}}(\partial_q\alpha)=0$, so this term vanishes as well.    

\item We can have cylindrical breaking at $\pm \infty$, which corresponds to the terms $\delta_{S^1} \circ \mathit{KH}_{(B)}$ and $\mathit{KH}_{(B)} \circ \delta_{S^1,q}$.
More precisely (taking into account the ordering conventions used in our discussion of orientations), the first case is the type (B) counterpart of \eqref{eq:type-a-orientation-1}, which explains why it appears with a $-1$ sign; and the second case is the counterpart of \eqref{eq:type-a-orientation-2}, hence the $+1$ sign.

\item Curves can degenerate to the boundary stratum $\partial_{t_1 = \theta_{\geq w}} \mathring{\AC}_{m,r,w}^{(B)}$ or $\partial_{t_1 = \theta_{\geq w+1}} \mathring{\AC}_{m,r,w}^{(B)}$.

\item Curves can degenerate to the boundary $\partial_{\theta_w^{\operatorname{lift}} = 0} \mathring{\AC}_{m,r,w}^{(B)}$. Recall that if we forget the circle $s = \sigma_w$, the image consists of configurations decorated with $(r-1)$ angles, such that $z_1$ lies in the annulus $[\sigma_{w-1},\sigma_w] \times S^1$. Moreover, because this stratum maps to the subset of $\mathring{\AC}_{m,r}$ where $\theta_w=0$, we have arranged for the perturbation data to be pulled back from $\mathring{\AC}_{m,r-1}.$ Therefore, summing over all $w$, we can identify the contribution of this stratum with $u\ell^{1,1}_{S^1,q}(\partial_q\alpha,\cdot)$. (See the discussion in Section \ref{subsec:or-con} for the comparison of orientations, which underlies the $+$ sign of this contribution.)
\end{itemize} 

From the boundaries of type (A) moduli spaces, we have the following contributions:
\begin{itemize} \itemsep.5em
\item 
Versions of the first three kinds of strata which appeared in type (B). More precisely, we can have $\sigma_{i+1}=\sigma_i$ (which contributes nothing); interior Floer breaking or marked points colliding (which also contributes nothing); and cylindrical breaking at $s= \pm \infty$ which contributes $\mathit{KH}_{(A)} \circ \delta_{S^1,q} - \delta_{S^1,q} \circ \mathit{KH}_{(A)}$.

\item Curves can degenerate to $\partial_{s_1 = \sigma_w} \mathring{\AC}_{m,r,w}^{(A)}$ or $\partial_{s_1 = \sigma_{w+1}} \mathring{\AC}_{m,r,w}^{(A)}$. The identifications \eqref{eq:cylinder-left-right},  \eqref{eq:ABisomorphism2} of boundary strata are orientation reversing. It follows that the contributions from $\partial_{s_1 = \sigma_w} \mathring{\AC}_{m,r,w-1}^{(A)}$ cancel those from $\partial_{t_1 = \theta_{\geq w}} \mathring{\AC}_{m,r,w}^{(B)}$ and the contributions from $\partial_{s_1 = \sigma_w} \mathring{\AC}_{m,r,w}^{(A)}$ cancel those from $\partial_{t_1 = \theta_{\geq w+1}} \mathring{\AC}_{m,r,w}^{(B)}.$ 
\end{itemize}

Summing up all of these different contributions therefore proves \eqref{eq:cartan0}.
\end{proof} 

As a final remark, take the $u=0$ reduction of the Cartan homotopy operator. It follows from the Cartan homotopy formula \eqref{eq:cartan0} that this defines a chain map 
\begin{equation} \label{eq:iotacf-2}
\begin{aligned}
& \iotaCF: \mathit{CF}_q^*(H^{\floer}) \longrightarrow \mathit{CF}_q^*(H^{\floer}), \\
& \iotaCF = \sum_m {\textstyle \frac{1}{(m-1)!}} \mathit{KH}_{(A)}^{m,0}(\partial_q \alpha, \alpha^{\otimes m-1},\cdot).
\end{aligned}
\end{equation} 
Geometrically, the underlying parameter spaces have $m$ marked points on the cylinder, where the first marked point (carrying $\partial_q\alpha$) is forced to lie on $\{t = 0\}$, while the others (carrying $\alpha$) are unconstrained. The induced operation on cohomology can be viewed as a $q$-deformation of the pair-of-pants product with $[\alpha^1]$, where $\alpha = q\alpha^1 + O(q^2)$ (compare Example \ref{th:pair-of-pants}). To summarize, we have a diagram
\begin{equation} \label{eq:iota-diagram}
\xymatrix{
H^*(\mathit{CF}^*_{S^1,q}(H^{\floer})) \ar[rrr]^-{\nabla_{u\partial q}} \ar[d]_-{u=0}
&&&
H^*(\mathit{CF}^*_{S^1,q}(H^{\floer})) \ar[d]^-{u=0}
\\
\ar[d]_-{q=0}
H^*(\mathit{CF}^*_{q}(H^{\floer})) \ar[rrr]^-{\iotaCF}
&&&
H^*(\mathit{CF}^*_{q}(H^{\floer}))
\ar[d]^-{q=0}
\\
H^*(\mathit{CF}^*(H)) \ar[rrr]^-{\text{pair-of-pants with $[\alpha^1]$}}
&&&
H^*(\mathit{CF}^*(H))
}
\end{equation}



\section{Open string constructions\label{sec:open-string}}

This section defines the relevant Fukaya-categorical structures. The setting is the same as in the previous section: we are given a Liouville manifold with vanishing first Chern class, and a Maurer-Cartan element in its symplectic cohomology $L_\infty$-algebra. The first step is to set up the resulting deformation of the wrapped Fukaya category. Next, we construct corresponding deformed open-closed and closed-open string maps (the choice of asymptotic markers means that the closed string Floer groups which appear in those two maps are not the same in general, even though they will ultimately agree in our specific geometric application). The open-closed map has a cyclic analogue which lands in deformed $S^1$-equivariant symplectic cohomology. The last part, Section \ref{subsec:intertwine}, focuses on a single issue, namely compatibility of the cyclic open-closed map with connections (on the geometric side, the connection on $S^1$-equivariant deformed symplectic Floer cohomology defined in Section \ref{section:connection}; and on the algebraic side, the Getzler-Gauss-Manin connection on the cyclic homology of the deformed Fukaya category). There are large overlaps with results elsewhere in the literature. The construction of the deformed Fukaya category is similar to the familiar relative Fukaya category \cite{sheridan16, perutz-sheridan22}. The cyclic open-closed map was constructed in \cite{ganatra19}, and we use a modified version of that approach. Finally, in the context of the relative Fukaya category, \cite{ganatra-sheridan25} (posted after the first version of the present paper) proves compatibility for connections, a result originally announced in \cite{ganatra-perutz-sheridan15}. In view of the existence of those other sources, and also because of the sheer amount of moduli spaces and formulae involved, the level of detail is lower than elsewhere in the paper. 

In particular, signs are often omitted throughout the main exposition. However, in the concluding Section \ref{subsec:open-string-signs}, we discuss the principles underlying the choice of signs (continuing on from the closed string case, in Sections \ref{subsec:or-con} and \ref{section:Linf}), and we spell out the signs in the following three instances.
\begin{itemize} \itemsep.5em
\item 
The $A_\infty$-associativity equation for the deformed Fukaya category (constructed in Section \ref{subsubsec:Fukaya}, based on spaces from Section \ref{subsubsec:with-interior}). Here, the signs are verified in Section \ref{subsubsec:signs-fukaya}.
\item
The chain map property of the open-closed map (from Section \ref{section:deformed-oc}, based on spaces from Section \ref{subsubsection:oc}). Sign issues that arise here are dealt with in Section \ref{subsubsec:signs-oc}.
\item
For the construction of the modified Getzler-Gauss-Manin connection (Section \ref{subsubsec:half-strip}), the key sign calculation occurs in Section \ref{subsubsec:signs-gm}.
\end{itemize}
Together, these arguments illustrate the main issues that arise in geometric sign arguments throughout this section (notably, when considering the open-closed map, the use of half-cylinders rather than thimbles). Again, there are other references: the proof of the $A_\infty$-associativity equation is a streamlined version of that from \cite[Chapter 12]{seidel04}; the construction of cyclic open-closed maps in \cite{ganatra19} includes signs; and \cite{ganatra-sheridan25} has a much more systematic discussion, in which signs are treated ``organically'' by including the possible orientations of parameter spaces into the definition of operations (instead of our more traditional approach, which picks one orientation and spells out the resulting signs explicitly).

\subsection{Parameter spaces}

\subsubsection{Pointed discs\label{sec:pointeddiscs}}
We start with a quick review of the Stasheff-Fukaya \cite{fukaya97} moduli space of discs with boundary marked points. Throughout, $\disc$ stands for the closed unit disc, and $\bH = \{\mathrm{re}(z) \leq 0\}$ for the closed left half-plane, in $\bC$. Fix $d \geq 2$, and let $(\zeta_0,\dots,\zeta_d)$ be a configuration of points in $\partial\bD$ which are numbered compatibly with their cyclic order on the circle. One considers surfaces
\begin{equation} \label{eq:pointed-disc}
S = \bD \setminus \{\zeta_0,\dots,\zeta_d\},
\end{equation}
and divides by the automorphism group of $\bD$ to get
\begin{equation} 
\oSd = \mathit{Conf}_{d+1}^{\operatorname{ord}}(\partial \disc)/\mathit{PU}(1,1).
\end{equation}
To represent this space in parallel with \ref{subsec:deligne-mumford}, one can go from the disc to the half-plane, thinking of $\zeta_0 = -\infty$. In that picture, $(\zeta_1,\dots,\zeta_d)$ are points on $\partial\bH = i\bR$ in upwards order, the surface is
\begin{equation} \label{eq:half-plane-0}
S = \bH \setminus \{\zeta_1,\dots,\zeta_d\},
\end{equation}
and the parameter space turns into
\begin{equation}
\oSd = \mathit{Conf}_d^{\operatorname{ord}}(\partial\bH)/(\bR \rtimes \bR^{>0}).
\end{equation}

The Fukaya-Stasheff spaces $\Sd$ are compactifications of the configuration spaces $\oSd.$ They arise as subsets of the real locus of Deligne-Mumford spaces (see e.g.\ \cite[Section 9f]{seidel04}), and from that inherit the structure of manifolds with corners.
\begin{itemize} \itemsep.5em
\item Take a tree $T$ with $(d+1)$ semi-infinite ends. We say that $T$ is planar if it comes with the topological datum of an embedding into $\bR^2$, such that the ordering of the semi-infinite edges as $\{0,\dots,d\}$ is compatible with the cyclic ordering inherited from the orientation of the plane.
\item Given a planar tree, the edges adjacent to a vertex $v$ inherit an ordering by $\{0,1,\dots,|v|_{\operatorname{\operatorname{in}}}\}$, which is such that $0$-th edge is the outgoing one.
\end{itemize}
With that, the stratification of the compactification is
\begin{equation} \label{eq:Tstratumdiscs}
\Sd = \bigsqcup_T \mathring{\frakR}_T, \quad \mathring{\frakR}_T = \prod_{v} \mathring{\frakR}_{|v|_{\operatorname{\operatorname{in}}}}
\end{equation}
where the disjoint union is over (isomorphism classes of) planar stable trees $T$.

\subsubsection{Strip-like ends\label{sec:pointeddiscsends}}
Strip-like ends for a surface \eqref{eq:pointed-disc} are defined as in Section \ref{subsec:open-string-operations}. They come with a gluing process, which is the open string analogue of \eqref{eq:glued-surface}, again with a gluing parameter \eqref{eq:log-gamma}. We prefer to make a more restricted choice of (rational) ends, which is particularly easy to write down in terms of \eqref{eq:half-plane-0}:
\begin{align} \label{eq:punctured-half-plane-ends-1}
&
\begin{aligned}
& [0,\infty) \times [0,1] \longrightarrow S, \; (s,t) \longmapsto \zeta_j - i\rho_j\exp(-\pi(s+it)) \\ & \qquad \qquad \qquad \qquad \qquad \text{ $\rho_j>0$, near $\zeta_j$ ($j = 1,\dots,m$);}
\end{aligned}
\\ &
\begin{aligned}
\label{eq:punctured-half-plane-ends-2}
& (-\infty,0] \times [0,1] \longrightarrow S, \; (s,t) \longmapsto \chi - i\rho_0\exp(-\pi(s+it)) 
\\ & \qquad \qquad \qquad \qquad \qquad \text{ $\rho_0>0$, $\chi \in \bR$, near $\zeta_0 = -\infty$.}
\end{aligned}
\end{align}
After choosing consistent strip-like ends over each $\oSd$, the gluing construction again gives rise to charts near each stratum of \eqref{eq:Tstratumdiscs}, \begin{equation} \label{eq:gluingLg} 
[0,1)^{\mathit{E}_{\operatorname{fin}}(T)} \times \mathring{\frakR}_T \longrightarrow \Sd.
\end{equation}

\subsubsection{Discs with boundary and interior punctures\label{subsubsec:with-interior}}
We now consider surfaces
\begin{align}
\label{eq:disc-picture}
& S = \bD \setminus \{\zeta_0,\dots,\zeta_d,z_1,\dots,z_m\} 
\;\; \text{or equivalently} \\ &
S = \bH \setminus \{\zeta_1,\dots,\zeta_d,z_1,\dots,z_m\}, \label{eq:half-plane}
\end{align}
where the $\zeta_j$ are boundary points as before, and the $z_k$ are interior points. For $m = 0$ this reduces to the previous discussion, so we assume $d \geq 2$ in that case; while for $m>0$, any $d \geq 0$ is allowed. Let 
\begin{equation}
\begin{aligned}
\oRkm & = \big(\mathit{Conf}_{d+1}^{\operatorname{ord}}(\partial \bD) \times \mathit{Conf}_m(\bD \setminus \partial \bD)\big)/\mathit{PU}(1,1) \\
& \iso \big(\mathit{Conf}_d^{\operatorname{ord}}(\partial\bH) \times
\mathit{Conf}_m(\bH \setminus \partial \bH)\big)/(\bR \rtimes \bR^{>0})
\end{aligned}
\end{equation}
be the resulting parameter space. To discuss its compactification, we again need to augment our tree terminology.
\begin{itemize} \itemsep.5em
\item Take a tree $T$. We ask that the $d+m$ incoming semi-infinite edges should be divided into $d$ {\em open} and $m$ {\em closed} (string) ones, which are numbered independently as $\{1,\dots,d\}$ and $\{1,\dots,m\}$. The single outgoing edge, which one can number as $0$, is always considered open. The vertices of $T$ should also come with a division into open and closed ones. An open semi-infinite edge can only be adjacent to an open vertex (so the root vertex is open). A finite edge can go between two vertices of the same kind, or from a closed vertex to an open vertex, but not the other way. 

\item Let's consider the sub-tree $T^{\operatorname{op}}$ consisting only of open vertices and edges connecting them. This should come with a planar embedding, compatible with the ordering of open vertices. If $v$ is an open vertex, we write $|v|_{\operatorname{\operatorname{in}}}^{\operatorname{op}}$ for the number of incoming edges in $T^{\operatorname{op}}$, and $|v|_{\operatorname{\operatorname{in}}}^{\operatorname{cl}} = |v|_{\operatorname{\operatorname{in}}} - |v|_{\operatorname{\operatorname{in}}}^{\operatorname{op}}$.
\end{itemize}
In the compactification, the open vertices correspond to Fukaya-Stasheff spaces and the closed vertices to Fulton-MacPherson spaces. More precisely,
\begin{equation} \label{eq:rdm}
\Rkm = \bigsqcup_T \mathring{\frakR}_T, \quad 
\mathring{\frakR}_T = \prod_{v \text{ open}} 
\mathring{\frakR}_{|v|_{\operatorname{in}}^{\operatorname{op}}, |v|_{\operatorname{in}}^{\operatorname{cl}}} 
\times \prod_{v \text{ closed}} 
\mathring{\frakF}_{|v|_{\operatorname{in}}},
\end{equation} 
where $T$ ranges over stable trees (with $(d+1)$ open, and $m$ closed, semi-infinite edges). Here, the stability condition is $|v|_{\operatorname{in}} \geq 2$ at a closed vertex, and $|v|_{\operatorname{in}}^{\operatorname{op}} + 2|v|_{\operatorname{in}}^{\operatorname{cl}} \geq 2$ at an open vertex.
As in previous situations we have encountered, the compactification is a smooth manifold with corners. The degenerations leading to the strata in \eqref{eq:rdm} are best understood in the half-plane picture \eqref{eq:half-plane}: as punctures collide at some point in the interior of $\bH$, we rescale to get a limiting configuration of $\bC$, which is unique up $\bC \rtimes \bR^{>0}$, hence gives a well-defined point in Fulton-MacPherson space. $\mathit{Sym}(m)$ acts freely on $\Rkm$, by permuting $\{z_1,\dots,z_m\}$ (thanks to Lemma \ref{lem:freeabs}, it's enough to show freeness on $\mathring{\frakR}_{d,m}$; but there, the isotropy group of a point corresponds to a finite subgroup of $\bR \rtimes \bR^{>0}$, and there are no nontrivial such groups).

We equip the interior marked points with asymptotic markers, so that in the half-plane picture they point left (towards $\zeta_0 = -\infty$). Equivalently, in terms of the hyperbolic metric on the interior, one can think of this as pointing along the geodesic that goes from $z_k$ to $\zeta_0$ (that point of view applies both to $\bD$ and $\bH$, and will be used again several times in the future); see Figure \ref{fig:punctured-half-plane}. Correspondingly, the tubular ends can be taken as in \eqref{eq:punctured-plane-ends}. For the boundary punctures, we choose strip-like ends as in \eqref{eq:punctured-half-plane-ends-1}, \eqref{eq:punctured-half-plane-ends-2}. Within this class, we can make a universal choice of cylindrical and strip-like ends, which is $\mathit{Sym}(m)$-invariant, and consistent with gluing operations (and our previous choices for Fulton-MacPherson spaces). 
\begin{figure}
\begin{centering}
\includegraphics{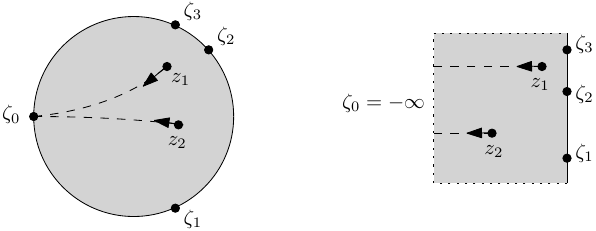}
\caption{\label{fig:punctured-half-plane}A summary of the conventions for $\mathring{\frakR}_{d,m}$. The left picture follows \eqref{eq:disc-picture}, and the equivalent one on the right follows \eqref{eq:half-plane}.}
\end{centering}
\end{figure}

\begin{remark}
Start with the space $\mathring{\frakM}_{d+2m}$, and look at the involution which reverses the complex structure of the Riemann surface, and simultaneously acts on the set of marked points $(\zeta_0,\dots,\zeta_d,z_1,\dots,z_m,\bar{z}_1,\dots,\bar{z}_m)$ by exchanging $z_k$ and $\bar{z}_k$. That involution extends to $\frakM_{d+2m}$, and we can lift it to the framed version. One can map $\frakR_{d,m}$ to the fixed point set (real locus) of $\frakM_{d+2m}^{\mathit{fr}}$, and derive its structure as manifold with corners from that. 
\end{remark}

\subsubsection{Punctured half-cylinders\label{subsubsection:oc}}
Here we will look at a variation of the previous parameter space. For $d, m \geq 0$, consider punctured half-cylinders
\begin{equation} \label{eq:half-cylinder}
\begin{aligned}
&
S = ((-\infty,0] \times S^1) \setminus \{z_1,\dots,z_m, \zeta_0,\dots,\zeta_d\},
\\ & 
z_1,\dots,z_m \in (-\infty,0) \times S^1, \;\; \zeta_0,\dots,\zeta_d \in \{0\} \times S^1 \text{ in cyclic order}.
\end{aligned}
\end{equation}
Denote the resulting parameter space by
\begin{equation}
\mathring{\frakH}_{d,m} = \mathit{Conf}_m((-\infty,0) \times S^1) \times
\mathit{Conf}_{d+1}^{\operatorname{ord}}(S^1).
\end{equation}
Note that here, as in the previous \eqref{eq:frakc}, we are not dividing by rotations in $S^1$-direction. This space admits a compactification to a manifold with corners $\frakH_{d,m}.$ As in the parallel situation of Section \ref{section:FM-cylinder}, it is convenient to describe the compactification in two steps.
\begin{itemize} \itemsep.5em
\item We first introduce a space capturing bubbling in the interior of the cylinder, and disc bubbling at the boundary:
\begin{equation} \label{eq:half-cylinder-heart}
\begin{aligned} &
\frakH_{d,m}^{\heart} = \bigsqcup_T \mathring{\frakH}_T^\heart, \\ &
\mathring{\frakH}_T^\heart = 
\mathring{\frakH}_{|v_{\operatorname{root}}|_{\operatorname{in}}^{\operatorname{op}},|v_{\operatorname{root}}|_{\operatorname{in}}^{\operatorname{cl}}}
\times \prod_{\substack{v \text{ open} \\ v \neq v_{\operatorname{root}}}}
\mathring{\frakR}_{|v|_{\operatorname{in}}^{\operatorname{op}},|v|_{\operatorname{in}}^{\operatorname{op}}} 
\times \prod_{v \text{ closed}} \mathring{\frakF}_{|v|_{\operatorname{in}}}.
\end{aligned}
\end{equation}
Here, $T$ ranges over colored trees as before, but the stability condition at the root vertex has been dropped.

\item The full compactification builds in breaking of the cylinder, as points go to $s=-\infty$. This can be formulated succinctly using the spaces from Section \ref{section:FM-cylinder}:
\begin{equation} \label{eq:cm-space}
\frakH_{d,m} = \bigsqcup_{m_1+m_2 = m, m_1> 0}\frakC_{m_1} \times \frakH_{d,m_2}^{\heart}.
\end{equation}
\end{itemize}
The compactification is a manifold with corners. It carries an action of $\bZ/(d+1)\bZ \times \mathit{Sym}(m)$ by permuting the punctures (cyclically for boundary punctures), and another application of Lemma \ref{lem:freeabs} shows this action to be free.

The asymptotic markers at interior punctures, including $-\infty$, are chosen as in Section \ref{section:FM-cylinder}. Correspondingly, the ends at interior punctures are as in \eqref{eq:endoncylinder1}, \eqref{eq:log-end}. At boundary punctures we pick them as in \eqref{eq:punctured-half-plane-ends-1} but this time applied to $\zeta_0$ as well. This means that all boundary ends are considered as inputs, and parametrized by $[0,\infty) \times [0,1]$. We can make a choice of those ends over the parameter spaces, which is compatible with gluing and also $\bZ/(m+1)\bZ \times \mathit{Sym}(m)$-equivariant.

\begin{remark} \label{th:should-we-rotate}
The space $\frakH_{d,m}$ carries a free $S^1$-action by rotating cylinders, and there is an isomorphism 
\begin{equation}
\frakH_{d,m}/S^1 \iso \frakR_{d,m+1}.
\end{equation}
If we take $\frakR_{d,m+1}$ but label the interior punctures as $(z_0,\dots,z_m)$, then adding an arbitrary asymptotic marker at $z_0$ yields an $S^1$-bundle which is canonically isomorphic to $\frakH_{d,m}$. Note however that this is incompatible with our choices of asymptotic markers at the other interior points: in the case of $\mathring{\frakH}_{d,m}$ those markers point towards $z_0$, whereas for $\mathring{\frakR}_{d,m+1}$ they point towards $\zeta_0$. For that reason, and in view of their differing applications, we have chosen different notation for those spaces.
\end{remark}
%

\subsubsection{Angle-decorated half-cylinders\label{subsubsection:angledecoratedhalf}}
An angle-decorated half-cylinder is a surface \eqref{eq:half-cylinder} together with 
\begin{equation} \label{eq:rangledechalf}
(\sigma_1,\theta_1),\dots,(\sigma_r,\theta_r) \in (-\infty,0] \times S^1,
\;\; \sigma_1 \leq \cdots \leq \sigma_r.
\end{equation}
Let $\mathring{\AH}_{d,m,r}$ be the resulting parameter space. Again, it admits a compactification to a manifold with corners $\AH_{d,m,r}$. Note that breaking off of cylinders at $-\infty$ is subject to the same angle-twisting conventions as in Section \ref{section:angles}. On the codimension $1$ boundary face where two of the $\sigma_i$ collide, there is a forgetful map 
\begin{equation} \label{eq:forget-circleopen}
\partial_{\sigma_{i+1}=\sigma_i} \AH_{d,m,r} \longrightarrow \AH_{d,m,r-1}
\end{equation}
which over $\mathring{\AH}_{d,m,r}$ is defined as in \eqref{eq:forget-circle}. On the boundary face where the rightmost $\sigma_r$ becomes $0$, we have a map
\begin{equation} \label{eq:sigmarboundary} 
\partial_{\sigma_r= 0}\AH_{d,m,r} \longrightarrow \AH_{d,m,r-1}
\end{equation}
which over $\mathring{\AH}_{d,m,r}$ is defined by forgetting $\sigma_r$ and then rotating the half-cylinder (with its punctures) by $-\theta_r$. 

The asymptotic marker and end at $z_0 = -\infty$ are chosen as in \eqref{eq:rotated-end} (meaning, they are rotated by $\theta_{\operatorname{tot}}$), and the remaining ones follow the same idea as for the previously discussed half-cylinder spaces. When making a choice of ends over the parameter spaces, we always want them to be consistent with gluing; invariant under the (free) action of $\bZ/(d+1)\bZ \times \mathit{Sym}(m)$; and compatible with both \eqref{eq:forget-circleopen} and \eqref{eq:sigmarboundary}.

\subsubsection{Open-closed parameter spaces (1)\label{subsubsection:ocs1}}
The open-closed map and its cyclic extension do not use the entirety of the spaces $\mathring\frakH_{d,m}$ and $\mathring{\AH}_{d,m,r}$, but only certain subspaces. The first of these is the subset of those angle-decorated half-cylinders where
\begin{equation} \label{eq:zeta0-is-0}
\zeta_0 = (0,0).
\end{equation}
We denote these by $\mathring\frakH_{d,m}^{(1)} \subset \mathring\frakH_{d,m}$ respectively $\mathring{\AH}_{d,m,r}^{(1)} \subset \mathring{\AH}_{d,m,r}$, and similarly for the closure. It is unproblematic to see that the closure is a submanifold with corners. For future reference, we list the codimension $1$ boundary strata (the first two already appear in $\mathring{\AH}_{d,m,r}^{(1)}$, the others only in the compactification).
\begin{itemize} \itemsep.5em
\item One can have $\sigma_i = \sigma_{i+1}$ for some $0 \leq i < r$, as in \eqref{eq:forget-circleopen}.
\item One can have $\sigma_r = 0$, as in \eqref{eq:sigmarboundary}.
\item A cylinder can break off at $-\infty$, which yields an open stratum of the form
\begin{equation}
\mathring{\AC}_{m_1,r_1} \times \mathring{\AH}_{d,m_2,r_2}^{(1)}, \;\;
m_1+m_2 = m, \; r_1+r_2 = r.
\end{equation}
\item Several punctures (of either kind) converge towards a point on $\{0\} \times S^1$, and bubble off into a disc. These strata have the form
\begin{equation}
\mathring{\AH}_{d_1,m_1,r_1}^{(1)} \times \mathring{\frakR}_{d_2,m_2}, \;\;
d_1+d_2 = d+1, \; m_1+m_2 = m.
\end{equation}
\item Fulton-Macpherson bubbling occurs at some interior point.
%
\end{itemize}

\begin{remark}
Take the case $r = 0$. In terms of Remark \ref{th:should-we-rotate}, the condition \eqref{eq:zeta0-is-0}  means that 
\begin{equation}
\frakH_{d,m}^{(1)} \iso \frakR_{d,m+1}.
\end{equation}
If one then also sets $m = 0$, the outcome is the space of discs with one interior puncture. By the relevant special case of \eqref{eq:rotated-end}, the asymptotic marker at the unique interior point $z_0 = -\infty$, points towards $\zeta_0$. This recovers the setup used in the classical construction of (non-equivariant, undeformed) open-closed maps, see e.g. \cite{abouzaid10}.
\end{remark}

\subsubsection{Open-closed parameter spaces (2)\label{subsubsection:ocs2}}
Our second space is again defined as a subset, or more precisely a codimension $0$ submanifold with boundary,
\begin{equation} \label{eq:type-2}
\mathring{\AH}_{d,m,r}^{(2)} \subset \mathring{\AH}_{d-1,m,r}. 
\end{equation}
The jump in the value of $d$ accomodates the convention that specifically for $\mathring{\AH}_{d,m,r}^{(2)}$, we number the boundary punctures by $\{\zeta_1,\dots,\zeta_d\}$ (see Remark \ref{th:plus-point} below for motivation). With that in mind, the subset is defined by asking that
\begin{equation} \label{eq:markerconstraints} 
(0,0) \text{ lies in the closed interval inside $\{0\} \times S^1$ which starts at $\zeta_d$ and ends at $\zeta_1$.}
\end{equation}
Here, ``starts and ends'' is with respect to the natural boundary orientation, which is the same as that given by the ordering $\zeta_1,\dots,\zeta_d$. (In the case $d = 1$, the condition is empty, meaning that $\zeta_1$ may lie anywhere on $\{0\} \times S^1$; for $d = 2$ it singles out one of the two closed intervals in $S^1$ with endpoints $\{\zeta_1,\zeta_2\}$.) We define $\AH_{d,m,r}^{(2)}$ to be the closure of $\mathring{\AH}_{d,m,r}^{(2)}$ in $\AH_{d-1,m,r}$. The codimension $1$ boundary faces are:
\begin{itemize} \itemsep.5em
\item counterparts of all five kinds previously encountered for type (1), which means the intersections of our space with the codimension $1$ boundary faces of $\AH_{d-1,m,r}$;
\item additionally, strata where $\zeta_1 = (0,0)$ or $\zeta_d = (0,0)$.
\end{itemize}
The space $\AH_{d,m,r}^{(2)}$ is not a manifold with corners. However, it is a manifold with boundary away from a set which is ``of codimension $\geq 2$'', meaning that it is the union of pieces that are locally closed submanifolds of codimension $\geq 2$ in $\AH_{d-1,m,r}$. This will be sufficient for our purpose, since all the data needed for Floer-theoretic constructions are first chosen over $\mathring{\AH}_{d-1,m,r}$, in a way which is compatible with the compactification, and then restricted to \eqref{eq:type-2}. This applies in particular to the strip-like ends.
\begin{figure}
\begin{centering}
\includegraphics{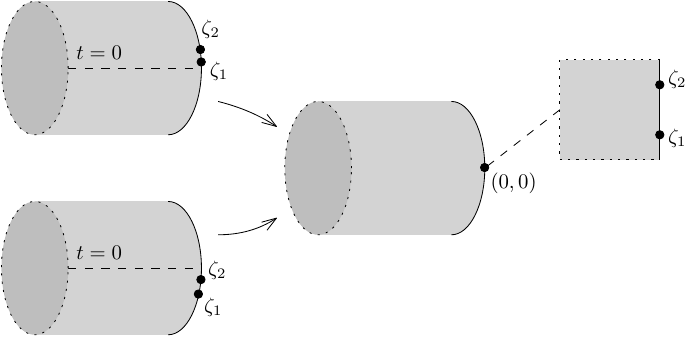}
\caption{\label{fig:glue-triangle-2}The limit $(\zeta_1^{\operatorname{lift}}, \zeta_2^{\operatorname{lift}}) \rightarrow (0,0)$ or $(1,1)$ in Example \ref{th:triangle}.}
\end{centering}
\end{figure}%
\begin{figure}
\begin{centering}
\includegraphics{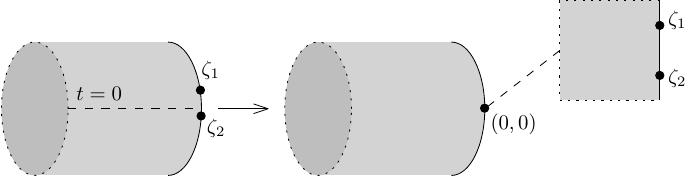}
\caption{\label{fig:glue-triangle-1}The limit $(\zeta_1^{\operatorname{lift}}, \zeta_2^{\operatorname{lift}}) \rightarrow (0,1)$ in Example \ref{th:triangle}.}
\end{centering}
\end{figure}%

\begin{example} \label{th:triangle}
Let's look at the two-dimensional space 
\begin{equation} \label{eq:triangle-space}
\mathring{\AH}_{2,0,0}^{(2)} \iso \{0 \leq \zeta_1^{\operatorname{lift}} < \zeta_2^{\operatorname{lift}} \leq 1\} \setminus \{(0,1)\}.
\end{equation}
where the position of the marked points has been lifted to $[0,1]$ by starting at $(0,0)$ and moving with the boundary orientation. There are two boundary faces already visible in $\mathring{\AH}_{2,0,0}$, namely $\zeta_1^{\operatorname{lift}} = 0$ and $\zeta_2^{\operatorname{lift}} = 1$. The limit where $\zeta_1^{\operatorname{lift}} = \zeta_2^{\operatorname{lift}} \in (0,1)$ is a standard bubbling process, giving rise to a smooth boundary side of the compactification. This takes care of the codimension $1$ faces. 

The two corners $(0,0)$ and $(1,1)$ in the closure of \eqref{eq:triangle-space} actually correspond to the same point in $\AH_{2,0,0}^{(2)}$. The corresponding degeneration is shown in Figure \ref{fig:glue-triangle-2}. If one starts with the half-plane bubble with marked points at $(0,\pm 1)$, and glues the two parts together using small parameters $t \in \bR$ (varying the position of the attaching point) and $\gamma>0$ (gluing parameter) inherited from the larger space $\AH_{1,0,0}$, the outcome (omitting some irrelevant constants) is a half-cylinder with $\zeta_1^{\operatorname{lift}} = t-\gamma$ and $\zeta_2^{\operatorname{lift}} = t+\gamma$. The condition \eqref{eq:triangle-space} becomes
\begin{equation} \label{eq:t-gamma}
t \leq -\gamma \text{ or }  t \geq \gamma.
\end{equation}
Hence, this is not even topologically a codimension $2$ corner. In contrast, the corner $(0,1)$ of \eqref{eq:triangle-space} poses no complications, since there the  analogue of \eqref{eq:t-gamma} is $-\gamma \leq t \leq \gamma$ (Figure \ref{fig:glue-triangle-1}).
\end{example}
\begin{figure}
\begin{centering}
\includegraphics{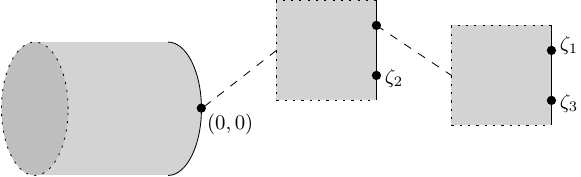}
\caption{\label{fig:sheel-glue}The point in $\AH_{3,0,0}^{(2)}$ from Example \ref{th:sheel}.}
\end{centering}
\end{figure}

\begin{example} \label{th:sheel}
A slightly more complicated case is 
\begin{equation} \label{eq:3-ring-circus}
\mathring{\AH}_{3,0,0}^{(2)} \iso \{0 \leq \zeta_1^{\operatorname{lift}} < \zeta_2^{\operatorname{lift}} < 
\zeta_3^{\operatorname{lift}} \leq 1\} \setminus (\{0\} \times (0,1) \times \{1\}).
\end{equation}
Instead of discussing the entire compactification, we'll just focus on the neighbourhood of a single point, shown in Figure \ref{fig:sheel-glue}. In the larger space $\AH_{3,0,0}$, a neighbourhood of this point is parametrized by $t \in \bR$ (varying the position of the attaching point on the boundary of the half-cylinder) and $\gamma_1,\gamma_2 \geq 0$ (gluing parameters). Assuming the marked points on both half-planes start out at $(0,\pm 1)$, gluing roughly yields 
\begin{equation}
\zeta_1 = (0,\gamma_1\gamma_2+t), \; \zeta_2 = (-\gamma_1+t), \;
\zeta_3 = (0,-\gamma_1\gamma_2+t).
\end{equation}
For this to lie in \eqref{eq:3-ring-circus} one needs $-\gamma_1\gamma_2 \leq t \leq \gamma_1\gamma_2$; which is not a corner in the $\smooth$-sense, since the graphs of $\pm \gamma_1\gamma_2$ become tangent at $(\gamma_1,\gamma_2) = (0,0)$.
\end{example}

\begin{remark} \label{th:plus-point}
It can make sense to consider the half-cylinder as coming with an additional decoration $\zeta_0^+ = (0,0)$, which can agree with either $\zeta_d$ or $\zeta_1$, and otherwise has to be such that $(\zeta_0^+,\zeta_1,\dots,\zeta_d)$ appear in cyclic order. This makes the situation more parallel to \eqref{eq:zeta0-is-0}. In particular, the choice of asymptotic marker in both cases is rotated by $\theta_{\operatorname{tot}}$ from the direction pointing towards $\zeta_0$ respectively $\zeta_0^+$. It is also intuitive in view of the definition of the cyclic complex \eqref{eq:plus-complex-cyclic}, where $\zeta_0^+$ can be thought of as standing in for the artificial unit $e^+$. Note that such a $\zeta_0^+$ would merely be a mnemonic for remembering \eqref{eq:markerconstraints}, and not a marked point or puncture, hence cannot bubble off. This is where our approach differs from that in \cite[Section 5.2]{ganatra19}: there, $\zeta_0^+$ (written as $z_f$) is treated as a marked point, leading to a larger compactification which is geometrically better-behaved but has additional codimension $1$ boundary strata \cite[Remark 45]{ganatra19}. For instance, take the situation from Example \ref{th:triangle}. If one treats $\zeta_0^+$ as a puncture, the resulting compactification is isomorphic to $\frakR_{1,2}$, which is a hexagon. In comparison with our compactification, not only have the limit points $(0,0)$ and $(1,1)$ in \eqref{eq:triangle-space} now become separate, each of them has actually been expanded into an entire boundary interval, and the same expansion has happened to $(0,1)$.
\end{remark}

\subsection{Open-string operations}

\subsubsection{The Fukaya category\label{subsubsec:Fukaya}}
We recall briefly the construction of the wrapped Fukaya category of $(\hat{N},\theta_{\hat{N}})$ using quadratic Hamiltonians, as in \cite{abouzaid10}. Fix a finite collection of Lagrangian submanifolds, which are conical \eqref{eq:l-cone}, carry brane structures \eqref{eq:brane}, and such that each pair $(L_0,L_1)$ satisfies Assumption \ref{th:no-p-chords} (with a fixed $P$, the same for all pairs). These will be the objects of our category. For every pair of objects we choose Floer data $(H_{L_{0},L_{1}}, J_{L_{0},L_{1}})$ as in Section \ref{section:LagrangianFloercomplex}; this yields the Floer complex $\scrA(L_0,L_1) = \mathit{CF}^*(L_0,L_1)$.

For each $d \geq 2$, we fix a consistent choice of strip-like ends $\varepsilon_0, \varepsilon_1, \cdots, \varepsilon_d$ over $\mathring{\frakR}_d$ (Section \ref{sec:pointeddiscs}). On the universal families $U_{\Sd} \longrightarrow \Sd$,
we choose perturbation data 
\begin{equation} \label{eq:u-data}
K_{\Sd} \in \Omega^1_{\mathcal{U}_{\Sd}/ \Sd} (U_{\Sd}, \scrH(\hat{N}))
\text{ and }
J_{\Sd} \in C^{\infty}(U_{\Sd}, \scrJ(\hat{N})). 
\end{equation}
These should satisfy the conditions of Section \ref{subsec:open-string-operations} when restricted to each fiber of the universal curve, and should be conformally consistent with respect to boundary strata (this is the open string analogue of the condition from Section \ref{subsubsec:neck}). Take chords $\mathbf{y} = (y_0,y_1,\cdots,y_d)$, where $y_0$ is  associated to the pair $(L_0,L_d)$, and each $y_i$, $i>0$, is associated to $(L_{i-1},L_i)$. We then consider the parametrized moduli space $\oSd(\mathbf{y})$ of pairs $(r,u)$, where $r \in \oSd$ and $u:S_r \rightarrow M$ is a solution to \eqref{eq:cauchy-riemann}, \eqref{eq:boundary-condition}, \eqref{eq:asymptoticslagrangian}. For a generic choice of perturbation data, these moduli spaces are smooth manifolds with
\begin{equation} 
\mathrm{dim}\, \oSd(\mathbf{y}) =
\operatorname{deg}(y_0)-\sum_{i=1}^d \operatorname{deg}(y_i)+d-2.
\end{equation} 
Moreover, they admit Gromov compactifications $\frakR_d(\mathbf{y})$, and in particular are finite sets if the dimension is $0$. As in the corresponding closed string context of Section \ref{subsubsection:Floerfamilies}, an isolated point $(r,u)$ gives rise to an isomorphism $o(r,u): \frako_{y_1} \otimes \cdots \otimes \frako_{y_m} \iso \frako_{y_0}$. The $\bK$-normalizations of those isomorphisms 
yield the $A_\infty$-operations 
\begin{equation}
\label{eq:mud} 
\mu^d: \mathit{CF}^*(L_{0},L_1) \otimes \cdots \otimes \mathit{CF}^*(L_{d-1},L_d) \longrightarrow \mathit{CF}^{*+2-d}(L_0,L_d)
\end{equation}
for $d \geq 2$; for $d = 1$ we use the Floer differential.

\subsubsection{The deformed Fukaya category\label{section:deform-fuk}}
This section explains how to deform the Fukaya category using a Maurer-Cartan element \eqref{eq:floer-mc}. The objects will be the same as before. The structure maps 
\begin{equation}
\label{eq:muqd} 
\mu_q^d: \mathit{CF}^*(L_{0},L_1) \otimes \cdots \otimes \mathit{CF}^*(L_{d-1},L_d) \to \mathit{CF}^*(L_0,L_d)[[q]]
\end{equation} 
reduce to the previous ones for $q = 0$ (which in particular means that $\mu^0_q$ has zero $q$-constant term). To define the $q$-deformed operations, we use the parameter spaces $\mathring{\frakR}_{d,m}$. We assume that ends have been chosen for those spaces, as in Section \ref{subsubsec:with-interior}, and we also choose perturbation data for the universal families over them, generalizing \eqref{eq:u-data}. Since the action of $\mathit{Sym}(m)$ on $\Rkm$ is free, we can achieve transversality while simultaneously asking that the perturbation data be $\mathit{Sym}(m)$-equivariant.
 
Having made these choices, given a collection of chords $\mathbf{y} = (y_0,y_1,\cdots,y_d)$ and periodic orbits $\mathbf{x} = (x_1, \cdots,x_{m})$, the resulting parametrized moduli space $\oRkm(\mathbf{y},\mathbf{x})$ satisfies
\begin{equation} 
\operatorname{dim}\, \oRkm(\mathbf{y},\mathbf{x}) = \operatorname{deg}(y_0)-\sum_{i=1}^{d}\operatorname{deg}(y_i)-\sum_{j=1}^{m}\operatorname{deg}(x_j)+d-2+2m. \end{equation}
Counting isolated solutions gives rise to operations
\begin{equation} \label{eq:iota11} 
\mu^{d,m}: \mathit{CF}^*(L_{0},L_1) \otimes \cdots \otimes \mathit{CF}^*(L_{d-1},L_d) \otimes \mathit{CF}^*(H)^{\otimes m} \longrightarrow \mathit{CF}^{*+2-d-2m}(L_0,L_d). 
\end{equation} 
We include the previously defined $A_\infty$-structure here as the special case $m = 0$.
Then, \eqref{eq:muqd} is defined by inserting the Maurer-Cartan element $\alpha$ in the way familiar from the closed string constructions in Section \ref{sec:q-deformed}:
\begin{equation} \label{eq:defnmuk} 
\mu_q^d(y_1,\cdots,y_d) \stackrel{\mathrm{def}}{=} \sum_{m \geq 0} {\textstyle\frac{1}{m!}}\,\mu^{d,m}(y_1,\cdots,y_d,\alpha^{\otimes m}) \in \mathit{CF}^*(L_0,L_d)[[q]].
\end{equation}

\begin{prop} \label{prop:deformedfukayawelldefined} 
The operations \eqref{eq:muqd} define a curved $A_\infty$-deformation $\scrA_q$ of $\scrA$.
\end{prop}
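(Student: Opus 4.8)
The plan is to verify the curved $A_\infty$-associativity equations for the operations $\mu_q^d$ defined in \eqref{eq:defnmuk} by the standard strategy of examining codimension-one boundary strata of one-dimensional parametrized moduli spaces $\mathring{\frakR}_{d,m}(\mathbf{y},\mathbf{x})$, combined with the Maurer-Cartan equation for $\alpha$. This is the open-string counterpart of the argument for Remark \ref{rem:MCcancel} (where the analogous statement for the $q$-deformed module differential was re-interpreted geometrically), and it parallels the familiar construction of Fukaya categories relative to an ample divisor, e.g.\ \cite{sheridan16}. First I would record that, for generic choice of the $\mathit{Sym}(m)$-equivariant perturbation data on the universal families over $\mathring{\frakR}_{d,m}$ (transversality being unobstructed because the $\mathit{Sym}(m)$-action is free, by Lemma \ref{lem:freeabs}), each one-dimensional moduli space $\mathring{\frakR}_{d,m}(\mathbf{y},\mathbf{x})$ is a manifold whose Gromov compactification is a compact one-manifold with boundary; the a priori $C^0$-bound is supplied by Proposition \ref{th:c0-bound-4}, the relevant index formula having been stated just before \eqref{eq:iota11}.

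The next step is to enumerate the codimension-one boundary strata of $\frakR_{d,m}(\mathbf{y},\mathbf{x})$ and match them with the terms of the curved $A_\infty$-relation \eqref{eq:associativity} for $\mu_q$. According to \eqref{eq:rdm}, these strata are of three types: (i) breaking of a Floer strip at one of the boundary ends $\zeta_0,\dots,\zeta_d$, or of a Floer cylinder at one of the interior punctures $z_1,\dots,z_m$; (ii) degeneration to a stratum with two open vertices, i.e.\ a product $\mathring{\frakR}_{d_1,m_1}(\dots) \times \mathring{\frakR}_{d_2,m_2}(\dots)$, which after inserting $\alpha$ at the interior points produces a composition $\mu_q^{d_1}(\dots,\mu_q^{d_2}(\dots),\dots)$ — here one uses the combinatorial identity $\frac{1}{m!}\sum \binom{m}{m_1} = \sum \frac{1}{m_1! m_2!}$ to see that summing the contributions over all ways of distributing the $m$ copies of $\alpha$ between the two vertices reproduces exactly the insertion of $\alpha^{\otimes m}$ in the composed operation; and (iii) Fulton–MacPherson bubbling at an interior point, where a group of the $z_k$ collide and bubble off a sphere carrying an $\ell$-input operation. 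Type (iii) is where the Maurer-Cartan equation enters: the bubbled-off component contributes (after inserting $\alpha$) a factor $\frac{1}{\ell!}\ell^{\ell}(\alpha^{\otimes \ell})$ as one of the interior inputs to the remaining disc operation, and summing over $\ell$ this is precisely $\sum_{\ell}\frac{1}{\ell!}\ell^\ell(\alpha^{\otimes\ell}) = 0$ by \eqref{eq:lie-mc}. Hence type (iii) contributes nothing; type (ii) gives the compositions $\mu_q\circ\mu_q$; and type (i) gives the $\mu_q^1$-terms (note the curvature term $\mu_q^0$, with vanishing $q$-constant part, arises as the $d=0$ case). Summing the signed count of all boundary points to zero then yields \eqref{eq:associativity} for $\mu_q$, which is what it means for $\scrA_q$ to be a curved $A_\infty$-algebra; and since $\mu_q^d$ reduces at $q=0$ to the undeformed $\mu^d$, it is a curved deformation of $\scrA$. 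One also checks strict (or cohomological) unitality is inherited, as the units and the perturbation-data conventions on the shells are unchanged.

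I expect the main obstacle to be bookkeeping rather than conceptual: keeping the signs and orientations straight across the three stratum types (the paper explicitly defers signs in this section, so I would either invoke the orientation conventions of Section \ref{subsec:or-con} and the open-string analogues, or simply state that the signs work out as in the referenced literature \cite{sheridan16}), and making sure the combinatorial factors $1/m!$ bookkeep correctly under splitting $m = m_1 + m_2$ in type (ii) and under the bubbling in type (iii). A secondary point requiring care is that the perturbation data on $\mathring{\frakR}_{d,m}$ must be chosen consistently with the already-fixed choices on the Fulton–MacPherson spaces $\mathring{\frakF}_\ell$ (so that the bubbled sphere in type (iii) carries exactly the operation $\ell^\ell$ from Section \ref{section:Linf}) and with the choices on lower $\mathring{\frakR}_{d',m'}$ (for type (ii)); this is exactly the conformal-consistency requirement imposed in Section \ref{section:deform-fuk}, so no new choices are needed. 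Given all of this, the proof is a routine instance of the moduli-space boundary argument, and I would present it at that level of detail.
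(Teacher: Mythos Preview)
Your overall strategy is correct and matches the paper's: examine the codimension-one boundary of one-dimensional spaces $\frakR_{d,m}(\mathbf{y},\mathbf{x})$, insert $\alpha$ at the interior punctures, and separate the strata into those that assemble into the curved $A_\infty$-relation and those that cancel by the Maurer--Cartan equation. The combinatorial bookkeeping with the $1/m!$ factors is also right.

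However, your grouping of the strata contains a genuine mis-assignment. You put Floer cylinder breaking at an interior puncture $z_k$ into type (i) and claim it contributes a ``$\mu_q^1$-term'' to the $A_\infty$-relation. It does not. When a Floer cylinder breaks off at $z_k$ and you insert $\alpha$ there, the contribution is of the form $\mu^{d,m}(\dots,\ell^1(\alpha),\dots)$, i.e.\ the $\ell = 1$ part of $\sum_{\ell\ge 1}\tfrac{1}{\ell!}\ell^\ell(\alpha^{\otimes\ell})$. Your type (iii), Fulton--MacPherson bubbling, only produces the terms with $\ell \ge 2$ (since $\mathring{\frakF}_\ell$ requires $\ell\ge 2$). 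Neither of these vanishes on its own; it is precisely their \emph{sum} that vanishes by \eqref{eq:lie-mc}. So the correct grouping, as in the paper, is: (interior Floer cylinder breaking) $+$ (Fulton--MacPherson bubbling) $\to 0$ by Maurer--Cartan; (boundary Floer strip breaking) $+$ (disc bubbling, including the case where interior points go to the boundary) $\to$ the curved $A_\infty$-relation. Once you regroup in this way, your argument goes through.
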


\begin{proof} 
The proof again proceeds by examining the boundaries of one-dimensional moduli spaces $\Rkm(\mathbf{y},\mathbf{x}))$. There are four different kinds of boundary points:
\begin{itemize} \itemsep.5em
\item 
Floer cylinder breaking at one of the interior marked points.  

\item Degeneration of the domain to the codimension one strata where $2 \leq m_1 \leq m$ of the interior marked points collide. On the parameter space $\frakR_{d,m}$, this means that we converge to a point in a stratum
\begin{equation} \label{eq:boundaryRkm1} 
\mathring{\frakR}_{d,m_2} \times \mathring{\mathfrak{F}}_{m_1} \subset \partial\Rkm,
\;\; m_1+m_2 = m+1.
\end{equation} 

\item Floer strip breaking at a boundary marked point. 

\item Boundary marked points collide, or interior marked points approach the boundary (or both). On the parameter space, this means convergence to a point in 
\begin{equation} \label{eq:boundaryRkm2} \mathring{\frakR}_{d_{2},m_{2}} \times \mathring{\frakR}_{d_{1},m_{1}} \subset \partial\Rkm, \;\;
m_1+m_2 = m, \; d_1+d_2 = d+1.
\end{equation}
\end{itemize}  
The fact that $\alpha$ satisfies the Maurer-Cartan equation means that the sum of the first two contributions is zero. The third and fourth kind of degeneration account for the terms in the $A_\infty$-equation.
\end{proof}

\subsubsection{The deformed closed-open map\label{subsubsec:deform-co}} 
The same moduli spaces as in the definition of $\scrA_q$ also give rise to a deformation of the closed-open string map. Given $w \in \mathit{CF}_q^{\operatorname{diag}}(H^{\floer})$ (see \eqref{eq:diagonaldeformationCF} for the notation), we insert $\alpha$ into the first $(m-1)$ closed string entries, and $w$ into the $m$-th entry, of the operation $\mu^{d,m}$ from \eqref{eq:iota11}; and write the outcome as
\begin{equation}
\begin{aligned} &
\mathit{CO}_q^d : \mathit{CF}_q^{\operatorname{diag}}(H^{\floer}) \longrightarrow \mathit{hom}(\scrA(L_{0},L_1,\dots,L_d),\scrA(L_0,L_d))[[q]], \\ &
 CO_q^d(w) = \sum_{m \geq 1} \pm \textstyle{\frac{1}{(m-1)!}} \, \mu^{d,m}(\dots,\alpha^{\otimes m-1},w).
\end{aligned}
\end{equation}
The collection of all such maps, for $d \geq 0$, yields
\begin{equation} \label{eq:COq3} 
\mathit{CO}_q: \mathit{CF}_q^{\operatorname{diag}}(H) \longrightarrow C^*(\scrA_q). \end{equation}
A variant of Proposition \ref{prop:deformedfukayawelldefined} shows that this is a chain map; compared to the argument given there, the boundary points which play a different role here are those where a Floer cylinder or Fulton-MacPherson configuration which contains the point $z_m$ bubbles off. The contribution from those boundary points is precisely $\mathit{CO}_q \circ \delta_q^{\operatorname{diag}}$, which is part of the chain map equation.

\subsubsection{The deformed cyclic open-closed map\label{section:deformed-oc}}
In this section, we construct the deformed cyclic open-closed map
\begin{equation} \label{eq:equi-oc}
\begin{aligned}
& \mathit{OC}_{S^1,q}: \mathit{CC}_*^+(\scrA_q) \longrightarrow CF_{S^{1},q}^{*+n}(H), \\
& (-1)^n \delta_{S^1,q} \circ \mathit{OC}_{S^1,q} = \mathit{OC}_{S^1,q} \circ d_{\mathit{CC}_*^+(\scrA_q)}.
\end{aligned}
\end{equation}

Recall that $\mathit{CC}_*^+(\scrA_q)$ is the direct sum of two kinds of pieces, $\scrA_q(L_d,L_0,\dots,L_d)$ and $e_{\scrA}^+ \otimes \scrA_q(L_0,L_1,\dots,L_{d-1},L_0)$, see \eqref{eq:categorical-nonunital-hochschild}. Correspondingly, the map \eqref{eq:equi-oc} has components 
\begin{align} &
\label{eq:OC1def} 
\mathit{OC}_{S^1,q,(1)}: \scrA_q(L_d,L_0,L_1,\dots,L_d) \longrightarrow \mathit{CF}_{S^{1},q}^{*+n-d}(H), 
\\ 
\label{eq:OC2def} & 
\mathit{OC}_{S^1,q,(2)}: e_\scrA^+ \otimes \scrA_q(L_0,L_1,\dots,L_{d-1},L_0) \longrightarrow \mathit{CF}_{S^{1},q}^{*+n-d}(H). 
\end{align}
In addition to those, we will introduce another operation $\mathit{AH}_{S^1,q,(0)}$ which plays a purely expository role (since it can ultimately be written in terms of $\mathit{OC}_{S^1,q,(2)}$; see Lemma \ref{lem:OChatversusOC2} below).

\begin{remark}
After setting $q = 0$ (or, geometrically, eliminating the extra interior punctures), our construction reduces to that from \cite{ganatra19}.
\end{remark}

To set up the construction, we choose perturbation data over the moduli spaces of angle-decorated half-cylinders $\AH_{d,m,r}$. These must lie in the class of perturbations allowed in Section \ref{subsubsec:closedopen}, and should be invariant with respect to $\bZ/(d+1)\bZ \times \mathit{Sym}(m)$ (which acts freely on the parameter space, see the discussion in Section \ref{subsubsection:angledecoratedhalf}). In addition to the usual conformal consistency conditions with respect to degenerations of the surfaces, we require that along certain boundary strata, the Floer data are pulled back from lower-dimensional parameter spaces: 
\begin{itemize} \itemsep.5em
\item over $\partial_{\sigma_{i+1}=\sigma_i} \mathring\AH_{d,m,r}$, the perturbation data are pulled back along \eqref{eq:forget-circleopen}. 
\item Along $\partial_{\sigma_{r}=0} \mathring{\AH}_{d,m,r}$, the perturbation data are pulled back along \eqref{eq:sigmarboundary}. 
\end{itemize}
Further conditions will be added throughout the subsequent discussion (one imposes them as part of the initial choice, but from an expository viewpoint it makes more sense to discuss them at the point where they are needed).

Let's first consider the moduli spaces $\mathring{\AH}_{d,m,r}(\bfy,\bfx)$ parametrized over $\mathring\AH_{d,m,r}$. Here, $\mathbf{x} = (x_0,x_1,\cdots,x_m)$ are periodic orbits, where $x_0$ is placed at the output $s = -\infty$, and $\bfy = (y_0,\dots,y_d)$ are chords. The zero-dimensional spaces give rise to operations
\begin{equation}
\mathit{AH}^{d,m,r}: \mathit{CF}^*(H)^{\otimes m} \otimes \scrA(L_d,L_0,L_1,\dots,L_d) 
\longrightarrow \mathit{CF}^{*+n-d-2r-2m-1}(H).
\end{equation}
We insert $\alpha$ at the $m$ closed string inputs, add up over all $m$, and also and add up over all $r$ with powers $u^r$; the outcome being maps
\begin{equation} \label{eq:ah}
\mathit{AH}_{S^1,q}: \scrA_q(X_d,X_0,\dots,X_d) \longrightarrow \mathit{CF}^{*+n-d-1}_{S^1,q}(H).
\end{equation}
Next take the subspace $\mathring{\AH}_{d,m,r}^{(1)}$ from Section \ref{subsubsection:ocs1}. The perturbation data that we use on these moduli spaces will be pulled back from $\mathring{\AH}_{d,m,r}$, with the following condition on the original choice:
\begin{itemize} \item
the parametrized moduli spaces $\mathring{\AH}_{d,m,r}^{(1)}(\bfy,\bfx)$ are regular; or equivalently, the map $\mathring{\AH}_{d,m,r}(\bfy,\bfx) \rightarrow \mathring{\AH}_{d,m,r}$ is transverse to the submanifold $\mathring{\AH}_{d,m,r}^{(1)}$.
\end{itemize}
The zero-dimensional moduli spaces spaces define operations
\begin{equation} \label{eq:oc-1-operations}
\mathit{OC}^{d,m,r}_{(1)}: \mathit{CF}^*(H)^{\otimes m} \otimes \scrA(L_d,L_0,L_1,\dots,L_d) 
\longrightarrow \mathit{CF}^{*+n-d-2r-2m}(H),
\end{equation}
which we manipulate algebraically as before to get \eqref{eq:OC1def}. Restricting to the moduli spaces without angle-decorations ($r = 0$), or equivalently setting the equivariant parameter $u$ to zero, yields the deformed version of the ordinary (non-equivariant) open-closed map,
\begin{equation} \label{eq:ordinary-open-closed}
\mathit{OC}_q: C_*(\scrA_q) \longrightarrow \mathit{CF}^{*+n}_q(H).
\end{equation}

As in our discussion of parameter spaces in Section \ref{subsubsection:ocs2}, we will now change conventions slightly: the number of boundary punctures is written as $d>0$, and they will be numbered by $\{\zeta_1,\dots,\zeta_d\}$. The Lagrangian submanifolds involved are correspondingly $(L_0,\dots,L_{d-1})$. We use the subspace $\mathring{\AH}_{d,m,r}^{(2)} \subset \mathring{\AH}_{d-1,m,r}$ and the associated parametrized moduli spaces $\mathring{\AH}_{d,m,r}^{(2)}(\bfy,\bfx)$. The additional transversality requirement is:
\begin{itemize} \item The moduli spaces $\AH_{d,m,r}^{(2)}(\mathbf{y},\mathbf{x})$ of (expected) dimension $\leq 1$ contain no curves lying over the codimension two or higher boundary strata of $\AH_{d,m,r}^{(2)}$. 
\end{itemize}
Using those spaces, we define maps
\begin{equation}
\mathit{OC}^{d,m,r}_{(2)}: \mathit{CF}^*(H)^{\otimes m} \otimes \big(e^+_{\scrA} \otimes \scrA(L_{0},L_1,\dots,L_d,L_0)\big)
\longrightarrow \mathit{CF}^{*+n-d-2r-2m}(H),
\end{equation}
which then lead to \eqref{eq:OC2def}.

Take the Connes operator which is part of \eqref{eq:c-unital-cyclic}, 
\begin{equation} 
B(a_0(a_1|\dots|a_d)) = -\sum_j (-1)^{(\|a_0\|+\cdots+\|a_j\|)(\|a_{j+1}\|+\cdots+\|a_d\|)} e_\scrA^+ (a_{j+1}|\dots|a_d|a_0|\dots|a_j).
\end{equation} 

\begin{lemma} \label{lem:OChatversusOC2} 
The operations \eqref{eq:ah} and \eqref{eq:OC2def} are related by  
\begin{equation} \label{eq:OChatOC2} 
\mathit{AH}_{S^1,q}(a_0(a_1|\dots|a_d))= OC_{S^1,q,(2)}\circ B(a_0(a_1|\dots|a_d)).
\end{equation}
\end{lemma}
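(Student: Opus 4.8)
The statement is a chain-level identity, so the plan is to prove it by the standard cobordism argument: exhibit both sides of \eqref{eq:OChatOC2} as parts of the count of boundary points of one-dimensional parametrized moduli spaces, and show that all other contributions cancel. The natural moduli spaces to use are the $\AH^{(2)}_{d,m,r}(\bfy,\bfx)$ of the previous section, but examined near the two boundary faces where $\zeta_1 = (0,0)$ or $\zeta_d = (0,0)$ (the faces listed at the end of Section \ref{subsubsection:ocs2}). These faces are exactly where the ``mnemonic marker'' $\zeta_0^+ = (0,0)$ of Remark \ref{th:plus-point} collides with an actual boundary puncture; the insertion of the artificial unit $e^+_\scrA$ degenerates, and one reads off the Connes operator $B$. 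So first I would set up the relevant one-dimensional moduli space: start from the operations $\mathit{AH}^{d,m,r}$ defined over $\mathring{\AH}_{d,m,r}$, but with the extra interior puncture at $s=-\infty$ carrying $x_0$, and track how a one-parameter family degenerates.

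Second, I would enumerate the codimension-one boundary strata of the relevant compactified parameter space and identify their Floer-theoretic contributions. The decisive point is the comparison of two ways of cutting down $\AH_{d,m,r}$: the condition $\zeta_0 = (0,0)$ giving the type (1) spaces, versus the condition \eqref{eq:markerconstraints} giving the type (2) spaces. The face $\partial_{\zeta_1 = (0,0)}\AH^{(2)}_{d,m,r}$ and the face $\partial_{\zeta_d = (0,0)}\AH^{(2)}_{d,m,r}$ are precisely where one of the cyclic permutations of $(a_0,a_1,\dots,a_d)$ lines up so that the half-cylinder looks like a type (2) configuration with $e^+_\scrA$ inserted as the distinguished entry; summing the contributions of these faces over all the ways the puncture string can be split is exactly $\mathit{OC}_{S^1,q,(2)}\circ B$. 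Meanwhile the ``bulk'' of the one-dimensional family, i.e. the contribution coming from varying the position of $(0,0)$ along the boundary circle relative to the $\zeta_i$, integrates to give $\mathit{AH}_{S^1,q}$ on the left side (this is the geometric incarnation of the observation already recorded, namely that $\mathit{AH}_{S^1,q}$ can be written in terms of $\mathit{OC}_{S^1,q,(2)}$). The remaining boundary strata — Floer breaking at the output or at interior/boundary punctures, Fulton--MacPherson bubbling, disc bubbling, and the faces $\sigma_{i+1} = \sigma_i$ and $\sigma_r = 0$ — either cancel in pairs or contribute zero because the perturbation data there are pulled back from lower-dimensional parameter spaces (the consistency conditions imposed in Section \ref{section:deformed-oc}, exactly as in the vanishing arguments of Lemma \ref{lem:S1diff} and Proposition \ref{prop:LmoduleS1}); the Maurer--Cartan equation for $\alpha$ handles the interior-collision terms as in Remark \ref{rem:MCcancel}. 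Collecting signs and the $u$-powers $u^r$ then yields \eqref{eq:OChatOC2}.

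The main obstacle I expect is not the enumeration of strata but the singular structure of $\AH^{(2)}_{d,m,r}$ near the corners where $(0,0)$ meets a boundary puncture: as Examples \ref{th:triangle} and \ref{th:sheel} show, these are not $C^\infty$ corners, and the compactification is only a manifold-with-boundary away from a set of codimension $\geq 2$. So the care needed is in justifying that the count of ends of the one-dimensional moduli spaces is still governed by the listed codimension-one faces — i.e. that no contributions leak out through the non-smooth corner loci. This is exactly the reason the transversality hypothesis ``the $\AH^{(2)}_{d,m,r}(\bfy,\bfx)$ of dimension $\leq 1$ contain no curves over codimension $\geq 2$ strata'' was built into Section \ref{subsubsection:ocs2}: with that in force, the one-dimensional moduli spaces are honest compact one-manifolds with boundary whose boundary lies over the codimension-one faces only, and the usual ``signed count of boundary points is zero'' argument applies. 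Since the excerpt explicitly says signs and orientations will no longer be spelled out, I would present this as a sketch: set up the moduli space, list the faces, invoke the pulled-back-data vanishing and the Maurer--Cartan cancellation, and match the two surviving families of faces with the two sides of \eqref{eq:OChatOC2}.
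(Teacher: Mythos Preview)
Your plan misidentifies the nature of the argument. The identity \eqref{eq:OChatOC2} is not obtained by a cobordism over one-dimensional moduli spaces; it is a tautological decomposition of the \emph{zero}-dimensional moduli spaces themselves. The paper's proof observes that on the parameter space $\mathring{\AH}_{d,m,r}$ (with boundary punctures $\zeta_0,\dots,\zeta_d$ and no constraint on where $(0,0)$ sits), the point $(0,0)$ lies in exactly one of the $d+1$ open boundary arcs between consecutive $\zeta_j$ and $\zeta_{j+1}$, up to the codimension-$1$ locus where $(0,0)$ coincides with some $\zeta_j$. The piece where $(0,0)$ lies in the arc $(\zeta_j,\zeta_{j+1})$ is, after cyclic relabelling $\zeta_{j+1}\mapsto\zeta_1,\dots,\zeta_j\mapsto\zeta_{d+1}$, precisely the interior of $\mathring{\AH}_{d+1,m,r}^{(2)}$. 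Because the perturbation data over $\AH_{d,m,r}$ were chosen $\bZ/(d+1)$-equivariantly, the Floer count over each such piece equals the type-$(2)$ count with the cyclically permuted inputs $e^+_\scrA(a_{j+1}|\dots|a_d|a_0|\dots|a_j)$. Summing over $j$ gives exactly $\mathit{OC}_{S^1,q,(2)}\circ B$. No one-dimensional family, no Maurer--Cartan cancellation, and none of the $\sigma_i=\sigma_{i+1}$ or $\sigma_r=0$ analysis is needed here.

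Your proposal has a concrete gap: you never specify what one-dimensional parametrized moduli space you would use, and the phrase ``varying the position of $(0,0)$ along the boundary circle'' does not describe an actual modulus---the point $(0,0)$ is fixed, and the only moduli are the positions of the $\zeta_j$. What you seem to be reaching for (the one extra degree of freedom in $\mathring{\AH}_{d,m,r}$ compared to $\mathring{\AH}_{d,m,r}^{(1)}$) is exactly the arc-decomposition just described, but that is a decomposition of the \emph{same}-dimensional space, not a boundary of something bigger. The faces $\zeta_1=(0,0)$ and $\zeta_d=(0,0)$ of $\AH^{(2)}$ are where adjacent arc-pieces meet, not where the Connes operator is produced; the Connes sum comes from summing over the pieces themselves. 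Rewrite the argument as a direct identification of parameter spaces using cyclic equivariance of the data, and it becomes a few lines.
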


\begin{proof}[Sketch of proof]
In the moduli spaces $\mathring{\AH}_{d,m,r}(\mathbf{y},\mathbf{x})$, the point $(0,0)$ can lie between any two consecutive  boundary marked points. (The situation where $(0,0)$ agrees with one of the marked points $\zeta_j$ is codimension $1$ and can be ignored.) Suppose $(0,0)$ lies between $\zeta_j$ and $\zeta_{j+1}$. Because the Floer data have been chosen equivariantly under cyclic permutation of the boundary marked points, we can view this as contributing to $\pm OC_{S^1,q,(2)}\circ e_\scrA^+(a_{j+1}|\dots|a_d|a_0|\dots|a_j).$ The boundary of the disc is naturally decomposed into intervals lying between consecutive marked points, and so summing over all $j$ proves \eqref{eq:OChatOC2}.  \end{proof}

\begin{prop} \label{prop:OCS1eq1} 
The following equation holds: 
\begin{equation} 
\begin{aligned}
&
\mathit{OC}_{S^1,q,(1)} \circ d_{\mathit{C}_*^+(\scrA_{q})}(a_0(a_1|\dots|a_d)) + u\mathit{OC}_{S^1,q,(2)} \circ B(a_0(a_1|\dots|a_d)) 
\\ & 
= (-1)^n \delta_{S^1,q} \circ \mathit{OC}_{S^1,q,(1)}(a_0(a_1|\dots|a_d)). 
\end{aligned}
\end{equation} 
\end{prop}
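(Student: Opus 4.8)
The plan is to prove the identity, exactly as in the proofs of Propositions \ref{prop:deformedfukayawelldefined} and \ref{prop:OCS1eq1}'s predecessors, by a ``signed count of boundary points of one-dimensional moduli spaces'' argument, applied to the family of parametrized moduli spaces $\mathring{\AH}_{d,m,r}^{(1)}(\bfy,\bfx)$ of (expected) dimension $1$. Here $\bfx=(x_0,\dots,x_m)$, $\bfy=(y_0,\dots,y_d)$, and in the final count one inserts the Maurer--Cartan element $\alpha$ into the $m$ interior marked points, sums over $m$ with $\frac{1}{m!}$, and sums over $r$ with powers $u^r$. By the transversality conditions imposed in Section \ref{section:deformed-oc} (regularity of $\mathring{\AH}^{(1)}_{d,m,r}(\bfy,\bfx)$, no curves over codimension $\geq 2$ strata), such a moduli space is a compact $1$-manifold with boundary, so the signed count of its boundary points vanishes; the content of the proposition is the identification of the various boundary strata with the four groups of terms in the asserted equation.

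The key steps, in order: first, enumerate the codimension $1$ boundary strata of $\AH_{d,m,r}^{(1)}$ as listed in Section \ref{subsubsection:ocs1}, and for each determine its contribution after inserting $\alpha$ and summing. (a) The strata $\sigma_i=\sigma_{i+1}$ (for $0\le i<r$) and $\sigma_r=0$ contribute nothing: by the consistency conditions on the perturbation data (pulled back along \eqref{eq:forget-circleopen} and \eqref{eq:sigmarboundary}, which have positive-dimensional fibres), no rigid curves lie over them --- this is the same mechanism as in Lemma \ref{lem:S1diff} and Proposition \ref{prop:LmoduleS1}. (b) Interior Fulton--MacPherson bubbling at an interior point, and Floer-cylinder breaking at an interior marked point, together assemble --- after summing over $m$ and using that $\alpha$ is a Maurer--Cartan element, exactly as in Remark \ref{rem:MCcancel} and the proof of Proposition \ref{prop:deformedfukayawelldefined} --- into the $\mu_q$-terms of $d_{C_*^+(\scrA_q)}$ applied to $a_0(a_1|\dots|a_d)$; the complementary part of $d_{C_*^+(\scrA_q)}$ (boundary disc bubbling off open inputs) comes from the stratum $\mathring{\AH}^{(1)}_{d_1,m_1,r_1}\times\mathring{\frakR}_{d_2,m_2}$ together with Floer-strip breaking at a boundary puncture. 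So these two bullets give $\mathit{OC}_{S^1,q,(1)}\circ d_{C_*^+(\scrA_q)}(a_0(a_1|\dots|a_d))$. (c) The cylinder-breaking stratum $\mathring{\AC}_{m_1,r_1}\times\mathring{\AH}^{(1)}_{d,m_2,r_2}$ at $s=-\infty$: here the $\mathring{\AC}_{m,r}$ factor, with $\alpha$ inserted at all its interior points and summed, is precisely the operation defining $\delta_{S^1,q}$ (Definition \ref{th:deformed-structures}(iii), Section \ref{sec:s1module}), so this yields $\delta_{S^1,q}\circ\mathit{OC}_{S^1,q,(1)}(a_0(a_1|\dots|a_d))$.

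The remaining stratum is $\sigma_r=0$ in the sense that one of the angle circles degenerates --- more precisely, one must be careful: the genuinely new boundary contribution, which has no counterpart in the $q=0$ story, is the one where the constraint \eqref{eq:zeta0-is-0} interacts with the angle decorations so that a piece ``$\sigma_r\to 0$'' produces, via Lemma \ref{lem:OChatversusOC2}, the term $u\,\mathit{OC}_{S^1,q,(2)}\circ B(a_0(a_1|\dots|a_d))$. The clean way to extract it is: the total boundary count over $\mathring{\AH}^{(1)}_{d,m,r}$ produces $\mathit{AH}_{S^1,q}$-type contributions which, by Lemma \ref{lem:OChatversusOC2} (the relation $\mathit{AH}_{S^1,q}=\mathit{OC}_{S^1,q,(2)}\circ B$), are rewritten in terms of $\mathit{OC}_{S^1,q,(2)}\circ B$; the factor of $u$ tracks the lost angle parameter. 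Assembling (b), (c), and this last contribution, and moving $\delta_{S^1,q}\circ\mathit{OC}_{S^1,q,(1)}$ to the other side, gives exactly the claimed equation. Throughout, one invokes the standard gluing theorems to identify the ends of $\mathring{\AH}^{(1)}_{d,m,r}(\bfy,\bfx)$ with the listed boundary configurations.

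I expect the main obstacle to be the sign/orientation bookkeeping in the last step --- in particular, verifying that the angle-twisted gluing conventions of Section \ref{section:angles} and the orientation conventions of Section \ref{subsec:or-con} conspire to produce the coefficient $+u$ in front of $\mathit{OC}_{S^1,q,(2)}\circ B$ (rather than $\pm u$ with the wrong sign), and that the contribution of the cylinder-breaking stratum appears with the sign that makes it $\delta_{S^1,q}\circ\mathit{OC}_{S^1,q,(1)}$ on the right-hand side. Since the paper has announced (Section \ref{sec:open-string}) that from here on signs and orientations will not be spelled out, and that the reader is referred to \cite{ganatra19} for the cyclic open-closed signs, I would present the argument at the level of identifying boundary strata with algebraic terms, note that the signs match those in \cite{ganatra19} with the straightforward modification accounting for the interior $\alpha$-insertions (which carry even degree, hence contribute no extra Koszul signs), and leave the detailed verification to the reader.
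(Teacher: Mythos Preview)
Your overall strategy --- analyze codimension-one boundary of one-dimensional $\mathring{\AH}_{d,m,r}^{(1)}(\bfy,\bfx)$ --- is exactly the paper's, but step (a) contains a genuine error that makes the argument self-contradictory.

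The claim that the $\sigma_r=0$ stratum contributes nothing is wrong. Your reasoning (perturbation data pulled back along \eqref{eq:sigmarboundary}, which you say has positive-dimensional fibres) is correct for the unconstrained space $\AH_{d,m,r}$, but fails once you impose $\zeta_0=(0,0)$. On $\partial_{\sigma_r=0}\AH_{d,m,r}^{(1)}$, the map \eqref{eq:sigmarboundary} forgets $\sigma_r$ and rotates the half-cylinder by $-\theta_r$; under that rotation the constrained point $\zeta_0=(0,0)$ sweeps through all of $\{0\}\times S^1$ as $\theta_r$ varies. Hence the restricted map is (away from higher-codimension loci) an \emph{isomorphism} onto $\mathring{\AH}_{d,m,r-1}$, not a circle bundle over a smaller space. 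Rigid curves over this stratum are therefore exactly those counted by $\mathit{AH}^{d,m,r-1}$, and after summing (with the lost $u$-power) and applying Lemma \ref{lem:OChatversusOC2}, this is the $u\,\mathit{OC}_{S^1,q,(2)}\circ B$ term. That is the paper's argument, and you half-see it in your third paragraph, but you cannot also keep (a) as stated --- the two claims contradict one another.

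A smaller error in (b): interior Fulton--MacPherson bubbling and Floer-cylinder breaking at interior marked points do not ``assemble into the $\mu_q$-terms of $d_{C_*^+(\scrA_q)}$''; they sum to zero by the Maurer--Cartan equation, which is precisely the content of Remark \ref{rem:MCcancel}. The entire $\mathit{OC}_{S^1,q,(1)}\circ d_{C_*^+(\scrA_q)}$ term comes from boundary disc bubbling and Floer strip breaking at boundary punctures.
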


\begin{proof} The contributions from codimension one boundary strata of $\mathring{\AH}_{d,m,r}^{(1)}(\mathbf{y},\mathbf{x})$ are as follows:
\begin{itemize} \itemsep.5em 
\item We have the boundary strata where $\sigma_{i+1}=\sigma_i$. These do not contribute because the Floer data is pulled back from a lower-dimensional space along \eqref{eq:forget-circleopen}. \item We can have Floer breaking or Fulton-MacPherson bubbling at some interior point. As in prior situations, the Maurer-Cartan equation ensures this contributes nothing. 

\item Consider the stratum where $\sigma_{r}=0$. Away from higher codimension subsets, the map \eqref{eq:sigmarboundary} defines an isomorphism from this boundary stratum to $\mathring{\AH}_{d,m,r-1}$. As the Floer data is pulled back from there, the argument from Lemma \ref{lem:OChatversusOC2} shows that this stratum contributes the term $u\mathit{OC}_{S^1,q,(2)} \circ B(a_0(a_1|\dots|a_d))$. 

\item We can have Floer cylindrical breaking or degeneration as $s \to -\infty$, which yields the term $(-1)^n\delta_{S^1,q} \circ OC_{S^1,q,(1)}$. 

\item Floer strip breaking or disc bubbling at the boundary contributes the term $OC_{S^1,q,(1)} \circ d_{\mathit{C}_*^+(\scrA_{q})}(a_0(a_1|\dots|a_d))$.
\end{itemize} 
\end{proof}

\begin{prop} \label{prop:OCS1eq2} 
The following equation is also satisfied: 
\begin{equation} \label{eq:also}
\begin{aligned}
& -\mathit{OC}_{S^1,q,(2)} \circ e^+_{\scrA}\, d_{D_*(\scrA_{q})}(a_1|\dots|a_d))
+ \mathit{OC}_{S^1,q,(1)}(a_1(a_2|\dots|a_d)) 
\\ & \qquad
-(-1)^{\|a_d\|(\|a_1\|+\cdots+\|a_{d-1}\|)}\mathit{OC}_{S^1,q,(1)}(a_d(a_1|\dots|a_{d-1}))
\\ & = 
(-1)^n\delta_{S^1,q} \circ OC_{S^1,q,(2)}(e_\mathcal{A}^{+}(a_1|\dots|a_d)).
\end{aligned}
\end{equation}
Here, $D_*(\scrA_{q})$ denotes the deformed version of the bar complex from \eqref{eq:bar-complex} with differential \eqref{eq:dbar-differential}.
\end{prop}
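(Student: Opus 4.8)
The plan is to prove \eqref{eq:also} exactly as Proposition \ref{prop:OCS1eq1} was proved, namely by analyzing the codimension one boundary strata of the one-dimensional moduli spaces $\AH_{d,m,r}^{(2)}(\mathbf{y},\mathbf{x})$, inserting the Maurer--Cartan element $\alpha$ at the interior punctures, summing over $m$ and $r$ (with the weight $\frac{1}{m!}u^r$), and reading off the algebraic identity from the ``signed count of boundary points is zero'' principle. The extra transversality hypothesis imposed in Section \ref{subsubsection:ocs2} — that one-dimensional moduli spaces avoid the codimension $\geq 2$ strata of $\AH_{d,m,r}^{(2)}$, despite the latter not being a manifold with corners — is what licenses this argument; the pathological corners described in Examples \ref{th:triangle} and \ref{th:sheel} are genuinely of codimension $\geq 2$ in the ambient $\AH_{d-1,m,r}$, so they carry no rigid solutions and can be ignored.

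\textbf{Key steps, in order.} First I would list the codimension one boundary faces of $\AH_{d,m,r}^{(2)}$, using the enumeration already given in Section \ref{subsubsection:ocs2}: (a) the faces $\sigma_{i+1}=\sigma_i$; (b) the face $\sigma_r=0$; (c) cylinder breaking at $-\infty$; (d) disc bubbling where boundary or interior punctures collide on $\{0\}\times S^1$; (e) Fulton--MacPherson bubbling at an interior point; and (f) the new faces $\zeta_1=(0,0)$ and $\zeta_d=(0,0)$. Second, I would dispose of the ``trivial'' contributions: faces (a) contribute nothing because the perturbation data are pulled back along \eqref{eq:forget-circleopen}; faces (e) and the interior-collision part of (d), together with Floer breaking at interior punctures, contribute nothing by the Maurer--Cartan cancellation argument used in Remark \ref{rem:MCcancel} and Proposition \ref{prop:deformedfukayawelldefined}. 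Third, I would identify the surviving terms: cylinder breaking/degeneration at $-\infty$ (type (c)) gives $\delta_{S^1,q}\circ \mathit{OC}_{S^1,q,(2)}(e_\scrA^+(a_1|\dots|a_d))$; Floer strip breaking and disc bubbling at the boundary (the boundary-puncture part of (d)) assemble into $-\mathit{OC}_{S^1,q,(2)}\circ e^+_\scrA\, d_{D_*(\scrA_q)}(a_1|\dots|a_d)$, matching the differential \eqref{eq:hochschild-homology-complex-2}, \eqref{eq:c-unital-cyclic} of the ``$e^+$-part'' of the cyclic complex; and the two new faces (f) are where the real content lies. Fourth, I would analyze faces (f): when $\zeta_1 \to (0,0)$ or $\zeta_d \to (0,0)$, the constraint \eqref{eq:markerconstraints} degenerates so that the decoration $\zeta_0^+=(0,0)$ coincides with an actual boundary puncture, and the half-cylinder limit is a type (1) configuration $\mathring{\AH}_{d-1,m,r}^{(1)}$ with one fewer boundary puncture, the asymptotic marker now pointing (after the $\theta_{\mathrm{tot}}$-rotation) towards the appropriate $\zeta_j$. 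Matching the two ends of the boundary interval $\{0\}\times S^1$ on which $\zeta_0^+$ sits produces exactly the two terms $\mathit{OC}_{S^1,q,(1)}(a_1(a_2|\dots|a_d))$ and $-(-1)^{\|a_d\|(\|a_1\|+\cdots+\|a_{d-1}\|)}\mathit{OC}_{S^1,q,(1)}(a_d(a_1|\dots|a_{d-1}))$, reproducing the last two lines of $d_{C_*^+(\scrA_q)}$ on $e_\scrA^+(a_1|\dots|a_d)$ in \eqref{eq:hochschild-homology-complex-2}. Collecting all contributions and moving $\delta_{S^1,q}\circ \mathit{OC}_{S^1,q,(2)}$ to the right-hand side gives \eqref{eq:also}.

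\textbf{Main obstacle.} The delicate point is the geometry near the faces (f), specifically verifying that the gluing/degeneration picture there is as claimed and that the two sub-cases $\zeta_1=(0,0)$ and $\zeta_d=(0,0)$ contribute the stated two terms with the correct signs relative to the orientation conventions — this is precisely the place where our smaller compactification differs from that of \cite[Section 5.2]{ganatra19}, where $\zeta_0^+$ is a genuine marked point and these faces are ``expanded'' (see Remark \ref{th:plus-point}). I would argue that, on the level of rigid solutions, the two descriptions give the same count: a solution over one of our faces (f) corresponds to a solution over a $\frakR_{d-1,m+1}$-type stratum in Ganatra's picture, because the extra expansion directions introduced by promoting $\zeta_0^+$ to a marked point are exactly the directions killed by the transversality hypothesis (they increase the expected dimension). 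The sign bookkeeping I would not carry out in detail, but would handle by invoking the conventions of \cite{ganatra19} as flagged in the section preamble; the Koszul sign on the second term is forced by cyclically rotating $a_d$ past $a_1|\dots|a_{d-1}$, matching the last term of \eqref{eq:hochschild-homology-complex-2}. A secondary check is that the $u$-weighting is consistent: since faces (f) involve no change in the number $r$ of angle decorations, both output terms carry the same power of $u$ as the input, so the identity is $u$-linear as written, consistent with \eqref{eq:also} having no explicit factor of $u$ (unlike \eqref{eq:also}'s companion \eqref{eq:OChatOC2} in Proposition \ref{prop:OCS1eq1}, whose $u$ comes from the $\sigma_r=0$ face).
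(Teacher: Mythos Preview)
Your approach is correct and matches the paper's proof in structure: enumerate the codimension-one boundary faces of $\AH_{d,m,r}^{(2)}(\mathbf{y},\mathbf{x})$ and identify their algebraic contributions. Faces (a), (c), (d), (e), (f) are all handled correctly, and your reading of the new faces (f) as producing the two $\mathit{OC}_{S^1,q,(1)}$ terms is the right one (minor quibble: the limiting configuration at $\zeta_1=(0,0)$ or $\zeta_d=(0,0)$ is a type-(1) half-cylinder with the \emph{same} number $d$ of boundary punctures, not one fewer --- one of the existing punctures has simply landed on $(0,0)$ and plays the role of the distinguished $\zeta_0$).

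There is, however, a genuine gap: you list face (b), the stratum $\sigma_r=0$, but never say what it contributes or why it vanishes. This is not a triviality, because in the type-(1) setting of Proposition~\ref{prop:OCS1eq1} that same face is precisely what produces the $u\,\mathit{OC}_{S^1,q,(2)}\circ B$ term. The paper's argument here is that for type (2) the forgetful map \eqref{eq:sigmarboundary} sends $\partial_{\sigma_r=0}\mathring{\AH}_{d,m,r}^{(2)}$ to $\mathring{\AH}_{d-1,m,r-1}$, and this map has one-dimensional fibres: since $\mathring{\AH}_{d,m,r}^{(2)}$ is a codimension-$0$ subset of $\mathring{\AH}_{d-1,m,r}$ (unlike $\mathring{\AH}_{d,m,r}^{(1)}$, which is codimension $1$ in $\mathring{\AH}_{d,m,r}$ because of the constraint $\zeta_0=(0,0)$), the $\theta_r$-direction is a genuine free parameter after forgetting $\sigma_r$. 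The perturbation data being pulled back along \eqref{eq:sigmarboundary} then forces curves over this face to come in one-parameter families, so there are no rigid solutions. Your closing remark about $u$-weighting hints that you sense something is different here, but observing that \eqref{eq:also} has no explicit $u$ is a consistency check, not an argument; you need the dimension count above to actually close the gap.
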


\begin{proof} 
Consider the contributions from codimension one boundary points of $\mathring{\AH}_{d,m,r}^{(2)}(\mathbf{y},\mathbf{x})$:
\begin{itemize} 
\itemsep.5em
\item As usual, the boundary strata where $\sigma_{i+1}=\sigma_i$ contribute nothing, because the Floer data are pulled back from a lower-dimensional space. 

\item We can have Floer breaking or Fulton-MacPherson bubbling at some interior point. Again, the Maurer-Cartan equation ensures this contributes nothing. 

\item This time, the stratum where $\sigma_{r}=0$ also contributes nothing because the perturbation data is pulled back from $\mathring{\AH}_{d-1,m,r-1}$, which is a lower-dimensional parameter space, along \eqref{eq:sigmarboundary}.

\item As in the previous Proposition, we can have a cylinder breaking off at $s \to -\infty$, which now yields the term $(-1)^n \delta_{S^1,q} \circ OC_{S^1,q,(2)}$. 

\item Floer strip breaking or disc bubbling at the boundary gives rise to $e^+_{\scrA}\, d_{D_*(\scrA_{q})}(a_1|\dots|a_d)$. 

\item The remaining boundary strata are where $\zeta_1 = (0,0)$ or $\zeta_d = (0,0)$. These boundaries contribute the last two terms on the left hand side of \eqref{eq:also}.
\end{itemize} 
\end{proof}

The two Propositions above combine to show that $\mathit{OC}_{S^1,q}$ is a chain map.

\subsection{The cyclic open-closed map and connections\label{subsec:intertwine}}

\subsubsection{The modified Getzler-Gauss-Manin connection\label{subsubsec:half-strip}}
Start with the operations $\mu^{d,m}$ from \eqref{eq:iota11} and $\mu^d_q$ from \eqref{eq:defnmuk} and consider 
\begin{equation} \label{eq:partial-q-mu}
\begin{aligned} &
\partial_q\mu^d_q: \scrA_q(L_0,L_1,\dots,L_d)
\longrightarrow \scrA_q(L_0,L_d)[-d], \\
& (\partial_q \mu^d_q)(a_1,\dots,a_d) = 
\sum_{m \geq 1} \textstyle{\frac{1}{(m-1)!}} \mu^{d,m}(a_1,\dots,a_d,\partial_q\alpha,
\alpha^{\otimes m-1}).
\end{aligned}
\end{equation}
In constructing additional operations, we will always proceed as follows:
\begin{itemize}
\itemsep.5em
\item Geometrically, we will use subsets of the space $\mathring{\frakR}_{d,m}$ (where we always have $m>0$) of discs with boundary and interior punctures (Section \ref{subsubsec:with-interior}). We also recycle the choices of ends and of additional data used in the definition of the deformed Fukaya category (Section \ref{section:deform-fuk}), which means that the compactifications of the moduli spaces of pseudo-holomorphic maps are inherited from there. An additional transversality condition will be imposed on those data, but that can be achieved as part of the initial generic choice.

\item Algebraically, the resulting operations will always be defined in the same fashion as the right hand side of \eqref{eq:partial-q-mu}, which means: inserting $\partial_q\alpha$ at $z_1$, and $\alpha$ at all other interior punctures; and then summing with weight $\frac{1}{(m-1)!}$, which one can think of as accounting for permutations of $(z_2,\dots,z_m)$.
\end{itemize}

Let's split $\mathring{\frakR}_{d,m}$ into halves, based on the position of $z_1$.
Say that our discs $S$ have $j+k+2$ boundary punctures, for some $j,k \geq 0$ ($d = j+k+1 \geq 1$). We fix $\bar{S} \iso \bD$ so that
\begin{equation} \label{eq:z-surface}
\zeta_0 = -1, \, \zeta_{j+1} = 1 \in \partial \bD.
\end{equation}
With this convention, define $\mathring{\frakR}^{\pm}_{j,1,k,m} \subset \mathring{\frakR}_{j+k+1,m}$ by requiring that (Figure \ref{fig:plusminus-space})
\begin{equation} \label{eq:z1-constraints}
\pm\!\mathrm{im}(z_1) \geq 0.
\end{equation}
\begin{figure}
\begin{centering}
\includegraphics{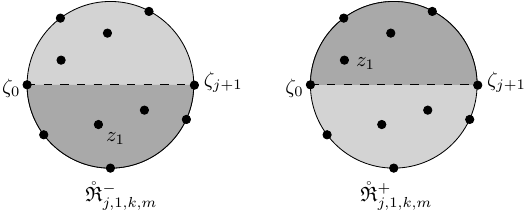}
\caption{\label{fig:plusminus-space}The position constraint from \eqref{eq:z1-constraints} which defines $\mathring{\frakR}^{\pm}_{j,1,k,m}$ (as shown, $j=3$, $k = 2$, $m = 4$).}
\end{centering}
\end{figure}%
Let's start by looking at the common boundary 
\begin{equation} \label{eq:common}
\mathring{\frakR}^0_{j,1,k,m} = \partial \mathring{\frakR}^+_{j,1,k,m} = \partial \mathring{\frakR}^-_{j,1,k,m} = \{\mathrm{im}(z_1) = 0\}
\end{equation}
(on that boundary, one could break the remaining symmetry by requiring that $z_1 = 0 \in \bD$). The additional transversality condition mentioned at the start of the discussion is precisely that the moduli space of pseudo-holomorphic maps parametrized by $\mathring{\frakR}_{j,1,k,m}^0$ should be regular (equivalently, if we take all of $\mathring{\frakR}_{d,m}$ as a parameter space, then the projection from the space of maps to the parameter space should be transverse to $\mathring{\frakR}^0_{j,1,k,m}$ for each of the finitely many $(j,k)$ that can occur). The operations obtained from the space of pseudo-holomorphic maps parametrized by $\mathring{\frakR}^0_{j,1,k,m}$, which we call
\begin{equation}
\mathit{GM}^{j,1,k}_0: \scrA_q(L_0,L_1,\dots,L_{j+k+1})
\longrightarrow \scrA_q(L_0,L_{j+k+1})[-j-k],
\end{equation}
form an $\scrA_q$-bimodule homomorphism (from the diagonal to itself). In the list of codimension $1$ degenerations from Figure \ref{fig:z-degeneration}, the top four are terms in the bimodule map equation; and the bottom two contribute zero (the first by the Maurer-Cartan equation, the second by \eqref{eq:ell1alpha}; this is another argument we've seen before, in Section \ref{section:connection}). 
\begin{figure}
\begin{centering}
\includegraphics{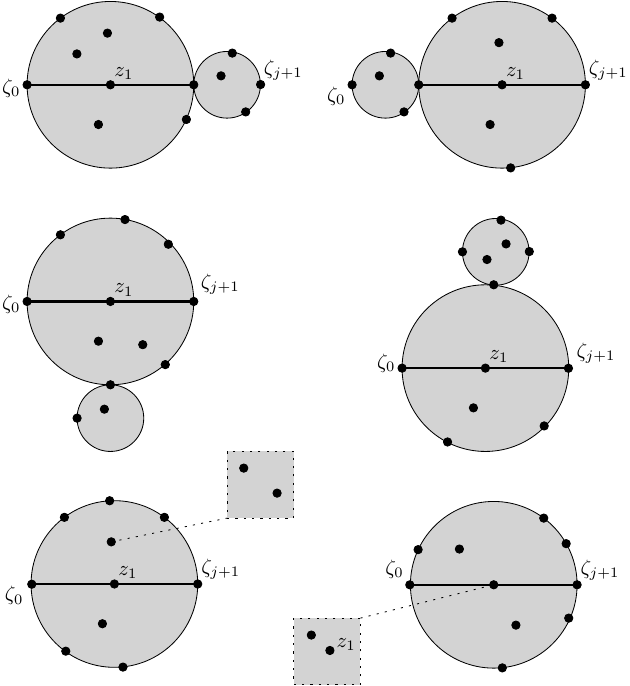}
\caption{\label{fig:z-degeneration}Codimension $1$ degenerations for $\frakR_{j,1,k,m}^0$.}
\end{centering}
\end{figure}

If we now pass to the spaces $\mathring{\frakR}^{\pm}_{j,1,k,m}$ themselves, the same kinds of degenerations as before appear; and there is an additional degeneration, where $z_1$ approaches the lower (for $\frakR^-$) or upper (for $\frakR^+$) half of $\partial \bD$. Finally, the parameter space itself has boundary \eqref{eq:common}, and that also contributes to the boundaries of one-dimensional moduli spaces. The outcome is that the associated operations
\begin{align}
& \mathit{GM}^{j,1,k}_\pm: \scrA_q(L_0,L_1,\dots,L_{j+k+1})
\longrightarrow \scrA_q(L_0,L_{j+k+1})[-j-k-1], \\
& \label{eq:both-xi}
\mathit{GM}_-^{j,1,k} + \mathit{GM}_+^{j,1,k} = \pm \partial_q\mu_q^{j+k+1},
\end{align}
satisfy the following equations, with respect to the differential $d_{(\scrA_q,\scrA_q)}$ in the dg category of $\scrA_q$-bimodules:
\begin{align} \label{eq:d-of-gm-minus}
&
\begin{aligned}
&
(d_{(\scrA_q,\scrA_q)}\mathit{GM}_-)^{j,1,k}(a_1,\dots,\underline{a_{j+1}},\dots,a_{j+k+1}) = 
\pm \mathit{GM}_0^{j,1,k}(a_1,\dots,a_{j+k+1}) \\ & \quad +
\sum_{i l} \pm \mu_q^{j+k-l+2}(a_1,\dots,a_i,
\partial_q\mu_q^l(a_{i+1},\dots,a_{i+l}),\dots,\underline{a_{j+1}},\dots,a_{j+k+1}),
\end{aligned}
\\ &
\begin{aligned}
&
(d_{(\scrA_q,\scrA_q)}\mathit{GM}_+)^{j,1,k}(a_1,\dots,\underline{a_{j+1}},\dots,a_{j+k+1}) = 
\mp \mathit{GM}_0^{j,1,k}(a_1,\dots,a_{j+k+1}) \\\ & \quad +
\sum_{i l} \pm \mu_q^{j+k-l+2}(a_1,\dots,\underline{a_{j+1}}, \dots
\partial_q\mu_q^l(a_{i+1},\dots,a_{i+l}),\dots,a_{j+k+1}).
\end{aligned}
\end{align}
\begin{figure}
\begin{centering}
\includegraphics{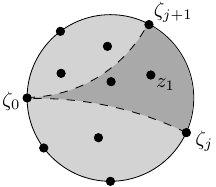}
\caption{\label{fig:wedge}The position constraint from \eqref{eq:wedge}, which defines $\mathring{\frakR}_{j,h,m}^\prec$ (as shown here, $j = 3$, $h = 2$, $m = 5$).}
\end{centering}
\end{figure}%
We use $\mathit{GM}_-$ to modify the Getzler-Gauss-Manin connection, in the general algebraic way indicated in \eqref{eq:modified-gauss-manin}--\eqref{eq:modified-gauss-manin-3}. The last-mentioned equation can be given a more geometric meaning, as follows. In \eqref{eq:z-surface} we could work with $\zeta_0$, $\zeta_{j+1}$ in arbitrary position, dividing by the entire automorphism group of the disc; then the role of the line $\{\mathrm{im}(z) = 0\}$ would be played by the hyperbolic geodesic connecting those two boundary points. In that perspective, the first term in \eqref{eq:modified-gauss-manin-3} concerns surfaces where $z_1$ lies in a region bounded by the geodesic connecting $\zeta_0$ to $\zeta_{j+1}$, whereas the second term is the same for the region connecting $\zeta_0$ to $\zeta_j$. Subtracting those two contributions from each other leads us to consider surfaces with the following modified condition:
\begin{equation} \label{eq:wedge}
\parbox{36em}{$z_1$ lies in the closed region of $S$ bounded by: the hyperbolic geodesic connecting $\zeta_0$ to $\zeta_j$; the hyperbolic geodesic connecting $\zeta_0$ to $\zeta_{j+1}$; and the part of $\partial S$ between $\zeta_j$ and $\zeta_{j+1}$ (Figure \ref{fig:wedge}).} 
\end{equation}
Write $\mathring{\frakR}_{j,h,m}^{\prec}$ for the space of such surfaces, where $h = d-j$. Following our usual principle, this gives rise to operations
\begin{equation}
\begin{aligned}
& \mathit{GM}_{\prec}^{j,h}: \scrA_q(L_0,L_1,\dots,L_{j+h})
\longrightarrow \scrA_q(L_0,L_{j+h})[-j-h], \\
& \mathit{GM}_{\prec}^{j,h} = \mathit{GM}_-^{j,1,h-1} - \mathit{GM}_-^{j-1,1,h}.
\end{aligned}
\end{equation}
As given, the definition only makes sense when $j,h>0$, but we extend it by setting (Figure \ref{fig:wedge-2})
\begin{equation} \label{eq:wedge-2}
\begin{aligned}
& \mathring{\frakR}^{\prec}_{0,h,m} = \mathring{\frakR}_{0,1,h-1,m}^- \;\; \Longrightarrow \;\;
\mathit{GM}_{\prec}^{0,h} = \mathit{GM}_-^{0,1,h-1} \text{ for $h>0$}, \\
& \mathring{\frakR}^{\prec}_{j,0,m} = \mathring{\frakR}_{j-1,1,0,m}^+ \;\; \Longrightarrow \;\;
\mathit{GM}_{\prec}^{j,0} = \mathit{GM}_+^{j-1,1,0} \text{ for $j>0$}, \\
& \mathring{\frakR}^{\prec}_{0,0,m} = \mathring{\frakR}_{0,m} \;\; \Longrightarrow
\mathit{GM}_{\prec}^{0,0} = \partial_q \mu_q^0.
\end{aligned}
\end{equation}
Combining this with the relation between $\mathit{GM}_{\pm}$ from \eqref{eq:both-xi}, the modified connection is:
\begin{equation} \label{eq:modified-gauss-manin-4}
\begin{aligned}
& 
\tilde{\nabla}_{u\partial_{q}}(a_0(a_1|\dots|a_l)) = u(\partial_q a_0)(a_1|\dots|a_l) + u
\sum_i a_0(a_1|\dots|\partial_q a_i|\dots|a_l) \\
& \qquad
+ \sum_{jk} \pm \mathit{GM}_0^{j,1,k}(a_{l-j+1},\dots,\underline{a_0},\dots,a_k)(a_{k+1}|\dots|a_{l-j}) \\
& \qquad
+ u \sum_{ijk} \pm e_{\scrA}^+ (a_{i+1}|\dots|\partial_q\mu_q^k(a_{j+1},\dots,a_{j+k})|\dots|a_0|\dots|a_i) \\
& \qquad
+ u\sum_{ijk} \pm e^+_{\scrA} (a_{i+1}|\dots|\mathit{GM}_-^{l-j,1,k}(a_{j+1},\dots,a_l,\underline{a_0},a_1,\dots,a_k)|\dots|a_i) \\
& \qquad \qquad \qquad \qquad \qquad \text{for } a_0(a_1|\dots|a_l) \in C_*(\scrA_q)[[u]],
\\ &
\tilde\nabla_{u\partial_{q}}(e_{\scrA}^+(a_1|\dots|a_l)) = u \sum_i e_{\scrA}^+(a_1|\dots|\partial_q a_i|\dots|a_l) 
\\ & \qquad + 
\sum_{jh} \pm \mathit{GM}_{\prec}^{j,h}(a_{l-j+1},\dots,a_l,a_1,\dots,a_h) (a_{j+1}|\dots|a_{l-j}) \\
& \qquad \qquad \qquad \qquad \qquad
\text{for } e_{\scrA}^+(a_1|\dots|a_l) \in e_\scrA^+ \otimes D_*(\scrA_q)[[u]].
\end{aligned}
\end{equation}
Here, the $D_*$ notation is as in \eqref{eq:bar-complex}; and the last line of our formula includes terms \eqref{eq:wedge-2}.
\begin{figure}
\begin{centering}
\includegraphics{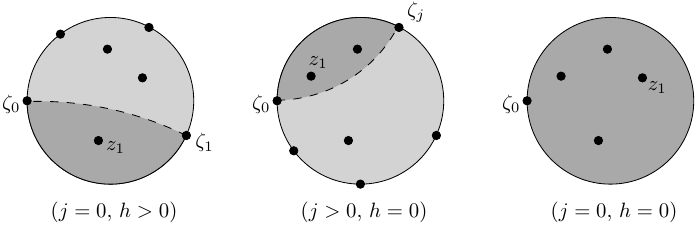}
\caption{\label{fig:wedge-2}The additional terms from \eqref{eq:wedge-2}.}
\end{centering}
\end{figure}

\subsubsection{The structure of the argument}
To prove compatibility of the cyclic open-closed map and connections, one needs to construct a suitable chain homotopy, which we call an intertwiner. This is a degree $n-1$ map, from the cyclic complex of the $q$-deformed Fukaya category, to the $q$-deformed $S^1$-equivariant Hamiltonian Floer complex, which satisfies the following equation:
\begin{equation}
\begin{aligned}
0 & = (\text{$q$-deformed equivariant Floer differential}) \circ (\text{intertwiner}) 
&& \text{(i)} \\ 
& + (\text{intertwiner}) \circ (\text{cyclic homology differential}) 
&& \text{(ii)} \\
& + u\partial_q(\text{cyclic open-closed map})
&& \text{(iii)} \\
& + (\text{closed string connection}-u\partial_q) \circ (\text{cyclic open-closed map}) 
&& \text{(iv)} \\
& - (\text{cyclic open-closed map}) \circ 
(\text{modified Getzler-Gauss-Manin}-u\partial_q).
&& \text{(v)}
\end{aligned}
\end{equation}
In (iii), what we are doing is taking the open-closed map, and differentiating all its coefficients with respect to $q$; as in \eqref{eq:partial-q-mu} this basically means replacing the Maurer-Cartan element at one interior puncture with its derivative $\partial_q\alpha$. In (iv) and (v), subtracting $u\partial_q$ simply means that we are using all the nontrivial terms in those connections, just ignoring the straightforward differentiation.
We have used text since that may be easier to understand, but for actually keeping track of all the terms, we need to replace this by symbols. Recall that the deformed $S^1$-equivariant Floer complex (Section \ref{sec:q-deformed}) is written as $(\mathit{CF}^*_{S^1,q}(H),\delta_{S^1,q})$. The equation for the intertwiner $\mathit{IT}: \mathit{CC}_*^+(\scrA_q) \rightarrow \mathit{CF}^*_{S^1,q}(H)$ is
\begin{equation} \label{eq:1-to-5}
\begin{aligned}
0 & = (-1)^n \delta_{S^1,q} \circ \mathit{IT}
\qquad \qquad \qquad \qquad \qquad \qquad \quad { } && \text{(i)} \\ 
& + \mathit{IT} \circ d_{\mathit{CC}_*^+(\scrA_q)} 
&& \text{(ii)} \\
& + u\partial_q(\mathit{OC}_{S^1,q})
&& \text{(iii)} \\
& + \mathit{KH} \circ \mathit{OC}_{S^1,q}
&& \text{(iv)} \\
& - \mathit{OC}_{S^1,q} \circ 
(\tilde{\nabla}_{u\partial_{q}}-u\partial_q).
&& \text{(v)}
\end{aligned}
\end{equation}
(As a consequence of previous notational conventions, certain operations carry a subscript $q$ while others don't; however, in fact all these structures belong to the $q$-deformed context.) 

Following the standard paradigm, the intertwiner is based on moduli spaces whose codimension $1$ degenerations correspond to the terms in the equation above. Recalling that the definition of the closed string connection involved two kinds of moduli spaces labeled (A) and (B), there will be corresponding versions for the intertwiner. Moreover, the open-closed map has two terms labeled (1) and (2), corresponding to the two parts of the cyclic complex, and the intertwiner will also follow the same pattern. This gives a total of four moduli spaces to be defined. The occurrence of boundary terms (i) and (iv) will be geometrically obvious; that of (iii) follows the same idea as in the construction of the closed string connection; and that of (ii) follows the same idea as in the construction of the cyclic open-closed map. Most of the thinking goes into (v), where we need to see how the rather ad-hoc-looking terms in \eqref{eq:modified-gauss-manin-4} arise from suitable degenerations. Because of its length, it makes sense to list here how the construction will be set up and organized:
\begin{itemize}
\itemsep.5em
\item Geometrically, we will use subsets of the spaces $\mathring{\AH}_{d,m,r}$ (Section \ref{subsubsection:angledecoratedhalf}), and the same ends and additional data as in the construction of the cyclic open-closed map (Section \ref{section:deformed-oc}). For this to work, the data are subject to a finite number of additional transversality conditions, which will no longer be stated explicitly.

\item For each parameter space, we will list the codimension $1$ boundary strata of the compactification (including ones that were already present as boundary faces of the uncompactified parameter space). In each case, we give an informal description of how points in the interior converge to that stratum, and then if necessary describe the structure of the stratum itself in terms of other moduli spaces (that discussion is not strictly in order; occasionally a space will appear as a boundary stratum before it's been formally introduced). For those boundary strata that contribute to the terms listed above, we will include (i)--(v) in the notation; those that don't contribute (because of cancellations) get Arabic numerals instead.

\item Algebraically, we treat those spaces as in Section \ref{subsubsec:half-strip}, but also sum over $r$ (the number of angle decorations) with powers $u^r$.
\end{itemize}

\subsubsection{Intertwining spaces (A1)\label{sec:A1}}
\begin{figure}
\begin{centering}
\includegraphics{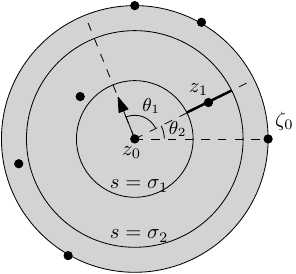}
\caption{A summary of the definition of $\mathring{\AH}_{d,m,r,w}^{(A1)}$ (as drawn, $d = 3$, $m = 3$, $r = 2$, $w = 1$). Due to the authors' limited drawing skills, the half-cylinder is shown as a disc with the output puncture $z_0$ in the center; this will be true of most images in this section.\label{fig:define-A1}}
\end{centering}
\end{figure}%
We take the space from Section \ref{section:cartan-A} and marry it to that of Section \ref{subsubsection:ocs1}. Fix $d \geq 0$, $m > 0$, $r \geq 0$, and $0 \leq w \leq r$. Consider half-cylinders \eqref{eq:half-cylinder} with angle-decorations \eqref{eq:rangledechalf}, and impose a version of \eqref{eq:s-moves} for $z_1$, as well as \eqref{eq:zeta0-is-0} for $\zeta_0$ (Figure \ref{fig:define-A1}):
\begin{equation} \label{eq:s-moves-2}
z_1 = (s_1,t_1 = \theta_{\geq w+1}), \;\; 
s_1 \in \begin{cases}
(-\infty,0) & r = 0, \\
(-\infty,\sigma_1] & w = 0,\, r>0, \\
[\sigma_w,\sigma_{w+1}] & w = 1,\dots,r-1, \\
[\sigma_r,0) & w = r,\,r>0,
\end{cases} \qquad
\zeta_0 = (0,0).
\end{equation}
The resulting parameter space is written as $\mathring{\AH}_{d,m,r,w}^{(A1)}$. The codimension $1$ strata of the compactification are as follows (see Figure \ref{fig:A1-boundary} for the simplest example):
\begin{itemize} 
\itemsep.5em
\item (A1.1) We can have $\sigma_i = \sigma_{i+1}$, for $i \neq w$.

\item (A1.2) $s_1$ can reach its extremal values: $z_1 = (\sigma_w,\theta_{\geq w+1})$ (for $w>0$) or $z_1 = (\sigma_{w+1},\theta_{\geq w+1})$ (for $w<r$).

\item (A1.3) Several interior punctures could collide (at an interior point of the half-cylinder) and form a Fulton-MacPherson bubble. Strictly speaking there are two sub-cases here, depending on whether $z_1$ ends up on the bubble or not.

\item (A1.i) A cylinder can bubble off at $-\infty$, but where $z_1$ remains in the original half-cylinder. The bubble can contain interior punctures as well as angle-decorations, so this stratum is the union of
\begin{equation}
\mathring{\AC}_{m_1,r_1} \times \mathring{\AH}_{d,m_2,r_2,w-r_1}^{(A1)}, \;\;
m_1+m_2 = m, \, r_1+r_2 = r.
\end{equation}

\item (A1.ii) While $z_1$ and all angle-decorations remain in the half-cylinder, several punctures (of both kinds) approach a boundary point and bubble off into a punctured disc. This stratum is a union of
\begin{equation}
\mathring{\AH}_{d_1,m_1,r,w}^{(A1)} \times \mathring{\frakR}_{d_2,m_2}, \;\; 
d_1+d_2 = d+1, \, m_1+m_2 = m.
\end{equation}

\item (A1.ii') (Only for $w<r$) Suppose that $\sigma_r \rightarrow 0$. The principle at work here has already appeared in the construction of the cyclic open-closed map, but we repeat it for convenience. Let $\theta_i^*$ be the limiting values of $\theta_i$ in our degeneration. Take the limiting half-cylinder and rotate it by $-\theta_r^*$, so that its asymptotic marker points in direction $\theta_1^* + \cdots + \theta_{r-1}^*$. Remove the pieces of parameter space where, after rotation, one of the boundary punctures in the half-cylinder lies at $(0,0)$. The rest can be thought of the disjoint union of components where $(0,0)$ lies between two specific boundary punctures; these are copies of $\mathring{\AH}^{(A2)}_{d,m,r-1,w}$ where the boundary punctures have been cyclically permuted.

\item (A1.iv) (Only for $w = 0$) $s_1$ can go to $-\infty$, which means that $z_1$ bubbles off into a cylinder. This stratum is the union of 
\begin{equation}
\mathring{\AC}_{m_1,r_1,0}^{(A)} \times \mathring{\AH}_{d,m_2,r_2}^{(1)}, \;\;
m_1+m_2 = m, \, r_1+r_2 = r.
\end{equation}

\item (A1.v) (Only for $w = r$, where $\theta_{\geq r+1} = 0$) $s_1$ can go to $0$, which means that $z_1$ approaches the boundary puncture $\zeta_0 = (0,0)$ along the $\{t = 0\}$ axis (perpendicularly to the boundary). The bubble is a punctured disc, where the position of the limit of $z_1$ on that disc is constrained to a specific hyperbolic geodesic. One gets the union of
\begin{equation}
\mathring{\AH}^{(1)}_{d_1,m_1,r} \times \mathring{\frakR}^0_{j,1,k,m_2}, \;\;
m_1 + m_2 = m, \, d_1 + j + k = d.
\end{equation}
\end{itemize}
\begin{figure}
\begin{centering}
\includegraphics{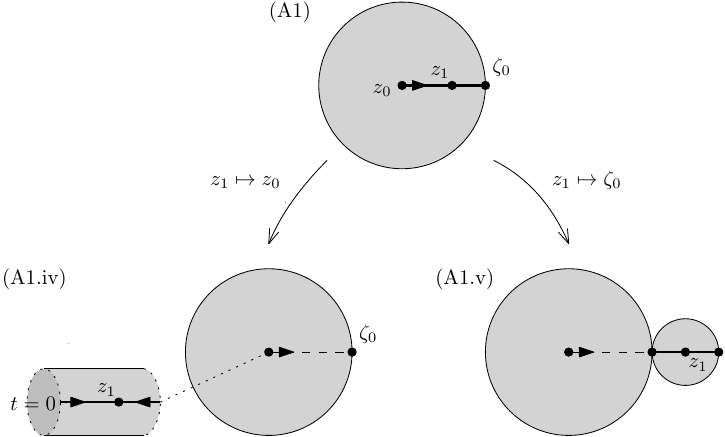}
\caption{For $\mathring{\AH}_{0,1,0,0}^{(A1)} \iso (0,1)$, only two degenerations can appear.\label{fig:A1-boundary}}
\end{centering}
\end{figure}%

\begin{remark} \label{th:A1-not}
The codimension of strata in the compactification depends on both the number of bubbles and on constraints in the parameter space. Careful consideration of that excludes contributions from many degenerations. For instance, consider the situation where $w = r-1$ and $s_1 \rightarrow 0$, which necessarily means that $\sigma_r \rightarrow 0$ as well. The interior puncture $z_1$ approaches the point $(0,\theta_r)$ and will bubble off into a disc. The remaining half-cylinder carries: the extra boundary puncture at $(0,\theta_r)$; angle decorations $(\sigma_1,\theta_1),\dots,(\sigma_{r-1},\theta_{r-1})$; and an asymptotic marker at $z_0$ rotated by $\theta_1 + \cdots + \theta_r$. Note that these three pieces of data are not independent, and therefore the limit has codimension $2$ (Figure \ref{fig:A1-not}). In a full compactification, this stratum belongs to the closure of codimension $1$ strata of type (A1.2) and (A1.ii').
\end{remark}
\begin{figure}
\begin{centering}
\includegraphics{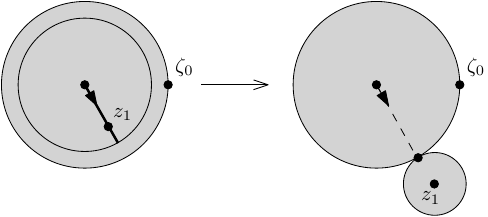}
\caption{The simplest example of the degeneration from Remark \ref{th:A1-not}. The dimension drops from $3$ (left) to $1$ (right).\label{fig:A1-not}}
\end{centering}
\end{figure}%

Algebraically, the outcome is a map $\mathit{IT}_{(A1)}: C_*(\scrA_q)[[u]] \rightarrow \mathit{CF}^{*+n-1}_{S^1,q}(H)$ which satisfies
\begin{equation} \label{eq:A1-operation}
\begin{aligned}
0 & = \text{(term from $s_1$ reaching its extremal values)} && \text{(A1.2)} \\
& \pm \delta_{S^1,q}\big(\mathit{IT}_{(A1)}(a_0(a_1|\dots|a_d))\big) && \text{(A1.i)} \\
& \pm \mathit{IT}_{(A1)}\big(d_{C_*(\scrA_q)}(a_0(a_1|\dots|a_d))\big) && \text{(A1.ii)} \\
& + u \sum_j \pm \mathit{IT}_{(A2)}(e^+_{\scrA}(a_j|\dots|a_{j-1})) && \text{(A1.ii')} \\
& \pm \mathit{KH}_{(A)}\big(\mathit{OC}_{S^1,q,(1)}(a_0(a_1|\dots|a_d))\big) &&
\text{(A1.iv)} \\
& + \sum_{ij} \pm \mathit{OC}_{S^1,q,(1)}\big(\mathit{GM}_0^{j,1,k}(a_{d-j+1},\dots,\underline{a_0},\dots,a_k)|a_{k+1}|\dots|a_{d-j}\big) && \text{(A1.v)}
\end{aligned}
\end{equation}
where $d_{C_*(\scrA_q)}$ is given by the formula \eqref{eq:unnormalized-c}. Here, we have tacitly added terms from breaking off of Floer cylinders to (A1.i) and (A1.ii). As usual, the contributions from (A1.3) are zero due to the Maurer-Cartan equation, and those from (A1.1) are zero because that space projects to a lower-dimensional one. We have not given a name to the (A1.2) term, because it will presently turn out to cancel with another term, just as in the definition of the closed string connection (Section \ref{section:connection}).

\subsubsection{Intertwining spaces (B1)}
\begin{figure}
\begin{centering}
\includegraphics{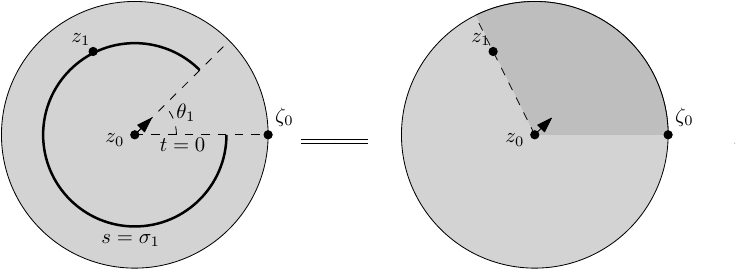}
\caption{Two ways of thinking of the condition \eqref{eq:t-moves-2}, in the simple case of $\mathring{\AH}_{0,1,1,1}^{(B1)}$. On the left, the marked point $z_1$ can lie anywhere on the thickened line. On the right, the asymptotic marker at $z_0$ can point into any direction in the darker shaded part of the disc.\label{fig:intertwining-B1}}
\end{centering}
\end{figure}%
We now apply the same idea to the space from Section \ref{section:cartan-B} (see Figure \ref{fig:intertwining-B1}). This means that the first part of \eqref{eq:s-moves-2} is replaced by \eqref{eq:t-moves}, while the second part remains the same. We summarize these conditions for convenience:
\begin{equation} \label{eq:t-moves-2}
\theta_w^{\operatorname{lift}} \in [0,1],\;\; \xi \in [\theta_w^{\operatorname{lift}},1], \quad
z_1 = (s_1 = \sigma_w,\, t_1 = \theta_{\geq w+1} + \xi),
\quad \zeta_0 = (0,0).
\end{equation}
This time, examination of the codimension $1$ degenerations in the resulting space $\mathring{\AH}_{d,m,r,w}^{(B1)}$ is more interesting, see (B1.v) and the even more surprising (B1.v') below. The full list is:
\begin{itemize}
\itemsep.5em
\item (B1.1) We can have $\sigma_i = \sigma_{i+1}$ for some $i$.

\item (B1.2) We can have $\xi = \theta_w^{\operatorname{lift}}$, which means $z_1 = (\sigma_w, \theta_{\geq w})$; or $\xi = 1$, which means $z_1 = (\sigma_w, \theta_{\geq w+1})$.

\item (B1.3) We can have bubbling at interior points as in (A1.3).

\item (B1.i), (B1.ii), (B1.ii') as in (A1.i), (A1.ii), (A1.ii'); in the last-mentioned case, the spaces that appear are $\mathring{\AH}_{d,m,r-1,w}^{(B2)}$.

\item (B1.iii) We can have $\theta^{\operatorname{lift}}_w = 0$, which means that $\xi \in [0,1]$ is arbitrary. By moving $\sigma_w$ around, one can then achieve that $z_1$ lies anywhere in $[\sigma_{w-1},\sigma_{w+1}] \times S^1$ (of course, $t_1 = \theta_{\geq w+1}$ can be achieved by either setting $\xi = 0$ or $\xi = 1$, but that is a higher codimension phenomenon and hence irrelevant). One can think of this as part of $\mathring{\AH}_{d,m,r-1}^{(1)}$, and by taking the union over all $w$ one gets all of that space (again, up to higher codimension differences). Looking slightly head, note that while $\mathring{\AH}_{d,m,r-1}^{(1)}$ is the parameter space underlying the term $\mathit{OC}_{S^1,q,(1)}$ of the open-closed map, here we are inserting $\partial_q\alpha$ at $z_1$, so the algebraic contribution will in fact be $\partial_q \mathit{OC}_{(1)}$.
%

\item (B1.iv) as in (A1.iv).

\item \parindent0em \parskip.5em
(B1.v) (This only applies to $w = r$, in which case $\theta_{\geq r+1} = 0$.) Suppose we have $\sigma_r \rightarrow 0$, and that $\xi$ also converges to some $\xi^* \in (0,1)$; more geometrically, $z_1$ approaches the boundary point $(0,\xi^*) \neq \zeta_0 = (0,0)$, and will bubble off into a punctured disc. Assume moreover that $\theta_r^{\operatorname{lift}}$ approaches some limit
\begin{equation}
\theta_r^{\operatorname{lift},*} \in (0,\xi^*). 
\end{equation}
Note that on the boundary of the half-cylinder, the points $(0,0)$, $(0,\theta_r^{\operatorname{lift},*})$, $(0,\xi^*)$ appear in that cyclic order.

In the limiting half-cylinder, $(0,\xi^*)$ becomes a boundary puncture. Between $(0,0)$ and that can lie other boundary punctures, but removing higher codimension pieces, we can assume that none of those punctures equals $(0,\theta_r^{\operatorname{lift},*})$; then the outcome can be subdivided into cases, depending on the boundary punctures that are on either side of the point $(0,\theta_r^{\operatorname{lift},*})$. Let's mentally rotate the half-cylinder by $-\theta_r^{\operatorname{lift},*}$, so that those punctures end up on either side of $(0,0)$, and cyclically permute the labels so that the two punctures now appear as the last and first one in our order. Altogether, this yields boundary faces of the form (Figure \ref{fig:B1-degenerations})
\begin{equation}
\mathring{\AH}_{d_1,m_1,r-1}^{(2)} \times \mathring{\frakR}_{d_2,m_2}, \;\; d_1+d_2 = d+1,
m_1+m_2 = m.
\end{equation}
Let's remind ourselves that one of the $m_2$ interior marked points on the disc component is the limit of $z_1$. Algebraically, this carries $\partial_q\alpha$, which means that the contribution from the disc component will be $\partial_q\mu_q$. Note that in the new ordering of boundary punctures on the half-cylinder, the puncture $(0,\xi^*-\theta_r^{\operatorname{lift},*})$ where the disc bubble is attached appears before $(0,-\theta_r^{\operatorname{lift},*})$. This is a consequence of the previous observation concerning cyclic order, and will effect an asymmetry in the algebraic contribution, where the $\partial_q\mu_q$ term appears before the $0$-th entry of the Hochschild chain.
\begin{figure}
\begin{centering}
\includegraphics{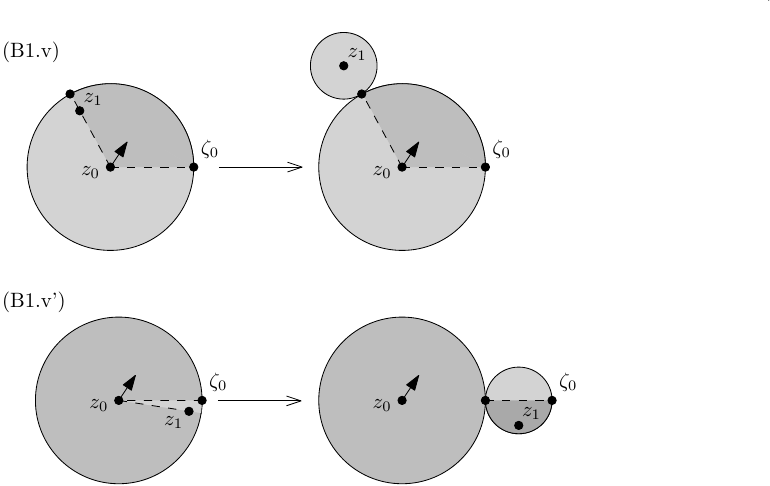}
\caption{The degenerations of type (B1.v) and (B1.v'), drawn in the same way as the right part of Figure \ref{fig:intertwining-B1}.\label{fig:B1-degenerations}}
\end{centering}
\end{figure}

\item \parskip0.5em \parindent0em
(B1.v') (This also only applies to $w = r$.) Suppose that $\sigma_r \rightarrow 0$ and $\xi \rightarrow 1$, but in such a way that $\sigma_r/(1-\xi)$ approaches a nonzero (and then necessarily negative) limit. This means that $z_1 \rightarrow (0,0)$, but the approach is from below the line $\{t = 0\}$ and at some nonzero slope. It is instructive to think of the resulting disc bubble as the left half-plane obtained by rescaling the situation near $(0,0)$. Then, the limit of $z_1$ lies in the left lower quarter-plane (Figure \ref{fig:B1-rescaling}). In more intrinsic terms, it lies below the hyperbolic geodesic connecting two specific boundary punctures in the bubble.
\begin{figure}
\begin{centering}
\includegraphics{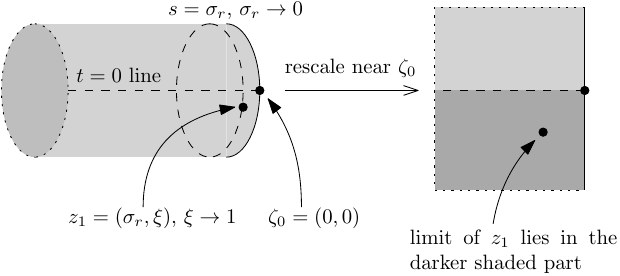}
\caption{An alternative picture of the bubbling from (B1.v'), showing it as a rescaling process.\label{fig:B1-rescaling}}
\end{centering}
\end{figure}%

Note that in this situation, $\theta_r^{\mathrm{lift}}$ can converge to an arbitrary limit, since any value is compatible with $\xi \rightarrow 1$. The half-cylinder component of the limit is then treated as in (A1.ii'). Ignoring higher codimension pieces, one can think of the outcome as copies of 
\begin{equation}
\mathring{\AH}_{d_1,m_1,r-1}^{(2)} \times \mathring{\frakR}^-_{j,1,k,m_2}, \;\;
j+k+d_1 = d, \; m_1+m_2 = m,
\end{equation}
summing over those cyclic permutations of the boundary punctures such that the limit of the $0$-th input remains on bubble ($\frakR^-$) component.
\end{itemize}

\begin{remark} \label{th:B1-not}
Again, there are other strata in which only one bubble component exists, but which are of higher codimension due to restrictions on the parameter. For instance, consider $w = r$ and the limit where $\sigma_r \rightarrow 0$ and $\xi \rightarrow 0$ (which necessarily means that $\theta_r^{\mathrm{lift}} \rightarrow 0$ as well). In that limit, the marked point $z_1$ approaches $(0,0)$ from above the axis $t = 0$, so we get a limit with a position constraint in the bubble, as in Figure \ref{fig:B1-not}. However, simple dimension-counting shows that this stratum is of codimension $2$.
\end{remark}
\begin{figure}
\begin{centering}
\includegraphics{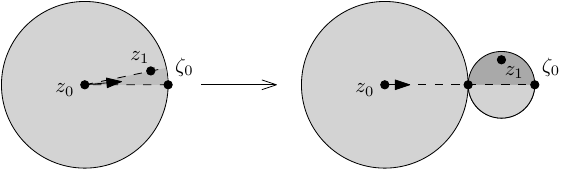}
\caption{\label{fig:B1-not}The limit from Remark \ref{th:B1-not}, in the simplest case of $\mathring{\AH}_{0,1,1,1}^{(B1)}$. We again follow the drawing conventions from the right part of Figure \ref{fig:intertwining-B1}.}
\end{centering}
\end{figure}%

The outcome is a map $\mathit{IT}_{(B1)}: C_*(\scrA_q)[[u]] \rightarrow \mathit{CF}^{*+n-1}_{S^1,_q}(H)$ which satisfies
\begin{equation} \label{eq:B1-operation}
\begin{aligned}
0 & = \text{(term from $\xi$ reaching its extremal values)} && \text{(B1.2)} \\
& \pm \delta_{S^1,q}\big(\mathit{IT}_{(B1)}(a_0(a_1|\dots|a_d))\big) && \text{(B1.i)} \\
& \pm \mathit{IT}_{(B1)}\big(d_{C_*(\scrA_q)}(a_0(a_1|\dots|a_d))\big) && \text{(B1.ii)} \\
& + u \sum_j \pm \mathit{IT}_{(B2)}(e^+_{\scrA}(a_j|\dots|a_{j-1})) && \text{(B1.ii')} \\
& \pm (u\partial_q \mathit{OC}_{S^1,q,(1)}) (a_0(a_1|\dots|a_d))
&& \text{(B1.iii)} \\
& \pm \mathit{KH}_{(B)}\big(\mathit{OC}_{S^1,q,(1)}(a_0(a_1|\dots|a_d))\big) &&
\text{(B1.iv)} \\
& + 
u\sum_{ijk} \pm \mathit{OC}_{S^1,q,(2)}\big(e_{\scrA}^+(a_{i+1}|\dots|
\\[-1em] & \qquad\qquad\qquad
\partial_q\mu_q^k(a_{j+1},\dots,a_{j+k})|\dots|a_0|\dots|a_i)\big) &&
\text{(B1.v)} \\ &
+ u\sum_{ijk} \pm \mathit{OC}_{S^1,q,(2)}\big( e^+_{\scrA} (a_{i+1}|\dots|
\\[-1em] & \qquad\qquad\qquad
\mathit{GM}_-^{d-j,1,k}(a_{j+1},\dots,\underline{a_0},\dots,a_k)|\dots|a_i) \big)
&& \text{(B1.v')}.
\end{aligned}
\end{equation}
The (A1.2) and (B1.2) terms in \eqref{eq:A1-operation} and \eqref{eq:B1-operation} concern the same surfaces. Hence, if we consider the sum $\mathit{IT}_{(A1)} + \mathit{IT}_{(B1)}$, they will cancel, in parallel with the construction of the closed string connection (Section \ref{section:connection}).

\subsubsection{Intertwining spaces (A2)}
\begin{figure}
\begin{centering}
\includegraphics{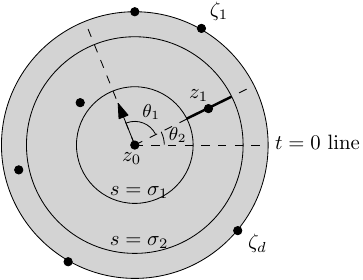}
\caption{The counterpart of Figure \ref{fig:define-A1} for $\mathring{\AH}_{d,m,r,w}^{(A2)}$. We remind the reader that in this context, $(0,0)$ is not a puncture.\label{fig:define-A2}}
\end{centering}
\end{figure}%
We now apply the same treatment as in Section \ref{sec:A1} to the second part of the cyclic open-closed map. The constraints (Figure \ref{fig:define-A2}) are:
\begin{equation} \label{eq:s-moves-3}
\begin{aligned}
& z_1 = (s_1,t_1 = \theta_{\geq w+1}), \quad
s_1 \in \begin{cases}
(-\infty,0) & r = 0, \\
(-\infty,\sigma_1] & w = 0,\, r>0, \\
[\sigma_w,\sigma_{w+1}] & w = 1,\dots,r-1, \\
[\sigma_r,0) & w = r,\,r>0,
\end{cases}
\\ &
(0,0) \text{ lies in the closed interval in $\{0\} \times S^1$ starting at $\zeta_d$ and ending at $\zeta_1$.}
\end{aligned}
\end{equation}
Here are the codimension $1$ degenerations of the resulting space $\mathring{\AH}^{(A2)}_{d,m,r,w}$:
\begin{itemize}
\itemsep.5em
\item (A2.1--3), (A2.i) as in (A1.1--3), (A1.i).

\item (A2.ii) While $z_1$ and all angle-decorations remain in the half-cylinder, several punctures (of both kinds) approach a boundary point and bubble off into a punctured disc; at most one of $(\zeta_1,\zeta_d)$ can belong to this group of colliding points. This stratum is a union of
\begin{equation}
\mathring{\AH}_{d_1,m_1,r,w}^{(A2)} \times \mathring{\frakR}_{d_2,m_2}, \;\; 
d_1+d_2 = d+1, \, m_1+m_2 = m.
\end{equation}

\item (A2.ii') The condition on $(\zeta_1,\zeta_d)$ in \eqref{eq:s-moves-3} achieves its extrema, which means that either $\zeta_1 = (0,0)$ or $\zeta_d = (0,0)$. In either case, we have arrived at \eqref{eq:s-moves-2} where the role of $\zeta_0$ is played by either $\zeta_1$ or $\zeta_d$, and with the remaining boundary punctures cyclically reordered; so this boundary stratum consists of two copies of $\mathring{\AH}_{d-1,m,r,w}^{(A1)}$; the shift in the $d$ subscript is by notational convention, see \eqref{eq:type-2}.

\item (A2.iv) as in (A1.iv).

\item (A2.v) (Only for $w = r$) we can have $s_1 \rightarrow 0$, which means $z_1 \rightarrow (0,0)$, but where both boundary punctures $\zeta_1$ and $\zeta_d$ stay away from the limiting point $(0,0)$. This creates a disc bubble with one boundary puncture, and with the limit of $z_1$ (as well as potentially other points) as interior puncture.
The outcome is
\begin{equation}
\mathring{\AH}_{d,m_1,r}^{(1)} \times \mathring{\frakR}_{0,m_2}, \;\;
m_1+m_2 = m, \, m_2 > 0.
\end{equation}
(For this and the following degenerations, see Figure \ref{fig:A2-degenerations.pdf}.)
\begin{figure}
\begin{centering}
\includegraphics{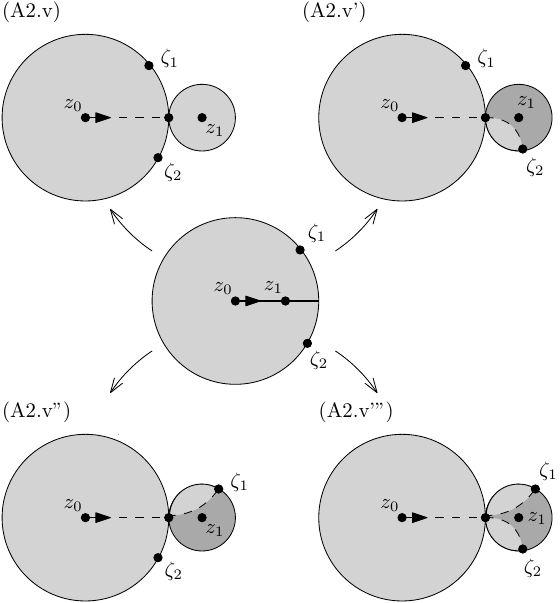}
\caption{\label{fig:A2-degenerations.pdf}Four related (A2) degenerations. In each case, the possible positions of $z_1$ on the bubble are in the darker shaded region.}
\end{centering}
\end{figure}%

\item (A2.v') (Also only for $w = r$) We have $s_1 \rightarrow 0$ as before, but this time 
some collection of points $(\zeta_{d-j+1},\dots,\zeta_d)$ also approaches $(0,0)$, in such a way that their limits as well as the limit of $z_1$ lie on the resulting bubble. As a consequence of the original condition on the positions of $z_1$ and $\zeta_d$, the points on the bubble inherit a constraint, which makes the stratum
\begin{equation}
\mathring{\AH}_{d-j,m_1,r}^{(1)} \times \mathring{\frakR}_{j-1,1,0,m_2}^+, \;\;
m_1+m_2 = m.
\end{equation}

\item (A2.v'') (Also only for $w = r$) Similar, but with $(\zeta_1,\dots,\zeta_h)$ which approach $(0,0)$. The resulting stratum has the form
\begin{equation}
\mathring{\AH}_{d-h,m_1,r}^{(1)} \times \mathring{\frakR}_{0,1,h-1,m_2}^-, \;\;
m_1+m_2 = m.
\end{equation}

\item (A2.v'{''}) (Also only for $w = r$) Finally, as $s_1 \rightarrow 0$, we can have $(\zeta_{d-j+1},\dots,\zeta_d,\zeta_1,\dots,\zeta_h)$ all approaching $(0,0)$, with $h,j>0$. This leads to boundary strata
\begin{equation}
\mathring{\AH}_{d-h-j,m_1,r}^{(1)} \times \mathring{\frakR}_{j,h,m_2}^{\prec}, \;\; m_1+m_2 = m.
\end{equation}
It is maybe better to think of the bubble as appearing through rescaling (Figure \ref{fig:A2-rescaling}). On the original half-cylinder, $z_1$ lies between two lines bounded by $\zeta_d$ and $\zeta_1$. After rescaling, we see the same phenomenon happening on the bubble half-plane, where the lines are now hyperbolic geodesics.
\end{itemize}
\begin{figure}
\begin{centering}
\includegraphics{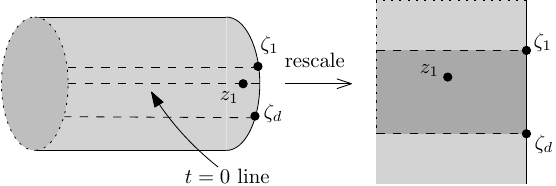}
\caption{\label{fig:A2-rescaling}An alternative picture of the (A2.v'{"}) degeneration.}
\end{centering}
\end{figure}%

\begin{remark}
In (A2.ii) we have not considered the case where $\zeta_1$ and $\zeta_d$ collide; such a collision can only occur at $(0,0)$, making the resulting stratum codimension $2$.
\end{remark}

The outcome is a map $\mathit{IT}_{(A2)}: e^+_{\scrA} \otimes B(\scrA_q)[[u]] \rightarrow \mathit{CF}^{*+n-2}_{S^1,q}(H)$ which satisfies
\begin{equation} \label{eq:A2-operation}
\begin{aligned}
0 & = \text{(term from $s_1$ reaching its extremal values)} && \text{(A2.2)} \\
& \pm \delta_{S^1,q}\big(\mathit{IT}_{(A2)}(e^+_{\scrA}(a_1|\dots|a_d))\big) && \text{(A2.i)} \\
& \pm \mathit{IT}_{(A2)}\big( e^+_{\scrA}\,d_{D_*(\scrA_q)}(a_1|\dots|a_d)\big) && \text{(A2.ii)} \\
& \pm \mathit{IT}_{(A1)}\big(a_1(a_2|\dots|a_d)\big) \pm \mathit{IT}_{(A1)}\big(a_d(a_1|\dots|a_{d-1})\big) && \text{(A2.ii')} \\
& \pm \mathit{KH}_{(A)}\big(\mathit{OC}_{S^1,q,(2)}(e^+_{\scrA}(a_1|\dots|a_d))\big)
&& \text{(A2.iv)} \\
& \pm \mathit{OC}_{S^1,q,(1)}\big(\mathit{GM}^{0,0}_{\prec}\,(a_1|\dots|a_d)\big) 
&& \text{(A2.v)} \\
& + \sum_j \pm \mathit{OC}_{S^1,q,(1)}\big(\mathit{GM}^{j,0}_{\prec}(a_{d-j+1},\dots,a_d)(a_1|\dots|a_{d-j})\big)
&& \text{(A2.v')} \\
& + \sum_h \pm \mathit{OC}_{S^1,q,(1)}\big(\mathit{GM}^{0,h}_{\prec}(a_1,\dots,a_h)(a_{h+1}|\dots|a_d)\big)
&& \text{(A2.v'')} \\
& + \sum_{j,h>0} \pm \mathit{OC}_{S^1,q,(1)}\big( \mathit{GM}^{j,h}_{\prec}(a_{d-j+1},\dots,a_d,a_1,\dots,a_h)(a_{h+1}|\dots|a_{d-j}) \big)
&& \text{(A2.v'{''})}
\end{aligned}
\end{equation}
where $d_{D_*(\scrA_q)}$ is given by the formula from \eqref{eq:dbar-differential} (applied to the deformed $A_\infty$-structure of $\scrA_q$, of course). The last four terms in \eqref{eq:A2-operation} appear algebraically the same, but that's because we have applied \eqref{eq:wedge-2} to slightly different geometric situations.

\subsubsection{Intertwining spaces (B2)\label{sec:B2}}
Our final space $\mathring{\AH}_{d,m,r,w}^{(B2)}$ imposes the following conditions on $z_1$ and the boundary punctures:
\begin{equation} \label{eq:t-moves-3}
\begin{aligned} &
\theta_w^{\operatorname{lift}} \in [0,1],\;\; \xi \in [\theta_w^{\operatorname{lift}},1], \quad
z_1 = (s_1 = \sigma_w,\, t_1 = \theta_{\geq w+1} + \xi),
\\ &
(0,0) \text{ lies in the closed interval in $\{0\} \times S^1$ starting at $\zeta_d$ and ending at $\zeta_1$.}
\end{aligned}
\end{equation}
There is nothing fundamentally new in the codimension $1$ degenerations:
\begin{itemize}
\item (B2.1-3), (B2.i) as in (B1.1-3), (B1.i).
\item (B2.ii-ii') as in (A2.ii-ii').
\item (B2.iii) as in (B1.iii).
\item (B2.iv) as in (A1.iv).
\end{itemize}

\begin{remark} \label{th:codim-2-again}
As in the construction of $\mathit{OC}_{(2)}$, the limit $s_1 \rightarrow 0$ does not contribute anything. The case not covered by previous considerations is $w = r$, where $s_1 \rightarrow 0$ means that $z_1$ approaches the boundary. However, this is again just a dimension count. Consider the simplest example $d = 2$, $m = 1$, $r = 1$. The space $\AH_{2,1,1,1}^{(B2)}$ has dimension $5$. (The degrees of freedom are: the positions of the two boundary marked points; the angle-decoration $(\sigma_1,\theta_1)$; and the position of $z_1$ on the circle $s = \sigma_1$. Note that we are not dividing by the $S^1$-action which rotates the half-cylinder, since that would break the condition that the asymptotic marker at $z_0 = -\infty$ is rotated by $\theta_1$ with respect to the $t = 0$ line.) The limit (Figure \ref{fig:intertwining-B2-not}) is a half-cylinder with three boundary punctures, together with a bubble component that is a disc with one boundary puncture and one interior puncture. At this point, we rotate the half-cylinder by $-\theta_1$ to bring the marker back into standard position, and then the outcome belongs to a $3$-dimensional parameter space. (Other limit configurations are possible, even in this simple situation, but the codimension argument still holds for those.)
\end{remark}
\begin{figure}
\begin{centering}
\includegraphics{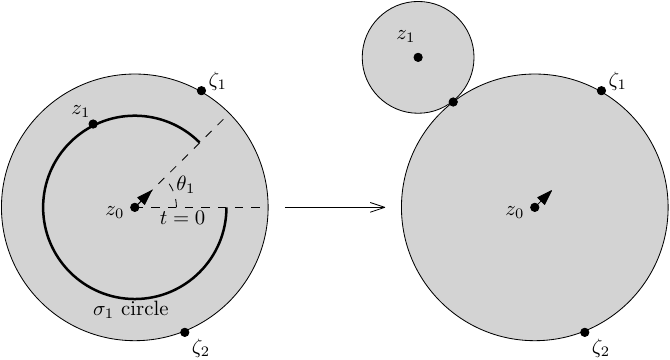}
\caption{\label{fig:intertwining-B2-not}The degeneration in $\mathring{\AH}_{2,1,1,1}$ discussed in Remark \ref{th:codim-2-again}.}
\end{centering}
\end{figure}%

Given that, the operation $\mathit{IT}_{(B2)} : e^+_{\scrA} \otimes B(\scrA_q)[[u]] \longrightarrow \mathit{CF}^{*+n-2}_{S^1,q}(H)$ satisfies
\begin{equation} \label{eq:B2-operation}
\begin{aligned}
0 & = \text{(term from $\xi$ reaching its extremal values)} && \text{(B2.2)} \\
& \pm \delta_{S^1,q}\big(\mathit{IT}_{(B2)}(e^+_{\scrA}(a_1|\dots|a_d))\big) && \text{(B2.i)} \\
& \pm \mathit{IT}_{(B2)}\big( e^+_{\scrA}\,d_{D_*(\scrA_q)}(a_1|\dots|a_d)\big) && \text{(B2.ii)} \\
& \pm \mathit{IT}_{(B1)}\big(a_1(a_2|\dots|a_d) \pm a_d(a_1|\dots|a_{d-1})\big) && \text{(B2.ii')} \\
& \pm (u\partial_q \mathit{OC}_{S^1,q,(2)})(e_{\scrA}^+(a_1|\dots|a_d)) &&
\text{(B2.iii)} \\
& \pm \mathit{KH}_{(B)}\big(\mathit{OC}_{S^1,q,(2)}(e_{\scrA}^+(a_1|\dots|a_d))\big)
&& \text{(B2.iv)} \\
\end{aligned}
\end{equation}

\subsubsection{Conclusion}
Write $\mathit{IT}_{(1)} = \mathit{IT}_{(A1)} \pm \mathit{IT}_{(B1)}$ and $\mathit{IT}_{(2)} = \mathit{IT}_{(A2)} \pm \mathit{IT}_{(B2)}$. By adding up \eqref{eq:A1-operation} and \eqref{eq:B1-operation} one sees that
\begin{equation} \label{eq:1-operation}
\begin{aligned}
0 & = \pm \delta_{S^1,q}(\mathit{IT}_{(1)}(a_0(a_1|\dots|a_d)))
&& \text{(A1+B1.i)} \\
& \pm \mathit{IT}_{(1)}(d_{C_*(\scrA_q)}(a_0(a_1|\dots|a_d))) 
&& \text{(A1+B1.ii)} \\
& + u \sum_j \pm \mathit{IT}_{(2)}(e^+_{\scrA}(a_j|\dots|a_{j-1})) 
&& \text{(A1+B1.ii')} \\
& \pm (u\partial_q\, \mathit{OC}_{S^1,q,(1)}) (a_0(a_1|\dots|a_d)) 
&& \text{(B1.iii)} \\
& \pm \mathit{KH}\big(\mathit{OC}_{S^1,q,(1)}(a_0(a_1|\dots|a_d))\big) 
&& \text{(A1+B1.iv)} \\
& + \sum_{ij} \pm \mathit{OC}_{S^1,q,(1)}(\mathit{GM}_0^{j,1,k}(a_{d-j+1},\dots,\underline{a_0},\dots,a_k)|
\\[-1em] & \qquad\qquad \qquad
a_{k+1}|\dots|a_{d-j}) 
&& \text{(A1.v)} \\
& + 
u\sum_{ijk} \pm \mathit{OC}_{S^1,q,(2)}\big(e_{\scrA}^+(a_{i+1}|\dots|
\\[-1em] & \qquad \qquad \qquad
\partial_q\mu_q^k(a_{j+1},\dots,a_{j+k})|\dots|a_0|\dots|a_i)
&& \text{(B1.v)} \\
& + u\sum_{ijk} \pm \mathit{OC}_{S^1,q,(2)}\big( e^+_{\scrA} (a_{i+1}|\dots|
\\[-1em] & \qquad \qquad \qquad
\mathit{GM}_-^{d-j,1,k}(a_{j+1},\dots,\underline{a_0},\dots,a_k)|\dots|a_i) \big)
&& \text{(B1.v')}
\end{aligned}
\end{equation}
Similarly, adding up \eqref{eq:A2-operation} and \eqref{eq:B2-operation} yields
\begin{equation} \label{eq:2-operation}
\begin{aligned}
0 & = \pm \delta_{S^1,q}\big(\mathit{IT}_{(2)}(e_+(a_1|\dots|a_d))\big)
&& \text{(A2+B2.i)}
\\
& \pm \mathit{IT}_{(2)}\big(e^+_{\scrA}\, d_{D_*(\scrA_q)}(a_1|\dots|a_d)\big)
&& \text{(A2+B2.ii)}
\\
& \pm \mathit{IT}_{(1)}\big(a_1(a_2|\dots|a_d)\big) \pm \mathit{IT}_{(1)}\big(a_d(a_2|\dots|a_{d-1})\big) 
&& \text{(A2+B2.ii')}
\\
& \pm (u\partial_q\,\mathit{OC}_{S^1,q,(2)})(e^+_{\scrA}(a_1|\dots|a_d))
&& \text{(B2.iii)}
\\ 
& \pm \mathit{KH}\big(\mathit{OC}_{S^1,q,(2)}(e_{\scrA}^+(a_1|\dots|a_d))\big)
&& \text{(A2+B2.iv)}
\\
& + \sum_{jh} \pm \mathit{OC}_{S^1,q,(1)}\big(\mathit{GM}_{\prec}^{j,h}(a_{d-j+1},\dots,a_d,a_1,\dots,a_h)(a_{h+1}|\dots|a_{d-j})\big) && \text{(A2.v-v'{''})}
\end{aligned}
\end{equation}
The equations \eqref{eq:1-operation}, \eqref{eq:2-operation} are exactly what one gets by spelling out \eqref{eq:1-to-5} using the formulae for the differential on the cyclic complex \eqref{eq:c-unital-cyclic} and the modified Getzler-Gauss-Manin connection \eqref{eq:modified-gauss-manin-4}. We have now shown that \eqref{eq:1-to-5} holds. Since $\tilde{\nabla}_{u\partial_{q}}$ is chain homotopic to $\nabla_{u\partial_{q}}$ by definition, the outcome is:

\begin{theorem}
For any Maurer-Cartan element \eqref{eq:floer-mc} and the associated deformation $\scrA_q$ of $\scrA$, the following diagram commutes up to chain homotopy:
\begin{equation}
\xymatrix{
\mathit{CC}_*^+(\scrA_q) \ar[rr]^-{\mathit{OC}_{S^1,q}} \ar[d]_-{\nabla_{u\partial_{q}}}
&&
\mathit{CF}^{*+n}_{S^1,q}(H) \ar[d]^-{\nabla_{u\partial_{q}}}
\\
\mathit{CC}_*^+(\scrA_q) \ar[rr]^-{\mathit{OC}_{S^1,q}}
&&
\mathit{CF}^{*+n}_{S^1,q}(H)
}
\end{equation}
\end{theorem}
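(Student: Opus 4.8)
\emph{Plan of proof.} The goal is to exhibit a $u$-linear chain homotopy — an \emph{intertwiner} $\mathit{IT}\colon \mathit{CC}_*^+(\scrA_q) \to \mathit{CF}^*_{S^1,q}(H)$ of degree $-1$ — satisfying the master equation \eqref{eq:1-to-5}, whose five terms encode (i) the deformed equivariant Floer differential $\delta_{S^1,q}$, (ii) the cyclic differential $d_{\mathit{CC}_*^+(\scrA_q)}$, (iii) naive $q$-differentiation of the cyclic open-closed map, (iv) the closed string operator $\mathit{KH}$, and (v) the modified Getzler--Gauss--Manin connection $\tilde{\nabla}_{u\partial_{q}}$ of \eqref{eq:modified-gauss-manin-4}. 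Granting \eqref{eq:1-to-5}, the theorem follows at once: $\tilde{\nabla}_{u\partial_{q}}$ was defined precisely so as to be chain homotopic to the ordinary Getzler--Gauss--Manin connection $\nabla_{u\partial_{q}}$, so homotopy-commutativity of the square for $\tilde{\nabla}$ transports to one for $\nabla$. Using the \emph{modified} connection here is forced by the geometry: the ad-hoc wedge-region operators $\mathit{GM}_{\pm}^{j,1,k}$, $\mathit{GM}_{\prec}^{j,h}$ appearing in \eqref{eq:modified-gauss-manin-4} are exactly the ones that can be produced by codimension-one degenerations of the relevant moduli spaces.

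The intertwiner is assembled from four families of parameter spaces, dictated by two independent dichotomies. First, $\mathit{CC}_*^+(\scrA_q) = C_*(\scrA_q)[[u]] \oplus (e_{\scrA}^+ \otimes D_*(\scrA_q)[[u]])$, so $\mathit{IT}$ has a component on each summand; this is the ``$1$'' versus ``$2$'' split, mirroring the two components $\mathit{OC}_{S^1,q,(1)}$ and $\mathit{OC}_{S^1,q,(2)}$. Second, the closed string connection $\nabla_{u\partial_{q}} = u\partial_q + \mathit{KH}$ was built from the type (A) spaces $\AC_{m,r,w}^{(A)}$ and the type (B) spaces $\AC_{m,r,w}^{(B)}$, so each of $\mathit{IT}_{(1)}$, $\mathit{IT}_{(2)}$ splits further as $\mathit{IT}_{(A\,\cdot)} \pm \mathit{IT}_{(B\,\cdot)}$. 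The four resulting parameter spaces $\mathring{\AH}_{d,m,r,w}^{(A1)}$, $\mathring{\AH}_{d,m,r,w}^{(B1)}$, $\mathring{\AH}_{d,m,r,w}^{(A2)}$, $\mathring{\AH}_{d,m,r,w}^{(B2)}$ live inside the angle-decorated half-cylinder spaces $\mathring{\AH}_{d,m,r}$ of Section~\ref{subsubsection:angledecoratedhalf}, cut out by superimposing the Cartan-type position constraints \eqref{eq:s-moves} (type A) or \eqref{eq:t-moves} (type B) on the distinguished interior puncture $z_1$ with the open-closed constraints \eqref{eq:zeta0-is-0} (type 1) or \eqref{eq:markerconstraints} (type 2) on the boundary punctures. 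Re-using the ends and auxiliary data from the construction of $\mathit{OC}_{S^1,q}$, and imposing finitely many further generic transversality conditions, the rigid moduli spaces over these parameter spaces define operations $\mathit{IT}_{(A1)}$, etc., always by inserting $\partial_q\alpha$ at $z_1$, $\alpha$ at the remaining interior punctures, and summing over $m$ with weight $\tfrac{1}{(m-1)!}$ and over $r$ with powers $u^r$.

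The core of the argument is then the boundary analysis. For each family one enumerates the codimension-one strata of the compactification and matches each to a term of the corresponding operation identity \eqref{eq:A1-operation}, \eqref{eq:B1-operation}, \eqref{eq:A2-operation}, \eqref{eq:B2-operation}. Most strata contribute nothing: those where two heights $\sigma_i = \sigma_{i+1}$ collide (Floer data pulled back from a smaller space), those where interior punctures form a Fulton--MacPherson bubble not containing $z_1$ (Maurer--Cartan cancellation), and those where $z_1$ carrying $\delta_q^{\operatorname{diag}}(\partial_q\alpha)$ degenerates (this vanishes by \eqref{eq:ell1alpha}). The surviving strata yield: cylinder breaking at $-\infty$, giving terms (i) and (iv); disc bubbling or boundary Floer breaking, giving term (ii); the $\theta_w^{\operatorname{lift}} = 0$ stratum, giving term (iii); and the degenerations in which $z_1$ runs into the boundary-puncture region ($s_1 \to 0$, $\sigma_r \to 0$, or $\xi \to 1$), giving term (v). Finally, the extremal faces of the $z_1$-constraint ($s_1$ at its endpoint for type A, $\xi$ at its endpoint for type B) produce the unwanted terms labeled (A1.2), (B1.2), (A2.2), (B2.2); these cancel in pairs once one forms $\mathit{IT}_{(1)} = \mathit{IT}_{(A1)} \pm \mathit{IT}_{(B1)}$ and $\mathit{IT}_{(2)} = \mathit{IT}_{(A2)} \pm \mathit{IT}_{(B2)}$, exactly as in the construction of the closed string connection in Section~\ref{section:connection}. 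What remains is \eqref{eq:1-operation} and \eqref{eq:2-operation}, which together spell out \eqref{eq:1-to-5}.

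The main obstacle, as the excerpt itself flags, is term (v). One must see precisely how the wedge-shaped operators $\mathit{GM}_{\pm}^{j,1,k}$ and $\mathit{GM}_{\prec}^{j,h}$ of \eqref{eq:modified-gauss-manin-4} emerge from specific rescaling degenerations: those where $z_1$ limits onto a boundary puncture along a prescribed slope (as in the $\sigma_r/(1-\xi)$ limit of type (B1.v${}'$)) or into a region of a half-plane bubble bounded by two hyperbolic geodesics (type (A2.v${}'''$)). Keeping track of the cyclic re-orderings of the boundary punctures that these degenerations induce — and the resulting asymmetry in where $\partial_q\mu_q$ or $\mathit{GM}_-$ sits relative to the $0$-th Hochschild entry — is where all the real bookkeeping lies, and this is also where signs must be handled with care. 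A secondary technical point is that $\AH_{d,m,r}^{(2)}$, hence the type (2) intertwining spaces, fail to be manifolds with corners: they are manifolds with boundary only away from a locus of codimension $\geq 2$, so one must verify that no rigid or one-dimensional moduli components meet that bad locus, which — as in the construction of $\mathit{OC}_{S^1,q,(2)}$ itself — follows from a dimension count once all Floer data are first chosen over the ambient $\mathring{\AH}_{d-1,m,r}$ and then restricted.
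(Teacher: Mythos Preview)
Your proposal is correct and follows essentially the same approach as the paper: the construction of the intertwiner $\mathit{IT}$ from the four families $\mathring{\AH}_{d,m,r,w}^{(A1)}, \mathring{\AH}_{d,m,r,w}^{(B1)}, \mathring{\AH}_{d,m,r,w}^{(A2)}, \mathring{\AH}_{d,m,r,w}^{(B2)}$, the boundary analysis yielding \eqref{eq:A1-operation}--\eqref{eq:B2-operation}, the pairwise cancellation of the (A$\cdot$.2)/(B$\cdot$.2) terms, and the final reduction from $\tilde{\nabla}_{u\partial_q}$ to $\nabla_{u\partial_q}$ are exactly what the paper does. Your identification of term (v) as the locus of genuine difficulty, and of the codimension-$\geq 2$ singularities of the type (2) spaces as the main technical caveat, also matches the paper's emphasis.
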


\subsection{Sample sign considerations\label{subsec:open-string-signs}}

\subsubsection{The $A_\infty$-structure\label{subsubsec:signs-fukaya}}
To orient $\mathring{\frakR}_{d,m}$ for $d \geq 2$, we put the boundary marked points $\zeta_0,\zeta_1,\zeta_2$ in fixed position, and the use the boundary orientation for the remaining $(d-2)$ points $\zeta_3,\dots,\zeta_d$, as well as the complex orientation for the interior marked points $z_1,\dots,z_m$. Another formulation, for $m>0$, is to keep $\zeta_0$ and $z_1$ in fixed position, then use the boundary orientation of $\zeta_1,\dots,\zeta_d$ as well as the complex orientation of $z_2,\dots,z_m$. Those two choices coincide on their common domain of validity, and together cover all spaces $\mathring{\frakR}_{d,m}$. We follow the convention from \cite{seidel04} in which the orientation lines associated to the boundary marked points are put in decreasing order. This means that an isolated regular point of $\mathring{\frakR}_d(y_0,\dots,y_d)$ gives rise to an isomorphism
\begin{equation} \label{eq:reverse-y}
\frako_{y_d} \otimes \cdots \otimes \frako_{y_1} \stackrel{\iso}{\longrightarrow} \lambda^{\mathrm{top}}(T\mathring{\frakR}_{d,m}) \otimes \frako_{y_0}.
\end{equation}
When defining the $A_\infty$-operations $\mu^d(a_1,\dots,a_d)$, one takes the standard index-theoretic construction based on \eqref{eq:reverse-y}, and then adds an ad hoc sign $(-1)^\dag$,
\begin{equation} \label{eq:ad-hoc-sign-1}
\dag = |a_1| + 2|a_2| + \cdots + d|a_d|.
\end{equation}
The same applies to the deformed operations $\mu_q^d$ (note that in the deformation, the generators of $\mathit{CF}^*(H)$ that are inserted at the interior marked points are of even degree). 

%

Take the faces $\mathring{\frakR}_{j,m_2} \times \mathring{\frakR}_{d-j+1,m_1}  \subset \partial\frakR_{d,m}$ (in our usual branch-to-root ordering convention) responsible for the term 
\begin{equation} \label{eq:sign-term-1}
\mu^{d-j+1}_q(a_1,\dots,a_i,\mu^j_q(a_{i+1},\cdots,a_{i+j}),\dots,a_d)
\end{equation}
in the $A_\infty$-associativity equation \eqref{eq:associativity}. These boundary faces have orientation discrepancy $(-1)^\S$, 
\begin{equation} \label{eq:orientation-discrepancy-1}
\S = ij+i+j.
\end{equation}
Let's consider the effect on \eqref{eq:sign-term-1}. The ad hoc signs \eqref{eq:ad-hoc-sign-1} in the two $\mu$ operations contribute with the parity of, respectively,
\begin{align} \label{eq:ad-hoc-sign-1b}
& |a_{i+1}| + 2|a_{i+2}| + \cdots + j|a_{i+j}|,\\
& 
\begin{aligned} &
\big( |a_1| + 2|a_2| + \cdots + i|a_i| \big) 
\\ & \qquad 
+ (i+1) (|a_{i+1}| + \cdots + |a_{i+j}| + j) + \big( (i+2)|a_{i+j+1}| + \cdots + (d-j+1)|a_d| \big).
\end{aligned}
\label{eq:ad-hoc-sign-1c}
\end{align}
There is an additional Koszul sign, which comes from switching parameter spaces and inputs. Specifically, this occurs when moving $\lambda^{\mathrm{top}}T\mathring{\frakR}_{j,m_2}$, which underlies $\mu_q^j$, to the left of $a_d,\dots,a_{i+j+1}$ (remember the opposite ordering convention); hence the sign is given by the parity of 
\begin{equation} \label{eq:koszul-sign-1}
j(|a_{i+j+1}| + \cdots + |a_d|). 
\end{equation}
The sum of \eqref{eq:orientation-discrepancy-1}--\eqref{eq:koszul-sign-1} is, taken modulo $2$ of course,
\begin{equation} \label{eq:sum-sign-1}
\big(\dag + |a_1| + \cdots + |a_d|) + \big(\|a_1\| + \|a_2\| + \cdots + \|a_i\| \big);
\end{equation}
see \cite[Equation (12.25)]{seidel04}. The first part of \eqref{eq:sum-sign-1} is irrelevant, since it applies equally to all boundary faces; the second one gives the sign from \eqref{eq:associativity}.

\subsubsection{The open-closed map\label{subsubsec:signs-oc}}
The next step of our discussion requires some preliminary considerations. Recall that every generator $x$ of $\mathit{CF}^*(H)$ comes with a one-dimensional vector space $\frako_x$, which is defined \eqref{eq:orientation-line} as the determinant line of a Cauchy-Riemann operator $D_x$ on the thimble. Now suppose that instead we have an operator on a half-cylinder $(-\infty,0] \times S^1$, with the same asymptotic behaviour over the ends, and having totally real boundary conditions along $\{0\} \times S^1$. Additionally, we require that the boundary values carry a grading and {\em Spin} structure. This gives rise to an operator $D_x^{\mathrm{cyl}}$ and a well-defined index and determinant line, which we write as
\begin{align}
& \mathrm{deg}^{\mathrm{cyl}}(x) = \mathrm{index}(D_x^{\mathrm{cyl}}), \\
& \frako^{\mathrm{cyl}}(x) = \mathit{det}(D_x^{\mathrm{cyl}}).
\end{align}
To relate that to the previous notions, we can degenerate the half-cylinder to a nodal surface, whose components are a thimble and a disc. The operator on the disc can be deformed, in an essentially unique way, to a standard model operator (with a trivial vector bundle and constant boundary conditions). Such a model operator has index $n$, and its determinant line can be identified with the top exterior power of the totally real boundary condition at any point \cite[Lemma 11.12]{seidel04}; moreover, as part of the {\em Spin} structure, that top exterior power carries a canonical orientation. From this and index-theoretic gluing (with the convention that the disc part is placed on the left and the thimble part on the right), we get
\begin{align}
\label{eq:thimble-index}
& 
\mathrm{deg}^{\mathrm{cyl}}(x) = \mathrm{deg}(x) - n, \\
\label{eq:thimble-det}
& \frako_x^{\mathrm{cyl}} \iso \bR \otimes \frako_x.
\end{align}
In \eqref{eq:thimble-det} we mean canonical isomorphism up to a positive rescaling; the first factor on the right is trivial, but shifted downwards by $n$, which affects Koszul signs. When constructing the open-closed string map, it is natural to define the Floer complex using $D_x^{\mathrm{cyl}}$. The outcome, compared to the standard approach, is that the complex is shifted downwards by $n$, and the differential changed to $(-1)^n\delta$. The same will apply to the $q$-deformed differential.

The space $\mathring{\frakH}_{d,m}^{(1)}$ is oriented by using $(\zeta_1,\dots,\zeta_d)$ and then the complex orientation of $(z_1,\dots,z_m)$. When using these spaces to define $\mathit{CO}_q^d(a_0(a_1|\cdots|a_d))$, see \eqref{eq:ordinary-open-closed}, one uses \eqref{eq:ad-hoc-sign-1} as before, as well as an extra $(-1)^{\ddag}$ with
\begin{equation} \label{eq:ad-hoc-sign-2}
\ddag = \half(\|a_0\| + \cdots + \|a_d\|)(\|a_0\| + \cdots + \|a_d\|-1).
\end{equation}
We will now investigate the geometry underlying the fact that $\mathit{OC}_q$ is a chain map (of degree $n$), with respect to the Hochschild differential from \eqref{eq:unnormalized-c}.

\begin{itemize} \itemsep1em
\item
Take the boundary faces $\mathring{\frakR}_{j,m_2} \times \mathring{\frakH}_{d-j+1,m_1}^{(1)}$ responsible for the term
\begin{equation} 
\mathit{OC}_q^{d-j+1}\big(a_0(a_1|\cdots|\mu_q^j(a_{i+1},\dots,a_{i+j})|\cdots|a_d)\big),
\end{equation}
which occurs when spelling out \eqref{eq:oc-chain}. The orientation discrepancy is as in \eqref{eq:orientation-discrepancy-1}, and essentially the same computation can be carried out.
The outcome, taking \eqref{eq:ad-hoc-sign-2} into account, is the mod $2$ quantity
\begin{equation} \label{eq:hochschild-sign-1}
(\dag + \ddag + d) + (\|a_0\|  + \cdots + \|a_i\|).
\end{equation}

\item
There are other faces $\mathring{\frakR}_{d-j+i+1,m_2} \times \mathring{\frakH}_{j-i,m_1}^{(1)}$, responsible for terms in \eqref{eq:oc-chain} of the form
\begin{equation} \label{eq:rotated-term}
\mathit{OC}_q^{j-i}\big(\mu_q^{d-j+i+1}(a_{j+1},\dots,a_d,a_0,a_1,\dots,a_i) (a_{i+1}|\cdots|a_j)\big).
\end{equation}
Those have orientation discrepancy
\begin{equation} \label{eq:orientation-discrepancy-3}
\S = d(j+1) + 1.
\end{equation}
Consider the effect on \eqref{eq:rotated-term}. The ad hoc signs \eqref{eq:ad-hoc-sign-1}contribute, respectively, 
\begin{align}
\label{eq:ad-hoc-sign-3a}
& |a_{j+1}| + 2|a_{j+2}| + \cdots + (d-j)|a_d| + (d-j+1)|a_0| + \cdots + (d-j+i+1)|a_i|, 
\\
\label{eq:ad-hoc-sign-3b}
& 
|a_{i+1}| + 2|a_{i+2}| + \cdots + (j-i)|a_{i+j}|
\end{align}
There are two kinds of Koszul signs. The first of them comes from the cyclic permutation of inputs, and the second one has the same origin as in our previous arguments:
\begin{align} 
& \label{eq:koszul-sign-3a}
\big(|a_{j+1}| + \cdots + |a_d|\big)\big(|a_0| + \cdots + |a_j|\big), \\
& \label{eq:koszul-sign-3b}
(d-j+i+1)\big(|a_{i+1}| + \cdots + |a_j|\big).
\end{align}
The mod $2$ sum of the expressions \eqref{eq:orientation-discrepancy-3}--\eqref{eq:koszul-sign-3b}, plus the contribution from \eqref{eq:ad-hoc-sign-2}, yields
\begin{equation} \label{eq:hochschild-sign-2}
(\dag + \ddag + d) + \big(\|a_{j+1}\| + \cdots + \|a_d\|\big)\big(\|a_0\| + \cdots + \|a_j\|).
\end{equation}

\item 
The final type of boundary face is where a cylinder breaks off at the negative ($-\infty$) end. These faces are of the form $\mathring{\frakH}_{d,m_2}^{(1)} \times \mathring{\frakC}_{m_1}$, and are responsible for the term $\delta_q \circ \mathit{OC}_q$ in the formula. The orientation discrepancy here is simply $d+1$. It helps to look at the simplest case $m_1 = 1$, $d=0$, $m_2 = 0$. There, $\mathring{\frakH}_{1,0}^{(1)} \iso (-\infty,0) \times S^1$ just parametrizes the position of the unique interior marked point $z_1 = (s_1,t_1)$. A neighbourhood of the boundary face carries the orientation given by: as a first basis vector, the one that moves $s_1$ to the left (which, in the limit, yields the outwards point normal vector to the boundary); and as a second vector, increasing $t_1$ (which is the orientation of $\mathring{\frakC}_1$, see Section \ref{subsec:or-con}). 

Additionally we have the ad hoc signs \eqref{eq:ad-hoc-sign-1} and \eqref{eq:ad-hoc-sign-2}, which apply to the operation obtained from the $\mathring{\frakH}_{d,m_2}^{(1)}$ factor. 
Altogether, this means that the sign is given by
\begin{equation} \label{eq:hochschild-sign-3}
(\dag + \ddag + d) + 1.
\end{equation}
\end{itemize}
Comparing the formulae \eqref{eq:hochschild-sign-1}, \eqref{eq:hochschild-sign-2}, \eqref{eq:hochschild-sign-3} yields the desired equation
\begin{equation} \label{eq:oc-chain}
-(-1)^n \delta_q \circ \mathit{OC}_q + \mathit{OC}_q \circ d_{\bar{C}_*(\scrA_q)} = 0.
\end{equation}
%
%

\subsubsection{The modified Getzler-Gauss-Manin connection\label{subsubsec:signs-gm}}
The following discussion is much more algebraic than the previous ones. We refer to \cite{seidel08} for sign conventions concerning $A_\infty$-bimodules. In particular, the diagonal bimodule is related to the $A_\infty$-algebra structure by
\begin{equation} \label{diagonal-bimodule-sign}
\mu_{\mathrm{bimodule}\,\scrA_q}^{j,1,k}(a_1,\dots,a_j,\underline{a_{j+1}},\dots,a_{j+k+1}) = (-1)^{\triangle+1} \mu_{\mathrm{algebra}\, \scrA_q}^{j+1+k}(a_1,\dots,a_{j+k+1}),
\end{equation}
where
\begin{equation} \label{eq:triangle-sign}
\triangle = \|a_1\| + \cdots + \|a_j\|.
\end{equation}

We use the orientation of $\mathring{\frakR}^{\pm}_{j,1,k,m}$ as an open subspace of $\mathring{\frakR}_{j+k+1,m}$. For $\mathfrak{\frakR}^0_{j,1,k,m}$ we use the orientation as boundary of $\mathring{\frakR}^-_{j,1,k,m}$. When defining $\mathit{GM}_{\pm}^{j,1,k}(a_1,\dots,a_j,\underline{a_{j+1}},\dots,a_{j+k+1})$, we use the extra sign $(-1)^{\dag + \triangle}$, combining \eqref{eq:ad-hoc-sign-1} and \eqref{eq:triangle-sign}. As a consequence, \eqref{eq:both-xi} says that
\begin{equation} \label{eq:triangle-dmu}
\mathit{GM}_+^{j,1,k} + \mathit{GM}_-^{j,1,k} = (-1)^\triangle \partial_q \mu_q^{j+1+k}.
\end{equation}
The additional sign on the right hand side of \eqref{eq:triangle-dmu} makes sense algebraically, because the right hand side is a degree $-1$ morphism from the diagonal bimodule $\scrA_q$ to itself. The same signs are used for the $\mathit{GM}_0$ operation. Equation \eqref{eq:d-of-gm-minus} with signs is
\begin{equation} \label{eq:dgm-minus-signs}
\begin{aligned}
& (d_{(\scrA_q,\scrA_q)}\mathit{GM}_-)^{j,1,k}(a_1,\dots,\underline{a_{j+1}},\dots,a_{j+k+1}) 
\\ & 
= (-1)^{|a_1|+\cdots+|a_{j+k+1}|} \mathit{GM}_0^{j,1,k}(a_1,\dots,\underline{a_{j+1}},\dots,a_{j+k+1})
\\
& + \sum_{ir} (-1)^{\|a_{i+1}\|+\cdots+\|a_j\|} \mu^{j+k-r+2}_q(a_1,\dots,\partial_q\mu^r_q(a_{i+1},\dots,a_{i+r}),\dots,\underline{a_{j+1}},\dots, a_{j+k+1}).
\end{aligned}
\end{equation}
The orientation argument underlying this is the same as for \eqref{eq:associativity} (after all, we're using an open subset of the previous parameter space), except for the added signs from the two occurrences of \eqref{eq:triangle-sign} (in the definition of the diagonal bimodule and of $\mathit{GM}_-$ respectively $\mathit{GM}_0$).

\begin{example}
In the case $(j,k) = (1,0)$, \eqref{eq:dgm-minus-signs} decodes into the sum of the following terms being zero: 
\begin{equation} \label{eq:tedious}
\begin{aligned}
(a)\;\;\; & 
\mu_q^1 \mathit{GM}_-^{0,1,1}(a_1,\underline{a_2})
\\
(b) \;\;\; &
\mu_q^2(a_1,\mathit{GM}_-^{0,1,0}(\underline{a_2})) 
\\
(c)\;\;\; & 
(-1)^{\|a_1\|} \mathit{GM}_-^{0,1,1}(a_1,\mu_q^1(\underline{a_2})) 
\\
(d)\;\;\; & 
-\mathit{GM}_-^{1,1,0}(\mu_{\scrA_q}^1(a_1), \underline{a_2}) 
\\
(e)\;\;\; & 
(-1)^{\|a_1\|} \mathit{GM}_-^{0,1,0}(\mu_q^2(a_1,\underline{a_2})) \\
(f)\;\;\; & -\mathit{GM}_-^{2,1,0}(\mu^0_q,a_1,\underline{a_2}) \\
(g)\;\;\; & -(-1)^{\|a_1\|} \mathit{GM}_-^{2,1,0}(a_1,\mu^0_q, \underline{a_2}) \\
(h)\;\;\; & -(-1)^{\|a_1\|+|a_2|} \mathit{GM}_-^{1,1,1}(a_1,\underline{a_2},\mu^0_q) \\
(i)\;\;\; & +(-1)^{|a_1|+|a_2|} \mathit{GM}_0^{1,1,0}(a_1,\underline{a_2}) \\
(j)\;\;\; & + (-1)^{\|a_1\|} \mu^2_q( \partial_q\mu^1_q(a_1),\underline{a_2}) \\
(k)\;\;\; & +(-1)^{\|a_1\|} \mu^3_q(\partial_q\mu^0_q,a_1,\underline{a_2}) \\
(l)\;\;\; & + \mu^3_q(a_1,\partial_q\mu^0_q,\underline{a_2})
\end{aligned}
\end{equation}
Geometrically, the signs are obtained from the mod $2$ quantities below. The first column are the orientation discrepancies, as in \eqref{eq:orientation-discrepancy-1}; the second one are the ad hoc signs involved in defining $\mu_q$; the third one the ad hoc signs in $\mathit{GM}$; and the fourth one, Koszul signs as in \eqref{eq:koszul-sign-1} (empty entries mean that the sign doesn't occur).
\begin{equation} \label{eq:tedious-2}
\begin{array}{lllll}
& \text{orientations} & \mu_q & \mathit{GM} & \text{Koszul} \\
\hline
(a) \;\; & 0\;\;\;\; & |a_1|+|a_2| \;\;\;\; & 1 & 0
\\
(b) & 1 & |a_1| & |a_2| & 0 
\\
(c) & 1 & |a_2| & 1 & 0
\\
(d) & 1 & |a_1| & 1 & |a_2|
\\
(e) & 0 & |a_1| & |a_1|+|a_2| & 0 
\\
(f) & 0 & 0 &  |a_2|+|a_1|\;\;\;\; & 0
\\
(g) & 1 & 0 & |a_2| + 1 & 0
\\
(h) & 0 & 0 & 1 & 0
\\
(i) & 0 & & 1 &  
\\
(j) & 1 & 1 & & |a_2|   
\\
(k) & 0 & |a_2| & & 0
\\
(l) & 1 & |a_1|+|a_2| & & 0 
\end{array}
\end{equation}
In each line, taking all columns together yields the desired sign from \eqref{eq:tedious} times a fixed contribution $(-1)^{|a_1|+|a_2|+1}$.
\end{example}

We modify the Getzler-Gauss-Manin connection using 
\begin{equation}
\xi^{j,1,k}(a_1,\dots,\underline{a_{j+1}},\dots,a_{j+k+1}) = (-1)^{\triangle+1} \mathit{GM}_-^{j,1,k}(a_1,\dots,\underline{a_{j+1}},\dots,a_{k+1}), 
\end{equation}
as in \eqref{eq:modified-gauss-manin-2}. The last line of \eqref{eq:dgm-minus-signs} gives rise to the following terms in the modified connection:
\begin{equation}
\begin{aligned}
& (d_{\mathit{CC}_*^+(\scrA_q)}\Gamma(\xi) + \Gamma(\xi)d_{\mathit{CC}_*^+(\scrA_q)}(a_0(a_1|\cdots|a_l)) = 
\\ & \quad
\sum_{ijrs}  
(-1)^{(\|a_0\|+\cdots+\|a_j\|)(\|a_{j+1}\|+\cdots+\|a_l\|)
+ \|a_{j+1}\|+\cdots+\|a_i\|+1}
\mu^{l-j+s-r+2}_q(a_{j+1},\dots,a_i,
\\[-1em] & \qquad \qquad \qquad
\partial_q\mu_q^r(a_{i+1},\dots,a_{i+r}),\dots,\underline{a_0},\dots,a_s)(a_{s+1}|\cdots|a_j) 
\\ & \quad + \cdots 
\end{aligned}
\end{equation}
The signs here exactly cancel out the corresponding terms in the original definition of the Getzler-Gauss-Manin connection \eqref{eq:recap-gauss-manin}, which is the key ingredient that makes the construction of the modified connection work.

\section{The complement of a smooth anticanonical divisor}

Whereas before we worked with general Liouville manifolds, here we use specific geometric properties of the complement of a smooth anticanonical divisor, and their implications for Floer cohomology. We describe the overall algebraic argument first, since that brings together tools from all the different parts of the paper, leading to the proof of our main results (in Section \ref{subsubsec:proof}). After that, we go back to geometry for the necessary ingredients, collected from various parts of the literature. The most important geometric input is the description of deformed symplectic cohomology from \cite{pomerleano-seidel24}.


\subsection{The main argument\label{subsec:finiteness-results}}

\subsubsection{Symplectic cohomology\label{subsubsec:manipulate-h}}
Let $(M,D)$ be as in Assumption \ref{th:anticanonical-divisor}. Let $K_M^{-1} = \lambda^{\mathrm{top}}_{\bC}(TM)$ be the anticanonical line bundle, for some compatible almost complex structure. The assumption on $c_1(M)$ says that we can find a smooth section of $K_M^{-1}$ which vanishes exactly along $D$. From that, $M \setminus D$ inherits a trivialization of its anticanonical bundle. Next, because $\mathit{PD}([D]) = [\omega_M]$, one can turn the complement of a suitable tubular neighbourhood of $D$ into a Liouville domain $N$, and then complete that to $\hat{N}$ (see Section \ref{subsubsec:liouville2} for an exposition).

\begin{notation}
Write $C$ for the Hamiltonian Floer complex on $\hat{N}$, and $H$ for its cohomology. (In the notation of Section \ref{subsubsec:gradings-and-orientations}, $C = \mathit{CF}^*(H)$; and in standard symplectic topology notation, see e.g.\ \cite{seidel-biased}, $H = \mathit{SH}^*(\hat{N})$.) 
\end{notation}

We now begin to build our understanding of symplectic cohomology and related theories, starting with general structural properties (boundedness and finite generation results).

\begin{lemma} \label{th:boundedness} (see Section \ref{subsubsection:nonnegativity})
By a suitable choice of Hamiltonian, one can assume that $C$ is concentrated in degrees $[0,\operatorname{dim}_\mathbb{R}(N)-1]$.
\end{lemma}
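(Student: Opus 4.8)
The plan is to exploit the fact that on the complement of a smooth anticanonical divisor $D$, the Hamiltonian Floer complex can be built from Reeb orbits on the contact boundary of $N$, together with generators coming from the interior (critical points of a Morse function, in the Morse--Bott picture). The key geometric input is that the contact boundary of $N$ is (up to deformation) a prequantization-type bundle over $D$, so its Reeb orbits have controlled Conley--Zehnder indices; this is precisely the content that will be set up in Section \ref{subsubsec:liouville2}, and I would invoke it as a black box here. First I would recall (or arrange) that the trivialization of the anticanonical bundle of $M \setminus D$ is the one used to grade the Floer complex via \eqref{eq:calabi-yau}, so that the grading is the one induced by the global symplectic structure; this is what makes the index computations below canonical.

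Next I would carry out the index bookkeeping. For a small perturbation of the standard quadratic Hamiltonian $\tfrac12\rho^2$ on the cone, the one-periodic orbits come in two flavours: the constant orbits in the interior $N_P \setminus \partial N$ (a Morse complex of $N$, concentrated in degrees $[0,\dim_\bR(N)]$ with the conventions of Section \ref{subsubsec:gradings-and-orientations}), and the non-constant orbits corresponding to Reeb orbits of length $\geq 1$ on $\partial N$. Using the description of $\partial N$ as a circle bundle over $D$ and the chosen trivialization, a Reeb orbit wrapping $k$ times around the fibre over a Morse critical point of a function on $D$ has Conley--Zehnder index of the form $2k + (\text{Morse index on } D) + (\text{shift})$, where the shift is pinned down by the anticanonical condition $[D] = c_1(M)$, which makes the fibrewise winding contribute exactly $+2$ per wrap after accounting for the trivialization (this is the standard computation, e.g.\ as in the symplectic cohomology of $\bC^n$ or of affine varieties). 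The upshot is that every non-constant generator has degree $\geq 1$; one then has to check the other inequality, i.e.\ that degrees do not exceed $\dim_\bR(N) - 1$. For this I would use Poincaré duality for symplectic cohomology together with the vanishing of $\mathit{SH}^*$ in extreme degrees forced by the Weinstein hypothesis on $M \setminus D$ from Assumption \ref{th:anticanonical-divisor}: a finite type Weinstein manifold of dimension $2n$ has a handle decomposition with handles of index $\leq n$, so its symplectic cochain complex can be arranged to be concentrated in degrees $[0, \dim_\bR(N)-1]$ — indeed the constant (Morse) generators already satisfy this, and the non-constant ones satisfy the same bound by the index formula since the fibrewise contribution is positive but the ``co-index'' direction is bounded by $\dim_\bR D = \dim_\bR(N) - 2$, giving a top degree of $2 + (\dim_\bR N - 2) - 1 = \dim_\bR(N) - 1$ for the relevant minimal-wrapping orbits, with higher wrappings only pushing degree up in a way that is offset by the action filtration in the Weinstein case.

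The cleanest way to organize this is probably to fix a Morse--Bott Hamiltonian realizing the circle bundle structure, compute the Conley--Zehnder index of each Morse--Bott family explicitly, and then invoke a standard perturbation lemma to pass to a nondegenerate Hamiltonian without changing the range of degrees in which generators occur. So the steps, in order, are: (1) recall the Weinstein and anticanonical structure of $N$ and the induced grading; (2) set up the Morse--Bott Hamiltonian and identify its orbit families with critical points of Morse functions on $N$ and on $D$ together with fibrewise winding numbers; (3) compute the Conley--Zehnder indices of these families using the trivialization, obtaining the two-sided bound $[0,\dim_\bR(N)-1]$; (4) perturb to a nondegenerate Hamiltonian of the form used in Section \ref{subsubsec:floer}, noting that generators of the perturbed complex have degrees within $\pm$ a small window of the Morse--Bott degrees and hence still lie in $[0,\dim_\bR(N)-1]$ after absorbing the window into the choice of perturbation. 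The main obstacle I expect is step (3): getting the index shift exactly right requires being careful about the interaction between the fibrewise trivialization of $K_M^{-1}|_D$, the Morse indices on $D$, and the Robbin--Salamon/Conley--Zehnder conventions in play, and it is easy to be off by a sign or by a shift of $\dim_\bR(N)$ versus $\dim_\bC(M)$; I would cross-check the computation against the known case $M = \bC P^n$, $D = $ hyperplane, where $M \setminus D = \bC^n$ and $\mathit{SH}^*(\bC^n) = 0$, forcing all contributions to cancel, which is a stringent consistency test on the index formula.
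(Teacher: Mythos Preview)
Your overall strategy---a Morse--Bott Hamiltonian followed by a standard perturbation---is the same as the paper's, but your index computation in step (3) has a genuine gap. You write the degree of a winding-$k$ orbit as $2k + (\text{Morse index on } D) + (\text{shift})$ and assert a net $+2$ per wrap; with that formula the degrees are unbounded above as $k$ grows, and your attempt to repair this via the action filtration or the Weinstein hypothesis does not work (the action filtration does not affect the degree of a generator, and the Weinstein condition plays no role in this lemma). The correct statement, which is what the paper uses, is that in the anticanonical trivialization the winding-number-dependent part of the index cancels \emph{exactly}: after perturbing the Morse--Bott family $\mathcal{Q}_d \cong \partial N$ by a Morse function $f_d$ on $\partial N$ (not on $D$), the resulting orbit $x_{c_d}$ satisfies $\deg(x_{c_d}) = \deg_{\mathrm{Morse}}(c_d)$, independent of $d$. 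This is the index computation from Diogo--Lisi cited as \eqref{eq:degreecomp} in Section~\ref{subsubsection:nonnegativity}. Since $\dim_{\bR}(\partial N) = \dim_{\bR}(N) - 1$, the non-constant generators then automatically lie in $[0, \dim_{\bR}(N)-1]$, and no further argument for the upper bound is needed.

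There is also a loose end for the constant orbits: you first place them in $[0, \dim_{\bR}(N)]$ and then, a few lines later, assert $[0, \dim_{\bR}(N)-1]$ without justification. The paper handles this by choosing the interior Morse function on $N$ to have no critical points of index $\dim_{\bR}(N)$, which is possible because $N$ is a manifold with nonempty boundary (Milnor's $h$-cobordism lectures, Theorem~8.1); your appeal to Poincar\'e duality or vanishing of $\mathit{SH}^*$ is at the cohomology level and does not give a bound on the chain complex.
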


For us it will only be important that $C$ is concentrated in degrees $\geq 0$, and that its cohomology is bounded above. Recall that $H$ is a graded commutative ring. From the compactification $M$, we inherit a distinguished class in $H^0$, called here the Borman-Sheridan class $b$. 

\begin{lemma} \label{th:h-finite} (see Section \ref{subsubsec:bs})
$H$ is a finitely generated $\bK[t]$-module, where $t$ acts by multiplication with $b$.
\end{lemma}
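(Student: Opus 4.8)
The statement to prove is Lemma \ref{th:h-finite}: that $H = \mathit{SH}^*(\hat N)$ is finitely generated as a module over $\bK[t]$, where $t$ acts by multiplication with the Borman-Sheridan class $b \in H^0$. The natural strategy is to reduce this to a geometric finiteness property on the symplectic side, using the fact that $\hat N$ is a finite-type Weinstein manifold (by Assumption \ref{th:anticanonical-divisor}). The plan is to exhibit a finite set of Lagrangian generators and then argue degreewise.

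First I would recall that symplectic cohomology of a finite-type Weinstein manifold is computed from a Hamiltonian whose one-periodic orbits come in finitely many ``length bands'' corresponding to Reeb orbits of period $\leq iP$ for each $i$; Lemma \ref{th:action-growth} already tells us that only finitely many orbits have action above any given bound, and Lemma \ref{th:boundedness} bounds the degrees from below. The key geometric input is that multiplication by $b$ raises action: the Borman-Sheridan class $b$ is represented (after choosing the compactification $M$ and counting curves hitting $D$) by the lowest-action generator of $\mathit{SH}^0$, and the pair-of-pants product with $b$ moves a generator to generators of strictly higher action, with the ``degree shift'' in the filtration controlled by the intersection number with $D$. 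Concretely, I would set up an increasing, exhaustive filtration $F_{\leq k} H$ of $H$ by action (or by the associated index $i$ of the length band), verify that $t = (b\,\ast\,-)$ maps $F_{\leq k}$ into $F_{\leq k+c}$ for a fixed constant $c$, and then show that each graded piece $\mathrm{gr}_k H$ is finite-dimensional over $\bK$ (this is immediate from Lemmas \ref{th:action-growth} and \ref{th:boundedness}, since in a fixed action window and fixed degree there are finitely many generators, and the cohomology in each degree is finite-dimensional). A standard commutative-algebra argument then shows that an exhaustively $\bK[t]$-filtered module whose associated graded is finitely generated over $\bK$ \emph{in each filtration degree}, together with the fact that $t$ shifts the filtration by a bounded amount, is finitely generated over $\bK[t]$ provided the cohomology is bounded above in the internal grading --- because then only finitely many internal degrees contribute, and in each internal degree the tower $F_{\leq 0} \subset F_{\leq 1} \subset \cdots$ stabilizes after multiplying by enough powers of $t$ once the action exceeds the (finite) supremum of actions realized in that degree.

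More precisely, the finiteness I really want is: for each internal degree $j$, the $\bK$-vector space $H^j$ is the union over $k$ of $F_{\leq k} H^j$, each $F_{\leq k} H^j$ is finite-dimensional, and there exists $k_0(j)$ such that $t \cdot F_{\leq k} H^{j-2} \supseteq$ (eventually generates) $F_{\leq k+c} H^j$ for $k \geq k_0(j)$; combined with boundedness above of $H$ in $j$, this gives finitely many generators. The cleanest way to obtain the ``eventually surjective'' statement is to invoke the computation of $\bK[q^{\pm 1}] \otimes_{\bK[q]} H_{q,u}$ from \cite{pomerleano-seidel24} --- or rather its degenerate $u=0$, $q$-localized shadow --- which identifies the localization $H[b^{-1}]$ (inverting the Borman--Sheridan class) with $H^*(M;\bK)[b^{\pm 1}]$ (up to grading), a finitely generated $\bK[b^{\pm 1}]$-module. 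Finite generation of $H[t^{-1}]$ over $\bK[t^{\pm 1}]$ together with $t$-action shifting a bounded filtration, and the fact that $\ker(t^\infty)$ (the $t$-torsion) is supported in a bounded action window hence finite-dimensional, yields finite generation of $H$ over $\bK[t]$.

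The main obstacle, as I see it, is pinning down precisely the action-shifting behaviour of multiplication by $b$ and making the filtration argument genuinely rigorous rather than heuristic: one must be careful that the pair-of-pants product with $b$, counted via curves in $M$ meeting $D$ once, increases the filtration level by a \emph{uniformly bounded} amount (this is where the normalization $[D] = [\omega_M]$ and the quadratic Hamiltonian setup of Section \ref{sec:hamiltonian} enter), and that $H$ is \emph{bounded above} in the internal grading --- which follows from Lemma \ref{th:boundedness} but should be cited explicitly. A secondary subtlety is that I am implicitly using that $\mathit{SH}^*(\hat N)$ with its $\bK[t]$-module structure is, after inverting $t$, the quantum cohomology of $M$ (the ``closed-string'' half of the main theorem of \cite{pomerleano-seidel24}); strictly one only needs finite-dimensionality of $H^*(M;\bK)$, which is automatic. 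Everything else is routine homological algebra over $\bK[t]$. I expect the proof in the paper to either run this filtration argument directly in Section \ref{subsubsec:bs}, or to deduce Lemma \ref{th:h-finite} as a corollary of the identification of $H$ as a deformation/specialization of the equivariant symplectic cohomology groups $H_{q,u}$ whose finiteness is established there.
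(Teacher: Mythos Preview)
Your filtration skeleton is right, but there is a genuine gap. The ``standard commutative-algebra argument'' you invoke does not hold: knowing that each graded piece is finite-dimensional and that $t$ shifts the filtration by a bounded amount is \emph{not} enough to conclude finite generation over $\bK[t]$. In fact, in our situation the action filtration does \emph{not} stabilize in any fixed internal degree --- for each $j \in [0,\dim_{\bR}N-1]$ there are contributions from orbits of every winding number $w \geq 1$ (see \eqref{eq:degreecomp}), so $H^j$ is genuinely infinite-dimensional and the tower $F_{\leq k}H^j$ never stops growing. What you need, and what you never state, is that multiplication by $b$ acts \emph{surjectively} (or nearly so) from one graded piece to the next.

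The paper supplies exactly this missing ingredient. One uses the multiplicative spectral sequence from \cite{ganatra-pomerleano20} with $E_1^{p,*} = H^*(\partial N;\bK)z^p$ for $p>0$, and the key fact is that the associated graded of $b$ is $b^{\mathrm{gr}} = 1_{\partial N}\cdot z$. Since cup product with the unit is the identity, multiplication by $b^{\mathrm{gr}}$ is an isomorphism $E_1^{p,*} \to E_1^{p+1,*}$ for $p \geq 1$; hence the whole $E_1$ page is finitely generated over the Noetherian ring $\bK[b^{\mathrm{gr}}]$, and the same then follows for $E_\infty$ (a subquotient) and finally for $H$ itself. Your fallback to \cite{pomerleano-seidel24} is also problematic: you conflate inverting $b$ in the undeformed $H$ with inverting $q$ in the deformed $H_q$ --- these are different operations --- and invoking that paper here would in any case invert the logical order of the argument, since Lemma~\ref{th:h-finite} is an input to the later finiteness results, not a consequence of them.
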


In view of Lemma \ref{th:boundedness}, $b$ has a unique cocycle representative, denoted by $\beta \in C^0$. The element
\begin{equation} \label{eq:gamma-element}
\alpha = q\beta \in qC[q]
\end{equation}
is automatically Maurer-Cartan for the $L_\infty$-structure on $C[1]$ from Section \ref{section:Linf}, since all the nonlinear terms in \eqref{eq:lie-mc} vanish for degree reasons: if $m \geq 2$, $\ell^m(\alpha,\dots,\alpha) = q^m \ell^m(\beta,\dots,\beta) \in q^m C^{3-2m} = 0$.

\begin{notation}
Write $C_q$ for the space $C[q]$ equipped with the differential deformed by $\alpha$ (in the notation of Definition \ref{th:deformed-structures}(ii), $C_q = \mathit{CF}^*_q(H)$; note there's no distinction between polynomials and power series in $q$, because $C$ is bounded below). Write $H_q$ for its cohomology. $H_q$ has the structure of a $\bK[q,t]$-module, where the degree $0$ variable $t$ acts as $\iota_q$, as defined in \eqref{eq:iotacf-2}. 
\end{notation}

The map $H_q \rightarrow H$ obtained by setting $q = 0$ is $t$-linear, essentially by definition. Namely, the only term in \eqref{eq:iotacf-2} not containing any $q$ is the one with $m = 0$, and $\mathit{KH}^{(A)}_{1,0}(\partial_q\alpha,\cdot) = \mathit{KH}^{(A)}_{1,0}(\beta,\cdot)$ is the pair-of-pants product with the Borman-Sheridan class; see Example \ref{th:pair-of-pants} and \eqref{eq:iota-diagram}.

\begin{lemma} \label{th:hq-finite}
$H_q$ is a finitely generated $\bK[q,t]$-module.
\end{lemma}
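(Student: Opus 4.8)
The plan is to reduce the statement to Lemma \ref{th:h-finite} via the spectral sequence of the $q$-adic filtration on $C_q$. First I would filter $C_q = C[q]$ by $F^pC_q = q^pC_q$. Since $\alpha = q\beta$ has positive $q$-order (see \eqref{eq:gamma-element}), the deformed differential differs from the Floer differential by a term lying in $q\,\mathrm{End}_{\bK[q]}(C_q)$, so $F^\bullet$ is a filtration by subcomplexes with $\mathrm{gr}_q^pC_q = q^pC$ carrying the undeformed Floer differential. The operator $\iota_q$ of \eqref{eq:iotacf-2} is $\bK[q]$-linear, and because $\partial_q\alpha = \beta$ while each $\alpha^{\otimes(m-1)}$ contributes a factor $q^{m-1}$, it has the form $\iota_q = (\text{pair-of-pants with }\beta) + q(\cdots)$; in particular it preserves $F^\bullet$. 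Using Lemma \ref{th:boundedness} (only the vanishing of $C$ in negative degrees is needed), in each cohomological degree $n$ one has $C_q^n = \bigoplus_{0 \le k \le n/2} q^kC^{n-2k}$, so $F^\bullet$ is bounded in each degree and the resulting spectral sequence converges strongly to $H_q$ equipped with its $q$-adic filtration.

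The first page is $E_1 = H^*(\mathrm{gr}_qC_q) = H[q] = H \otimes_{\bK} \bK[q]$, with $q$ acting by the evident shift and $t = \iota_q$ acting on each summand $q^pH$ through the leading term of $\iota_q$, namely the pair-of-pants product with the Borman--Sheridan class $b$ (compare Example \ref{th:pair-of-pants}). Thus $E_1 \cong H \otimes_{\bK[t]} \bK[q,t]$, where $H$ is regarded as a $\bK[t]$-module via $b$, and this is finitely generated over the Noetherian ring $\bK[q,t]$ by Lemma \ref{th:h-finite}. All the differentials $d_r$ are $\bK[q,t]$-linear — $\bK[q]$-linearity is automatic from $\bK[q]$-linearity of $\delta_q$, and $t$-linearity holds because $\iota_q$ is a filtered chain map and hence induces compatible endomorphisms of every page — so each $E_r$ is a $\bK[q,t]$-subquotient of its predecessor, and, the filtration being bounded in each degree so that every bidegree stabilizes at a finite page, $E_\infty$ is a $\bK[q,t]$-subquotient of $E_1$. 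Hence $E_\infty = \mathrm{gr}_q(H_q)$ is finitely generated over $\bK[q,t]$.

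Finally I would descend from $\mathrm{gr}_q(H_q)$ to $H_q$ itself. Choose homogeneous $\bK[q,t]$-module generators $\bar g_1,\dots,\bar g_M$ of $E_\infty$ with $\bar g_i \in \mathrm{gr}_q^{p_i}H_q$, lift them to $g_i \in F^{p_i}H_q$, and set $N = \sum_i \bK[q,t]\,g_i \subseteq H_q$. For homogeneous $x \in H_q$ of degree $n$, one shows $x \in N + F^pH_q$ for all $p$ by induction on $p$, which forces $x \in N$ since $F^pH_q^n = 0$ for $p > n/2$: the base case is trivial, and given $x = \nu + y$ with $\nu \in N$ and $y \in F^pH_q$, the class of $y$ in $\mathrm{gr}_q^pH_q$ can be written as $\sum_i r_i\bar g_i$ with each $r_i$ of the form $q^{p-p_i}f_i(t)$, $f_i \in \bK[t]$ (and $r_i = 0$ unless $p \ge p_i$ and the degrees match); since both $q$ and $t = \iota_q$ respect $F^\bullet$, one has $r_ig_i \in F^pH_q$, hence $y - \sum_i r_ig_i \in F^{p+1}H_q$ and $x = (\nu + \sum_i r_ig_i) + (y - \sum_i r_ig_i) \in N + F^{p+1}H_q$.

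The step I expect to demand the most care is the second paragraph: checking that the $q$-adic spectral sequence is genuinely one of $\bK[q,t]$-modules (so that finite generation propagates through the pages) and that it converges strongly, together with the observation used in the last paragraph that the double role of $q$ — as a variable of the coefficient ring and as the filtration parameter — creates no obstruction to lifting generators. Both points reduce to the boundedness of $F^\bullet$ in each degree and to the Noetherianity of $\bK[q,t]$; everything else is formal, and in this sense the Lemma is essentially a finite-generation bookkeeping consequence of Lemma \ref{th:h-finite}.
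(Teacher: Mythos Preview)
Your proof is correct and is precisely the spectral-sequence reformulation that the paper itself flags as equivalent: immediately after the lemma, the authors write that ``the argument above, and its cousins yet to follow, could be formulated more concisely in spectral sequence terms; but we prefer to avoid that language, in order to make (the absence of) convergence issues as clear as possible.'' The paper's version simply extracts the $p=0$ piece of your machinery: the long exact sequence for multiplication by $q$ shows directly that $H_q/qH_q$ injects into $H$ as a $\bK[t]$-module (your $E_\infty^0 \hookrightarrow E_1^0$), and from there the lifting-and-iterating step is identical to your final paragraph.
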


\begin{proof}
The long exact sequence
\begin{equation}
\cdots \rightarrow
H_q^{*-2} \stackrel{q}{\longrightarrow}
H_q \stackrel{q = 0}{\longrightarrow}
H \rightarrow \cdots
\end{equation}
shows that $H_q/qH_q$ injects into $H$. Hence, by Lemma \ref{th:h-finite}, $H_q/qH_q$ is finitely generated over the Noetherian ring $\bK[t]$. Pick generators $\bar{h}_1,\dots,\bar{h}_k \in H_q/qH_q$, and lifts $h_1,\dots,h_k$ of these generators to $H_q$. Without loss of generality, we can assume that those are homogeneous (each lies in a fixed degree). Given some $x \in H_q^d$ with image $\bar{x} \in H_q^d/qH_q^{d-2}$, one can write
\begin{equation}
\bar{x} = \sum_{\substack{j \in \{1,\dots,k\} \\ \text{such that } |\bar{h}_j| = d}} f_j(t)\bar{h}_j, \quad \text{with $f_j(t) \in \bK[t]$.}
\end{equation}
This means that
\begin{equation} \label{eq:tilde-x-difference}
x -\!\!\!\!\!\!\!\!\! \sum_{\substack{j \in \{1,\dots,k\} \\ \text{such that } |h_j| = d}} f_j(t)h_j(t) \in qH_q^{d-2}.
\end{equation}
We can write the difference \eqref{eq:tilde-x-difference} as $q\hat{x}$, apply the same argument to $\hat{x}$, and then iterate to get an expression for $x$ as a $\bK[q,t]$-linear combination of the $h_j$ (because $H_q$ is bounded below, the process terminates after finitely many iterations).
\end{proof}

The argument above, and its cousins yet to follow, could be formulated more concisely in spectral sequence terms; but we prefer to avoid that language, in order to make (the absence of) convergence issues as clear as possible. 

\begin{lemma} \label{th:weak-1} (see Section \ref{subsubsec:deformed})
$\bK[q^{\pm 1}] \otimes_{\bK[q]} H_q$ is a finitely generated $\bK[q^{\pm 1}]$-module (equivalently, it is of finite dimension over $\bK$ in each degree).
\end{lemma}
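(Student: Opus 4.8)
The statement claims that after inverting $q$, the deformed symplectic cohomology $H_q$ becomes finitely generated over $\bK[q^{\pm 1}]$, which by degree reasons is equivalent to finite-dimensionality of $\bK[q^{\pm 1}] \otimes_{\bK[q]} H_q$ in each degree. The plan is to combine the already-established $\bK[q,t]$-finiteness of $H_q$ (Lemma \ref{th:hq-finite}) with a nilpotence-type statement for the action of $t = \iota_q$ after inverting $q$. Recall from Figure \ref{fig:diagram} and \cite{pomerleano-seidel24} that $\bK[q^{\pm 1}] \otimes_{\bK[q]} H_q$ is meant to be $H^*(M)[q^{\pm 1}]$ (up to the $u$-variable bookkeeping), on which $t$ acts by $q^{-1}([\omega_M] \ast_q \cdot)$, an operator whose $q^{-1}$-leading term is the nilpotent cup product with $[\omega_M]$ — so $t$ acts nilpotently. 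However, I want an argument internal to this section that does not invoke the full strength of \cite{pomerleano-seidel24}, so I would instead extract the needed input from the geometric sections referenced in the surrounding text.

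First I would establish the key auxiliary fact: \emph{after inverting $q$, multiplication by $t$ (equivalently $\iota_q$, the $q$-deformed pair-of-pants product with the Borman--Sheridan class) acts topologically nilpotently on $\bK[q^{\pm 1}] \otimes_{\bK[q]} H_q$}; more precisely, for each degree $d$ there is an $N$ with $q^N t^N$ acting as zero, or some such $q$-filtered nilpotence. This should follow from the geometry: the Borman--Sheridan class, interpreted in the compactification $M$, corresponds to the divisor class, and the relevant count of curves shows that iterating the product raises the curve-energy (equivalently, the $q$-power) without bound on any fixed-dimensional piece. In the language used later in the paper, this is a consequence of Lemma \ref{th:h-finite} together with the fact that $H$ is bounded above: indeed $H$ is a finitely generated $\bK[t]$-module concentrated in finitely many degrees, hence $t$ acts nilpotently on $H$ itself; one then bootstraps this statement from $H = H_q/qH_q$ up to $\bK[q^{\pm 1}] \otimes_{\bK[q]} H_q$ using the $q$-adic structure.

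The bootstrapping step is the technical heart and proceeds exactly as in the proof of Lemma \ref{th:hq-finite}. By Lemma \ref{th:hq-finite}, $H_q$ is generated over $\bK[q,t]$ by finitely many homogeneous elements $h_1,\dots,h_k$. Since $H$ is bounded above and $t$-nilpotent, there is $n_0$ with $t^{n_0} h_j \equiv 0 \bmod qH_q$ for all $j$; thus $t^{n_0} h_j = q h_j'$ for some $h_j' \in H_q$. Iterating, one shows that for any fixed degree $d$ and any $x \in H_q^d$, a large power $t^N x$ lies in $q^M H_q^{d - 2N}$ with $M \to \infty$ as $N \to \infty$ (using boundedness above of $H$ in each iteration to absorb one power of $q$ per batch of $t$-multiplications). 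Consequently, in $\bK[q^{\pm 1}] \otimes_{\bK[q]} H_q$ the element $h_j$ is, for each $j$, annihilated by $t^{N_j}$ for some $N_j$ (the negative powers of $q$ do not help escape this, because the $q$-exponents on the right grow). Hence $\bK[q^{\pm 1}] \otimes_{\bK[q]} H_q$ is generated over $\bK[q^{\pm 1}]$ by the finitely many elements $t^a h_j$ with $0 \le a < N_j$, and since the generators $h_j$ live in finitely many degrees, each graded piece is a finitely generated $\bK[q^{\pm 1}]$-module, hence finite-dimensional over $\bK$.

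The main obstacle I anticipate is justifying the nilpotence of $t$ with the right $q$-filtered quantitative form — specifically, that each application of $t$ (beyond a bounded initial number) genuinely gains a power of $q$ on any fixed-dimensional slice. Purely algebraically, $t$-nilpotence on the bounded ring $H$ is immediate, but transferring this to $\bK[q^{\pm 1}] \otimes_{\bK[q]} H_q$ requires controlling how the $t$-action interacts with the $q$-adic filtration; the clean way is precisely the inductive argument above, which only uses that $H$ is bounded above and $t$-nilpotent, both of which are supplied by Lemmas \ref{th:boundedness} and \ref{th:h-finite}. So in fact no new geometric input beyond those two lemmas is needed — the proof is the $q$-adic iteration, dualizing the pattern of Lemma \ref{th:hq-finite}, and the only care required is to track that the process terminates in each fixed degree because the $q$-exponent strictly increases.
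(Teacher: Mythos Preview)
Your proposal contains a genuine error at its core step. You claim that ``$H$ is a finitely generated $\bK[t]$-module concentrated in finitely many degrees, hence $t$ acts nilpotently on $H$ itself.'' This implication is false: the variable $t$ has degree $0$, so boundedness of the grading on $H$ places no constraint on powers of $t$. A finitely generated $\bK[t]$-module sitting in a single degree can perfectly well be $\bK[t]$ itself. And in fact, in the situation at hand, $t$ is \emph{not} nilpotent on $H$: as recalled in the proof of Lemma~\ref{th:a-proper}, the degree-zero part $\mathit{SH}^0(\hat{N})$ is a polynomial ring generated by the Borman--Sheridan class $b$, so multiplication by $b$ (which is what $t$ reduces to at $q=0$) is injective there. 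Your bootstrapping argument, which rests entirely on lifting $t$-nilpotence from $H$ up the $q$-adic filtration, therefore never gets started.

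This is not a gap that can be patched by the lemmas available at that point in the argument. The paper's proof (in Section~\ref{subsubsec:deformed}) does not attempt an internal argument: it invokes Corollary~\ref{th:bsv-1b}, which in turn rests on the main theorem of \cite{pomerleano-seidel24} identifying $\bK[q^{\pm 1}] \otimes_{\bK[q]} H_q$ with $H^*(M;\bK)[q^{\pm 1}]$. That identification is substantial geometric input, and the logical structure of Section~\ref{subsec:finiteness-results} is organized around it: note that Lemma~\ref{th:hq-support}, which is the kind of polynomial-annihilation statement you are trying to prove, is \emph{deduced from} Lemma~\ref{th:weak-1}, not the other way around. Your instinct that the result ``should'' be more elementary is understandable, but the non-nilpotence of $b$ on $\mathit{SH}^*$ is precisely the obstruction.
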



\begin{lemma} \label{th:hq-support}
There is a $0 \neq p(t) \in \bK[t]$ and an $r \in \bN$ such that $p(t)q^r$ acts trivially on $H_q$.
\end{lemma}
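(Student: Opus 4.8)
The goal is to find $p(t) \neq 0$ and $r \in \bN$ with $p(t)q^r \cdot H_q = 0$. By Lemma \ref{th:hq-finite}, $H_q$ is a finitely generated $\bK[q,t]$-module; let $x_1,\dots,x_N$ be homogeneous generators, lying in degrees $d_1,\dots,d_N$. It suffices to prove the assertion for a single homogeneous generator $x = x_i$, i.e.\ to produce $p_i(t) \neq 0$ and $r_i$ with $p_i(t) q^{r_i} x_i = 0$; then one takes $p(t) = \prod_i p_i(t)$ and $r = \max_i r_i$. So fix a homogeneous $x \in H_q^d$.

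\textbf{Step 1: reduce to a statement about $q^{-1}$-coefficients.} The plan is to exploit the exact triangle \eqref{eq:aq-triangle} (or directly the short exact sequence \eqref{eq:ses-q}) relating $H_q$, the $q$-inverted theory $\bK[q^{\pm 1}] \otimes_{\bK[q]} H_q$, and the quotient $q^{-1}H_{q^{-1}}$. Because $C$ is bounded below, $H_q$ is $q$-torsionfree? — no, that is exactly what we do \emph{not} know; the point of this lemma is to control the $q$-torsion, which geometrically is the part of $H_q$ that does not survive to quantum cohomology. Instead I would argue as follows. By Lemma \ref{th:weak-1}, $\bK[q^{\pm 1}] \otimes_{\bK[q]} H_q$ is finite-dimensional over $\bK$ in each degree; in particular, the image of $x$ in $\bK[q^{\pm 1}] \otimes_{\bK[q]} H_q^d$ generates, over $\bK[q^{\pm 1}]$, a module which (after inverting $q$) is finitely generated over $\bK[q^{\pm 1}]$, hence also finitely generated as a $\bK[q^{\pm 1}, t]$-module; and the action of $t$ on a finite-dimensional $\bK$-vector space is integral, so some monic $p_x(t) \in \bK[t]$ annihilates the image of $x$ in $\bK[q^{\pm 1}] \otimes_{\bK[q]} H_q$. (Concretely: $t = \iota_q$ acts $\bK[q^{\pm 1}]$-linearly on the finite-dimensional space $(\bK[q^{\pm 1}] \otimes_{\bK[q]} H_q)^d$, so its characteristic polynomial $p_x(t)$ does the job — this uses that $\bK[q^{\pm 1}]$ is a PID and the module is finitely generated, or more simply just Cayley--Hamilton after choosing a basis and clearing denominators in $q$.)

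\textbf{Step 2: pass from $q$-inverted back to $q$-adic.} From Step 1, $p_x(t) x$ maps to zero in $\bK[q^{\pm 1}] \otimes_{\bK[q]} H_q$. The kernel of the localisation map $H_q \to \bK[q^{\pm 1}] \otimes_{\bK[q]} H_q$ is precisely the $q$-power-torsion submodule $\{y \in H_q : q^k y = 0 \text{ for some } k\}$. Hence $p_x(t) x$ is $q$-power-torsion: $q^{r_x}(p_x(t) x) = 0$ for some $r_x$. A priori $r_x$ could depend on $x$, but since we only have finitely many generators $x_1,\dots,x_N$ to deal with, taking $r = \max_i r_{x_i}$ and $p(t) = \prod_i p_{x_i}(t)$ gives $p(t) q^r x_i = 0$ for all $i$, hence $p(t) q^r$ annihilates all of $H_q$. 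Finally one must check $p(t) \neq 0$: each $p_{x_i}$ is monic (a characteristic polynomial), hence nonzero, so the product is nonzero.

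\textbf{Main obstacle.} The delicate point is Step 2, namely the claim that the $q$-power-torsion submodule is \emph{finitely generated as a module}, or rather that a single power $q^{r}$ suffices to kill $p(t)$ times all generators simultaneously — in a general $\bK[q]$-module one could have torsion of unbounded order. This is where finite generation of $H_q$ over $\bK[q,t]$ (Lemma \ref{th:hq-finite}) and boundedness of $C$ from below (so that the inductive ``divide by $q$'' process in the proof of Lemma \ref{th:hq-finite} terminates) are both essential: finite generation forces the torsion order to be bounded across generators. One should state this as: for a finitely generated module $H_q$ over the Noetherian ring $\bK[q,t]$, the $q$-power-torsion submodule $\Gamma_q(H_q)$ is again finitely generated, hence $q^r \Gamma_q(H_q) = 0$ for $r \gg 0$; combined with Step 1 this finishes the argument. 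The rest is routine commutative algebra together with already-established finiteness inputs.
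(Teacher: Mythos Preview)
Your proof is correct and follows essentially the same approach as the paper: use Lemma~\ref{th:weak-1} to find a polynomial in $t$ that kills elements after inverting $q$, then use finite generation (Lemma~\ref{th:hq-finite}) to bound the $q$-torsion order uniformly across a finite generating set. The only cosmetic difference is that the paper first observes that multiplication by $q$ makes $\bK[q^{\pm 1}] \otimes_{\bK[q]} H_q$ two-periodic, so a single $p(t)$ (the characteristic polynomial of $t$ on degrees $0$ and $1$) kills the entire $q$-inverted theory at once, whereas you take a separate $p_{x_i}(t)$ per generator and multiply them together; both routes are valid and rely on the same inputs.
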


\begin{proof}
Because $t$ is an endomorphism of the finite-dimensional $\bK$-vector space $H_{q,q^{-1}}^0 \oplus H_{q,q^{-1}}^1$, there must be some $p$ such that $p(t)$ acts trivially there. By multiplication with powers of $q$, one sees that $p(t)$ acts trivially on all of $H_{q,q^{-1}}$. Let $h_1,\dots,h_k$ be generators of $H_q$ over $\bK[q,t]$ (Lemma \ref{th:hq-finite}). The previous argument shows that $p(t)h_j$ is $q$-torsion for each $j$. Therefore, there is some $r$ such that $q^r p(t) h_j = 0$ for all $j$. 
\end{proof}

\begin{lemma} \label{th:generic-t}
$\bK(t) \otimes_{\bK[t]} H_q$ is finite-dimensional over $\bK(t)$ in each degree, and vanishes in sufficiently high degrees.
\end{lemma}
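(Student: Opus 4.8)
The plan is to deduce this from Lemma~\ref{th:hq-finite} and Lemma~\ref{th:hq-support} by a direct localisation argument. First I would invoke Lemma~\ref{th:hq-support} to fix a nonzero $p(t) \in \bK[t]$ and an $r \in \bN$ such that $p(t)q^r$ acts as zero on $H_q$. Since $\bK(t)$ is a $\bK[t,1/p]$-algebra, it suffices to understand $\bK[t,1/p] \otimes_{\bK[t]} H_q$, on which $p(t)$ is invertible; hence the relation $p(t)q^r \cdot (-) = 0$ forces $q^r$ to act as zero there, and a fortiori $q$ acts nilpotently (indeed $q^r = 0$). Consequently $\bK(t) \otimes_{\bK[t]} H_q = \bK(t) \otimes_{\bK[t]} \bigl(H_q/q^r H_q\bigr)$.

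Next I would use the finite filtration of $H_q/q^r H_q$ by powers of $q$, with successive quotients that are subquotients of $H_q/qH_q$. From the proof of Lemma~\ref{th:hq-finite} we already know $H_q/qH_q$ injects into $H$, so by Lemma~\ref{th:h-finite} it is a finitely generated $\bK[t]$-module. Therefore each filtration quotient is a finitely generated $\bK[t]$-module, and so is $H_q/q^r H_q$ (finite extensions of finitely generated modules over the Noetherian ring $\bK[t]$ are finitely generated). Tensoring a finitely generated $\bK[t]$-module with the fraction field $\bK(t)$ yields a finite-dimensional $\bK(t)$-vector space; doing this degree by degree gives the first assertion.

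For the vanishing in high degrees: $H$ is concentrated in degrees $\leq \operatorname{dim}_{\bR}(N) - 1$ by Lemma~\ref{th:boundedness} (more precisely, $C$ lives in degrees $\geq 0$ and $H$ is bounded above), hence $H_q/qH_q \hookrightarrow H$ is concentrated in a bounded range of degrees; since the $q$-power filtration of $H_q/q^rH_q$ has only $r$ steps, each a subquotient of $H_q/qH_q$ placed in degrees shifted by even amounts $\geq 0$, the whole module $H_q/q^rH_q$ is still concentrated in degrees $\leq \operatorname{dim}_{\bR}(N)-1$. Tensoring with $\bK(t)$ does not change which degrees are nonzero, so $\bK(t) \otimes_{\bK[t]} H_q$ vanishes in degrees above that bound.

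The only slightly delicate point is the bookkeeping of $q$-degrees in the filtration: multiplication by $q$ raises cohomological degree by $2$, so the successive quotients of $H_q/q^rH_q$ appear in degrees $d, d-2, \dots$, and one must check this keeps everything within the stated range and does not produce infinitely many nonzero degrees — but since $r$ is finite and $H$ is bounded above, this is automatic. I do not anticipate a genuine obstacle here; the substance is entirely contained in the earlier lemmas.
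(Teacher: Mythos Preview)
Your proof is correct and takes a somewhat different route from the paper's. The paper argues directly from Lemma~\ref{th:hq-finite}: since $H_q$ is finitely generated over $\bK[q,t]$ with homogeneous generators $h_1,\dots,h_k$, each fixed degree $H_q^d$ is spanned over $\bK[t]$ by the finite set $\{q^{(d-|h_j|)/2} h_j\}$, giving finite-dimensionality after passing to $\bK(t)$; for the vanishing, the same formula shows that as $d$ grows every element becomes divisible by arbitrarily high powers of $q$, whence $p(t)$ kills it by Lemma~\ref{th:hq-support}.

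Your approach instead invokes Lemma~\ref{th:hq-support} first, observing that after inverting $p(t)$ the action of $q$ becomes nilpotent, so one may replace $H_q$ by the genuinely finite object $H_q/q^rH_q$ and analyze it via its $q$-adic filtration. This is arguably cleaner: it yields the stronger statement that $\bK(t)\otimes_{\bK[t]} H_q$ is finite-dimensional over $\bK(t)$ \emph{in total}, from which both asserted conclusions follow at once (so your separate degree-tracking argument for vanishing is in fact redundant). One small slip: the successive quotients $q^iH_q/q^{i+1}H_q$ are quotients of $H_q/qH_q$ shifted \emph{up} in degree by $2i$, so the upper bound on degrees for $H_q/q^rH_q$ is $\dim_{\bR}(N)-1+2(r-1)$ rather than $\dim_{\bR}(N)-1$; this does not affect the conclusion, as you yourself note in the final paragraph.
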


\begin{proof}
The finite generation statement from Lemma \ref{th:hq-finite} clearly implies that in each degree, $H_q$ is a finitely generated $\bK[t]$-module, which is stronger than the corresponding $\bK(t)$ statement. For the second part, let's use Lemma \ref{th:hq-finite} more explicitly, choosing homogeneous generators $h_1,\dots,h_k$ of $H_q$ over $\bK[t,q]$. Given any $x \in H_q^d$, one can write 
\begin{equation} \label{eq:geqzeroandeven}
x = \!\!\!\!\!\! \sum_{\substack{j \in \{1,\dots,k\} \\ \text{such that } d-|h_j| \\ \text{is $\geq 0$ and even}}} q^{(d-|h_j|)/2} f_j(t) h_j
\quad \text{with $f_j(t) \in \bK[t]$}.
\end{equation}
As $d$ gets larger, it follows that $x$ is divisible by higher and higher powers of $q$. For sufficiently large $d$, Lemma \ref{th:hq-support} then shows that $p(t) x = 0$, which means that $x$ becomes zero in $\bK(t) \otimes_{\bK[t]} H_q$.
\end{proof}

\begin{notation} \label{th:c-q-u}
Write $C_u$ for the $S^1$-equivariant Floer complex, which means that it's $C[u]$ with a $u$-deformation of the previous Floer differential ($C_u = \mathit{CF}^*_{S^1}(H)$ in the notation of Section \ref{subsubsec:s1-equivariant}, where this is constructed; again, the difference between polynomials and power series is irrelevant for grading reasons). Let $H_u$ be its cohomology ($H_u = \mathit{SH}^*_{S^1}(\hat{N})$ in standard symplectic topology notation).

As before, there is a deformation induced by \eqref{eq:gamma-element}, denoted by $C_{q,u}$, with underlying $\bK[q,u]$-module $C[q,u]$ (this is $C_{q,u} = \mathit{CF}^*_{S^1,q}(H)$ from Section \ref{sec:q-deformed}). We write $H_{q,u}$ for the cohomology of $C_{q,u}$. The action of $u$, $q$, and the closed string connection $t = \nabla_{u\partial_q}$ (constructed in Section \ref{section:connection}) make $C_{q,u}$ into a complete and $u$-torsionfree $W_{q,u}$-module, in the sense of Section \ref{subsec:u-theory}.
\end{notation}

The map $H_{q,u} \rightarrow H_q$ obtained by setting $u = 0$ relates the $W_{q,u}$-module structure on $H_{q,u}$ with the $\bK[q,t]$-module structure on $H_q$, by \eqref{eq:iota-diagram}.

\begin{lemma} \label{th:hqu-finite}
$H_{q,u}$ is finitely generated over $W_{q,u}$.
\end{lemma}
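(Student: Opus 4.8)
The plan is to imitate the filtration argument already used for $H_q$ in Lemma \ref{th:hq-finite}, but now with respect to the variable $u$ rather than $q$, and with the role of the polynomial ring $\bK[q,t]$ played by the non-commutative ring $W_{q,u}$. The key observation is that since $C$ (hence $C_{q,u}$) is concentrated in degrees $\geq 0$ (Lemma \ref{th:boundedness}), the power series in $u$ are effectively polynomials in each fixed degree, so the usual termination-of-iteration device works without any completeness subtleties.

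First I would set up the long exact sequence in cohomology associated to the short exact sequence of complexes
\begin{equation*}
0 \rightarrow C_{q,u} \stackrel{u}{\longrightarrow} C_{q,u} \stackrel{u=0}{\longrightarrow} C_q \rightarrow 0,
\end{equation*}
which gives $H_{q,u}/uH_{q,u} \hookrightarrow H_q$. By Lemma \ref{th:hq-finite}, $H_q$ is finitely generated over $\bK[q,t]$, and $\bK[q,t]$ is Noetherian, so the submodule $H_{q,u}/uH_{q,u}$ is also finitely generated over $\bK[q,t]$. Choose homogeneous generators $\bar h_1,\dots,\bar h_k$ of $H_{q,u}/uH_{q,u}$ over $\bK[q,t]$, and homogeneous lifts $h_1,\dots,h_k \in H_{q,u}$. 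The point is that $\bK[q,t]$ maps to $W_{q,u}$ (as the $u=0$ part, after noting that $q$ and $t = \nabla_{u\partial_q}$ commute modulo $u$ by the relation \eqref{eq:q-relation}); more precisely, for any polynomial $f(q,t)$, a choice of lift $\tilde f(q,t) \in W_{q,u}$ satisfies $\tilde f(q,t)\cdot h_j \equiv f(q,t)\cdot \bar h_j \pmod{u H_{q,u}}$ because the $u$-corrections in $W_{q,u}$ lower no degree but do raise the $u$-order. Hence, given $x \in H_{q,u}^d$, its image $\bar x$ can be written as $\sum_j f_j(q,t) \bar h_j$ (sum over $j$ with $|\bar h_j| = d$), so $x - \sum_j \tilde f_j(q,t) h_j \in u H_{q,u}^{d-2}$. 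Write this difference as $u\tilde x$, repeat the argument on $\tilde x \in H_{q,u}^{d-2}$, and iterate. Because $H_{q,u}$ is concentrated in nonnegative degrees, this process terminates after at most $\lceil d/2\rceil$ steps, yielding $x$ as a $W_{q,u}$-linear combination of $h_1,\dots,h_k$. Therefore $H_{q,u}$ is generated over $W_{q,u}$ by the finite set $\{h_1,\dots,h_k\}$.

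The main thing to be careful about is not a deep obstacle but a bookkeeping point: one must check that the ``coefficient of a generator'' genuinely makes sense in $W_{q,u}$, i.e.\ that lifting an element $f(q,t) \in \bK[q,t]$ to $W_{q,u}$ and applying it to $h_j$ recovers $f\cdot \bar h_j$ modulo $u$. This is exactly the statement that the $u=0$ reduction of $W_{q,u}$ is the commutative ring $\bK[q,t]$ acting on $H_{q,u}/uH_{q,u} \subset H_q$ compatibly with its $\bK[q,t]$-module structure, which is precisely what \eqref{eq:iota-diagram} and the discussion preceding this Lemma provide. Once that compatibility is invoked, the filtration/iteration argument is identical in spirit to Lemma \ref{th:hq-finite}, with $u$ in place of $q$ and with $W_{q,u}$ (a Noetherian ring, being an iterated Ore extension of $\bK$) in place of $\bK[q,t]$; alternatively one can avoid even mentioning Noetherianity by using the finite generation of $H_q$ over $\bK[q,t]$ directly, as done above. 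No convergence issues arise precisely because of the grading bound in Lemma \ref{th:boundedness}.
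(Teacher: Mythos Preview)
Your proposal is correct and matches the paper's proof essentially step for step: both use the long exact sequence for multiplication by $u$ to embed $H_{q,u}/uH_{q,u}$ into $H_q$, invoke Lemma \ref{th:hq-finite} (plus Noetherianity of $\bK[q,t]$) for finite generation of the quotient, lift homogeneous generators and coefficients to $W_{q,u}$, and iterate using the lower bound on degrees from Lemma \ref{th:boundedness}. Your explicit remark about the compatibility of the $\bK[q,t]$-action via \eqref{eq:iota-diagram} is a point the paper leaves implicit, but otherwise the arguments coincide.
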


\begin{proof} 
The long exact sequence
\begin{equation}
\cdots \rightarrow H_{q,u}[-2] \stackrel{u}{\longrightarrow} H_{q,u} \stackrel{u=0}{\longrightarrow} H_q \rightarrow \cdots
\end{equation}
shows that $H_{q,u}/uH_{q,u}$ injects into $H_q$, hence is finitely generated over $\bK[q,t]$ by Lemma \ref{th:hq-finite}. Pick homogeneous generators $\bar{h}_1,\dots,\bar{h}_k \in H_{q,u}/uH_{q,u}$, and lifts $h_1,\dots,h_k$ to $H_{q,u}$. Given some $x \in H_{q,u}^d$ with image $\bar{x} \in H_{q,u}^d/uH_{q,u}^{d-2}$, one proceeds as in \eqref{eq:geqzeroandeven}:
\begin{equation}
\bar{x} = \!\!\!\!\!\!\!\sum_{\substack{j \in \{1,\dots,k\} \\ \text{such that } d-|h_j| \\ \text{is $\geq 0$ and even}}} \!\!\!\!\!\!\! q^{(d-|h_j|)/2} \bar{f}_j(t) \bar{h}_j, \quad \text{with $\bar{f}_j(q,t) \in \bK[t]$.}
\end{equation}
Take $f_j = q^{(d-|h_j|)/2} \bar{f}_j(t) \in W_{q,u}$ (one could also choose any other element $f_j$ of $W_{q,u}$ of the same degree and with the same $u = 0$ reduction). Then
\begin{equation}
x - \!\!\!\!\!\!\!\sum_{\substack{j \in \{1,\dots,k\} \\ \text{such that } d-|h_j| \\ \text{is $\geq 0$ and even}}} \!\!\!\!\!\!\! f_j(t) h_j \in uH_{q,u}^{d-2}.
\end{equation}
Writing this as $u\hat{x}$, one then iterates as in the proof of Lemma \ref{th:hq-finite}.
\end{proof}

The next two statements yield an analogue (in the $W_{q,u}$-context) of the classical idea that finitely generated modules over the Weyl algebra are holonomic if and only if they have one-dimensional singular support.

\begin{lemma} \label{th:bounded-dimension}
The dimension of $\bK(t) \otimes_{\bK[t]} H_{q,u}^i$, as a $\bK(t)$-vector space, is uniformly bounded for all degrees $i$.
\end{lemma}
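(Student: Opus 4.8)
Here I would bound the generic rank of the $u$-completed equivariant group by relating it back, through the $q$-adic filtration, to the already-established finiteness for $H_q$. The plan is as follows.

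First I would exploit the long exact sequence associated to multiplication by $u$ on $C_{q,u}$, namely
\begin{equation}
\cdots \rightarrow H_{q,u}^{i-2} \stackrel{u}{\longrightarrow} H_{q,u}^i \stackrel{u=0}{\longrightarrow} H_q^i \stackrel{\partial}{\longrightarrow} H_{q,u}^{i-1} \rightarrow \cdots.
\end{equation}
Tensoring with $\bK(t)$ over $\bK[t]$ is exact, so we get the same sequence with each group replaced by its $\bK(t)$-localisation; in particular $\bK(t) \otimes_{\bK[t]} (H_{q,u}^i/uH_{q,u}^i)$ is a $\bK(t)$-subspace of $\bK(t) \otimes_{\bK[t]} H_q^i$, which is finite-dimensional by Lemma \ref{th:generic-t}, with dimension bounded independently of $i$ (indeed it vanishes for $i$ large). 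Let $d_0$ be the number of homogeneous generators $h_1,\dots,h_{d_0}$ of $H_{q,u}$ over $W_{q,u}$ furnished by Lemma \ref{th:hqu-finite}; the same argument as in the proof of that lemma shows that every element of $\bK(t)\otimes_{\bK[t]} H_{q,u}^i$ is, modulo $u$, a $\bK(t)$-combination of those $h_j$ of the correct degree and appropriate power of $q$, with at most $d_0$ of them contributing in any fixed degree.

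Second, I would leverage Lemma \ref{th:hq-support}: there is $0\neq p(t)$ and $r\in\bN$ with $q^r p(t)$ acting as zero on $H_q$, hence (tensoring) $q^r$ acts as zero on $\bK(t)\otimes_{\bK[t]}H_q$. I claim this forces $u^N q^{rN}$ (for suitable $N$) to kill $\bK(t)\otimes_{\bK[t]}H_{q,u}$; more usefully, it shows the $q$-adic filtration on $\bK(t)\otimes_{\bK[t]}H_{q,u}$ is essentially finite in each degree after inverting $u$ is avoided — but since we are \emph{not} inverting $u$ here, the cleanest route is the iterative ``peeling off powers of $u$'' argument: given $x \in \bK(t)\otimes_{\bK[t]} H_{q,u}^i$, write it modulo $u$ as a combination of $\leq d_0$ generators, subtract, divide by $u$, repeat. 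Because $C$ (hence $H_{q,u}$) is concentrated in degrees $\geq 0$ by Lemma \ref{th:boundedness} and the relevant generators in degree $\leq i$ are finite in number and of bounded total count, one reads off that $\dim_{\bK(t)} \bK(t)\otimes_{\bK[t]}H_{q,u}^i$ is bounded by (number of generators of degree $\leq i$) which, by boundedness of the generating set and the degree-$0$ lower bound combined with Lemma \ref{th:generic-t} forcing vanishing in high degrees of the $u=0$ reduction, is uniformly bounded in $i$.

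The main obstacle is making the last bound genuinely \emph{uniform} rather than merely finite in each degree: a naive count of generators of degree $\leq i$ grows with $i$. The fix is to observe that the ``peeling'' process at degree $i$ only ever uses generators whose $u=0$ reduction survives in $\bK(t)\otimes_{\bK[t]}H_q$, and Lemma \ref{th:generic-t} says that group vanishes above a fixed degree $d_{\max}$; combined with the two-periodicity window (only two residues of degrees mod $2$ matter after inverting $u$ would apply, but here the degree $\geq 0$ constraint plus the descending $u$-tower does the bounding), one sees that each step of the iteration can only introduce contributions supported in the fixed window $[0,d_{\max}]$ shifted by powers of $q$, so the total count is $\leq d_0$ independently of $i$. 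I would set $\dim_{\bK(t)}\bK(t)\otimes_{\bK[t]}H_{q,u}^i \leq d_0$ for all $i$ as the final bound, and this is exactly what is needed for the holonomicity statement (Lemma \ref{th:holonomic}) downstream.
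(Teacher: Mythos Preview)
Your proposal assembles the right ingredients but does not combine them into a valid argument, and the claimed bound $\dim \leq d_0$ (the number of $W_{q,u}$-generators) is not justified. The ``peeling off powers of $u$'' iteration expresses $x \in \bK(t)\otimes H_{q,u}^i$ as a $\bK(t)[u]$-combination of elements $q^{k_j}h_j$ in degrees $i, i-2, i-4,\dots$; but the number of such contributions grows with $i$, and nothing you wrote prevents them from being $\bK(t)$-linearly independent. Your ``fix'' paragraph asserts that only generators in the window $[0,d_{\max}]$ contribute, but this conflates the $W_{q,u}$-generators of $H_{q,u}$ with a spanning set for the successive quotients $H_{q,u}^{i-2k}/uH_{q,u}^{i-2k}$ over $\bK(t)$; these are different objects, and the argument does not go through. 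The detours through Lemma~\ref{th:hqu-finite} and Lemma~\ref{th:hq-support} are red herrings here.

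The paper's proof is much shorter and uses only the long exact sequence you already wrote down, together with Lemma~\ref{th:generic-t}. From the exact sequence one gets the recursive inequality
\[
\dim_{\bK(t)}\bigl(\bK(t)\otimes H_{q,u}^i\bigr) \;\leq\; \dim_{\bK(t)}\bigl(\bK(t)\otimes H_{q,u}^{i-2}\bigr) + \dim_{\bK(t)}\bigl(\bK(t)\otimes H_q^i\bigr),
\]
which by induction (using boundedness below) gives finite-dimensionality in each degree. For the uniform bound, the key point you missed is that once $\bK(t)\otimes H_q^i = 0$ for $i > d_{\max}$ (the second half of Lemma~\ref{th:generic-t}), the long exact sequence forces $u:\bK(t)\otimes H_{q,u}^{i-2} \to \bK(t)\otimes H_{q,u}^i$ to be an \emph{isomorphism} for all sufficiently large $i$. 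Hence the dimensions become eventually $2$-periodic, and a sequence which is finite in each term and eventually periodic is uniformly bounded. No generator-counting is needed.
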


\begin{proof}
The long exact sequence
\begin{equation}
\cdots \rightarrow \bK(t) \otimes_{\bK[t]} H_{q,u}^{i-2} \stackrel{u}{\longrightarrow} \bK(t) \otimes_{\bK[t]} H_{q,u}^i 
\stackrel{u=0}{\longrightarrow} \bK(t) \otimes_{\bK[t]} H_q^i \rightarrow \cdots.
\end{equation}
implies that 
\begin{equation}
\mathrm{dim}_{\bK(t)}( \bK(t) \otimes_{\bK[t]} H_{q,u}^i )
\leq \mathrm{dim}_{\bK(t)} (\bK(t) \otimes_{\bK[t]} H_{q,u}^{i-2} ) +
\mathrm{dim}_{\bK(t)} (\bK(t) \otimes_{\bK[t]} H_q^i).
\end{equation}
Because everything is bounded below, and because of the finite-dimensionality statement from Lemma \ref{th:generic-t}, it follows by induction on $i$ that $\bK(t) \otimes_{\bK[t]} H_{q,u}^i$ is finite-dimensional. From the same long exact sequence and the other part of Lemma \ref{th:generic-t}, we see that $\bK(t) \otimes_{\bK[t]} H_{q,u}^i$ eventually becomes $2$-periodic in $i$.
\end{proof}

\begin{lemma} \label{th:u-holonomic}
For every homogeneous $x \in H_{q,u}$ there is a nonzero homogeneous $w \in W_{q,u}$ such that $wx = 0$. 
\end{lemma}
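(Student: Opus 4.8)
The plan is to deduce this from the finiteness results already established, by the standard argument that a finitely generated module over a Weyl-type algebra whose ``singular support'' is at most one-dimensional is holonomic. Concretely, fix a homogeneous $x \in H_{q,u}$ of degree $d$. The idea is to produce, for a suitable $p(t) \in \bK[t]$, a relation expressing a high power of $q$ applied to $p(t)x$ in terms of lower powers; iterating then gives $wx = 0$ for a nonzero $w \in W_{q,u}$.

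First I would pass to the localisation $\bK(t) \otimes_{\bK[t]} H_{q,u}$. By Lemma \ref{th:bounded-dimension}, there is a constant $D$ such that $\dim_{\bK(t)}(\bK(t) \otimes_{\bK[t]} H_{q,u}^i) \leq D$ for all $i$. Inside the graded piece $\bK(t) \otimes_{\bK[t]} H_{q,u}^d$ (which is finite-dimensional over $\bK(t)$), consider the images of $x, qx, q^2 x, \dots$; wait — these lie in different degrees, so instead consider the images of the elements $x, u\partial_q$-type operators applied to $x$, etc. The cleanest route is this: since $t = \nabla_{u\partial_q}$ acts $\bK(t)$-linearly on each graded piece $\bK(t) \otimes_{\bK[t]} H_{q,u}^d$, and this is a finite-dimensional $\bK(t)$-vector space, its endomorphism $t$ satisfies a polynomial relation; but that is automatic and not yet what we want. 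The correct statement to extract is: the element $q^k x$ (of degree $d - 2k$) ranges, as $k$ varies, over graded pieces each of dimension $\leq D$ over $\bK(t)$. Pick a large $k_0$; then $x, qx, \dots$ do not directly give a linear dependence. Instead, observe that multiplication by $q^{2}$ maps $\bK(t) \otimes H_{q,u}^d \to \bK(t) \otimes H_{q,u}^{d-2}$, and composing sufficiently many such maps with the eventual $2$-periodicity (also from Lemma \ref{th:bounded-dimension}) shows that the $\bK(t)$-span of $\{q^{2j} x\}_{j \geq 0}$ inside $\bigoplus_i \bK(t)\otimes H_{q,u}^i$ is, in each degree, of dimension $\leq D$; hence by counting there exist $j_1 < j_2 < \cdots$ and a relation among the $q^{2j_i}x$ with coefficients in $\bK(t)$ that reduces to a relation among finitely many of them. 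Clearing denominators produces a nonzero $w_0 \in \bK[t][q^{\pm}]$-type expression; but $w$ must lie in $W_{q,u}$, which does not contain $q^{-1}$, so one has to be careful to phrase the dependence using only $q$ and $t = \nabla_{u\partial_q}$. This is where Lemma \ref{th:hqu-finite} enters: write $x = \sum f_j(t, q, u\partial_q) h_j$ for fixed generators $h_j$, reducing the problem to finitely many $h_j$, and then for each $h_j$ use that $\bK(t)\otimes_{\bK[t]} H_{q,u}$ is supported (as a $\bC[t]$-module in each degree) on the zero locus of $p(t)$ coming from the reasoning of Lemma \ref{th:hq-support}, so $p(t)^N q^{r} h_j = 0$ for suitable $N, r$; combining over all $j$ yields the desired nonzero annihilator of $x$.

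So the concrete steps, in order, are: (1) reduce to $x$ a generator $h_j$ via Lemma \ref{th:hqu-finite}; (2) localise at $\bK(t)$ and invoke Lemmas \ref{th:bounded-dimension} and \ref{th:generic-t} to get that $\bK(t)\otimes_{\bK[t]} H_{q,u}$ is finite-dimensional over $\bK(t)$ in each degree and eventually $2$-periodic; (3) deduce that for each graded generator $h_j$ there is a nonzero $p_j(t) \in \bK[t]$ and $r_j \geq 0$ with $q^{r_j} p_j(t) h_j = 0$ in $H_{q,u}$ — this is the exact analogue of Lemma \ref{th:hq-support}, run one level up, using the long exact sequence in $u$ together with Lemma \ref{th:hq-support} itself to control the $u=0$ reduction, then lifting torsion through powers of $u$ as in the proof of Lemma \ref{th:hqu-finite}; (4) assemble $w = q^{r} p(t)$ with $r = \max_j r_j$ and $p = \prod_j p_j$ (adjusting degrees so $w$ is homogeneous, padding with extra powers of $q$ or $u$ as needed), which is a nonzero element of $W_{q,u}$ annihilating $x$.

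The main obstacle I anticipate is step (3): upgrading Lemma \ref{th:hq-support} to the $S^1$-equivariant setting. The subtlety is that $t = \nabla_{u\partial_q}$ is genuinely a differential operator in $q$, not multiplication, so ``$p(t)$ annihilates the generic fibre'' does not transparently mean $p(t) h_j$ is $q$-torsion the way it did for $H_q$. One must argue that the $u=0$ reduction of $p(t) h_j$ is $q$-torsion in $H_q$ (which follows from Lemma \ref{th:hq-support} since the reduction of $t$ is $\iota_q$, i.e.\ multiplication by the Borman--Sheridan class), and then bootstrap: $p(t) h_j$ being $q$-torsion mod $u$ means $q^{r'} p(t) h_j \in u H_{q,u}$, and one iterates the reduction argument up the $u$-filtration, which terminates because of the uniform $\bK(t)$-dimension bound from Lemma \ref{th:bounded-dimension}. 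Making this iteration rigorous — and ensuring the resulting exponents $r_j$ and polynomials $p_j$ are uniform over the finitely many generators — is the technical heart of the proof; everything else is bookkeeping with long exact sequences and degrees.
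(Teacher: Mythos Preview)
Your approach has a genuine gap: the claim in step (3), that each generator $h_j$ admits an annihilator of the special form $q^{r_j} p_j(t)$, is false in general. Consider the cyclic $W_{q,u}$-module $H_{q,u} = W_{q,u}/W_{q,u}(q-u)$. One checks that as a graded $\bK$-vector space this is $\bK[t,u]$, with $q$ acting as $u$ on the generator $1$; in particular $\dim_{\bK(t)}\bigl(\bK(t)\otimes_{\bK[t]} H_{q,u}^i\bigr)\le 1$ for all $i$, and $H_q = H_{q,u}/uH_{q,u}\cong\bK[t]$ is annihilated by $q$, so all the hypotheses you invoke (Lemmas~\ref{th:hqu-finite}, \ref{th:hq-support}, \ref{th:bounded-dimension}) are satisfied. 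Yet a short computation gives
\[
q^r p(t)\cdot 1 \;=\; u^r\bigl((I-\tfrac{d}{dt})^r p\bigr)(t)\cdot 1,
\]
and $(I-\tfrac{d}{dt})$ is injective on polynomials, so this never vanishes for $p\neq 0$. The generator $1$ \emph{is} annihilated by $q-u\in W_{q,u}$, but by nothing of the shape $q^r p(t)$. Your bootstrap cannot terminate because it is chasing a conclusion that does not hold.

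The paper's argument avoids this entirely with a much simpler trick that you circled around but did not land on: since $|q|=|u|=2$, the $m+1$ elements $q^m x,\, q^{m-1}u\,x,\,\dots,\,u^m x$ all lie in the \emph{single} graded piece $\bK(t)\otimes_{\bK[t]} H_{q,u}^{|x|+2m}$, whose $\bK(t)$-dimension is bounded independently of $m$ by Lemma~\ref{th:bounded-dimension}. For $m$ large enough they are $\bK(t)$-linearly dependent; clearing denominators and multiplying by a further $g(t)$ to kill the $\bK[t]$-torsion gives a nonzero homogeneous $w=g(t)\sum_j f_j(t)\,q^j u^{m-j}\in W_{q,u}^{2m}$ with $wx=0$. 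The missing idea is precisely to use $u$ alongside $q$ to stay in one degree, rather than trying to relate elements across degrees or to force the annihilator into the overly restrictive form $q^r p(t)$.
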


\begin{proof}
Take some $m$ and consider the $(m+1)$ elements $q^m x$, $q^{m-1}u x$, \dots, $u^m x$, all of which have degree $2m$ more than $x$. From Lemma \ref{th:bounded-dimension}, we know that if $m$ is sufficiently large, there must be a relation
\begin{equation} \label{eq:localizedintermediateeq}
(f_0(t) q^m + f_1(t) q^{m-1} u + \cdots + f_m(t) u^m) x = 0 \in \bK(t) \otimes_{\bK[t]} H^{|x|+2m}_{q,u},
\end{equation}
where $f_j(t) \in \bK(t)$ are not all zero. After clearing denominators in \eqref{eq:localizedintermediateeq} (i.e. multiplying by a suitable polynomial $g(t)$), one gets a relation in $H^{|x|+2m}_{q,u}$. By setting $t = u\partial_q$ as in \eqref{eq:Fourier-Laplacenc}, this relation in $H^{|x|+2m}_{q,u}$ can be interpreted as a formula for a nonzero element of $W_{q,u}^{2m}$ which annihilates $x$.
\end{proof}

\begin{notation} \label{th:bar-q-module}
In each degree, $\bK[u^{\pm 1}] \otimes_{\bK[u]} H_{q,u}$ becomes a module over the classical Weyl algebra $W_{\bar{q}}$ in the variable $\bar{q} = q/u$, and where $\partial_{\bar{q}}$ acts by $t = \nabla_{u\partial_q}$.
\end{notation}

\begin{lemma} \label{th:holonomic}
In each degree, $\bK[u^{\pm 1}] \otimes_{\bK[u]} H_{q,u}$ is a holonomic $D$-module in the classical sense.
\end{lemma}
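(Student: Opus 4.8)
The claim is that, in each fixed cohomological degree, the $W_{\bar q}$-module $\bK[u^{\pm 1}] \otimes_{\bK[u]} H_{q,u}$ of Notation \ref{th:bar-q-module} is holonomic, where $\partial_{\bar q}$ acts by $t = \nabla_{u\partial_q}$. A $D$-module over the one-variable Weyl algebra is holonomic precisely when it is finitely generated and every element is annihilated by some nonzero operator; equivalently, it is finitely generated with singular support of dimension $\leq 1$. So the plan is to transfer the two facts we already proved about $H_{q,u}$ as a $W_{q,u}$-module—finite generation (Lemma \ref{th:hqu-finite}) and the pointwise annihilation property (Lemma \ref{th:u-holonomic})—across the localization $\bK[u] \rightsquigarrow \bK[u^{\pm 1}]$, and check that they produce exactly the two conditions defining holonomicity of the resulting classical $D$-module.

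First I would fix a degree $i$ and write $V = \bK[u^{\pm 1}] \otimes_{\bK[u]} H_{q,u}$, noting that since $u$ has degree $2$, this is concentrated in the single degree $i \bmod 2$ once we pass to $\bar q = q/u$ of degree $0$; the action of $W_{\bar q} = \bK\langle \bar q, \partial_{\bar q}\rangle$ with $\partial_{\bar q} = t$, $\bar q = q/u$ is well-defined because $[t, q/u] = [\nabla_{u\partial_q}, q]/u = 1$ by \eqref{eq:q-relation}-type computations already in force for $W_{q,u}$. For \emph{finite generation}: by Lemma \ref{th:hqu-finite}, $H_{q,u}$ is generated over $W_{q,u}$ by finitely many homogeneous elements $h_1,\dots,h_k$; their images in $V$ (after inverting $u$, hence after multiplying by suitable powers of $u^{\pm 1}$ to land in degree $i$) generate $V$ over $W_{\bar q}$, because inverting $u$ turns the $W_{q,u}$-action into the $W_{\bar q}$-action in the relevant degree and any monomial $q^a u^b \partial_q^c$ applied to a generator becomes, up to an invertible power of $u$, a $W_{\bar q}$-monomial. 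For the \emph{pointwise annihilation}: given $x \in V$, lift it to a homogeneous element $u^{-N} \tilde x$ with $\tilde x \in H_{q,u}$; by Lemma \ref{th:u-holonomic} there is a nonzero homogeneous $w \in W_{q,u}$ with $w\tilde x = 0$. Writing $w = \sum_j f_j(t) q^{a_j} u^{b_j}$ (homogeneous forces $2a_j + \deg f_j \cdot(\text{something}) + \dots$ to be constant), dividing by the appropriate power of $u$ converts $w$ into a nonzero element of $W_{\bar q}$ annihilating $x$; here one must check the division leaves something nonzero, which follows since $u$ acts invertibly and $w \neq 0$. These two properties together are exactly the definition of holonomicity recalled in Section \ref{subsubsec:fl}.

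The step I expect to be slightly delicate—though not deep—is the bookkeeping that makes precise the phrase ``the $W_{q,u}$-structure becomes the $W_{\bar q}$-structure after inverting $u$'': one has to observe that $\nabla_{u\partial_q}$ and $q$ do not individually descend to operators of degree $0$, but the combinations $t = \nabla_{u\partial_q}$ (degree $0$ as an endomorphism after the identification $u = $ unit in each degree) and $\bar q = q/u$ do, and that the commutation relation is the correct Weyl one with the correct sign, matching \eqref{eq:fourier}. A clean way to organize this is to note that $\bK[u^{\pm 1}] \otimes_{\bK[u]} W_{q,u}$ contains $W_{\bar q}$ as the degree-$0$ part and that $\bK[u^{\pm 1}] \otimes_{\bK[u]} H_{q,u}$, being $2$-periodic, is recovered from its degree $i\bmod 2$ piece as a $W_{\bar q}$-module; then finiteness and the annihilator condition are inherited verbatim. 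I would also remark that holonomicity of $V$ is equivalent to $\bK(t)\otimes_{\bK[t]}V$ being finite-dimensional over $\bK(t)$, which is essentially Lemma \ref{th:bounded-dimension} restricted to one degree, giving an alternative one-line derivation once finite generation is in hand; I would probably present both, using Lemma \ref{th:bounded-dimension} as the quick route and the Lemma \ref{th:u-holonomic} argument as the conceptual one.
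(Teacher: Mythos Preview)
Your proposal is correct and follows essentially the same approach as the paper: invoke Lemma \ref{th:hqu-finite} for finite generation and Lemma \ref{th:u-holonomic} for pointwise annihilation, then transfer both through the identification of $W_{\bar q}$ with the degree-$0$ part of $\bK[u^{\pm 1}] \otimes_{\bK[u]} W_{q,u}$ by multiplying generators and annihilators by suitable powers of $u$. Your ``alternative route'' via Lemma \ref{th:bounded-dimension} is not really different, since that lemma is precisely the input to Lemma \ref{th:u-holonomic}; the paper just packages the annihilator step as a separate lemma before citing it here.
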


\begin{proof}
Lemma \ref{th:hqu-finite} implies that $\bK[u^{\pm 1}] \otimes_{\bK[u]} H_{q,u}$ is finitely generated over $\bK[u^{\pm 1}] \otimes_{\bK[u]} W_{q,u}$. Take generators for the even degree part, assumed to be homogeneous without loss of generality, and multiply each by a suitable power of $u$ so that they all lie in degree $0$. Note that $W_{\bar{q}}$ can be identified with the degree $0$ part of $\bK[u^{\pm 1}] \otimes_{\bK[u]} W_{q,u}$. Hence, it follows that our generators will generate the degree $0$ part of $\bK[u^{\pm 1}] \otimes_{\bK[u]} H_{q,u}$ over $W_{\bar{q}}$. Similarly, we know from Lemma \ref{th:u-holonomic} that for every degree $0$ element $x \in \bK[u,u^{-1}] \otimes_{\bK[u]} H_{q,u}$, there is a homogeneous $w \in \bK[u,u^{-1}] \otimes W_{q,u}$ such that $wx = 0$. After multiplying by the appropriate power of $u$, one can achieve that $w$ has degree $0$, which shows the required properties in the case of degree $0$. By multiplying with powers of $u$, it follows that the same holds in any even degree. The argument in odd degrees is parallel.
\end{proof}

We will now introduce some modified versions of $C_{q,u}$, following the general algebraic formalism from Section \ref{subsubsec:uq-weyl}. First of all, one can invert $q$, as in \eqref{eq:invert-q} and \eqref{eq:quotient-q}.

\begin{notation} \label{th:modify-c}
Starting with $C_{q,u}$ as in Notation \ref{th:c-q-u}, consider
\begin{align} 
\label{eq:q-plusminus}
& C_{q^{\pm 1},u} = \bK[q^{\pm 1}] \hat\otimes_{\bK[q]} C_{q,u}, \\
& \label{eq:cross-into-closed}
q^{-1}C_{q^{-1},u} = q^{-1}\bK[q^{-1}] \hat\otimes_{\bK[q]} C_{q,u},
\end{align}
where $\hat\otimes$ denotes $u$-adic completion (spelled out in Example \ref{th:example-modules}).
\end{notation}

These groups are not yet our actual target: in a subsequent step we invert a polynomial in $t = \nabla_{u\partial_q}$, following \eqref{eq:quotient-t}.

\begin{notation}
For nonzero $p(t) \in \bK[t]$, define
\begin{align}
\label{eq:invert-p-1}
& C_{q,1/p,u} = \bK[t,1/p] \otimes_{\bK[t]} C_{q,u}, \\
\label{eq:invert-p-2}
& C_{q^{\pm 1},1/p,u} = \bK[t,1/p] \hat\otimes_{\bK[t]} C_{q^{\pm 1},u}, \\
\label{eq:invert-p-3}
& q^{-1}C_{q^{-1},1/p,u} = \bK[t,1/p] \hat\otimes_{\bK[t]} q^{-1}C_{q^{-1},u}.
\end{align}
In \eqref{eq:invert-p-1}, we could have inserted a $u$-adic completion, but that would be redundant for grading reasons; while it is necessary for \eqref{eq:invert-p-2}. 
As usual, we write $H_{q,1/p,u}$, $H_{q^{\pm 1},1/p,u}$ and $q^{-1}H_{q^{-1},1/p,u}$ for the cohomology groups. The first of these is straightforward to describe, because of the absence of completion and the exactness of the localization functor:
\begin{equation}
H_{q,1/p,u} = \bK[t,1/p] \otimes_{\bK[t]} H_{q,u}.
\end{equation}
\end{notation}

\begin{lemma}
In the derived category $D(W_{q,u})$ from Section \ref{subsubsec:derived}, there is an exact triangle
\begin{equation} \label{eq:desired-triangle}
\xymatrix{
C_{q,1/p,u} \ar[r] & C_{q^{\pm 1},1/p,u} \ar[r] & q^{-1}C_{q^{-1},1/p,u}
\ar@/^1pc/[ll]^-{[1]}
}
\end{equation}
\end{lemma}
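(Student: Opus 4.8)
The strategy is to obtain \eqref{eq:desired-triangle} by applying the general machinery of Section \ref{subsubsec:derived}, specifically the exact triangle \eqref{eq:aq-triangle}, to the dg module $A_{q,u} = C_{q,u}$, and then transporting the resulting triangle along the localisation functor $\bK[t,1/p] \hat\otimes_{\bK[t]} (-)$. First I would record that $C_{q,u}$ is indeed an object of the category underlying $D(W_{q,u})$: it is complete and $u$-torsionfree (this is part of Notation \ref{th:c-q-u}), and it is a dg module over $W_{q,u} \iso W_{t,u}$, with $t$ acting by the closed-string connection $\nabla_{u\partial_q}$. Second, I would check the extra hypothesis needed for \eqref{eq:aq-triangle}, namely that the $u = 0$ reduction $A_q = C_q$ is $q$-torsionfree. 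This holds because $C_q = C[q]$ with a $q$-linearly-deformed differential, and $C[q]$ is a free (hence torsionfree) $\bK[q]$-module on which multiplication by $q$ is visibly injective; the deformed differential is $\bK[q]$-linear, so $q$ acts injectively on the chain level, a fortiori on $C_q$ itself as a complex (the point is simply that $q$ is a non-zero-divisor on the underlying graded module). With this in hand, Lemma \ref{th:invert-q-2} and Lemma \ref{th:filtered-les} give the exact triangle \eqref{eq:aq-triangle}
\begin{equation*}
\xymatrix{C_{q,u} \ar[r] & C_{q^{\pm 1},u} \ar[r] & q^{-1} C_{q^{-1},u} \ar@/^1pc/[ll]^-{[1]}}
\end{equation*}
in $D(W_{q,u})$, where the first two terms and the third are exactly \eqref{eq:q-plusminus}, \eqref{eq:cross-into-closed}, matching the notation in Notation \ref{th:modify-c}.

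Third, I would apply the exact endofunctor $\bK[t,1/p]\hat\otimes_{\bK[t]}(-)$ of $D(W_{q,u}) = D(W_{t,u})$ to this triangle. Section \ref{subsubsec:derived} establishes that completed localisation in the sense of \eqref{eq:quotient-t} preserves filtered quasi-isomorphisms and hence descends to an exact endofunctor of the derived category (this uses Lemma \ref{th:invert-t}). Applying it termwise sends the three vertices to \eqref{eq:invert-p-3}? — one must be slightly careful here about the order of operations. By definition \eqref{eq:invert-p-1}--\eqref{eq:invert-p-3}, the functor sends $C_{q,u}$ to $C_{q,1/p,u}$, sends $C_{q^{\pm 1},u}$ to $C_{q^{\pm 1},1/p,u}$, and sends $q^{-1}C_{q^{-1},u}$ to $q^{-1}C_{q^{-1},1/p,u}$; since an exact functor carries exact triangles to exact triangles, one obtains \eqref{eq:desired-triangle} directly. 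The only subtlety is that the localisation and the $u$-adic completion are being interleaved: in \eqref{eq:invert-p-2}, \eqref{eq:invert-p-3} one tensors with $\bK[t,1/p]$ and then $u$-completes, which is precisely the operation \eqref{eq:quotient-t} that Section \ref{subsubsec:derived} analyses, so no ambiguity arises. For \eqref{eq:invert-p-1} the completion is redundant (grading reasons, as noted in the text), so $C_{q,1/p,u} = \bK[t,1/p]\otimes_{\bK[t]}C_{q,u}$ already agrees with the completed version; this is consistent with the functor being applied termwise.

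I do not anticipate a serious obstacle: every ingredient — the exact triangle \eqref{eq:aq-triangle}, the $q$-torsionfreeness of $C_q$, the exactness of completed $t$-localisation — is already available in the excerpt. The one point requiring genuine care is verifying that $C_q$ is $q$-torsionfree so that Lemma \ref{th:invert-q-2} applies; this is where one must look at the explicit form of the deformed differential and confirm that passing from $C_{q,u}$ to its $u = 0$ reduction does not introduce $q$-torsion (equivalently, one may invoke the last sentence of the proof of Lemma \ref{th:invert-q-2}, which deduces $q$-torsionfreeness of $A_{q,u}$ from that of $A_q$, and run the argument in the concrete case at hand). After that, the proof is a formal application of the derived-category formalism, and I would present it in two or three lines citing \eqref{eq:aq-triangle}, Lemma \ref{th:invert-q-2}, and the remarks on localisation functors in Section \ref{subsubsec:derived}.
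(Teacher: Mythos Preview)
Your proposal is correct and follows exactly the paper's approach: verify that $C_q$ is $q$-torsionfree (the paper just says ``by construction'', since $C_q = C[q]$ as a graded $\bK[q]$-module), invoke \eqref{eq:aq-triangle} to get the triangle before localisation, and then apply the exact endofunctor $\bK[t,1/p]\hat\otimes_{\bK[t]}(-)$ from Section \ref{subsubsec:derived}. The paper's proof is two sentences long and omits the verifications you spell out, but the logical content is identical.
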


\begin{proof}
The $u = 0$ specialization of $C_{q,u}$ is $C_q$, which by construction is $q$-torsionfree. Hence, as discussed in \eqref{eq:aq-triangle}, there is an exact triangle
\begin{equation}
\xymatrix{
C_{q,u} \ar[r] & C_{q^{\pm 1},u} \ar[r] & q^{-1}C_{q^{-1},u}
\ar@/^1pc/[ll]^-{[1]}
}
\end{equation}
As mentioned in Section \ref{subsubsec:derived}, inverting $p$ and completing is an exact functor; applying that gives \eqref{eq:desired-triangle}.
\end{proof}

\begin{lemma} \label{th:generically-acyclic}
There is a $p(t)$ and an isomorphism of $W_{q,u}$-modules,
\begin{equation}
\bK[t,1/p] \otimes_{\bK[t]} H_{q,u} \iso q^{-1}H_{q^{-1},1/p,u}[-1].
\end{equation}
\end{lemma}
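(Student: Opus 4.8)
The strategy is to exploit the exact triangle \eqref{eq:desired-triangle} together with the finite‑generation statements already established, showing that the middle term $C_{q^{\pm 1},1/p,u}$ becomes acyclic after inverting a suitable $p(t)$. Concretely, I would first analyze $H_{q^{\pm 1},1/p,u}$. By Lemma \ref{th:holonomic}, $\bK[u^{\pm 1}] \otimes_{\bK[u]} H_{q,u}$ is holonomic in the classical sense in each degree, hence by Lemma \ref{th:localize-d-module} there is a nonzero $p(t)$ such that $\bK[t,1/p] \otimes_{\bK[t]}$ (of that $D$‑module) is a finite rank vector bundle over $\bK[t,1/p]$ on which $\partial_t = -q/u$ acts as a connection; in particular it is finitely generated, i.e. finite‑dimensional over $\bK(t)$ after a further localization. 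Translating back through the relation $\bar q = q/u$ (Notation \ref{th:bar-q-module}) and remembering that $q$ acts invertibly on $C_{q^{\pm 1},u}$, I expect to conclude that the cohomology of $C_{q^{\pm 1},1/p,u}$ vanishes: inverting $q$ kills the part that survives to a vector bundle in the Fourier‑dual variable, because that bundle is supported where $q$ (equivalently $\bar q^{-1}$) is invertible and finite‑dimensional, and the $u$‑completion then forces it to zero. This mirrors the label in Figure \ref{fig:diagram} "acyclicity of the mapping cone".

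Second, I would make the acyclicity argument precise using the $u$‑filtration. The key point is that $H_{q^{\pm 1},1/p,u}/u H_{q^{\pm 1},1/p,u}$ should vanish — or more carefully, that $\bK[t,1/p]\otimes_{\bK[t]}(\bK[q^{\pm 1}]\otimes_{\bK[q]} H_q)$ is zero, which follows from Lemma \ref{th:hq-support}: there $p(t) q^r$ annihilates $H_q$, so after inverting $q$ and then $p(t)$, the group $H_q$ is killed, hence $H_{q^{\pm 1},1/p}=0$. Then I would use Lemma \ref{th:invert-q}, Lemma \ref{th:invert-t} and Lemma \ref{th:completion} (exactly the $u$‑completeness bookkeeping developed in Section \ref{subsec:u-theory}) to lift this from the $u=0$ reduction to the completed module $C_{q^{\pm 1},1/p,u}$: a complete $u$‑torsionfree dg module whose $u=0$ reduction is acyclic is itself acyclic. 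Choosing the polynomial $p$ to be (a multiple of) the one from Lemma \ref{th:hq-support}, possibly enlarged to also satisfy the requirements of Lemma \ref{th:holonomic} and Lemma \ref{th:localize-d-module}, makes both ingredients available simultaneously.

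Third, with $C_{q^{\pm 1},1/p,u}$ acyclic in $D(W_{q,u})$, the exact triangle \eqref{eq:desired-triangle} gives an isomorphism $C_{q,1/p,u}[1] \iso q^{-1}C_{q^{-1},1/p,u}$ in $D(W_{q,u})$, hence an isomorphism of $W_{q,u}$‑modules on cohomology
\[
\bK[t,1/p]\otimes_{\bK[t]} H_{q,u} = H_{q,1/p,u} \iso q^{-1}H_{q^{-1},1/p,u}[-1],
\]
using the identification of $H_{q,1/p,u}$ recorded just above (the absence of completion in \eqref{eq:invert-p-1} for grading reasons). This is exactly the claimed statement.

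The main obstacle I anticipate is the acyclicity of $C_{q^{\pm 1},1/p,u}$, i.e. making sure that inverting $q$ and then inverting $p(t)$ really does kill all cohomology and not just the $u=0$ reduction, while correctly handling the interaction of the two completions (the $q$‑adic completion built into \eqref{eq:q-plusminus} versus the $u$‑adic completion). The clean way around this is precisely to reduce everything to the $u=0$ level via Lemma \ref{th:hq-support} and then invoke the completeness/torsionfreeness machinery (Lemmas \ref{th:completion}, \ref{th:completion-2}, \ref{th:invert-q}, \ref{th:invert-q-2}, \ref{th:invert-t}) to transport acyclicity back up — so the subtlety is bookkeeping rather than genuinely hard analysis. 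One should double‑check that $p(t)$ can be chosen uniformly across the (finitely many relevant) degrees, which is fine since $H_q$ and $H_{q,u}$ are bounded below and eventually $2$‑periodic by Lemmas \ref{th:generic-t} and \ref{th:bounded-dimension}.
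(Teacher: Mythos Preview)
Your proposal is correct and takes essentially the same approach as the paper: show that the $u=0$ reduction of $C_{q^{\pm 1},1/p,u}$ is $\bK[q^{\pm 1},t,1/p]\otimes_{\bK[t]} C_q$ (via Lemmas \ref{th:invert-q} and \ref{th:invert-t}), use Lemma \ref{th:hq-support} to choose $p$ so that this is acyclic, conclude filtered acyclicity of $C_{q^{\pm 1},1/p,u}$, and read off the desired isomorphism from the exact triangle \eqref{eq:desired-triangle}. Your first paragraph's detour through holonomicity and Lemma \ref{th:localize-d-module} is unnecessary (and the heuristic there about ``inverting $q$ kills the vector bundle part'' is not an argument); the paper, like your second paragraph, goes straight to Lemma \ref{th:hq-support}.
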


\begin{proof}
By a combination of Lemmas \ref{th:invert-q} and \ref{th:invert-t}, we know that the $u = 0$ reduction of $C_{q^{\pm 1},1/p,u}$ is
\begin{equation} \label{eq:qtp}
\bK[q^{\pm 1}, t,1/p] \otimes_{\bK[t]} C_q.
\end{equation}
Lemma \ref{th:hq-support} says that for a suitable choice of $p$, this space is acyclic. In that case, $C_{q^{\pm 1},1/p,u}$ is filtered acyclic, hence isomorphic to the zero object in $D(W_{q,u})$, which means that the remaining nontrivial morphism in \eqref{eq:desired-triangle} is an isomorphism in that category, hence induces an isomorphism on cohomology.
\end{proof}

\subsubsection{The wrapped Fukaya category}
We choose some Weinstein structure on $\hat{N}$.

\begin{notation}
Let $\scrA$ be the full subcategory of the wrapped category of $\hat{N}$, whose objects are co-cores for the Weinstein structure. We think of this as an $A_\infty$-algebra, by taking the direct sum of all morphism spaces.
\end{notation}


\begin{lemma} \label{th:closed-open-iso}
(i) $\scrA$ is smooth.

(ii) The closed-open and open-closed maps
\begin{align}
\label{eq:clop} & H^* \longrightarrow \mathit{HH}^*(\scrA), \\
\label{eq:opcl} & \mathit{HH}_*(\scrA) \longrightarrow H^{*+n}
\end{align}
are isomorphisms.
\end{lemma}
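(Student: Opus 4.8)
\textbf{Proof strategy for Lemma \ref{th:closed-open-iso}.}

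The plan is to deduce both statements from the general structure theory of Weinstein manifolds, rather than proving anything from scratch. The key point is that $\hat{N}$, being a finite-type Weinstein manifold (Assumption \ref{th:anticanonical-divisor}), is \emph{nondegenerate} in the sense of \cite{ganatra13}: this is precisely the content of the generation-type results of \cite{many, gps2, gao}, which show that the co-cores of a Weinstein handle decomposition split-generate the wrapped Fukaya category. Once nondegeneracy is in hand, the conclusions of \cite[Theorem 1.1 and Theorem 1.3]{ganatra13} apply directly to $\scrW(\hat N)$, and I would transfer them to $\scrA$ via the split-generation statement.

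For part (i), smoothness, I would argue as follows. By the generation result, $\scrA$ (the full subcategory on the co-cores, viewed as an $A_\infty$-algebra via the direct sum of morphism spaces) split-generates $\scrW(\hat N)$, so the inclusion induces a Morita equivalence between $\scrA$ and $\scrW(\hat N)$. Smoothness (perfectness of the diagonal bimodule in $D(\scrA \otimes \scrA^{\mathrm{opp}})$, in the sense of Section \ref{subsubsec:smoothness}) is a Morita-invariant property, so it suffices to know that $\scrW(\hat N)$ is smooth; this is part of \cite[Theorem 1.1]{ganatra13} for nondegenerate Liouville manifolds. Alternatively, one may invoke the explicit statement in \cite[Theorem 1.4]{many} (or \cite{nonproper}) that the wrapped category of any Weinstein manifold, presented via co-cores, is homologically smooth.

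For part (ii), I would identify $H = \mathit{SH}^*(\hat N)$ with $\mathit{HH}^*(\scrW(\hat N))$ and $\mathit{HH}_*(\scrW(\hat N))$ respectively via the closed-open and open-closed maps of \cite{ganatra13}, again using that $\scrA$ and $\scrW(\hat N)$ have isomorphic Hochschild (co)homology by Morita invariance. The isomorphism \eqref{eq:clop} is the nondegeneracy conclusion of \cite[Theorem 1.1]{ganatra13} (the closed-open map being an isomorphism is in fact equivalent to nondegeneracy). The isomorphism \eqref{eq:opcl} then follows from \cite[Theorem 1.3]{ganatra13}, i.e.\ the Calabi-Yau/duality statement: for a nondegenerate $\hat N$ the open-closed map is an isomorphism, with the degree shift by $\mathrm{dim}_{\bC}(M) = \tfrac12\mathrm{dim}_{\bR}(\hat N)$ dictated by the grading conventions for $\mathit{HH}_*$ in Section \ref{subsec:conventions}(d). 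I would also remark that \cite[Theorem 1.4]{many} states the open-closed isomorphism explicitly for Weinstein manifolds, which is the most direct citation.

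The main obstacle is purely bookkeeping rather than mathematical: one must check that the grading and orientation conventions of \cite{ganatra13, many} match those fixed in Section \ref{subsec:conventions} and Section \ref{subsubsec:gradings-and-orientations} (in particular the sign of $u$, cf.\ Section \ref{subsec:conventions}(e), and the cohomological-versus-homological grading on $\mathit{HH}_*$), so that the shift in \eqref{eq:opcl} comes out as $+\mathrm{dim}_{\bC}(M)$ and not its negative. Since none of the deformation-theoretic machinery of the paper is needed here --- this lemma concerns the undeformed category --- I would keep the proof to a citation of the above results together with the Morita-invariance remark, as is done in the excerpt.
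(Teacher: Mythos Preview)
Your proposal is correct and matches the paper's own treatment essentially line for line: the paper also gives no independent argument, but simply records (in the sentence preceding the lemma) that results of \cite{many, gps2, gao} establish nondegeneracy of Weinstein manifolds in the sense of \cite{ganatra13}, whence \cite[Theorems 1.1 and 1.3]{ganatra13} yield (i) and (ii), with \cite[Theorem 1.4]{many} cited explicitly for \eqref{eq:opcl}. Your added remark about Morita invariance when passing from $\scrW(\hat N)$ to the full subcategory $\scrA$ on co-cores is a welcome clarification the paper leaves implicit.
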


These are general properties of Weinstein manifolds (as explained in \cite{nonproper}, results in \cite{many, gps2, gao} establish that all Weinstein manifolds are nondegenerate in the sense of \cite{ganatra13}, so one can apply \cite[Theorem 1.1 and Theorem 1.3]{ganatra13}; the part concerning \eqref{eq:opcl} is also explicitly stated in \cite[Theorem 1.4]{many}).

\begin{lemma} \label{th:a-proper} (see Section \ref{subsubsec:bs})
$H^*(\scrA)$ is a finitely generated $\bK[t]$-module, where $t$ acts by multiplication with the image of the Borman-Sheridan class under \eqref{eq:clop}, followed by the forgetful map $\mathit{HH}^*(\scrA) \rightarrow H^*(\scrA)$.
\end{lemma}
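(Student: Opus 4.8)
\textbf{Plan of proof for Lemma \ref{th:a-proper}.}

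The statement is the open-string counterpart of Lemma \ref{th:h-finite}, so the plan is to deduce it from that lemma together with the isomorphisms of Lemma \ref{th:closed-open-iso}. First I would recall the commutative diagram relating the closed-open and open-closed maps to the module structures: by a standard compatibility (the open-closed map is a map of modules over the closed-open algebra, and the forgetful map $\mathit{HH}^*(\scrA) \to H^*(\scrA)$ is compatible with the $\mathit{HH}^*(\scrA)$-module structure on $\mathit{HH}_*(\scrA)$), the diagram
\begin{equation}
\xymatrix{
\mathit{HH}_*(\scrA) \ar[r]^-{\eqref{eq:opcl}}_-{\iso} \ar[d]_-{t} & H^{*+\mathrm{dim}_{\bC}(M)} \ar[d]^-{b\,\cdot} \\
\mathit{HH}_*(\scrA) \ar[r]^-{\eqref{eq:opcl}}_-{\iso} & H^{*+\mathrm{dim}_{\bC}(M)}
}
\end{equation}
commutes, where on the left $t$ acts via the composite $H^* \to \mathit{HH}^*(\scrA) \to H^*(\scrA)$ of Lemma \ref{th:a-proper} (using that the image of the Borman-Sheridan class under \eqref{eq:clop} acts on $\mathit{HH}_*(\scrA)$ and, transported through the open-closed isomorphism, this action is multiplication by $b$ on $H$). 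This is the key point to nail down: that the cap-product action of the Hochschild cohomology class coming from $b$ corresponds, under the open-closed map, to the pair-of-pants product with $b$ on symplectic cohomology. It follows that \eqref{eq:opcl} intertwines the $\bK[t]$-module structure on $\mathit{HH}_*(\scrA)$ with the $\bK[t]$-module structure on $H$ from Lemma \ref{th:h-finite}.

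Next I would invoke Lemma \ref{th:h-finite}: $H$ is finitely generated over $\bK[t]$, so by the above $\mathit{HH}_*(\scrA)$ is finitely generated over $\bK[t]$ as well. To pass from $\mathit{HH}_*(\scrA)$ to $H^*(\scrA)$ I would use smoothness (Lemma \ref{th:closed-open-iso}(i)): for a smooth $A_\infty$-algebra over $\bK$ there is a nondegenerate pairing relating $\mathit{HH}_*(\scrA)$ and $\mathit{HH}^*(\scrA)$ (the Shklyarov/Van den Bergh duality already cited in the proof of Corollary \ref{th:degree-bound}), which in particular shows $\mathit{HH}^*(\scrA) \cong H^*$ via \eqref{eq:clop} carries the same finiteness; and the forgetful map $\mathit{HH}^*(\scrA) \to H^*(\scrA)$ is a map of $\bK[t]$-modules, with $H^*(\scrA)$ a finitely generated $\mathit{HH}^*(\scrA)$-module (indeed $\scrA$ being smooth is proper-dual-like enough that $\mathit{HH}^*(\scrA)$ acts on $H^*(\scrA)$ with $H^*(\scrA)$ finitely generated; concretely $H^*(\scrA)$ is a subquotient controlled by $\mathit{HH}^*$). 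Hence $H^*(\scrA)$ is finitely generated over $\bK[t]$.

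The main obstacle I anticipate is the first step: verifying precisely that the Hochschild-theoretic action of the class $b \in \mathit{HH}^*(\scrA)$ matches multiplication by $b$ on $H$ under the open-closed map, i.e.\ that \eqref{eq:clop} and \eqref{eq:opcl} are module maps over the Borman-Sheridan element in a compatible way. This is a module-structure compatibility for the open-closed and closed-open maps; it is part of the standard package (e.g.\ as in \cite{ganatra13}), but one must be careful that the relevant $\bK[t]$-actions on both sides are literally the ones appearing in Lemmas \ref{th:h-finite} and \ref{th:a-proper}. Once that is in place, the rest is formal finite-generation bookkeeping, exactly parallel to the closed-string arguments of Section \ref{subsubsec:manipulate-h}. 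Note that this lemma is also the input for assumption (iii) of Corollary \ref{th:end-of-algebra}, via the relation $t = [\mu_{\scrA_q}^{0,(1)}] = [\kappa_{\scrA_q}]|_{q=0}$-type identification coming from $\alpha = q\beta$ and the fact that the deformed Fukaya category $\scrA_q$ has curvature $\mu^0_q$ whose linear term represents the image of $b$.
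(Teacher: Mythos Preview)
Your approach has a genuine gap at the step where you pass from finite generation of $\mathit{HH}^*(\scrA)$ (or $\mathit{HH}_*(\scrA)$) over $\bK[t]$ to finite generation of $H^*(\scrA)$. The forgetful map $\mathit{HH}^*(\scrA) \to H^*(\scrA)$ lands only in the diagonal part $\prod_X H^*(\scrA(X,X))$ and is not surjective even there; it sees nothing of the off-diagonal hom spaces $H^*(\scrA(X,Y))$ for $X \neq Y$, which are typically infinite-dimensional over $\bK$ (wrapped Floer cohomology). So knowing that $\mathit{HH}^*(\scrA)$ is finitely generated over $\bK[t]$ does not formally imply the same for $H^*(\scrA)$. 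The smoothness/Shklyarov-pairing remark is a red herring: that pairing lives on Hochschild homology and gives no control over the size of individual hom spaces. Your claim that ``$H^*(\scrA)$ is a finitely generated $\mathit{HH}^*(\scrA)$-module'' is precisely the missing content, and it is not a formal consequence of smoothness.

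The paper's proof is quite different and does not attempt to reduce to Lemma \ref{th:h-finite} or to the isomorphisms of Lemma \ref{th:closed-open-iso}. Instead it cites two external geometric results: first, \cite[Theorem 1.1(c)]{pomerleano21}, which shows directly that the wrapped Fukaya category $\scrA$ is \emph{proper} over $\mathit{SH}^0(\hat{N})$, meaning each $H^*(\scrA(X,Y))$ is a finitely generated $\mathit{SH}^0(\hat{N})$-module; second, \cite[Lemma 5.38]{ganatra-pomerleano20}, which identifies $\mathit{SH}^0(\hat{N})$ with the polynomial ring $\bK[b]$. The first of these is a substantive Floer-theoretic statement (proved via filtrations on wrapped Floer complexes analogous to \eqref{eq:gp-spectral-sequence}), not something deducible from the closed-open or open-closed isomorphisms alone.
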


\begin{notation}
We write $\scrA_q$ for the curved deformation of $\scrA$ associated to $\alpha$, as in Section \ref{section:deform-fuk}. On the closed string side, write $C_q^{\operatorname{diag}}$ for the space $C[q]$ (or equivalently $C[[q]]$, because $C$ is bounded below) equipped with the differential deformed by $\alpha$, in the sense of Definition \ref{th:deformed-structures}(i); and $H_q^{\operatorname{diag}}$ for its cohomology.
\end{notation}

\begin{lemma} \label{th:weak-2} (see Section \ref{subsubsec:deformed})
$\bK[q^{\pm 1}] \otimes_{\bK[q]} H_q^{\operatorname{diag}}$ is a finitely generated $\bK[q^{\pm 1}]$-module (equivalently, it is of finite dimension over $\bK$ in each degree).
\end{lemma}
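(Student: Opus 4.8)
The statement to prove is Lemma \ref{th:weak-2}: that $\bK[q^{\pm 1}] \otimes_{\bK[q]} H_q^{\operatorname{diag}}$ is finitely generated over $\bK[q^{\pm 1}]$, equivalently finite-dimensional over $\bK$ in each degree. The natural approach is to deduce this from the already-established properties of the module-deformed symplectic cohomology $H_q$, rather than reworking everything for the diagonal version. The key geometric input should be the closed-open map from Lemma \ref{th:closed-open-iso}(ii), or more precisely its $q$-deformed counterpart: the deformed closed-open map $\mathit{CO}_q: C_q^{\operatorname{diag}} \to C^*(\scrA_q)$ from Section \ref{subsubsec:deform-co} is a chain map, and (by a filtered/$q$-adic argument reducing to the undeformed case, where \eqref{eq:clop} is an isomorphism) it is a filtered quasi-isomorphism, hence a quasi-isomorphism after inverting $q$. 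So on cohomology $\bK[q^{\pm 1}] \otimes_{\bK[q]} H_q^{\operatorname{diag}} \iso \bK[q^{\pm 1}] \otimes_{\bK[q]} \mathit{HH}^*(\scrA_q)$, and similarly Lemma \ref{th:closed-open-iso}(ii) gives $\bK[q^{\pm 1}] \otimes_{\bK[q]} H_q \iso \bK[q^{\pm 1}] \otimes_{\bK[q]} \mathit{HH}_*(\scrA_q)$ (up to a degree shift) via the deformed open-closed map. Thus the Lemma will follow once we know that Hochschild homology and cohomology of $\scrA_q$ (after inverting $q$) agree in each degree up to finite-dimensional ambiguity, together with the already-proved Lemma \ref{th:weak-1} which gives finite-dimensionality of $\bK[q^{\pm 1}] \otimes_{\bK[q]} H_q$ in each degree.

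First I would make precise the comparison between $\mathit{HH}^*$ and $\mathit{HH}_*$ of the $q$-inverted deformed category. The cleanest route is: $\scrA$ is smooth (Lemma \ref{th:closed-open-iso}(i)), hence so is $\scrA_q$ as a curved deformation over $\bK[[q]]$ in the appropriate sense, and after inverting $q$ one obtains a category over $\bK((q))$ which is smooth; combined with properness (which after inverting $q$ follows from Lemma \ref{th:a-proper}, since $\bK[q^{\pm 1}] \otimes_{\bK[q]} H^*(\scrA)$ is finite-dimensional over $\bK((q))$ in each degree once $t$ — equivalently $q \cdot (\text{Borman--Sheridan})$ — becomes invertible... more carefully, one should phrase this via the finite generation over $\bK[t]$ and the fact that $b$ maps to an invertible-up-to-nilpotent element after inverting $q$), smoothness and properness give a nondegenerate Shklyarov pairing identifying $\mathit{HH}_*$ with the dual of $\mathit{HH}^*$ up to a shift by $\mathrm{dim}_{\bC}(M)$, hence the two are abstractly isomorphic degreewise over $\bK((q))$. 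Alternatively, and perhaps more in the spirit of the rest of the paper, one avoids the Fukaya side entirely: the map $H_q^{\operatorname{diag}} \to H_q$ obtained from $\iota_q$-type considerations, or the pair-of-pants module structure, lets one bootstrap from $H_q$ directly. Concretely, $C_q^{\operatorname{diag}}$ and $C_q$ have the same underlying complex $C[q]$ with differentials differing only by the choice of $L_\infty$ versus $L_\infty$-module deformation structure; for grading/boundedness reasons the associated $q$-adic spectral sequences have the same $E_1$-page (namely $H = \mathit{SH}^*(\hat N)$ with some differentials), so finite-dimensionality in each degree of $\bK[q^{\pm 1}] \otimes_{\bK[q]} H_q$ forces the same for $\bK[q^{\pm 1}] \otimes_{\bK[q]} H_q^{\operatorname{diag}}$.

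I would therefore structure the proof as follows. Step one: observe that $C_q^{\operatorname{diag}} = (C[q], \delta_q^{\operatorname{diag}})$ and $C_q = (C[q], \delta_q^{\operatorname{module}})$ share the underlying graded $\bK[q]$-module $C[q]$, with $C$ bounded below (degrees $\geq 0$, by Lemma \ref{th:boundedness}) and with cohomology $H$ bounded above (Lemma \ref{th:h-finite} gives finite generation over $\bK[t]$, hence boundedness). Step two: run the filtration of $C[q]$ by powers of $q$; since $C$ is bounded below, the filtration is exhaustive and bounded below in each fixed total degree, so the associated spectral sequence converges, and its $E_1$-term is $H((q))$-ish — precisely, in each fixed degree $d$ the complex $\bK[q^{\pm 1}] \otimes_{\bK[q]} C_q^{\operatorname{diag}}$ in that degree is built from finitely many copies of $H^{d-2k}$-graded pieces of $C$, matching the corresponding statement for $C_q$. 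Step three: invoke Lemma \ref{th:weak-1}, which tells us $\bK[q^{\pm 1}] \otimes_{\bK[q]} H_q$ is finite-dimensional in each degree; since the degreewise Euler-characteristic-type bound (number of generators contributed by each $H^{d-2k}$) is the same for the diagonal deformation, $\bK[q^{\pm 1}] \otimes_{\bK[q]} H_q^{\operatorname{diag}}$ is also finite-dimensional in each degree, which is the claim.

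\textbf{The main obstacle.} The delicate point is justifying that the two deformations $\delta_q^{\operatorname{diag}}$ and $\delta_q^{\operatorname{module}}$ really do yield the ``same size'' cohomology after inverting $q$ — i.e. making rigorous the spectral-sequence comparison, since a priori different differentials on the same associated graded could produce genuinely different-dimensional cohomology. The honest fix is probably not to compare them by hand but to route through the Fukaya category: prove that the deformed closed-open map $\mathit{CO}_q$ is a filtered quasi-isomorphism (reducing to the undeformed isomorphism \eqref{eq:clop} by the standard $q$-filtration argument, exactly as used repeatedly in Section \ref{subsec:ainfty}), so that $\bK[q^{\pm 1}] \otimes_{\bK[q]} H_q^{\operatorname{diag}} \iso \mathit{HH}^*(\bK((q)) \otimes \scrA_q)$, and then use smoothness of $\scrA$ (Lemma \ref{th:closed-open-iso}(i)) together with the properness statement extracted from Lemma \ref{th:a-proper} to apply the Shklyarov duality pairing \cite{shklyarov07}, giving a degreewise isomorphism $\mathit{HH}^*(\bK((q)) \otimes \scrA_q) \iso \mathit{HH}_{\mathrm{dim}_{\bC}(M) - *}(\bK((q)) \otimes \scrA_q)^\vee \iso (\bK[q^{\pm 1}] \otimes_{\bK[q]} H_{\mathrm{dim}_{\bC}(M)-*})^\vee$, finite-dimensional by Lemma \ref{th:weak-1}. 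I expect the write-up in Section \ref{subsubsec:deformed} to take this second route, with the verification of properness over $\bK((q))$ (that $H^*(\scrA)$ being finitely generated over $\bK[t]$ with $t = $ Borman--Sheridan implies degreewise finite-dimensionality after inverting $q$) being the one genuinely new little argument needed.
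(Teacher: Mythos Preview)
The paper's proof is entirely different from your approach. Rather than any internal comparison, it quotes an external result (Theorem \ref{th:bsv-1}, imported from \cite{pomerleano-seidel24}) which explicitly computes $H_q^{\mathrm{diag}}$ as a graded $\bK[q]$-module --- in fact showing $H_q \iso H_q^{\mathrm{diag}} \iso H^*(M;\bK)[q] \oplus \bigoplus_{w\geq 1} H^*(D;\bK)z^w$ with the $q$-action described --- and after inverting $q$ this becomes $H^*(M;\bK)[q^{\pm 1}]$ (Corollary \ref{th:bsv-1b}), from which the lemma is immediate. The proof in the paper is literally one line: ``These follow immediately from the statement above (in fact, they are substantially weaker).''

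Your route 2 has a genuine gap at the properness step. You want $\bK((q)) \otimes \scrA_q$ to be proper so that Shklyarov duality applies, and you try to justify this via Lemma \ref{th:a-proper}. But that lemma concerns $H^*(\scrA)$ --- the \emph{undeformed} category --- as a $\bK[t]$-module, where $t$ acts by the image of the Borman--Sheridan class. This does not translate into properness of $\scrA_q$: first, $\scrA_q$ is \emph{curved}, so $(\mu^1_q)^2 \neq 0$ and ``cohomology of morphism spaces'' (the usual meaning of properness) is not even defined; second, the formal variable $q$ in $\scrA_q$ and the endomorphism $t$ in Lemma \ref{th:a-proper} play different roles --- inverting $q$ does not make $t$ invertible, and your parenthetical ``$b$ maps to an invertible-up-to-nilpotent element after inverting $q$'' has no clear meaning. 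The paper itself uses Lemma \ref{th:a-proper} only to establish properness of the \emph{fibre} category $\scrA_t$ over $\bK[t,1/p]$ (see the proof of Lemma \ref{th:exploit-a}(iii)), not of $\scrA_q$ over $\bK((q))$. A workable internal route would instead need to deform the smooth Calabi--Yau structure on $\scrA$ to $\scrA_q$, giving $\mathit{HH}^*(\scrA_q) \iso \mathit{HH}_{*-n}(\scrA_q)$ directly without properness; but you have not proposed this, and it is not the ``little argument'' you anticipate.
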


\begin{lemma} \label{th:exploit-a}
$\scrA$ and $\scrA_q$ satisfy the conditions (i)--(v) from Corollary \ref{th:end-of-algebra}, where the constant $d$ in (v) is $\mathrm{dim}_{\bC}(M)-1$.
\end{lemma}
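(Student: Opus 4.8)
The plan is to verify conditions (i)--(v) of Corollary \ref{th:end-of-algebra} in turn; (i), (iv), (v) will be immediate from the structural input already cited, (iii) will follow from Lemma \ref{th:a-proper} once the two $t$-actions are matched, and (ii) is the only step requiring real work. Condition (i) is exactly Lemma \ref{th:closed-open-iso}(i). For (iv) and (v) I would use the closed--open and open--closed isomorphisms \eqref{eq:clop}, \eqref{eq:opcl} together with Lemma \ref{th:boundedness}: choosing the Hamiltonian so that $C$ sits in degrees $[0,\mathrm{dim}_{\bR}(N)-1]$, the cohomology $H$ lies in the same range, hence $\mathit{HH}^*(\scrA)\iso H$ vanishes in negative degrees (this is (iv)), while $\mathit{HH}_*(\scrA)\iso H^{*+\mathrm{dim}_{\bC}(M)}$ is concentrated in degrees $\leq \mathrm{dim}_{\bR}(N)-1-\mathrm{dim}_{\bC}(M)=\mathrm{dim}_{\bC}(M)-1$, using $\mathrm{dim}_{\bR}(N)=\mathrm{dim}_{\bR}(M)=2\mathrm{dim}_{\bC}(M)$; this gives (v) with the asserted $d$.

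For (iii) the point is to identify the $t$-action. At chain level the $q$-linear coefficient of $\mu_q^\bullet$ is the (undeformed) closed--open image of $\partial_q\alpha=\beta$, so $[\mu_{\scrA_q}^{(1)}]\in\mathit{HH}^0(\scrA)$ is the image of the Borman--Sheridan class under \eqref{eq:clop}; passing to arity-$0$ components, $[\mu_{\scrA_q}^{0,(1)}]\in H^0(\scrA)$ is its further image under the forgetful map $\mathit{HH}^0(\scrA)\to H^0(\scrA)$. Thus multiplication by $[\mu_{\scrA_q}^{0,(1)}]$ on $H^*(\scrA)$ is exactly the operator considered in Lemma \ref{th:a-proper}, which tells us $H^*(\scrA)$ is finitely generated over $\bK[t]$; localising then gives (iii) for \emph{any} nonzero polynomial $p$, in particular for the $p$ produced in the next step.

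The substantive part is (ii). I would transport the problem to the closed-string side: the deformed closed--open map $\mathit{CO}_q\colon C_q^{\operatorname{diag}}\to C^*(\scrA_q)$ is a filtered quasi-isomorphism (by the standard spectral sequence over powers of $q$, starting from Lemma \ref{th:closed-open-iso}(ii)), and a ring map, so $\mathit{HH}^*(\scrA_q)\iso H_q^{\operatorname{diag}}$ as $\bK[q]$-algebras. Since $\partial_q\mu_{\scrA_q}=\mathit{CO}_q(\partial_q\alpha)$ at chain level and $\beta$ is automatically a $\delta_q^{\operatorname{diag}}$-cocycle (the brackets $\ell^{\geq 2}(\beta,\dots)$ land in negative degrees, where $C$ vanishes), the class $[\kappa_{\scrA_q}]$ corresponds to the $q$-deformed Borman--Sheridan class $[\beta]_q\in H_q^{\operatorname{diag}}$ and the $t$-action $[\kappa_{\scrA_q}]\smile$ to multiplication by $[\beta]_q$. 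I would then rerun the Nakayama-type arguments of Lemmas \ref{th:hq-finite} and \ref{th:hq-support} with $H_q^{\operatorname{diag}}$ in place of $H_q$: the reduction $H_q^{\operatorname{diag}}/qH_q^{\operatorname{diag}}$ injects into $H$, which is finitely generated over $\bK[t]$ with $t$ acting by the Borman--Sheridan class (Lemma \ref{th:h-finite}), so $H_q^{\operatorname{diag}}$ is finitely generated over $\bK[q,t]$; combining this with the per-degree finite-dimensionality of $\bK[q^{\pm1}]\otimes_{\bK[q]}H_q^{\operatorname{diag}}$ (Lemma \ref{th:weak-2}) produces a nonzero $p(t)$ and $r>0$ with $q^rp(t)$ acting as zero on $H_q^{\operatorname{diag}}$; evaluating on the unit gives $q^rp([\beta]_q)=0$, i.e.\ $q^rp([\kappa_{\scrA_q}])=0$ in $\mathit{HH}^{2r}(\scrA_q)$, which is (ii).

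The main obstacle is the bookkeeping inside (ii): checking that the deformed closed--open map is a ring isomorphism compatible with the $t$-actions and carries $[\kappa_{\scrA_q}]$ to $[\beta]_q$. Once that identification is secured, the finiteness steps are a routine adaptation of the arguments already proved for the module version $H_q$, and feeding the verified conditions (i)--(v) into Corollary \ref{th:end-of-algebra} finishes the proof (the constant $d$ in (v) being $\mathrm{dim}_{\bC}(M)-1$, as claimed).
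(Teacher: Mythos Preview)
Your treatment of (i), (iii), (iv), (v) matches the paper's proof essentially verbatim, including the degree count for (v).

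For (ii), your route works but is more laborious than the paper's. The paper does \emph{not} use that $\mathit{CO}_q$ is a ring map, nor does it identify $[\kappa_{\scrA_q}]$ with any specific closed-string class, nor does it rerun the Nakayama-type arguments of Lemmas \ref{th:hq-finite}--\ref{th:hq-support} on the diagonal side. Instead, it uses $\mathit{CO}_q$ only as a $\bK[q]$-module isomorphism $H_q^{\operatorname{diag}}\iso \mathit{HH}^*(\scrA_q)$ (from the $q$-filtration and Lemma \ref{th:closed-open-iso}(ii)), transports Lemma \ref{th:weak-2} to conclude that $\bK[q^{\pm1}]\otimes_{\bK[q]}\mathit{HH}^*(\scrA_q)$ is finite-dimensional in each degree, and then works \emph{intrinsically} with the Hochschild product $\smile$: since $[\kappa_{\scrA_q}]\in\mathit{HH}^0(\scrA_q)$, its powers $1,[\kappa_{\scrA_q}],[\kappa_{\scrA_q}]^{\smile 2},\dots$ all lie in the finite-dimensional degree-$0$ piece after inverting $q$, so some nonzero polynomial $p$ satisfies $p([\kappa_{\scrA_q}])=0$ there, whence $q^r p([\kappa_{\scrA_q}])=0$ in $\mathit{HH}^{2r}(\scrA_q)$. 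This sidesteps exactly the ``main obstacle'' you flag: no ring structure on $H_q^{\operatorname{diag}}$ is needed, and the argument applies to \emph{any} degree-$0$ element of $\mathit{HH}^*(\scrA_q)$, not just one coming from the closed-string side.
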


\begin{proof}
(i) is Lemma \ref{th:closed-open-iso}(i). 

Concerning (ii), by a $q$-filtration argument and Lemma \ref{th:closed-open-iso}(ii), the deformed closed-open map 
\begin{equation}
H_q^{\operatorname{diag}} \longrightarrow \mathit{HH}^*(\scrA_q)
\end{equation}
defined in Section \ref{subsubsec:deform-co}, is an isomorphism. From that and Lemma \ref{th:weak-2}, one sees that $\bK[q^{\pm 1}] \otimes_{\bK[q]} \mathit{HH}^*(\scrA_q)$ is finite-dimensional over $\bK$ in each degree. As a consequence, for any $x \in \mathit{HH}^0(\scrA_q)$ there is a nonzero polynomial $p$ such that $p(x) = 0 \in \bK[q^{\pm 1}] \otimes_{\bK[q]} \mathit{HH}^*(\scrA_q)$. This means that $q^r p(x) = 0 \in \mathit{HH}^{2r}(\scrA_q)$ for some $r>0$. In particular, that applies to $x = [\kappa_{\scrA_q}]$.

(iii) By definition, $[\mu_{\scrA_q}^{0,(1)}]$ is the image of $b$ under $H \rightarrow \mathit{HH}^*(\scrA) \rightarrow H^*(\scrA)$. Hence, the desired result follows from Lemma \ref{th:a-proper}.

(iv) follows from the fact that \eqref{eq:clop} is an isomorphism and that $H$ vanishes in negative degrees. Similarly, (v) holds because \eqref{eq:opcl} is an isomorphism and $H$ vanishes in degrees $\geq \mathrm{dim}_{\bR}(M)$.
\end{proof}


\subsubsection{Open-closed comparison\label{subsubsec:proof}}
We now adopt the notation from Section \ref{subsubsec:manipulate-a}, starting with the cyclic complex $A_{q,u}$ from \eqref{eq:final-cyclic}, and constructing $q^{-1}A_{q^{-1},1/p,u}$ by applying \eqref{eq:invert-q} and then \eqref{eq:quotient-t}. This is parallel to the closed strings constructions from Notation \ref{th:modify-c}, and in fact we have:

\begin{lemma} \label{th:open-closed-w}
There is an isomorphism of $W_{q,u}$-modules, 
\begin{equation}
q^{-1}H(A_{q^{-1},1/p,u}) \iso q^{-1}H_{q^{-1},1/p,u}.
\end{equation}
\end{lemma}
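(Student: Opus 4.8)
The plan is to establish the isomorphism by exhibiting the cyclic open-closed map as a morphism in the derived category $D(W_{q,u})$, applying the necessary localisations, and then invoking the finiteness and comparison results already assembled. Concretely, I would first recall that the deformed cyclic open-closed map of Section \ref{section:deformed-oc} gives a chain map $\mathit{OC}_{S^1,q}: \mathit{CC}_*^+(\scrA_q) \to \mathit{CF}^{*+n}_{S^1,q}(H)$, which in the notation at hand is a $(q,u)$-linear chain map $A_{q,u} = \mathit{CC}_*^+(\scrA_q) \to C_{q,u}[n]$ (shifting degrees by $n = \mathrm{dim}_{\bC}(M)$); by the Theorem at the end of Section \ref{subsec:intertwine} it commutes up to homotopy with $\nabla_{u\partial_q}$ on both sides. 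Using Lemma \ref{th:homotopy-dq} (or rather the analogous statement with $t$ and $q$ in their appropriate roles), this upgrades $\mathit{OC}_{S^1,q}$ to a morphism in $D(W_{q,u})$.

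The next step is to check that this morphism is an isomorphism in $D(W_{q,u})$, i.e.\ a filtered quasi-isomorphism. Its $u = 0$ reduction is the $q$-deformed (non-equivariant) open-closed map $\mathit{OC}_q: C^+_*(\scrA_q) \to C_q[n]$; by a $q$-filtration argument this reduces to the undeformed open-closed map, which induces the isomorphism \eqref{eq:opcl} of Lemma \ref{th:closed-open-iso}(ii) on cohomology. Here I would need to be slightly careful that $C^+_*(\scrA_q)$ (the ``$+$-version'' of the Hochschild/cyclic complex in the cohomologically unital setting, cf.\ Sections \ref{subsubsec:c-unital}--\ref{subsubsec:categories}) computes the same thing as the usual complex; that is exactly the content of Lemma \ref{th:collapse} and the discussion following it. So $\mathit{OC}_{S^1,q}$ is a filtered quasi-isomorphism, hence an isomorphism $A_{q,u} \iso C_{q,u}[n]$ in $D(W_{q,u})$.

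Finally, I would push this isomorphism through the two localisation functors. Inverting $q$ and passing to $q^{-1}\bK[q^{-1}]$-coefficients is an exact functor on $D(W_{q,u})$ (Section \ref{subsubsec:derived}, using Lemma \ref{th:invert-q-2} and Lemma \ref{th:filtered-les}), and the completed localisation $\bK[t,1/p]\hat\otimes_{\bK[t]}(-)$ with respect to a nonzero polynomial $p(t)$, $t = \nabla_{u\partial_q}$, is likewise an exact endofunctor (Lemma \ref{th:invert-t}). Applying these in the order used to define $q^{-1}C_{q^{-1},1/p,u}$ in Notation \ref{th:modify-c}, and correspondingly $q^{-1}A_{q^{-1},1/p,u}$ from Section \ref{subsubsec:manipulate-a}, transports the isomorphism to $q^{-1}A_{q^{-1},1/p,u} \iso q^{-1}C_{q^{-1},1/p,u}[n]$ in $D(W_{q,u})$, which gives the claimed isomorphism $q^{-1}H(A_{q^{-1},1/p,u}) \iso q^{-1}H_{q^{-1},1/p,u}$ of $W_{q,u}$-modules (up to the degree shift by $n$, which I would either absorb into the statement or note explicitly).

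The main obstacle I anticipate is bookkeeping rather than conceptual: making sure the ``$+$-version'' complex $\mathit{CC}_*^+(\scrA_q)$ appearing geometrically (via the two components $\mathit{OC}_{S^1,q,(1)}$, $\mathit{OC}_{S^1,q,(2)}$) is matched correctly with the algebraically-defined $A_{q,u}$, including the precise degree shift by $n = \mathrm{dim}_{\bC}(M)$ coming from \eqref{eq:opcl}, and that all the commutativity-up-to-homotopy statements are assembled into a genuine morphism in $D(W_{q,u})$ via Lemma \ref{th:homotopy-dq} (whose asymmetric $(t,q)$ form must be applied in the version appropriate to the Fourier-Laplace identification $t = \nabla_{u\partial_q}$). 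Once that is in place, exactness of the localisation functors does the rest essentially formally.
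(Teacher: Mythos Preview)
Your proposal is correct and follows essentially the same approach as the paper: take the cyclic open-closed map, show it is a filtered quasi-isomorphism via Lemma \ref{th:closed-open-iso}(ii) and a $q$-filtration argument, upgrade it to a morphism in $D(W_{q,u})$ via Lemma \ref{th:homotopy-dq}, and then apply the two localisation endofunctors. Your treatment is in fact more explicit than the paper's terse proof about the degree shift by $n = \dim_{\bC}(M)$ and about matching the $+$-version of the cyclic complex with the standard one.
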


\begin{proof}
Take the cyclic open-closed map $A_{q,u} \longrightarrow C_{q,u}$, from Section \ref{section:deformed-oc}. By Lemma \ref{th:closed-open-iso}(ii), this is a filtered quasi-isomorphism, and it satisfies the assumptions of Lemma \ref{th:homotopy-dq} (it is strictly $q$-linear, and commutes with connections up to chain homotopy). Hence, it is an isomorphism in the category $D(W_{q,u})$. Both modifications we have applied (passing to negative powers of $q$, and inverting $p$) are endofunctors of that category, hence the outcome of carrying them out on each side inherits the isomorphism.
\end{proof}

\begin{lemma} \label{th:ft-h}
Set $\bK = \bC$. Consider $\bC[t,1/p,u^{\pm 1}] \otimes_{\bC[t,u]} H_{q,u}$ as a module over $W_t$, with the connection $\nabla_{\partial_t} = -q/u$. Then, that connection has regular singularities (including at $\infty$); quasi-unipotent monodromies around each singularity; and each such monodromy has Jordan blocks of size $\leq \mathrm{dim}_{\bC}(M)$.
\end{lemma}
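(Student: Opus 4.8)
\textbf{Proof plan for Lemma \ref{th:ft-h}.}

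The plan is to combine the algebraic package from Corollary \ref{th:end-of-algebra} with the two identifications established just above (Lemma \ref{th:generically-acyclic} and Lemma \ref{th:open-closed-w}) which together express the object whose connection we want to control as the Fourier–Laplace transform of the cyclic homology of the fibre category $\scrA_{t,1/p}$. First I would invoke Lemma \ref{th:exploit-a}, which verifies hypotheses (i)--(v) of Corollary \ref{th:end-of-algebra} for $\scrA$ and $\scrA_q$, with $d = \mathrm{dim}_{\bC}(M) - 1$ playing the role of the Hochschild homology concentration bound (this uses that the open-closed map \eqref{eq:opcl} is an isomorphism and $H$ vanishes in degrees $\geq \mathrm{dim}_{\bR}(M)$, so $\mathit{HH}_*(\scrA)$ is concentrated in degrees $\leq \mathrm{dim}_{\bC}(M)-1$). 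Here one must be slightly careful about the choice of $p(t)$: Corollary \ref{th:end-of-algebra} requires $p$ so that $q^r p([\kappa_{\scrA_q}])$ vanishes and so that $\bK[t,1/p] \otimes_{\bK[t]} H^*(\scrA)$ is finitely generated; one takes $p$ to be a common multiple of that polynomial and the polynomial from Lemma \ref{th:generically-acyclic} (and also the one from Lemma \ref{th:hq-support}), which is harmless since enlarging $p$ only removes more fibres.

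With that choice of $p$, Corollary \ref{th:end-of-algebra} tells us that in each degree the cohomology of $q^{-1} A_{q^{-1},1/p,u^{\pm 1}}$ is finitely generated over $\bC[t,1/p]$, that $\nabla_{\partial_t} = -q/u$ acts as a connection with regular singularities and quasi-unipotent monodromy around every singular point including $\infty$, and (using assumption (v)) that the Jordan blocks have size $\leq d + 1 = \mathrm{dim}_{\bC}(M)$. Then I would transport this conclusion across the chain of isomorphisms: Lemma \ref{th:open-closed-w} identifies $q^{-1}H(A_{q^{-1},1/p,u})$ with $q^{-1}H_{q^{-1},1/p,u}$ as $W_{q,u}$-modules (hence, after inverting $u$, as modules with $\nabla_{\partial_t}$-action), and Lemma \ref{th:generically-acyclic} identifies $\bK[t,1/p] \otimes_{\bK[t]} H_{q,u}$ with $q^{-1}H_{q^{-1},1/p,u}[-1]$ as $W_{q,u}$-modules. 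Inverting $u$ throughout (a purely algebraic operation that commutes with taking cohomology and preserves the connection) then yields an isomorphism between $\bC[t,1/p,u^{\pm 1}] \otimes_{\bC[t,u]} H_{q,u}$ and $q^{-1} H(A_{q^{-1},1/p,u^{\pm 1}})$ up to a degree shift, compatible with $\nabla_{\partial_t}$ and with the identification $\partial_t \leftrightarrow -q/u$. Since regularity, quasi-unipotence, and the Jordan block bound are all invariant under isomorphism of $D$-modules and under degree shift, the stated properties for $\bC[t,1/p,u^{\pm 1}] \otimes_{\bC[t,u]} H_{q,u}$ follow.

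The main obstacle I anticipate is purely bookkeeping rather than conceptual: one must make sure that all three descriptions of the relevant module — the cyclic-homology side $q^{-1}A_{q^{-1},1/p,u^{\pm 1}}$, the closed-string side $q^{-1}H_{q^{-1},1/p,u^{\pm 1}}$, and the target $\bC[t,1/p,u^{\pm 1}] \otimes_{\bC[t,u]} H_{q,u}$ — carry \emph{the same} $W_{q,u}$-action (in particular that $t$ everywhere means $\nabla_{u\partial_q}$ and that the Fourier–Laplace renaming $\nabla_{\partial_t} = -q/u$ is applied consistently), and that the order of operations (invert $q$, then invert $p(t)$, then invert $u$) matches between Notation \ref{th:modify-c}ff.\ and Section \ref{subsubsec:manipulate-a}; the completeness/$u$-torsionfreeness hypotheses needed to apply Lemmas \ref{th:homotopy-dq}, \ref{th:invert-q}, \ref{th:invert-t} have all been checked in Section \ref{subsec:u-theory} and in Notation \ref{th:c-q-u}. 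Once the dictionary is pinned down, the result is a direct transport of Corollary \ref{th:end-of-algebra} along Lemmas \ref{th:generically-acyclic} and \ref{th:open-closed-w}.
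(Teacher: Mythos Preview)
Your proposal is correct and follows essentially the same approach as the paper's proof: invoke Lemma \ref{th:exploit-a} to verify the hypotheses of Corollary \ref{th:end-of-algebra} (with $d = \mathrm{dim}_{\bC}(M)-1$), then transport the resulting regularity, quasi-unipotence, and Jordan block bound across the isomorphisms of Lemmas \ref{th:open-closed-w} and \ref{th:generically-acyclic}. Your added care about choosing $p$ as a common multiple and tracking the $W_{q,u}$-module structures is sound and only makes explicit what the paper leaves implicit.
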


\begin{proof}
From Lemma \ref{th:exploit-a} and Corollary \ref{th:end-of-algebra}, we get corresponding properties for $\bC[u^{\pm 1}] \otimes_{\bC[u]} H(q^{-1}A_{q^{-1},1/p,u})$. Those are carried over to $\bC[u^{\pm 1}] \otimes_{\bC[u]} q^{-1}H_{q^{-1},1/p,u}$ by Lemma \ref{th:open-closed-w}, and then to $\bC[u^{\pm 1},t,1/p] \otimes_{\bC[t,u]} H_{q,u}$ by Lemma \ref{th:generically-acyclic}.
\end{proof}

The last missing puzzle pieces are: the classical Fourier-Laplace transform; and a final appeal to \cite{pomerleano-seidel24}, which here enters in a much more substantive way than before.

\begin{proof}[Proof of Theorems \ref{th:main} and \ref{th:main-2}]
By Lemma \ref{th:holonomic}, $\bC[u^{\pm 1}] \otimes_{\bC[u]} H_{q,u}$ is a holonomic $D$-module in each degree. Lemma \ref{th:ft-h} described the connection associated to the Fourier-Laplace transform of that $D$-module. Proposition \ref{th:fourier} and Corollary \ref{th:quasiunipotent} translate that into properties of the original $D$-module, or more precisely of the connection $\nabla_{u\partial_q}$ on $\bC[q^{\pm 1},u^{\pm 1}] \otimes_{\bC[u]} H_{q,u}$. By Theorem \ref{th:bsv-3}, this is the quantum connection in the form \eqref{eq:quantum-connection-with-u}.
\end{proof}

\begin{remark} \label{th:summary}
Let's summarize the argument, allowing for some expository simplifications: we use $\bK = \bC$ throughout; omit the notational details that distinguish various versions of the cyclic complex; and leave out the final algebraic step of inverting $u$. Roughly speaking, we'll be working our way up the left column of Figure \ref{fig:diagram}, from bottom (cyclic homology) to top (quantum cohomology).

The general algebraic material from Section \ref{subsec:fiber} associates to $\scrA_q$ an $A_\infty$-algebra $\scrA_t$ over $\bC[t]$. If $\mathit{CC}_*(\scrA_t)$ is its cyclic complex over $\bC[t]$, then the $u$-completed tensor product $\bC[t,1/p] \hat\otimes_{\bC[t]} \mathit{CC}_*(\scrA_t)$ is the cyclic complex of the category $\scrA_{t,1/p}$ over $\bC[t,1/p]$ obtained by removing finitely many values of $t$. The ``generic smoothness'' argument from Section \ref{subsubsec:smoothness} allows us to apply the theorem from \cite{petrov-vaintrob-vologodsky18} and obtain information about (periodic) cyclic homology over $\bC[t,1/p]$. The ``categorical Fourier-Laplace transform'' from Theorem \ref{th:noncommutative-fourier-transform} relates $\mathit{CC}_*(\scrA_t)$ to $q^{-1}\bC[q] \hat\otimes_{\bC[q]} \mathit{CC}_*(\scrA_q)$, and we can apply $\bC[t,1/p] \hat\otimes_{\bC[t]} -$ to both sides. At this point, we have obtained some understanding of the connection $\nabla_{\partial_t}$ on $\bC[t,1/p] \hat\otimes_{\bC[t]} (q^{-1}\bC[q] \hat\otimes_{\bC[q]} \mathit{CC}_*(\scrA_q))$, which is where things stand in Corollary \ref{th:end-of-algebra}. Now, we use the cyclic open-closed quasi-isomorphism to carry over the information to $q^{-1}C_{q^{-1},1/p,u} = \bC[t,1/p] \hat\otimes_{\bC[t]} (q^{-1}\bC[q^{-1}] \hat\otimes_{\bC[q]} C_{q,u})$. An acyclicity result and the exact triangle \eqref{eq:desired-triangle} show that this is quasi-isomorphic to $C_{q,1/p,u} = \bC[t,1/p] \hat\otimes_{\bC[t]} C_{q,u}$. At this point, for grading reasons, we can finally dispense with the completion $\hat\otimes_{\bC[t]}$ and consider it as an ordinary tensor product, which means that its cohomology is $\bC[t,1/p] \otimes_{\bC[t]} H_{q,u}$. Section \ref{subsec:finiteness-results} shows that we are in a context of holonomic $D$-modules, which allows us to apply classical results on Fourier-Laplace transforms (Proposition \ref{th:fourier}); the outcome being information about the connection $\nabla_{u\partial_q}$ on $\bC[q^{\pm 1}] \otimes_{\bC[q]} H_{q,u}$. Finally, Theorem \ref{th:bsv-3} identifies that with the quantum connection.
\end{remark}  

\subsection{Geometric ingredients\label{subsec:dan}}

\subsubsection{The Liouville domain\label{subsubsec:liouville2}}
Our first task is to recall how the pair $(M,D)$ gives rise to a Liouville domain $(N, \theta_N)$. We define $\omega_D$ to be the restriction of the symplectic form $\omega_M$ to the symplectic hypersurface $D$. Let 
\begin{equation} 
\pi: \scrL \longrightarrow D 
\end{equation} 
be the normal bundle of $D$, and $\scrL_0 \subset \scrL$ the zero section.  Choose a Hermitian metric $||\cdot||$ on $\scrL$, and set $\mu(v)=\frac{1}{2}||v||^2$. For any $\mu_0>0$, define $\scrL_{\mu < \mu_0} = \{v \in \scrL\;:\;\mu(v) < \mu_0\}$. We also choose a Hermitian connection $\nabla$ on $\scrL$ whose associated connection one-form $\alpha^{\nabla} \in \Omega^1(\scrL\setminus \scrL_0)$ satisfies 
\begin{equation} 
d\alpha^{\nabla} = -\pi^*(\omega_D).
\end{equation}  
The closed two-form
\begin{equation} 
\omega_{(||\cdot||, \nabla)} = d(\mu\cdot \alpha^{\nabla}) + \pi^*\omega_D
\end{equation} 
extends smoothly over $\scrL_0$, and is symplectic on $\scrL_{\mu < 1}$. Rotation of the fibers of $\scrL$ defines a Hamiltonian $S^1$-action on $(\scrL_{\mu < 1}, \omega_{(||\cdot||, \nabla)})$ with moment map $\mu$. We write the infinitesimal generator of this action as $\partial_\phi$.

For $\epsilon$ sufficiently small, the symplectic tubular neighborhood theorem shows that there is a symplectic embedding 
\begin{equation} 
\psi: \scrL_{\mu < 2\epsilon} \hookrightarrow M,  \quad  
\psi^*(\omega_M)=\omega_{(||\cdot||, \nabla)}, 
\end{equation} 
sending the zero section to $D$. We fix such an embedding, and let $UD$ be its image. The identification $\psi$ equips $(UD, \omega_{UD}=(\omega_M)_{|UD})$ with a Hamiltonian $S^1$-action. In a slight abuse of notation, we will let $\partial_\phi$ denote the infinitesimal generator of this circle action, and $\mu$ the moment map on $UD$ (as before, normalized so that it vanishes on $D$).

Our hypothesis that $D$ is Poincar\'e dual to the symplectic class implies that $\omega_M|(M \setminus D)$ is an exact symplectic form. 

\begin{lemma}
For a suitable choice of connection $\nabla$, there exists a primitive $\theta_{M \setminus D} \in \Omega^1(M \setminus D)$ of $\omega_M$, such that (after making $\epsilon$ smaller)
\begin{equation} \label{eq:theta-alpha}
\psi^*(\theta_{M\setminus D}) = (\mu-1)\alpha^{\nabla}.
\end{equation} 
\end{lemma}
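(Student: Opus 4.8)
The plan is to construct $\theta_{M\setminus D}$ by interpolating between the local model $(\mu-1)\alpha^\nabla$ near $D$ and an arbitrary global primitive away from $D$, and then to fix the discrepancy by a cohomological argument that constrains the choice of connection. First I would fix any primitive $\theta_0 \in \Omega^1(M\setminus D)$ of $\omega_M$, whose existence is exactly the exactness statement recalled just above (from $[D]=[\omega_M]$). On the punctured tubular neighbourhood $\psi(\scrL_{0 < \mu < 2\epsilon})$ we have two primitives of $\omega_M$, namely $\psi^*\theta_0$ and the model form $(\mu-1)\alpha^\nabla$ (which is a primitive there because $d((\mu-1)\alpha^\nabla) = d\mu\wedge\alpha^\nabla + (\mu-1)d\alpha^\nabla = d(\mu\alpha^\nabla) - \pi^*\omega_D = \omega_{(\|\cdot\|,\nabla)}$, pulled back by $\psi$). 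Their difference is a closed one-form $\tau$ on $\scrL_{0<\mu<2\epsilon}$, which deformation-retracts onto the circle bundle $\scrL_{\mu=\epsilon}$, so $[\tau] \in H^1(\scrL_{\mu=\epsilon};\bR)$.

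The key step is to arrange $[\tau] = 0$. The Gysin sequence of the circle bundle $\scrL_{\mu=\epsilon}\to D$ reads $H^1(D)\to H^1(\scrL_{\mu=\epsilon})\to H^0(D)\xrightarrow{\cup e}H^2(D)$, where $e = [\omega_D]$ (up to a positive constant) is the Euler class of $\scrL$, i.e. the normal bundle of $D$; since $D$ is a symplectic hypersurface, $e\neq 0$, so the connecting map $H^0(D)\to H^2(D)$ is injective and $H^1(\scrL_{\mu=\epsilon})\cong H^1(D)$, the pullback of a class from the base. Thus $[\tau]$ is the pullback $\pi^*\eta$ of some $\eta\in H^1(D;\bR)$. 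Now the point is that the term $(\mu-1)\alpha^\nabla$ depends on the choice of Hermitian connection $\nabla$: replacing $\nabla$ by $\nabla + \pi^*\gamma$ for a closed $\gamma\in\Omega^1(D)$ (with $d\gamma=0$, so that $d\alpha^\nabla=-\pi^*\omega_D$ is preserved) changes $\alpha^\nabla$ by $\pi^*\gamma$, hence changes $(\mu-1)\alpha^\nabla$ by $(\mu-1)\pi^*\gamma$ and shifts $[\tau]$ by $-\pi^*[\gamma]$ (the factor $(\mu-1)$ is homotopic to the constant $-1$ along the retraction onto $\scrL_{\mu=\epsilon}$, up to a harmless nonzero scalar which one absorbs). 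Since every class in $H^1(D;\bR)$ is represented by a closed one-form, we may choose $\gamma$ so that the new $[\tau]$ vanishes. After this adjustment of $\nabla$ (and re-choosing $\psi$, $\epsilon$ accordingly), $\tau$ is exact on $\scrL_{0<\mu<2\epsilon}$, say $\tau = df$ for $f \in C^\infty(\scrL_{0<\mu<2\epsilon})$; since this annulus retracts onto a compact circle bundle, $f$ extends to a smooth function on all of $M\setminus D$ that agrees with a fixed such $f$ near $D$ — concretely, cut off $f$ outside the neighbourhood using a bump function, which is legitimate because $df=\tau$ is already globally defined only near $D$ but $f$ itself can be extended arbitrarily.

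Finally I would set $\theta_{M\setminus D} := \psi_*\big((\mu-1)\alpha^\nabla\big)$ on $UD\setminus D$ and $\theta_{M\setminus D} := \theta_0 - df$ on $M\setminus D$, checking these agree on the overlap: there $\theta_0 - df = \theta_0 - \tau = \psi_*((\mu-1)\alpha^\nabla)$ by construction of $\tau$. Both pieces are primitives of $\omega_M$, so the glued one-form is a global primitive satisfying \eqref{eq:theta-alpha} by fiat on $UD$. The main obstacle is the cohomological bookkeeping in the middle step — verifying that the relevant class lives in the pullback part $\pi^*H^1(D)$ of $H^1(\scrL_{\mu=\epsilon})$ and that modifying $\nabla$ within the affine space of Hermitian connections compatible with $d\alpha^\nabla = -\pi^*\omega_D$ surjects onto that part — together with being careful that shrinking $\epsilon$ and re-choosing the tubular neighbourhood embedding $\psi$ after each modification does not disturb the normalization $\mu|_D = 0$ or the symplectic matching $\psi^*\omega_M = \omega_{(\|\cdot\|,\nabla)}$.
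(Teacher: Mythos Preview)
Your strategy is essentially the same as the paper's: form the closed one-form $\tau = \psi^*\theta_0 - (\mu-1)\alpha^\nabla$, observe that its class lies in $\pi^* H^1(D)$ (the paper cites \cite{diogolisi19} for $\pi^*\colon H^1(D)\xrightarrow{\cong}H^1(\scrL_{0<\mu<2\epsilon})$, while your Gysin argument is a fine self-contained substitute), then adjust $\nabla$ within connections of fixed curvature to kill $[\tau]$, and finally glue.

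The one genuine gap is in your computation of how $[\tau]$ shifts when $\nabla$ is replaced by $\nabla + \pi^*\gamma$. You account only for the change in $(\mu-1)\alpha^\nabla$, but the local symplectic form $\omega_{(\|\cdot\|,\nabla)} = d(\mu\alpha^\nabla) + \pi^*\omega_D$ also changes (by $d\mu\wedge\pi^*\gamma$), which forces the embedding $\psi$ to change as well, and hence $\psi^*\theta_0$ moves too. Your final paragraph flags the need to re-choose $\psi$, but not that this feeds back into $[\tau]$. The paper resolves this by working with a one-parameter family $\nabla_r$ (with $\partial_r\alpha^{\nabla_r} = \pi^*\beta_r$) and computing the derivative
\[
\partial_r\big(\psi_r^*\theta_{M\setminus D} - (\mu-1)\alpha^{\nabla_r}\big)
= d(i_{X_r}\psi_r^*\theta_{M\setminus D}) - i_{X_r}\omega_{(\|\cdot\|,\nabla_r)} - (\mu-1)\pi^*\beta_r,
\]
where $X_r$ generates the isotopy of embeddings. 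The first term is exact; the remaining two combine to a closed form extending over the zero-section with restriction $\pi^*\beta_r$ there, so the net shift in cohomology is $\pi^*[\beta_r]$. This confirms your expected answer, but the contribution from the moving $\psi$ is not negligible---it is what makes the combined expression extend across $\scrL_0$ and hence have a well-defined class computable by restriction to $D$.
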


\begin{proof}
Given any $\nabla$, the associated embedding $\psi$, and any primitive $\theta_{M \setminus D}$, consider the closed one-form
\begin{equation} \label{eq:diogolisi}
\psi^*(\theta_{M \setminus D}) - (\mu-1)\alpha^{\nabla} \in \Omega^1(\scrL_{\mu<2\epsilon} \setminus \scrL_0).
\end{equation}
If this is exact, then by using a suitable cutoff function to modify $\theta_{M \setminus D}$ one can achieve that \eqref{eq:theta-alpha} holds (on a smaller neighbourhood).

Let's look at what happens for a one-parameter family $\nabla_r$ of connections, all having the same curvature, so that $\partial_r \alpha^{\nabla_r} = \pi^*\beta_r$ for some closed one-forms $\beta_r \in \Omega^1(D)$. Differentiating the analogue of \eqref{eq:diogolisi} with respect to $r$ yields
\begin{equation} \label{eq:r-derivative}
\partial_r (\psi_r^*(\theta_{M \setminus D}) - (\mu-1)\alpha^{\nabla_r}) =
d(i_{X_r}\psi_r^*\theta_{M \setminus D}) - i_{X_r}\omega_{(||\cdot||, \nabla^r)} - (\mu-1) \pi^*\beta_r;
\end{equation}
here $X_r$ is the vector field such that $D\psi_r(X_r) = \partial_r \psi_r$, which vanishes along the zero-section. The first term on the right hand side of \eqref{eq:r-derivative} is exact. The other two terms combine to a closed one-form which extends over the zero-section, and which on the zero-section equals $\pi^*\beta_r$. Hence,
\begin{equation}
\partial_r [\psi_r^*(\theta_{M \setminus D}) - (\mu-1)\alpha^{\nabla_r}] = \pi^*[\beta_r] \in H^1(\scrL_{\mu<2\epsilon} \setminus \scrL_0).
\end{equation}
As already observed in \cite[Lemma 2.2]{diogolisi19}, $\pi^*: H^1(D) \rightarrow H^1(\scrL_{\mu<2\epsilon} \setminus \scrL_0)$ is an isomorphism. Hence, by starting with an arbitrary $\nabla_0$ and choosing $\beta_r$ appropriately, one can achieve that $\psi_1^*(\theta_{M \setminus D}) - (\mu-1)\alpha^{\nabla_1}$ is exact; which means that $\nabla_1$ can be used to obtain \eqref{eq:theta-alpha}.
\end{proof}

The Liouville domain $(N,\theta_N)$ is defined by 
\begin{equation} 
N\stackrel{\mathrm{def}}{=} M\setminus UD_{\mu<\epsilon}, \quad \theta_N\stackrel{\mathrm{def}}{=}(\theta_{M\setminus D})_{|N}. 
\end{equation} 
Here $UD_{\mu<\epsilon}$ denotes the locus of $UD$ where $\mu<\epsilon$. The boundary $\partial N = \{\mu=\epsilon\}$ is a circle bundle $\partial N \to D$. If we decompose the pull-back $\psi^*(Z_N)$ of the Liouville vector field into a base and fiber component (using the connection), the fiber component is a suitable negative multiple of the radial vector field. Therefore, the Liouville vector field points strictly outwards along $\partial N$. The Reeb field on $\partial N$ is
\begin{equation} 
R_{\partial N} = \textstyle\frac{1}{\epsilon-1}\partial_\phi. 
\end{equation} 
Thus, the Reeb flow is tangent to the fibers of $\partial N \to D$, and the set of periods of its orbits is
\begin{equation} 
\label{eq:setofperiods} \lbrace 1-\epsilon, 2(1-\epsilon), \cdots \rbrace. 
\end{equation}

\subsubsection{Floer complex of $N$} \label{subsubsection:nonnegativity}
Let $\hat{N}$ be the Liouville manifold associated to $N$. Recall from Section \ref{subsubsec:liouville} that $\rho$ denotes the radial coordinate on the cone $[1,\infty) \times \partial N \subset \hat{N}$. Since the Liouville flow exists for all (positive and negative) time, the embedding of the cone into $\hat{N}$ extends to 
\begin{equation}
(0,\infty) \times \partial N \to \hat{N}.
\end{equation}

To simplify the discussion which follows, we will assume that the constant $\epsilon$ involved in the construction of the Liouville domain $N$ has been taken to be less than $\frac{1}{4}$.  
Consider a time-independent quadratic Hamiltonian $h: \hat{N} \to \mathbb{R}$ which satisfies the following conditions: 
\begin{itemize} 
\itemsep.5em
\item On $[\frac{1}{2},\infty)\times \partial N$, $h=\frac{1}{2}\rho^2.$ 
\item Over $N \setminus (\frac{1}{4},1] \times \partial N$, $h$ is a $C^2$-small Morse function such that $dh(Z_N)>0$ along $\{ \frac{1}{4} \} \times \partial N$. We further require that the critical points of $h$ over $N \setminus (\frac{1}{4},1] \times \partial N$ have Morse index concentrated in $[0,\operatorname{dim}_\mathbb{R}(N)-1]$.
\item The Hamiltonian flow of $h$ has no periodic orbits in the shell $[\frac{1}{4},\frac{1}{2}] \times \partial N$.
\end{itemize}
The condition that the critical points of $h$ have Morse index concentrated in $[0,\operatorname{dim}_\mathbb{R}(N)-1]$ can always be achieved, by \cite[Theorem 8.1]{milnor65}. (In the notation there, set $W = N \setminus (\frac{1}{4},1] \times \partial N$, $V = \{ \frac{1}{4} \} \times \partial N$, and also reverse the sign of the Morse function.) These conditions imply that the non-constant periodic points of $h$ are precisely those in the level sets 
\begin{equation} 
\mathcal{Q}_d \stackrel{\mathrm{def}}{=}\{d(1-\epsilon)\} \times \partial N, \quad d\geq 1. 
\end{equation} 
The periodic flow along $\mathcal{Q}_d$ generates an $S^1$-action \begin{align} \gamma_d: S^1 \times \mathcal{Q}_d \to \mathcal{Q}_d. \end{align} These orbit sets are transversally non-degenerate and we perturb $h$ to a (time-dependent) nondegenerate Hamiltonian $H$ whose one-periodic orbits are explicitly determined. To ensure that this is done compatibly with the analysis in Section \ref{subsubsec:floer}, we set the constant $P$ from  Assumption \ref{th:no-p-orbits} to be: \begin{align} P=\sqrt{2}(1-\epsilon). \end{align}

For each critical submanifold $\mathcal{Q}_d$, we choose a small constant $\tau_d \in (0,1-\epsilon)$ which satisfies: \begin{itemize} \item $iP \notin [d(1-\epsilon)-\tau_d,d(1-\epsilon)+\tau_d]$ for any $i$. \end{itemize} We then let $\mathcal{UQ}_d$ denote the isolating shell:  
\begin{equation}
\mathcal{UQ}_d \stackrel{\mathrm{def}}{=}[d(1-\epsilon)-\tau_d,d(1-\epsilon)+\tau_d] \times \partial N. 
\end{equation}  
By construction, the isolating shell $\mathcal{UQ}_d$ does not contain any critical submanifolds besides $\mathcal{Q}_d$ and also \begin{align} \label{eq:shellsforHD} \text{does not intersect any of the level sets } \{ iP \} \times \partial N. \end{align}  

Over $\hat{N} \setminus \bigcup_d \mathcal{UQ}_d$, $H$ will be unperturbed, i.e. we will have $H=h$. Over each shell $\mathcal{UQ}_d$, we use the standard Morse-Bott perturbation procedure (see \cite[Section 2]{cfh96}, \cite[Appendix B]{kwonvankoert16}, \cite[Appendix C]{ritterzivanovic23}). Let  $\pi_d: \mathcal{UQ}_d \to \mathcal{Q}_d$ denote the natural projection map and choose a $C^2$-small Morse function  \begin{align} f_d:\mathcal{Q}_d \to \mathbb{R}. \end{align} We also choose a suitable cutoff function $\kappa_d:\mathcal{UQ}_d \to [0,1]$ which is $1$ near the critical level set $\mathcal{Q}_d$ and $0$ near the boundary of $\mathcal{UQ}_d$. Over $\mathcal{UQ}_d$, we then perturb $h$ as follows:  
\begin{equation}
\begin{aligned} 
& H_{|\mathcal{UQ}_d}: S^1 \times \mathcal{UQ}_d \to \mathbb{R} \\ 
& H_{|\mathcal{UQ}_d} =  h+\kappa_d \cdot(f_d \circ \gamma_d \circ (-\mathrm{id}_{S^1} \times \pi_d)).
\end{aligned}
\end{equation}

Provided the perturbing function $f_d$ is chosen sufficiently small (which we can achieve separately for each $d$), the periodic orbits of $H_{|\mathcal{UQ}_d}$ are in bijection with the critical points of $f_d.$ Moreover, if $c_d \in \operatorname{crit}(f_d)$ is a critical point of Morse index $\operatorname{deg}(c_d)$, then the corresponding periodic orbit $x_{c_{d}}$ is nondegenerate and has degree (see e.g.\ \cite[Section 3.1]{diogolisi19b}, which uses homological rather than cohomological grading conventions; generally, the calculation of Floer indices when one breaks Morse--Bott degeneracies goes back to \cite{Pozniak1994}): \begin{align} \label{eq:degreecomp} \operatorname{deg}(x_{c_{d}})= \operatorname{deg}(c_d).\end{align} 
It is clear that we can arrange that the perturbation $\tilde{H}=H-h$ is bounded and has bounded derivative $\partial_\rho\tilde{H}$. In particular, we assume that it is of the form \eqref{eq:perturbed-hamiltonian} along the cone. In view of \eqref{eq:shellsforHD}, we have $H=\frac{1}{2}\rho^2$ along $iP$-shells for $i\geq 1$. The nondegenerate Hamiltonian $H$ therefore satisfies all of the conditions needed for the analysis of \S \ref{subsubsec:floer}. The other important property of $H$ is the following:

\begin{proposition} \label{bounded-2} For the Hamiltonian $H$ constructed above, the complex $CF^*(H)$ is concentrated in degrees $[0,\operatorname{dim}_\mathbb{R}(N)-1].$
\end{proposition}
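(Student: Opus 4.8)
The plan is to compute the Floer complex of $H$ directly from the explicit list of its one-periodic orbits, using the fact that $H$ has been arranged to be a small Morse-Bott perturbation of the quadratic Hamiltonian $h$. There are two kinds of orbits to consider: the constant orbits, which are the critical points of $h|(N \setminus (\tfrac14,1] \times \partial N)$, and the nonconstant orbits $x_{c_d}$ arising from critical points $c_d$ of the perturbing functions $f_d$ on the critical submanifolds $\mathcal{Q}_d$. First I would dispose of the constant orbits: these were chosen (via \cite[Theorem 8.1]{milnor65}) to have Morse degree in $[0,\operatorname{dim}_{\bR}(N)-1]$, and the degree formula $\operatorname{deg}(x_{c_d}) = \operatorname{deg}(c_d)$ (the $d=0$ case) then places them in the asserted range. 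So the content of the statement is entirely about the nonconstant orbits on the shells $\mathcal{UQ}_d$, $d \geq 1$.

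For those, I would invoke the degree computation \eqref{eq:degreecomp}, which says $\operatorname{deg}(x_{c_d}) = \operatorname{deg}(c_d)$ where $c_d \in \operatorname{crit}(f_d)$ and $f_d$ is a Morse function on $\mathcal{Q}_d = \{d(1-\epsilon)\} \times \partial N$. Since $\mathcal{Q}_d$ is diffeomorphic to $\partial N$, which is a closed manifold of dimension $\operatorname{dim}_{\bR}(N) - 1$, the Morse indices $\operatorname{deg}(c_d)$ range over $[0, \operatorname{dim}_{\bR}(N)-1]$. Hence every nonconstant orbit also has degree in $[0,\operatorname{dim}_{\bR}(N)-1]$, and combining the two cases gives the proposition. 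The key point to articulate carefully is that the shift conventions in \eqref{eq:degreecomp} (the Conley-Zehnder index of the perturbed orbit equals the Morse index of the generating critical point, in cohomological normalization) are exactly those used in Section \ref{subsubsec:gradings-and-orientations}, so that no additional shift by $\tfrac12 \operatorname{dim}$ or by the dimension of the critical manifold creeps in; this is why the statement is clean. I would quote \cite[Section 3.1]{diogolisi19b} for this, noting the passage between their homological and our cohomological conventions.

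The main (and really the only) obstacle is bookkeeping of gradings: one must check that the normalization of the Conley-Zehnder index used in \eqref{eq:degreecomp}, together with the choice of grading operator $D_x$ from Section \ref{subsubsec:gradings-and-orientations} and the trivialization of $K_N^{-1}$ fixed in \eqref{eq:calabi-yau}, produces precisely the range $[0,\operatorname{dim}_{\bR}(N)-1]$ rather than some translate of it. In particular one should confirm that for the constant (Morse) part the identification of Floer degree with Morse degree holds on the nose, and that for the Morse-Bott part the relevant index is the Morse index of $c_d$ on $\mathcal{Q}_d$ with no correction term for the normal directions (these are accounted for by the fact that $h$ is already of the model quadratic form transverse to $\mathcal{Q}_d$, so the transverse Conley-Zehnder contribution is the ``expected'' one built into \eqref{eq:degreecomp}). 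Once these conventions are pinned down, the proof is a two-line consequence of the explicit description of orbits. I would therefore present the argument as: (1) recall the orbit list; (2) apply \eqref{eq:degreecomp} in the case $d=0$ together with the Morse-index bound from \cite{milnor65}; (3) apply \eqref{eq:degreecomp} for $d \geq 1$ together with $\dim \mathcal{Q}_d = \operatorname{dim}_{\bR}(N)-1$; (4) conclude.
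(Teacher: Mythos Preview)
Your proposal is correct and matches the paper's own proof essentially verbatim: the paper simply invokes \eqref{eq:degreecomp} for the nonconstant orbits and the Morse-index constraint built into the construction of $h$ for the constant ones. Your additional remarks on grading conventions are accurate but go beyond what the paper spells out; note only that the constant orbits are not literally the ``$d=0$ case'' of \eqref{eq:degreecomp} but rather critical points of $h$ itself, for which the identification of Floer and Morse degree is the standard one for $C^2$-small Morse Hamiltonians.
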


\begin{proof}[Proof of Lemma \ref{th:boundedness}] We use the Hamiltonian $H$ constructed above. Note that in view of \eqref{eq:degreecomp}, the gradings of non-constant orbits are concentrated in degrees $[0,\operatorname{dim}_\mathbb{R}(N)-1]$. By assumption, the constant orbits which arise as critical points of $h$ have degrees concentrated in $[0,\operatorname{dim}_\mathbb{R}(N)-1]$ as well and the result follows. \end{proof}


\subsubsection{The Borman-Sheridan class\label{subsubsec:bs}}
Let $H_\lambda:\hat{N} \to \mathbb{R}$ be a linear Hamiltonian of slope $\lambda>1$. The Borman-Sheridan class \begin{align} b_\lambda \in HF^0(H_\lambda) \end{align} was defined in \cite{ganatrapomerleano21,tonkonog19}. There is an acceleration map
\begin{equation} 
\mathfrak{ac}:  HF^0(H_\lambda) \to HF^0(H),
\end{equation} 
where $H$ is the quadratic Hamiltonian from Section \ref{subsubsection:nonnegativity}. We then set 
\begin{align} 
b \stackrel{\mathrm{def}}{=} \mathfrak{ac}(b_\lambda). 
\end{align}
As noted in Section \ref{subsec:finiteness-results}, the class $b$ admits a unique cochain level representative $\beta$, which gives rise to our Maurer-Cartan element. Our remaining task is to explain the cohomological finiteness properties of $b$ (Lemma \ref{th:h-finite}, Lemma \ref{th:a-proper}).

\begin{proof}[Proof of Lemma \ref{th:h-finite}]
This is very similar to a special case of \cite[Theorem 5.30]{ganatra-pomerleano20}; we will summarize the proof in a form suitable for our purpose. (Just like the definition of the Borman-Sheridan class, the argument in \cite{ganatra-pomerleano20} uses direct limits of Floer cohomologies of linear Hamiltonians as a model for symplectic cohomology, as opposed to the quadratic Hamiltonians used here; however, the isomorphism between the two models, given by acceleration maps, is compatible with pair-of-pants products, so all results about multiplicative structures carry over.) The argument from \cite{ganatra-pomerleano20} is based on the following properties of symplectic cohomology and the Borman-Sheridan class.
\begin{itemize} \itemsep.5em
\item
There is a multiplicative spectral sequence converging to symplectic cohomology, with
\begin{equation} \label{eq:gp-spectral-sequence} 
E_1^{pq} = \begin{cases}
H^q(N;\bK) & p = 0, \\
H^{p+q}(\partial N;\bK) z^p & p>0, \\
0 & p<0.
\end{cases}
\end{equation}
The powers of $z$ are just notation, which roughly speaking keeps track of winding numbers of orbits around $D$ (the labeling of the columns corresponds to an increasing filtration, and therefore the differentials are $d_r^{pq}: E_r^{pq} \rightarrow E_r^{p-r,q+r+1}$). Given two classes $\alpha_1, \alpha_2 \in H^*(\partial N)$, the product of $\alpha_1 z^{p_1}, \alpha_2 z^{p_2}$ on the $E_1$ page is the ordinary cup product:
\begin{equation} \label{eq:e1product} 
(\alpha_1 z^{p_1}) (\alpha_2 z^{p_2}) = (\alpha_1 \smile \alpha_2)z^{p_1+p_2}. 
\end{equation} 

\item
Let 
\begin{equation} \label{eq:f-filtration}
F_0 \subset F_1 \subset \cdots \subset \mathit{SH}^*(\hat{N})
\end{equation}
be the (bounded below exhaustive) filtration of symplectic cohomology whose associated graded is the $E_\infty$ page of our spectral sequence. By definition we have $b \in F_1$; and 
\begin{equation}
b^\mathrm{gr} \stackrel{\mathrm{def}}{=} 1_{\partial N}\,z \in H^0(\partial N;\bK)z = E_1^{1,-1}
\end{equation}
survives to $E_\infty$, where it yields the image of $b$ in $F_1/F_0$.  
\end{itemize}

Multiplicativity means that every page of the spectral sequence is a module over $\bK[b^{\mathrm{gr}}]$, compatibly with differentials. From \eqref{eq:e1product}, the $E_1$ page is finitely generated over $\bK[b^{\mathrm{gr}}]$. Define $Z_\infty^{**} \subset E_1^{**}$ to be the subspace of elements that survive to the $E_\infty$ page. This is a $\bK[b^{\mathrm{gr}}]$-submodule, hence finitely generated because $\bK[b^{\mathrm{gr}}]$ is a Noetherian ring. The $E_\infty$ page is by definition a quotient of $Z_\infty^{**}$, and therefore finitely generated over $\bK[b^{\mathrm{gr}}]$ as well. Finally, 
finite generation of the associated graded of \eqref{eq:f-filtration} over $\bK[b^{\mathrm{gr}}]$ implies that of $\mathit{SH}^*(\hat{N})$ over $\bK[b]$.
\end{proof}

\begin{proof}[Proof of Lemma \ref{th:a-proper}] By \cite[Theorem 1.1(c)]{pomerleano21}, $\mathcal{A}$ is proper over $SH^0(\hat{N})$; and \cite[Lemma 5.38]{ganatra-pomerleano20} shows that $\mathit{SH}^0(\hat{N})$ is a one-variable polynomial ring generated by $b$. These two statements together imply the desired result. 
\end{proof}

\subsubsection{Deformed symplectic cohomology\label{subsubsec:deformed}}
Finally, we summarize the results from \cite{pomerleano-seidel24} (partly overlapping with \cite{el-alami-sheridan24b}) which will be used in our argument.

\begin{theorem} \label{th:bsv-1}
There are isomorphisms of graded $\bK[q]$-modules
\begin{equation} \label{eq:bsv-1}
H^*(M;\bK)[q] \oplus \bigoplus_{w \geq 1} H^*(D;\bK)z^w \iso H_q \iso H_q^{\mathrm{diag}},
\end{equation}
where $z^w$ are formal symbols of degree $0$. The $q$-action on the left hand side has the following properties: it is the standard on $H^*(M;\bK)[q]$; and any element in $H^*(D;\bK)z^w$ will be mapped to $H^*(M;\bK)[q]$ by a sufficiently high power of $q$.
\end{theorem}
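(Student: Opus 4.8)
\textbf{Proof strategy for Theorem \ref{th:bsv-1}.}

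The plan is to compute $H_q$ and $H_q^{\mathrm{diag}}$ directly from the geometry of the pair $(M,D)$ by running a spectral sequence argument that deforms the one used in the proof of Lemma \ref{th:h-finite}, and then to match the two answers. First I would recall the multiplicative spectral sequence \eqref{eq:gp-spectral-sequence} converging to $\mathit{SH}^*(\hat N) = H$, with $E_1$ page $H^*(N;\bK)$ in column $p=0$ and $H^*(\partial N;\bK)z^p$ in columns $p>0$. Since $\partial N \to D$ is a circle bundle with Euler class $-[\omega_D]$ (equivalently, $-c_1(\scrL)$), the Gysin sequence identifies the column-$p$ contributions, for $p \geq 1$, after passing to the $E_2$ page where the fiberwise differential has been taken, with two copies of $H^*(D;\bK)$: one in ``$z^p$'' weight coming from $H^*(D)$ and one from $H^{*-1}(D) \cdot (\text{fiber class})$. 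The key geometric input is that the deformation by the Maurer-Cartan element $\alpha = q\beta$, where $\beta$ represents the Borman-Sheridan class, shifts the spectral sequence so that multiplication by $q$ in $H_q$ corresponds — up to the filtration — to multiplication by $b^{\mathrm{gr}} = 1_{\partial N}z$ followed by the appropriate weight shift; this is precisely the statement that the Borman-Sheridan class counts sections meeting $D$ once, so deforming the differential by $q\beta$ ``turns on'' the maps from weight-$w$ pieces to weight-$(w{+}1)$ pieces that were previously zero on $E_1$.

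The main steps, in order, would be: (1) set up the deformed spectral sequence for $H_q$ (respectively $H_q^{\mathrm{diag}}$), using that $C_q$ is $C$ with a differential deformed by $\alpha$, and that $\alpha$ has bounded-below $q$-order so no completion issues arise; (2) identify the $E_1$ page with $H^*(M;\bK)[q] \oplus \bigoplus_{w \geq 1} H^*(D;\bK)z^w$ — here the $H^*(M;\bK)$ summand comes from the fact that the $p=0$ column together with the ``half'' of each positive column closest to the divisor reassembles, via the long exact sequence of the pair $(M, M\setminus D)$ and the Thom isomorphism $H^{*-2}(D) \iso H^*(M, M\setminus D)$, into $H^*(M;\bK)$, while the complementary ``half'' of each column $w \geq 1$ survives as $H^*(D;\bK)z^w$; (3) show the deformed differential $\delta_q$ induces, on this page, exactly the $q$-module structure claimed, namely the standard one on $H^*(M;\bK)[q]$ and the property that high powers of $q$ push $H^*(D;\bK)z^w$ into $H^*(M;\bK)[q]$ — this is where one invokes that $b$ acts the way the PSS/acceleration description dictates, combined with a degeneration argument showing the deformed spectral sequence collapses at the relevant page for degree reasons (everything is concentrated in a bounded range of total degree by Lemma \ref{th:boundedness}); (4) observe that the same computation, run for the diagonal deformation $C_q^{\mathrm{diag}}$, produces the same $E_1$ page and the same induced structure, because the $L_\infty$ Maurer-Cartan element and the Hochschild (diagonal-bimodule) deformation element agree at first order and the higher-order terms are constrained identically; this gives the second isomorphism in \eqref{eq:bsv-1}.

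I expect the main obstacle to be step (3): controlling the deformed differential precisely enough to pin down the $\bK[q]$-module structure, rather than merely its associated graded. The subtlety is that $\delta_q = \ell^1 + \ell^2(\alpha,\cdot) + \tfrac12\ell^3(\alpha,\alpha,\cdot) + \cdots$ mixes the filtration by weight with powers of $q$, and one has to argue that the relevant higher operations either vanish for degree/dimension reasons or contribute only the expected ``multiplication by $b^{\mathrm{gr}}$'' terms. The cleanest route is probably to identify a second grading (by winding number around $D$, tracked by the $z$-variable) which is preserved by the undeformed differential and shifted by exactly one by each insertion of $\alpha$; then $\delta_q$ becomes weight-homogeneous if one assigns $q$ weight $1$, the deformed spectral sequence becomes a genuine (bi-graded) spectral sequence, and its convergence plus the explicit description of the $d_1$ differential as a cup product with the Euler class (from the Gysin sequence) forces the module structure. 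Establishing this bigrading compatibility rigorously, and checking that the acceleration-map comparison between linear and quadratic Hamiltonian models respects it, is the technical heart of the argument; the rest is bookkeeping with the Gysin and Thom isomorphisms.
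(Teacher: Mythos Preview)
This theorem is not proved in the present paper; it is quoted as input from the companion paper \cite{pomerleano-seidel24}. The only related material here is the informal ``toy model'' of Section \ref{subsec:toy}, which explicitly ignores pseudo-holomorphic curve contributions and is offered as motivation, not proof.

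Your proposed spectral-sequence strategy is in the right spirit and resembles that toy model, but it takes a genuinely different route from what \cite{pomerleano-seidel24} actually does. Rather than working entirely on $\hat N$ and analysing the $L_\infty$-deformation by $\alpha = q\beta$, that paper sets up a relative Floer theory on $M$ which allows trajectories to intersect $D$ (with $q$ recording intersection number), so that the Morse--Bott structure of a Hamiltonian with orbit sets over $D$ yields the decomposition $H^*(M;\bK)[q] \oplus \bigoplus_w H^*(D;\bK)z^w$ directly. The comparison with the $L_\infty$-deformation framework used here is then a separate step, carried out in \cite[Section 10]{pomerleano-seidel24} (as the paper notes after the statement).

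The gap in your approach is exactly where you flag it: step (3). Your claim that ``each insertion of $\alpha$ shifts winding number by exactly one'' is the geometric heart of the matter, but it is not a formal consequence of the setup. The operations $\ell^m$ involve genus-zero curves in $\hat N$ with $m{+}1$ cylindrical ends, and controlling how these interact with the winding-number filtration requires precisely the kind of SFT neck-stretching or relative-curve analysis that \cite{pomerleano-seidel24} carries out in its own model. Without that, you cannot rule out that higher $\ell^m(\alpha^{\otimes m-1},\cdot)$ terms scramble the filtration, and hence cannot conclude that the deformed spectral sequence degenerates or that the $\bK[q]$-module structure is as claimed.
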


Note that the theorem above concerns both $H_q$ and $H_q^{\mathrm{diag}}$. In general (meaning, for an abstract choice of Maurer-Cartan element) those could be different theories, but in the Borman-Sheridan case they coincide. In fact, the main part \cite{pomerleano-seidel24} works with a different definition of deformed symplectic cohomology, in which there is no distinction between those two groups; the translation into the framework used here is explained in \cite[Section 10]{pomerleano-seidel24} (in particular, see \cite[Section 10.6]{pomerleano-seidel24} for $H_q^{\mathrm{diag}}$). After inverting $q$, we get a simpler statement which is sufficient for our purpose:

\begin{corollary} \label{th:bsv-1b}
There are isomorphisms of graded $\bK[q^{\pm 1}$-modules
\begin{equation} 
H^*(M;\bK)[q^{\pm 1}] \iso \bK[q^{\pm 1}] \otimes_{\bK[q]} H_q \iso
\bK[q^{\pm 1}] \otimes_{\bK[q]} H_q^{\mathrm{diag}}.
\end{equation}
\end{corollary}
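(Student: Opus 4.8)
The statement to be proved, Corollary \ref{th:bsv-1b}, is an immediate consequence of Theorem \ref{th:bsv-1}, so the plan is simply to extract it by inverting $q$. Concretely, I would start from the two $\bK[q]$-module isomorphisms in \eqref{eq:bsv-1},
\begin{equation} \label{eq:plan-start}
H^*(M;\bK)[q] \oplus \bigoplus_{w \geq 1} H^*(D;\bK)z^w \iso H_q \iso H_q^{\mathrm{diag}},
\end{equation}
and apply the exact functor $\bK[q^{\pm 1}] \otimes_{\bK[q]} (-)$ to all three terms. Since localisation of modules is exact and preserves isomorphisms, this immediately yields
\begin{equation} \label{eq:plan-localize}
\bK[q^{\pm 1}] \otimes_{\bK[q]} \Big( H^*(M;\bK)[q] \oplus \bigoplus_{w \geq 1} H^*(D;\bK)z^w \Big)
\iso \bK[q^{\pm 1}] \otimes_{\bK[q]} H_q \iso \bK[q^{\pm 1}] \otimes_{\bK[q]} H_q^{\mathrm{diag}},
\end{equation}
so it remains only to identify the left-hand side with $H^*(M;\bK)[q^{\pm 1}]$ as a graded $\bK[q^{\pm 1}]$-module.

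\textbf{The key step.} The crux is the observation, already recorded in the last sentence of Theorem \ref{th:bsv-1}, that every element of the summand $\bigoplus_{w \geq 1} H^*(D;\bK)z^w$ is sent to $H^*(M;\bK)[q]$ by a sufficiently high power of $q$. I would phrase this as follows. Write $V = H^*(M;\bK)[q] \oplus \bigoplus_{w \geq 1} H^*(D;\bK)z^w$ for the left-hand side of \eqref{eq:plan-start} with its given $\bK[q]$-module structure, and $V' = H^*(M;\bK)[q] \subset V$ for the (honest $\bK[q]$-submodule) first summand. The inclusion $V' \hookrightarrow V$ becomes an isomorphism after inverting $q$: it is injective because $V'$ is a direct summand, and it is surjective after localisation precisely because each generator $xz^w$ ($x \in H^*(D;\bK)$, $w \geq 1$) satisfies $q^N (xz^w) \in V'$ for some $N = N(x,w)$, so that $xz^w = q^{-N}\cdot q^N(xz^w)$ lies in the image of $\bK[q^{\pm 1}] \otimes_{\bK[q]} V' \to \bK[q^{\pm 1}] \otimes_{\bK[q]} V$. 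Hence
\begin{equation} \label{eq:plan-identify}
\bK[q^{\pm 1}] \otimes_{\bK[q]} V \;\iso\; \bK[q^{\pm 1}] \otimes_{\bK[q]} V' \;=\; \bK[q^{\pm 1}] \otimes_{\bK[q]} \big(H^*(M;\bK)[q]\big) \;=\; H^*(M;\bK)[q^{\pm 1}],
\end{equation}
where the final equality uses $\bK[q^{\pm 1}] \otimes_{\bK[q]} \bK[q] = \bK[q^{\pm 1}]$ together with the fact that the tensor product commutes with direct sums over a basis of $H^*(M;\bK)$. All of these identifications respect the grading, since $q$ has degree $2$ and the $z^w$ have degree $0$, and the powers of $q$ used are homogeneous. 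Combining \eqref{eq:plan-identify} with \eqref{eq:plan-localize} gives exactly the chain of graded $\bK[q^{\pm 1}]$-module isomorphisms claimed in Corollary \ref{th:bsv-1b}.

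\textbf{Main obstacle.} There is essentially no serious obstacle: the whole content is imported from Theorem \ref{th:bsv-1} (itself quoted from \cite{pomerleano-seidel24}), and the localisation argument is routine commutative algebra. The only point requiring a small amount of care is making sure that the ``$q$ eventually absorbs $z^w$'' property is used in the correct direction --- it gives surjectivity of $\bK[q^{\pm 1}]\otimes V' \to \bK[q^{\pm 1}]\otimes V$ after localisation, not before --- and that the resulting isomorphisms are genuinely $\bK[q^{\pm 1}]$-linear and grading-preserving, not merely $\bK$-linear. One should also note explicitly that, although $V$ is not a free or even finitely generated $\bK[q]$-module (the $z^w$-part has infinitely many summands), this causes no difficulty because localisation is exact without any finiteness hypothesis, and the comparison map $V' \hookrightarrow V$ is already defined integrally.
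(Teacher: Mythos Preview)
Your proposal is correct and matches the paper's approach: the paper presents Corollary \ref{th:bsv-1b} as an immediate consequence of Theorem \ref{th:bsv-1} obtained by inverting $q$, and the only content is the observation (which you spell out carefully) that the $H^*(D;\bK)z^w$-summands are absorbed into $H^*(M;\bK)[q]$ after localisation because high powers of $q$ map them there.
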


For $H_q^{\mathrm{diag}}$, an alternative proof of Corollary \ref{th:bsv-1b} is given in \cite{el-alami-sheridan24b}; and one should be able to adapt the argument there to cover $H_q$ as well.

\begin{proof}[Proofs of Lemma \ref{th:weak-1} \and \ref{th:weak-2}] These follow immediately from the statement above (in fact, they are substantially weaker).
\end{proof}

The equivariant versions of these results are as follows.

\begin{theorem} \label{th:bsv-2}
There is an isomorphism of graded $\bK[q,u]$-modules,
\begin{equation} \label{eq:bsv-2}
H^*(M;\bK)[q,u] \oplus \bigoplus_{w \geq 1} H^*(D;\bK)[u]z^w \iso H_{q,u}.
\end{equation}
The $u$-action on the left hand side is the standard one, and the $q$-action has the following properties: it is standard on $H^*(M;\bK)[q,u]$; and any element in $H^*(D;\bK)[u]z^w$ will be mapped to $H^*(M;\bK)[q,u]$ by a sufficiently high power of $q$.
\end{theorem}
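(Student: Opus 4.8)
\textbf{Plan of proof for Theorem \ref{th:bsv-2}.}

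The plan is to bootstrap from the non-equivariant statement Theorem \ref{th:bsv-1} using the $u$-adic filtration, exactly as the finiteness lemmas in Section \ref{subsubsec:manipulate-h} were bootstrapped from their $q=0$ counterparts. Recall that $C_{q,u} = C_q[u]$ as a graded $\bK[q,u]$-module, with the differential a $u$-deformation of the deformed differential on $C_q$; since $C$ is bounded below, there is no issue distinguishing polynomials and power series, and the spectral sequence of the $u$-adic filtration (or rather the long exact sequences $\cdots \to H_{q,u}[-2] \xrightarrow{u} H_{q,u} \xrightarrow{u=0} H_q \to \cdots$) degenerates in a controlled way. The first step is therefore to establish $u$-torsion-freeness and degeneration: I would argue that the $u$-Bockstein differentials on the associated graded all vanish. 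This is where the specific input is needed — it is \emph{not} automatic for an abstract deformation — and the cleanest route is to invoke the structural description in \cite{pomerleano-seidel24}, where the equivariant deformed complex is built directly and the $u$-differential is identified with (a deformed version of) the $S^1$-equivariant differential on the Floer complex of $\hat N$, whose relevant part is understood. Concretely I expect \cite{pomerleano-seidel24} to already contain, or to immediately yield, the statement that $H_{q,u}$ is a free $\bK[u]$-module with $H_{q,u}/uH_{q,u} \iso H_q$; given that, \eqref{eq:bsv-2} follows from \eqref{eq:bsv-1} by tensoring the right-hand side identification of Theorem \ref{th:bsv-1} with $\bK[u]$ and lifting, and the description of the $q$-action is inherited verbatim from Theorem \ref{th:bsv-1} because the $q$-action commutes with $u$.

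The key steps, in order: (1) Show $H_{q,u}$ is $u$-torsion-free — this is immediate from Lemma \ref{th:completion}-style reasoning once one knows the $u=0$ reduction computes $H_q$, since $C_q$ is the honest $u=0$ fibre of the complex $C_{q,u}$ and the connecting maps in the long exact sequence are the $u$-Bocksteins. (2) Show the $u$-Bocksteins vanish, equivalently that $H_{q,u} \to H_q$ is surjective; by the filtration argument (iterating $x = \sum f_j h_j + u\tilde x$ as in the proof of Lemma \ref{th:hqu-finite}) this promotes a $\bK[q]$-module splitting of $H_q$ to a $\bK[q,u]$-module splitting of $H_{q,u}$. (3) Transport the explicit description: the decomposition $H^*(M;\bK)[q] \oplus \bigoplus_{w\geq 1} H^*(D;\bK)z^w$ from Theorem \ref{th:bsv-1} is $\bK[q]$-linear, so its $[u]$-extension is $\bK[q,u]$-linear and maps isomorphically onto $H_{q,u}$ by Steps (1)--(2). (4) Read off the $q$-action properties: on the free summand $H^*(M;\bK)[q,u]$ the $q$-action is standard because it is standard mod $u$ and $q$ commutes with $u$; and for $x \in H^*(D;\bK)[u]z^w$, the statement that some $q^k x$ lands in $H^*(M;\bK)[q,u]$ again reduces mod $u$ to the corresponding assertion in Theorem \ref{th:bsv-1}, and then lifts because the complement of $H^*(M;\bK)[q,u]$ in $H_{q,u}$ is $u$-divisible-by-nothing, i.e.\ the reduction map is injective on it.

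The main obstacle is Step (2), the vanishing of the $u$-Bocksteins. Unlike the finiteness statements in Section \ref{subsubsec:manipulate-h}, which only needed \emph{some} bound and tolerated torsion, here we need the \emph{precise} rank statement \eqref{eq:bsv-2}, so degeneration on the nose is essential. I do not expect to prove this from the Floer-theoretic definitions used in the present paper; rather, I would cite \cite{pomerleano-seidel24} (with the translation of conventions from \cite[Section 10]{pomerleano-seidel24}), where the equivariant deformed symplectic cohomology is computed and where the module $\bigoplus_{w\geq 1} H^*(D;\bK)[u]z^w$ appears geometrically as the contribution of orbits winding $w$ times around $D$, with the $u$-action visibly free. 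An honest self-contained proof would require either a degeneration argument for the relevant Morse-Bott spectral sequence along the lines of \cite{cfh96, kwonvankoert16}, exploiting that the $S^1$-action on the critical submanifolds $\mathcal{Q}_d$ from Section \ref{subsubsection:nonnegativity} is the rotation action whose equivariant cohomology is free over $\bK[u]$, or the local structure arguments for the circle action near $D$; I would relegate this to the cited references and present Theorem \ref{th:bsv-2} as their equivariant consequence combined with the elementary bootstrapping above.
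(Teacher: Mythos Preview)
The paper does not prove this theorem: it is stated in Section \ref{subsubsec:deformed} as one of several results imported wholesale from \cite{pomerleano-seidel24}, with no argument given beyond the citation (plus a pointer to \cite[Section 10]{pomerleano-seidel24} for the translation of conventions). You have correctly anticipated this.

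Your bootstrapping sketch goes further than the paper does, and is reasonable in spirit, but step (4) has a genuine gap. From $u$-freeness of $H_{q,u}$ together with the $\bK[q]$-module isomorphism $H_{q,u}/uH_{q,u} \cong H_q$ of Theorem \ref{th:bsv-1}, you can certainly lift the decomposition of $H_q$ to a $\bK[u]$-module splitting of $H_{q,u}$; but there is no reason such a lift should respect the $q$-action. The $q$-action on $H_{q,u}$ is $u$-linear and reduces mod $u$ to the known action on $H_q$, yet it may carry $u$-correction terms that mix the summands. Both of your arguments in (4) fail for exactly this reason: ``$q$ is standard on $H^*(M;\bK)[q,u]$ because it is standard mod $u$ and commutes with $u$'' ignores possible $u$-corrections landing in the $D$-summands; and ``$q^k x$ lies in $H^*(M;\bK)[q,u]$ because its reduction does and the reduction map is injective on the complement'' is wrong, since that reduction map has kernel $u$ times the complement, not zero. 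So the $\bK[q,u]$-module statement, and in particular the claim that the $q$-action is standard on the first summand, genuinely requires the geometric construction in \cite{pomerleano-seidel24} (where the splitting and the $q$-action are produced compatibly), not merely Theorem \ref{th:bsv-1} plus degeneration of the $u$-filtration.
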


\begin{corollary} \label{th:bsv-2b}
There is an isomorphism of graded $\bK[u,q^{\pm 1}]$-modules,
\begin{equation} \label{eq:equivariant-bsv}
H^*(M;\bK)[q^{\pm 1},u] \iso \bK[q^{\pm 1}] \otimes_{\bK[q]} H_{q,u}.
\end{equation}
\end{corollary}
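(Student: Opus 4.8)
\textbf{Proof proposal for Corollary \ref{th:bsv-2b}.} The plan is to deduce the corollary from Theorem \ref{th:bsv-2} purely by a flatness/localization argument in the variable $q$, completely analogous to the passage from Theorem \ref{th:bsv-1} to Corollary \ref{th:bsv-1b}, and then observe that the resulting module-theoretic description is exactly the one claimed. Concretely, I would start with the isomorphism of graded $\bK[q,u]$-modules
\begin{equation*}
H^*(M;\bK)[q,u] \oplus \bigoplus_{w \geq 1} H^*(D;\bK)[u]z^w \;\iso\; H_{q,u}
\end{equation*}
from Theorem \ref{th:bsv-2}, and apply the exact functor $\bK[q^{\pm 1}] \otimes_{\bK[q]} (-)$. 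Since $\bK[q^{\pm 1}]$ is flat over $\bK[q]$, this commutes with the direct sum, so the left hand side becomes
\begin{equation*}
\bK[q^{\pm 1}] \otimes_{\bK[q]} \Big(H^*(M;\bK)[q,u]\Big) \;\oplus\; \bigoplus_{w \geq 1}\bK[q^{\pm 1}] \otimes_{\bK[q]}\Big(H^*(D;\bK)[u]z^w\Big).
\end{equation*}

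The first summand is immediately $H^*(M;\bK)[q^{\pm 1},u]$, since inverting $q$ in a free polynomial module just inverts $q$ in the base ring. For the summands involving $D$, the key input is the second stated property of the $q$-action in Theorem \ref{th:bsv-2}: any element of $H^*(D;\bK)[u]z^w$ is carried into $H^*(M;\bK)[q,u]$ by a sufficiently high power of $q$. This says precisely that, after inverting $q$, each such $D$-summand maps into (and hence, since the map is $\bK[q^{\pm 1}]$-linear and the original direct sum decomposition was as $\bK[q,u]$-modules, gets absorbed into) the localization of the $M$-summand. More carefully: the natural map $\bigoplus_{w\ge 1} H^*(D;\bK)[u]z^w \to H^*(M;\bK)[q^{\pm 1},u]$ induced by the $q$-action becomes, after inverting $q$, a map whose image lies in $H^*(M;\bK)[q^{\pm 1},u]$; combined with the fact that the whole thing already surjects after localization (because multiplication by $q$ is invertible on the target), one concludes $\bK[q^{\pm 1}] \otimes_{\bK[q]} H_{q,u} \iso H^*(M;\bK)[q^{\pm 1},u]$ as graded $\bK[u,q^{\pm 1}]$-modules. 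The $u$-action is visibly the standard one throughout, since it was the standard one on the left hand side of \eqref{eq:bsv-2} and is untouched by the localization in $q$.

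The only subtlety — and the step I expect to require a little care rather than being entirely routine — is verifying that the localized $D$-summands genuinely contribute nothing new, i.e.\ that the localization map $\bK[q^{\pm 1}]\otimes_{\bK[q]}(H^*(D;\bK)[u]z^w) \to H^*(M;\bK)[q^{\pm 1},u]$ is injective with image contained in the $M$-part, rather than merely that each element is eventually $q$-divisible into it. This follows by tracking the $q$-action structure: the property in Theorem \ref{th:bsv-2} means that on $\bigoplus_{w\ge1}H^*(D;\bK)[u]z^w$, multiplication by a suitable power of $q$ lands in the $M$-summand, so after inverting $q$ the inclusion of the $M$-summand is already surjective, which forces the $D$-summands to add nothing. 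One can also argue dimension-by-dimension: in each fixed cohomological degree $d$, the $\bK[u]$-modules on both sides of \eqref{eq:bsv-2} are free of the same (possibly infinite) rank, and inverting $q$ produces a $\bK[u,q^{\pm1}]$-module which in each degree $d$ is the degree-$d$ part of $H^*(M;\bK)[q^{\pm 1},u]$; comparing with the already-established non-equivariant Corollary \ref{th:bsv-1b} via setting $u=0$, together with $u$-completeness and freeness, pins down the answer. Either route gives the stated isomorphism \eqref{eq:equivariant-bsv}.
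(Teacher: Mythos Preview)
Your overall strategy is the intended one (and matches the paper's implicit argument), but your first paragraph contains a real confusion that you should fix. The direct sum on the left of \eqref{eq:bsv-2} is \emph{not} a decomposition of $\bK[q]$-modules: the summands $H^*(D;\bK)[u]z^w$ are not closed under the $q$-action --- indeed, the second property in Theorem~\ref{th:bsv-2} says precisely that $q$ eventually carries them into the $M$-summand. So distributing $\bK[q^{\pm 1}] \otimes_{\bK[q]}(-)$ across that direct sum is illegitimate, and your parenthetical ``the original direct sum decomposition was as $\bK[q,u]$-modules'' is false (the \emph{isomorphism} with $H_{q,u}$ is $\bK[q,u]$-linear; the direct sum splitting is only $\bK[u]$-linear).

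The clean argument, which you do essentially reach in your final paragraph, is this: $H^*(M;\bK)[q,u]$ \emph{is} a $\bK[q]$-submodule of the left side of \eqref{eq:bsv-2} (since $q$ acts standardly there), and the quotient by it is $q$-torsion, because any element of the quotient lifts to a finite sum of $D$-terms, each annihilated by a power of $q$ after passing to the quotient. Localizing the short exact sequence
\[
0 \longrightarrow H^*(M;\bK)[q,u] \longrightarrow H_{q,u} \longrightarrow (\text{$q$-torsion}) \longrightarrow 0
\]
at $q$ kills the right term and gives \eqref{eq:equivariant-bsv}. Your sentence ``after inverting $q$ the inclusion of the $M$-summand is already surjective'' is exactly this; just lead with it and drop the faulty distributivity step. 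The alternative ``dimension-by-dimension'' argument you sketch at the end is unnecessary and, as written, not precise.
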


The final result we need concerns connections. For simplicity, we'll state it only in $q$-inverted form (readers interested in what it looks like without inverting $q$ are referred to \cite[Section 9]{pomerleano-seidel24}).

\begin{theorem} \label{th:bsv-3}
The isomorphism \eqref{eq:equivariant-bsv} identifies the quantum connection \eqref{eq:quantum-connection-with-u} with the canonical connection on $H_{q,u}$.
\end{theorem}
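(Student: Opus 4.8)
\textbf{Proof strategy for Theorem \ref{th:bsv-3}.}

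The plan is to recall the construction of the canonical connection on $H_{q,u}$ (Section \ref{section:connection}) and the main comparison theorem from \cite{pomerleano-seidel24} which underlies Theorem \ref{th:bsv-2} (equivalently its $q$-inverted form, Corollary \ref{th:bsv-2b}), and then to check that the identification of modules in \eqref{eq:equivariant-bsv} intertwines $\nabla_{u\partial_q}$ with \eqref{eq:quantum-connection-with-u}. Since \eqref{eq:quantum-connection-with-u} is characterized by the small quantum product with $[\omega_M]$, the real content is that the $\mathit{KH}$ operator appearing in the definition of the closed string connection, transported through the isomorphism \eqref{eq:equivariant-bsv}, computes $q^{-1}([\omega_M] \ast_q (-))$.

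First I would set up the bookkeeping: on the left-hand side of \eqref{eq:equivariant-bsv}, $\nabla_{u\partial_q}$ should act as $u\partial_q + q^{-1}([\omega_M] \ast_q (-))$ after we use the identification to transfer it. The $u\partial_q$ part is the naive differentiation and matches automatically (both the canonical connection and the quantum connection have $u\partial_q$ as their ``trivial'' term, cf.\ \eqref{eq:kappa-connection} and \eqref{eq:quantum-connection-with-u}). So everything reduces to the leading (order $u^0$ relative to the connection, but involving all powers of $u$) term: one must show $\mathit{KH}$ acts, under \eqref{eq:equivariant-bsv}, as multiplication by $q^{-1}[\omega_M]$ in the deformed equivariant quantum product. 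The key geometric input is the identification of the Borman--Sheridan class $b$ (equivalently the Maurer--Cartan element $\alpha = q\beta$) with the class that, after deformation and the isomorphism to quantum cohomology, corresponds to $q^{-1}[\omega_M]$ acting by quantum multiplication; this is exactly what \cite{pomerleano-seidel24} establishes when proving the ring isomorphism part of Theorem \ref{th:bsv-1}, and the connection statement (Theorem \ref{th:bsv-3}) is the one-parameter-family refinement of it. Concretely, I would invoke \cite[Section 9]{pomerleano-seidel24}, where the canonical connection on $H_{q,u}$ is shown to be compatible with the module isomorphism \eqref{eq:equivariant-bsv}; the content there is precisely that the operator $\mathit{KH}$ (built from the Cartan homotopy moduli spaces $\AC_{m,r,w}^{(A)}$, $\AC_{m,r,w}^{(B)}$ with $\partial_q\alpha$ and $\alpha$ inserted) corresponds under the PSS-type comparison maps to $q^{-1}[\omega_M]\ast_q$.

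The main obstacle I anticipate is not the formal structure but verifying the precise normalization and sign: one has to track how the factor of $q^{-1}$ in \eqref{eq:quantum-connection-with-u}, the $u$-linearity conventions of Section \ref{subsec:conventions}(e), the grading operator $\mathrm{Gr}$ (here in the incarnation $\delta_d = d - \dim_{\bC}(M)$ forced by the open-closed map, cf.\ the third bullet of Section 1.1), and Poincaré duality (needed to pass between $H^*(M)$ and $H^{*+\dim_{\bC}(M)}(M)$ in \eqref{eq:opcl}) all fit together. In practice this means matching the leading-order coefficient in the $q$-expansion of $\delta_{S^1,q}$-to-quantum comparison — i.e.\ checking that the PSS isomorphism underlying \eqref{eq:bsv-2} carries the degree-$2$ deformation parameter correctly — and that the circle-action decoration in the definition of $\mathit{KH}$ produces exactly the $u$-connection term of \eqref{eq:quantum-connection-with-u} rather than a gauge-equivalent variant. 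Since all of these verifications are carried out in \cite{pomerleano-seidel24}, the proof here is essentially a citation: I would state that Theorem \ref{th:bsv-3} is \cite[Theorem in Section 9]{pomerleano-seidel24}, combined with the identification (Section \ref{subsubsec:bs}) of the Maurer--Cartan element used here with the one used there, noting that the present $H_{q,u}$ with its connection from Section \ref{section:connection} agrees with the object constructed in \cite{seidel18} and \cite{pomerleano-seidel24} (the independent construction promised in the introduction).
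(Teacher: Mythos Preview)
Your proposal is correct in its conclusion: the paper does not prove Theorem \ref{th:bsv-3} at all, but simply imports it from \cite{pomerleano-seidel24} (with a pointer to \cite[Section 9]{pomerleano-seidel24} for the statement before inverting $q$). Your sketch of what the underlying argument should look like---that the $\mathit{KH}$ operator, transported through the PSS-type isomorphism, becomes $q^{-1}[\omega_M]\ast_q$---is reasonable informal motivation, but it goes beyond what the present paper does; here the theorem is stated as a black-box citation, and the only surrounding discussion is the toy model in Section \ref{subsec:toy}, which the authors explicitly flag as motivation for Theorems \ref{th:bsv-1} and \ref{th:bsv-2} rather than as a proof.
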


\end{document}